\begin{document}
\newtheorem{thm}{Theorem}[section]
\newtheorem{cor}[thm]{Corollary}
\newtheorem{prop}[thm]{Proposition}
\newtheorem{lem}[thm]{Lemma}
\newtheorem{conj}[thm]{Conjecture}
\newtheorem{cond}[thm]{Condition}
\theoremstyle{definition}
\newtheorem{dfn}[thm]{Definition}
\newtheorem{ex}[thm]{Example}
\newtheorem{rem}[thm]{Remark}
\newtheorem{conv}[thm]{Convention}
\newtheorem{quest}[thm]{Question}
\numberwithin{figure}{section}
\def\eq#1{{\rm(\ref{#1})}}
\def\dim{\mathop{\rm dim}\nolimits}
\def\vdim{\mathop{\rm vdim}\nolimits}
\def\ev{\mathop{\rm ev}\nolimits}
\def\bev{\mathop{\bf ev}\nolimits}
\def\btev{\mathop{\bf\widetilde{ev}}\nolimits}
\def\Ker{\mathop{\rm Ker}}
\def\Coker{\mathop{\rm Coker}}
\def\ind{\mathop{\rm ind}}
\def\Ind{\mathop{\rm Ind}}
\def\val{\mathop{\rm val}}
\def\mul{\mathop{\rm mul}}
\def\Im{\mathop{\rm Im}}
\def\Stab{\mathop{\rm Stab}\nolimits}
\def\vol{\mathop{\rm vol}\nolimits}
\def\rank{\mathop{\rm rank}}
\def\Hom{\mathop{\rm Hom}\nolimits}
\def\Aut{\mathop{\rm Aut}}
\def\id{\mathop{\rm id}\nolimits}
\def\rsi{{\rm si}}
\def\geo{{\rm geo}}
\def\bs{\boldsymbol}
\def\ge{\geqslant}
\def\le{\leqslant\nobreak}
\def\ma{{\rm main}}
\def\nov{{\rm nov}}
\def\CY{{\rm\sst CY}}
\def\cal{\mathcal}
\def\R{{\mathbin{\mathbb R}}}
\def\Z{{\mathbin{\mathbb Z}}}
\def\N{{\mathbin{\mathbb N}}}
\def\Q{{\mathbin{\mathbb Q}}}
\def\C{{\mathbin{\mathbb C}}}
\def\CP{{\mathbin{\mathbb{CP}}}}
\def\M{{\mathbin{\mathcal M}}}
\def\X{{\mathbin{\mathcal X}}}
\def\Y{{\mathbin{\mathcal Y}}}
\def\a{{\mathfrak a}}
\def\b{{\mathfrak b}}
\def\f{{\mathfrak f}}
\def\g{{\mathfrak g}}
\def\h{{\mathfrak h}}
\def\m{{\mathfrak m}}
\def\n{{\mathfrak n}}
\def\p{{\mathfrak p}}
\def\q{{\mathfrak q}}
\def\s{{\mathfrak s}}
\def\H{{\mathfrak H}}
\def\U{\mathbin{\rm U}}
\def\In{{\mathfrak Incl}}
\def\Ev{{\mathfrak Eval}}
\def\oM{{\mathbin{\smash{\,\,\overline{\!\!\mathcal M\!}\,}}}}
\def\tM{{\mathbin{\smash{\,\,\widetilde{\!\!\mathcal M\!}\,}}}}
\def\hM{{\mathbin{\smash{\,\,\widehat{\!\!\mathcal M\!}\,}}}}
\def\al{\alpha}
\def\be{\beta}
\def\ga{\gamma}
\def\de{\delta}
\def\io{\iota}
\def\ep{\epsilon}
\def\vep{\varepsilon}
\def\la{\lambda}
\def\ka{\kappa}
\def\th{\theta}
\def\ze{\zeta}
\def\up{\upsilon}
\def\vp{\varphi}
\def\si{\sigma}
\def\om{\omega}
\def\De{\Delta}
\def\La{\Lambda}
\def\Om{\Omega}
\def\Ga{\Gamma}
\def\Si{\Sigma}
\def\Th{\Theta}
\def\pd{\partial}
\def\ts{\textstyle}
\def\sst{\scriptscriptstyle}
\def\w{\wedge}
\def\sm{\setminus}
\def\bu{\bullet}
\def\op{\oplus}
\def\ot{\otimes}
\def\ov{\overline}
\def\bigop{\bigoplus}
\def\bigot{\bigotimes}
\def\iy{\infty}
\def\pr{{\mathop{\preceq}\nolimits}}
\def\tl{\trianglelefteq\nobreak}
\def\ls{\mathop{\lesssim\kern .05em}\nolimits}
\def\ra{\rightarrow}
\def\Ra{\Rightarrow}
\def\na{\nabla}
\def\ab{\allowbreak}
\def\hookra{\hookrightarrow}
\def\longra{\longrightarrow}
\def\t{\times}
\def\ci{\circ}
\def\ti{\tilde}
\def\d{{\rm d}}
\def\ha{{\ts\frac{1}{2}}}
\def\md#1{\vert #1 \vert}
\def\bmd#1{\big\vert #1 \big\vert}
\def\ms#1{\vert #1 \vert^2}
\def\nm#1{\Vert #1 \Vert}
\def\bnm#1{\big\Vert #1 \big\Vert}
\title{Immersed Lagrangian Floer Theory}
\author{Manabu Akaho and Dominic Joyce}
\subjclass[2000]{Primary 58F05. Secondary 35J65, 58E05}
\thanks{Supported by EPSRC grant EP/D07763X/1}
\begin{abstract}
Let $(M,\om)$ be a compact symplectic $2n$-manifold, and $L$ a
compact embedded Lagrangian submanifold in $M$. Fukaya, Oh, Ohta and
Ono \cite{FOOO} construct {\it Lagrangian Floer cohomology\/} for
such $M,L$, yielding groups $HF^*(L,b;\La_\nov)$ for one Lagrangian
or $HF^*\bigl((L_1,b_1),(L_2,b_2);\La_\nov\bigr)$ for two, where
$b,b_1,b_2$ are choices of {\it bounding cochains}, and exist if and
only if $L,L_1,L_2$ have {\it unobstructed Floer cohomology}. These
are independent of choices up to canonical isomorphism, and have
important invariance properties under Hamiltonian equivalence. Floer
cohomology groups are the morphism groups in the derived Fukaya
category of $(M,\om)$, and so are an essential part of the
Homological Mirror Symmetry Conjecture of Kontsevich.

The goal of this paper is to extend \cite{FOOO} to {\it immersed\/}
Lagrangians $L$ in $M$ with immersion $\io:L\ra M$, with transverse
self-intersections. In the embedded case, Floer cohomology
$HF^*(L,b;\La_\nov)$ is a modified, `quantized' version of singular
homology $H_{n-*}(L;\La_\nov)$ over the Novikov ring $\La_\nov$. In
our immersed case, $HF^*(L,b;\La_\nov)$ turns out to be a quantized
version of $H_{n-*}(L;\La_\nov)\op\bigop_{(p_-,p_+)\in
R}\La_\nov\cdot(p_-,p_+)$, where $R=\bigl\{(p_-,p_+):p_-,p_+\in L$,
$p_-\ne p_+$, $\io(p_-)=\io(p_+)\bigr\}$ is a set of two extra
generators for each self-intersection point of $L$, and $(p_-,p_+)$
has degree $\eta_{(p_-,p_+)}\in\Z$, an index depending on how $L$
intersects itself at $\io(p_-)=\io(p_+)$.

The theory becomes simpler and more powerful for {\it graded\/}
Lagrangians in Calabi--Yau manifolds, when we can work over a
smaller Novikov ring $\La_\CY$. The proofs involve associating a
gapped filtered $A_\iy$ algebra over $\La_\nov^0$ or $\La_\CY^0$ to
$\io:L\ra M$, which is independent of nearly all choices up to
canonical homotopy equivalence, and is built using a series of
finite approximations called $A_{N,0}$ algebras
for~$N=0,1,2,\ldots$.
\end{abstract}
\maketitle

\baselineskip 11.53pt plus .47pt

\section{Introduction}
\label{il1}

Let $(M,\om)$ be a compact symplectic manifold, and $L$ a compact
embedded Lagrangian submanifold in $M$. Fukaya, Oh, Ohta and Ono
\cite{FOOO} have undertaken the mammoth task of rigorously
constructing {\it Lagrangian Floer cohomology\/} for such $M,L$. In
brief, to each Lagrangian $L$ in $M$ they associate a ({\it gapped
filtered\/}) $A_\iy$ {\it algebra\/} $(\Q\X\ot\La^0_\nov,\m)$. A
{\it bounding cochain\/} $b\in\Q\X\ot\La^0_\nov$ is a solution of
the equation $\sum_{k\ge 0}\m_k(b,\ldots,b)=0$ in
$\Q\X\ot\La^0_\nov$. Given such $b$, they define the Lagrangian
Floer cohomology $HF^*(L,b;\La_\nov)$ of $L$. If $L$ does not admit
a bounding cochain, we say that $L$ has {\it obstructed Lagrangian
Floer cohomology}. If $L_1,L_2$ are transversely intersecting
Lagrangians in $M$ with bounding cochains $b_1,b_2$, they define the
Lagrangian Floer cohomology $HF^*\bigl((L_1,b_1),
(L_2,b_2);\La_\nov\bigr)$ of $L_1,L_2$. These are the morphism
groups in the derived Fukaya category of $(M,\om)$, and so are an
essential part of the Homological Mirror Symmetry Conjecture of
Kontsevich~\cite{Kont}.

The purpose of this paper is to extend the work of Fukaya, Oh, Ohta
and Ono \cite{FOOO} to {\it immersed\/} Lagrangians $L$ in $M$ with
immersion $\io:L\ra M$, with transverse self-intersections. This was
done by the first author \cite{Akah} under the simplifying
assumption that $\pi_2(M,\io(L))=\{1\}$, which eliminates the issues
of disc bubbling, $A_\iy$ algebras and bounding cochains. We now
discuss the much more difficult general case.

Suppose $\io:L\ra M$ is a compact immersed Lagrangian in $(M,\om)$,
such that $\io^{-1}(p)$ is at most two points for each $p\in\io(L)$,
and when $\io^{-1}(p)=\{p_+,p_-\}$ is two points the two sheets of
$L$ intersect transversely at $p$, that is,
$\d\io(T_{p_+}L)\cap\d\io(T_{p_-}L)=\{0\}$ in $T_pM$. We will
construct a gapped filtered $A_\iy$ algebra $(\Q\X\ot
\La^0_\nov,\m)$ associated to $L$, independent of choices up to
canonical homotopy equivalence, which generalizes both the embedded
case in Fukaya et al.\ \cite[\S 3]{FOOO}, and the gapped filtered
$A_\iy$ category associated to finitely many embedded Lagrangian
submanifolds by Fukaya \cite{Fuka}. Thus we can define {\it bounding
cochains\/} $b$ for $L$, and {\it Lagrangian Floer cohomology
groups\/} $HF^*(L,b;\La_\nov)$ and $HF^*\bigl((L_1,b_1),(L_2,b_2);
\La_\nov\bigr)$, which are independent of choices up to canonical
isomorphism.

Fukaya et al.\ \cite{FOOO} mainly develop two subjects: geometry and
algebra. In the geometric part, they realize $A_{N,K}$ structures on
some singular chains of an embedded Lagrangian submanifold $L$
through moduli spaces of isomorphism classes of stable maps from a
genus 0 prestable bordered Riemann surface with boundary attached to
$L$. In the algebraic part, they develop the homotopy theory, or
homological algebra, of $A_{N,K}$ and gapped filtered $A_\iy$
algebras. Finally, they apply the homotopy theory to the geometric
realization, and obtain a gapped filtered $A_\iy$ algebra associated
to an embedded Lagrangian submanifold.

Here we develop a generalization of their geometry, that is, we
construct $A_{N,0}$ structures associated to an immersed Lagrangian
submanifold with transverse self-intersections. Then we apply the
homotopy theory to our generalization, and obtain a gapped filtered
$A_\iy$ algebra associated to an immersed Lagrangian submanifold.

Fukaya et al.\ also construct a gapped filtered $A_\iy$ bimodule
associated to a pair of transversely intersecting embedded
Lagrangian submanifolds \cite{FOOO}, and a gapped filtered $A_\iy$
category associated to a finite number of transversely intersecting
embedded Lagrangian submanifolds \cite{Fuka}. Regarding a finite
union of embedded Lagrangians as a single immersed Lagrangian, their
gapped filtered $A_\iy$ modules and categories become part of our
gapped filtered $A_\iy$ algebras.

Here is one reason why extending Lagrangian Floer cohomology to
immersed Lagrangians may be important. Using the embedded Lagrangian
Floer theory of \cite{FOOO}, one can define the {\it Fukaya
category} ${\rm Fuk}(M,\om)_{\rm em}$, whose objects are roughly
speaking pairs $(L,b)$ of an embedded Lagrangian and a bounding
cochain $b$ for $L$, and the {\it derived Fukaya category}
$D^b\bigl({\rm Fuk}(M,\om)_{\rm em}\bigr)$. Kontsevich's Homological
Mirror Symmetry Conjecture \cite{Kont} says (roughly) that for
$(M,\om)$ a symplectic Calabi--Yau with mirror complex Calabi--Yau
$(\check M,\check J)$, the derived Fukaya category $D^b\bigl({\rm
Fuk}(M,\om)_{\rm em}\bigr)$ should be equivalent as a triangulated
category to the derived category $D^b\bigl({\rm coh}(\check M,\check
J)\bigr)$ of coherent sheaves on~$(\check M,\check J)$.

The theory of this paper would allow us to define an {\it immersed
Fukaya category} ${\rm Fuk}(M,\om)_{\rm im}$ involving immersed
Lagrangians, and the {\it derived immersed Fukaya category}
$D^b\bigl({\rm Fuk}(M,\om)_{\rm im}\bigr)$. We could then use
$D^b\bigl({\rm Fuk}(M,\om)_{\rm im}\bigr)$ in place of
$D^b\bigl({\rm Fuk}(M,\om)_{\rm em}\bigr)$ in Homological Mirror
Symmetry. Actually, it seems likely that $D^b\bigl({\rm
Fuk}(M,\om)_{\rm im}\bigr)$ and $D^b\bigl({\rm Fuk}(M,\om)_{\rm
em}\bigr)$ are {\it equivalent categories}, although $D^b\bigl({\rm
Fuk}(M,\om)_{\rm im}\bigr)$ has more objects.

Motivated by conjectures of Thomas and Yau \cite{ThYa} and more
recent ideas of Bridgeland \cite{Brid} and the String Theorists
Douglas and Aspinwall, we can state the following (approximate)
conjecture, which is an extension of the Homological Mirror Symmetry
story: let $(M,J,\om,\Om)$ be a Calabi--Yau $n$-fold. Then there
should exist a Bridgeland stability condition $(Z,{\cal P})$ on
$D^b\bigl({\rm Fuk}(M,\om)\bigr)$ depending on the holomorphic
$(n,0)$-form $\Om$ on $M$, such that each isomorphism class of
stable objects in $D^b\bigl({\rm Fuk}(M,\om)\bigr)$ is represented
by a unique special Lagrangian.

For this conjecture to have a chance of being true, we need
$D^b\bigl({\rm Fuk}(M,\om)\bigr)$ to contain as many actual
geometric Lagrangians as possible. In particular, the conjecture
should be {\it false} for the embedded case $D^b\bigl({\rm
Fuk}(M,\om)_{\rm em}\bigr)$ when $n>2$, since then there could exist
$(L,b)$ and $(L',b')$ isomorphic in $D^b\bigl({\rm Fuk}(M,\om)_{\rm
im}\bigr)$ with $L$ embedded, and $L'$ special Lagrangian and
immersed but not embedded. Then $(L,b)$ must be stable in both
$D^b\bigl({\rm Fuk}(M,\om)_{\rm em}\bigr)$ and $D^b\bigl({\rm
Fuk}(M,\om)_{\rm im}\bigr)$, but the uniqueness argument of Thomas
and Yau \cite[Th.~4.3]{ThYa} applied in our immersed Floer
cohomology theory implies that there cannot exist $(L'',b'')$ in
$D^b\bigl({\rm Fuk}(M,\om)_{\rm em}\bigr)$ isomorphic to $(L,b)$
with $L''$ special Lagrangian. Thus, to make our modified
Thomas--Yau conjecture true we need at least to include immersed
Lagrangians in the Fukaya category, and perhaps also some classes of
singular Lagrangians as well.

We begin with some background material on Kuranishi spaces,
multisections, and virtual chains in \S\ref{il2}, and on $A_\iy$
algebras and $A_{N,K}$ algebras in \S\ref{il3}. Section \ref{il4}
introduces the moduli spaces of isomorphism classes of stable maps
from a genus 0 prestable bordered Riemann surface with boundary
attached to an immersed Lagrangian submanifold. They are Kuranishi
spaces, with boundary and corners, whose boundaries are fibre
products of other such moduli spaces. Section \ref{il5} discusses
orientations of our moduli spaces.

Sections \ref{il6}--\ref{il11} construct gapped filtered $A_\iy$
algebras from immersed Lagrangian submanifolds $\io:L\ra M$, and
show they are independent of choices such as the almost complex
structure $J$, up to canonical homotopy equivalence. First, in
\S\ref{il6}--\S\ref{il7}, we construct $A_{N,0}$ algebras
$(\Q\X_N,{\cal G},\m)$ from $\io:L\ra M$ for all $N=0,1,2,\ldots$,
involving different arbitrary choices for each $N$. In
\S\ref{il8}--\S\ref{il9}, we show that the $A_{N,0}$ algebras of
\S\ref{il6}--\S\ref{il7} are unique up to $A_{N,0}$ homotopy
equivalences ${\mathfrak j}:(\Q\X_N,{\cal G},\m)\ra(\Q\X'_N,{\cal
G},\m')$, and \S\ref{il10} proves that these $\mathfrak j$ are
unique up to homotopy.

Section \ref{il11} passes from $A_{N,0}$ algebras $(\Q\X_N,{\cal
G},\m)$ to gapped filtered $A_\iy$ algebras $(\Q\X\ot\La_\nov^0,
\m)$ by a limiting process as $N\ra\iy$, and shows that $(\Q\X
\ot\La_\nov^0,\m)$ is independent of choices up to canonical
homotopy equivalence. Section \ref{il12} defines {\it graded\/}
Lagrangians in Calabi--Yau manifolds, and explains how in the graded
case we can redo \S\ref{il6}--\S\ref{il11} using the smaller Novikov
ring $\La_\CY^0$. Finally, \S\ref{il13} defines bounding cochains
and Lagrangian Floer cohomology, discusses some applications, and
suggests some questions and conjectures for future research.

By its very nature, this paper exists wholly in the shadow of
Fukaya, Oh, Ono and Ohta's massive work \cite{FOOO}. Despite this,
we have tried hard to make our paper {\it independent of\/}
\cite{FOOO}, in the sense that our paper is self-contained,
requiring no more than the usual background for research papers in
the area, and readers should not need to read (or even open)
\cite{FOOO} to understand our paper.

We also frequently use different methods of proof to Fukaya et al.\
\cite{FOOO}. In particular, \S\ref{il8}--\S\ref{il10} is much
shorter and simpler than the parallel parts of \cite{FOOO}. The
current version of \cite{FOOO} is more than 1000 pages long, and in
this paper we not only cover a large proportion, the most important
parts, of \cite{FOOO}, but we also generalize it significantly by
extending it to immersed Lagrangians. So we maintain that our paper
is, by the standards of \cite{FOOO}, very short and succinct.

Parts of this paper will be rewritten in \cite{AkJo} using the
second author's theory of {\it Kuranishi cohomology\/}
\cite{Joyc2,Joyc3}, which simplifies issues to do with virtual
chains.
\medskip

\noindent{\it Acknowledgements.} The authors would like to thank
Kenji Fukaya, Hiroshige Kajiura, Yong-Geun Oh, Hiroshi Ohta, Kauru
Ono, Paul Seidel and Ivan Smith for useful conversations, and the
EPSRC for financial support, grant EP/D07763X/1.

\section{Background material on Kuranishi spaces and multisections}
\label{il2}

We now summarize results from Fukaya, Ono et al.\ \cite[\S 3--\S
6]{FuOn}, \cite[\S A]{FOOO} on Kuranishi spaces, multisections and
virtual chains that we will need later. Where the notation of
\cite{FuOn,FOOO} differs, for instance in whether Kuranishi
neighbourhoods are $(V,E,\Ga,s,\psi)$ with $V$ a manifold or
$(V,E,s,\psi)$ with $V$ an orbifold, we generally
follow~\cite{FOOO}.

\subsection{Kuranishi structures on topological spaces}
\label{il21}

We define {\it Kuranishi spaces}, following Fukaya, Ono et al.\
\cite[\S5]{FuOn} and~\cite[\S A1.1]{FOOO}.

\begin{dfn} Let $X$ be a compact, metrizable topological space. A
{\it Kuranishi neighbourhood\/} of $p\in X$ is a quintet $(V_p,E_p,
\Ga_p,s_p,\psi_p)$ such that:
\begin{itemize}
\setlength{\itemsep}{0pt}
\setlength{\parsep}{0pt}
\item[(i)] $V_p$ is a smooth finite-dimensional manifold, which may
or may not have boundary or corners;
\item[(ii)] $E_p\ra V_p$ is a vector bundle over $V_p$;
\item[(iii)] $\Ga_p$ is a finite group which acts smoothly on $V_p$,
and acts compatibly on $E_p$ preserving the vector bundle structure;
\item[(iv)] $s_p:V_p\ra E_p$ is a $\Ga_p$-equivariant smooth section;
and
\item[(v)] $\psi_p$ is a homeomorphism from $s_p^{-1}(0)/\Ga_p$ to a
neighbourhood of $p$ in $X$, where $s_p^{-1}(0)$ is the subset of
$V_p$ where the section $s_p$ is zero.
\end{itemize}
We call $E_p$ the {\it obstruction bundle}, and $s_p$ the {\it
Kuranishi map}.
\label{il2dfn1}
\end{dfn}

Here we follow \cite[Def.~5.1]{FuOn} in taking $E_p$ to be a vector
bundle, rather than a finite-dimensional vector space as
in~\cite[Def.~A1.1]{FOOO}.

\begin{dfn} Let $(V_p,E_p,\Ga_p,s_p,\psi_p)$ and $(V_q,E_q,\Ga_q,s_q,
\psi_q)$ be Kuranishi neighbourhoods of $p\in X$ and $q\in
\psi_p(s_p^{-1}(0)/\Ga_p)$ respectively. We call a triple
$(\hat\phi_{pq},\ab\phi_{pq},\ab h_{pq})$ a {\it coordinate change\/}
if:
\begin{itemize}
\setlength{\itemsep}{0pt}
\setlength{\parsep}{0pt}
\item[(a)] $h_{pq}:\Ga_q\ra\Ga_p$ is an {\it injective\/} group
homomorphism;
\item[(b)] $\phi_{pq}:V_q\ra V_p$ is an $h_{pq}$-equivariant smooth
{\it embedding\/};
\item[(c)] $(\hat\phi_{pq},\phi_{pq})$ is an $h_{pq}$-equivariant
smooth {\it embedding\/} of vector bundles $E_q\ra E_p$;
\item[(d)] $\hat\phi_{pq}\ci s_q\equiv s_p\ci\phi_{pq}$; and
\item[(e)] $\psi_q\equiv \psi_p\ci\phi_{pq}$.
\end{itemize}
\label{il2dfn2}
\end{dfn}

We define the notions of a {\it germ of a Kuranishi neighbourhood\/}
and a {\it germ of a coordinate change\/} in the obvious way.

\begin{dfn} A {\it Kuranishi structure\/} on $X$ assigns a germ of a
Kuranishi neighbourhood $(V_p,E_p,\Ga_p,s_p,\psi_p)$ for each $p\in X$
and a germ of a coordinate change $(\hat\phi_{pq},\phi_{pq},h_{pq})$
for each $q\in\psi_p(s_p^{-1}(0)/\Ga_p)$, such that the following
hold:
\begin{itemize}
\setlength{\itemsep}{0pt}
\setlength{\parsep}{0pt}
\item[(i)] $\dim V_p-\rank E_p$ is independent of $p$\/; and
\item[(ii)] If $q\in \psi_p(s_p^{-1}(0)/\Ga_p)$ and
$r\in\psi_q(s_q^{-1}(0)/\Ga_q)$ then $h_{pq}\ci h_{qr}=h_{pr}$,
$\phi_{pq}\ci\phi_{qr}=\phi_{pr}$
and~$\hat\phi_{pq}\ci\hat\phi_{qr}=\hat\phi_{pr}$.
\end{itemize}
We call $\vdim X=\dim V_p-\rank E_p$ the {\it virtual dimension\/}
of the Kuranishi structure. A topological space $X$ with a Kuranishi
structure is called a {\it Kuranishi space}.
\label{il2dfn3}
\end{dfn}

The point of these definitions is that in many moduli problems in
geometry in which there are obstructions, the moduli spaces can be
equipped with Kuranishi structures in a natural way. This holds for
the moduli spaces of $J$-holomorphic maps from a bordered Riemann
surface studied by Fukaya et al.\ \cite{FOOO} and Liu \cite{Liu}, as
we shall explain in~\S\ref{il4}.

\subsection{Boundaries, strongly smooth maps, and fibre products}
\label{il22}

We now define the {\it boundary\/} $\pd X$ of a Kuranishi space $X$,
which is itself a Kuranishi space of dimension $\vdim X-1$. To
understand the definition, recall that in Definition \ref{il2dfn1}(i),
$V_p$ may be a manifold {\it with boundary}, and {\it with
corners}. An $n$-manifold $M$ with boundary is locally modelled on
$[0,\ep)\t(-\ep,\ep)^{n-1}$, and an $n$-manifold $M$ with corners is
locally modelled on $[0,\ep)^k\t(-\ep,\ep)^{n-k}$, for small $\ep>0$.
If $x$ lies in a codimension $k$ corner of $M$ then $k$ different
$(n-1)$-dimensional boundary strata of $M$ meet at $x$. The
{\it boundary\/} $\pd M$ is the set of pairs $(x,B)$, where $x\in M$
and $B$ is a local choice of $(n-1)$-dimensional boundary stratum of
$M$ containing~$x$.

Thus, if $x$ lies in a codimension $k$ corner of $M$ then $x$ is
represented by $k$ {\it distinct points\/} $(x,B_i)$ in $\pd M$ for
$i=1,\ldots,k$. The point of making $\pd M$ a set of pairs $(x,B)$
and not points $x$ is that this way $\pd M$ is a manifold with
corners, but if we defined $\pd M$ as the obvious subset of $M$ it
would not be a manifold with corners near a codimension $k$ corner of
$M$ for~$k>1$.

\begin{dfn} Let $X$ be a Kuranishi space. We shall define a Kuranishi
space $\pd X$ called the {\it boundary\/} of $X$. The points of
$\pd X$ are equivalence classes $[p,v,B]$ of triples $(p,v,B)$, where
$p\in X$, $(V_p,E_p,\Ga_p,s_p,\psi_p)$ lies in the germ of Kuranishi
neighbourhoods at $p$, $v\in V_p$ with $s_p(v)=0$ and $\psi_p(\Ga_pv)
=p$, and $B$ is a local boundary stratum of $V_p$ containing~$v$.

Two triples $(p,v,B)$ and $(p,w,C)$ are {\it equivalent\/} if $p=q$
and $\ga\cdot(v,B)=(w,C)$ for some $\ga\in\Ga_p$; we also have an
obvious notion of equivalence for choices of different Kuranishi
neighbourhoods $(V_p,E_p,\Ga_p,s_p,\psi_p)$, $(V'_p,E'_p,\Ga'_p,s'_p,
\psi'_p)$ in the germ at $p$. Basically, this just means that points
of $\pd X$ are $p\in X$ together with a choice of boundary stratum of
the Kuranishi neighbourhoods $V_p$ lying over $p$, up to the action
of~$\Ga_p$.

We can then define a unique natural {\it topology\/} and {\it
Kuranishi structure\/} on $\pd X$, such that $(\pd V_p,E_p\vert_{\pd
V_p},\Ga_p,s_p\vert_{\pd V_p},\psi_p\vert_{\pd V_p/\Ga_p})$ is a
Kuranishi neighbourhood on $\pd X$ for each Kuranishi neighbourhood
$(V_p,E_p,\Ga_p,s_p,\psi_p)$ on $X$. It is easy to verify that
$\vdim\pd X=\vdim X-1$, and $\pd X$ is compact if $X$ is compact.
\label{il2dfn4}
\end{dfn}

Here is \cite[Def.~6.6]{FuOn}. The equivalent definition in
\cite[Def.~A1.13]{FOOO} instead uses {\it good coordinate systems}.
Fukaya et al. \cite[Def.~A1.13]{FOOO} use the notation {\it weakly
submersive} rather than strong submersion.

\begin{dfn} Let $X$ be a Kuranishi space, and $Y$ be a topological
space. Roughly speaking, a {\it strongly continuous map\/} $\bs f:
X\ra Y$ consists of a continuous map $f_p:V_p\ra Y$ with $f_p\ci\ga
\equiv f_p$ for all $\ga\in\Ga_p$ for each Kuranishi neighbourhood
$(V_p,E_p,\Ga_p,s_p,\psi_p)$ in $X$, such that if
$(\hat\phi_{pq},\phi_{pq},h_{pq})$ is a coordinate change between
$(V_p,E_p,\Ga_p,s_p,\psi_p)$ and $(V_q,E_q,\Ga_q,s_q,\psi_q)$, then
$f_p\ci\phi_{pq}=f_q$. But because Kuranishi spaces are defined
using germs of Kuranishi neighbourhoods, we define a strongly
continuous map $\bs f$ to be a system of germs of $\Ga_p$-invariant
continuous maps $f_p:V_p\ra Y$, satisfying $f_p\ci\phi_{pq}=f_q$ for
germs of coordinate changes. Then $\bs f$ induces a continuous map
$f:X\ra Y$ in the obvious way. If $Y$ is a smooth manifold and all
$f_p$ are smooth, we call $\bs f$ {\it strongly smooth}, and if all
$f_p$ are submersions, we call $\bs f$ a {\it strong submersion}.
\label{il2dfn5}
\end{dfn}

Fukaya et al.\ \cite[Def.~A1.37]{FOOO} define {\it fibre products\/}
of Kuranishi spaces.

\begin{dfn} Let $X,X'$ be Kuranishi spaces, $Y$ be a smooth manifold,
and $\bs f:X\ra Y$, $\bs f':X'\ra Y$ be strongly smooth maps, at
least one of which is a strong submersion, inducing continuous maps
$f:X\ra Y$ and $f':X'\ra Y$. Then we can form the {\it fibre
product\/} $X\t_YX'=\bigl\{(p,p')\in X\t X':f(p)=f'(p') \bigr\}$, a
paracompact Hausdorff topological space. We also write $X\t_YX'$ as
$X\t_{\bs f,Y,\bs f'}X'$ when we wish to specify ~$\bs f,\bs f'$.

Let $(p,p')\in X\t_YX'$, let $(V_p,E_p,\Ga_p,s_p,\psi_p)$, $(V'_{p'},
E'_{p'},\Ga'_{p'},s'_{p'},\psi'_{p'})$ be sufficiently small Kuranishi
neighbourhoods in the germs at $p,p'$ in $X,X'$, and $f_p:V_p\ra Y$,
$f'_{p'}:V'_{p'}\ra Y$ be smooth maps in the germs of $\bs f,\bs f'$
at $p,p'$ respectively. Define a Kuranishi neighbourhood in $X\t_YX'$
by
\begin{equation}
\begin{split}
\bigl(V_p\t_{f_p,Y,f^{\smash\prime}_{\smash{p'}}}V'_{p'},
&(E_p\op E'_{p'})
\vert_{V_p\t_YV^{\smash{\prime}}_{\smash{p^{\smash{\prime}}}}},
\Ga_p\t\Ga'_{p'}, \\
&(s_p\op s'_{p'})\vert_{V_p\t_YV^{
\smash{\prime}}_{\smash{p^{\smash{\prime}}}}},
(\psi_p\t\psi'_{p'})\vert_{V_p\t_YV^{\smash{\prime}}_{
\smash{p^{\smash{\prime}}}}}\bigr).
\end{split}
\label{il2eq1}
\end{equation}
Here $V_p\t_{f_p,Y,f'_{p'}}V'_{p'}$ is the fibre product of smooth
manifolds, defined as at least one of $f_p,f'_{p'}$ is a submersion.
It is a submanifold of $V_p\t V'_{p'}$, so we can restrict $E_p\op
E'_{p'}$, $s_p\op s'_{p'}$ and $\psi_p\t\psi'_{p'}$ to it.

It is easy to verify that coordinate changes between Kuranishi
neighbourhoods in $X$ and $X'$ induce coordinate changes between
neighbourhoods \eq{il2eq1}. So the systems of germs of Kuranishi
neighbourhoods and coordinate changes on $X,X'$ induce such systems on
$X\t_YX'$. This gives a {\it Kuranishi structure\/} on $X\t_YX'$,
making it into a {\it Kuranishi space}. Clearly $\vdim(X\t_YX')=
\vdim X+\vdim X'-\dim Y$, and $X\t_YX'$ is compact if $X,X'$ are
compact.
\label{il2dfn6}
\end{dfn}

\subsection{Tangent bundles and orientations}
\label{il23}

Here is \cite[Def.~5.6]{FuOn}. The equivalent definition in
\cite[Def.~A1.14]{FOOO} involves a choice of {\it good coordinate
system}.

\begin{dfn} Let $X$ be a Kuranishi space. Then $X$ has a germ of
coordinate changes $(\hat\phi_{pq},\phi_{pq},h_{pq})$ between
Kuranishi neighbourhoods $(V_p,E_p,\Ga_p,s_p,\psi_p)$ and $(V_q,E_q,
\Ga_q,s_q,\psi_q)$. We say that $X$ {\it has a tangent bundle\/} if
associated to this germ of coordinate changes $(\hat\phi_{pq},
\phi_{pq},h_{pq})$ we have a germ of $\Ga_q$- and $h_{pq}$-equivariant
isomorphisms of vector bundles over~$V_q$:
\begin{equation}
\chi_{pq}:\frac{\phi_{pq}^*(E_p)}{\hat\phi_{pq}(E_q)}\longra
\frac{\phi_{pq}^*(TV_p)}{(\d\phi_{pq})(TV_q)},
\label{il2eq2}
\end{equation}
where $\hat\phi_{pq}:E_q\ra\phi_{pq}^*(E_p)$ and $\d\phi_{pq}:
TV_q\ra\phi_{pq}^*(TV_p)$ are morphisms of vector bundles over $V_q$.
These must agree on triple overlaps: if $(V_p,E_p,\Ga_p,s_p,\psi_p)$,
$(V_q,E_q,\Ga_q,s_q,\psi_q)$ and $(V_r,E_r,\Ga_r,s_r,\psi_r)$ are
Kuranishi neighbourhoods and $(\hat\phi_{pq},\ab\phi_{pq},\ab h_{pq}),
(\hat\phi_{pr},\phi_{pr},h_{pr}),(\hat\phi_{qr},\phi_{qr},h_{qr})$
coordinate changes between them with $\hat\phi_{pr}=
\hat\phi_{pq}\ci\hat\phi_{qr}$, $\phi_{pr}=\phi_{pq}\ci\phi_{qr}$ and
$h_{pr}=h_{pq}\ci h_{qr}$, then the following diagram of vector
bundles over $V_r$ must commute:
\begin{equation*}
\begin{split}
\xymatrix@C=13pt{0 \ar[r] &
\frac{\phi_{qr}^*(E_q)}{\hat\phi_{qr}(E_r)}
\ar[rr]^{\phi_{qr}^*(\hat\phi_{pq})} \ar[d]_{\chi_{qr}} &&
\frac{\phi_{pr}^*(E_p)}{\hat\phi_{pr}(E_r)}
\ar[rr]^{\!\!\!\!\!\!\!\text{\scriptsize project}}
\ar[d]_{\chi_{pr}} &&
\frac{\phi_{pr}^*(E_p)}{\phi_{qr}^*(\hat\phi_{pq}(E_q))}
\ar[d]_{\phi_{qr}^*(\chi_{pq})} \ar[r] & 0 \\
0 \ar[r] & \frac{\phi_{qr}^*(TV_q)}{(\d\phi_{qr})(TV_r)}
\ar[rr]^{\phi_{qr}^*(\d\phi_{pq})} &&
\frac{\phi_{pr}^*(TV_p)}{(\d\phi_{pr})(TV_r)}
\ar[rr]^{\!\!\!\!\!\!\!\!\!\!\!\!\!\text{\scriptsize project}} &&
\frac{\phi_{pr}^*(TV_p)}{\phi_{qr}^*(\d\phi_{pq})(\phi_{qr})^*(TV_q)}
\ar[r] & 0.}
\end{split}
\end{equation*}
\label{il2dfn7}
\end{dfn}

We can now discuss {\it orientations\/} of Kuranishi spaces.

\begin{dfn} Let $X$ be a Kuranishi space with a tangent bundle. We
say that the Kuranishi structure on $X$ is {\it oriented\/} if
associated to the germ of Kuranishi neighbourhoods $(V_p,E_p,\Ga_p,
s_p,\psi_p)$ on $X$ we are given a germ of orientations of the fibres
of the vector bundles $E_p\op TV_p$ varying continuously over $V_p$.
These must be compatible on overlaps, in the following sense. Let
$(\hat\phi_{pq},\phi_{pq},h_{pq})$ be in the germ of coordinate
changes, and $\chi_{pq}$ be as in~\eq{il2eq2}.

Then if $v\in V_q$ and $(e_q^1,\ldots,e_q^m)$,
$(t_q^1,\ldots,t_q^n)$ are bases of $E_q\vert_v$ and $T_vV_q$ such
that $(e_q^1,\ldots,e_q^m, t_q^1,\ldots,t_q^n)$ is an oriented basis
of $(E_q\op TV_q)\vert_v$, and if
$\bigl(e_p^1,\ldots,e_p^k,\hat\phi_{pq}(e_q^1),\ab
\ldots,\hat\phi_{pq}(e_q^m)\bigr)$ and $\bigl(t_p^1,\ldots,t_p^k,
(\d\phi_{pq})(t_q^1),\ldots,(\d\phi_{pq})(t_q^n)\bigr)$ are bases of
$E_p\vert_{\phi_{pq}(v)}$ and $T_{\phi_{pq}(v)}V_p$ such that
$\chi_{pq}\bigl(e_p^i+\hat\phi_{pq}(E_q\vert_v)\bigr)=t_p^i+
(\d\phi_{pq})(T_vV_q)$ for $i=1,\ldots,k$, then $\bigl(e_p^1,\ldots,
e_p^k,\hat\phi_{pq}(e_q^1),\ldots,\hat\phi_{pq}(e_q^m),t_p^1,\ldots,
t_p^k,(\d\phi_{pq})(t_q^1),\allowbreak \ldots,\allowbreak
(\d\phi_{pq})(t_q^n)\bigr)$ is an oriented basis for~$(E_p\op
TV_p)\vert_{\phi_{pq}(v)}$.
\label{il2dfn8}
\end{dfn}

\subsection{Orientation conventions}
\label{il24}

Suppose $X,X'$ are Kuranishi spaces with tangent bundles and
orientations, $Y$ is an oriented smooth manifold, and $\bs f:X\ra
Y$, $\bs f':X'\ra Y$ are strongly smooth maps. Then by \S\ref{il22}
we have Kuranishi spaces $\pd X$ and $X\t_YX'$. These can also be
given orientations in a natural way. We use the orientation
conventions of Fukaya et al.~\cite[\S 45]{FOOO}.

\begin{conv} First, our conventions for smooth manifolds:
\begin{itemize}
\item[(a)] Let $X$ be an oriented smooth manifold with boundary $\pd
X$. Then we define the orientation on $\pd X$ such that
\begin{equation*}
TX\vert_{\pd X}=\R_{\rm out}\op T(\pd X)
\end{equation*}
is an isomorphism of oriented vector spaces, where $\R_{\rm out}$ is
oriented by an outward-pointing normal vector to~$\pd X$.
\item[(b)] Let $X,X',Y$ be oriented smooth manifolds, and $f:X\ra
Y$, $f':X'\ra Y$ be smooth submersions. Then $\d f:TX\ra f^*(TY)$
and $\d f':TX'\ra (f')^*(TY)$ are surjective maps of vector bundles
over $X,X'$. Choosing Riemannian metrics on $X,X'$ and identifying
the orthogonal complement of $\Ker\d f$ in $TX$ with the image
$f^*(TY)$ of $\d f$, and similarly for $f'$, we have isomorphisms of
vector bundles over $X,X'$:
\begin{equation}
TX\cong \Ker\d f\op f^*(TY) \quad\text{and}\quad TX'\cong
(f')^*(TY)\op\Ker\d f'.
\label{il2eq3}
\end{equation}

Define orientations on the fibres of $\Ker\d f$, $\Ker\d f'$ over
$X,X'$ such that \eq{il2eq3} are isomorphisms of oriented vector
bundles, where $TX,TX'$ are oriented by the orientations on $X,X'$,
and $f^*(TY),(f')^*(TY)$ by the orientation on $Y$. Then we define
the orientation on $X\t_YX'$ so that
\begin{equation}
\begin{split}
T(X\t_YX')&\cong \pi_X^*(\Ker\d f)\op(f\ci\pi_X)^*(TY)
\op\pi_{X'}^*(\Ker\d f')\\
&\cong \pi_X^*(\Ker\d f)\op\pi_{X'}^*(TX')\\
&\cong \pi_X^*(TX)\op\pi_{X'}^*(\Ker\d f')
\end{split}
\label{il2eq4}
\end{equation}
are isomorphisms of oriented vector bundles. Here $\pi_X:X\t_YX'\ra
X$ and $\pi_{X'}:X\t_YX'\ra X'$ are the natural projections, and
$f\ci\pi_X\equiv f'\ci\pi_{X'}$.

Note that the second line of \eq{il2eq4} makes sense if $f$ is a
submersion but $f'$ is only smooth, and the third line makes sense
if $f'$ is a submersion but $f$ is only smooth. Thus, our convention
extends to fibre products $X\t_{f,Yf'}X'$ in which only one of
$f,f'$ is a submersion.
\end{itemize}
Here is how to extend (b) to $X,X'$ Kuranishi spaces:
\begin{itemize}
\item[(c)] Let $X,X'$ be oriented Kuranishi spaces, $Y$ be an
oriented smooth manifold, and $\bs f:X\ra Y,\bs f':X'\ra Y$ be
strong submersions. We take Kuranishi neighbourhoods
$(V_p,E_p,\Ga_p,s_p,\psi_p)$,
$(V'_{p'},E'_{p'},\Ga'_{p'},s'_{p'},\psi'_{p'})$ for $X,X'$,
respectively. First, choose orientations of $V_p$ and $V'_{p'}$, and
we have the orientation of $V_p\t_{f_p,Y,f'_{p'}}V'_{p'}$ by
Convention \ref{il2conv}(b). Secondly, the orientations of $E_p\op
TV_p$ and $E'_{p'}\op TV'_{p'}$ induce the orientation of $(E_p\op
E'_{p'})|_{V_p\t_{f_p,Y,f'_{p'}}V'_{p'}} \op
T_{(p,p')}\bigl(V_p\t_{f_p,Y,f'_{p'}}V'_{p'}\bigr)$. Then we {\it
define\/} an orientation of the Kuranishi neighbourhood \eq{il2eq1}
with the following sign correction term:
\begin{equation*}
(-1)^{\rank E'_{p'}(\dim V_p-\rank E_p-\dim Y)} (E_p\!\op\!
E'_{p'})\vert_{V_p\t_{f_p,Y,f'_{p'}}V'_{p'}}\!\op\!
T_{(p,p')}(V_p\t_{f_p,Y,f'_{p'}}V'_{p'}),
\end{equation*}
where $-1$ means the opposite orientation. This orientation
convention is independent of the choice of Kuranishi neighbourhood.
It extends to only one of $\bs f,\bs f'$ a strong submersion as
in~(b).
\end{itemize}
\label{il2conv}
\end{conv}

If $X$ is a Kuranishi space with tangent bundle and orientation, we
will write $-X$ for the same Kuranishi space with the opposite
orientation. Here is \cite[Lem.~45.3]{FOOO}, except the second line
of \eq{il2eq5}, which is elementary.

\begin{prop} Let\/ $X_1,X_2,\ldots$ be Kuranishi spaces with tangent
bundles and orientations, $Y,Y_1,\ldots$ be oriented smooth
manifolds without boundary, and\/ $\bs f_1:X_1\ra Y,\ldots$ be
strong submersions. Then the following hold, in Kuranishi spaces
with tangent bundles and orientations:
\begin{itemize}
\item[(a)] For\/ $\bs f_1:X_1\ra Y$ and\/ $\bs f_2:X_2\ra Y$ we have
\begin{equation}
\begin{gathered}
\pd(X_1\t_YX_2)=(\pd X_1)\t_YX_2\amalg (-1)^{\vdim X_1+\dim Y}
X_1\t_Y(\pd X_2)\\
\text{and}\quad X_1\t_YX_2=(-1)^{(\vdim X_1-\dim Y)(\vdim X_2-\dim
Y)}X_2\t_YX_1.
\end{gathered}
\label{il2eq5}
\end{equation}
\item[(b)] For\/ $\bs f_1:X_1\ra Y_1$, $\bs f_2:X_2\ra Y_1\t
Y_2$ and\/ $\bs f_3:X_3\ra Y_2$, we have
\begin{equation*}
(X_1\t_{Y_1}X_2)\t_{Y_2}X_3=X_1\t_{Y_1}(X_2\t_{Y_2}X_3).
\end{equation*}
\item[(c)] For\/ $\bs f_1:X_1\ra Y_1\t Y_2$, $\bs f_2:X_2\ra
Y_1$ and\/ $\bs f_3:X_3\ra Y_2$, we have
\begin{equation*}
X_1\t_{Y_1\t Y_2}(X_2\t X_3)=(-1)^{\dim Y_2(\dim Y_1+\vdim X_2)}
(X_1\t_{Y_1}X_2)\t_{Y_2}X_3.
\end{equation*}
\end{itemize}
\label{il2prop1}
\end{prop}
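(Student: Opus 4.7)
The plan is to reduce every identity to a statement about fibre products of oriented smooth manifolds (without boundary), and then verify the manifold statements by an explicit multilinear-algebra sign computation. This reduction is possible because, by Convention \ref{il2conv}(c), the orientation on a Kuranishi fibre product $X\t_YX'$ is defined Kuranishi-neighbourhood-wise from the orientation of the underlying smooth-manifold fibre product $V_p\t_{f_p,Y,f'_{p'}}V'_{p'}$ twisted by the universal sign $(-1)^{\rank E'_{p'}(\dim V_p-\rank E_p-\dim Y)}$, which depends only on ranks of obstruction bundles and $\dim Y$. A short check shows these correction factors agree on both sides of each identity (boundaries do not change ranks, and the two iterated compositions in (b), (c) involve the same total rank data), so it suffices to treat the manifold case.

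For the first statement in \eq{il2eq5}, I work on a Kuranishi neighbourhood near $(p_1,p_2)\in\pd(X_1\t_YX_2)$. Set-theoretically $\pd(V_{p_1}\t_YV_{p_2})$ is the disjoint union of $(\pd V_{p_1})\t_YV_{p_2}$ and $V_{p_1}\t_Y(\pd V_{p_2})$. On the first piece, the third line of \eq{il2eq4} combined with $TV_{p_1}|_{\pd V_{p_1}}=\R_{\rm out}\op T(\pd V_{p_1})$ gives the desired orientation with no sign. On the second piece, the second line of \eq{il2eq4} gives $T(V_{p_1}\t_YV_{p_2})\cong\Ker\d f_{p_1}\op TV_{p_2}$, and substituting $TV_{p_2}=\R_{\rm out}\op T(\pd V_{p_2})$ forces me to commute $\R_{\rm out}$ past $\Ker\d f_{p_1}$, yielding the sign $(-1)^{\dim V_{p_1}-\dim Y}=(-1)^{\vdim X_1+\dim Y}$. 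For the commutativity statement I apply the first line of \eq{il2eq4} to both $X_1\t_YX_2$ and $X_2\t_YX_1$: there is a permutation sign from reordering the three blocks $\Ker\d f_1,TY,\Ker\d f_2$, plus an additional sign from the fact that Convention \ref{il2conv}(b) orients $\Ker\d f$ as the \emph{first} summand of $TX\cong\Ker\d f\op TY$ but $\Ker\d f'$ as the \emph{second} summand of $TX'\cong TY\op\Ker\d f'$, so swapping the roles of the two factors re-orients each $\Ker\d f_i$ by $(-1)^{\dim Y(\dim X_i-\dim Y)}$. Combining these contributions simplifies to exactly $(-1)^{(\vdim X_1-\dim Y)(\vdim X_2-\dim Y)}$.

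For (b) and (c), I decompose both sides by iterated applications of \eq{il2eq4} into ordered direct sums of the form $\Ker\d f_i\op\cdots\op TY_j\op\cdots\op\Ker\d f_k$. For (b) both sides reduce to the same ordered sum $\Ker\d f_1\op TY_1\op\Ker\d f_2\op TY_2\op\Ker\d f_3$, so no sign arises. For (c) the left side gives $\Ker\d f_1\op TY_1\op TY_2\op\Ker\d f_2\op\Ker\d f_3$ (after expanding $T(Y_1\t Y_2)$ and splitting $\Ker\d(f_2\t f_3)$ using the orientation inherited from $T(X_2\t X_3)$, which contributes a first sign from swapping $\Ker\d f_2$ past $TY_2$ inside that decomposition), while the right side produces the same ordered factors but with $TY_2$ moved past $TY_1\op\Ker\d f_2$, because for the inner fibre product $X_1\t_{Y_1}X_2$ one must first split $\Ker\d f_1^{(1)}\cong\Ker\d f_1\op TY_2$ and then reconcile it with Convention \ref{il2conv}(b). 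Tallying these motions yields the claimed sign $(-1)^{\dim Y_2(\dim Y_1+\vdim X_2)}$. The main obstacle throughout is simply the careful bookkeeping of signs produced by permutations of summands, by the two asymmetric orientation conventions on $\Ker\d f$ versus $\Ker\d f'$ in Convention \ref{il2conv}(b), and by the obstruction-bundle sign in Convention \ref{il2conv}(c); the identities themselves become elementary once this bookkeeping is organized.
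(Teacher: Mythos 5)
First, a remark on context: the paper itself offers no proof of Proposition~\ref{il2prop1} --- it simply cites \cite[Lem.~45.3]{FOOO} and calls the second line of \eq{il2eq5} elementary --- so you are supplying an argument where the paper has none. Your manifold-level sign computations (the $\R_{\rm out}$ commutation for the boundary formula, the block permutation plus the asymmetry of Convention~\ref{il2conv}(b) for commutativity, and the iterated decompositions for (b), (c)) are essentially correct and would form the core of a genuine proof.

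The gap is in your opening reduction to the manifold case. The Convention~\ref{il2conv}(c) correction sign $(-1)^{\rank E'_{p'}(\dim V_p-\rank E_p-\dim Y)}$ depends on $\dim V_p$, not only on ranks, and it does \emph{not} agree on the two sides of the identities in (a) and (c): for the piece $(\pd X_1)\t_YX_2$ it changes by $(-1)^{\rank E_2}$ because $\dim\pd V_1=\dim V_1-1$; for commutativity the two sides carry the asymmetric factors $(-1)^{\rank E_2(\dim V_1-\rank E_1-\dim Y)}$ and $(-1)^{\rank E_1(\dim V_2-\rank E_2-\dim Y)}$, and reordering the obstruction blocks $E_1\op E_2\mapsto E_2\op E_1$ --- which your bookkeeping never tracks, since orientations live on $E_p\op TV_p$ rather than on $TV_p$ --- contributes a further $(-1)^{\rank E_1\rank E_2}$; in (c) the total correction factors on the two sides differ by $(-1)^{\rank E_2\dim Y_2}$ (only in (b) do they genuinely cancel). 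These discrepancies are not ignorable: they are exactly what converts your manifold-level answers, which come out in terms of the actual dimensions $\dim V_i$, into the asserted signs in terms of $\vdim X_i=\dim V_i-\rank E_i$. As written, your argument proves on each Kuranishi neighbourhood, for instance, $X_1\t_YX_2=(-1)^{(\dim V_1-\dim Y)(\dim V_2-\dim Y)}X_2\t_YX_1$ and a boundary sign $(-1)^{\dim V_1+\dim Y}$, which are the wrong signs whenever the obstruction bundles are nontrivial. One checks, e.g.\ for commutativity, that the neglected terms combine to $(-1)^{\rank E_1(\dim V_2-\dim Y)+\rank E_2(\dim V_1-\dim Y)+\rank E_1\rank E_2}$, which is precisely the difference between $(-1)^{(\dim V_1-\dim Y)(\dim V_2-\dim Y)}$ and $(-1)^{(\vdim X_1-\dim Y)(\vdim X_2-\dim Y)}$; so the statement is saved, but only by the bookkeeping you declared unnecessary. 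To repair the proof, redo your permutation arguments on $E_p\op TV_p$ (or on determinant lines), carrying the $E$-blocks and the Convention~\ref{il2conv}(c) factors through each identity.
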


\subsection{Good coordinate systems}
\label{il25}

{\it Good coordinate systems\/} are convenient choices of finite
coverings of $X$ by Kuranishi neighbourhoods, \cite[Def.~6.1]{FuOn},
\cite[Lem.~A1.11]{FOOO}.

\begin{dfn} Let $X$ be a compact Kuranishi space. A {\it good
coordinate system\/} on $X$ consists of a finite indexing set $I$,
an order $<$ on $I$, a family $\bigl\{(V^i,E^i,\Ga^i,s^i,\psi^i):
i\in I\bigr\}$ of Kuranishi neighbourhoods on $X$ with
$X=\bigcup_{i\in I}\Im\psi^i$, and for all $i,j\in I$ with $j<i$ and
$\Im\psi^i\cap\Im\psi^j\ne\emptyset$, a quadruple
$(V^{ij},\hat\phi^{ij},\phi^{ij},h^{ij})$, where $V^{ij}$ is a
$\Ga^j$-invariant open neighbourhood of $(\psi^j)^{-1}(\Im\psi^i)$
in $V^j$, and $(\hat\phi^{ij},\phi^{ij},h^{ij})$ is a coordinate
change from $(V^{ij},E^j\vert_{V^{ij}},\Ga^j,s^j\vert_{V^{ij}},
\psi^j\vert_{V^{ij}})$ to $(V^i,E^i,\Ga^i,s^i,\psi^i)$. Whenever
$i,j,k\in I$ with $k<j<i$ these should satisfy $\hat\phi^{ij}\ci
\hat\phi^{jk}=\hat\phi^{ik}$, $\phi^{ij}\ci\phi^{jk}=\phi^{ik}$ and
$h^{ij}\ci h^{jk}=h^{ik}$ over~$(\phi^{jk})^{-1}(V^{ij})\cap
V^{jk}\cap V^{ik}$.
\label{il2dfn9}
\end{dfn}

Then Fukaya and Ono prove \cite[Lem.~6.3]{FuOn},\cite[Lem.~A1.11]
{FOOO}:

\begin{prop} Let\/ $X$ be a compact Kuranishi space and\/
$\{U_\al:\al\in A\}$ an open cover of\/ $X$. Then there exists a
good coordinate system on\/ $X$ such that for each\/ $i\in I$ we
have $\Im\psi^i\subseteq U_\al$ for some~$\al\in A$.
\label{il2prop2}
\end{prop}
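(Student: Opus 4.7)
The plan is to convert the germ-level data in the Kuranishi structure into a finite system of representatives that satisfy the cocycle conditions on the nose. Since each Kuranishi neighbourhood $(V_p,E_p,\Ga_p,s_p,\psi_p)$ in the germ at $p$ may be freely shrunk, I would first choose for each $p\in X$ a representative with $\Im\psi_p\subseteq U_{\al(p)}$ for some $\al(p)\in A$. Compactness then supplies a finite subcover indexed by $I=\{1,\ldots,N\}$ with Kuranishi neighbourhoods $(V^i,E^i,\Ga^i,s^i,\psi^i)$ and $X=\bigcup_{i\in I}\Im\psi^i$. I would then totally order $I$ so that $j<i$ implies $\dim V^j\le\dim V^i$, breaking ties arbitrarily; this ordering is forced, since each $\phi^{ij}:V^{ij}\to V^i$ must be a smooth embedding of manifolds.

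Next, for each pair $j<i$ with $\Im\psi^i\cap\Im\psi^j\ne\emptyset$, I would build $(V^{ij},\hat\phi^{ij},\phi^{ij},h^{ij})$ by patching local germs. For each $r\in\Im\psi^i\cap\Im\psi^j$ the Kuranishi structure supplies a small $(V_r,E_r,\Ga_r,s_r,\psi_r)$ with coordinate changes $(\hat\phi_{p_ir},\phi_{p_ir},h_{p_ir})$ into $V^i$ and $(\hat\phi_{p_jr},\phi_{p_jr},h_{p_jr})$ into $V^j$. Since $\phi_{p_jr}$ is a smooth embedding, composing its local inverse with $\phi_{p_ir}$ yields a local candidate for $\phi^{ij}$ on an open neighbourhood of $\phi_{p_jr}(V_r)$ in $V^j$; parallel constructions on the bundles produce $\hat\phi^{ij}$, and the group homomorphism $h^{ij}:\Ga^j\to\Ga^i$ is determined by demanding $h^{ij}$-equivariance of $\phi^{ij}$ relative to the $\Ga^j$- and $\Ga^i$-actions. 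The triple-overlap axiom in Definition \ref{il2dfn3}(ii) applied to $(p_i,p_j,r)$ forces any two such local candidates to agree on their common domain up to the $\Ga^j$-action, so they glue to a single $\Ga^j$-invariant triple $(\hat\phi^{ij},\phi^{ij},h^{ij})$ defined on an open neighbourhood $V^{ij}$ of $(\psi^j)^{-1}(\Im\psi^i)$ in~$V^j$.

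The main obstacle will be realising the cocycle identities $\hat\phi^{ij}\ci\hat\phi^{jk}=\hat\phi^{ik}$, $\phi^{ij}\ci\phi^{jk}=\phi^{ik}$ and $h^{ij}\ci h^{jk}=h^{ik}$ for every triple $k<j<i$ on actual open sets rather than merely as germs. Pointwise validity along the closed subset $(\phi^{jk})^{-1}(V^{ij})\cap V^{jk}\cap V^{ik}\cap(s^k)^{-1}(0)$ follows from Definition \ref{il2dfn3}(ii), but this set is only closed, not open. To upgrade germ equality to equality on shared open neighbourhoods, I would induct on $N=|I|$: assuming a good coordinate system has been built for indices $i<N$, adjoin $(V^N,E^N,\Ga^N,s^N,\psi^N)$, and then shrink each $V^{Nj}$ for $j<N$ sufficiently that all cocycle identities involving index $N$ hold wherever defined. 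Such shrinking never destroys the covering requirement $V^{ij}\supseteq(\psi^j)^{-1}(\Im\psi^i)$, because the two sides of each cocycle already agree on that closed locus by construction, leaving room to interpose any smaller open neighbourhood. Finiteness of $I$ guarantees termination of the induction and yields the desired good coordinate system refining~$\{U_\al:\al\in A\}$.
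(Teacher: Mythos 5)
The paper itself offers no proof of Proposition \ref{il2prop2} --- it is quoted from Fukaya--Ono \cite[Lem.~6.3]{FuOn}, \cite[Lem.~A1.11]{FOOO} --- so your argument must be judged against the construction carried out there. Your first steps (shrinking germ representatives so that $\Im\psi_p\subseteq U_{\al(p)}$, extracting a finite subcover by compactness, ordering $I$ compatibly with $\dim V^i$) are fine and agree with the standard approach. The genuine gap is in your second paragraph, where you claim that composing $\phi_{p_ir}$ with a ``local inverse'' of $\phi_{p_jr}$ gives a candidate for $\phi^{ij}$ on an \emph{open} neighbourhood of $\phi_{p_jr}(V_r)$ in $V^j$. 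The germ data only provide embeddings $\phi_{p_ir}:V_r\ra V^i$ and $\phi_{p_jr}:V_r\ra V^j$, and $\dim V_r$ may be strictly smaller than $\dim V^j$: since coordinate changes go from germ charts at nearby points \emph{into} $V_{p_j}$, the chart dimension is only upper semi-continuous, so at points $r$ of $\Im\psi^i\cap\Im\psi^j$ away from the base points the charts can have smaller obstruction rank. Then $\phi_{p_ir}\circ(\phi_{p_jr})^{-1}$ is defined only on the positive-codimension submanifold $\phi_{p_jr}(V_r)\subset V^j$, and no composition of germ data can yield a map on an open subset of $V^j$ at all. Extending this map to an open neighbourhood as an embedding satisfying Definition \ref{il2dfn2}(c)--(e) \emph{exactly} --- compatibly with the obstruction bundles, the Kuranishi maps $s^i,s^j$ and the maps $\psi^i,\psi^j$ --- is precisely the hard content of the proposition; it needs extra input (essentially an identification of the normal directions of $\phi_{p_jr}(V_r)$ in $V^j$ with extra obstruction directions of $E^i$, in the spirit of the tangent-bundle data of Definition \ref{il2dfn7}, plus tubular-neighbourhood constructions), and it is where the cited proof, and later more careful treatments of good coordinate systems, spend their effort. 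Your proposal treats it as automatic.

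A secondary problem is the gluing and cocycle step. Definition \ref{il2dfn3}(ii) applies only to triples $(p,q,r)$ with $q\in\psi_p(s_p^{-1}(0)/\Ga_p)$ and $r\in\psi_q(s_q^{-1}(0)/\Ga_q)$; for your triple $(p_i,p_j,r)$ this would require $p_j\in\Im\psi^i$, which need not hold, so comparing candidates built from two overlap points $r,r'$ requires a further argument passing through germ charts at points of $\Im\psi_r\cap\Im\psi_{r'}$. Moreover the germ conditions constrain the maps only near the zero sets: candidates agree as germs along $(s^j)^{-1}(0)$, hence at best on \emph{some} neighbourhood of it after shrinking, not on the whole common domain, so the clean gluing you assert, and likewise the final shrinking induction, already presuppose shrinking arguments of the type that must be justified. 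These points are repairable in principle, but only once the extension problem above is solved; as it stands the proof fails at its central step.
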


\subsection{Chains and homology}
\label{il26}

Let $Y$ be a smooth manifold. We now explain the complexes we will
use to define the homology of $Y$. We shall work throughout with
{\it singular homology\/} defined using {\it smooth simplicial
chains\/} on $Y$, following Fukaya and Ono \cite{FuOn}. Write $\De_k$
for the $k$-{\it simplex}
\begin{equation}
\bigl\{(x_0,\ldots,x_k)\in\R^{k+1}:x_i\ge 0,\;
x_0+\cdots+x_k=1\bigr\}.
\label{il2eq6}
\end{equation}
The {\it singular chain complex\/} $\bigl(C_*^\rsi(Y;\Q),\pd\bigr)$
of $Y$ has $C_k^\rsi(Y;\Q)$ the $\Q$-vector space with basis smooth
maps $f:\De_k\ra Y$, and boundary operator $\pd:C_k^\rsi(Y;\Q)\ra
C_{k-1}^\rsi(Y;\Q)$ given by
\begin{equation}
\pd:\ts\sum_{a\in A}\rho_a\,f_a\longmapsto \ts\sum_{a\in
A}\sum_{j=0}^k(-1)^j\rho_a(f_a\ci F_j^k),
\label{il2eq7}
\end{equation}
where for $j=0,\ldots,k$ the map $F_j^k:\De_{k-1}\ra\De_k$ is given
by $F_j^k(x_0,\ldots,x_{k-1})=(x_0,\ldots,x_{j-1},0,x_j,\ldots,
x_{k-1})$. The {\it singular homology\/} $H_*^\rsi(Y;\Q)$ of $Y$ is
the homology of~$\bigl(C_*^\rsi(Y;\Q),\pd\bigr)$.

However, following Fukaya et al.\ \cite{FOOO}, when we define
$A_{N,0}$ algebras and $A_\iy$ algebras below we will not use the
full chain complex $\bigl(C_*^\rsi(Y;\Q),\pd\bigr)$, but certain
{\it subcomplexes\/} $(\Q\X,\pd)$. When we do this, we will use the
following conventions:
\begin{itemize}
\setlength{\itemsep}{0pt}
\setlength{\parsep}{0pt}
\item $\X$ is a finite set of smooth maps $f:\De_k\ra Y$, ranging
over different $k=0,1,\ldots$, and allowing $k>\dim Y$. We generally
refer to elements of $\X$ as $f$, taking the domain $\De_k$ of $f$
(that is, the choice of $k=0,1,\ldots$) to be implicit.
\item $\Q\X$ is the graded $\Q$-vector subspace of $C_*^\rsi(Y;\Q)$
with basis $\X$.
\item if $f\in\X$ maps $\De_k\ra Y$, then $f\ci F_j^k\in\X$ for
$j=0,\ldots,k$. Thus $\Q\X$ is closed under $\pd$ by \eq{il2eq7},
and $(\Q\X,\pd)$ is a subcomplex of
$\bigl(C_*^\rsi(Y;\Q),\pd\bigr)$. The inclusion $\Q\X\hookrightarrow
C_*^\rsi(Y;\Q)$ induces a morphism $H_*\bigl((\Q\X,\pd)\bigr)\ra
H_*^\rsi(Y;\Q)$ from the homology of $(\Q\X,\pd)$ to the singular
homology of $Y$. We require $\X$ to be chosen so that this morphism
is an isomorphism.
\item We shall also consider tensor products $\Q\X\ot\La^*_\nov$
with a {\it Novikov ring\/} $\La_\nov^*=\La^0_\nov$ or $\La_\nov$.
Then $(\Q\X\ot\La^*_\nov,\pd)$ is a complex of $\La^*_\nov$-modules.
\end{itemize}

The reason for working with finitely generated subcomplexes
$(\Q\X,\pd)$ is that in the construction of an $A_\iy$ algebra for a
Lagrangian submanifold, when we perturb our moduli spaces
$\oM_{k+1}^\ma(\al,\be,J)$ to make them transverse, just one
perturbation is not enough, we need a different choice of
perturbation for each $k$-tuple $(f_1,\ldots,f_k)$ of chains
$f_1,\ldots,f_k$ in our chain complex $\Q\X$. To keep these choices
under control, we cannot work with the full complex
$C_*^\rsi(Y;\Q)$, but only with finite generated subcomplexes
$\Q\X$, which are constructed together with associated perturbations
of $\oM_{k+1}^\ma(\al,\be,J,f_1,\ldots,f_k)$ for
$f_1,\ldots,f_k\in\X$ using an inductive method.

The following proposition will be an important tool in constructing
such~$\X$.

\begin{prop} Let\/ $Y$ be a compact manifold, possibly with
boundary and corners. Let $\cal W$ be a finite set of smooth maps
$f:\De_k\ra Y$, ranging over different $k=0,1,\ldots$. Then there
exists a finite set $\X$ of smooth maps $f:\De_k\ra Y$, ranging over
different $k=0,1,\ldots$, with the following properties:
\begin{itemize}
\setlength{\itemsep}{0pt}
\setlength{\parsep}{0pt}
\item[{\rm(i)}]${\cal W}\subseteq\X;$
\item[{\rm(ii)}] if\/ $f:\De_k\ra Y$ lies in\/ $\X$ and\/ $k>0$ then\/
$f\ci F_j^k:\De_{k-1}\ra Y$ lies in\/ $\X$ for all\/ $j=0,\ldots,k;$
and
\item[{\rm(iii)}] part\/ {\rm(ii)} implies that\/ $\Q\X$ is
closed under\/ $\pd,$ and a subcomplex of the singular chains\/
$C_*^\rsi(Y;\Q)$. We require that the natural projection\/
$H_*(\Q\X,\pd)\ra H_*^\rsi(Y;\Q)$ should be an isomorphism.
\end{itemize}
\label{il2prop3}
\end{prop}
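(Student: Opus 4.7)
My plan is to build $\X$ in three stages: first close $\cal W$ under face operators; next adjoin the simplices of a finite smooth triangulation of $Y$ to secure surjectivity on homology; and finally append filling chains (with all their iterated face simplices) to kill the remaining kernel.

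I begin by letting $\X_0$ be the closure of $\cal W$ under the face operators $f\mapsto f\ci F_j^k$. Since each $f\in\cal W$ has only finitely many iterated faces, $\X_0$ is finite, contains $\cal W$, and satisfies (ii). Next, because $Y$ is a compact smooth manifold with boundary and corners, by Cairns--Whitehead (or Munkres' version for corners) $Y$ admits a finite smooth triangulation $T$; let $\X_T$ be its simplex set, finite and closed under faces, with $\Q\X_T\hookrightarrow C_*^\rsi(Y;\Q)$ a quasi-isomorphism by the standard comparison of simplicial and singular homology. Setting $\X_1=\X_0\cup\X_T$, the factorization $\Q\X_T\hookrightarrow\Q\X_1\hookrightarrow C_*^\rsi(Y;\Q)$ forces $H_*(\Q\X_1)\ra H_*^\rsi(Y;\Q)$ to be surjective and $H_*(\Q\X_T)\hookrightarrow H_*(\Q\X_1)$ to be injective.

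Now let $K=\ker\bigl(H_*(\Q\X_1)\ra H_*^\rsi(Y;\Q)\bigr)$, which is finite-dimensional because $\Q\X_1$ is. Choose cycles $w_1,\ldots,w_s\in\Q\X_1$ representing a basis of $K$; each satisfies $w_j=\pd u_j$ for some smooth chain $u_j\in C_*^\rsi(Y;\Q)$. I then form $\X$ by adjoining to $\X_1$ all simplices appearing in the chains $u_1,\ldots,u_s$ together with all of their iterated face simplices. By construction $\X$ is finite, closed under faces, contains $\cal W$, and each $w_j$ becomes a boundary in $\Q\X$.

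The main obstacle is to verify (iii), that the natural map $H_*(\Q\X)\ra H_*^\rsi(Y;\Q)$ really is an isomorphism. A priori, adding new simplices can both introduce new cycles in $\Q\X$ that fail to be boundaries and create new boundary relations that kill old cycles representing nontrivial classes of $H_*^\rsi(Y;\Q)$. The cleanest way to exclude this is to choose the fillings $u_j$ as simplicial chains in a sufficiently fine refinement $T'$ of $T$: write $w_j$ as a simplicial approximation in $\Q\X_{T'}$ plus a controlled chain-homotopy error, and arrange that all newly adjoined simplices lie in the finite simplex set $\X_{T'}$; the desired isomorphism can then be read off from the long exact sequence of the pair $(\Q\X,\Q\X_T)$, using acyclicity of $C_*^\rsi(Y;\Q)/\Q\X_T$. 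If a one-shot argument is obstructed, one iterates, alternating between adjoining fillings (to kill the current kernel) and adjoining representative cycles (to restore surjectivity whenever it is lost); termination follows from finite-dimensionality of $H_*(\Q\X_n)$ at each stage together with a bookkeeping argument controlling $\dim H_*(\Q\X_n/\Q\X_T)$.
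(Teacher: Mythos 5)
The paper states Proposition \ref{il2prop3} without proof (it is quoted as a standard tool, following Fukaya--Ono), so I can only judge your argument on its own terms. Your first two stages are fine: closing $\cal W$ under the face maps $f\mapsto f\ci F_j^k$, adjoining the vertex-ordered simplices of a finite smooth triangulation $T$ of $Y$, and concluding that $H_*(\Q\X_1,\pd)\ra H_*^\rsi(Y;\Q)$ is surjective all work, and you correctly isolate the real difficulty: injectivity after new simplices are adjoined. The problem is that your resolution of that difficulty does not hold up. A filling $u_j$ with $\pd u_j=w_j$ cannot have all of its simplices in the simplex set of a refinement $T'$: after cancellation the surviving faces of the simplices of $u_j$ are exactly the simplices of $w_j$, which in general come from $\cal W$ and are not faces of any triangulation simplex. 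What simplicial approximation actually gives is a simplicial cycle homologous to $w_j$ together with ``prism'' simplices interpolating between $w_j$ and that cycle, and these prisms are precisely the new, uncontrolled simplices; so the instruction to ``arrange that all newly adjoined simplices lie in $\X_{T'}$'' is internally inconsistent with the presence of the homotopy-error term. Moreover, the appeal to the long exact sequence of $(\Q\X,\Q\X_T)$ is circular: acyclicity of $C_*^\rsi(Y;\Q)/\Q\X_T$ only says relative cycles bound in singular chains, whereas what you must show is that they bound inside $\Q\X/\Q\X_T$ --- which is exactly the original problem in relative form.

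The fallback iteration has the same defect: each round of fillings adjoins new simplices and all their faces, out of which new cycles can be formed that bound in $C_*^\rsi(Y;\Q)$ but not yet in the current complex, so the kernel can reappear at every stage. Finite-dimensionality of $H_*(\Q\X_n,\pd)$ at each stage does not give termination, and you exhibit no monotone quantity that decreases; the obvious version of this process only stabilizes after countably many stages, which yields a countable rather than finite $\X$ and so does not prove the stated proposition. To close the gap you need a genuinely finite mechanism: for instance, construct by induction on dimension, compatibly with faces, a chain map $P$ from $\Q\X_0$ to $\Q\X_T$ together with chain homotopies $h$ satisfying $\id-P=\pd\ci h+h\ci\pd$ on $\Q\X_0$, adjoin the (finitely many) simplices of $P(f)$ and $h(f)$ for $f\in\X_0$ together with their faces, and then prove directly (by a prism or mapping-cylinder argument on the added simplices, not by the long exact sequence of the pair) that the resulting finite complex chain-retracts onto a subcomplex already known to compute $H_*^\rsi(Y;\Q)$. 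As written, the key injectivity step is not established, so the proof is incomplete.
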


In \cite{AkJo} the authors will rewrite much of \cite{FOOO} using
the theory of {\it Kuranishi cohomology\/} developed by the second
author \cite{Joyc2,Joyc3}. In this approach there is no need to
perturb moduli spaces $\oM_{k+1}^\ma(\al,\be,J)$ to make them
transverse, and we define our $A_\iy$ structure on the full
Kuranishi cochains $KC^*(Y;\La_\nov^0)$, instead of countably
generated subcomplexes~$(\Q\X,\pd)$.

\subsection{Multisections and virtual chains}
\label{il27}

In many geometric situations, if a moduli space $X$ is singular or
does not have the expected dimension, then one can make a small
perturbation to get a new moduli space $X'$ which is smooth and of
the expected dimension. The Kuranishi structure formalism allows us
to make these perturbations in an abstract way. The basic idea is to
choose a {\it good coordinate system}, as in Definition \ref{il2dfn9},
and then perturb the sections $s^i:V^i\ra E^i$ to smooth
$\ti s^i:V^i\ra E^i$ which are {\it transverse}, that is,
$\d\ti s^i:T_vV^i\ra E^i$ is surjective for each $v\in(\ti s^i)^{-1}
(0)$. Then $(\ti s^i)^{-1}(0)$ is a smooth manifold of dimension
$\vdim X$. The perturbations $\ti s^i,\ti s^j$ must be compatible on
the overlaps $V^{ij}$.

However, it may be impossible to choose $\ti s^i$ both transverse
and $\Ga^i$-equivariant. To deal with this, Fukaya and Ono \cite[\S
3]{FuOn}, \cite[\S A1]{FOOO} introduce {\it multisections}.

\begin{dfn} Let $(V,E,\Ga,s,\psi)$ be a Kuranishi neighbourhood on
some space $X$. For each $n\ge 1$, write $S^nE\ra V$ for the quotient
of the vector bundle $E^n\ra V=E\t\cdots\t E\ra V$ by the symmetric
group $S_n$. That is, the fibre of the bundle $S^nE$ over $v\in V$
is~$(E\vert_v)^n/S_n$.

Define an $n$-{\it multisection\/} $\bs s$ of the orbibundle $E\ra
V$ to be a continuous, $\Ga$-equivariant section of the bundle
$S^nE\ra V$. An $n$-multisection $\bs s$ is called {\it liftable\/}
if there exists $\bs{\ti s}=(s_1,\ldots,s_n):V\ra E^n$ with each
$s_a$ continuous such that $\bs s=\pi\ci\bs{\ti s}$, where
$\pi:E^n\ra S^nE$ is the projection. Note that we do {\it not\/}
require the $s_a$ for $a=1,\ldots,n$ to be $\Ga$-equivariant. A
liftable $n$-multisection $\bs s$ is called {\it smooth\/} if it has
a lift $\bs{\ti s}=(s_1,\ldots,s_n)$ with each $s_a$ smooth, and
{\it transverse\/} if these smooth $s_a$ are transverse, that is,
$\d s_a:T_vV\ra E$ is surjective for each $v\in s_a^{-1}(0)$. When
$V$ has boundary and corners, we also require that the restriction
of each $s_a$ to each codimension $k$ corner of $V$ should be
transverse. This implies that $s_a^{-1}(0)$ is a submanifold of $V$,
of dimension $\dim V-\rank E$, with boundary and corners.

For $n,m\ge 1$, there is an obvious map $E^n\ra E^{nm}$ in which
each $E$ factor of $E^n$ is repeated $m$ times. This induces a map
$S^nE\ra S^{nm}E$. Composing with this maps an $n$-multisection to
an $nm$-multisection. An $n$-multisection $\bs s$ and an
$m$-multisection $\bs s'$ are called {\it equivalent\/} if the
induced $nm$-multisections coincide. A (smooth, or transverse) {\it
multisection\/} $\s$ of $E\ra V$ is defined to be an equivalence
class of (smooth, or transverse) $n$-multisections $\bs s$ over
all~$n$.
\label{il2dfn10}
\end{dfn}

We now sketch the construction of {\it virtual chains} in Fukaya and
Ono \cite[\S 3 \& \S 6]{FuOn}, \cite[\S A1]{FOOO}, without going
into detail. Let $X$ be a compact Kuranishi space with a tangent
bundle and an orientation, which may have boundary and corners, let
$Y$ be an orbifold, and $\bs g:X\ra Y$ a strongly smooth map. By
Proposition \ref{il2prop2} we may choose a {\it good coordinate
system} $\bs I=\bigl(I,<,(V^i,\ldots,\psi^i):i\in I\bigr)$ for $X$,
and smooth maps $g^i:V^i\ra Y$ representing $\bs g$ for $i\in I$,
with $g^i\ci\phi^{ij}\equiv g^j\vert_{V^{ij}}$ when $j<i$ in $I$ and
$\Im\psi^i\cap\Im\psi^j\ne\emptyset$. By induction on $i\in I$ in
the order $<$, for each $i\in I$ Fukaya and Ono choose a sequence
$(\s^i_n)_{n=1}^\iy$ of smooth, transverse multisections on
$(V^i,E^i,\Ga^i,s^i,\psi^i)$, such that $\s^i_n\ra s^i$ in the $C^0$
topology as~$n\ra\iy$.

When $j<i$ in $I$ and $\Im\psi^i\cap\Im\psi^j\ne\emptyset$, the
$(\s^i_n)_{n=1}^\iy$ and $(\s^j_n)_{n=1}^\iy$ satisfy compatibility
conditions: we have $\hat\phi^{ij}\ci{\mathfrak s}^j_n
\equiv\s^i_n\ci\phi^{ij}$ on $V^{ij}$ for all $n=1,2,\ldots$.
Furthermore, since $X$ has a tangent bundle we have isomorphisms
$\chi^{ij}$ over $V^{ij}$ as in \eq{il2eq2}, and Fukaya and Ono use
these and $\hat\phi^{ij}\ci \s^j_n$ to prescribe $\s^i_n$ on an open
neighbourhood of $\phi^{ij}(V^{ij})$ in~$V^i$.

If the multisections $\s^i_n$ were single valued sections of $E^i$,
then as they are transverse $(\s^i_n)^{-1}(0)$ would be a smooth
oriented $\Ga^i$-invariant submanifold of $V^i$ of dimension $\vdim
X$, so $(\s^i_n)^{-1}(0)/\Ga^i$ would be a smooth orbifold. The
compatibility conditions over $V^{ij}$ mean that $\phi^{ij}$ induces
a local diffeomorphism of $(\s^i_n)^{-1}(0)/\Ga^i$ and $(s^j_n)^{-1}
(0)/\Ga^j$ over $V^{ij}/\Ga^j$. Gluing the $(\s^i_n)^{-1}(0)/\Ga^i$
for fixed $n$ and all $i\in I$ together using $\phi^{ij}$ yields a
smooth oriented orbifold $\ti X_n$. When $n\gg0$, so that
$(\s^i_n)^{-1}(0)$ is $C^0$ close to $(s^i)^{-1}(0)$, this $\ti X_n$
would be both compact and Hausdorff, so we would have perturbed $X$ to
a compact, smooth, oriented orbifold $\ti X_n$ of dimension
$k=\vdim X$, which may have boundary and corners.

The smooth maps $g^i:V^i\ra Y$ would glue together to give a smooth
map $\ti g_n:\ti X_n\ra Y$. We would then choose a {\it
triangulation\/} of $\ti X_n$ by smooth singular simplices
$f_a:\De_k\ra\ti X_n$ for $a\in A$, a finite indexing set. The {\it
virtual chain\/} for $(X,\bs g)$ would then be $VC(X,\bs g)=
\sum_{a\in A}\ep_a(\ti g_n\ci f_a)$ in $C_k^\rsi(Y;\Q)$, where
$\ep_a$ is 1 if $f_a$ is orientation-preserving, and $-1$ if $f_a$
is orientation-reversing. If $\pd X=\emptyset$ then $\pd\ti
X_n=\emptyset$, so $\pd VC(X,\bs g)=0$. Then $VC(X,\bs g)$ is called
the {\it virtual cycle\/} of $(X,\bs g)$, and its homology class
$[VC(X,\bs g)]\in H_k^\rsi(Y;\Q)$ is independent of choices of $\bs
I,\s^i_n,n,\ldots$, and is called the {\it virtual class\/}
of~$(X,\bs g)$.

Although the multisections $\s^i_n$ are not in general single valued
sections of $E^i$, we can still follow the method above, with some
adaptations. Represent $\s^i_n$ by a liftable $m$-multisection on
$V^i$ with lift $(s^i_{n,1},\ldots,s^i_{n,m})$. Then each
$(s^i_{n,b})^{-1}(0)$ is an oriented submanifold of $V^i$, not
necessarily $\Ga^i$-invariant. In place of $(\s^i_n)^{-1}(0)$, we
write $\frac{1}{m}\sum_{b=1}^m(s^i_{n,b})^{-1}(0)$, considered as a
$\Q$-linear combination of oriented submanifolds of $V^i$, and this
is then $\Ga^i$-invariant, and essentially independent of the choice
of $m$-multisection and lift $(s^i_{n,1},\ldots,s^i_{n,m})$
representing $\s^i_n$. Here we do not distinguish sheets of
$\frac{1}{m}\sum_{b=1}^m(s^i_{n,b})^{-1}(0)$ that lie on top of each
other locally, but regard them as a single sheet and add up the
multiplicities $\frac{1}{m}$. So we regard
$\bigl(\frac{1}{m}\sum_{b=1}^m(s^i_{n,b})^{-1}(0)\bigr)/\Ga^i$ as a
kind of {\it non-Hausdorff suborbifold of\/ $V^i/\Ga^i$, with
multiplicity in\/}~$\Q$.

With this convention, we can glue the $\bigl(\frac{1}{m}\sum_{b=1}^m
(s^i_{n,b})^{-1}(0)\bigr)/\Ga^i$ for all $i\in I$ using the
$\phi^{ij}$ to get a kind of compact, oriented, non-Hausdorff
orbifold $\ti X_n$ with multiplicity in $\Q$, with a smooth map $\ti
g_n:\ti X_n\ra Y$. Fukaya and Ono then triangulate $\ti X_n$ into
$k$-simplices $f_a:\De_k\ra\ti X_n$, such that on the interior
$f_a(\De_k^\ci)$ of each simplex $\ti X_n$ is Hausdorff and the
multiplicity is a constant $c_a\in\Q$. The {\it virtual chain\/} or
{\it cycle\/} $VC(X,\bs g)$ is then defined to be $\sum_{a\in A}
(\ep_ac_a)(\ti g_n\ci f_a)$ in $C^\rsi_k(Y;\Q)$, using the notation
of~\S\ref{il26}.

{\it Perturbation data\/} is the set of choices needed to construct a
virtual chain.

\begin{dfn} Let $X$ be a compact Kuranishi space with a tangent
bundle and an orientation, $Y$ an orbifold, and $\bs g:X\ra Y$ a
strongly smooth map. A set of {\it perturbation data\/} $\s_X$ for
$(X,\bs g)$ consists of a good coordinate system $\bs I=
\bigl(I,<\nobreak,(V^i,\ldots,\psi^i):i\in I\bigr)$ for $X$, and
smooth maps $g^i:V^i\ra Y$ representing $\bs g$ for $i\in I$, with
$g^i\ci\phi^{ij}\equiv g^j\vert_{V^{ij}}$ when $j<i$ in $I$ and
$\Im\psi^i\cap\Im\psi^j\ne\emptyset$, and smooth, transverse
multisections $\s^i$ on $(V^i,E^i,\Ga^i,s^i,\psi^i)$ for $i\in I$
which are compatible on overlaps $V^{ij}$ and near $\phi^{ij}(V^{ij})$
as above, and such that each $\s^i$ is sufficiently close to $s^i$ in
$C^0$ that the construction of virtual chains above works; in
particular, gluing the $(\s^i)^{-1}(0)/\Ga^i$ for all $i\in I$
together as above should yield a {\it compact\/} oriented
non-Hausdorff manifold $\ti X$ with boundary and corners.

The last item in a set of perturbation data is a choice of
triangulation of $\ti X$ into $k$-simplices $f_a:\De_k\ra\ti X$ for
$a\in A$, where $k=\vdim X$ and $A$ is a finite indexing set, such
that on the interior $f_a(\De_k^\ci)$ of each simplex $\ti X$ is
Hausdorff and the multiplicity is a constant $c_a\in\Q$. We shall
often use $\s_X$, or similar notation, to denote this collection of
data. The {\it virtual chain\/} or {\it cycle\/} $VC(X,\bs g,\s_X)$
constructed using this data $\s_X$ is then defined to be $VC(X,\bs g,
\s_X)=\sum_{a\in A}(\ep_ac_a)(\ti g\ci f_a)$ in $C^\rsi_k(Y;\Q)$,
where $\ep_a$ is 1 if $f_a$ is orientation-preserving, and $-1$ if
$f_a$ is orientation-reversing.
\label{il2dfn11}
\end{dfn}

\begin{rem}{\bf(a)} Perturbation data does not involve a series
$(\s_n^i)_{n=1}^\iy$ for each $(V^i,\ldots,\psi^i)$, but only a
single choice $\s^i$, which we think of as $\s_n^i$ for some fixed
$n\gg 0$. Because of this, we have to require the $\s^i$ to be
`sufficiently close to $s^i$ in $C^0$'. This is rather vague and
unsatisfactory, and will cause problems later; the reason why we
have to introduce $A_{N,0}$ algebras, rather than going straight to
$A_\iy$ algebras, is roughly speaking that we can make only finitely
many choices of $\s^i$ at once and still have these `sufficiently
close' conditions satisfied. This is very inconvenient, but is
central to the approach of Fukaya et al.\ \cite{FOOO}. The authors
give a different approach, avoiding this problem completely,
in~\cite{AkJo}.

\noindent{\bf(b)} When we choose perturbation data $\s_X$ for
$(X,\bs g)$, we usually need $VC(X,\bs g,\s_X)$ to lie in some chain
complex $\Q\X$, as in \S\ref{il26}. That is, we need $\ti g\ci
f_a:\De_k\ra Y$ to lie in $\X$ for all $a\in A$. When this happens
we will say that `the simplices of $VC(X,\bs g,\s_X)$ lie in $\X$'.
Actually, we first choose more-or-less arbitrary perturbations
$\s_X$, and then enlarge $\X$ so that it contains the simplices of
$VC(X,\bs g, \s_X)$. We never try to choose $\s_X$ so that the
simplices of $VC(X,\bs g,\s_X)$ lie in a fixed complex $\X$, as this
would probably be impossible.

\noindent{\bf(c)} Given perturbation data $\s_X$ for $(X,\bs g)$, we
can restrict it to perturbation data $\s_X\vert_{\pd X}$ for $(\pd
X,\bs g\vert_{\pd X})$ in a natural way, and then the virtual chains
satisfy $\pd VC(X,\bs g,\s_X)=VC(\pd X,\bs g\vert_{\pd X},
\s_X\vert_{\pd X})$. Conversely, given perturbation data $\s_{\pd X}$
for $(\pd X,\bs g\vert_{\pd X})$, we often want to choose perturbation
data $\s_X$ for $(X,\bs g)$ with $\s_X\vert_{\pd X}=
\s_{\pd X}$, or at least, we want $\s_X\vert_{\pd X}$ and $\s_{\pd X}$
to be equivalent in some sense that implies that $VC(\pd X,
\bs g\vert_{\pd X},\s_X\vert_{\pd X})=VC(\pd X,\bs g\vert_{\pd X},
\s_{\pd X})$. But there is a problem here, that referred to in (a)
above, as the condition $\s_X\vert_{\pd X}=\s_{\pd X}$ may not be
compatible with the condition that the $\s^i$ in $\s_X$ are
`sufficiently close to $s^i$ in~$C^0$'.

\noindent{\bf(d)} The second author \cite{Joyc2,Joyc3} has developed
{\it Kuranishi homology\/} $KH_*(Y;R)$ and {\it Kuranishi
cohomology\/} $KH^*(Y;R)$ for $Y$ an orbifold and $R$ a
$\Q$-algebra, in which the chains are triples $[X,\bs f,\bs G]$ for
$X$ a compact oriented Kuranishi space, $\bs f:X\ra Y$ a strongly
smooth map or strong submersion, and $\bs G$ some extra
`gauge-fixing data' or `co-gauge-fixing data'. Kuranishi homology
$KH_*(Y;R)$ is isomorphic to singular homology $H_*^\rsi(Y;R)$, and
Kuranishi cohomology $KH^*(Y;R)$ is isomorphic to
compactly-supported cohomology $H^*_{\rm cs}(Y;R)$. Working with
Kuranishi cohomology instead of currents or singular chains gives a
far cleaner approach to virtual chains. In \cite{AkJo} the authors
will rewrite much of \cite{FOOO} using Kuranishi cohomology, which
results in a drastic shortening and technical simplification,
eliminating all the $A_{N,K}$-algebras we will meet below. Parts of
this paper will also be rewritten using Kuranishi cohomology
in~\cite{AkJo}.
\label{il2rem}
\end{rem}

\section{Introduction to $A_\iy$ algebras and $A_{N,K}$ algebras}
\label{il3}

$A_\iy$ algebras were introduced by Stasheff \cite{Stas1,Stas2}. The
following treatment is based on Fukaya et al.\ \cite{FOOO}, and uses
their conventions. Two survey papers by Keller \cite{Kell1,Kell2}
are useful introductions; note that \cite{Kell2} uses the
conventions of \cite{FOOO}, as we do, but \cite{Kell1} has different
conventions on signs and grading. We restrict to $A_\iy$ algebras
over $\Q$, but one can also work over any commutative ring~$R$.

\subsection{(Weak) $A_\iy$ algebras and morphisms}
\label{il31}

Following \cite[\S 7.1]{FOOO}, we define

\begin{dfn} A {\it weak\/ $A_\iy$ algebra} $(A,\m)$ (over $\Q$)
consists of:
\begin{itemize}
\setlength{\itemsep}{0pt}
\setlength{\parsep}{0pt}
\item[(a)] A $\Z$-graded $\Q$-vector space $A=\bigop_{d\in\Z}A^d$; and
\item[(b)] Graded $\Q$-multilinear maps $\m_k:{\buildrel
{\ulcorner\,\,\,\text{$k$ copies } \,\,\,\urcorner} \over
{\vphantom{m}\smash{A\t\cdots\t A}}}\ra A$ for $k=0,1,2,\ldots$, of
degree $+1$. That is, $\m_k$ maps $A^{d_1}\t\cdots\t A^{d_k}\ra
A^{d_1+\cdots+d_k+1}$ for all $d_1,\ldots,d_k\in\Z$. When $k=0$ we
take $\m_0\in A^1$. Write $\m=(\m_k)_{k\ge 0}$.
\end{itemize}
These must satisfy the following condition. Call $a\in A$ {\it pure}
if $a\in A^d\sm\{0\}$ for some $d\in\Z$, and then define the {\it
degree} of $a$ to be $\deg a=d$. Then we require that for all $k\ge
0$ and all pure $a_1,\ldots,a_k$ in $A$ we have
\begin{equation}
\begin{gathered}
\sum_{\begin{subarray}{l}i,k_1,k_2:1\le i\le k_1,\\
k_2\ge 0,\; k_1+k_2=k+1\end{subarray}}
\begin{aligned}[t]
(-1)^{\sum_{l=1}^{i-1}\deg a_l}\m_{k_1}(a_1,\ldots,a_{i-1},\m_{k_2}
(a_i,\ldots,a_{i+k_2-1}), \\
a_{i+k_2}\ldots,a_k)=0.
\end{aligned}
\end{gathered}
\label{il3eq1}
\end{equation}
We call $(A,\m)$ an {\it $A_\iy$ algebra} if it is a weak $A_\iy$
algebra and~$\m_0=0$.
\label{il3dfn1}
\end{dfn}

If $(A,\m)$ is an $A_\iy$ algebra, so that $\m_0=0$, then
\eq{il3eq1} for $k=1$ becomes $\m_1\ci\m_1(a_1)=0$. Thus $\m_1:A\ra
A$ is a graded linear map of degree $+1$ with $\m_1\ci\m_1=0$, so
$(A,\m_1)$ is a complex, and we can form its cohomology $H^*(A)$ by
\begin{equation*}
H^p(A)=\frac{\Ker\m_1:A^p\ra A^{p+1}}{\Im\m_1:A^{p-1}\ra A^p}\,.
\end{equation*}
Then $\m_k$ for $k>1$ induce various operations on $H^*(A)$. For
example, \eq{il3eq1} when $k=2$ yields $\m_2(\m_1(a_1),a_2)+
(-1)^{\deg a_1}\m_2(a_1,\m_1(a_2))+\m_1(\m_2(a_1,a_2))=0$. This
implies that the bilinear product $\bu:H^p(A)\t H^q(A)\ra
H^{p+q+1}(A)$ given by
\begin{equation*}
(a_1+\Im\m_1)\bu(a_2+\Im\m_1)=(-1)^{(\deg a_1+1)\deg
a_2}\m_2(a_1,a_2)+\Im\m_1
\end{equation*}
is well-defined. Then \eq{il3eq1} when $k=3$ implies that $\bu$ is
{\it associative}.

If $(A,\m)$ is only a weak $A_\iy$ algebra, with $\m_0\ne 0$, then
\eq{il3eq1} for $k=1$ yields
\begin{equation*}
\m_1\ci\m_1(a_1)=-\m_2(\m_0,a_1)-(-1)^{\deg a_1}\m_2(a_1,\m_0).
\end{equation*}
So we may no longer have $\m_1\ci\m_1=0$, and we cannot form the
cohomology $H^*(A)$. We regard $\m_0$ as the {\it obstruction} to
$(A,\m_1)$ being a complex.

Equation \eq{il3eq1} can be expressed more naturally using the {\it
bar complex} of~$(A,\m)$.

\begin{dfn} Let $(A,\m)$ be a weak $A_\iy$ algebra. The {\it tensor
coalgebra} $T(A)$ of $A$ is $T(A)=\bigop_{n\ge 0}A^{\ot^n}$, where
we write $A^{\ot^0}=\Q$. It is graded in the obvious way, so that
$T(A)^d=\bigop_{d_1+\cdots+d_n=d}A^{d_1}\ot\cdots\ot A^{d_n}$. It
has a {\it coproduct\/} $\De:T(A)\ra T(A)\ot T(A)$ given by
\begin{equation*}
\De(a_1\ot\cdots\ot a_n)=\ts\sum_{k=0}^n (a_1\ot\cdots\ot a_k)\ot
(a_{k+1}\ot\cdots\ot a_n),
\end{equation*}
taking the $k=0$ and $k=n$ terms to be $1\ot(a_1\ot\cdots\ot a_n)$
and $(a_1\ot\cdots\ot a_n)\ot 1$ respectively. Define a linear map
$\bar\m_k:T(A)\ra T(A)$ for $k\ge 0$ by
\begin{equation*}
\bar\m_k(a_1\ot\cdots\ot a_n)=\sum_{l=1}^{n-k+1}\begin{aligned}[t]
&(-1)^{\deg a_1+\cdots+\deg a_{l-1}} a_1\ot\cdots\ot a_{l-1}\ot\\
&\quad\m_k(a_l,\ldots,a_{l+k-1})\ot a_{l+k}\ot\cdots\ot a_n,
\end{aligned}
\end{equation*}
for all $n\ge 0$ and pure $a_1,\ldots,a_n$ in $A$. In the case
$k=n=0$ we set $\bar\m_0(\la)=\la\m_0\in A^1$ for $\la\in\Q$. Define
$\bar\d=\sum_{k=0}^\iy\bar\m_k$. Then $\bar\d:T(A)\ra T(A)$ is a
graded linear map of degree $+1$, and equation \eq{il3eq1} is
equivalent to $\bar\d\ci\bar\d=0$, so that $\bigl(T(A),\bar\d\bigr)$
is a complex, the {\it bar complex} of $(A,\m)$. Note too that
$\bar\m_k$ and $\bar\d$ are derivations for the coproduct $\De$, so
that $\bigl(T(A),\De,\bar\d\bigr)$ is a {\it differential graded
coalgebra}.
\label{il3dfn2}
\end{dfn}

Here is the notion of morphism of $A_\iy$ algebras.

\begin{dfn} Let $(A,\m)$ and $(B,\n)$ be $A_\iy$ algebras. An
$A_\iy$ {\it morphism} $\f:(A,\m)\ra(B,\n)$ is $\f=(\f_k)_{k\ge 1}$,
where $\f_k:{\buildrel {\ulcorner\,\,\,\text{$k$ copies }
\,\,\,\urcorner} \over {\vphantom{m}\smash{A\t\cdots\t A}}}\ra B$
for $k=1,2,\ldots$ are graded $\Q$-multilinear maps of degree 0,
satisfying
\begin{equation}
\begin{gathered}
\sum_{1\le i<j\le k}
\begin{aligned}[t](-1)^{\sum_{l=1}^{i-1}\deg a_l}
\f_{k-j+i+1}\bigl(a_1,\ldots,a_{i-1},&\\
\m_{j-i}(a_i,\ldots,a_{j-1}),a_j,\ldots,a_k\bigr)&
\end{aligned}\\
=\sum_{0< k_1<k_2<\cdots<k_l=k}
\begin{aligned}[t]
\n_l\bigl(\f_{k_1}(a_1,\ldots,a_{k_1}),
\f_{k_2-k_1}(a_{k_1+1},\ldots,a_{k_2}),&\\
\ldots,\f_{k_l-k_{l-1}}(a_{k_{l-1}+1},\ldots,a_{k_l})\bigr),&
\end{aligned}
\end{gathered}
\label{il3eq2}
\end{equation}
for all $k\ge 0$ and pure $a_1,\ldots,a_k$ in $A$. We can rewrite
\eq{il3eq2} in terms of the bar complexes of $(A,\m)$ and $(B,\n)$:
define $\bar\f:T(A)\ra T(B)$ by
\begin{equation}
\begin{gathered}
\bar\f(a_1\ot\cdots\ot a_n)=\!\!\!\!\sum_{0<
k_1<k_2<\cdots<k_l=n}\!\!\!\!\!\!\!
\begin{aligned}[t]\f_{k_1}(a_1,\ldots,a_{k_1})\ot
\f_{k_2-k_1}(a_{k_1+1},\ldots,a_{k_2})\ot&\\
\cdots\ot \f_{k_l-k_{l-1}}(a_{k_{l-1}+1},\ldots,a_{k_l})&,
\end{aligned}
\end{gathered}
\label{il3eq3}
\end{equation}
for all $n>0$ and pure $a_1,\ldots,a_n$ in $A$. Then \eq{il3eq2} is
equivalent to $\bar\d_B\ci\bar\f=\bar\f\ci\bar\d_A:T(A)\ra T(B)$,
that is, $\bar\f$ is a morphism of bar complexes $\bigl(T(A),\bar
\d_A\bigr)\ra\bigl(T(B),\bar\d_B\bigr)$. It also intertwines the
coproducts $\De_A,\De_B$ on~$T(A),T(B)$.

We call an $A_\iy$ morphism $\f:(A,\m)\ra(B,\n)$ {\it strict\/} if
$\f_k=0$ for $k\ne 1$, an $A_\iy$ {\it isomorphism} if $\f_1:A\ra B$
is an isomorphism of vector spaces, and a {\it strict\/ $A_\iy$
isomorphism} if it is both strict and an $A_\iy$ isomorphism. When
$n=1$, equation \eq{il3eq2} becomes $\f_1\ci\m_1=\n_1\ci\f_1:A\ra
B$. Thus $\f_1$ is a morphism of complexes $(A,\m_1)\ra(B,\n_1)$,
and induces a morphism of cohomology groups $(\f_1)_*:H^*(A)\ra
H^*(B)$. We call $\f$ a {\it weak homotopy equivalence}, or {\it
quasi-isomorphism}, if $(\f_1)_*$ is an isomorphism.

If $(A,\m),(B,\n),(C,{\mathfrak o})$ are $A_\iy$ algebras and
$\f:(A,\m)\ra(B,\n)$, $\g:(B,\n)\ra(C,{\mathfrak o})$ are $A_\iy$
morphisms, the {\it composition} $\g\ci\f:(A,\m) \ra(C,{\mathfrak
o})$ is given by
\begin{equation}
\begin{gathered}
(\g\ci\f)_n(a_1,\ldots,a_n)=\!\!\!\!\sum_{0<
k_1<k_2<\cdots<k_l=n}\!\!\!\!\!\!\!\!\!\!\!\!
\begin{aligned}[t]\g_l\bigl(\f_{k_1}(a_1,\ldots,a_{k_1}),
\f_{k_2-k_1}(a_{k_1+1},\ldots,a_{k_2}),&\\
\ldots,\f_{k_l-k_{l-1}}(a_{k_{l-1}+1},\ldots,a_{k_l})\bigr)&.
\end{aligned}
\end{gathered}
\label{il3eq4}
\end{equation}
On bar complexes this implies that $\overline{(\g\ci\f)}=\bar\g\ci
\bar\f$. Composition is associative.
\label{il3dfn3}
\end{dfn}

This definition of $A_\iy$ morphism also makes sense for {\it
weak\/} $A_\iy$ algebras, allowing $n\ge 0$ and $i\le j$ in
\eq{il3eq2}. In the weak case it would look more natural to take
$\f=(\f_k)_{k\ge 0}$, and include $\f_0$ terms in \eq{il3eq2} and
\eq{il3eq3}. However, both \eq{il3eq2} and \eq{il3eq3} would then
become {\it infinite\/} sums, for instance, \eq{il3eq3} when $n=0$
would be $\f_1(\m_0)=\sum_{l\ge 0}\n_l(\f_0,\ldots,\f_0)$. So we
would need an appropriate notion of convergence of series in $A,B$.
But the definition of weak homotopy equivalence does {\it not\/}
make sense for weak $A_\iy$ algebras, since $H^*(A),H^*(B)$ are not
defined.

\subsection{Homotopy between $A_\iy$ morphisms and algebras}
\label{il32}

Now let $(A,\m),(B,\n)$ be $A_\iy$ algebras, and
$\f,\g:(A,\m)\ra(B,\n)$ be $A_\iy$ morphisms. We will define the
notion of {\it homotopy} $\H$ from $\f$ to $\g$. Our definition is
based on Keller \cite[\S 3.7]{Kell1}. Fukaya et al.\ \cite[\S
15.1--\S 15.2]{FOOO} use a different, more complicated definition,
involving `models of $[0,1]\t B$', but they show in
\cite[Prop.~15.40]{FOOO} that the two definitions yield the same
notion of whether $\f,\g$ are homotopic.

\begin{dfn} Let $(A,\m),(B,\n)$ be $A_\iy$ algebras, and
$\f,\g:(A,\m)\ra(B,\n)$ be $A_\iy$ morphisms. A {\it homotopy} from
$\f$ to $\g$ is $\H=(\H_k)_{k\ge 1}$, where $\H_k:{\buildrel
{\ulcorner\,\,\,\text{$k$ copies } \,\,\,\urcorner} \over
{\vphantom{m}\smash{A\t\cdots\t A}}}\ra B$ for $k=1,2,\ldots$ are
graded $\Q$-multilinear maps of degree $-1$, satisfying
\begin{gather}
\f_n(a_1,\ldots,a_n)-\g_n(a_1,\ldots,a_n)=
\nonumber\\
\sum_{\substack{0< j_1<j_2<\cdots<j_l<\\
k_1<k_2<\cdots<k_m=n}}
\begin{aligned}[t]
&\n_{l+m+1}\bigl(\f_{j_1}(a_1,\ldots,a_{j_1}),
\f_{j_2-j_1}(a_{j_1+1},\ldots,a_{j_2}),\ldots,\\
&\f_{j_l-j_{l-1}}\!(a_{j_{l-1}+1},\ldots,a_{j_l}),
\H_{k_1-j_l}(a_{j_l+1},\ldots,a_{k_1}), \\
&\g_{k_2-k_1}(a_{k_1+1},\ldots,a_{k_2}),\ldots,
\g_{k_m-k_{m-1}}(a_{k_{m-1}+1},\ldots,a_{k_m})\bigr)
\end{aligned}
\label{il3eq5}
\\
+\!\!\!\!\sum_{0\le i<j\le n}\!\!\!\!\!\! (-1)^{\sum_{l=1}^i\deg
a_l} \H_{n-j+i+1}\bigl(a_1,\ldots,a_i,
\m_{j-i}(a_{i+1},\ldots,a_j),a_{j+1},\ldots,a_n\bigr), \nonumber
\end{gather}
for all $n\ge 0$ and pure $a_1,\ldots,a_n$ in $A$. We can rewrite
\eq{il3eq5} in terms of the bar complexes of $(A,\m)$ and $(B,\n)$:
define $\bar\H:T(A)\ra T(B)$ by
\begin{gather*}
\bar\H(a_1\!\ot\!\cdots\!\ot\! a_n)=\!\!\!\!\!\!\!\!\!\!\!\!\!
\sum_{\substack{0< j_1<j_2<\cdots<j_l<\\
k_1<k_2<\cdots<k_m=n}}
\begin{aligned}[t]&\f_{j_1}(a_1,\ldots,a_{j_1})\ot
\f_{j_2-j_1}(a_{j_1+1},\ldots,a_{j_2})\ot\cdots\ot\\
&\f_{j_l-j_{l-1}}\!(a_{j_{l-1}+1},\ldots,a_{j_l})\!\ot\!
\H_{k_1-j_l}(a_{j_l+1},\ldots,a_{k_1})\ot \\
&\!\!\!\!\!\!\!\!\!\!\!\!\!\!\!\!\!\!\!\!\!\!\!\!\!\!\!\!\!\!
\g_{k_2-k_1}(a_{k_1+1},\ldots,a_{k_2})\ot\cdots\ot
\g_{k_m-k_{m-1}}(a_{k_{m-1}+1},\ldots,a_{k_m}),
\end{aligned}
\end{gather*}
for all $n\ge 0$ and pure $a_1,\ldots,a_n$ in $A$. Then $\bar\H$
satisfies $\De_B\ci\bar\H=(\bar\f\ot\bar\H+\bar\H\ot\bar\g)
\ci\De_A$, and \eq{il3eq5} is equivalent to
$\bar\f-\bar\g=\bar\d_B\ci\bar\H+\bar\H\ci\bar\d_A$, so that
$\bar\f$ and $\bar\g$ are homotopic as morphisms of chain complexes
in the usual sense.

$A_\iy$ algebras form a 2-{\it category}, with $A_\iy$ morphisms as
1-morphisms, and homotopies as 2-morphisms. We will sometimes write
a homotopy $\H$ from $\f$ to $\g$ as $\H:\f\Ra\g$, using 2-category
notation. There are various notions of composition between
homotopies and $A_\iy$-morphisms: given $\f,\g,\h:(A,\m)\ra(B,\n)$
and $\H:\f\Ra\g$, ${\mathfrak I}:\g\Ra\h$, we can define ${\mathfrak
I}\ci\H:\f\Ra\h$. Given $\f,\g:(A,\m)\ra(B,\n)$,
$\h:(B,\n)\ra(C,{\mathfrak o})$ and $\H:\f\Ra\g$, we can define
$\h\ci\H:(\h\ci\f)\Ra(\h\ci\g)$. Given $\f:(A,\m)\ra(B,\n)$,
$\g,\h:(B,\n)\ra(C,{\mathfrak o})$ and ${\mathfrak I}:\g\Ra\h$, we
can define ${\mathfrak I}\ci\f:(\g\ci\f)\Ra(\h\ci\f)$. The
definitions, as compositions of maps $\f_k,\g_k,\h_k,\H_k,{\mathfrak
I}_k$, are straightforward. They satisfy the usual 2-category
associativity properties.
\label{il3dfn4}
\end{dfn}

\begin{dfn} Let $(A,\m),(B,\n)$ be $A_\iy$ algebras, and
$\f:(A,\m)\ra(B,\n)$ an $A_\iy$ morphism. A {\it homotopy inverse}
for $\f$ is an $A_\iy$ morphism $\g:(B,\n)\ra(A,\m)$ such that
$\g\ci\f:(A,\m)\ra(A,\m)$ is homotopic to $\id_A:(A,\m)\ra(A,\m)$,
and $\f\ci\g:(B,\n)\ra(B,\n)$ is homotopic to
$\id_B:(B,\n)\ra(B,\n)$. If $\f$ has a homotopy inverse, we call
$\f$ a {\it homotopy equivalence}, and we call $(A,\m),(B,\n)$ {\it
homotopic}.
\label{il3dfn5}
\end{dfn}

The following important theorem is proved by Fukaya et al.\
\cite[Cor.~15.44, Th.~15.45(1)]{FOOO}; see also Keller \cite[\S
3.7]{Kell1}, who cites the thesis of Prout\'e (Paris, 1984).

\begin{thm} Let\/ $(A,\m),(B,\n)$ be $A_\iy$ algebras. Then
\begin{itemize}
\setlength{\itemsep}{0pt}
\setlength{\parsep}{0pt}
\item[{\rm(a)}] Homotopy is an equivalence relation
on $A_\iy$ morphisms $\f:(A,\m)\ra(B,\n)$.
\item[{\rm(b)}] Homotopy is an equivalence relation
on $A_\iy$ algebras.
\item[{\rm(c)}] An $A_\iy$ morphism $\f:(A,\m)\ra(B,\n)$ is a
homotopy equivalence if and only if it is a weak homotopy
equivalence.
\end{itemize}
\label{il3thm1}
\end{thm}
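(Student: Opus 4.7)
The plan is to work throughout in the bar-complex picture of Definitions \ref{il3dfn2}--\ref{il3dfn4}, in which a weak $A_\iy$ algebra becomes a DG coalgebra $(T(A),\De,\bar\d)$, an $A_\iy$ morphism becomes a DG coalgebra morphism $\bar\f:T(A)\ra T(B)$ intertwining coproducts, and a homotopy $\H:\f\Ra\g$ becomes an $(\bar\f,\bar\g)$-coderivation $\bar\H$ with $\bar\f-\bar\g=\bar\d_B\ci\bar\H+\bar\H\ci\bar\d_A$. The main advantage is that the proof reduces to manipulations of DG coalgebras and coderivations, where chain-homotopy-type constructions are standard.

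For part (a) I would follow the path-object strategy alluded to in Fukaya et al.~\cite[\S 15]{FOOO}. Reflexivity is immediate by taking $\bar\H=0$, which is trivially a $(\bar\f,\bar\f)$-coderivation. For symmetry and transitivity I would construct a weak $A_\iy$ structure $(\mathrm{Path}(B),\mathfrak p)$ on a cochain model of $B^{[0,1]}$ (e.g.\ $B\op B[-1]\op B$ with the induced shuffle $A_\iy$ structure from $B\ot C^*(\De^1)$), together with two strict $A_\iy$ projections $\pi_0,\pi_1:\mathrm{Path}(B)\ra B$. I would then check that homotopies $\H:\f\Ra\g$ correspond bijectively to $A_\iy$ morphisms $\Psi:A\ra\mathrm{Path}(B)$ with $\pi_0\ci\Psi=\f$ and $\pi_1\ci\Psi=\g$. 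Symmetry follows from the interval-reversing strict isomorphism of $\mathrm{Path}(B)$ which swaps $\pi_0,\pi_1$; transitivity follows by forming a ``double path'' morphism out of concatenating $\Psi_1,\Psi_2$ that agree in the middle and composing with a retraction back to a single $\mathrm{Path}(B)$. Part (b) is then formal: reflexivity uses $\id_A:(A,\m)\ra(A,\m)$ as its own homotopy inverse; symmetry just swaps $\f$ and $\g$; transitivity uses composition of $A_\iy$ morphisms and (a) to combine the resulting composites of homotopies into single homotopies.

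For part (c), the implication homotopy equivalence $\Rightarrow$ weak homotopy equivalence is straightforward: if $\g$ is a homotopy inverse of $\f$, then $(\g\ci\f)_1=\g_1\ci\f_1$ is chain-homotopic to $\id_A$ via $\H_1$ from Definition \ref{il3dfn4}, and similarly $\f_1\ci\g_1$ to $\id_B$, so $(\f_1)_*$ and $(\g_1)_*$ are mutually inverse on cohomology. The converse is the $A_\iy$ analogue of Whitehead's theorem and is the \emph{main obstacle}. The strategy is to construct $\g=(\g_k)_{k\ge 1}$ together with homotopies $\H:\g\ci\f\Ra\id_A$ and ${\mathfrak I}:\f\ci\g\Ra\id_B$ by simultaneous induction along the tensor-length filtration $F_nT(B)=\bigop_{k\le n}B^{\ot k}$. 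The base case is to pick a chain-homotopy inverse $\g_1:(B,\n_1)\ra(A,\m_1)$ of $\f_1$; since we are over $\Q$ and $(\f_1)_*$ is an isomorphism, this exists by classical homological algebra. At the inductive step, assume $\g_{<k}$, $\H_{<k}$ and ${\mathfrak I}_{<k}$ have been defined satisfying \eq{il3eq2} and \eq{il3eq5} modulo $F_{k-1}$; then the failure of these equations at order $k$ is a triple of elements in appropriate $\Hom$-spaces whose total $\m_1$-boundary vanishes by $\bar\d\ci\bar\d=0$ and the induction hypothesis, and whose cohomology class is killed in $H^*(A)$ or $H^*(B)$ because $(\f_1)_*$ is bijective. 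Choosing primitives gives $\g_k,\H_k,{\mathfrak I}_k$ extending the construction.

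The main obstacle will be this simultaneous inductive step in (c). One must identify the obstruction at filtration level $k$ as a cocycle (which requires careful expansion of the $A_\iy$ and homotopy relations using the induction hypothesis), verify that its cohomology class is a genuine coboundary using the hypothesis on $(\f_1)_*$, and organize the choices of primitives so that the three pieces $\g_k,\H_k,{\mathfrak I}_k$ are extended consistently. The sign bookkeeping inherited from Definitions \ref{il3dfn3}--\ref{il3dfn4} is delicate, and the correct order in which to define the new components at each step (typically $\g_k$ first, then $\H_k,{\mathfrak I}_k$) is essential to making the obstruction class visibly lie in the image of $(\f_1)_*^{-1}$ applied to a coboundary.
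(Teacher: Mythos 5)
You should first be aware that the paper offers no proof of Theorem \ref{il3thm1}: it is quoted from Fukaya--Oh--Ohta--Ono (Cor.~15.44, Th.~15.45(1)) and Keller/Prout\'e, so there is no internal argument to compare against. Your outline is essentially a reconstruction of those cited proofs rather than a new route: the path-object treatment of (a),(b) is FOOO's ``models of $[0,1]\t B$'' (alluded to in Definition \ref{il3dfn4}), and the obstruction-theoretic induction for (c) is the Prout\'e/Lef\`evre-Hasegawa style argument. In outline this can be made to work, but two steps as you state them are genuinely incomplete or wrong.

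First, the path object. With the small model $B\op B[-1]\op B\cong B\ot C^*(\De_1)$ built on the cup-product DGA of simplicial cochains of the interval, a morphism $\Psi:A\ra B\ot C^*(\De_1)$ with $\pi_0\ci\Psi=\f$, $\pi_1\ci\Psi=\g$ does decompose along $e_0,e_1,e_{01}$ and recovers exactly Keller's formula \eq{il3eq5} (since $e_0\cup e_{01}=e_{01}=e_{01}\cup e_1$ and the other products vanish); that part is fine. But the ``interval-reversing strict isomorphism'' you invoke for symmetry does not exist for this model: reversal sends the cup product to the opposite product (it would have to send $e_0\cup e_{01}=e_{01}$ to $e_1\cup(\pm e_{01})=0$), so it is not an automorphism and does not swap $\pi_0,\pi_1$. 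To get symmetry and a clean transitivity you must instead use a commutative model such as polynomial forms $\Q[t,\d t]$ on $[0,1]$ (legitimate over $\Q$, and exactly what FOOO do, with $t\mapsto 1-t$ giving the reversal); but then the bijection between Definition \ref{il3dfn4} homotopies and path-object homotopies is no longer bookkeeping --- it is the content of FOOO's Prop.~15.40, which the paper explicitly flags as a separate result. Either way, one of ``symmetry'' or ``equivalence of the two notions of homotopy'' is a real lemma you have not supplied; likewise concatenation of paths needs an argument (concatenated polynomial forms are not polynomial). Second, in (c) you have correctly identified the crux, but the simultaneous induction on $(\g_k,\H_k,{\mathfrak I}_k)$ is only a sketch and is the most delicate way to organize the argument, since the two homotopy equations interact at each filtration level. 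The safer standard route is: construct by induction only a one-sided inverse, i.e.\ $\g$ together with $\H:\g\ci\f\Ra\id_A$ (this is where bijectivity of $(\f_1)_*$ enters); note $\g$ is again a weak homotopy equivalence, so it too has a left inverse $\h$ with $\h\ci\g\simeq\id_B$; then $\f\simeq\h\ci\g\ci\f\simeq\h$, whence $\f\ci\g\simeq\id_B$, using part (a) and the 2-category compositions of Definition \ref{il3dfn4}. Alternatively, (c) follows from machinery the paper already sets up in \S\ref{il33}: pass to minimal models via Theorem \ref{il3thm2}, where a weak homotopy equivalence has invertible $\f_1$ and hence admits an inverse $A_\iy$ morphism built recursively from $\f_1^{-1}$, and transport back along the homotopy equivalences ${\mathfrak i}$. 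Filling in one of these two organizations, and repairing the path-object symmetry step, is what your proposal still needs.
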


In practice, homotopy is a more useful notion of when two $A_\iy$
algebras are `the same' than either $A_\iy$ isomorphism or strict
$A_\iy$ isomorphism. We are interested in properties of $A_\iy$
algebras which are invariant under homotopy. Constructions of
$A_\iy$ algebras generally depend on some arbitrary choices (such as
the almost complex structure $J$ below), and different choices yield
homotopic but not (strictly) isomorphic $A_\iy$ algebras.

\subsection{Minimal models, and sums over planar trees}
\label{il33}

An $A_\iy$ algebra $(B,\n)$ is called {\it minimal\/} if $\n_1=0$,
so that $H^*(B)=B$. If $(A,\m)$ is an $A_\iy$ algebra, then one can
make $H^*(A)$ into a minimal $A_\iy$ algebra $\bigl(H^*(A),
\n\bigr)$, such that there is an $A_\iy$-morphism $\bs\pi:(A,\m)
\ra\bigl(H^*(A),\n\bigr)$ inducing the identity in cohomology. Thus
$\bigl(H^*(A),\n\bigr)$ is homotopic to $(A,\m)$. We call
$\bigl(H^*(A),\n\bigr)$ a {\it minimal model\/} or {\it canonical
model\/} for $(A,\m)$. It is unique up to $A_\iy$ isomorphism. We
will explain a proof of this using the method of sums over `planar
rooted trees' due to Kontsevich and Soibelman \cite[\S 6.4]{KoSo};
see also Markl \cite{Mark} and Keller~\cite[Th.~2.3]{Kell2}.

\begin{dfn} A {\it planar rooted tree} is a finite, connected,
simply-connected graph $T$ in the plane $\R^2$, whose vertices are
divided into $k+1$ {\it external vertices} numbered $0,1,\ldots,k$,
and at least one {\it internal vertices}. Each external vertex must
be connected to exactly one edge, and the external vertices should
be {\it cyclically ordered}, in the sense that if we embed $T$ into
the unit disc $\{x^2+y^2\le 1\}$ such that $T\cap\{x^2+y^2=1\}$ is
vertices $0,1,\ldots,k$, then the external vertices appear in the
cyclic order $0,1,\ldots,k$ anticlockwise around the circle.

Here when we say $T$ is a {\it graph in the plane}, we mean that $T$
is embedded in $\R^2$ up to continuous deformations. Since $T$ is
simply-connected, such an embedding class of $T$ is equivalent to
prescribing the cyclic order of the edges at each vertex.

We call vertex 0 the {\it root\/} of $T$, and vertices $1,\ldots,k$
the {\it leaves} of $T$. Define a unique {\it orientation\/} on $T$
such that each edge is oriented in the direction of the minimal path
to the root vertex. Then every vertex except the root has exactly
one outgoing edge, and the rest incoming edges. We call an edge the
{\it root edge} if it is connected to the root vertex, a {\it leaf
edge} if it is connected to a leaf vertex, and an {\it internal
edge} if it is connected to no distinguished vertices. (See
Figure~\ref{il3fig1}(a).)
\label{il3dfn6}
\end{dfn}

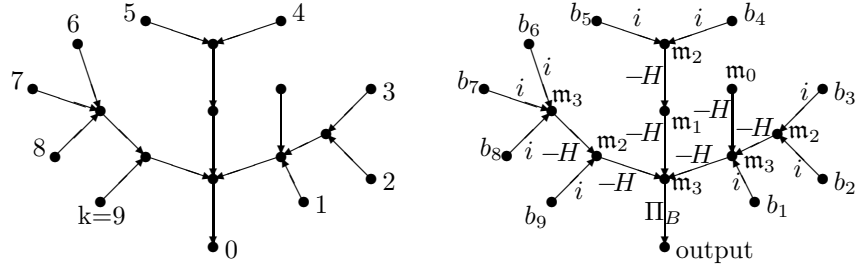
\begin{figure}[htb]
\setlength{\unitlength}{3mm}
\begin{picture}(40,12)(-10,-1)
\put(0,0){\circle*{.5}} \put(0,3){\circle*{.5}}
\put(0,6){\circle*{.5}} \put(0,9){\circle*{.5}}
\put(3,4){\circle*{.5}} \put(-3,4){\circle*{.5}}
\put(3,10){\circle*{.5}} \put(-3,10){\circle*{.5}}
\put(3,7){\circle*{.5}} \put(5,5){\circle*{.5}}
\put(4,2){\circle*{.5}} \put(7,7){\circle*{.5}}
\put(7,3){\circle*{.5}} \put(-5,6){\circle*{.5}}
\put(-5,2){\circle*{.5}} \put(-6,9){\circle*{.5}}
\put(-8,7){\circle*{.5}} \put(-7,4){\circle*{.5}}
\put(0,3){\vector(0,-1){3}} \put(0,6){\vector(0,-1){3}}
\put(0,9){\vector(0,-1){3}} \put(3,4){\vector(-3,-1){3}}
\put(-3,4){\vector(3,-1){3}} \put(3,10){\vector(-3,-1){3}}
\put(-3,10){\vector(3,-1){3}} \put(3,7){\vector(0,-1){3}}
\put(5,5){\vector(-2,-1){2}} \put(4,2){\vector(-1,2){1}}
\put(7,7){\vector(-1,-1){2}} \put(7,3){\vector(-1,1){2}}
\put(-5,6){\vector(1,-1){2}} \put(-5,2){\vector(1,1){2}}
\put(-6,9){\vector(1,-3){1}} \put(-8,7){\vector(3,-1){3}}
\put(-7,4){\vector(1,1){2}} \put(.5,-.5){0} \put(4.5,1.5){1}
\put(7.5,2.5){2} \put(7.5,6.5){3} \put(3.5,10){4} \put(-4,10){5}
\put(-6.5,9.5){6} \put(-9,7){7} \put(-8,4){8} \put(-6,1){k=9}
\put(20,0){\circle*{.5}} \put(20,3){\circle*{.5}}
\put(20,6){\circle*{.5}} \put(20,9){\circle*{.5}}
\put(23,4){\circle*{.5}} \put(17,4){\circle*{.5}}
\put(23,10){\circle*{.5}} \put(17,10){\circle*{.5}}
\put(23,7){\circle*{.5}} \put(25,5){\circle*{.5}}
\put(24,2){\circle*{.5}} \put(27,7){\circle*{.5}}
\put(27,3){\circle*{.5}} \put(15,6){\circle*{.5}}
\put(15,2){\circle*{.5}} \put(14,9){\circle*{.5}}
\put(12,7){\circle*{.5}} \put(13,4){\circle*{.5}}
\put(20,3){\vector(0,-1){3}} \put(20,6){\vector(0,-1){3}}
\put(20,9){\vector(0,-1){3}} \put(23,4){\vector(-3,-1){3}}
\put(17,4){\vector(3,-1){3}} \put(23,10){\vector(-3,-1){3}}
\put(17,10){\vector(3,-1){3}} \put(23,7){\vector(0,-1){3}}
\put(25,5){\vector(-2,-1){2}} \put(24,2){\vector(-1,2){1}}
\put(27,7){\vector(-1,-1){2}} \put(27,3){\vector(-1,1){2}}
\put(15,6){\vector(1,-1){2}} \put(15,2){\vector(1,1){2}}
\put(14,9){\vector(1,-3){1}} \put(12,7){\vector(3,-1){3}}
\put(13,4){\vector(1,1){2}} \put(20.5,-.5){output}
\put(24.5,1.5){$b_1$} \put(27.5,2.5){$b_2$} \put(27.5,6.5){$b_3$}
\put(23.5,10){$b_4$} \put(15.8,10){$b_5$} \put(13.5,9.5){$b_6$}
\put(10.8,7){$b_7$} \put(11.8,4){$b_8$} \put(13.8,1){$b_9$}
\put(20.3,2.5){$\m_3$} \put(20.3,5.2){$\m_1$} \put(20.3,8.3){$\m_2$}
\put(23.5,3.5){$\m_3$} \put(22.7,7.5){$\m_0$} \put(25.5,4.8){$\m_2$}
\put(17,4.4){$\m_2$} \put(15,6.4){$\m_3$} \put(19.1,1.2){$\Pi_B$}
\put(18.2,4.7){$-\!H$} \put(18.2,7.2){$-\!H$} \put(23.1,4.9){$-\!H$}
\put(21.2,5.7){$-\!H$} \put(20.4,3.7){$-\!H$} \put(17,2.45){$-\!H$}
\put(14.5,3.8){$-\!H$} \put(21.3,9.8){$i$} \put(18.6,9.8){$i$}
\put(23,2.5){$i$} \put(25.7,3.1){$i$} \put(26,6.5){$i$}
\put(16,2.1){$i$} \put(13.8,3.9){$i$} \put(14.6,7.6){$i$}
\put(13.4,6.7){$i$}
\end{picture}
\caption{{}\!\!\!(a) A planar rooted tree $T$\quad (b) operators
assigned to it}
\label{il3fig1}
\end{figure}

\begin{dfn} Let $(A,\m)$ be an $A_\iy$ algebra. Then $(A,\m_1)$ is a
complex. Let $B$ be a graded vector subspace of $A$ closed under
$\m_1$, such that the inclusion $i:B\hookra A$ induces an
isomorphism $i_*:H^*(B,\m_1\vert_B)\ra H^*(A,\m_1)$. We will
construct $\n=(\n_k)_{k\ge 1}$ making $(B,\n)$ into an $A_\iy$
algebra homotopic to~$(A,\m)$.

Since $i_*$ is an isomorphism, we can choose a graded vector
subspace $C$ of $A$ such that $C\cap\Ker\m_1=\{0\}$ and $A=B\op
C\op\m_1(C)$. Then $\m_1:C\ra\m_1(C)$ is invertible, so there is a
unique graded linear map $H:A\ra A$ of degree $-1$ with
$H(b)=H(c)=0$ and $H\ci\m_1(c)=c$ for all $b\in B$ and $c\in C$. Let
$\Pi_B:A\ra B$ be the projection, with kernel $C\op\m_1(C)$. Then
$\id_A-\Pi_B=\m_1\ci H+H\ci\m_1$ on~$A$.

For each planar rooted tree $T$ with $k$ leaves, define a graded
multilinear operator $\n_{k,T}:{\buildrel {\ulcorner\,\,\,\text{$k$
copies } \,\,\,\urcorner} \over {\vphantom{m}\smash{B\t\cdots\t
B}}}\ra B$ of degree $+1$, as follows. To define
$\n_{k,T}(b_1,\ldots,b_k)$, assign objects and operators to the
vertices and edges of $T$:
\begin{itemize}
\setlength{\itemsep}{0pt}
\setlength{\parsep}{0pt}
\item assign $b_1,\ldots,b_k$ to the leaf vertices $1,\ldots,k$
respectively.
\item for each internal vertex with 1 outgoing edge and $n$ incoming
edges, assign~$\m_n$.
\item assign $i$ to each leaf edge.
\item assign $\Pi_B$ to the root edge.
\item assign $-H$ to each internal edge.
\end{itemize}
This is illustrated in Figure \ref{il3fig1}(b). Then we define
$\n_{k,T}(b_1,\ldots,b_k)$ to be the composition of all these
objects and morphisms, where we follow the orientations of the
edges, and at each interior vertex with 1 outgoing edge and $n$
incoming edges, we apply $\m_n$ to the $n$ inputs from the $n$
incoming edges in the order counting anticlockwise from the outgoing
edge. In the example of Figure \ref{il3fig1}, this yields
\begin{gather*}
\n_{9,T}(b_1,\ldots,b_9)=\Pi\ci\m_3\bigl(
-H\ci\m_3(i(b_1),-H\ci\m_2(i(b_2),i(b_3)),-H(\m_0)),\\
-H\ci\m_1(-H\ci\m_2(i(b_4),i(b_5))),
-H\ci\m_2(-H\ci\m_3(i(b_6),i(b_7),i(b_8)),i(b_9))\bigr).
\end{gather*}
Note that this includes an $\m_0$ term, and so is zero in the
$A_\iy$ algebra case.

Define $\n_1=\m_1\vert_B$, and for $k\ge 2$ define
$\n_k=\sum_T\n_{k,T}$, where the sum is over all planar rooted trees
$T$ with $k$ leaves, such that {\it every internal vertex has at
least three edges}. (This excludes Figure \ref{il3fig1}. For
filtered $A_\iy$ algebras we will also allow internal vertices with
one or two edges.) This condition implies that $T$ has at most $2k$
vertices and $2k-1$ edges, so there are only finite many such trees
$T$, and $\n_k=\sum_T\n_{k,T}$ is a finite sum.

In a similar way, for each planar rooted tree $T$ with $k$ leaves,
define a graded multilinear operator ${\mathfrak i}_{k,T}:{\buildrel
{\ulcorner\,\,\,\text{$k$ copies } \,\,\,\urcorner} \over
{\vphantom{m}\smash{B\t\cdots\t B}}}\ra A$ of degree 0, as follows.
Assign objects and operators to the vertices and edges of $T$:
\begin{itemize}
\setlength{\itemsep}{0pt}
\setlength{\parsep}{0pt}
\item assign $b_1,\ldots,b_k$ to the leaf vertices $1,\ldots,k$
respectively.
\item for each internal vertex with 1 outgoing edge and $n$ incoming
edges, assign~$\m_n$.
\item assign $i$ to each leaf edge.
\item assign $-H$ to the root edge and to each internal edge.
\end{itemize}
Define ${\mathfrak i}_{k,T}(b_1,\ldots,b_k)$ to be the composition
of all these objects and morphisms. Define ${\mathfrak i}_1:B\ra A$
by ${\mathfrak i}_1=i$, and for $k\ge 2$ define ${\mathfrak
i}_k=\sum_T{\mathfrak i}_{k,T}$, where the sum is over all rooted
planar trees $T$ with $k$ leaves, such that every internal vertex
has at least three edges. Then Markl \cite{Mark} proves:
\label{il3dfn7}
\end{dfn}

\begin{thm} In Definition {\rm\ref{il3dfn7},} $(B,\n)$ is an $A_\iy$
algebra, and\/ ${\mathfrak i}:(B,\n)\ra(A,\m)$ is an $A_\iy$
morphism, and a homotopy equivalence. If we choose $B\cong H^*(A)$
to be a subspace representing $H^*(A),$ so that\/
$\n_1=\m_1\vert_B=0,$ then $(B,\n)$ is a minimal model for~$(A,\m)$.
\label{il3thm2}
\end{thm}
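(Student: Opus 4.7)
\smallskip

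\noindent\textbf{Proof proposal.} The plan is to verify the $A_\iy$ relations for $\n$ and the morphism equations for $\mathfrak{i}$ by a purely combinatorial analysis of sums over planar rooted trees, using the $A_\iy$ relations for $(A,\m)$ and the homotopy identity for $H$. I first collect the elementary identities that will drive every cancellation. From the splitting $A=B\op C\op\m_1(C)$ and the definitions, we have $\Pi_B\ci i=\id_B$, $H\ci i=0$, $H\ci H=0$, $\Pi_B\ci H=H\ci\Pi_B=0$, $\Pi_B\ci\m_1=\m_1\ci\Pi_B=0$ on the relevant summands, and most importantly the chain-homotopy identity
\begin{equation*}
i\ci\Pi_B=\id_A-\m_1\ci H-H\ci\m_1\quad\text{on }A.
\end{equation*}
I also record that $\n_1=\m_1\vert_B$ squares to zero because $B$ is closed under $\m_1$ and $\m_1\ci\m_1=0$.

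Next I turn to the $A_\iy$ relations for $\n$. For each $k\ge 2$, the expression
$\sum_{k_1+k_2=k+1,\,i}(-1)^{\sum_{l<i}\deg b_l}\n_{k_1}(b_1,\ldots,\n_{k_2}(\ldots),\ldots)$
expands, via the tree definition of each $\n_{k_j}$, as a signed sum over planar rooted trees $T$ with $k$ leaves carrying \emph{two} distinguished internal vertices $v_1,v_2$ (or the ``compatible'' pair: $v_2$ lies above $v_1$). In each such term every edge is labelled as in Definition \ref{il3dfn7}, \emph{except} the single edge joining the region belonging to $T_1$ to the region belonging to $T_2$: this edge carries $i\ci\Pi_B$ rather than $-H$, because the output of the inner $\n_{k_2}$ is projected to $B$ and then re-included. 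Substituting $i\Pi_B=\id_A-\m_1H-H\m_1$ splits each such term into three. The $\id_A$-piece produces a tree identical with one built from a single internal vertex of valence $k_1+k_2-1$ times a distinguished ``bar'', and summing over all insertion positions $i$ for fixed $T$ exactly reproduces a sum that vanishes by the $A_\iy$ relation for $\m$ at that vertex; the $-\m_1H$ and $-H\m_1$ pieces convert the distinguished edge into an ordinary $-H$ internal edge sitting next to an $\m_1$ vertex, and summing over the two orderings of $v_1,v_2$ causes these two pieces to cancel in pairs across grafting positions. This is the standard Kontsevich--Soibelman/Markl cancellation argument, and this is the main obstacle: setting up the signs correctly and checking that the bijection between terms exchanges them with the correct sign. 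Once this is in place, $(B,\n)$ satisfies the $A_\iy$ axioms.

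For $\mathfrak{i}$, the verification of equation \eq{il3eq2} is entirely analogous. Expanding the right-hand side $\sum_T\m_l(\mathfrak{i}_{k_1,T_1}(\ldots),\ldots,\mathfrak{i}_{k_l,T_l}(\ldots))$ produces signed sums over trees in which the edge leaving the grouping vertex is labelled $-H$ rather than carrying a $\Pi_B$, so the output-projection trick above is replaced by the observation that grafting the $\mathfrak{i}_{k_j,T_j}$ below a single $\m_l$ vertex precisely enumerates every tree contributing to some $\mathfrak{i}_{k,T}$ with a distinguished internal vertex. The left-hand side similarly enumerates trees with a distinguished internal edge carrying an $\m$-composition. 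Applying $\m_1H+H\m_1=\id-i\Pi_B$ on the distinguished edges and invoking the $A_\iy$ relations for $\m$ gives the desired identity, again with pairwise cancellation handling the $\id$ and $i\Pi_B$ parts.

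Finally, since $\mathfrak{i}_1=i$ and by hypothesis $i$ induces an isomorphism $H^*(B,\n_1)\ra H^*(A,\m_1)$, the morphism $\mathfrak{i}$ is a weak homotopy equivalence, hence a homotopy equivalence by Theorem \ref{il3thm1}(c). For the last assertion, choose $B\subset\Ker\m_1$ to be a graded vector-space complement of $\Im\m_1\cap\Ker\m_1$ inside $\Ker\m_1$; then $B$ is closed under $\m_1$ with $\m_1\vert_B=0$, the inclusion $i:B\hookra A$ induces the identification $B\cong H^*(A)$, and $\n_1=\m_1\vert_B=0$, so $(B,\n)$ is a minimal model for $(A,\m)$ in the sense defined at the start of \S\ref{il33}. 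Uniqueness up to $A_\iy$ isomorphism then follows from Theorem \ref{il3thm1}: any two minimal models are homotopy equivalent via morphisms whose linear parts are isomorphisms, and for minimal $A_\iy$ algebras any homotopy equivalence is already an $A_\iy$ isomorphism.
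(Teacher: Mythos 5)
Your proposal is essentially the same argument as the one the paper relies on: the paper gives no proof of Theorem \ref{il3thm2} at all, but simply cites Markl \cite{Mark} (following the Kontsevich--Soibelman tree method \cite{KoSo}), and your sketch is exactly that standard homotopy-transfer verification, substituting $i\ci\Pi_B=\id_A-\m_1\ci H-H\ci\m_1$ on the connecting edges and cancelling over grafted trees, with the concluding deductions (weak homotopy equivalence via ${\mathfrak i}_1=i$ and Theorem \ref{il3thm1}(c), and the minimal-model case) correct. The only caveat is that the sign bookkeeping and the precise term-by-term bijection---which you rightly flag as the crux---are left unverified, but this is precisely the part the paper itself delegates to \cite{Mark}.
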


Markl \cite{Mark} also gives much more complicated explicit formulae
for a homotopy inverse ${\mathfrak j}:(A,\m)\ra(B,\n)$ for
$\mathfrak i$ and a homotopy $\H$ from ${\mathfrak i}\ci{\mathfrak
j}$ to $\id_A$. Later we will need a special case of this
construction.

\begin{dfn} Let $(A,\m)$ and $(D,{\mathfrak o})$ be $A_\iy$
algebras, and $\p:(A,\m)\ra(D,{\mathfrak o})$ a strict, surjective
$A_\iy$ morphism which is a weak homotopy equivalence. That is,
$\p_k=0$ for $k\ne 1$, and $\p_1:A\ra D$ is surjective and induces
an isomorphism $(\p_1)_*:H^*(A,\m_1)\ra H^*(D,{\mathfrak o}_1)$. In
Definition \ref{il3dfn7}, choose the subspaces $B,C$ of $A$ such
that $C\op\m_1(C)=\Ker\p_1$, and $\p_1\vert B:B\ra D$ is an
isomorphism. This is possible as $\p_1$ is surjective and $(\p_1)_*$
is an isomorphism.

As $\p$ is a strict $A_\iy$ morphism, we have
$\p_1\ci\m_j={\mathfrak o}_j\ci(\p_1\t\cdots\t\p_1)$ for all
$j=1,2,\ldots$. Since $\Ker\Pi_B=\Ker\p_1$ and $\Im
H\subseteq\Ker\p_1$, this implies that
$\Pi_B\ci\m_j(a_1,\ldots,a_{i-1},-H(a_i),a_{i+1},\ldots,a_j)=0$ for
all $a_1,\ldots,a_j\in A$ and $i=1,\ldots,j$. Applying this to the
root vertex of $T$, we see that $\n_{k,T}=0$ in Definition
\ref{il3dfn7} whenever $T$ has an internal edge. Thus, the only
nonzero $\n_{k,T}$ is the unique $T$ with one internal vertex and
$k$ leaves, and we have $\n_k=\Pi_B\ci\m_k\ci(i\t\cdots\t i)$ for
all $k=1,2,\ldots$. Comparing this with $\p_1\ci\m_k={\mathfrak
o}_k\ci(\p_1\t\cdots\t\p_1)$ and noting that $\p_1\vert B:B\ra D$ is
an isomorphism, we see that $\p_1\vert B:B\ra D$ identifies $\m_k$
and ${\mathfrak o}_k$ for $k=1,2,\ldots$. Hence, $\p_1\vert_B$
induces a {\it strict $A_\iy$ isomorphism}~$(B,\n)\ra(D,{\mathfrak
o})$.

Now define a graded multilinear operator $\q_k:{\buildrel
{\ulcorner\,\,\,\text{$k$ copies } \,\,\,\urcorner} \over
{\vphantom{m}\smash{D\t\cdots\t D}}}\ra A$ of degree 0 by
$\q_k={\mathfrak i}_k\ci\bigl((\p_1\vert B)^{-1}\t\cdots\t(\p_1\vert
B)^{-1}\bigr)$, and write $\q=(\q_k)_{k\ge 1}$. Then Theorem
\ref{il3thm2} implies that $\q:(D,{\mathfrak o})\ra(A,\m)$ is an
$A_\iy$ {\it morphism}, and a {\it homotopy equivalence}. It is easy
to check that $\p\ci\q:(D,{\mathfrak o})\ra(D,{\mathfrak o})$ is the
identity on $(D,{\mathfrak o})$, so $\q$ is a {\it homotopy inverse}
for $\p:(A,\m)\ra(D,{\mathfrak o})$. We have proved:
\label{il3dfn8}
\end{dfn}

\begin{cor} Let\/ $\p:(A,\m)\ra(D,{\mathfrak o})$ be a strict,
surjective $A_\iy$ morphism of\/ $A_\iy$ algebras which is a weak
homotopy equivalence. Then we can construct an explicit homotopy
inverse $\q:(D,{\mathfrak o})\ra(A,\m)$ for $\p$ using sums over
planar trees.
\label{il3cor1}
\end{cor}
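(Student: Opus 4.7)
The plan is to follow verbatim the construction sketched in Definition \ref{il3dfn8}, filling in the verifications of each claim used in passing. Apart from one genuine vanishing argument, everything is bookkeeping, since Theorem \ref{il3thm2} already supplies the heavy machinery.

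\smallskip

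\noindent\textbf{Step 1: Choose the subspaces.} First I would exhibit the required splitting $A=B\op C\op\m_1(C)$. Because $\p_1:A\ra D$ is surjective and $(\p_1)_*$ is an isomorphism on cohomology, $\Ker\p_1$ is closed under $\m_1$ and contains all $\m_1$-boundaries; pick a graded complement $B$ to $\Ker\p_1$ in $A$, so $\p_1\vert_B:B\ra D$ is a graded linear isomorphism. Since $(\p_1)_*$ is an isomorphism, every cycle in $\Ker\p_1$ is exact, so $(\Ker\p_1)\cap\Ker\m_1=\m_1(\Ker\p_1)$. Choose $C\subset\Ker\p_1$ with $C\cap\Ker\m_1=\{0\}$ and $\Ker\p_1=C\op\m_1(C)$, and note $\m_1(B)\subseteq B\op\m_1(C)$ because the splitting is compatible with $\m_1$ on cohomology classes; after adjusting $B$ within its equivalence class we may take $B$ closed under $\m_1$. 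This gives the decomposition required to apply Definition \ref{il3dfn7}, so Theorem \ref{il3thm2} yields an $A_\iy$ structure $(B,\n)$ and an $A_\iy$ homotopy equivalence ${\mathfrak i}:(B,\n)\ra(A,\m)$.

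\smallskip

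\noindent\textbf{Step 2: Kill trees with internal edges.} The key vanishing is that for any $j\ge 1$, any $i\in\{1,\ldots,j\}$, and any $a_1,\ldots,a_j\in A$,
\begin{equation*}
\Pi_B\ci\m_j\bigl(a_1,\ldots,a_{i-1},-H(a_i),a_{i+1},\ldots,a_j\bigr)=0.
\end{equation*}
Indeed $\Im H\subseteq\Ker\p_1$ by construction, and since $\p$ is strict we have $\p_1\ci\m_j={\mathfrak o}_j\ci(\p_1\ot\cdots\ot\p_1)$, which vanishes as soon as one argument lies in $\Ker\p_1$. Combined with $\Ker\Pi_B=\Ker\p_1$, the displayed identity follows. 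Now every planar rooted tree $T$ with at least one internal edge contributes to $\n_{k,T}$ a composition whose outermost operation is $\Pi_B\ci\m_n$ applied with at least one input of the form $-H(\cdots)$; hence $\n_{k,T}=0$. Only the unique tree with a single internal vertex survives, giving $\n_k=\Pi_B\ci\m_k\ci(i\ot\cdots\ot i)$. Comparing with $\p_1\ci\m_k={\mathfrak o}_k\ci(\p_1\ot\cdots\ot\p_1)$ and using that $\p_1\vert_B$ is an isomorphism onto $D$ with inverse given by $i$ followed by the identification, I conclude that $\p_1\vert_B:(B,\n)\ra(D,{\mathfrak o})$ is a strict $A_\iy$ isomorphism.

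\smallskip

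\noindent\textbf{Step 3: Transport ${\mathfrak i}$ to $\q$ and verify the inverse property.} Set $\q_k={\mathfrak i}_k\ci\bigl((\p_1\vert_B)^{-1}\ot\cdots\ot(\p_1\vert_B)^{-1}\bigr)$; then $\q=\bigl(\p_1\vert_B\bigr)^{-1}$-pullback of an $A_\iy$ homotopy equivalence is an $A_\iy$ homotopy equivalence $(D,{\mathfrak o})\ra(A,\m)$, by Theorem \ref{il3thm2} together with the strict isomorphism of Step 2. It remains to check $\p\ci\q=\id_{(D,{\mathfrak o})}$. By strictness of $\p$, $(\p\ci\q)_k=\p_1\ci\q_k=\p_1\ci{\mathfrak i}_k\ci\bigl((\p_1\vert_B)^{-1}\ot\cdots\bigr)$. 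For $k=1$, ${\mathfrak i}_1=i$ so $\p_1\ci i\ci(\p_1\vert_B)^{-1}=\id_D$. For $k\ge 2$, every summand ${\mathfrak i}_{k,T}$ ends with $-H$ applied at the root edge, so its image lies in $\Im H\subseteq\Ker\p_1$ and $\p_1\ci{\mathfrak i}_k=0$. Hence $\p\ci\q$ is the strict identity on $(D,{\mathfrak o})$, and in particular it is homotopic to $\id_{(D,{\mathfrak o})}$. Combined with $\q$ being a homotopy equivalence, Theorem \ref{il3thm1}(a) then forces $\q\ci\p$ to be homotopic to $\id_{(A,\m)}$, completing the proof.

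\smallskip

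\noindent The main obstacle is really Step 2: arranging the choices of $B$ and $C$ in Step 1 so that every $H$-insertion lies in $\Ker\p_1$, and then formalizing the tree-vanishing argument so that it genuinely reduces $\n_k$ to the one-vertex tree. Once that collapse is in hand, the homotopy inverse falls out essentially formally.
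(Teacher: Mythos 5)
Your proposal is correct and is essentially the paper's own proof: the paper establishes this corollary precisely through the construction of Definition \ref{il3dfn8} — choose $B,C$ with $C\op\m_1(C)=\Ker\p_1$ and $\p_1\vert_B:B\ra D$ an isomorphism, use $\Im H\subseteq\Ker\p_1=\Ker\Pi_B$ together with strictness of $\p$ to kill every tree with an internal edge so that $\n_k=\Pi_B\ci\m_k\ci(i\t\cdots\t i)$, and set $\q_k={\mathfrak i}_k\ci\bigl((\p_1\vert_B)^{-1}\t\cdots\t(\p_1\vert_B)^{-1}\bigr)$ with $\p\ci\q=\id_D$, exactly as you do. The one slip is the parenthetical claim in your Step 1 that $\Ker\p_1$ ``contains all $\m_1$-boundaries'' of $A$, which is false in general (unless ${\mathfrak o}_1=0$) but also unused — what you actually need, and correctly derive from the long exact sequence, is that $\Ker\p_1$ is an acyclic subcomplex, which (over $\Q$) also justifies the existence of an $\m_1$-closed complement $B$ that both you and the paper assert without further detail.
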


\subsection{Novikov rings, and modules over them}
\label{il34}

In defining Lagrangian Floer cohomology, we have to consider sums
involving infinitely many terms, coming from $J$-holomorphic discs
of larger and larger area. To ensure these sums converge, we work
over a ring of formal power series known as a {\it Novikov ring}, as
in Fukaya et al.\ \cite[Def.~6.2]{FOOO}. We consider two kinds, {\it
general Novikov rings} $\La_\nov,\La_\nov^0$ and {\it Calabi--Yau
Novikov rings} $\La_\CY,\La_\CY^0$, to be used in \S\ref{il11},
\S\ref{il13}, and \S\ref{il12}--\S\ref{il13}, respectively.

The reason for having two kinds is this. In $\La_\nov,\La_\nov^0$,
terms $T^\la e^\mu$ keep track of $J$-holomorphic discs in $M$ with
boundary in $L$, area $\la$, and Maslov index $2\mu$. However, if
$M$ is {\it Calabi--Yau} and $L$ is {\it graded\/} then all
$J$-holomorphic curves in $M$ with boundary in $L$ have Maslov index
0, so the $e^\mu$ are unnecessary, and we can use the smaller rings
$\La_\CY,\La_\CY^0$. We restrict to Novikov rings over~$\Q$.

\begin{dfn} Let $T$ and $e$ be formal variables, graded of degree 0
and 2, respectively. Define four {\it universal Novikov rings} (over
$\Q$) by
\begin{align}
\La_\nov&=\bigl\{\ts\sum_{i=0}^\iy a_iT^{\la_i}e^{\mu_i}:
\text{$a_i\in\Q$, $\la_i\in\R$, $\mu_i\in\Z$, $\lim_{i\ra\iy}\la_i
=\iy$}\bigr\},
\label{il3eq6}\\
\La^0_\nov&=\bigl\{\ts\sum_{i=0}^\iy a_iT^{\la_i}e^{\mu_i}:
\text{$a_i\in\Q$, $\la_i\in[0,\iy)$, $\mu_i\in\Z$,
$\lim_{i\ra\iy}\la_i =\iy$}\bigr\},
\label{il3eq7}\\
\La_\CY&=\bigl\{\ts\sum_{i=0}^\iy a_iT^{\la_i}: \text{$a_i\in\Q$,
$\la_i\in\R$, $\lim_{i\ra\iy}\la_i =\iy$}\bigr\},
\label{il3eq8}\\
\La^0_\CY&=\bigl\{\ts\sum_{i=0}^\iy a_iT^{\la_i}: \text{$a_i\in\Q$,
$\la_i\in[0,\iy)$, $\lim_{i\ra\iy}\la_i =\iy$}\bigr\}.
\label{il3eq9}
\end{align}
Then $\La_\nov^0\subset\La_\nov$ and $\La_\CY^0\subset\La_\CY$ are
$\Q$-vector spaces. For brevity we shall write $\La_\nov^*$ to mean
either $\La_\nov^0$ or $\La_\nov$, and $\La_\CY^*$ to mean either
$\La_\CY^0$ or $\La_\CY$. Define multiplications `$\,\cdot\,$' by
$\bigl(\sum_{i=0}^\iy a_iT^{\la_i}e^{\mu_i}\bigr)\cdot
\bigl(\sum_{j=0}^\iy b_jT^{\nu_j}e^{\xi_j}\bigr)=\sum_{i,j=0}^\iy
a_ib_jT^{\la_i+\nu_j}e^{\mu_i+\xi_j}$ on $\La_\nov^*$, and similarly
for $\La^*_\CY$. Here since $\la_i,\nu_j\ra\iy$ as $i,j\ra\iy$, the
sum over $i,j$ can be rewritten as a sum over $k=0,1,\ldots$ such
that $\la_{i_k}+\nu_{j_k}\ra\iy$ as $k\ra\iy$, and so it lies in
$\La^*_\nov$. With these multiplications, $\La_\nov^*,\La_\CY^*$ are
{\it commutative $\Q$-algebras} with identity $1=1T^0e^0$ or~$1T^0$.

The condition that $\lim_{i\ra\iy}\la_i=\iy$ in
\eq{il3eq6}--\eq{il3eq9} is equivalent to saying that for all $C\ge
0$, there are only finitely many $(\la_i,\mu_i)$ or $\la_i$ in the
sums with $\la_i\le C$. We will often write similar conditions this
way. Define {\it filtrations\/} of $\La_\nov^*,\La_\CY^*$ by
\begin{align*}
F^\la\La_\nov^*&=\bigl\{\ts\sum_{i=0}^\iy
a_iT^{\la_i}e^{\mu_i}\in\La_\nov^*:\text{$\la_i\ge\la$ for all
$i=0,1,\ldots$}\bigr\},\\
F^\la\La_\CY^*&=\bigl\{\ts\sum_{i=0}^\iy
a_iT^{\la_i}\in\La_\CY^*:\text{$\la_i\ge\la$ for all
$i=0,1,\ldots$}\bigr\},
\end{align*}
for $\la\in\R$. Then $F^\la\La_\nov^*\subseteq F^\nu\La_\nov^*$ if
$\la\ge\nu$, and $(F^\la\La_\nov^*)\cdot(F^\nu\La_\nov^*)
=F^{\la+\nu}\La_\nov^*$, and $\La_\nov^0=F^0\La_\nov$,
and~$F^\la\La_\nov=T^\la\La_\nov^0$.

These filtrations induce {\it topologies} on $\La_\nov^*,\La_\CY^*$,
and notions of {\it convergence\/} for sequences and series, which
have nothing to do with the topology on $\Q$ or convergence in $\Q$.
An infinite sum $\sum_{k=0}^\iy\al_k$ in $\La_\nov^*$ converges in
$\La_\nov^*$ if and only if for all $\la\in\R$ we have $\al_k\in
F^\la\La_\nov^*$ for all except finitely many~$k=0,1,2,\ldots$.

As $T,e$ are graded of degrees 0,2, we can regard
$\La_\nov,\La_\nov^0$ as {\it graded\/} rings. Write
$\La^{(k)}_\nov,\La_\nov^{0\,(k)}$ for the degree $k$ parts of
$\La_\nov,\La_\nov^0$, for $k\in\Z$. Then
\begin{equation*}
\La_\nov^{(2k)}=\bigl\{\ts\sum_{i=0}^\iy a_iT^{\la_i}e^k:
\text{$a_i\in\Q$, $\la_i\in\R$, $\lim_{i\ra\iy}\la_i=\iy$}\bigr\},
\quad \La^{(2k+1)}_\nov=0,
\end{equation*}
for $k\in\Z$. Note that $\nu\in\La_\nov$ can have nonzero components
$\nu^{(2k)}\in\La_\nov^{(2k)}$ for infinitely many $k\in\Z$, but
$\nu=\sum_{k\in\Z}\nu^{(2k)}$ holds as a convergent sum in
$\La_\nov$. Identifying $e^0=1$ gives $\La_\CY=\La_\nov^{(0)}$
and~$\La^0_\CY=\La_\nov^{0\,(0)}$.

We can also consider {\it modules\/} over $\La_\nov^*$ and
$\La_\CY^*$. In this paper, all modules we consider will be of the
form $V\ot_\Q\La_\nov^*$ and $V\ot_\Q\La_\CY^*$, where
$V=\bigop_{k\in\Z}V^k$ is a finite-dimensional graded $\Q$-vector
space. Then $V\ot\La_\nov^*$ is {\it graded\/} with grading
$(V\ot\La_\nov^*)^l=\bigop_{j+k=l}V^j\ot\La_\nov^{*\,(k)}$, and {\it
filtered\/} with filtration $F^\la(V\ot\La^*_\nov) =V\ot
F^\la\La^*_\nov$ for $\la\in\R$. As $V$ is finite-dimensional, we do
not need to take the {\it completion} $V\hat\ot\La_\nov^*$ with
respect to the filtration of $V\ot\La_\nov^*$, as Fukaya et al.\ do
\cite{FOOO}, since $V\ot\La_\nov^*$ is already complete.
\label{il3dfn9}
\end{dfn}

\subsection{Gapped filtered $A_\iy$ algebras}
\label{il35}

Next we define {\it gapped filtered\/ $A_\iy$ algebras}, following
Fukaya et al.\ \cite[\S 7.2]{FOOO}, and extend the material of
\S\ref{il32}--\S\ref{il33} to them. The rest of the section,
\S\ref{il35}--\S\ref{il37}, can be done either over $\La_\nov^0$ or
$\La_\CY^0$. We shall work over $\La_\nov^0$, as it is more general;
the changes for the $\La_\CY^0$ case are obvious. For instance, in
Definition \ref{il3dfn10}(i) for $\La_\CY^0$ we would take ${\cal
G}\subset[0,\iy)$ closed under addition with $0\in\cal G$ and ${\cal
G}\cap[0,C]$ finite for $C\ge 0$, and write~$\m_k=\sum_{\la\in{\cal
G}}T^\la \m_k^{\smash{\la}}$.

\begin{dfn} A {\it gapped filtered\/ $A_\iy$ algebra}
$(A\ot\La_\nov^0,\m)$ consists of:
\begin{itemize}
\setlength{\itemsep}{0pt}
\setlength{\parsep}{0pt}
\item[(a)] A $\Z$-graded $\Q$-vector space $A=\bigop_{d\in\Z}A^d$,
so that $A\ot\La_\nov^0$ is a graded filtered $\La_\nov^0$-module.
\item[(b)] Graded $\La_\nov^0$-multilinear maps
$\smash{\m_k:{\buildrel {\ulcorner\,\,\,\text{$k$ copies }
\,\,\,\urcorner} \over {\vphantom{m}\smash{(A\ot\La_\nov^0)
\t\cdots\t (A\ot\La_\nov^0)}}}\ra} A\ot\La_\nov^0$ for
$k=0,1,2,\ldots$, of degree $+1$. Write~$\m=(\m_k)_{k\ge 0}$.
\end{itemize}
These must satisfy the following conditions:
\begin{itemize}
\setlength{\itemsep}{0pt}
\setlength{\parsep}{0pt}
\item[(i)] there exists a subset ${\cal G}\subset[0,\iy)\t\Z$, closed
under addition, such that ${\cal G}\cap(\{0\}\t\Z)=\{(0,0)\}$ and
${\cal G}\cap([0,C]\t\Z)$ is finite for any $C\ge 0$, and the maps
$\m_k$ for $k\ge 0$ may be written
$\smash{\m_k=\sum_{(\la,\mu)\in{\cal G}}T^\la
e^\mu\m_k^{\smash{\la,\mu}}}$, for unique $\Q$-multilinear maps
$\m_k^{\smash{\la,\mu}}:{\buildrel {\ulcorner\,\,\,\text{$k$ copies
} \,\,\,\urcorner} \over {\vphantom{m}\smash{A\t\cdots\t A}}}\ra A$
graded of degree $1-2\mu$. When $k=0$, we take
$\m_0\in(A\ot\La_\nov^0)^{(1)}$ and $\m_0^{\smash{\la,\mu}}\in
A^{1-2\mu}$;
\item[(ii)] $\m_0^{\smash{0,0}}=0$, in the notation of (i); and
\item[(iii)] call $a\in A\ot\La_\nov^0$ {\it pure} if $a\in
(A\ot\La_\nov^0)^{(d)}\sm\{0\}$ for some $d\in\Z$, and then define
the {\it degree} of $a$ to be $\deg a=d$. Then we require that for
all $k\ge 0$ and all pure $a_1,\ldots,a_k$ in $A\ot\La_\nov^0$,
equation \eq{il3eq1} holds.
\end{itemize}
There is a unique smallest choice of subset $\cal G$ satisfying (i).
Part (iii) may be rewritten in terms of the $\m_k^{\smash{\la,\mu}}$
as follows: for all $k\ge 0$, all $(\la,\mu)\in{\cal G}$ and all
pure $a_1,\ldots,a_k$ in $A$, we have
\begin{equation}
\begin{gathered}
\sum_{\begin{subarray}{l}i,k_1,k_2,\la_1,\la_2,\mu_1,\mu_2:\;
1\le i\le k_1, k_2\ge 0,\\
 k_1+k_2=k+1,\;\la_1+\la_2=\la,\; \mu_1+\mu_2=\mu\end{subarray}
\!\!\!\!\!\!\!\!\!\!\!\!\!\!\!\!\!\!\!\!\!\!\!\!\!\! }
\!\!\!\!\!\!\!\!\!\!\!\!\!\!\!\!\!\!\!
\begin{aligned}[t]
(-1)^{\sum_{l=1}^{i-1}\deg
a_l}\m_{k_1}^{\la_1,\mu_1}(a_1,\ldots,a_{i-1},\m_{k_2}^{\la_2,\mu_2}
(a_i,\ldots,a_{i+k_2-1}), \\
a_{i+k_2}\ldots,a_k)=0.
\end{aligned}
\end{gathered}
\label{il3eq10}
\end{equation}
Note that a gapped filtered $A_\iy$ algebra $(A\ot\La_\nov^0,\m)$ is
a {\it weak $A_\iy$ algebra\/} in the sense of Definition
\ref{il3dfn1}, with extra structure. Also, if $(\la,\mu)=(0,0)$ then
as ${\cal G}\cap(\{0\}\t\Z)=\{(0,0)\}$, equation \eq{il3eq10}
reduces to
\begin{equation*}
\sum_{\begin{subarray}{l}i,k_1,k_2:1\le i\le k_1,\\
k_2\ge 0,\; k_1+k_2=k+1\end{subarray}}\!\!\!\!
\begin{aligned}[t]
(-1)^{\sum_{l=1}^{i-1}\deg
a_l}\m_{k_1}^{\smash{0,0}}(a_1,\ldots,a_{i-1},\m_{k_2}^{\smash{0,0}}
(a_i,\ldots,a_{i+k_2-1}), \\
a_{i+k_2}\ldots,a_k)=0,
\end{aligned}
\end{equation*}
for all $k\ge 0$ and all pure $a_1,\ldots,a_k$ in $A$. Thus, if
$(A\ot\La_\nov^0,\m)$ is a gapped filtered $A_\iy$ algebra, then
$(A,\m^{\smash{0,0}})$ is an $A_\iy$ {\it algebra}, where
$\m^{\smash{0,0}}=(\m_k^{\smash{0,0}})_{k\ge 0}$. In particular,
$(A,\m_1^{\smash{0,0}})$ is a complex, and we can form its
cohomology $H^*(A,\m_1^{\smash{0,0}})$. Generalizing \S\ref{il33},
we call a gapped filtered $A_\iy$ algebra $(A\ot\La_\nov^0,\m)$ {\it
minimal\/} if~$\m_1^{\smash{0,0}}=0$.

If $A$ is infinite-dimensional then in general we should replace
$A\ot\La_\nov^0$ by the {\it completion\/} $A\hat\ot\La_\nov^0$ of
$A\ot\La_\nov^0$ with respect to the filtration
$F^\la(A\ot\La_\nov^0)$, $\la\ge 0$, as in Fukaya et al.\
\cite{FOOO}. If we do not, then infinite sums in $A\ot\La_\nov^0$
such as $\m_k(a_1,\ldots,a_k)=\sum_{(\la,\mu)\in {\cal G}}T^\la
e^\mu\m_k^{\smash{\la,\mu}}(a_1,\ldots,a_k)$ from (i) need not
converge in $A\ot\La_\nov^0$. But as we work only with
finite-dimensional $A$, for which $A\ot\La_\nov^0$ is already
complete, we shall ignore this point.

A gapped filtered $A_\iy$ algebra $(A\ot\La_\nov^0,\m)$ is called
{\it strict\/} if $\m_0=0$. Then (iii) implies that $\m_1\ci\m_1=0$,
so $(A\ot\La_\nov^0,\m_1)$ is a complex of $\La_\nov^0$-modules, and
we can form its {\it cohomology} $H^*(A\ot\La_\nov^0,\m_1)$, which
is a graded filtered $\La_\nov^0$-module. Also,
$(A\ot\La_\nov,\m_1)$ is a complex of $\La_\nov$-modules, whose
cohomology $H^*(A\ot\La_\nov,\m_1)$ is a graded filtered
$\La_\nov$-module. These are the kinds of cohomology we will use to
define Lagrangian Floer cohomology.
\label{il3dfn10}
\end{dfn}

The term {\it gapped\/} \cite[Def.~7.26]{FOOO} refers to condition
(i) above. This structure arises naturally in $J$-holomorphic curve
problems, and is useful for inductive arguments. We generalize
Definitions \ref{il3dfn3}--\ref{il3dfn5} to the gapped filtered
case.

\begin{dfn} Let $(A\ot\La_\nov^0,\m)$ and
$(B\ot\La_\nov^0,\n)$ be gapped filtered $A_\iy$ algebras. A {\it
gapped filtered\/ $A_\iy$ morphism}
$\f:(A\ot\La_\nov^0,\m)\ra(B\ot\La_\nov^0,\n)$ is $\f=(\f_k)_{k\ge
0}$, where $\f_k:{\buildrel {\ulcorner\,\,\,\text{$k$ copies }
\,\,\,\urcorner} \over {\vphantom{m}\smash{(A\ot\La_\nov^0)\t\cdots
\t(A\ot\La_\nov^0)}}}\ra B\ot\La_\nov^0$ for $k=0,1,\ldots$ are
graded $\La_\nov^0$-multilinear maps of degree 0, satisfying
\begin{itemize}
\setlength{\itemsep}{0pt}
\setlength{\parsep}{0pt}
\item[(i)] there exists a subset ${\cal G}'\subset[0,\iy)\t\Z$, closed
under addition, such that ${\cal G}'\cap(\{0\}\t\Z)=\{(0,0)\}$ and
${\cal G}'\cap([0,C]\t\Z)$ is finite for any $C\ge 0$, and the maps
$\f_k$ for $k\ge 0$ may be written
$\smash{\f_k=\sum_{(\la,\mu)\in{\cal G}'}T^\la
e^\mu\f_k^{\smash{\la,\mu}}}$, for unique $\Q$-multilinear maps
$\f_k^{\smash{\la,\mu}}:{\buildrel{\ulcorner\,\,\,\text{$k$ copies }
\,\,\,\urcorner} \over {\vphantom{m}\smash{A\t\cdots\t A}}}\ra B$
graded of degree $-2\mu$. When $k=0$, we take
$\f_0\in(B\ot\La_\nov^0)^{(0)}$ and $\f_0^{\smash{\la,\mu}}\in
B^{-2\mu}$;
\item[(ii)] $\f_0^{\smash{0,0}}=0$, in the notation of (i); and
\item[(iii)] for all $k\ge 0$ and pure $a_1,\ldots,a_k$ in
$A\ot\La_\nov^0$, we have
\begin{equation}
\begin{gathered}
\sum_{1\le i\le j\le k}
\begin{aligned}[t](-1)^{\sum_{l=1}^{i-1}\deg a_l}
\f_{k-j+i+1}\bigl(a_1,\ldots,a_{i-1},&\\
\m_{j-i}(a_i,\ldots,a_{j-1}),a_j,\ldots,a_k\bigr)&
\end{aligned}\\
=\sum_{0\le k_1\le k_2\le \cdots\le k_l=k}
\begin{aligned}[t]
\n_l\bigl(\f_{k_1}(a_1,\ldots,a_{k_1}),
\f_{k_2-k_1}(a_{k_1+1},\ldots,a_{k_2}),&\\
\ldots,\f_{k_l-k_{l-1}}(a_{k_{l-1}+1},\ldots,a_{k_l})\bigr).&
\end{aligned}
\end{gathered}
\label{il3eq11}
\end{equation}
\end{itemize}
As for \eq{il3eq10}, equation \eq{il3eq11} may be rewritten in terms
of the $\f_k^{\smash{\la,\mu}},\m_k^{\smash{\la,\mu}},
\n_k^{\smash{\la,\mu}}$ as
\begin{equation}
\begin{gathered}
\sum_{\begin{subarray}{l} 1\le i\le j\le k\\
\la_1+\la_2=\la,\; \mu_1+\mu_2=\mu\end{subarray}}
\begin{aligned}[t](-1)^{\sum_{l=1}^{i-1}\deg a_l}
\f_{k-j+i+1}^{\la_1,\mu_1}\bigl(a_1,\ldots,a_{i-1},&\\
\m_{j-i}^{\la_2,\mu_2}(a_i,\ldots,a_{j-1}),a_j,\ldots,a_k\bigr)&
\end{aligned}\\
=\sum_{\begin{subarray}{l} 0\le k_1\le k_2\le \cdots\le k_l=k\\
\la_0+\cdots+\la_l=\la,\; \mu_0+\cdots+\mu_l=\mu\end{subarray}}
\begin{aligned}[t]
\n_l^{\la_0,\mu_0}\bigl(\f_{k_1}^{\la_1,\mu_1}(a_1,\ldots,a_{k_1}),
\f_{k_2-k_1}^{\la_2,\mu_2}(a_{k_1+1},\ldots,a_{k_2}),&\\
\ldots,\f_{k_l-k_{l-1}}^{\la_l,\mu_l}(a_{k_{l-1}+1},\ldots,a_{k_l})\bigr),&
\end{aligned}
\end{gathered}
\label{il3eq12}
\end{equation}
for all $k\ge 0$, pure $a_1,\ldots,a_k$ in $A$, $\la\ge 0$
and~$\mu\in\Z$.

Note the difference between \eq{il3eq2} and \eq{il3eq11}: because we
now allow $\f_0$ to be nonzero, the second line of \eq{il3eq11} is a
sum over $0\le k_1\le k_2\le \cdots\le k_l=n$ rather than over $0<
k_1<k_2<\cdots<k_l=n$. Thus, the second line of \eq{il3eq11} is an
{\it infinite} sum, as for instance it includes the terms
$\n_l(\f_0,\ldots,\f_0,\f_n(a_1,\ldots,a_n))$ for all $l\ge 1$. We
claim that the second line of \eq{il3eq11} is a {\it convergent
sum\/} in the complete filtered $\La_\nov^0$-module
$B\ot\La_\nov^0$, in the sense of \S\ref{il34}. This is more-or-less
equivalent to \eq{il3eq12} being finite sums for all~$\la,\mu$.

To see this, let $\la_0=\min_{(0,0)\ne(\la,n)\in{\cal G}'}\la$,
which is well-defined and positive by (i), unless ${\cal
G}'=\{(0,0)\}$, in which case $\f_0=0$ and the result is trivial.
Then $\f_0\in F^{\la_0}(B\ot\La_\nov^0)$ by (ii). Now for any given
$N\ge 0$, there are only finitely many terms in the second line of
\eq{il3eq11} including fewer that $N$ $\f_0$'s. Thus, there are only
finitely many terms which do not lie in
$F^{N\la_0}(B\ot\La_\nov^0)$. Since $N\la_0\ra\iy$ as $N\ra\iy$,
this implies that for any $\la\in[0,\iy)$, all but finitely many
terms in the second line of \eq{il3eq11} lie in
$F^\la(B\ot\La_\nov^0)$, so it is a convergent sum.

A gapped filtered $A_\iy$ morphism $\f:(A\ot\La_\nov^0,\m)
\ra(B\ot\La_\nov^0,\n)$ is called {\it strict\/} if $\f_k=0$ for
$k\ne 1$, and a {\it gapped filtered\/ $A_\iy$ isomorphism} if
$\f_1:A\ot\La_\nov^0\ra B\ot\La_\nov^0$ is an isomorphism.

If $\f:(A\ot\La_\nov^0,\m)\ra(B\ot\La_\nov^0,\n)$ is a gapped
filtered $A_\iy$ morphism, then
$\f^{\smash{0,0}}:(A,\m^{\smash{0,0}})\ra (B,\n^{\smash{0,0}})$ is
an $A_\iy$ morphism, where
$\f^{\smash{0,0}}=(\f^{\smash{0,0}}_k)_{k\ge 1}$. We call $\f$ a
{\it weak homotopy equivalence} of gapped filtered $A_\iy$ algebras
if $\f^{\smash{0,0}}:(A,\m^{\smash{0,0}})\ra(B,\n^{\smash{0,0}})$ is
a weak homotopy equivalence of $A_\iy$ algebras in the sense of
Definition \ref{il3dfn3}, that is, if $\f^{\smash{0,0}}_1$ induces
an isomorphism~$H^*(A,\m^{\smash{0,0}}_1)\ra
H^*(B,\n^{\smash{0,0}}_1)$.

If $(A\ot\La_\nov^0,\m),(B\ot\La_\nov^0,\n),
(C\ot\La_\nov^0,{\mathfrak o})$ are gapped filtered $A_\iy$ algebras
and $\f:(A\ot\La_\nov^0,\m)\ra(B\ot\La_\nov^0,\n)$,
$\g:(B\ot\La_\nov^0,\n)\ra(C\ot\La_\nov^0,{\mathfrak o})$ are gapped
filtered $A_\iy$ morphisms, the {\it composition}
$\g\ci\f:(A\ot\La_\nov^0,\m)\ra(C\ot\La_\nov^0,{\mathfrak o})$ is
\begin{equation}
\begin{gathered}
(\g\ci\f)_n(a_1,\ldots,a_n)=\!\!\!\!\sum_{0\le k_1\le k_2\le
\cdots\le k_l=n}\!\!\!\!\!\!\!\!\!\!\!\!
\begin{aligned}[t]\g_l\bigl(\f_{k_1}(a_1,\ldots,a_{k_1}),
\f_{k_2-k_1}(a_{k_1+1},\ldots,a_{k_2}),&\\
\ldots,\f_{k_l-k_{l-1}}(a_{k_{l-1}+1},\ldots,a_{k_l})\bigr)&,
\end{aligned}
\end{gathered}
\label{il3eq13}
\end{equation}
which is \eq{il3eq4} but allowing equalities in the sum over $0<
k_1<k_2<\cdots<k_l=n$. As for \eq{il3eq11}, this is an infinite,
convergent sum. Composition is associative.

Let $\f,\g:(A\ot\La_\nov^0,\m)\ra(B\ot\La_\nov^0,\n)$ be gapped
filtered $A_\iy$ morphisms of gapped filtered $A_\iy$ algebras. A
{\it homotopy} from $\f$ to $\g$ is $\H=(\H_k)_{k\ge 0}$, where
$\H_k:{\buildrel {\ulcorner\,\,\,\text{$k$ copies } \,\,\,\urcorner}
\over {\vphantom{m}\smash{(A\ot\La_\nov^0)\t\cdots\t
(A\ot\La_\nov^0)}}}\ra B\ot\La_\nov^0$ for $k=0,1,\ldots$ are graded
$\La_\nov^0$-multilinear maps of degree $-1$, satisfying
\begin{itemize}
\setlength{\itemsep}{0pt}
\setlength{\parsep}{0pt}
\item[(i)] there exists a subset ${\cal G}''\subset[0,\iy)\t\Z$, closed
under addition, such that ${\cal G}''\cap(\{0\}\t\Z)=\{(0,0)\}$ and
${\cal G}''\cap([0,C]\t\Z)$ is finite for any $C\ge 0$, and the maps
$\H_k$ for $k\ge 0$ may be written
$\smash{\H_k=\sum_{(\la,\mu)\in{\cal G}''}T^\la
e^\mu\H_k^{\smash{\la,\mu}}}$, for unique $\Q$-multilinear maps
$\H_k^{\smash{\la,\mu}}:{\buildrel{\ulcorner\,\,\,\text{$k$ copies }
\,\,\,\urcorner} \over {\vphantom{m}\smash{A\t\cdots\t A}}}\ra B$
graded of degree $-1-2\mu$. When $k=0$, we take
$\H_0\in(B\ot\La_\nov^0)^{(-1)}$ and $\H_0^{\smash{\la,\mu}}\in
B^{-1-2\mu}$;
\item[(ii)] $\H_0^{\smash{0,0}}=0$, in the notation of (i); and
\item[(iii)] for all $n\ge 0$ and pure $a_1,\ldots,a_n$ in
$A\ot\La_\nov^0$, we have

\begin{gather}
\f_n(a_1,\ldots,a_n)-\g_n(a_1,\ldots,a_n)=
\nonumber\\
\sum_{\substack{0\le j_1\le j_2\le \cdots\le j_l\le\\
k_1\le k_2\le\cdots\le k_m=n}}
\begin{aligned}[t]
&\n_{l+m+1}\bigl(\f_{j_1}(a_1,\ldots,a_{j_1}),
\f_{j_2-j_1}(a_{j_1+1},\ldots,a_{j_2}),\ldots,\\
&\f_{j_l-j_{l-1}}\!(a_{j_{l-1}+1},\ldots,a_{j_l}),
\H_{k_1-j_l}(a_{j_l+1},\ldots,a_{k_1}), \\
&\g_{k_2-k_1}(a_{k_1+1},\ldots,a_{k_2}),\ldots,
\g_{k_m-k_{m-1}}(a_{k_{m-1}+1},\ldots,a_{k_m})\bigr)
\end{aligned}
\label{il3eq14}
\\
+\!\!\!\!\sum_{0\le i\le j\le n}\!\!\!\!\!\! (-1)^{\sum_{l=1}^i\deg
a_l} \H_{n-j+i+1}\bigl(a_1,\ldots,a_i,
\m_{j-i}(a_{i+1},\ldots,a_j),a_{j+1},\ldots,a_n\bigr), \nonumber
\end{gather}
which is \eq{il3eq5}, but allowing equalities in
$0<j_1<\cdots<k_m=n$ and $0\le i<j\le n$. As for \eq{il3eq11} and
\eq{il3eq13}, \eq{il3eq14} is a convergent infinite sum.
\end{itemize}

Let $\f:(A\ot\La_\nov^0,\m)\ra(B\ot\La_\nov^0,\n)$ be a gapped
filtered $A_\iy$ morphism. A {\it homotopy inverse} for $\f$ is a
gapped filtered $A_\iy$ morphism
$\g:(B\ot\La_\nov^0,\n)\ra(A\ot\La_\nov^0, \m)$ such that
$\g\ci\f:(A\ot\La_\nov^0,\m)\ra(A\ot \La_\nov^0,\m)$ is homotopic to
$\id_A:(A\ot\La_\nov^0,\m)\ra(A\ot\La_\nov^0,\m)$, and
$\f\ci\g:(B\ot\La_\nov^0,\n)\ra(B\ot\La_\nov^0,\n)$ is homotopic to
$\id_B:(B\ot\La_\nov^0,\n)\ra(B\ot\La_\nov^0,\n)$. If $\f$ has a
homotopy inverse, we call $\f$ a {\it homotopy equivalence}, and we
call $(A\ot\La_\nov^0,\m), (B\ot\La_\nov^0,\n)$ {\it homotopic}.
\label{il3dfn11}
\end{dfn}

Here is the analogue of Theorem \ref{il3thm1}, due to Fukaya et
al.~\cite[Th.~15.45(2)]{FOOO}.

\begin{thm} Let\/ $(A\ot\La_\nov^0,\m),(B\ot\La_\nov^0,\n)$
be gapped filtered $A_\iy$ algebras. Then
\begin{itemize}
\setlength{\itemsep}{0pt}
\setlength{\parsep}{0pt}
\item[{\rm(a)}] Homotopy is an equivalence relation
on gapped filtered $A_\iy$ morphisms
$\f:(A\ot\La_\nov^0,\m)\ra(B\ot\La_\nov^0,\n)$.
\item[{\rm(b)}] Homotopy is an equivalence relation
on gapped filtered $A_\iy$ algebras.
\item[{\rm(c)}] A gapped filtered $A_\iy$ morphism
$\f:(A\ot\La_\nov^0,\m)\ra(B\ot\La_\nov^0,\n)$ is a homotopy
equivalence if and only if it is a weak homotopy equivalence.
\end{itemize}
\label{il3thm3}
\end{thm}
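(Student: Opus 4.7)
The plan is to deduce Theorem 3.3 from the ungapped Theorem 3.1 by induction over the gap semigroup $\cal G$. For any gapped filtered $A_\iy$ algebra $(A\ot\La_\nov^0,\m)$ and any $E>0$, the set ${\cal G}\cap([0,E]\t\Z)$ is finite by Definition 3.10(i), so we can enumerate ${\cal G}=\{(0,0)=(\la_0,\mu_0),(\la_1,\mu_1),(\la_2,\mu_2),\ldots\}$ with $\la_0\le\la_1\le\la_2\le\cdots$ and $\la_i\ra\iy$. The general strategy is to solve the $A_\iy$ relations, morphism equations \eq{il3eq12}, and homotopy equations \eq{il3eq14} level by level in this order, at each step using Theorem 3.1 on the $(0,0)$-parts $(A,\m^{\smash{0,0}})$, $(B,\n^{\smash{0,0}})$ to solve the leading-order equation, while the higher $(\la_i,\mu_i)$ contributions amount to a finite correction at each induction stage.

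For part (a), reflexivity is witnessed by $\H\equiv 0$. For symmetry and transitivity, given $\H:\f\Ra\g$ (resp.\ $\H:\f\Ra\g$ and ${\mathfrak I}:\g\Ra\h$), one constructs the reverse (resp.\ composite) homotopy ${\mathfrak K}=({\mathfrak K}_k)_{k\ge 0}$ by first applying Theorem 3.1(a) at the $(0,0)$-level to obtain ${\mathfrak K}^{\smash{0,0}}$, and then inductively defining ${\mathfrak K}^{\smash{\la_i,\mu_i}}_k$ for $i\ge 1$. At stage $i$, equation \eq{il3eq14} expanded by component as in \eq{il3eq12} becomes a finite linear equation for the new unknowns ${\mathfrak K}^{\smash{\la_i,\mu_i}}_k$ with right-hand side determined by previously constructed data; this right-hand side is an $\m^{\smash{0,0}}_1$-cocycle in the Hochschild-type complex whose cohomology vanishes on the relevant class, and hence a solution ${\mathfrak K}^{\smash{\la_i,\mu_i}}_k$ exists. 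Part (b) then follows formally from (a): reflexivity uses the identity, symmetry is built into Definition 3.11, and transitivity of homotopy equivalence uses the composition \eq{il3eq13} and its associativity.

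For part (c), the forward direction is immediate: if $\g$ is a homotopy inverse for $\f$, applying $(\cdot)^{\smash{0,0}}$ and using Theorem 3.1(a) to pass homotopies to the $(0,0)$-level shows $\g^{\smash{0,0}}\ci\f^{\smash{0,0}}$ and $\f^{\smash{0,0}}\ci\g^{\smash{0,0}}$ are homotopic to identities, so $(\f_1^{\smash{0,0}})_*$ is an isomorphism by Theorem 3.1(c). For the converse, assuming $\f$ is a weak homotopy equivalence, Theorem 3.1(c) supplies an $A_\iy$ homotopy inverse $\ti\g:(B,\n^{\smash{0,0}})\ra(A,\m^{\smash{0,0}})$ for $\f^{\smash{0,0}}$. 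One then constructs a gapped filtered $A_\iy$ morphism $\g:(B\ot\La_\nov^0,\n)\ra(A\ot\La_\nov^0,\m)$ with $\g^{\smash{0,0}}=\ti\g$ by inductively defining the components $\g^{\smash{\la_i,\mu_i}}_k$ for $i\ge 1$ so that \eq{il3eq12} holds order by order. The obstruction to extending at each stage is a cocycle in a Hochschild cochain complex of $(A,\m^{\smash{0,0}})$ with coefficients in $B$, and the hypothesis that $\ti\g$ is a quasi-inverse makes this obstruction a coboundary, allowing the extension. Finally one checks $\f\ci\g\sim\id$ and $\g\ci\f\sim\id$ by the same level-by-level argument applied to \eq{il3eq14}.

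The main obstacle is controlling the obstruction theory in part (c): at each $(\la_i,\mu_i)$, one must verify that the finite system of linear equations for the new components $\g^{\smash{\la_i,\mu_i}}_k$ is consistent, i.e.\ that the accumulated right-hand side is exact in the appropriate Hochschild-type complex of $(A,\m^{\smash{0,0}})$. Here the convergence issues flagged after \eq{il3eq11} are not an obstruction, since at each finite stage only finitely many terms contribute (by \eq{il3eq12}); the genuine content is the cohomological extension step, which rests on $\f^{\smash{0,0}}$ being a quasi-isomorphism and is exactly parallel to, though more elaborate than, the inductive construction of minimal models via sums over planar trees in \S3.3.
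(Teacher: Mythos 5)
The paper itself gives no proof of Theorem \ref{il3thm3}: it is quoted from Fukaya--Oh--Ohta--Ono \cite[Th.~15.45(2)]{FOOO}, so your argument has to stand on its own. Your overall shape is reasonable and is indeed the standard one — enumerate $\cal G$ by increasing energy, use Theorem \ref{il3thm1} at the $(0,0)$-level, and observe that at each level $(\la_i,\mu_i)$ the equations \eq{il3eq12} or \eq{il3eq14} are affine-linear in the new unknowns with only finitely many terms. The genuine gap is the step you invoke at every stage of (a) and of the converse in (c): you assert that the accumulated right-hand side ``is an $\m_1^{\smash{0,0}}$-cocycle in the Hochschild-type complex whose cohomology vanishes on the relevant class'', respectively that ``the hypothesis that $\ti\g$ is a quasi-inverse makes this obstruction a coboundary''. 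The linear operator acting on ${\mathfrak K}_k^{\smash{\la_i,\mu_i}}$ or $\g_k^{\smash{\la_i,\mu_i}}$ is the Hochschild-type differential on $\Hom\bigl(T(A),B\bigr)$ built from $\m^{\smash{0,0}},\n^{\smash{0,0}}$ and twisted by $\f^{\smash{0,0}},\g^{\smash{0,0}}$, and the cohomology of that complex does \emph{not} vanish in general (already for $A=B$, $\f=\g=\id$ and $\m$ with only $\m_1,\m_2$ it is a Hochschild cohomology); nor does $\f^{\smash{0,0}}$ being a quasi-isomorphism make every cocycle exact. So the vanishing of the specific obstruction class at each level is exactly the content of the theorem, not a consequence of general position, and as written your induction does not close: nothing rules out a nonzero class at some level, and killing it in honest proofs requires either constructing the correcting homotopies simultaneously with the morphism, revisiting lower-level choices, or extra structure such as filtered path objects (``models of $[0,1]\t B$''), which is how \cite{FOOO} proceed.

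A repair is available inside the paper's own toolkit, at least for (c): use Theorem \ref{il3thm4} to replace both algebras by minimal models. Between minimal gapped filtered $A_\iy$ algebras, a weak homotopy equivalence has $\f_1^{\smash{0,0}}$ a linear isomorphism, hence $\f_1$ is invertible over $\La_\nov^0$ by completeness of the filtration, and then a genuine inverse gapped filtered morphism can be built level by level with \emph{no} obstruction, since each equation has the form $\f_1^{\smash{0,0}}\ci\g_k^{\smash{\la_i,\mu_i}}=(\text{known data})$ and is uniquely solvable; transporting back along the homotopy equivalences of Theorem \ref{il3thm4} then gives (c), but this uses (a), (b) and the whiskering compositions $\h\ci\H$ and $\H\ci\f$ of homotopies with morphisms in the filtered setting (convergent by the same $F^\la$-argument as for \eq{il3eq11}), which you use implicitly in (b) without constructing. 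For (a), symmetry and transitivity with the explicit-formula Definition \ref{il3dfn11}(iii) remain genuinely nontrivial and need either the path-object approach of \cite{FOOO} or a reduction of the same kind; your reflexivity argument ($\H\equiv 0$) and the forward direction of (c) are fine.
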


We can also generalize the ideas of \S\ref{il33} to the gapped
filtered case. Here are the analogues of Definition \ref{il3dfn7}
and Theorem~\ref{il3thm2}.

\begin{dfn} Let $(A\ot\La_\nov^0,\m)$ be a gapped filtered
$A_\iy$ algebra. Then $\bigl(A,\m_1^{\smash{0,0}}\bigr)$ is a
complex. Let $B$ be a graded vector subspace of $A$ closed under
$\m_1^{\smash{0,0}}$, such that the inclusion $i:B\hookra A$ induces
an isomorphism $i_*:H^*\bigl(B,\m_1^{\smash{0,0}}\vert_B\bigr)\ra
H^*\bigl(A,\m_1^{\smash{0,0}}\bigr)$. We will construct
$\n=(\n_k)_{k\ge 0}$ making $(B\ot\La_\nov^0,\n)$ into a gapped
filtered $A_\iy$ algebra homotopic to~$(A\ot\La_\nov^0,\m)$.

Since $i_*$ is an isomorphism, we can choose a graded vector
subspace $C$ of $A$ such that $C\cap\Ker\m_1^{\smash{0,0}}=\{0\}$
and $A=B\op C\op\m_1^{\smash{0,0}}(C)$. Then
$\m_1^{\smash{0,0}}:C\ra\m_1^{\smash{0,0}}(C)$ is invertible, so
there is a unique graded linear map $H:A\ra A$ of degree $-1$ with
$H(b)=H(c)=0$ and $H\ci\m_1^{\smash{0,0}}(c)=c$ for all $b\in B$ and
$c\in C$. Let $\Pi_B:A\ra B$ be the projection, with kernel
$C\op\m_1^{\smash{0,0}}(C)$. Then $\id_A-\Pi_B=\m_1^{\smash{0,0}}\ci
H+H\ci\m_1^{\smash{0,0}}$ on $A$. Let
$\hat\imath:B\ot\La_\nov^0\hookra A\ot\La_\nov^0$, $\hat
H:A\ot\La_\nov^0\ra A\ot\La_\nov^0$ and $\hat\Pi_B:A\ot\La_\nov^0\ra
B\ot\La_\nov^0$ be the $\La_\nov^0$-linear extensions
of~$i,H,\Pi_B$.

For each planar rooted tree $T$ with $k$ leaves, define a graded
multilinear operator $\n_{k,T}:{\buildrel {\ulcorner\,\,\,\text{$k$
copies } \,\,\,\urcorner} \over {\vphantom{m}\smash{(B\ot
\La_\nov^0)\t\cdots\t(B\ot\La_\nov^0)}}}\ra B\ot\La_\nov^0$ of
degree $+1$, as follows. To define $\n_{k,T}(b_1,\ldots,b_k)$,
assign objects and operators to the vertices and edges of $T$:
\begin{itemize}
\setlength{\itemsep}{0pt}
\setlength{\parsep}{0pt}
\item assign $b_1,\ldots,b_k$ to the leaf vertices $1,\ldots,k$
respectively.
\item for each internal vertex with 1 outgoing edge and $n$ incoming
edges, $n\ne 1$, assign~$\m_n$.
\item for each internal vertex with 1 outgoing edge and 1 incoming
edge, assign~$\m_1-\m_1^{\smash{0,0}}$.
\item assign $\hat\imath$ to each leaf edge.
\item assign $\hat\Pi_B$ to the root edge.
\item assign $-\hat H$ to each internal edge.
\end{itemize}
Let $\n_{k,T}(b_1,\ldots,b_k)$ be the composition of all these
objects and morphisms, as in Definition \ref{il3dfn7}. Define
$\n_k:{\buildrel {\ulcorner\,\,\,\text{$k$ copies } \,\,\,\urcorner}
\over {\vphantom{m}\smash{(B\ot\La_\nov^0)\t\cdots
\t(B\ot\La_\nov^0)}}}\ra B\ot\La_\nov^0$ by
\begin{equation}
\n_k=\begin{cases} \m_1^{\smash{0,0}}+
\sum_T\n_{1,T}, & k=1, \\
\sum_T\n_{k,T}, & k=0,2,3,4,\ldots,
\end{cases}
\label{il3eq15}
\end{equation}
where the sums are over all planar rooted trees $T$ with $k$ leaves.

The sums in \eq{il3eq15} are {\it infinite} sums, since such trees
$T$ can contain arbitrarily large numbers of internal vertices with
1 edge, which are weighted by $\m_0$, or with 2 edges, which are
weighted by $\m_1-\m_1^{\smash{0,0}}$. We claim they are {\it
convergent}. To see this, let $\cal G$ be as in Definition
\ref{il3dfn10}(i), and set $\la_0=\min_{(0,0)\ne(\la,\mu)\in{\cal
G}}\la$. Then $\la_0>0$, provided ${\cal G}\ne\{(0,0)\}$, and
$\m_0\in F^{\la_0}(A\ot\La_\nov^0)$, and $\m_1-\m_1^{\smash{0,0}}:
F^\la(A\ot\La_\nov^0)\ra F^{\la+\la_0}(A\ot\La_\nov^0)$ for all
$\la\in[0,\iy)$. Therefore, if $T$ has $N$ internal vertices with 1
or 2 edges, then $\n_{k,T}$ maps to $F^{N\la_0}(B\ot\La_\nov^0)$. As
there are only finitely many rooted planar trees $T$ with $k$ leaves
and fewer than $N$ internal vertices with 1 or 2 edges, and
$N\la_0\ra\iy$ as $N\ra\iy$, it follows that \eq{il3eq15} is
convergent.

In a similar way, for each planar rooted tree $T$ with $k$ leaves,
define a graded multilinear operator ${\mathfrak i}_{k,T}:{\buildrel
{\ulcorner\,\,\,\text{$k$ copies } \,\,\,\urcorner} \over
{\vphantom{m}\smash{(B\ot\La_\nov^0)\t\cdots\t(B\ot\La_\nov^0)}}}
\ra A\ot\La_\nov^0$ of degree 0, as follows. Assign objects and
operators to the vertices and edges of $T$:
\begin{itemize}
\setlength{\itemsep}{0pt}
\setlength{\parsep}{0pt}
\item assign $b_1,\ldots,b_k$ to the leaf vertices $1,\ldots,k$
respectively.
\item for each internal vertex with 1 outgoing edge and $n$ incoming
edges, $n\ne 1$, assign~$\m_n$.
\item for each internal vertex with 1 outgoing edge and 1 incoming
edge, assign~$\m_1-\m_1^{\smash{0,0}}$.
\item assign $\hat\imath$ to each leaf edge.
\item assign $-\hat H$ to the root edge and to each internal edge.
\end{itemize}
Define ${\mathfrak i}_{k,T}(b_1,\ldots,b_k)$ to be the composition
of all these objects and morphisms. Define ${\mathfrak
i}_k:{\buildrel {\ulcorner\,\,\,\text{$k$ copies } \,\,\,\urcorner}
\over {\vphantom{m}\smash{(B\ot\La_\nov^0)\t\cdots\t
(B\ot\La_\nov^0)}}}\ra A\ot\La_\nov^0$ by
\begin{equation*}
{\mathfrak i}_k=\begin{cases} \hat\imath+
\sum_T{\mathfrak i}_{1,T}, & k=1, \\
\sum_T{\mathfrak i}_{k,T}, & k=0,2,3,4,\ldots,
\end{cases}
\end{equation*}
where the sums are over all planar rooted trees $T$ with $k$ leaves.
As for \eq{il3eq15}, these are convergent infinite sums.
\label{il3dfn12}
\end{dfn}

\begin{thm} In Definition {\rm\ref{il3dfn12},} $(B\ot\La_\nov^0,\n)$
is a gapped filtered\/ $A_\iy$ algebra, and\/ ${\mathfrak
i}:(B\ot\La_\nov^0,\n)\ra(A\ot\La_\nov^0,\m)$ is a gapped filtered\/
$A_\iy$ morphism, and a homotopy equivalence. If we choose $B\cong
H^*(A,\m_1^{\smash{0,0}})$ to be a subspace representing
$H^*(A,\m_1^{\smash{0,0}}),$ so that\/
$\n_1^{\smash{0,0}}=\m_1^{\smash{0,0}}\vert_B=0,$ then
$(B\ot\La_\nov^0,\n)$ is a minimal model for~$(A\ot\La_\nov^0,\m)$.
\label{il3thm4}
\end{thm}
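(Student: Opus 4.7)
The plan is to adapt Markl's tree-sum proof of Theorem \ref{il3thm2} to the gapped filtered setting, where Definition \ref{il3dfn12} additionally allows internal vertices of valence $1$ weighted by $\m_0$ and of valence $2$ weighted by $\m_1-\m_1^{\smash{0,0}}$. First I would fix the gap structure: let ${\cal G}^{\n}\subseteq[0,\iy)\t\Z$ be the sub-semigroup generated by the gap set $\cal G$ of $(A\ot\La_\nov^0,\m)$. Since each $\m_n$ splits as in Definition \ref{il3dfn10}(i) and the operators $\hat\imath,\hat H,\hat\Pi_B$ are $\La_\nov^0$-linear of bidegree $(0,0)$, every tree summand $\n_{k,T}$ and $\mathfrak i_{k,T}$ decomposes into maps of the correct degree shifts indexed by bidegrees in ${\cal G}^{\n}$. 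The bidegree $(0,0)$ components $\n_0^{\smash{0,0}}$ and $\mathfrak i_0^{\smash{0,0}}$ vanish, since every tree with no leaves contains at least one valence-$1$ internal vertex weighted by $\m_0$, whose $(0,0)$-component is zero by Definition \ref{il3dfn10}(ii). Convergence of all infinite tree sums below is guaranteed by the estimate at the end of Definition \ref{il3dfn12}.

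Next I would verify the $A_\iy$ relation \eqref{il3eq10} for $(B\ot\La_\nov^0,\n)$ and the morphism relation \eqref{il3eq11} for $\mathfrak i$. This is the combinatorial heart of the proof, and follows Markl's argument \cite{Mark}, enlarged to permit internal vertices of all valences $\ge 1$. The central tool is the chain-homotopy identity $\id_A-\hat\imath\hat\Pi_B=\m_1^{\smash{0,0}}\hat H+\hat H\m_1^{\smash{0,0}}$ from Definition \ref{il3dfn12}, which lets one rewrite the $\hat\imath\hat\Pi_B$ term produced when two tree sums are grafted along a shared edge as the identity minus two homotopy terms. Expanding the left-hand side of \eqref{il3eq10} for $\n$ and regrouping the resulting terms by trees with a distinguished interior edge or vertex, each contribution either cancels pairwise against another, or assembles at a single distinguished vertex into the $A_\iy$ relation for $\m$ applied to the inputs at that vertex, which vanishes. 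An analogous grafting argument yields \eqref{il3eq11} for~$\mathfrak i$.

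The homotopy equivalence claim reduces, by Theorem \ref{il3thm3}(c), to showing that $\mathfrak i$ is a weak homotopy equivalence. At bidegree $(0,0)$, every valence-$2$ internal vertex contributes $(\m_1-\m_1^{\smash{0,0}})^{\smash{0,0}}=0$ and every valence-$1$ internal vertex contributes $\m_0^{\smash{0,0}}=0$, so only trees all of whose internal vertices have valence $\ge 3$ can contribute. For a planar rooted tree with one leaf, one root, and $v\ge 1$ internal vertices of valences $d_1,\ldots,d_v$, counting edges via $\sum_i d_i+2=2(v+1)$ yields $\sum_i d_i=2v$, which is incompatible with each $d_i\ge 3$. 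Hence no such tree exists, so $\mathfrak i_1^{\smash{0,0}}=i$, which is a quasi-isomorphism by hypothesis, and $\mathfrak i$ is a homotopy equivalence.

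Finally, for the minimal model assertion, choose $B\cong H^*(A,\m_1^{\smash{0,0}})$ with $\m_1^{\smash{0,0}}\vert_B=0$; the same valence count forces $\sum_T\n_{1,T}^{\smash{0,0}}=0$, so $\n_1^{\smash{0,0}}=\m_1^{\smash{0,0}}\vert_B=0$ and $(B\ot\La_\nov^0,\n)$ is minimal. The main obstacle is the tree combinatorics of the second paragraph: although the underlying argument is formal and well-established in the non-filtered case, one must carefully track how the new valence-$1$ and valence-$2$ vertices interact with the pairing of cancellations, which requires refining the edge/vertex bookkeeping of Markl's original proof.
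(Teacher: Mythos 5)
Your proposal is correct and follows essentially the route the paper intends: Theorem \ref{il3thm4} is stated as the gapped filtered analogue of Markl's tree-sum Theorem \ref{il3thm2}, and the only genuinely new ingredient --- convergence of the infinite sums over trees with valence-$1$ and valence-$2$ internal vertices --- is exactly the point already established in Definition \ref{il3dfn12}, as you invoke it. Your bidegree-$(0,0)$ valence count giving $\mathfrak{i}_1^{\smash{0,0}}=i$ and $\n_1^{\smash{0,0}}=\m_1^{\smash{0,0}}\vert_B$, combined with Theorem \ref{il3thm3}(c), is a legitimate (and slightly more economical) way to get the homotopy equivalence in place of Markl's explicit homotopy inverse.
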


As for Corollary \ref{il3cor1}, we prove:

\begin{cor} Let\/ $\p:(A\ot\La_\nov^0,\m)\ra(D\ot\La_\nov^0,
{\mathfrak o})$ be a strict, surjective gapped filtered\/ $A_\iy$
morphism of gapped filtered\/ $A_\iy$ algebras which is a weak
homotopy equivalence. Then we can construct an explicit homotopy
inverse $\q:(D\ot\La_\nov^0,{\mathfrak o})\ra(A\ot \La_\nov^0,\m)$
for $\p$ using sums over planar trees.
\label{il3cor2}
\end{cor}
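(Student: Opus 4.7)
The proof adapts Definition \ref{il3dfn8} to the gapped filtered setting by replacing the minimal-model machinery of Definition \ref{il3dfn7} with its filtered analogue from Definition \ref{il3dfn12}. Since $\p$ is strict, its leading part $\p^{\smash{0,0}}=(\p_k^{\smash{0,0}})_{k\ge 1}$ is a strict $A_\iy$-morphism $(A,\m^{\smash{0,0}})\ra(D,{\mathfrak o}^{\smash{0,0}})$; it is surjective because the reduction of $\p_1$ modulo $F^\la$ for $0<\la\ll 1$ coincides with $\p_1^{\smash{0,0}}$, so surjectivity of $\p_1$ forces surjectivity of $\p_1^{\smash{0,0}}$, and it is a weak homotopy equivalence by Definition \ref{il3dfn11}. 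Hence Corollary \ref{il3cor1} applies to $\p^{\smash{0,0}}$ and supplies graded subspaces $B,C\subseteq A$ with $C\cap\Ker\m_1^{\smash{0,0}}=\{0\}$, $A=B\op C\op\m_1^{\smash{0,0}}(C)$, $C\op\m_1^{\smash{0,0}}(C)=\Ker\p_1^{\smash{0,0}}$, and $\p_1^{\smash{0,0}}|_B:B\ra D$ an isomorphism of graded $\Q$-vector spaces. Define $H:A\ra A$ of degree $-1$ and $\Pi_B:A\ra B$ as in Definition \ref{il3dfn12}, together with their $\La_\nov^0$-linear extensions $\hat H,\hat\Pi_B,\hat\imath$.

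Next apply Definition \ref{il3dfn12} to $(A\ot\La_\nov^0,\m)$ with these choices; by Theorem \ref{il3thm4} this yields a gapped filtered $A_\iy$ algebra $(B\ot\La_\nov^0,\n)$ and a gapped filtered $A_\iy$ morphism
\[
{\mathfrak i}:(B\ot\La_\nov^0,\n)\ra(A\ot\La_\nov^0,\m),
\]
written as an explicit sum over planar rooted trees, which is a homotopy equivalence. Consider the composition $\p\ci{\mathfrak i}$. Since $\p_k=0$ for $k\ne 1$, formula \eq{il3eq13} collapses to $(\p\ci{\mathfrak i})_n=\p_1\ci{\mathfrak i}_n$ for $n\ge 1$, and its leading linear part is $(\p\ci{\mathfrak i})_1^{\smash{0,0}}=\p_1^{\smash{0,0}}|_B$, an isomorphism $B\ra D$. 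In the non-filtered setting of Definition \ref{il3dfn8} this composition was in fact strict as an $A_\iy$-morphism because $\Im H\subseteq\Ker\p_1$; in the filtered case the higher-order components $\p_1^{\smash{\la,\mu}}$ with $(\la,\mu)\ne(0,0)$ need not vanish on $\Im H$, so $\p\ci{\mathfrak i}$ is generally not strict. Nonetheless, since its linear leading part is an isomorphism, $\p\ci{\mathfrak i}$ admits a gapped filtered $A_\iy$ inverse $(\p\ci{\mathfrak i})^{-1}:(D\ot\La_\nov^0,{\mathfrak o})\ra(B\ot\La_\nov^0,\n)$, obtained by solving the morphism relations \eq{il3eq12} order-by-order in the gapped filtration ${\cal G}$; each filtration level requires only finitely many corrections by gappedness of ${\cal G}$, and the whole construction can be repackaged as a sum over planar rooted trees.

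Define $\q:={\mathfrak i}\ci(\p\ci{\mathfrak i})^{-1}:(D\ot\La_\nov^0,{\mathfrak o})\ra(A\ot\La_\nov^0,\m)$; by \eq{il3eq13} it is again a sum over planar trees. The first homotopy relation is immediate: $\p\ci\q=\p\ci{\mathfrak i}\ci(\p\ci{\mathfrak i})^{-1}=\id$. For $\q\ci\p\simeq\id$, choose any homotopy inverse $\q'$ of $\p$, which exists by Theorem \ref{il3thm3}(c); then, using associativity of composition and $\p\ci\q=\id$,
\[
\q\ci\p\simeq(\q'\ci\p)\ci(\q\ci\p)=\q'\ci(\p\ci\q)\ci\p=\q'\ci\p\simeq\id.
\]
The main obstacle, absent in the non-filtered case, is producing the explicit planar-tree description of the inverse $(\p\ci{\mathfrak i})^{-1}$: in Definition \ref{il3dfn8} the analogous composition was strict and its inverse was just the strict $A_\iy$-morphism induced by $(\p_1^{\smash{0,0}}|_B)^{-1}$, whereas here the nonzero gapped corrections at all orders must be inverted iteratively using the completeness and gapping of~$\La_\nov^0$.
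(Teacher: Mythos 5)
Your proof is correct, and its skeleton is exactly the paper's: the paper proves Corollary \ref{il3cor2} ``as for Corollary \ref{il3cor1}'', i.e.\ by transcribing Definition \ref{il3dfn8} into the filtered setting of Definition \ref{il3dfn12} — choose $B,C$ with $C\op\m_1^{0,0}(C)=\Ker\p_1^{0,0}$ and $\p_1^{0,0}\vert_B:B\ra D$ an isomorphism, and take the tree-sum morphism ${\mathfrak i}:(B\ot\La_\nov^0,\n)\ra(A\ot\La_\nov^0,\m)$ from Theorem \ref{il3thm4} — which is also your starting point. Where you diverge is the endgame. The paper's intended argument uses strictness of $\p$ ($\p_1\ci\m_j={\mathfrak o}_j\ci(\p_1\t\cdots\t\p_1)$) to get $\hat\Pi_B\ci\m_j(\ldots,-\hat H(a),\ldots)=0$, so every tree with an internal edge contributes zero to $\n$, the map $\p_1\vert_B$ identifies $(B\ot\La_\nov^0,\n)$ strictly with $(D\ot\La_\nov^0,{\mathfrak o})$, and one sets $\q_k={\mathfrak i}_k\ci\bigl((\p_1\vert_B)^{-1}\t\cdots\t(\p_1\vert_B)^{-1}\bigr)$, giving $\p\ci\q=\id$ on the nose. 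That vanishing argument requires $\Im\hat H\subseteq\Ker\p_1$ and $\Ker\hat\Pi_B=\Ker\p_1$ for the \emph{full} filtered $\p_1$; this is automatic when $\p_1$ has only a $(0,0)$ component — which is the case in every application in the paper (Definition \ref{il9dfn3}, Corollary \ref{il9cor}) — but not for an arbitrary strict $\p$ with nonzero $\p_1^{\la,\mu}$, which is precisely the subtlety you flag. Your fix — note $(\p\ci{\mathfrak i})_1^{0,0}=\p_1^{0,0}\vert_B$ is an isomorphism, invert the resulting gapped filtered $A_\iy$ isomorphism $\p\ci{\mathfrak i}$ order-by-order in the filtration, set $\q={\mathfrak i}\ci(\p\ci{\mathfrak i})^{-1}$, and deduce $\q\ci\p\simeq\id$ from $\p\ci\q=\id$ together with Theorem \ref{il3thm3}(c) — is valid, and it actually covers the general statement. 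The trade-off is that it leans on the (standard, but not stated in this paper) lemma that a gapped filtered $A_\iy$ morphism with invertible $\f_1$ has a gapped filtered inverse, and yields a less explicit formula than the paper's one-line $\q_k={\mathfrak i}_k\ci((\p_1\vert_B)^{-1})^{\t k}$; it would be worth remarking in your write-up that when $\Im\hat H\subseteq\Ker\p_1$ (e.g.\ when $\p_1$ is the $\La_\nov^0$-linear extension of a $\Q$-linear map, as in \S\ref{il9}) the extra inversion is unnecessary and the unfiltered argument carries over verbatim.
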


\subsection{Bounding cochains}
\label{il36}

As in Definition \ref{il3dfn10}, to define Lagrangian Floer
cohomology we will need {\it strict\/} gapped filtered $A_\iy$
algebras. {\it Bounding cochains} are a method of modifying gapped
filtered $A_\iy$ algebras to make them strict, introduced by Fukaya
et al.\ \cite[\S 5.7, \S 11]{FOOO}.

\begin{dfn} Let $(A\ot\La_\nov^0,\m)$ be a gapped filtered
$A_\iy$ algebra, and suppose $b\in F^\la(A\ot\La_\nov^0)^{(0)}$ for
some $\la>0$. Define graded $\La_\nov^0$-multilinear maps
$\m_k^b:{\buildrel {\ulcorner\,\,\,\text{$k$ copies }
\,\,\,\urcorner} \over {\vphantom{m}\smash{(A\ot\La_\nov^0)\t\cdots
\t(A\ot\La_\nov^0)}}}\ra A\ot\La_\nov^0$ for $k=0,1,2,\ldots$, of
degree $+1$, by
\begin{gather*}
\m_k^b(a_1,\ldots,a_k)=\!\!\sum_{n_0,\ldots,n_k\ge
0}\!\!\!\m_{k+n_0+\cdots+n_k}\begin{aligned}[t]
\bigl({\buildrel{\ulcorner\,\,\text{$n_0$} \,\,\urcorner}
\over{\vphantom{m}\smash{b,\ldots,b}}},a_1, {\buildrel
{\ulcorner\,\,\text{$n_1$}\,\,\urcorner}
\over{\vphantom{m}\smash{b,\ldots,b}}},a_2,{\buildrel
{\ulcorner\,\,\text{$n_2$}\,\,\urcorner}
\over{\vphantom{m}\smash{b,\ldots,b}}}&,\\[-6pt]
\ldots, {\buildrel {\ulcorner\,\,\text{$n_{k-1}$}\,\,\urcorner}
\over{\vphantom{m}\smash{b,\ldots,b}}},a_k,{\buildrel
{\ulcorner\,\,\text{$n_k$} \,\,\urcorner}
\over{\vphantom{m}\smash{b,\ldots,b}}}&\bigr).
\end{aligned}
\end{gather*}
This is an infinite sum, but converges as $b\in
F^\la(A\ot\La_\nov^0)$ for $\la>0$. Write $\m^b=(\m_k^b)_{k\ge 0}$.
We call $b$ a {\it bounding cochain\/} for $(A\ot\La_\nov^0,\m)$ if
$\m_0^b=0$, that is, if
\begin{equation*}
\ts\sum_{k\ge 0}\m_k(b,\ldots,b)=0.
\end{equation*}
This is called the {\it Maurer--Cartan equation}, or {\it
Batalin--Vilkovisky master equation}.\!\!\!\!
\label{il3dfn13}
\end{dfn}

It is then easy to prove \cite[Prop.~11.10]{FOOO}:

\begin{lem} In Definition {\rm\ref{il3dfn13},} $(A\ot\La_\nov^0,\m^b)$
is a gapped filtered $A_\iy$ algebra, which is strict if and only
if\/ $b$ is a bounding cochain. Moreover,
$\f:(A\ot\La_\nov^0,\m^b)\ra(A\ot\La_\nov^0,\m)$ defined by
$\f_0=b,$ $\f_1=\id_{\smash{A\ot\La_\nov^0}}$ and\/ $\f_k=0$ for
$k\ge 2$ is an $A_\iy$ isomorphism. Thus $(A\ot\La_\nov^0,\m^b)$ is
homotopy equivalent to $(A\ot\La_\nov^0,\m)$.
\label{il3lem}
\end{lem}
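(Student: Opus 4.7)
The plan is to deduce all four claims from one tautological verification of the gapped filtered $A_\iy$ morphism relation \eqref{il3eq11} for $\f$, and then to pass to the bar coalgebra to extract the $A_\iy$ relations for $\m^b$. First I check convergence: since $b \in F^\la(A \ot \La_\nov^0)$ for some $\la > 0$, any term in the sum defining $\m^b_k(a_1, \ldots, a_k)$ in Definition \ref{il3dfn13} involving $N = n_0 + \cdots + n_k$ copies of $b$ lies in $F^{N\la}(A \ot \La_\nov^0)$, so for each $C \ge 0$ only finitely many terms avoid $F^C$ and the sum converges in the complete filtered module. The same estimate exhibits $\m^b = (\m^b_k)_{k \ge 0}$ as gapped, with gap set contained in ${\cal G} + \N \cdot (\la, 0)$ where ${\cal G}$ is the gap set of $\m$.

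Next I verify that $\f$ satisfies \eqref{il3eq11} as a map $(A \ot \La_\nov^0, \m^b) \ra (A \ot \La_\nov^0, \m)$. With $\f_1 = \id$ and $\f_k = 0$ for $k \ge 2$, the only $\f_r$ that can appear as the outer operation (which must accept at least one argument, namely the output of $\m^b_{j-i}$) is $\f_1$; so the left hand side collapses to the single term $\f_1\bigl(\m^b_k(a_1, \ldots, a_k)\bigr) = \m^b_k(a_1, \ldots, a_k)$. On the right hand side, each inner $\f_{k_s - k_{s-1}}$ is nonzero only for $k_s - k_{s-1} \in \{0, 1\}$, contributing either a copy of $b$ (when $k_s = k_{s-1}$) or the next $a_j$ (when $k_s = k_{s-1}+1$); summing over all decompositions $0 \le k_1 \le \cdots \le k_l = k$ produces exactly the defining sum of $\m^b_k$ in Definition \ref{il3dfn13}. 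Hence both sides agree identically and the morphism relation is satisfied.

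Third, I transfer this to the bar coalgebra $T(A \ot \La_\nov^0)$. The operations $\m$ and $\m^b$ induce codifferentials $\bar\d_\m, \bar\d_{\m^b}$ of degree $+1$, and the $A_\iy$ relations for $\m$ read $\bar\d_\m \ci \bar\d_\m = 0$ (Definition \ref{il3dfn2}). The morphism $\f$ extends to a coalgebra endomorphism $\bar\f$ of $T(A \ot \La_\nov^0)$ via the gapped analogue of \eqref{il3eq3}, and the morphism relation just verified is equivalent to $\bar\d_\m \ci \bar\f = \bar\f \ci \bar\d_{\m^b}$. Because $\f_1 = \id$, the map $\bar\f$ equals the identity modulo $F^{>0}$, hence is injective on the complete filtered module. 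Squaring gives $\bar\f \ci \bar\d_{\m^b} \ci \bar\d_{\m^b} = \bar\d_\m \ci \bar\d_\m \ci \bar\f = 0$, and injectivity of $\bar\f$ forces $\bar\d_{\m^b} \ci \bar\d_{\m^b} = 0$. Thus $\m^b$ satisfies the $A_\iy$ relations, $(A \ot \La_\nov^0, \m^b)$ is a gapped filtered $A_\iy$ algebra, and $\f$ is a gapped filtered $A_\iy$ morphism; since $\f_1 = \id$ is invertible, $\f$ is in fact a gapped filtered $A_\iy$ isomorphism.

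For the remaining claims, strictness of $(A \ot \La_\nov^0, \m^b)$ means $\m^b_0 = \sum_{k \ge 0} \m_k(b, \ldots, b) = 0$, which is precisely the bounding cochain equation. For the final claim, $\f_1^{0,0} = \id_A$ is trivially a quasi-isomorphism of $(A, \m_1^{0,0})$ with itself, so $\f$ is a weak homotopy equivalence of gapped filtered $A_\iy$ algebras in the sense of Definition \ref{il3dfn11}, and Theorem \ref{il3thm3}(c) then upgrades this to a genuine homotopy equivalence. The main technical issue is the analysis of the infinite sums on the completed bar coalgebra---the convergence of $\bar\d_{\m^b}$ and injectivity of $\bar\f$---which is straightforward once one exploits that $b$ has positive filtration and $\f_1$ is the identity.
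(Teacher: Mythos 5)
Your argument is essentially correct, but note that the paper itself gives no proof of this lemma -- it simply cites \cite[Prop.~11.10]{FOOO} -- so the natural comparison is with the standard direct argument there. That argument verifies \eq{il3eq10} for $\m^b$ by expanding each $\m^b$ as a sum over insertions of $b$'s and regrouping the resulting double sum into instances of \eq{il3eq10} for $\m$ evaluated on interleavings of the $a_i$'s with copies of $b$; since $\deg b=0$ no extra signs appear, and the same expansion shows directly that $\f$ satisfies \eq{il3eq11}. Your route instead makes the morphism identity the primary (tautological) computation and recovers the $A_\iy$ relations for $\m^b$ from $\bar\d_\m^2=0$ together with injectivity of $\bar\f$. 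This is elegant and does work, but it buys its economy at the cost of machinery the paper never sets up: Definition \ref{il3dfn2} only defines the uncompleted bar coalgebra $T(A)$ over $\Q$, and the paper explicitly warns after Definition \ref{il3dfn3} that once $\f_0\ne 0$ the bar-level formulas become infinite sums. So your step ``\eq{il3eq11} $\Leftrightarrow$ $\bar\d_\m\ci\bar\f=\bar\f\ci\bar\d_{\m^b}$'' and the statement that coderivations/coalgebra maps are determined by their corestrictions need the completed filtered bar coalgebra, whose convergence properties you assert but do not establish; the direct regrouping proof avoids this entirely.

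Two small points should be tightened. First, your claimed gap set ${\cal G}+\N\cdot(\la,0)$ is wrong in general: $b\in F^\la(A\ot\La_\nov^0)^{(0)}$ can involve terms $T^{\la_i}e^{\mu_i}$ with various $\la_i\ge\la$ and $\mu_i\ne 0$, so the correct statement is that the monoid generated by ${\cal G}$ and the exponents occurring in $b$ works, the finiteness condition holding because every such exponent has $\la_i\ge\la>0$. Second, being a gapped filtered $A_\iy$ algebra/morphism also requires Definition \ref{il3dfn10}(ii) and Definition \ref{il3dfn11}(ii): you should record that $\m^{b\,0,0}_0=\m^{0,0}_0=0$ (all terms of $\m_0^b$ containing at least one $b$ lie in $F^{\ge\la}$) and that $\f_0^{\smash{0,0}}=b^{0,0}=0$ for the same reason. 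With these additions your proof is complete; the final deduction of homotopy equivalence via $\f^{0,0}_1=\id_A$ and Theorem \ref{il3thm3}(c), and the identification of strictness with the Maurer--Cartan equation, are exactly as intended.
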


Thus, if $b$ is a bounding cochain then $(A\ot\La_\nov^0,\m_1^b)$ is
a complex, and we may form its cohomology
$H^*(A\ot\La_\nov^0,\m_1^b)$, which is a $\La_\nov^0$-module. We can
also work over $\La_\nov$ rather than $\La_\nov^0$, so that
$(A\ot\La_\nov,\m_1^b)$ is a complex, with cohomology
$H^*(A\ot\La_\nov,\m_1^b)$.

\subsection{$A_{N,K}$ algebras}
\label{il37}

Stasheff \cite{Stas1,Stas2} introduced $A_K$ {\it algebras} \cite[\S
17.2]{FOOO}, a finite approximation to $A_\iy$ algebras. An $A_K$
algebra $(A,\m)$ is as in Definition \ref{il3dfn1} with $\m_0=0$,
except that $\m=(\m_k)_{k=1}^K$ rather than $(\m_k)_{k=1}^\iy$, and
\eq{il3eq1} holds for $k=1,\ldots,K$ rather than $k=1,\ldots,\iy$.
Similarly, $A_{N,K}$ {\it algebras} \cite[\S 30.6]{FOOO} are a
finite approximation of gapped filtered $A_\iy$ algebras. We omit
the phrase `gapped filtered' used in \cite{FOOO}. Here is the
$A_{N,K}$ analogue of Definitions \ref{il3dfn10} and~\ref{il3dfn11}.

\begin{dfn} Let ${\cal G}\subset[0,\iy)\t\Z$ be closed under addition with
${\cal G}\cap(\{0\}\t\Z)=\{(0,0)\}$ and ${\cal G}\cap([0,C]\t\Z)$
finite for any $C\ge 0$. Define $\nm{\,.\,}:{\cal G}\ra\N$ by
\begin{equation}
\nm{(\la,\mu)}=\max\bigl\{d:(\la,\mu)=\ts\sum_{i=1}^d(\la_i,\mu_i),\;
(0,0)\ne(\la_i,\mu_i)\in{\cal G}\bigr\}+[\la]
\label{il3eq16}
\end{equation}
for $(0,0)\ne(\la,\mu)\in{\cal G}$, where $[\la]$ is the greatest
integer $\le\la$, and $\nm{(0,0)}=0$. (This differs by 1 from
$\nm{(\la,\mu)}$ in~\cite[Def.~30.61]{FOOO}.)

Let $N,K\ge 0$. An $A_{N,K}$ {\it algebra} $(A,{\cal G},\m)$
consists of a $\Z$-graded $\Q$-vector space $A=\bigop_{d\in\Z}A^d$,
$\cal G$ as above, and a family $\m$ of graded $\Q$-multilinear maps
$\m_k^{\smash{\la,\mu}}:{\buildrel {\ulcorner\,\,\,\text{$k$ copies
} \,\,\,\urcorner} \over {\vphantom{m}\smash{A\t\cdots\t A}}}\ra A$
of degree $1-2\mu$ for all $(\la,\mu)\in{\cal G}$ and $k\ge 0$ such
that either (a) $\nm{(\la,\mu)}+k-1<N+K$, or (b)
$\nm{(\la,\mu)}+k-1=N+K$ and $\nm{(\la,\mu)}-1\le N$, satisfying
equation \eq{il3eq10} for all $(\la,\mu)\in{\cal G}$ and $k\ge 0$
such that (a) or (b) hold.

Now suppose $(A,{\cal G},\m)$ and $(B,{\cal G},\n)$ are $A_{N,K}$
algebras. Modifying the first part of Definition \ref{il3dfn11}, an
$A_{N,K}$ {\it morphism} $\f:(A,{\cal G},\m)\ra(B,{\cal G},\n)$
consists of $\Q$-multilinear maps $\f_k^{\smash{\la,\mu}}:
{\buildrel{ \ulcorner\,\,\,\text{$k$ copies } \,\,\,\urcorner} \over
{\vphantom{m}\smash{A\t\cdots\t A}}}\ra B$ graded of degree $-2\mu$
for all $(\la,\mu)\in{\cal G}$ and $k\ge 0$ such that (a) or (b)
hold, with $\f_0^{\smash{0,0}}=0$, satisfying equation \eq{il3eq12}
for all $(\la,\mu)\in{\cal G}$, $k\ge 0$ such that (a) or (b) hold
and pure $a_1,\ldots,a_k\in A$. Note that we use {\it the same}
$\cal G$ for $(A,{\cal G},\m),(B,{\cal G},\n)$ and $\f$, and we
regard $\cal G$ as fixed once and for all. The issue of changing
$\cal G$ will be addressed in the proof of Theorem~\ref{il11thm}.

{\it Composition} of $A_{N,K}$ morphisms is defined in the obvious
way. If $\f:(A,{\cal G},\m)\ra(B,{\cal G},\n)$ is an $A_{N,K}$
morphism then $\f_1^{\smash{0,0}}:A\ra B$ is a well-defined morphism
of complexes $(A,\m_1^{\smash{0,0}})\ra(B,\n_1^{\smash{0,0}})$, and
induces $(\f_1^{\smash{0,0}})_*: H^*(A,\m_1^{\smash{0,0}})\ra
H^*(B,\n_1^{\smash{0,0}})$. We call $\f$ a {\it weak homotopy
equivalence} if $(\f_1^{\smash{0,0}})_*$ is an isomorphism. We can
also define {\it homotopy} $\H:\f\Rightarrow\g$ between
$A_{N,K}$-morphisms $\f,\g:(A,{\cal G},\m)\ra(B,{\cal G},\n)$ by
rewriting \eq{il3eq14} in terms of the $\H_k^{\smash{\la,\mu}}$ and
only requiring it to hold for $(\la,\mu),k$ satisfying (a) or (b).
Thus we define {\it homotopy inverse} and {\it homotopy
equivalence}.
\label{il3dfn14}
\end{dfn}

Here is the analogue of Theorems \ref{il3thm1} and \ref{il3thm3},
\cite[Rem.~30.71]{FOOO}.

\begin{thm} Let\/ $(A,{\cal G},\m),(B,{\cal G},\n)$ be $A_{N,K}$
algebras. Then
\begin{itemize}
\setlength{\itemsep}{0pt}
\setlength{\parsep}{0pt}
\item[{\rm(a)}] Homotopy is an equivalence relation
on $A_{N,K}$ morphisms $\f:(A,{\cal G},\m)\ab\ra(B,{\cal G},\n)$.
\item[{\rm(b)}] Homotopy is an equivalence relation on $A_{N,K}$
algebras.
\item[{\rm(c)}] An $A_{N,K}$ morphism $\f:(A,{\cal G},\m)\ra
(B,{\cal G},\n)$ is a homotopy equivalence if and only if it is a
weak homotopy equivalence.
\end{itemize}
\label{il3thm5}
\end{thm}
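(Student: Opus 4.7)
The plan is to mimic the proofs of Theorems \ref{il3thm1} and \ref{il3thm3}, taking care that the finite truncation built into the definition of an $A_{N,K}$ algebra is respected throughout. The key book-keeping device is the function $\nm{\,.\,}$ of \eq{il3eq16}: an $A_{N,K}$ operation $\m_k^{\la,\mu}$ is present precisely when $(\nm{(\la,\mu)},k)$ lies in the admissible region (a)--(b) of Definition \ref{il3dfn14}, and every term on the right-hand side of \eq{il3eq10}, \eq{il3eq12}, or \eq{il3eq14} involves only operations whose index pair $(\nm{},k)$ is bounded above by that of the left-hand side. This monotonicity is what allows the $A_\iy$ arguments to restrict to the $A_{N,K}$ setting.

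For (a), reflexivity holds with $\H\equiv 0$. Symmetry and transitivity can be obtained by transcribing the standard concatenation and reversal constructions for homotopies between $A_\iy$ morphisms, as in Fukaya et al.\ \cite[\S 15]{FOOO} or Keller \cite[\S 3.7]{Kell1}, level by level in $(\nm{(\la,\mu)},k)$; the monotonicity above guarantees that each formula uses only operations defined in the $A_{N,K}$ setting. Part (b) then follows from (a) together with the existence of identity $A_{N,K}$ morphisms and associative composition, which are immediate from the definitions.

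For (c), the nontrivial direction is that a weak homotopy equivalence $\f$ is a homotopy equivalence. The strategy is to produce an $A_{N,K}$ minimal model, imitating Definition \ref{il3dfn12} and Theorem \ref{il3thm4}: pick a graded subspace $B\subset A$ with $B\cong H^*(A,\m_1^{0,0})$ and a contracting homotopy $H:A\ra A$, and define $\n_k^{\la,\mu}$ by the planar-tree formula \eq{il3eq15}. At fixed $(\la,\mu,k)$ only finitely many trees contribute, because every internal vertex labelled $\m_0^{\la_0,\mu_0}$ or $\m_1^{\la_1,\mu_1}-\m_1^{0,0}$ forces $\la_0$ or $\la_1$ to be bounded below by $\la_0:=\min_{(0,0)\ne(\la',\mu')\in{\cal G}}\la'>0$, so only boundedly many such vertices fit under the cutoff $\nm{(\la,\mu)}+k-1\le N+K$. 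This yields a minimal $A_{N,K}$ algebra $(B,{\cal G},\n)$ and an $A_{N,K}$ homotopy equivalence ${\mathfrak i}:(B,{\cal G},\n)\ra(A,{\cal G},\m)$. Applying this to both source and target of $\f$ reduces (c) to the case where $\f$ is a morphism between minimal $A_{N,K}$ algebras whose component $\f_1^{0,0}$ is an isomorphism of graded vector spaces; a homotopy inverse is then built by induction on $(\nm{(\la,\mu)},k)$ ordered lexicographically, inverting the leading term $\f_1^{0,0}$ and solving away higher-order corrections using the $A_{N,K}$ relations.

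The main obstacle is the asymmetric boundary condition (b) of Definition \ref{il3dfn14}: operations at $\nm{(\la,\mu)}+k-1=N+K$ are present only when $\nm{(\la,\mu)}-1\le N$. Verifying that every step of the inductive constructions above, namely the tree sums for minimal models, the concatenation of homotopies, and the obstruction-theoretic construction of homotopy inverses, produces outputs inside this admissible region and never invokes a forbidden operation on the right-hand side, is the delicate bookkeeping that separates this proof from a direct citation of the $A_\iy$ case.
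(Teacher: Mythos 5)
You should first be aware that the paper offers no proof of this theorem at all: like Theorems \ref{il3thm1} and \ref{il3thm3}, it is simply quoted from Fukaya et al.\ \cite[Rem.~30.71]{FOOO}, so there is no in-paper argument to measure your proposal against. Judged on its own terms, your sketch has the right backbone. The central bookkeeping fact you invoke is correct and worth isolating as a lemma: since $\nm{\,.\,}$ in \eq{il3eq16} is superadditive on ${\cal G}$ and $k_1+k_2=k+1$ in \eq{il3eq10} (similarly in \eq{il3eq12}, \eq{il3eq14}), the admissible region (a)--(b) of Definition \ref{il3dfn14} is closed under all decompositions appearing in the structure, morphism and homotopy equations, and a short case check shows this persists on the boundary stratum $\nm{(\la,\mu)}+k-1=N+K$, $\nm{(\la,\mu)}-1\le N$. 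This is indeed why the $A_\iy$ and filtered $A_\iy$ arguments truncate consistently.

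However, the proposal defers exactly the points where the real work lies, so as a proof it has genuine gaps. For (a), with the formula-level definition of homotopy used here (the truncation of \eq{il3eq14}), symmetry and transitivity are not obtained by ``standard concatenation and reversal'': even in the untruncated case the sources you cite do not argue that way, but instead pass through a cylinder-type device (Fukaya et al.'s models of $[0,1]\t B$, \cite[\S 15.1--\S 15.2, Prop.~15.40, Th.~15.45]{FOOO}, or the result of Prout\'e quoted by Keller \cite{Kell1}), and to adapt this you must either construct such a model within the $A_{N,K}$ truncation or exhibit explicit concatenation/reversal formulae and verify the truncated \eq{il3eq14} for them; neither is done in your sketch, and this is precisely the content of \cite[\S 30]{FOOO}. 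For (c), the reduction to minimal models via planar trees is plausible (your finiteness argument at fixed $(\la,\mu,k)$ is fine), but the inductive construction of a homotopy inverse, in particular at the asymmetric boundary case (b), is the step you yourself label ``delicate bookkeeping'' and do not carry out; in \cite{FOOO} this is handled by the obstruction-theoretic extension machinery (compare Theorem \ref{il3thm6}, i.e.\ \cite[Th.~30.72, Lem.~30.128]{FOOO}). So your plan is consistent with how the cited source actually proceeds, but the two key verifications are announced rather than proved, and filling them in is the substance of the theorem.
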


For simplicity, in the rest of the paper we will take $K=0$, and
consider only $A_{N,0}$ algebras. These are sufficient for our
purposes, and fixing $K=0$  reduces conditions (a) and (b) of
Definition \ref{il3dfn14} to the single
inequality~$\nm{(\la,\mu)}+k-1\le N$.

If $\bar N\ge N\ge 0$ then any $A_{\bar N,0}$ algebra $(A,{\cal
G},\bar\m)$ induces an $A_{N,0}$ algebra $(A,{\cal G},\m)$ by taking
$\m$ to be the subset of $\bar\m_k^{\smash{\la,\mu}}$ with
$\nm{(\la,\mu)}+k-1\le N$. Similarly, an $A_{\bar N,0}$ morphism
$\bar\f:(A,{\cal G},\bar\m)\ab\ra(B,{\cal G},\bar\n)$ restricts to
an $A_{N,0}$ morphism $\f:(A,{\cal G},\m)\ab\ra(B,{\cal G},\n)$ on
the corresponding $A_{N,0}$ algebras. Conversely, we can ask about
extending $A_{N,0}$ algebras and $A_{N,0}$ morphisms to $A_{\bar
N,0}$ algebras and $A_{\bar N,0}$ morphisms. Our next theorem
follows from Fukaya et al.~\cite[Th.~30.72 \& Lem.~30.128]{FOOO}.

\begin{thm} Let\/ $\f:(A,{\cal G},\m)\ra(B,{\cal G},\n)$ be an
$A_{N,0}$ morphism of\/ $A_{N,0}$ algebras which is a weak homotopy
equivalence. Suppose $\bar N\ge N,$ and $(B,{\cal G},\bar\n)$ is an
$A_{\bar N,0}$ algebra extending $(B,{\cal G},\n)$. Then
\begin{itemize}
\setlength{\itemsep}{0pt}
\setlength{\parsep}{0pt}
\item[{\rm(a)}] there exists an $A_{\bar N,0}$ algebra $(A,{\cal
G},\bar\m)$ extending $(A,{\cal G},\m),$ and an $A_{\bar N,0}$
morphism $\bar\f:(A,{\cal G},\bar\m)\ra(B,{\cal G},\bar\n)$
extending $\f$ which is a weak homotopy equivalence; and
\item[{\rm(b)}] if\/ $(A,{\cal G},\bar\m)$ is an $A_{\bar N,0}$
algebra extending $(A,{\cal G},\m),$ and\/ $\bar\g:(A,{\cal
G},\bar\m)\ra(B,{\cal G},\bar\n)$ is an $A_{\bar N,0}$ morphism
which restricts to an $A_{N,0}$ morphism $\g:(A,{\cal
G},\m)\ra(B,{\cal G},\n)$ which is $A_{N,0}$ homotopic to $\f,$ then
$\f$ extends to an $A_{\bar N,0}$ morphism $\bar\f:(A,{\cal
G},\bar\m)\ra(B,{\cal G},\bar\n)$ which is $A_{\bar N,0}$ homotopic
to~$\bar\g$.
\end{itemize}
\label{il3thm6}
\end{thm}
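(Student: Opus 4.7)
The plan is an obstruction-theoretic induction on $\bar N - N$. Since extending from $A_{N,0}$ to $A_{N+1,0}$ can be iterated, it suffices to treat the case $\bar N = N+1$. Fix $N$ and suppose all data has been constructed at level $\le N$. The new operations to specify are multilinear maps $\bar\m_k^{\la,\mu}$ and $\bar\f_k^{\la,\mu}$ for all $(\la,\mu)\in{\cal G}$ and $k\ge 0$ with $\nm{(\la,\mu)}+k-1 = N+1$, subject to \eqref{il3eq10} and \eqref{il3eq12} at these new bidegrees.

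The first step is to isolate the linear parts of the new equations. Since pairing a level-$(N+1)$ unknown with any factor of level $\ge 1$ would push the total strictly above $N+1$, in \eqref{il3eq10} the new $\bar\m_k^{\la,\mu}$ only appears in combinations with $\m_1^{0,0}$, and similarly in \eqref{il3eq12} the new $\bar\f_k^{\la,\mu}$ only appears paired with $\m_1^{0,0}$ and $\n_1^{0,0}$, together with a single term $\f_1^{0,0}\ci\bar\m_k^{\la,\mu}$. Thus the equations take the schematic form $D\bar\m_k^{\la,\mu} = P_k^{\la,\mu}$ and $D'\bar\f_k^{\la,\mu} = Q_k^{\la,\mu} - \f_1^{0,0}\ci\bar\m_k^{\la,\mu}$, where $D,D'$ are Hochschild-type differentials built from $\m_1^{0,0}$ and $\n_1^{0,0}$ and $P_k^{\la,\mu}, Q_k^{\la,\mu}$ are polynomial expressions in the already-chosen data (together with $\bar\n$). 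A direct calculation from the $A_\iy$-identities already valid at levels $\le N$ shows that $P_k^{\la,\mu}$ and $Q_k^{\la,\mu}$ are closed, so define classes in the appropriate Hochschild-type cohomology.

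The key step is to exploit the weak homotopy equivalence hypothesis: $\f_1^{0,0}$ is a quasi-isomorphism of complexes $(A,\m_1^{0,0})\ra(B,\n_1^{0,0})$, so it induces isomorphisms on the Hochschild-type cohomology groups in which the obstruction classes live. Because $(B,{\cal G},\bar\n)$ is given, the $B$-side obstruction is already zero; transporting this vanishing back through the induced isomorphism lets me choose $\bar\m_k^{\la,\mu}$ so that $P_k^{\la,\mu}$ becomes a coboundary, after which the equation for $\bar\f_k^{\la,\mu}$ is solvable. Ordering the new bidegrees consistently---for instance lexicographically by $\nm{(\la,\mu)}$ within the fixed value of $\nm+k-1$---ensures that all data on right-hand sides is already determined when needed. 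The weak homotopy equivalence of $\bar\f$ is automatic, since $\bar\f_1^{0,0}=\f_1^{0,0}$ is unchanged from $\f$.

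For part (b), I apply the same scheme to the triple $(\bar\f,\bar\H,\bar\g)$: extend $\f$ and a homotopy $\H$ from $\f$ to $\g$ simultaneously, level by level, with $\bar\g$ playing the role that $\bar\n$ played in (a). At each new bidegree the obstruction to extending the pair $(\bar\f,\bar\H)$ vanishes because $\bar\g$ realises the $B$-side cocycle as a coboundary in the Hochschild-type complex governing homotopies of morphisms, and $(\f_1^{0,0})_*$ transports this vanishing back to the $A$-side. The main obstacle throughout is the purely algebraic bookkeeping: verifying the closedness of each $P_k^{\la,\mu}$, $Q_k^{\la,\mu}$ with respect to the correct differential, and the consistency of the inductive ordering of choices. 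These are the intricate computations carried out in the greater $A_{N,K}$ generality in Fukaya et al.~\cite[Th.~30.72, Lem.~30.128]{FOOO}; restricting to $K=0$ removes the two-variable filtration bookkeeping but the structure of the argument is the same.
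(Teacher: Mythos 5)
The paper itself offers no proof of Theorem \ref{il3thm6}: it is quoted directly from Fukaya--Oh--Ohta--Ono \cite[Th.~30.72 \& Lem.~30.128]{FOOO}, so the only meaningful comparison is with the argument in that reference. Your outline is essentially that argument: extend one level at a time, linearize the new instances of \eq{il3eq10} and \eq{il3eq12}, locate the obstruction classes in the Hochschild-type cohomology of $(A,\m_1^{\smash{0,0}})$ and $(B,\n_1^{\smash{0,0}})$, and use the quasi-isomorphism $\f_1^{\smash{0,0}}$ to transport the vanishing supplied by the given extension $\bar\n$ (respectively, in (b), by $\bar\g$, extending the morphism and the homotopy together). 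So you are on the intended route, and deferring the detailed verifications to \cite{FOOO} is no worse than what the paper does.

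One imprecision you should correct: it is not true that a level-$(N{+}1)$ unknown can only meet $\m_1^{\smash{0,0}}$ in the new equations. The operations of level $0$ (those with $\nm{(\la,\mu)}+k-1=0$) are $\m_1^{\smash{0,0}}$ \emph{and} the curvature terms $\m_0^{\smash{\la,\mu}}$ with $\nm{(\la,\mu)}=1$, which are generally nonzero in the filtered setting. Hence an equation of level $N+1$ at bidegree $(\la,\mu,k)$ also contains terms $\bar\m_{k+1}^{\smash{\la',\mu'}}(\ldots,\m_0^{\smash{\la-\la',\mu-\mu'}},\ldots)$, and when $\nm{(\la,\mu)}=\nm{(\la',\mu')}+1$ this inserts \emph{another unknown of the same level} $N+1$. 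The system of new equations is therefore coupled, and only becomes triangular after the secondary ordering by $\nm{(\la,\mu)}$ (or by $\la$) that you mention in passing; moreover the cocycle property of each right-hand side must be checked for this coupled system, not merely for the Hochschild differential of a single unknown. That bookkeeping is exactly the content of the cited parts of \cite{FOOO}; your sketch survives once the sentence claiming the unknown pairs only with $\m_1^{\smash{0,0}}$ is replaced by this more careful statement.
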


All of \S\ref{il35}--\S\ref{il37} also works over $\La_\CY^0$ rather
than $\La_\nov^0$, with the obvious changes.

\section{Moduli spaces}
\label{il4}

Next we discuss moduli spaces of isomorphism classes of stable maps
from a genus 0 prestable bordered Riemann surface with immersed
Lagrangian boundary conditions. Most of the arguments are the same
as in the embedded case of Fukaya et al.\ \cite[\S 29]{FOOO} and Liu
\cite{Liu}, but we put some extra data on the boundary of our stable
maps.

\subsection{Definition of the moduli spaces
$\oM_{k+1}^\ma(\al,\be,J)$}
\label{il41}

We first define stable $J$-holomorphic maps from prestable holomorphic
discs with marked points.

\begin{dfn} Let $(M,\om)$ be a compact $2n$-dimensional symplectic
manifold with a compatible almost complex structure $J$, and
$\io:L\ra M$ a compact Lagrangian immersion. Suppose that all the
self-intersection points of the immersion $\io$ are transverse
double self-intersections.

Let $\Si$ be a genus 0 prestable bordered Riemann surface, that is,
$\Si$ is a possibly singular Riemann surface with boundary $\pd\Si$
such that the double $\Si\cup_{\pd\Si}\bar{\Si}$ is a connected and
simply connected compact singular Riemann surface whose only
singularities are nodes. Let $k$ be a non-negative integer, and
choose mutually distinct smooth points $z_0,\ldots,z_k$ on $\pd\Si$,
and write $\vec z=(z_0,\ldots,z_k)$. Let $u:\Si\ra M$ be a
$J$-holomorphic map with $u(\pd\Si)\subset\io(L)$. We call the
triple $(\Si,\vec z,u)$ {\it stable\/} if the automorphism group
$\Aut(\Si,\vec z,u)$ of biholomorphisms $f:\Si\ra\Si$ with $u\ci
f=u$ and $f(z_i)=z_i$ for $i=0,\ldots,k$ is finite. Equivalently,
$(\Si,\vec z,u)$ is stable if for each irreducible component $\Si'$
of $\Si$, $u\vert_{\Si'}$ is not constant, or
\begin{itemize}
\item the number of singular points on $\Si'$ is at least 3 when
$\Si'$ is diffeomorphic to a sphere,
\item the number of marked or singular points on $\pd\Si'$ plus twice
the number of singular points on $\Si'\sm\pd\Si'$ is at least 3 when
$\Si'$ is diffeomorphic to a disc.
\end{itemize}

For $(\Si,\vec z,u)$ as above, we would like to think of the boundary
$\pd\Si$ as a circle, but this is not true if $\Si$ has boundary
nodes. Let ${\cal S}^1=\{z\in\C:\md{z}=1\}$ be a circle with the
counter-clockwise orientation. The boundary $\pd\Si$ has the
orientation induced by the complex structure, and there is a
continuous and orientation-preserving map $l:{\cal S}^1\ra\pd\Si$
unique up to reparameterization such that
\begin{itemize}
\item the inverse image of a singular point of $\pd\Si$ consists of
two points,
\item the inverse image of a smooth point of $\pd\Si$ consists of
one point.
\end{itemize}
Write $\ze_i=l^{-1}(z_i)$, for $i=0,\ldots,k$.
\label{il4dfn1}
\end{dfn}

In the embedded case \cite[\S 2]{FOOO}, one defines moduli spaces
$\oM_{k+1}(\be,J)$ of isomorphism classes $[\Si,\vec z,u]$ of
triples $(\Si,\vec z,u)$. But in our immersed case, we need to keep
track of some extra information. In Definition \ref{il4dfn1}, $u\ci
l$ is a continuous map ${\cal S}^1\ra\io(L)$. We want to know
whether this can be locally lifted to a continuous map $\bar u:{\cal
S}^1\ra L$ with $\io\ci\bar u\equiv u\ci l$. This is only a problem
at the self-intersection points of $\io(L)$. For such a point $p\in
M$ we have $\io^{-1}(p)=\{p_+,p_-\}$, that is, two points $p_+,p_-$
in $L$ map to one point $p$ in $M$, and $\io(L)$ near $p$ in $M$ has
two sheets, the images under $\io$ of disjoint open neighbourhoods
of $p_+$ and~$p_-$.

If $u\ci l(\ze)=p$ for some $\ze\in{\cal S}^1$, it can happen that
$u\ci l$ jumps at $\ze$ between the two sheets of $\io(L)$ near $p$
in $M$, and so $u\ci l$ cannot be lifted to a continuous $\bar u:
{\cal S}^1\ra L$ near $\ze$, since $\bar u$ would have to jump
discontinuously between $p_+$ and $p_-$ at $\ze$. The meaning of the
next definition is that we consider triples $(\Si,\vec z,u)$ in
which $u\ci l$ jumps at $\ze$ between two sheets of $\io(L)$ in this
way if and only if $\ze=\ze_i$ for $i$ in a fixed subset
$I\subseteq\{0,\ldots,k\}$, and that we also prescribe $p=u(\ze_i)$
and the limits $p_+,p_-$ of $u(\ze')$ as $\ze'\ra\ze_i$ in
${\cal S}^1$ from either direction.

\begin{dfn} Let $(M,\om)$ be a compact $2n$-dimensional symplectic
manifold with a compatible almost complex structure $J$, and
$\io:L\ra M$ a compact Lagrangian immersion with only transverse
double self-intersections. Define $R$ to be the set of ordered pairs
$(p_-,p_+)\in L\t L$ such that $p_-\neq p_+$ and $\io(p_-)=\io(p_+)$,
and define an involution $\si:R\ra R$ by $\si(p_-,p_+)=(p_+,p_-)$.

Fix $k\ge0$. Let $I\subset\{0,\ldots,k\}$ be a subset, $\al:I\ra R$
a map, and $\be\in H_2(M,\io(L);\Z)$ a relative homology class.
Consider quintuples $(\Si,\vec z,u,l,\bar{u})$, where $\Si$ is a
genus 0 prestable bordered Riemann surface, and $\vec
z=(z_0,\ldots,z_k)$ for distinct smooth points $z_0,\ldots,z_k$ on
$\pd\Si$, and $u:\Si\ra M$ is a $J$-holomorphic map with
$u(\pd\Si)\subset\io(L)$ and $(\Si,\vec z,u)$ stable, and $l:{\cal
S}^1\ra\pd\Si$ is as in Definition \ref{il4dfn1} with
$\ze_i=l^{-1}(z_i)$ for all $i$, and $\bar{u}:{\cal S}^1\sm
\{\ze_i:i\in I\}\ra L$ is a continuous map, satisfying the following
conditions:
\begin{itemize}
\item $u_*([\Si])=\be\in H_2(M,\io(L);\Z)$, with $[\Si]\in
H_2(\Si,\pd\Si;\Z)$ the fundamental class;
\item $\ze_0,\ldots,\ze_k$ are ordered counter-clockwise
on~${\cal S}^1$;
\item $\io\ci\bar{u}\equiv u\ci l$ on ${\cal S}^1\sm\{\ze_i:i\in I\}$;
and
\item $(\lim_{\th\uparrow 0}\bar{u}(e^{\sqrt{-1}\,\th}\ze_i),
\lim_{\th\downarrow 0}\bar{u}(e^{\sqrt{-1}\,\th}\ze_i))=\al(i)$ in
$R$, for all~$i\in I$.
\end{itemize}

We say that two quintuples $(\Si,\vec z,u,l,\bar{u})$ and
$(\Si',\vec z',u',l',\bar{u}')$ are {\it isomorphic\/} if there
exist a biholomorphic map $\varphi:\Si\ra\Si'$ and an
orientation-preserving homeomorphism $\bar{\varphi}:{\cal S}^1\ra
{\cal S}^1$ such that
\begin{itemize}
\item $u'\ci\varphi=u$, and $\varphi(z_i)=z'_i$ for $i=0,\ldots,k$,
\item $\varphi\ci l=l'\ci\bar{\varphi}$, and
$\bar{u}'\ci\bar{\varphi}=\bar{u}$ on ${\cal S}^1\sm\{\ze_i:i\in I\}$.
\end{itemize}
Denote by $\oM_{k+1}^\ma(\al,\be,J)$ the set of the isomorphism
classes $[\Si,\vec z,u,l,\bar{u}]$ of such quintuples $(\Si,\vec
z,u,l,\bar{u})$. Then we may define a natural, compact, Hausdorff
topology on $\oM_{k+1}^\ma(\al,\be,J)$ called the $C^\iy$ {\it
topology}, following Fukaya et al.\ \cite[\S 29]{FOOO} and
Liu~\cite[\S 5.2]{Liu}.

Define the {\it evaluation maps\/} $\ev_i:\oM_{k+1}^\ma(\al,\be,J)
\ra L\amalg R$ by
\begin{equation}
\ev_i\bigl([\Si,\vec z,u,l,\bar{u}]\bigr)=\begin{cases}
\bar{u}(\ze_i)\in L, & i\notin I,\\
\al(i)\in R, & i\in I,
\end{cases}
\label{il4eq1}
\end{equation}
for $i=0,\ldots,k$, and $\ev:\oM_{k+1}^\ma(\al,\be,J)\ra L\amalg R$ by
\begin{equation}
\ev\bigl([\Si,\vec z,u,l,\bar{u}]\bigr)=\begin{cases}
\bar u(\zeta_0)\in L, & 0\notin I,\\
\si\ci\al(0)\in R, & 0\in I,
\end{cases}
\label{il4eq2}
\end{equation}
where $\si:R\ra R$ is the involution above. Following Fukaya et al.\
\cite[\S 9 \& \S 29]{FOOO} and Liu \cite{Liu} we may define a {\it
Kuranishi structure\/} on $\oM_{k+1}^\ma(\al,\be,J)$, with boundary
and corners and a tangent bundle, and the continuous maps
$\ev_i,\ev$ extend to {\it strong
submersions\/}~$\bev_i,\bev:\oM_{k+1}^\ma(\al,\be,J)\ra L\amalg R$.

We shall also write
\begin{equation}
\oM_{k+1}^\ma(\be,J)=
\coprod\nolimits_{\begin{subarray}{l}I\subseteq\{0,\ldots,k\},\\
\al:I\ra R\end{subarray}}\oM_{k+1}^\ma(\al,\be,J).
\label{il4eq3}
\end{equation}
Since by \eq{il4eq10} below the virtual dimension of $\oM_{k+1}^\ma
(\al,\be,J)$ depends on $I,\al$, this is technically not a Kuranishi
space, only a disjoint union of Kuranishi spaces of different
dimensions. We define strong submersions
$\bev_i,\bev:\oM_{k+1}^\ma(\be,J)\ra L\amalg R$ to be $\bev_i,\bev$
on each component~$\smash{\oM_{k+1}^\ma (\al,\be,J)}$.
\label{il4dfn2}
\end{dfn}

\subsection{The boundary of $\oM_{k+1}^\ma(\al,\be,J)$}
\label{il42}

Following Fukaya et al.\ \cite[\S 30]{FOOO} we can give an
expression for the {\it boundaries\/} of our moduli spaces. We
postpone discussing the {\it orientations} in \eq{il4eq4}
until~\S\ref{il5}.

\begin{thm} In the situation of Definition \ref{il4dfn2}, there is
an isomorphism of unoriented Kuranishi spaces, using the fibre
product of Definition~\ref{il2dfn6}:
\begin{equation}
\begin{gathered}
\pd\oM^\ma_{k+1}(\al,\be,J)\cong \coprod_{\begin{subarray}{l}
k_1+k_2=k+1,\; 1\le i\le k_1,\; I_1\cup_iI_2=I,\\
\al_1\cup_i\al_2=\al,\; \be_1+\be_2=\be
\end{subarray}\!\!\!\!\!\!\!\!\!\!\!\!\!\!\!\!\!\!\!\!\!\!\!\!}
\begin{aligned}[t]
\oM^\ma_{k_2+1}(\al_2,\be_2,J)\t_{\bev,L\amalg R,\bev_i}&\\
\oM^\ma_{k_1+1}(\al_1,\be_1,J)&
\end{aligned}
\end{gathered}
\label{il4eq4}
\end{equation}
where we define\/ $I_1\cup_iI_2\subseteq\{0,\ldots,k\}$ and\/
$\al_1\cup_i\al_2:I_1\cup_iI_2\ra R$ by
\begin{equation}
\begin{split}
I_1\cup_iI_2=&\{j:j\in I_1,\;j<i\}\cup\{j+i-1:j\in I_2,\;0<j\} \\
&\cup\{j+k_2-1:j\in I_1,\;i<j\}, \\
(\al_1\cup_i\al_2)(j)=&\begin{cases}
\al_1(j), &\text{for $0\le j< i,$}\\
\al_2(j-i+1), &\text{for $1\le j-i+1\le k_2,$}\\
\al_1(j-k_2+1), &\text{for $i<j-k_2+1\le k_1$},
\end{cases}
\end{split}
\label{il4eq5}
\end{equation}
and we also use the same notation for the evaluation maps
$\bev_i:\oM^\ma_{k_1+1}(\al_1,\be_1,J)\ra L\amalg R$
and $\bev:\oM^\ma_{k_2+1}(\al_2,\be_2,J)\ra L\amalg R$.
\label{il4thm1}
\end{thm}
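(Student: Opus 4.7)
\medskip\noindent\textbf{Proof plan.}
The plan is to reduce to the analogous boundary formula in the embedded case of Fukaya--Oh--Ohta--Ono and Liu, and then track the extra data $(l,\bar u,I,\al)$ through the boundary decomposition. A point of $\pd\oM^\ma_{k+1}(\al,\be,J)$ is represented (in the sense of Definition \ref{il2dfn4}) by a stable map $(\Si,\vec z,u,l,\bar u)$ together with a choice of local codimension-one boundary stratum of the ambient Kuranishi neighbourhood. By the standard analysis in \cite[\S 29]{FOOO} and \cite{Liu}, such strata are in bijection with the choice of a \emph{single distinguished boundary node} of $\Si$: splitting $\Si$ at this node yields two genus-$0$ prestable bordered Riemann surfaces $\Si_1,\Si_2$, with the marked points $z_0,\dots,z_k$ distributed cyclically so that $z_0$ lies on $\Si_1$; writing $z_0,\dots,z_{i-1}$ on $\Si_1$ (in positions $0,\dots,i-1$), then the new node as the $i$-th point of $\Si_1$ and the $0$-th of $\Si_2$, then $z_i,\dots,z_{i+k_2-1}$ on $\Si_2$ (in positions $1,\dots,k_2$), and finally $z_{i+k_2},\dots,z_k$ on $\Si_1$ (in positions $i+1,\dots,k_1$), with $k_1+k_2=k+1$ and $1\le i\le k_1$. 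The homology class splits as $\be=\be_1+\be_2$ with $\be_j=(u|_{\Si_j})_*[\Si_j]$.

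Next I would show that the data $(l,\bar u)$ and $(I,\al)$ on $(\Si,\vec z,u)$ correspond precisely to such data on the pair $(\Si_1,\Si_2)$ together with a matching condition at the node. Restricting $l:{\cal S}^1\to\pd\Si$ to the preimages of $\pd\Si_1,\pd\Si_2$ yields maps $l_1,l_2$ of circles ${\cal S}^1_1,{\cal S}^1_2$ (after reparameterization), and these split $\bar u$ into $\bar u_1,\bar u_2$. Let $w\in\pd\Si$ be the node. The index set $I$ partitions as $I_1\cup_i I_2$ with $I_j$ the indices lying on $\Si_j$, renumbered by the shift in \eq{il4eq5}, and $\al$ restricts to $\al_1,\al_2$. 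There are then two cases: either $u(w)$ is a regular point of $\io(L)$, in which case $\bar u$ is continuous across $w$, the node is not a jumping marked point for either component ($i\notin I_1$, $0\notin I_2$), and $\bar u_1(\zeta_i)=\bar u_2(\zeta_0)\in L$; or $u(w)$ is a self-intersection point, in which case $i\in I_1$ and $0\in I_2$, and the limits of $\bar u$ from the two sides of $w$ determine $\al_1(i)=(p_-,p_+)\in R$ and $\al_2(0)=(p_+,p_-)\in R$. By the definitions \eq{il4eq1}--\eq{il4eq2} of $\bev_i$ and $\bev$, in either case the matching at $w$ is exactly the equation $\bev_i[\Si_1,\dots]=\bev[\Si_2,\dots]$ in $L\amalg R$, with the involution $\si$ built into \eq{il4eq2} precisely to account for the reversal of orientation of $\pd\Si$ as one passes through the node. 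This sets up a bijection between the underlying sets of the two sides of \eq{il4eq4}.

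It remains to match the Kuranishi structures and strong submersions. On the left, the boundary Kuranishi structure is the one of Definition \ref{il2dfn4}, obtained from the obstruction bundles on $\oM^\ma_{k+1}(\al,\be,J)$ by restricting to a boundary stratum. On the right, the fibre product Kuranishi structure is that of Definition \ref{il2dfn6}, built from the local Kuranishi models of the two factors and the strong submersion $\bev$. The key analytic content is the standard gluing theorem for $J$-holomorphic discs with boundary on an immersed Lagrangian: a neighbourhood of a nodal stable map in $\oM^\ma_{k+1}(\al,\be,J)$ is diffeomorphic (as a Kuranishi neighbourhood) to the product of the two factor neighbourhoods fibred over $L\amalg R$, times a gluing parameter $[0,\ep)$ which becomes the outward normal direction to the boundary. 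This is exactly the assertion that the obstruction bundle of the glued map is the direct sum of the two obstruction bundles. Since $\io:L\ra M$ is an immersion with transverse double points, the gluing analysis at a self-intersection node reduces locally to the embedded case once one selects which sheet of $\io(L)$ each disc is glued to, and that selection is precisely the $R$-valued data in~$\al_1(i),\al_2(0)$.

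The main obstacle is this last Kuranishi-theoretic identification, in particular the gluing analysis at self-intersection nodes: one must verify that the universal families of perturbed $J$-holomorphic discs in the two factors can be consistently glued sheet-to-sheet whenever the $R$-valued evaluation maps agree, and that the resulting obstruction bundle is compatible with the direct-sum structure required by \eq{il2eq1}. Compatibility of the strong submersions $\bev,\bev_i$ on the left and on the right is then immediate from \eq{il4eq1}--\eq{il4eq2} and the definition of evaluation on a fibre product. Orientations are deferred to \S\ref{il5}.
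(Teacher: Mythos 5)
Your proposal follows essentially the same route as the paper, which itself only sketches this result: boundary strata correspond to a choice of boundary node, the data $(l,\bar{u},I,\al)$ split as in \eq{il4eq5} with the matching condition expressed by $\bev_i=\bev$ in $L\amalg R$ (the involution $\si$ accounting for the reversed order of the one-sided limits at the node), and the identification of Kuranishi structures is deferred to the gluing theorems of Fukaya et al.\ and Liu, exactly as the paper does. One small correction: the case split should be by whether $\bar{u}$ jumps sheets at the node, not by whether $u(w)$ is a self-intersection point of $\io(L)$ --- the lifted boundary can pass through a self-intersection point while remaining on a single sheet, in which case $i\notin I_1$, $0\notin I_2$ and the matching still takes place in $L$ (which is precisely why the fibre product is over $L\amalg R$ rather than over $\io(L)\subset M$: it records the sheet as well as the point).
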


Here \eq{il4eq4} is a fairly straightforward consequence of the
construction of the Kuranishi structure on
$\oM_{k+1}^\ma(\al,\be,J)$, as near the boundary strata of
$\oM_{k+1}^\ma(\al,\be,J)$ the Kuranishi neighbourhoods
$(V_p,\ldots,\psi_p)$ are built from Kuranishi neighbourhoods on
terms in the right hand side of \eq{il4eq4}, using gluing theorems
to desingularize boundary nodes in $\Si$. In \eq{il4eq4} we choose
to write the fibre product as
$\oM_{k_2+1}^\ma(\al_2,\be_2,J)\t_{\bev,L\amalg R,\bev_i}
\oM_{k_1+1}^\ma(\al_1,\be_1,J)$, although it would be more obvious
to write it as $\oM_{k_1+1}^\ma(\al_1,\be_1,J) \t_{\bev_i,L\amalg
R,\bev}\oM_{k_2+1}^\ma(\al_2,\be_2,J)$, following Fukaya et al.\
\cite[Prop.~46.3]{FOOO}. As we will explain in Remark
\ref{il5rem2}(b), because of peculiarities of the immersed case,
when we orient our moduli spaces in \S\ref{il5}, the signs in our
formulae will look simpler and more natural with the fibre product
order in~\eq{il4eq4}.

\subsection{The virtual dimension of $\oM_{k+1}^\ma(\al,\be,J)$}
\label{il43}

We shall compute the {\it virtual dimension\/} of
$\oM_{k+1}^\ma(\al,\be,J)$, modifying Fukaya \cite[Th.~3.2]{Fuka},
who calculates the virtual dimension of moduli spaces of holomorphic
discs with boundary attached to a union $L_0\cup\cdots\cup L_k$ of
transversely intersecting embedded Lagrangians, so that
$L_0\cup\cdots\cup L_k$ is an immersed Lagrangian submanifold with
transverse double self-intersections, and also Fukaya et al.\
\cite[Prop.~12.59]{FOOO}, who perform the same calculation
for~$L_0\cup L_1$.

\begin{dfn} Let
\begin{equation}
Y=\bigl\{(x,y)\in\R^2:\text{either $x\le 0$, $x^2+y^2\le 1$ or $x\ge
0$, $\md{y}\le 1$}\bigr\}.
\label{il4eq6}
\end{equation}
For $(p_-,p_+)\in R$, choose a smooth family
$\la_{(p_-,p_+)}=\{\la_{(p_-,p_+)}(x,y)\}_{(x,y)\in\pd Y}$ of
Lagrangian subspaces of $T_pM$, where $p=\io(p_-)=\io(p_+)$, such that
\begin{equation*}
\la_{(p_-,p_+)}(x,y)=\begin{cases}
\d\io(T_{p_-}L), & \mbox{if }y=1,\\
\d\io(T_{p_+}L), & \mbox{if }y=-1.
\end{cases}
\end{equation*}
If $(p_-,p_+)\in R$ then $\si(p_-,p_+)=(p_+,p_-)\in R$, and we
require $\la_{(p_-,p_+)}$ and $\la_{(p_+,p_-)}$ to be related by
$\la_{(p_+,p_-)}(x,y)\equiv\la_{(p_-,p_+)}(x,-y)$. When $L$ is {\it
oriented}, as it will be from \S\ref{il5} onwards, we take
$\la_{(p_-,p_+)}$ to be a smooth family of oriented Lagrangian
subspaces, which agree with $\d\io(T_{p_\mp}L)$ as oriented
subspaces when~$y=\pm 1$.

Consider the differential operator
\begin{equation}
\bar{\pd}_{\la_{(p_-,p_+)}}\!=\!\frac{\pd}{\pd x}\!+\!
J_p\frac{\pd}{\pd y}:W^{1,q}(Y,\pd Y;T_pM,\la_{(p_-,p_+)})\!\ra
L^q(Y;T_pM\ot\La^{0,1}Y),
\label{il4eq7}
\end{equation}
for $q>2$, where $W^{1,q}(Y,\pd Y;T_pM,\la_{(p_-,p_+)})$ is the
Sobolev space of the $W^{1,q}$-maps $\xi:Y\ra T_pM$ with
$\xi(x,y)\in\la_{(p_-,p_+)}(x,y)$, for $(x,y)\in\pd Y$, and
$L^q(Y;T_pM\ot\La^{0,1}Y)$ is the one of the $L^q$-maps $\xi:Y\ra
T_pM\ot\La^{0,1}Y$. Following \cite[Def.~12.62]{FOOO}, define
\begin{equation}
\eta_{(p_-,p_+)}=\ind\bar{\pd}_{\la_{(p_-,p_+)}},
\label{il4eq8}
\end{equation}
the Fredholm index of \eq{il4eq7}. Since $\la_{(p_+,p_-)}(x,y)\equiv
\la_{(p_-,p_+)}(x,-y)$, it is easy to check that
\begin{equation}
\eta_{(p_-,p_+)}+\eta_{(p_+,p_-)}=n.
\label{il4eq9}
\end{equation}
\label{il4dfn3}
\end{dfn}

Note that $\eta_{(p_-,p_+)}$ depends on the choice of
$\la_{(p_-,p+)}$ up to isotopy. When $\la_{(p_-,p_+)}$ is a family
of oriented Lagrangian subspaces, different choices of
$\la_{(p_-,p+)}$ add an even number to $\eta_{(p_-,p_+)}$. Thus the
only invariant information is whether $\eta_{(p_-,p_+)}$ is even or
odd, which depends on whether the transverse, oriented subspaces
$\d\io(T_{p_-}L)$ and $\d\io(T_{p_+}L)$ intersect positively or
negatively in~$T_pM$.

In \S\ref{il46} we will use this freedom to require that
$\eta_{(p_-,p_+)}\ge 0$ for all $(p_-,p_+)\in R$, and ask that
$\la_{(p_-,p_+)}$ is chosen generically, which ensures that $\Ker
\smash{\bar{\pd}_{\la_{(p_-,p_+)}}}$ has dimension
$\eta_{(p_-,p_+)}$, and
$\Coker\smash{\bar{\pd}_{\la_{(p_-,p_+)}}}=0$. This is not strictly
necessary, but it simplifies the arguments.

There is an important case in which it is natural to fix the
$\eta_{(p_-,p_+)}$, however, to be discussed in \S\ref{il12}.
Suppose that $(M,\om)$ is the symplectic manifold underlying a {\it
Calabi--Yau} manifold, and that $L$ is a {\it graded\/} immersed
Lagrangian submanifold, in the sense of Definition \ref{il12dfn}.
Then we can choose $\la_{(p_-,p_+)}$ to be a family of graded
Lagrangian subspaces of $T_pM$, which agree with $\d\io(T_{p_\mp}L)$
as graded Lagrangian subspaces when $y=\pm 1$. This requirement
determines $\eta_{(p_-,p_+)}$ uniquely in $\Z$, independently of the
choice of $\la_{(p_-,p_+)}$. Also in this case the Maslov index
$\mu_L(\be)$ below is automatically zero, provided the
$\la_{(p_-,p_+)}$ are taken to be graded.

We can now define the {\it Maslov index\/} $\mu_L(\be)$, and compute
the virtual dimension of~$\oM_{k+1}^\ma(\al,\be,J)$.

\begin{dfn} For $[\Si,\vec z,u,l,\bar{u}]\in\oM_{k+1}^\ma(\al,\be,J)$,
we take $\vep>0$ and a continuous map $\psi:{\cal S}^1\ra {\cal S}^1$
such that
\begin{itemize}
\item $\psi:{\cal S}^1\sm\bigcup_{i\in I}\{e^{\sqrt{-1}\th}\ze_i:
\th\in[-\vep,\vep]\}\ra {\cal S}^1\sm\{\ze_i:i\in I\}$ is an
orientation preserving homeomorphism,
\item $\psi(\{e^{\sqrt{-1}\th}\ze_i:\th\in[-\vep,\vep]\})=\ze_i$,
for $i\in I$,
\end{itemize}
and define
\begin{equation*}
A_{\al,\be}(z)=\begin{cases} \d\io(T_{\bar{u}\ci\psi(z)}L), &
\mbox{for } z\in {\cal S}^1\sm\bigcup_{i\in
I}\{e^{\sqrt{-1}\th}\ze_i:\th\in(-\vep,\vep)\},\\
\la_{\al(i)}\ci h_i(z), &\mbox{ for }z\in\{e^{\sqrt{-1}\th}\ze_i:
\th\in(-\vep,\vep)\} \mbox{ with }i\in I,
\end{cases}
\end{equation*}
where $h_i:\{e^{\sqrt{-1}\th}\ze_i:\th\in(-\vep,\vep)\}\ra\pd Y$ is
a diffeomorphism with
\begin{equation*}
\lim_{\th\ra-\vep}h_i(e^{\sqrt{-1}\th}\ze_i)=(\iy,1)
\mbox{ and }\lim_{\th\ra\vep}h_i(e^{\sqrt{-1}\th}\ze_i)=(\iy,-1).
\end{equation*}
The symplectic vector bundle $u^*(TM)$ with $u^*(\om)$ is isomorphic
to the trivial one $\Si\t\C^n\ra\Si$. Denote this trivialization by
$f:u^*(TM)\ra\C^n$, and $f\ci A_{\al,\be}$ is a loop in the
Grassmannian of Lagrangian subspaces in~$\C^n$.

Write $\mu_L(\be)$ for the {\it Maslov index\/} of $f\ci
A_{\al,\be}$, in the sense of Fukaya et al.\ \cite[\S 2.1]{FOOO}.
That is, $\mu_L(\be)\in\Z$ is the contraction of the homology class
of $f\ci A_{\al,\be}$ with a certain class in the 1-cohomology of
the Grassmannian of Lagrangian subspaces in $\C^n$. If $L$ is
oriented, as it will be from \S\ref{il5} onwards, then $\mu_L(\be)$
is even. As above and in \S\ref{il12}, if $(M,\om)$ is Calabi--Yau
and $L$ is {\it graded}, we can define the $\la_{(p_-,p_+)}$ using
graded Lagrangian subspaces, and then $\mu_L(\be)=0$ for all~$\be$.

Now $\mu_L(\be)$ depends on the choices of families
$\la_{(p_-,p_+)}$ for $(p_-,p_+)\in R$ above up to isotopy, and
hence in effect on the $\eta_{(p_-,p_+)}$. We regard these as fixed
once and for all, and suppress the dependence of the Maslov index on
them in our notation. In fact $\mu_L(\be)$ is independent of the
other choices involved, except $\be$, which justifies our writing it
as $\mu_L(\be)$. That is, $\mu_L(\be)$ is independent of
$k,I,\al,[\Si,\vec z,u,l,\bar{u}],\psi,h_i$, and the trivialization
of $\bigl(u^*(TM),u^*(\om)\bigr)$. To see this, note that morally
$\mu_L(\be)=\be\cdot c_1\bigl(M,\io(L)\bigr)$, where $\be\in
H_2\bigl(M,\io(L);\Z\bigr)$ and $c_1\bigl(M,\io(L)\bigr)\in
H^2\bigl(M,\io(L);\Z\bigr)$ is the {\it relative first Chern
class\/} for $\om$ on $\bigl(M,\io(L)\bigr)$. The reason
$\mu_L(\be)$ can be independent of $I,\al$ is that $\be$ partially
determines $I,\al$, enough so that the dependence of $\mu_L(\be)$ on
$I,\al$ is determined by~$\be$.
\label{il4dfn4}
\end{dfn}

The following proposition is a straightforward modification of
Fukaya \cite[Th.~3.2]{Fuka} and Fukaya et al.\
\cite[Prop.~29.1]{FOOO} to the immersed case, following
\cite[Prop.~12.59]{FOOO}. In effect, in constructing
$\psi,A_{\al,\be}$ above we are defining a desingularized moduli
problem, with embedded Lagrangian boundary conditions. The virtual
dimension of this desingularized moduli problem is computed as in
\cite[Prop.~29.1]{FOOO}, and is the right hand side of \eq{il4eq10}
omitting the term $-\sum_{i\in I}\eta_{\al(i)}$. But the effect of
desingularizing by gluing in $\la_{\al(i)}$ at $z_i$ is to increase
the virtual dimension by $\eta_{\al(i)}$, so to recover the virtual
dimension of the original moduli problem we subtract~$\sum_{i\in
I}\eta_{\al(i)}$.

\begin{prop} The virtual dimension of the Kuranishi space\/
$\oM_{k+1}^\ma(\al,\be,J)$ is
\begin{equation}
\vdim\oM_{k+1}^\ma(\al,\be,J)=\mu_L(\be)+k-2+n-\ts\sum_{i\in I}
\eta_{\al(i)}.
\label{il4eq10}
\end{equation}
\label{il4prop1}
\end{prop}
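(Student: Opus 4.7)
The plan is to reduce the computation to a standard Riemann--Roch style index calculation for a linearized Cauchy--Riemann operator on a disc with globally continuous Lagrangian boundary conditions, by exploiting the desingularization already implicit in Definition \ref{il4dfn4}. By construction of the Kuranishi structure in Fukaya et al.\ \cite[\S 29]{FOOO} and Liu \cite{Liu}, $\vdim\oM_{k+1}^\ma(\al,\be,J)$ equals the Fredholm index of the linearized $\bar\pd$-operator $D_u\bar\pd$ on sections of $u^*(TM)$ with appropriate Lagrangian boundary values, plus the combinatorial contribution $(k+1)-3=k-2$ coming from the $k+1$ free boundary marked points on a smooth disc modulo its $3$-dimensional automorphism group.

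The key geometric input is that the naive boundary condition $z\mapsto\d\io(T_{\bar u(z)}L)$ is discontinuous at each $\ze_i$ for $i\in I$, jumping between the two Lagrangian sheets $\d\io(T_{p_-}L)$ and $\d\io(T_{p_+}L)$, where $\al(i)=(p_-,p_+)$. To convert $D_u\bar\pd$ into an honest Fredholm operator, I would use $\psi$ to pinch out a small arc around each $\ze_i$ and glue in the strip $Y$ of \eq{il4eq6} carrying the path $\la_{\al(i)}$ that interpolates between $\d\io(T_{p_-}L)$ and $\d\io(T_{p_+}L)$ as $y$ runs from $1$ to $-1$ along $\pd Y$. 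The resulting loop $A_{\al,\be}$ of Definition \ref{il4dfn4} is continuous on ${\cal S}^1$ and provides a bona fide elliptic boundary condition.

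Applying the classical index formula for a Cauchy--Riemann operator on a disc with continuous Lagrangian boundary loop (Fukaya \cite[Th.~3.2]{Fuka}, Fukaya et al.\ \cite[Prop.~29.1]{FOOO}), the Fredholm index of the desingularized operator equals $\mu_L(\be)+n$, where $\mu_L(\be)$ is by construction the Maslov index of $f\ci A_{\al,\be}$ computed via the trivialization $f$ of $u^*(TM)$. Combined with the combinatorial $k-2$, this shows the desingularized moduli problem has virtual dimension $\mu_L(\be)+n+k-2$, exactly the right hand side of \eq{il4eq10} with the term $-\sum_{i\in I}\eta_{\al(i)}$ omitted.

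Finally I would correct for the desingularization. The local gluing at each $\ze_i$ has the effect of inserting a copy of the strip operator $\bar\pd_{\la_{\al(i)}}$ of \eq{il4eq7}, whose Fredholm index is by definition $\eta_{\al(i)}$. A linear gluing/excision argument for Fredholm operators on bordered surfaces then identifies the index of the desingularized problem with that of the original singular problem plus $\sum_{i\in I}\eta_{\al(i)}$, yielding \eq{il4eq10} after subtraction. The main technical obstacle will be making this linear gluing precise: one must verify that the weighted Sobolev completions used to define $\eta_{(p_-,p_+)}$ on $Y$ are compatible with the Sobolev spaces underlying the Kuranishi construction of $\oM_{k+1}^\ma(\al,\be,J)$ near the branch points $\ze_i$, so that additivity of Fredholm indices under excision can legitimately be invoked, as was done for the pairwise case in Fukaya et al.~\cite[Prop.~12.59]{FOOO}.
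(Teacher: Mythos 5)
Your proposal follows essentially the same route as the paper: desingularize at each $\ze_i$ by gluing in the path $\la_{\al(i)}$ to obtain an embedded-type boundary problem of virtual dimension $\mu_L(\be)+n+k-2$ (via \cite[Prop.~29.1]{FOOO}), then subtract $\sum_{i\in I}\eta_{\al(i)}$ because each gluing inserts the strip operator $\bar\pd_{\la_{\al(i)}}$ of index $\eta_{\al(i)}$, exactly as in the paper's sketch following \cite[Prop.~12.59]{FOOO}. The linear gluing identity you flag as the main technical point is precisely what the paper later establishes in the proof of Theorem \ref{il5thm1} (equation \eq{il5eq1}), so your argument is correct and consistent with the paper's.
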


\subsection{The moduli spaces
$\oM_{k+1}^\ma(\al,\be,J,f_1,\ldots,f_k)$}
\label{il44}

Next we add {\it smooth simplicial chains\/} to our moduli spaces.

\begin{dfn} For $i=1,\ldots,k$, let $a_i\ge 0$ and $f_i:\De_{a_i}\ra
L\amalg R$ be a smooth map, where $\De_{a_i}$ is the $a_i$-simplex
of \eq{il2eq6}, so that $f_i\in C_{a_i}^\rsi(L\amalg R)$ is a smooth
simplicial chain. Define the Kuranishi space
$\oM_{k+1}^\ma(\al,\be,J,f_1,\ldots,f_k)$ to be the fibre product
\begin{equation}
\begin{split}
&\oM_{k+1}^\ma(\al,\be,J,f_1,\ldots,f_k)= \\
&\oM_{k+1}^\ma(\al,\be,J)\t_{\bev_1\t\cdots\t\bev_k,(L\amalg R)^k,
f_1\t\cdots\t f_k}(\De_{a_1}\t\cdots\t\De_{a_k}).
\end{split}
\label{il4eq11}
\end{equation}
Here $\bev_i$ maps to $L$ if $i\notin I$ and to $R$ if $i\in I$.
Also, the fibre product is over $1,\ldots,k$ although
$I\subseteq\{0,\ldots,k\}$, so we have to exclude 0. Thus,
\eq{il4eq11} is in effect a fibre product over the manifold
$\prod_{i\in\{1,\ldots,k\}\sm I}L\t\prod_{i\in I\sm\{0\}}R$, which
has dimension $n(k-\md{I\sm\{0\}})$. So we see from \eq{il4eq10} and
Definition \ref{il2dfn6} that
\begin{equation}
\begin{split}
&\vdim\oM_{k+1}^\ma(\al,\be,J,f_1,\ldots,f_k)=\\
&\mu_L(\be)+k-2+n-\ts\sum_{i\in I}\eta_{\al(i)}+\ts\sum_{0\ne
i\notin I}(a_i-n)+\ts\sum_{0\ne i\in I}a_i.
\end{split}
\label{il4eq12}
\end{equation}

Let $f:\De_a\ra L\amalg R$ be a smooth map. Since $f$ is connected,
it must map either to $L$, or to some unique $(p_-,p_+)$ in $R$.
Define the {\it shifted cohomological degree\/} of $f:\De_a\ra
L\amalg R$ to be
\begin{equation}
\deg f=\begin{cases} n-a-1, & f(\De_a)\subseteq L,\\
\eta_{(p_-,p_+)}-a-1, & f(\De_a)=\{(p_-,p_+)\}\subset R.
\end{cases}
\label{il4eq13}
\end{equation}
In effect, we are defining a {\it new grading\/} on the simplicial
chains $C_*^\rsi(L\amalg R;\Q)=C_*^\rsi(L;\Q)\op
\bigop_{(p_-,p_+)\in R}C_*^\rsi(\{(p_-,p_+)\};\Q)$, such that $\deg
C_a^\rsi(L;\Q)=n-a-1$ and $\deg C_a^\rsi(\{(p_-,p_+)\};\Q)=
\eta_{(p_-,p_+)}-a-1$.

Note that our notation differs from that of Fukaya et al.\
\cite{FOOO} in the embedded case. Fukaya et al.\ define the {\it
cohomological degree} of $f:\De_a\ra L$ in $C_a^\rsi(L;\Q)$ to be
$\deg f=n-a$, that is, $\deg f$ is in effect the codimension of
$f(\De_a)$ in $L$. But then they work throughout with the {\it
shifted complex} $C_*^\rsi(L;\Q)[1]$ in which $f$ has grading $\deg'
f=\deg f-1$, as in \cite[\S 7.1]{FOOO}. So our $\deg f$ corresponds
to Fukaya et al.'s shifted degree $\deg'f$, which is why we call it
the {\it shifted\/} cohomological degree.

We prefer this convention as it simplifies many of the dimensions
and signs expressed in terms of $\deg f_1,\ldots,\deg f_n$ below,
and also the shifted complexes $C_*^\rsi(L;\Q)[1],\Q\X[1]$ which are
ubiquitous in \cite{FOOO} are replaced below by unshifted complexes
$C_*^\rsi(L;\Q),\Q\X$, simplifying the notation. We undo the shift
when we define Lagrangian Floer cohomology in \eq{il13eq3}. We will
explain the reason for grading $f:\De_a\ra\{(p_-,p_+)\}$ by $\deg
f=\eta_{(p_-,p_+)}-a-1$ in Definition~\ref{il4dfn9}.

Observe that $\oM_{k+1}^\ma(\al,\be,J,f_1,\ldots,f_k)=\emptyset$
unless $f_i:\De_{a_i}\ra L\amalg R$ maps to $L$ if $i\notin I$, and
to $\al(i)\in R$ if $i\in I$. Then combining \eq{il4eq12} and
\eq{il4eq13} yields
\begin{equation}
\begin{aligned}
&\vdim\oM_{k+1}^\ma(\al,\be,J,f_1,\ldots,f_k)=\\
&\begin{cases}
\mu_L(\be)-2+n-\ts\sum_{i=1}^k\deg f_i, & 0\notin I,\\
\mu_L(\be)-2+n-\ts\sum_{i=1}^k\deg f_i-\eta_{\al(0)}, & 0\in I.
\end{cases}
\end{aligned}
\label{il4eq14}
\end{equation}
This also holds trivially in the other cases, as
then~$\oM_{k+1}^\ma(\al,\be,J,f_1,\ldots,f_k)=\emptyset$.

From \eq{il2eq5} and \eq{il4eq4}, $\pd\oM_{k+1}^\ma
(\al,\be,J,f_1,\ldots,f_k)$ is given without orientations by
\begin{equation}
\begin{aligned}
&\coprod_{i=1}^k\coprod_{j=0}^{a_i}\oM_{k+1}^\ma
(\al,\be,J,f_1,\ldots,f_{i-1},f_i\ci F_j^{a_i},f_{i+1},\ldots,f_k) \\
&\amalg\pd\oM_{k+1}^\ma(\al,\be,J)\t_{\bev_1\t\cdots\t\bev_k,(L\amalg
R)^k, f_1\t\cdots\t f_k}(\De_{a_1}\t\cdots\t\De_{a_k}),
\end{aligned}
\label{il4eq15}
\end{equation}
where $F_j^{a_i}:\De_{a_i-1}\ra\De_{a_i}$ is as in~\S\ref{il26}.

Write $\oM_{k_1+1}^\ma(\al_1,\be_1,J,f_1,\ldots,f_{i-1};f_{i+k_2},
\ldots,f_k)$ for the fibre product
\begin{equation}
\begin{aligned}
&\oM_{k_1+1}^\ma(\al_1,\be_1,J)\t_{\bev_1\t\cdots\t\bev_{i-1}\t
\bev_{i+1}\t\cdots\t\bev_{k_1},(L\amalg R)^{k_1-1},} \\
&{}_{f_1\t\cdots\t f_{i-1}\t f_{i+k_2}\t\cdots\t f_k}
(\De_{a_1}\t\cdots\t \De_{a_{i-1}}\t\De_{a_{i+k_2}}\t\cdots\t
\De_{a_k}),
\end{aligned}
\label{il4eq16}
\end{equation}
where $k_1+k_2=k+1$. Then as for \eq{il4eq14} we calculate that
\begin{equation}
\begin{split}
&\vdim\oM_{k_1+1}^\ma(\al_1,\be_1,J,f_1,\ldots,f_{i-1};
f_{i+k_2},\ldots,f_k)= \\
&\begin{cases} \mu_L(\be_1)-1+n-\ts\sum_{j=1}^{i-1}\deg
f_j-\sum_{j=i+k_2}^k \deg f_j, & \hbox to 0pt{\hss
$0,i\!\notin\!I_1$,}\\
\mu_L(\be_1)-1+n-\ts\sum_{j=1}^{i-1}\deg f_j-\sum_{j=i+k_2}^k \deg
f_j-\eta_{\al_1(0)}, & \hbox to 0pt{\hss $0\!\in\!I_1$,
$i\notin I_1$,}\\
\mu_L(\be_1)-1+n-\ts\sum_{j=1}^{i-1}\deg f_j-\sum_{j=i+k_2}^k \deg
f_j-\eta_{\al_1(i)}, & \hbox to 0pt{\hss $0\!\notin\!I_1$,
$i\in I_1$,}\\
\mu_L(\be_1)\!-\!1\!+\!n-\!\ts\sum_{j=1}^{i-1}\deg
f_j\!-\!\sum_{j=i+k_2}^k\deg f_j-\eta_{\al_1(0)}-\eta_{\al_1(i)},
\,\,\,\,\,\,\,\,\,\,\,\,\,\,\,\,\,\,\,\, &\hbox to 0pt{\hss
$0,i\!\in\!I_1$,}
\end{cases}
\end{split}
\label{il4eq17}
\end{equation}

Combining \eq{il4eq4}, \eq{il4eq15} and \eq{il4eq16} shows that
\begin{equation}
\begin{split}
&\pd\oM_{k+1}^\ma(\al,\be,J,f_1,\ldots,f_k)\cong \\
&\coprod_{i=1}^k\coprod_{j=0}^{a_i}
\oM_{k+1}^\ma(\al,\be,J,f_1,\ldots,f_{i-1},f_i\ci F_j^{a_i},
f_{i+1},\ldots,f_k)\\
&\amalg
\coprod_{\begin{subarray}{l}
k_1+k_2=k+1,\;1\le i\le k_1, \\
I_1\cup_iI_2=I,\;\al_1\cup_i\al_2=\al,\\
\be_1+\be_2=\be
\end{subarray}}
\begin{aligned}[t]
&\oM_{k_2+1}^\ma (\al_2,\be_2,J,f_i,\ldots,f_{i+k_2-1})
\t_{\bev,L\amalg R,\bev_i}\\
&\qquad \oM_{k_1+1}^\ma (\al_1,\be_1,J,f_1,\ldots,f_{i-1};
f_{i+k_2},\ldots,f_k),
\end{aligned}
\end{split}
\label{il4eq18}
\end{equation}
in unoriented Kuranishi spaces.

As for \eq{il4eq3}, we shall also write
\begin{equation}
\oM_{k+1}^\ma(\be,J,f_1,\ldots,f_k)=\coprod\nolimits_{
\begin{subarray}{l}I\subseteq\{0,\ldots,k\},\\ \al:I\ra
R\end{subarray}}\oM_{k+1}^\ma(\al,\be,J,f_1,\ldots,f_k).
\label{il4eq19}
\end{equation}
Again, this is a disjoint union of Kuranishi spaces of different
dimensions. We define a strong submersion
$\bev:\oM_{k+1}^\ma(\be,J,f_1,\ldots,f_k)\ra L\amalg R$ to be $\bev$
on each component~$\oM_{k+1}^\ma (\al,\be,J,f_1,\ldots,f_k)$.
\label{il4dfn5}
\end{dfn}

\subsection{Adding families of almost complex structures}
\label{il45}

We can generalize all the material above to {\it smooth families\/}
of almost complex structures $J_t$ for $t\in{\cal T}$, with $\cal T$
a smooth manifold. We will need this in \S\ref{il8}--\S\ref{il9}
with ${\cal T}=[0,1]$, and in \S\ref{il10} with ${\cal T}$ a
semicircle $S$ and a triangle~$T$.

\begin{dfn} Suppose $(M,\om)$ is a compact $2n$-dimensional
symplectic manifold, $\cal T$ an oriented smooth manifold, which may
be noncompact and may have boundary and corners, and $J_t$ for
$t\in{\cal T}$ a smooth family of almost complex structures on $M$
compatible with $\om$. Let $\io:L\ra M$ be a compact Lagrangian
immersion. Suppose that all the self-intersection points of the
immersion $\io$ are transverse double self-intersections.

Generalizing Definition \ref{il4dfn2} and using the same notation,
define $\oM_{k+1}^\ma(\al,\be,J_t:t\in{\cal T})$ to be the set of
$\bigl(t,[\Si,\vec z,u,l,\bar{u}]\bigr)$ for $t\in{\cal T}$ and
$[\Si,\vec z,u,l,\bar{u}]\in\oM_{k+1}^\ma(\al,\be,J_t)$. Define
$\pi_{\cal T}:\oM_{k+1}^\ma(\al,\be,J_t:t\in{\cal T})\ra{\cal T}$ by
$\pi_{\cal T}:\bigl(t,[\Si,\vec z,u,l,\bar{u}]\bigr)\mapsto t$ and
$\ev_i,\ev:\oM_{k+1}^\ma(\al,\be,J_t:t\in{\cal T})\!\ra\!L\!\amalg\!
R$ by $\ev_i,\ev:\bigl(t,[\Si,\vec
z,u,l,\bar{u}]\bigr)\!\mapsto\!\ev_i,\ev \bigl([\Si,\vec
z,u,l,\bar{u}]\bigr)$.

As for the case of $\oM_{k+1}^\ma(\al,\be,J)$ in \S\ref{il41}, we
may define a natural, Hausdorff topology on $\oM_{k+1}^\ma
(\al,\be,J_t:t\in{\cal T})$ called the $C^\iy$ {\it topology}, such
that $\pi_{\cal T},\ev_i,\ev$ are continuous. If $\cal T$ is compact
then $\oM_{k+1}^\ma (\al,\be,J_t:t\in{\cal T})$ is compact.

We can then define a {\it Kuranishi structure} on $\oM_{k+1}^\ma
(\al,\be,J_t:t\in{\cal T})$, with boundary and corners and a tangent
bundle, and $\pi_{\cal T},\ev_i,\ev$ extend to {\it strong
submersions\/} $\bs\pi_{\cal T},\bev_i,\bev$. For each $t'\in{\cal
T}$ there is an isomorphism of Kuranishi spaces
\begin{equation}
\oM_{k+1}^\ma(\al,\be,J_{t'})\cong \{t'\}\t_{\io,{\cal
T},\bs\pi_{\cal T}}\oM_{k+1}^\ma(\al,\be,J_t:t\in{\cal T}),
\label{il4eq20}
\end{equation}
where $\io:\{t'\}\ra{\cal T}$ is the inclusion, and the right hand
side is a fibre product of Kuranishi spaces, which is well-defined
as $\bs\pi_{\cal T}$ is a strong submersion.

There is one subtle point here: the Kuranishi structures on each
side depend on choices made during the constructions, and
\eq{il4eq20} holds provided the choices made in defining the
Kuranishi structures on $\oM_{k+1}^\ma(\al,\be,J_{t'})$ and
$\oM_{k+1}^\ma(\al,\be,J_t:t\in{\cal T})$ are compatible. If ${\cal
T}=[0,1]$ then for any allowed choices of Kuranishi structures on
$\oM_{k+1}^\ma(\al,\be,J_0)$ and $\oM_{k+1}^\ma(\al,\be,J_1)$, we
can choose the Kuranishi structure on $\oM_{k+1}^\ma(\al,\be,J_t:
t\in{\cal T})$ so that \eq{il4eq20} holds when $t'=0,1$. We will
usually suppress this issue of needing to make compatible choices of
Kuranishi structures.
\label{il4dfn6}
\end{dfn}

Here are the generalizations of Theorem \ref{il4thm1} and
Proposition~\ref{il4prop1}.

\begin{thm} In the situation of Definition \ref{il4dfn6}, there is
an isomorphism of unoriented Kuranishi spaces:
\begin{gather}
\begin{split}
&\pd\oM^\ma_{k+1}(\al,\be,J_t:t\in{\cal T})\cong
\oM^\ma_{k+1}(\al,\be,J_t:t\in\pd{\cal T})\,\amalg\\
&\coprod_{\begin{subarray}{l}
k_1+k_2=k+1,\; 1\le i\le k_1,\\
I_1\cup_iI_2=I,\;\al_1\cup_i\al_2=\al,\\
\be_1+\be_2=\be
\end{subarray}}\!\!\!\!\!
\begin{aligned}[t]
\oM^\ma_{k_2+1}(\al_2,\be_2,J_t:t\in{\cal T})\t_{\bs\pi_{\cal
T}\t\bev,{\cal T}\t(L\amalg R),\bs\pi_{\cal T}\t\bev_i}&\\
\oM^\ma_{k_1+1}(\al_1,\be_1,J_t:t\in{\cal T}), \qquad \text{and\/}&
\end{aligned}
\end{split}
\label{il4eq21}\\
\vdim\oM_{k+1}^\ma(\al,\be,J_t:t\in{\cal T})=\mu_L(\be)+
k-2+n-\ts\sum_{i\in I}\eta_{\al(i)}+\dim{\cal T}.
\label{il4eq22}
\end{gather}
\label{il4thm2}
\end{thm}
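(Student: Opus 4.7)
The plan is to reduce both statements to the corresponding fixed-$J$ results, Theorem \ref{il4thm1} and Proposition \ref{il4prop1}, by treating $\cal T$ as an extra parameter. First I would construct the Kuranishi structure on $\oM^\ma_{k+1}(\al,\be,J_t:t\in{\cal T})$ by adapting the construction sketched in \S\ref{il41}: at each point $(t_0,[\Si,\vec z,u,l,\bar u])$, a Kuranishi neighbourhood is built from one for $\oM^\ma_{k+1}(\al,\be,J_{t_0})$ by thickening its base by a small open $U\subset{\cal T}$ around $t_0$, pulling back the obstruction bundle, and modifying the Kuranishi section to use the family Cauchy--Riemann operator $\bar\pd_{J_t}$. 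Standard transversality and gluing, applied in families, give a Kuranishi structure with boundary, corners and a tangent bundle, and make $\bs\pi_{\cal T}$ a strong submersion compatible with \eq{il4eq20}.

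For the dimension formula \eq{il4eq22}, the linearized operator at $(t_0,[\Si,\vec z,u,l,\bar u])$ acquires an extra $T_{t_0}{\cal T}$ summand in its domain coming from variation of the parameter $t$, so its Fredholm index grows by $\dim{\cal T}$ relative to the one used to establish \eq{il4eq10}. Equivalently, since \eq{il4eq20} realises $\oM^\ma_{k+1}(\al,\be,J_{t'})$ as the fibre of the strong submersion $\bs\pi_{\cal T}$ over $t'\in{\cal T}$, Definition \ref{il2dfn6} gives
\begin{equation*}
\vdim\oM^\ma_{k+1}(\al,\be,J_{t'})=\vdim\oM^\ma_{k+1}(\al,\be,J_t:t\in{\cal T})-\dim{\cal T},
\end{equation*}
and rearranging using \eq{il4eq10} yields \eq{il4eq22}.

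For the boundary formula \eq{il4eq21}, a point of $\pd\oM^\ma_{k+1}(\al,\be,J_t:t\in{\cal T})$ lies in a codimension-one stratum of a Kuranishi chart $V\t U$ as above, and there are two disjoint ways this can occur. Either $(\Si,\vec z,u,l,\bar u)$ is an interior point of the moduli factor and $t\in\pd{\cal T}$, which by \eq{il4eq20} applied to the boundary strata of $\cal T$ yields the first term $\oM^\ma_{k+1}(\al,\be,J_t:t\in\pd{\cal T})$; or $t\in{\cal T}^\circ$ and $(\Si,\vec z,u,l,\bar u)$ degenerates at a single boundary node, in which case the local model from the family gluing theorem is precisely the fixed-$J_t$ product from \eq{il4eq4}, but now parametrised by $t$ on both sides. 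The requirement that both components of the broken configuration share the same $J_t$ forces the gluing to be taken over ${\cal T}\t(L\amalg R)$ rather than just $L\amalg R$, which is exactly the extra factor of $\bs\pi_{\cal T}$ in the evaluation maps of \eq{il4eq21}.

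The main obstacle is book-keeping: one must verify that the Kuranishi structures produced by the family gluing construction at a boundary node agree with the fibre-product Kuranishi structures on the right hand side of \eq{il4eq21}, in the sense of Definitions \ref{il2dfn3} and \ref{il2dfn6}. This is a direct family version of the gluing analysis of \cite{FOOO,Liu} already used for Theorem \ref{il4thm1}, and introduces no essentially new analytic input beyond smooth dependence on $t\in{\cal T}$; orientations are deferred to \S\ref{il5} as stated.
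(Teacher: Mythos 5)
Your proposal is correct and follows essentially the route the paper intends: Theorem \ref{il4thm2} is stated as the direct family analogue of Theorem \ref{il4thm1} and Proposition \ref{il4prop1}, with the Kuranishi structure of Definition \ref{il4dfn6} thickened by ${\cal T}$, the index (equivalently, the fibre-product relation \eq{il4eq20} together with Definition \ref{il2dfn6}) contributing the extra $\dim{\cal T}$ in \eq{il4eq22}, and the codimension-one boundary splitting into the $t\in\pd{\cal T}$ stratum plus nodal degenerations glued over ${\cal T}\t(L\amalg R)$ because both components carry the same $J_t$. Your identification of the two disjoint boundary types and of the r\^ole of $\bs\pi_{\cal T}$ in the fibre product matches the paper's argument.
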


We can also add smooth simplicial chains, following Definition
\ref{il4dfn5}. The obvious way to do this is to start with
$f_i:\De_{a_i}\ra L\amalg R$ for $i=1,\ldots,k$, and take the fibre
product $\oM^\ma_{k+1}(\al,\be,J_t:t\in{\cal T})\t_{\bev_1\t\cdots\t
\bev_k,(L\amalg R)^k,f_1\t\cdots\t f_k}(\De_{a_1}\t\cdots\t
\De_{a_k})$ as in \eq{il4eq11}. But for our later purposes we need
to do something different: we use simplicial chains on ${\cal
T}\t(L\amalg R)$, so that $f_i$ maps $\De_{a_i}\ra{\cal T}\t(L\amalg
R)$, and then we define $\oM_{k+1}^\ma(\al,\be,J_t:t\in{\cal
T},f_1,\ldots,f_k)$ by a fibre product over $({\cal T}\t(L\amalg
R))^k$. Thus, roughly speaking we want to write
\begin{equation}
\begin{split}
\oM_{k+1}^\ma(\al,&\be,J_t:t\in{\cal T},f_1,\ldots,f_k)=
\oM_{k+1}^\ma(\al,\be,J_t:t\in{\cal T})\\
&\t_{(\bs\pi_{\cal T}\t\bev_1)\t\cdots\t(\bs\pi_{\cal
T}\t\bev_k),({\cal T}\t(L\amalg R))^k, f_1\t\cdots\t
f_k}(\De_{a_1}\t\cdots\t\De_{a_k}).
\end{split}
\label{il4eq23}
\end{equation}

However, there is a problem with \eq{il4eq23}. Although
$\bs\pi_{\cal T}\t\bev_1\t\cdots\t\bev_k:\oM_{k+1}^\ma
(\al,\ab\be,J_t:t\in{\cal T})\ra{\cal T}\t (L\amalg R)^k$ is a
strong submersion, if $\dim{\cal T}>0$ and $k>1$ then $(\bs\pi_{\cal
T}\t\bev_1)\t\cdots\t(\bs\pi_{\cal T}\t\bev_k):\oM_{k+1}^\ma
(\al,\be,J_t:t\in{\cal T})\ra({\cal T}\t(L\amalg R))^k$ is {\it
not\/} a strong submersion, as it does not locally map onto ${\cal
T}^k$, but only onto the diagonal $\bigl\{(t,\ldots,t)\in{\cal
T}^k:t\in{\cal T}\bigr\}$. Since $f_1\t\cdots\t f_k$ may also not be
a strong submersion, the fibre product in \eq{il4eq23} is not
well-defined.

We fix this by including an extra factor in the fibre product, which
modifies the Kuranishi structures and makes the strongly smooth maps
into strong submersions. The same problem holds for the moduli
spaces $\M_{k+1}^\ma(M',L',\{J_{1,s}\}_s:\be;{\rm twp}(x);\vec{\cal
P})$ in Fukaya et al.\ \cite[\S 19.2]{FOOO}, but appears to the
authors to have been overlooked.

\begin{dfn} First suppose for simplicity that ${\cal T}$ is of dimension
$m$ and embedded in $\R^m$. For $k\ge 0$, define a new Kuranishi
structure $\ka^m_k$ on $\R^m$ by the global Kuranishi neighbourhood
$(V^m_k,E^m_k,s^m_k,\psi^m_k)$, where $V^m_k\!=\!(\R^m)^{k+1}$, and
$E^m_k\!=\!(\R^m)^{k+1}\t(\R^m)^k$, the trivial vector bundle over
$V^m_k$ with fibre $(\R^m)^k$. Define $s^m_k:V^m_k\!\ra\!E^m_k$ by
$s^m_k:(\bs v_0,\ldots,\bs v_k)\!\mapsto\!\bigl((\bs v_0,\ldots,\bs
v_k),(\bs v_1-\bs v_0,\ldots,\bs v_k-\bs v_0)\bigr)$, for $\bs
v_0,\ldots,\bs v_k\in\R^m$. Then $(s^m_k)^{-1}(0)\!=\!\bigl\{(\bs
v,\ldots,\bs v)\in(\R^m)^{k+1}:\bs v\in\R^m\bigr\}$. Define
$\psi^m_k:(s^m_k)^{-1}(0)\!\ra\!\R^m$ by $\psi^m_k:(\bs v,\ldots,\bs
v)\!\mapsto\!\bs v$. Define $\pi_i:V^m_k\!\ra\!\R^m$ for
$i\!=\!0,\ldots,k$ by $\pi_i:(\bs v_0,\ldots,\bs v_k)\!\mapsto\!\bs
v_i$. Then $\pi_i$ represents a strongly smooth map
$\bs\pi_i:(\R^m,\ka^m_k)\!\ra\!\R^m$, with
$\bs\pi_0\!\t\!\cdots\!\t\!\bs\pi_k:(\R^m,\ka^m_k)\!\ra\!(\R^m)^{k+1}$
a strong submersion.

Now for $i=1,\ldots,k$, let $a_i\ge 0$ and $f_i:\De_{a_i}\ra {\cal
T}\t(L\amalg R)$ be a smooth map. Define the Kuranishi space
\begin{align}
\oM_{k+1}^\ma(\al,&\be,J_t:t\!\in\!{\cal T},f_1,\ldots,f_k)\!=\!
\bigl((\R^m,\ka^m_k)\t_{\bs\pi_0,\R^m,\bs\pi_{\cal T}}
\oM_{k+1}^\ma(\al,\be,J_t:t\!\in\!{\cal T})\bigr)
\nonumber\\
&\t_{(\bs\pi_1\t\bev_1)\t\cdots\t(\bs\pi_k\t\bev_k),({\cal
T}\t(L\amalg R))^k, f_1\t\cdots\t
f_k}(\De_{a_1}\t\cdots\t\De_{a_k}).
\label{il4eq24}
\end{align}
Unlike \eq{il4eq23}, this is well-defined, as $\bs\pi_0$ and
$(\bs\pi_1\t\bev_1)\!\t\!\cdots\!\t\!(\bs\pi_k\t\bev_k)$ are strong
submersions. Also, one can show the Kuranishi structure of
$(\R^m,\ka^m_k)$ is unchanged by diffeomorphisms of $\R^m$. Thus, by
composing the embedding ${\cal T}\hookra\R^m$ with a diffeomorphism
of $\R^m$, we see that the Kuranishi structure of
$\oM_{k+1}^\ma(\al,\be,J_t:t\!\in\!{\cal T},f_1,\ldots,f_k)$ is
locally independent of the choice of embedding of $\cal T$ in
$\R^m$. In fact, since the Kuranishi structure depends only locally
on ${\cal T}\hookra\R^m$, and any $\cal T$ can be locally embedded
in $\R^m$, the Kuranishi structure of $\oM_{k+1}^\ma(\al,
\be,J_t:t\!\in\!{\cal T},f_1,\ldots,f_k)$ is well-defined even if
$\cal T$ cannot be globally embedded in~$\R^m$.

As for \eq{il4eq12}, but using \eq{il4eq22}, \eq{il4eq24} and
$\vdim(\R^m,\ka_k^m)=m$, we see that
\begin{equation}
\begin{split}
&\vdim\oM_{k+1}^\ma(\al,\be,J_t:t\in{\cal T},f_1,\ldots,f_k)=
(1-k)\dim{\cal T}+\\
&\mu_L(\be)+k-2+n-\ts\sum_{i\in I}\eta_{\al(i)}+\ts\sum_{0\ne
i\notin I}(a_i-n)+\ts\sum_{0\ne i\in I}a_i.
\end{split}
\label{il4eq25}
\end{equation}

As in \S\ref{il44}, it is convenient to rewrite this using a notion
of {\it shifted cohomological degree}. Let $f:\De_a\ra {\cal
T}\t(L\amalg R)$ be a smooth map. Generalizing \eq{il4eq13}, define
\begin{equation}
\deg f=\begin{cases} \dim{\cal T}+n-a-1, & f(\De_a)\subseteq
{\cal T}\t L,\\
\dim{\cal T}\!+\!\eta_{(p_-,p_+)}\!-\!a\!-\!1, &
f(\De_a)\!\subseteq\!{\cal T}\!\t\!\{(p_-,p_+)\},\; (p_-,p_+)\in R.
\end{cases}
\label{il4eq26}
\end{equation}
Then combining \eq{il4eq25} and \eq{il4eq26} yields a generalization
of~\eq{il4eq14}:
\begin{equation}
\begin{aligned}
&\vdim\oM_{k+1}^\ma(\al,\be,J_t:\in{\cal T},f_1,\ldots,f_k)=\\
&\begin{cases}
\mu_L(\be)-2+\dim{\cal T}+n-\ts\sum_{i=1}^k\deg f_i, & 0\notin I,\\
\mu_L(\be)-2+\dim{\cal T}+n-\ts\sum_{i=1}^k\deg f_i-\eta_{\al(0)}, &
0\in I.
\end{cases}
\end{aligned}
\label{il4eq27}
\end{equation}
This illustrates something we will see in \S\ref{il55}, that to
generalize from one complex structure $J$ to a family $J_t:t\in{\cal
T}$, in dimensions or signs we usually change $n$ to $\dim{\cal
T}+n$, and make no other changes.

Write $\oM_{k_1+1}^\ma(\al_1,\be_1,J_t:t\in{\cal T},f_1,\ldots,
f_{i-1};f_{i+k_2}, \ldots,f_k)$ for the fibre product
\begin{equation}
\begin{aligned}
&\bigl((\R^m,\ka^m_{k_1})\t_{\bs\pi_0,\R^m,\bs\pi_{\cal T}}
\oM_{k_1+1}^\ma(\al_1,\be_1,J_t:t\in{\cal
T})\bigr)\\
&\quad\t_{(\bs\pi_1\t\bev_1)\t\cdots(\bs\pi_{i-1}\t\bev_{i-1})
\t(\bs\pi_{i+1}\t\bev_{i+1})\t\cdots\t(\bs\pi_{k_1}\t\bev_{k_1}),
({\cal T}\t (L\amalg R))^{k_1-1},} \\
&\quad{}_{f_1\t\cdots\t f_{i-1}\t f_{i+k_2}\t\cdots\t f_k}
(\De_{a_1}\!\t\!\cdots\!\t\!
\De_{a_{i-1}}\!\t\!\De_{a_{i+k_2}}\!\t\!\cdots\!\t\!\De_{a_k}).
\end{aligned}
\label{il4eq28}
\end{equation}
Its virtual dimension is given by the sum of \eq{il4eq17} with
$\dim{\cal T}$. As for \eq{il4eq18} but using \eq{il4eq21}, and
requiring $k>0$, we find that
\begin{equation}
\begin{split}
&\pd\oM_{k+1}^\ma(\al,\be,J_t:t\in{\cal T},f_1,\ldots,f_k)\cong \\
&\coprod_{i=1}^k\coprod_{j=0}^{a_i}
\oM_{k+1}^\ma(\al,\be,J_t:t\in{\cal T},f_1,\ldots,f_{i-1},f_i\ci
F_j^{a_i},
f_{i+1},\ldots,f_k)\\
&\amalg \!\!\!\coprod_{\begin{subarray}{l}
k_1+k_2=k+1,\;1\le i\le k_1, \\
I_1\cup_iI_2=I,\;\al_1\cup_i\al_2=\al,\\
\be_1+\be_2=\be
\end{subarray}\!\!\!\!\!\!\!\!\!\!\!\!\!\!\!\!\!\!\!\!\!\!\!\!\!}
\!\!\!\!\!
\begin{aligned}[t]
\oM_{k_2+1}^\ma (\al_2,\be_2,J_t:t\!\in\!{\cal
T},f_i,\ldots,f_{i+k_2-1})\!\t_{\bs\pi_0\t\bev,
{\cal T}\t(L\amalg R),\bs\pi_i\t\bev_i}&\\
\oM_{k_1+1}^\ma (\al_1,\be_1,J_t:t\in{\cal T},f_1,\ldots,f_{i-1};
f_{i+k_2},\ldots,f_k)&,
\end{aligned}
\end{split}
\label{il4eq29}
\end{equation}
in unoriented Kuranishi spaces. Here from \eq{il4eq24}, the first
line of \eq{il4eq29} involves a fibre product with $(\R^m,\ka^m_k)$,
but the third line involves fibre products with $(\R^m,\ka^m_{k_1})$
and $(\R^m,\ka^m_{k_2})$. To match these up, we construct an
explicit isomorphism of Kuranishi spaces~$(\R^m,\ka^m_k)\cong(\R^m,
\ka^m_{k_2})\t_{\bs\pi_0,\R^m,\bs\pi_i}(\R^m,\ka^m_{k_1})$.

Note that unlike \eq{il4eq21}, as $k>0$, there are no special
contributions to \eq{il4eq29} from the boundary $\pd{\cal T}$. As
for \eq{il4eq19}, we shall also write
\begin{equation}
\oM_{k+1}^\ma(\be,J_t:t\in{\cal T},f_1,\ldots,f_k)=
\!\!\!\coprod_{I\subseteq\{0,\ldots,k\},\; \al:I\ra
R\!\!\!\!\!\!\!\!\!\!\!\!\!\!\!\!\!\!\!\!\!\!\!\!\!}\!\!
\oM_{k+1}^\ma(\al,\be,J_t:t\in{\cal T},f_1,\ldots,f_k).
\label{il4eq30}
\end{equation}
Again, this is a disjoint union of Kuranishi spaces of different
dimensions.
\label{il4dfn7}
\end{dfn}

\begin{rem} In \S\ref{il8} and \S\ref{il10} the following question
will be important. Suppose $\cal T$ has boundary $\pd{\cal T}$, and
for each $i=1,\ldots,k$ we have smooth $f_i:\De_{a_i}\ra{\cal
T}\t(L\amalg R)$ such that for some $b_i=0,\ldots,a_i$, the boundary
map $g_i=f_i\ci F_{b_i}^{a_i}:\De_{a_i-1}\ra{\cal T}\t(L\amalg R)$
maps to $\pd{\cal T}\t(L\amalg R)$, and that $f_i$ maps
$\De_{a_i}\sm F_{b_i}^{a_i}(\De_{a_i-1})$ to ${\cal T}^\ci\t(L\amalg
R)$, where ${\cal T}^\ci$ is the interior of $\cal T$. Then, what is
the relation between $\oM_{k+1}^\ma(\al,\be,J_t:t\in{\cal
T},f_1,\ldots,f_k)$ and~$\oM_{k+1}^\ma(\al,\be,J_t:t\in\pd{\cal
T},g_1,\ldots,g_k)$?

The answer is complicated, because if we locally embed ${\cal
T}\hookra\R^m$ such that $\pd {\cal T}\hookra\R^{m-1}$, then the
definition \eq{il4eq24} of $\oM_{k+1}^\ma(\al,\be,J_t:t\in{\cal
T},f_1,\ldots,f_k)$ involves $(\R^m,\ka^m_k)$, but for
$\oM_{k+1}^\ma(\al,\be,J_t:t\in\pd{\cal T},g_1,\ldots,g_k)$ it
involves $(\R^{m-1},\ka^{m-1}_k)$. To give a satisfactory relation
we need to impose an extra {\it transversality condition} for
$f_1,\ldots,f_k$ over~$\pd {\cal T}$:

\begin{cond} Assume that\/ $\pi_{\cal T}\ci f_i:\De_{a_i}\ra{\cal T}$
is transverse to $\pd{\cal T}$ along $F_{b_i}^{a_i}(\De_{a_i-1})$
for each\/ $i=1,\ldots,k$. That is, for each\/ $p\in
F_{b_i}^{a_i}(\De_{a_i-1})$ we require that\/~$\d(\pi_{\cal T}\ci
f_i)(T_p\De_{a_i})+T_{\pi_{\cal T}\ci f_i(p)}(\pd{\cal
T})=T_{\pi_{\cal T}\ci f_i(p)}{\cal T}$.
\label{il4cond}
\end{cond}

Supposing that ${\cal T}$ is embedded in $\R^m$ such that $\pd {\cal
T}$ is embedded in $\R^{m-1}\subset\R^m$ locally, and using
Condition \ref{il4cond}, we have isomorphisms of Kuranishi spaces
\begin{gather}
\pd{\cal T}\t_{i,\R^m,\bs\pi_0}\bigl(
(\R^m,\ka_k^m)\t_{\bs\pi_1\t\cdots\t\bs\pi_k,(\R^m)^k,(\pi_{\cal
T}\ci f_1)\t\cdots\t(\pi_{\cal T}\ci f_k})\bigl(\De_{a_1}\t\cdots\t
\De_{a_k}\bigr)\cong
\nonumber\\
\bigl((\R^{m-1},\ka_k^{m-1})\t_{\bs\pi_1\t\cdots\t\bs\pi_k,
(\R^{m-1})^k,(\pi_{\pd{\cal T}}\ci g_1)\t\cdots\t(\pi_{\pd{\cal
T}}\ci g_k)}\bigl(\De_{a_1-1}\t\cdots\t\De_{a_k-1}\bigr)\bigr)
\nonumber\\
\t\bigl[\{0\}\t_{i,\R,\bs\pi_0}\bigl((\R,\ka_k^1)\t_{\bs\pi_1\t
\cdots\t\bs\pi_k,\R^k,i}[0,\iy)^k\bigr)\bigr],
\label{il4eq31}\\
(\R^m,\ka_k^m)\t_{\bs\pi_1\t\cdots\t\bs\pi_k,(\R^m)^k,(\pi_{\cal
T}\ci f_1)\t\cdots\t(\pi_{\cal T}\ci f_{j-1})\t(\pi_{\cal T}\ci
g_j)\t(\pi_{\cal T}\ci f_{j+1})\t\cdots\t(\pi_{\cal T}\ci f_k)}
\nonumber\\
\bigl(\De_{a_1}\t\cdots\t
\De_{a_{j-1}}\t\De_{a_j-1}\t\De_{a_{j+1}}\t\cdots\t\De_{a_k}\bigr)\cong
\nonumber\\
\bigl((\R^{m-1},\ka_k^{m-1})\t_{\bs\pi_1\t\cdots\t\bs\pi_k,(\R^{m-1})^k,
(\pi_{\pd{\cal T}}\ci g_1)\t\cdots\t(\pi_{\pd{\cal T}}\ci g_k)}
\bigl(\De_{a_1-1}\t\cdots\t\De_{a_k-1}\bigr)\bigr)
\nonumber\\
\t\bigl[(\R,\ka_k^1)\t_{\bs\pi_1\t\cdots\t
\bs\pi_k,\R^k,i}[0,\iy)^{j-1}\t\{0\}\t[0,\iy)^{k-j}\bigr],
\label{il4eq32}
\end{gather}
for $j=1,\ldots,k$, where $i$ denotes inclusion maps. To prove
\eq{il4eq31} and \eq{il4eq32}, we use the isomorphism
$(\R^m,\ka^m_k)\cong(\R^{m-1},\ka^{m-1}_k)\t (\R,\ka^1_k)$ and the
isomorphism $\De_{a_j}\cong\De_{a_j-1}\t[0,\iy)$ near
$F_{b_j}^{a_j}(\De_{a_j-1})$. Condition \ref{il4cond} ensures that
the factor $[0,\iy)$ in $\De_{a_j}\cong\De_{a_j-1}\t[0,\iy)$ locally
submerses to the factor $\R$ in~$\R^m\cong\R^{m-1}\t\R$.

Equations \eq{il4eq24}, \eq{il4eq31}, \eq{il4eq32} and properties of
fibre products yield isomorphisms
\begin{align}
\begin{split}
&\pd{\cal T}\t_{i,{\cal
T},\bs\pi_0}\oM_{k+1}^\ma(\al,\be,J_t:t\in{\cal T},
f_1,\ldots,f_k)\\
&\quad\cong \oM_{k+1}^\ma(\al,\be,J_t:t\in\pd{\cal T},g_1,\ldots,g_k)\\
&\qquad\t\bigl[\{0\}\t_{i,\R,\bs\pi_0}\bigl((\R,\ka_k^1)\t_{\bs\pi_1\t\cdots\t
\bs\pi_k,\R^k,i}[0,\iy)^k\bigr)\bigr],
\end{split}
\label{il4eq33}\\
\begin{split}
&\oM_{k+1}^\ma(\be,J_t:t\in{\cal T},
f_1,\ldots,f_{j-1},g_j,f_{j+1},\ldots,f_k)\\
&\quad\cong
\oM_{k+1}^\ma(\al,\be,J_t:t\in\pd{\cal T},g_1,\ldots,g_k)\\
&\qquad\t\bigl[(\R,\ka_k^1)\t_{\bs\pi_1\t\cdots\t
\bs\pi_k,\R^k,i}[0,\iy)^{j-1}\t\{0\}\t[0,\iy)^{k-j}\bigr],
\end{split}
\label{il4eq34}
\end{align}
for all $j=1,\ldots,k$. These are the relations we seek between
$\oM_{k+1}^\ma(\al,\be,J_t:t\in{\cal T},f_1,\ldots,f_k)$
and~$\oM_{k+1}^\ma(\al,\be,J_t:t\in\pd{\cal T},g_1,\ldots,g_k)$.

Note that the third lines of \eq{il4eq33} and \eq{il4eq34} are each
a point $\{0\}$ with an unusual Kuranishi structure, of virtual
dimension 0. Since the Kuranishi maps of these Kuranishi structures
are already transverse, when we choose {\it perturbation data} as in
\S\ref{il27}, they do not need to be perturbed. Hence, from
\eq{il4eq34}, a choice of perturbation data for $\oM_{k+1}^\ma(\al,
\be,J_t:t\in\pd{\cal T},g_1,\ldots,g_k)$ determines perturbation
data for $\oM_{k+1}^\ma(\be,J_t:t\in{\cal T},
f_1,\ldots,f_{j-1},g_j,f_{j+1},\ldots,f_k)$, which have the same
virtual chains. This will enable us to relate $A_{N,K}$ algebras of
singular chains on ${\cal T}\t(L\amalg R)$ to $A_{N,K}$ algebras of
singular chains on $\pd{\cal T}\t(L\amalg R)$
in~\S\ref{il8}--\S\ref{il10}.
\label{il4rem}
\end{rem}

\subsection{Modified moduli spaces $\tM_{k+1}^\ma(\al,\be,J)$}
\label{il46}

We will see in \S\ref{il5} that defining and computing with
orientations on the moduli spaces $\oM_{k+1}^\ma(\al,\be,J)$ is
rather complicated. This is mostly to do with the r\^ole of the
operators $\bar{\pd}_{\la_{(p_-,p_+)}}$. We will now define
modified, noncompact spaces $\tM_{k+1}^\ma(\al,\be,J)$ whose
dimensions and orientations behave in a simpler, more natural way.
To compute the sign in some orientation problem for the
$\oM_{k+1}^\ma(\cdots)$, it is usually simpler to first work out the
answer for the $\tM_{k+1}^\ma(\cdots)$. Also, the
$\tM_{k+1}^\ma(\cdots)$ provide geometric explanations for the
notions of {\it grading\/} and {\it shifted cohomological degree\/}
introduced in~\S\ref{il44}.

\begin{dfn} In Definition \ref{il4dfn3}, suppose that the
families $\la_{(p_-,p_+)}$ for all $(p_-,p_+)$ in $R$ have been
chosen such that $\eta_{(p_-,p_+)}\ge 0$, and $\la_{(p_-,p_+)}$ is
generic. This genericity implies that $\dim\Ker\bar{\pd}_{
\la_{(p_-,p_+)}}$ and $\dim\Coker\bar{\pd}_{\la_{(p_-, p_+)}}$ are
both as small as possible, so $\dim\Ker
\smash{\bar{\pd}_{\la_{(p_-,p_+)}}}=\eta_{(p_-,p_+)}$ and
$\dim\Coker\smash{\bar{\pd}_{ \la_{(p_-,p_+)}}}=0$, since
$\eta_{(p_-,p_+)}=\ind\smash{\bar{\pd}_{\la_{(p_-,p_+)}}}\ge 0$.

Consider the linear map $\ev_{(-1,0)}:\Ker\bar{\pd}_{
\la_{(p_-,p_+)}}\ra\la_{(p_-,p_+)}(-1,0)$ mapping
$\ev_{(-1,0)}:\xi\mapsto \xi(-1,0)$. We have
$\dim\Ker\bar{\pd}_{\la_{(p_-,p_+)}}\le
n=\dim\la_{(p_-,p_+)}(-1,0)$, since $0\le\eta_{(p_-,p_+)}\le n$ by
\eq{il4eq9}. Thus, genericness implies that $\ev_{(-1,0)}$ is
injective, so $\ev_{(-1,0)}\bigl(\Ker\bar{\pd}_{
\la_{(p_-,p_+)}}\bigr)$ is a vector subspace of
$\la_{(p_-,p_+)}(-1,0)$ of dimension~$\eta_{(p_-,p_+)}$.

As $\la_{(p_+,p_-)}(x,y)\equiv\la_{(p_-,p_+)}(x,-y)$ we have
$\la_{(p_+,p_-)}(-1,0)=\la_{(p_-,p_+)}(-1,0)$. Hence
$\ev_{(-1,0)}\bigl(\Ker\bar{\pd}_{ \la_{(p_-,p_+)}}\bigr)$ and
$\ev_{(-1,0)}\bigl(\Ker\bar{\pd}_{ \la_{(p_+,p_-)}}\bigr)$ are
subspaces of $\la_{(p_-,p_+)}\ab(-1,0)\ab\cong\R^n$, of dimensions
$\eta_{(p_-,p_+)}$ and $\eta_{(p_+,p_-)}=n-\eta_{(p_-,p_+)}$. By
genericness they intersect transversely, so that
\begin{equation}
\la_{(p_-,p_+)}(-1,0)=
\ev_{(-1,0)}\bigl(\Ker\bar{\pd}_{\la_{(p_-,p_+)}}\bigr)\op
\ev_{(-1,0)}\bigl(\Ker\bar{\pd}_{\la_{(p_+,p_-)}}\bigr).
\label{il4eq35}
\end{equation}
In \S\ref{il5} we will choose orientations for the
$\Ker\bar{\pd}_{\la_{(p_-,p_+)}}$, and so we can ask whether or not
\eq{il4eq35} holds in oriented vector spaces.

In the situation of Definition \ref{il4dfn2}, define
\begin{equation}
\tM_{k+1}^\ma(\al,\be,J)=\oM_{k+1}^\ma(\al,\be,J)\t\ts\prod_{i\in
I}\Ker\bar{\pd}_{\la_{\al(i)}}.
\label{il4eq36}
\end{equation}
We write elements of $\tM_{k+1}^\ma(\al,\be,J)$ as $\bigl([\Si,\vec
z,u,l,\bar{u}],\xi_i:i\in I\bigr)$, for $[\Si,\vec
z,u,l,\bar{u}]\in\oM_{k+1}^\ma (\al,\be,J)$ and
$\xi_i\in\Ker\bar{\pd}_{ \la_{\al(i)}}$. When for computing
orientations we need to regard \eq{il4eq36} as an {\it ordered\/}
product, since $I\subseteq\{0,\ldots,k\}$ we regard the product
$\prod_{i\in I}$ as occurring in the natural order $\le$ on $I$. We
interpret $\tM_{k+1}^\ma(\al,\be,J)$ as a {\it Kuranishi space},
since the $\Ker\bar{\pd}_{\la_{\al(i)}}$ are manifolds of dimension
$\eta_{\al(i)}$ and $\oM_{k+1}^\ma(\al,\be,J)$ is a Kuranishi space
from \S\ref{il41}. Equation \eq{il4eq10} implies the simpler
equation
\begin{equation}
\vdim\tM_{k+1}^\ma(\al,\be,J)=\mu_L(\be)+k-2+n.
\label{il4eq37}
\end{equation}
This is {\it independent of\/} $\al$, and agrees with the embedded
case~\cite[Prop.~29.1]{FOOO}.

Define $\ti R=\coprod_{(p_-,p_+)\in R}\bigl(\{(p_-,p_+)\}\t
\la_{(p_-,p_+)}(-1,0)\bigr)$. Then $\ti R$ is an $n$-manifold, as
each $\la_{(p_-,p_+)}(-1,0)\cong\R^n$. Thus $L\amalg\ti R$ is an
$n$-manifold. It is nicer to work with than $L\amalg R$, the
disjoint union of an $n$-manifold and a 0-manifold. Define {\it
modified evaluation maps\/} $\widetilde{\ev}_i:
\tM_{k+1}^\ma(\al,\be,J) \ra L\amalg\ti R$ by
\begin{equation}
\widetilde{\ev}_i\bigl([\Si,\vec z,u,l,\bar{u}],\xi_i:i\in
I\bigr)=\begin{cases}
\bar{u}(\ze_i)\in L, & i\notin I,\\
\bigl(\al(i),\ev_{(-1,0)}(\xi_i)\bigr)\in\ti R, & i\in I,
\end{cases}
\label{il4eq38}
\end{equation}
for $i=0,\ldots,k$, and $\widetilde{\ev}:\tM_{k+1}^\ma(\al,\be,J)\ra
L\amalg \ti R$ by
\begin{equation}
\widetilde{\ev}\bigl([\Si,\vec z,u,l,\bar{u}],\xi_i:i\in
I\bigr)=\begin{cases}
\bar{u}(\zeta_0)\in L, & 0\notin I,\\
\bigl(\si\ci\al(0),\ev_{(-1,0)}(\xi_0)\bigr)\in\ti R, & 0\in I.
\end{cases}
\label{il4eq39}
\end{equation}
As for $\ev_i,\ev$, these extend to {\it strongly smooth maps}
$\btev_i,\btev:\tM_{k+1}^\ma(\al,\be,J) \ra L\amalg\ti R$ at the
Kuranishi space level. They are not strong submersions, since the
maps $\ev_{(-1,0)}:\Ker\bar{\pd}_{\la_{(p_-,p_+)}}
\ra\la_{(p_-,p_+)}(-1,0)$ are not submersions, but this will not
matter in the fibre products we take in \eq{il4eq40} and elsewhere
below, because of the transverseness of the subspaces
in~\eq{il4eq35}.

We can now generalize \eq{il4eq4} to an isomorphism of unoriented
Kuranishi spaces:
\begin{equation}
\begin{gathered}
\pd\tM_{k+1}^\ma(\al,\be,J)\cong
\coprod_{\begin{subarray}{l}
k_1+k_2=k+1,\;1\le i\le k_1,\; I_1\cup_iI_2=I,\\
\al_1\cup_i\al_2=\al,\;\be_1+\be_2=\be \end{subarray}
\!\!\!\!\!\!\!\!\!\!\!\!\!\!\!\!\!\!\!\!\!\!\!\!\!\!\!\!}
\begin{aligned}[t]
\tM_{k_2+1}^\ma(\al_2,\be_2,J)\t_{\btev,L\amalg \ti R,\btev_i}&\\
\tM_{k_1+1}^\ma(\al_1,\be_1,J).&
\end{aligned}
\end{gathered}
\label{il4eq40}
\end{equation}
Note that if $i\in I_1$ and $0\notin I_2$, or if $i\notin I_1$ and
$0\in I_2$, then the fibre products in \eq{il4eq4} and \eq{il4eq40}
are empty, since one side maps to $L$, and the other to $R$ or $\ti
R$. Thus, to deduce \eq{il4eq40} from \eq{il4eq4}, for fixed
$i,\ldots,\be_2$ we may divide into the two cases (a) $i\notin I_1$
and $0\notin I_2$, and (b) $i\in I_1$ and $0\in I_2$.

In case (a), the right hand sides of \eq{il4eq4} and \eq{il4eq40}
are both fibre products over $L$, and to see they are isomorphic we
have to give an isomorphism between the extra factors $\prod_{j\in
I}\Ker\bar{\pd}_{\la_{\al(j)}}$ from $\tM_{k+1}^\ma(\al,\be,J)$ on
the left, and $\prod_{j\in I_1}\Ker\bar{\pd}_{
\la_{\al_1(j)}}\t\prod_{j\in I_2}\Ker\bar{\pd}_{\la_{\al_2(j)}}$
from $\tM_{k_1+1}^\ma(\al_1,\be_1,J)$ and $\tM_{k_2+1}^\ma(\al_2,
\be_2,J)$ on the right. In this case, \eq{il4eq5} defines an
isomorphism between $I$ and $I_1\amalg I_2$ which identifies $\al$
and $\al_1\amalg\al_2$, which induces an isomorphism between
$\prod_{j\in I}\Ker\bar{\pd}_{\la_{\al(j)}}$ and $\prod_{j\in
I_1}\Ker\bar{\pd}_{\la_{\al_1(j)}}\t\prod_{j\in I_2}\Ker
\bar{\pd}_{\la_{\al_2(j)}}$ from~$\tM_{k_1+1}^\ma(\al_1,\be_1,J)$.

In case (b), equation \eq{il4eq4} is a fibre product over $R$, and
equation \eq{il4eq40} a fibre product over $\ti R$. By
\eq{il4eq1}--\eq{il4eq2} and \eq{il4eq38}--\eq{il4eq39}, both can
only be nonempty if $\al_1(i)=\si\ci\al_2(0)$, so we suppose this.
Set $\al_1(i)=(p_-,p_+)$ in $R$, so that $\al_2(0)=(p_+,p_-)$, and
let $p=\io(p_-)=\io(p_+)$. Then the term in \eq{il4eq4} is a fibre
product over the point $\{(p_-,p_+)\}$, that is, it is just a
product. The term in \eq{il4eq40} is a fibre product over the
Lagrangian subspace $\la_{(p_-,p_+)}(-1,0)$ in $T_pM$, and $\btev_i$
maps the factor $\Ker\bar{\pd}_{\la_{\al_1(i)}}$ from
$\tM_{k_1+1}^\ma(\al_1,\be_1,J)$ to $\la_{(p_-,p_+)}(-1,0)$ by
$\ev_{(-1,0)}$, and $\btev$ maps the factor
$\Ker\bar{\pd}_{\la_{\al_2(0)}}$ from $\tM_{k_2+1}^\ma(\al_2,
\be_2,J)$ to $\la_{(p_-,p_+)}(-1,0)$ by~$\ev_{(-1,0)}$.

Since \eq{il4eq35} is a direct sum, and $\ev_{(-1,0)}$ are
embeddings, the fibre product of these two factors over
$\la_{(p_-,p_+)}(-1,0)$ is just a point. The remaining extra factors
$\prod_{i\ne j\in I_1}\Ker\bar{\pd}_{\la_{\al_1(j)}}\t\prod_{0\ne
j\in I_2}\Ker\bar{\pd}_{\la_{\al_2(j)}}$ from
$\tM_{k_1+1}^\ma(\al_1,\be_1,J),\tM_{k_2+1}^\ma(\al_2, \be_2,J)$ are
identified with $\prod_{j\in I}\Ker\bar{\pd}_{\la_{\al(j)}}$ from
$\tM_{k+1}^\ma(\al,\be,J)$ using \eq{il4eq5} as in case (a). This
proves \eq{il4eq40}. Note that \eq{il4eq40} is a fibre product over
the $n$-manifold $L\amalg\ti R$. This makes it easier to work with
than \eq{il4eq4}, which is a fibre product over the disjoint union
of an $n$-manifold $L$, and a 0-manifold $R$.

As for \eq{il4eq3} and \eq{il4eq19}, we shall write
\begin{equation}
\tM_{k+1}^\ma(\be,J)=\coprod\nolimits_{\begin{subarray}{l}I\subseteq
\{0,\ldots,k\},\\ \al:I\ra R\end{subarray}}\tM_{k+1}^\ma
(\al,\be,J).
\label{il4eq41}
\end{equation}
Since by \eq{il4eq37} the virtual dimension of $\tM_{k+1}^\ma
(\al,\be,J)$ is independent of $I,\al$, this is a Kuranishi space,
possibly noncompact because of the vector space $\prod_{i\in
I}\Ker\bar\pd_{\la_{\al(i)}}$, of virtual dimension \eq{il4eq37},
another illustration of how the $\tM_{k+1}^\ma (\al,\be,J)$ are
better behaved that the $\oM_{k+1}^\ma (\al,\be,J)$. We define
strong smooth maps $\btev_i,\btev:\tM_{k+1}^\ma(\be,J)\ra L\amalg\ti
R$ to be $\btev_i,\btev$ on each~$\smash{\tM_{k+1}^\ma
(\al,\be,J)}$.
\label{il4dfn8}
\end{dfn}

\subsection{The moduli spaces
$\tM_{k+1}^\ma(\al,\be,J,f_1,\ldots,f_k)$}
\label{il47}

We can also define modified versions $\tM_{k+1}^\ma
(\al,\be,J,f_1,\ldots,f_k),\tM_{k_1+1}^\ma(\al_1,\ab \be_1,\ab J,\ab
f_1,\ldots,f_{i-1}; f_{i+k_2},\ldots,f_k)$ of the moduli spaces of
\S\ref{il44}, in a similar way to~\S\ref{il46}.

\begin{dfn} In the situation of Definition \ref{il4dfn5}, define
\begin{gather}
\tM_{k+1}^\ma(\al,\be,J,f_1,\ldots,f_k)\!=\!\begin{cases}
\oM_{k+1}^\ma(\al,\be,J, f_1,\ldots,f_k), & 0\!\notin\!I,\\
\oM_{k+1}^\ma(\al,\be,J,f_1,\ldots,f_k)\!\t\!\Ker
\bar{\pd}_{\la_{\al(0)}},\!\! & 0\!\in\!I,\end{cases}
\label{il4eq42}\\
\tM_{k_1+1}^\ma(\al_1,\ab \be_1,\ab J,\ab f_1,\ldots,f_{i-1};
f_{i+k_2},\ldots,f_k)=\qquad\qquad\qquad\qquad{}
\label{il4eq43}\\
\begin{cases}
\oM_{k_1+1}^\ma(\al_1,\ab \be_1,\ab J,\ab f_1,\ldots,f_{i-1};
f_{i+k_2},\ldots,f_k), & \hbox to 0pt{\hss $0,i\notin I_1,$}\\
\oM_{k_1+1}^\ma(\al_1,\ab \be_1,\ab J,\ab f_1,\ldots,f_{i-1};
f_{i+k_2},\ldots,f_k)\t \Ker\bar{\pd}_{\la_{\al_1(0)}},& \hbox to
0pt{\hss $0\in I_1$, $i\notin I_1$,}\\
\oM_{k_1+1}^\ma(\al_1,\ab \be_1,\ab J,\ab f_1,\ldots,f_{i-1};
f_{i+k_2},\ldots,f_k)\t\Ker\bar{\pd}_{\la_{\al_1(i)}},& \hbox to
0pt{\hss $0\notin I_1$, $i\in I_1$,}\\
\oM_{k_1+1}^\ma(\al_1,\ab \be_1,\ab J,\ab f_1,\ldots,f_{i-1};
f_{i+k_2},\ldots,f_k)\!\t\!
\Ker\bar{\pd}_{\la_{\al_1(0)}}\!\t\!\Ker\bar{\pd}_{\la_{\al_1(i)}},
\,\,\,\,\,\,\,\,\,\,\,\,\,\,\,\,\, & \hbox to 0pt{\hss
$0,i\!\in\!I_1$.}
\end{cases}
\nonumber
\end{gather}
Then equations \eq{il4eq14} and \eq{il4eq17} imply the simpler
formulae
\begin{align}
&\vdim\tM_{k+1}^\ma(\al,\be,J,f_1,\ldots,f_k)=
\mu_L(\be)-2+n-\ts\sum_{i=1}^k\deg f_i,
\label{il4eq44}\\
&\vdim\tM_{k_1+1}^\ma(\al_1,\be_1,J,f_1,\ldots,f_{i-1};
f_{i+k_2},\ldots,f_k)=
\nonumber\\
&\qquad\qquad\mu_L(\be_1)-1+n-\ts\sum_{j=1}^{i-1}\deg
f_j-\sum_{j=i+k_2}^k \deg f_j.
\nonumber
\end{align}

Suppose now that $f_i:\De_{a_i}\ra L\amalg R$ maps to $L$ if
$i\notin I$, and to $\al(i)\in R$ if $i\in I$. As above, if this
does not hold then $\oM_{k+1}^\ma(\al,\be,J,f_1,\ldots,f_k)=
\emptyset=\tM_{k+1}^\ma(\al,\be,J,f_1,\ldots,f_k)$. Then
\eq{il4eq42} is equivalent to the fibre product
\begin{equation}
\begin{split}
&\tM_{k+1}^\ma(\al,\be,J,f_1,\ldots,f_k)=
\tM_{k+1}^\ma(\al,\be,J)\\
&\;\t_{\prod_{i=1}^k\btev_i,(L\amalg\ti
R)^k,\prod_{i=1}^k\bigl\{\begin{smallmatrix}f_i, & i\notin I\\
f_i\t\ev_{(-1,0)}, & i\in I\end{smallmatrix}\bigr\}}
\left.\prod_{i=1}^k
\begin{cases}\De_{a_i}, & \!i\notin I\\
\De_{a_i}\t\Ker\bar{\pd}_{\la_{\si\ci\al(i)}}, & \!i\in I\end{cases}
\right\}.
\end{split}
\label{il4eq45}
\end{equation}

The difference between \eq{il4eq42} and \eq{il4eq45} is that in
\eq{il4eq45} we have extra factors $\prod_{0\ne i\in
I}\Ker\bar{\pd}_{\la_{\al(i)}}$ in $\tM_{k+1}^\ma(\al,\be,J)$ (we
exclude 0 because of the factor $\Ker\bar{\pd}_{\la_{\al(0)}}$ in
\eq{il4eq42}) and $\prod_{0\ne i\in
I}\Ker\bar{\pd}_{\la_{\si\ci\al(i)}}$ in $\prod_{0\ne i\in
I}\De_{a_i}\t\Ker\bar{\pd}_{\la_{\si\ci\al(i)}}$. However, we are
taking a fibre product over $(L\amalg\ti R)^k$ rather than $(L\amalg
R)^k$. The effect of this is that for each $0\ne i\in I$, in
\eq{il4eq45} we take the fibre product $\Ker\bar{\pd}_{\la_{\al(i)}}
\t_{\ev_{(-1,0)},\la_{\al(i)}(-1,0),\ev_{(-1,0)}}
\Ker\bar{\pd}_{\la_{\si\ci\al(i)}}$, which is just a point by
\eq{il4eq35} and injectivity of the $\ev_{(-1,0)}$. Thus
\eq{il4eq42} and \eq{il4eq45} differ only by the product with
$\md{I\sm\{0\}}$ points, so they are equivalent.

Similarly, using \eq{il4eq16} we find \eq{il4eq43} is equivalent to
the fibre product
\begin{equation}
\begin{split}
& \tM_{k_1+1}^\ma(\al_1,\ab \be_1,\ab J,\ab f_1,\ldots,f_{i-1};
f_{i+k_2},\ldots,f_k)=\tM_{k_1+1}^\ma(\al_1,\be_1,J)\\
&\t_{\btev_1\t\cdots\t\btev_{i-1}\t
\btev_{i+1}\t\cdots\t\btev_{k_1},(L\amalg\ti R)^{k_1-1},
\prod_{\begin{subarray}{l} j=1,\ldots,k:\\
j<i\;\text{or}\; j\ge i+k_2\end{subarray}}
\bigl\{\begin{smallmatrix}f_j, & j\notin I\\
f_j\t\ev_{(-1,0)}, & j\in I\end{smallmatrix}\bigr\}}\\
&\prod_{\begin{subarray}{l} j=1,\ldots,k:\\
j<i\;\text{or}\; j\ge i+k_2\end{subarray}}
\left.\begin{cases}\De_{a_j}, & \!j\notin I\\
\De_{a_j}\t\Ker\bar{\pd}_{\la_{\si\ci\al(j)}}, & \!j\in I\end{cases}
\right\}.
\end{split}
\label{il4eq46}
\end{equation}

Combining \eq{il4eq40}, \eq{il4eq45} and \eq{il4eq46} we find that
by analogy with \eq{il4eq18} we have
\begin{equation}
\begin{aligned}
&\pd\tM_{k+1}^\ma(\al,\be,J,f_1,\ldots,f_k)\cong \\
&\coprod_{i=1}^k\coprod_{j=0}^{a_i}
\tM_{k+1}^\ma(\al,\be,J,f_1,\ldots,f_{i-1},f_i\ci F_j^{a_i},
f_{i+1},\ldots,f_k)\\
&\amalg
\coprod_{\begin{subarray}{l}
k_1+k_2=k+1,\;1\le i\le k_1,\\
I_1\cup_iI_2=I,\;\al_1\cup_i\al_2=\al, \\
\be_1+\be_2=\be\end{subarray}}
\begin{aligned}[t]
&\tM_{k_2+1}^\ma (\al_2,\be_2,J,f_i,\ldots,f_{i+k_2-1})
\t_{\btev,L\amalg\ti R,\btev_i}\\
&\quad\tM_{k_1+1}^\ma
(\al_1,\be_1,J,f_1,\ldots,f_{i-1};f_{i+k_2},\ldots,f_k),
\end{aligned}
\end{aligned}
\label{il4eq47}
\end{equation}
in unoriented Kuranishi spaces.

As for \eq{il4eq3}, \eq{il4eq19} and \eq{il4eq41} we shall also
write
\begin{equation*}
\tM_{k+1}^\ma(\be,J,f_1,\ldots,f_k)=\coprod\nolimits_{
\begin{subarray}{l}I\subseteq\{0,\ldots,k\},\\ \al:I\ra
R\end{subarray}}\tM_{k+1}^\ma(\al,\be,J,f_1,\ldots,f_k).
\end{equation*}
This is a Kuranishi space, of virtual dimension \eq{il4eq44}, which
may be noncompact because of the vector space
$\Ker\bar\pd_{\la_{\al(0)}}$ in \eq{il4eq42}. We define
$\btev:\tM_{k+1}^\ma(\be,J,f_1,\ldots,f_k)\ab\ra L\amalg\ti R$ to be
$\btev$ on each~$\tM_{k+1}^\ma (\al,\be,J,f_1,\ldots,f_k)$.

We can now explain the notion of {\it shifted cohomological
degree\/} in Definition \ref{il4dfn5}, and the grading it induces on
$C_*^\rsi(L\amalg R;\Q)$. Suppose $f:\De_a\ra L\amalg R$ is smooth.
By \eq{il4eq13}, if $f$ maps to $L$ then $\deg f=n-a-1$, which is
the (virtual) codimension of $f(\De_a)$ in $L$ minus one. But if $f$
maps to $(p_-,p_+)$ in $R$ then $\deg f=\eta_{(p_-,p_+)}-a-1$. Here
is a good way to understand this. Morally, we want to lift $f$ to a
map $\ti f$ to the $n$-manifold $L\amalg\ti R$. Since $f$ maps to
$\{(p_-,p_+)\}\subset R$, the lift $\ti f$ should map to
$\{(p_-,p_+)\}\t\la_{(p_-,p_+)}(-1,0)\subset\ti R$. But the domain
of $f$ should not be $\De_a$. Motivated by \eq{il4eq45}, we see that
$f:\De_a\ra\{(p_-,p_+)\}\subset R$ should lift to
\begin{equation*}
\ti f=f\t\ev_{(-1,0)}:\De_a\t\Ker\bar{\pd}_{\la_{(p_+,p_-)}}
\longra\{(p_-,p_+)\}\t\la_{(p_-,p_+)}(-1,0)\subset\ti R.
\end{equation*}

This is not a chain in $C_*^\rsi(L\amalg\ti R;\Q)$, as $\De_a\t
\Ker\bar{\pd}_{\la_{\si(p_-,p_+)}}$ is not a simplex. But it does
justify the change in degree in \eq{il4eq13}. We have
$\dim\bigl(\De_a\t\Ker\bar{\pd}_{\la_{(p_+,p_-)}}\bigr)=
a+\eta_{(p_+,p_-)}=a+n-\eta_{(p_-,p_+)}$ by \eq{il4eq9}. Thus, the
(virtual) codimension of $\ti f\bigl(\De_a\t\Ker
\bar{\pd}_{\la_{(p_+,p_-)}}\bigr)$ in $\ti R$ minus one is $n-(a+
n-\eta_{(p_-,p_+)})-1=\eta_{(p_-,p_+)}-a-1=\deg f$. Hence, when we
lift to modified moduli spaces in this way, the shifted
cohomological degree $\deg f$ is the genuine shifted cohomological
degree of the `chain' $\ti f$ in~$L\amalg\ti R$.
\label{il4dfn9}
\end{dfn}

We could also easily define modified versions of the moduli spaces
of \S\ref{il45} for families of complex structures, but we will not,
as we only need the modified spaces for motivation anyway.

\section{Orientations}
\label{il5}

We now define orientations on the Kuranishi spaces defined in
\S\ref{il4}, and prove formulae for their boundaries in oriented
Kuranishi spaces, so computing the appropriate signs in \eq{il4eq4},
\eq{il4eq18}, \eq{il4eq40}, and \eq{il4eq47}.

\subsection{Orientations on $\tM_{k+1}^\ma(\al,\be,J)$}
\label{il51}

Fukaya et al.\ \cite[Def.~44.2]{FOOO} define {\it relative spin
structures\/} on $L$. We adapt their definition to the immersed
case.

\begin{dfn} Let $\io:L\ra M$ be an immersed submanifold with
transverse self-intersections in $M$. Fix triangulations of $L$ and
$M$ compatible under $\io$. This can be done by first triangulating
the self-intersection of $\io(L)$ in $M$, then extending this to a
triangulation of $\io(L)$ which pulls back to one of $L$, and then
extending the triangulation of $\io(L)$ to one of $M$. A {\it
relative spin structure\/} for $\io:L\ra M$ consists of an
orientation on $L$; a class ${\rm st}\in H^2(M;\Z_2)$ such that
$\io^*({\rm st})=w_2(L)\in H^2(L;\Z_2)$, the second Stiefel--Whitney
class of $L$; an oriented vector bundle $V$ on the 3-skeleton
$M_{[3]}$ of $M$ with $w_2(V)={\rm st}$; and a spin structure
on~$(TL\op\io^*(V))\vert_{L_{[2]}}$.

Here $L_{[2]}$ is the 2-skeleton of $L$, and as $w_2(V
\vert_{L_{[2]}})=\io^*({\rm st})\vert_{L_{[2]}}=
w_2(L)\vert_{L_{[2]}}$ we have $w_2\bigl((TL\op
\io^*(V))\vert_{L_{[2]}}\bigr)=0$, so $(TL\op\io^*(V))
\vert_{L_{[2]}}$ admits a spin structure. If $L$ is spin then
$w_2(L)=0$, so we can take ${\rm st}=0$ and $V=0$ and the spin
structure on $TL\vert_{L_{[2]}}$ to be the restriction of that on
$TL$. Hence, an orientation and spin structure on $L$ induce a
relative spin structure for~$\io:L\ra M$.
\label{il5def1}
\end{dfn}

We first construct orientations on the modified spaces
$\tM_{k+1}^\ma(\al,\be,J)$ of~\S\ref{il46}.

\begin{thm} Let\/ $(M,\om)$ be a compact symplectic manifold with
compatible almost complex structure\/ $J,$ and\/ $\io:L\ra M$ a
compact Lagrangian immersion with only transverse double
self-intersections. Then choices of a relative spin structure for\/
$\io:L\ra M,$ and of\/ $\la_{(p_-,p_+)}$ for\/ $(p_-,p_+)\in R$ as
in\/ \S{\rm\ref{il43},} determine orientations on the modified
Kuranishi spaces\/ $\tM_{k+1}^\ma(\al,\be,J)$ of\/ \S{\rm\ref{il46}}
for all\/~$k,\al,\be$.
\label{il5thm1}
\end{thm}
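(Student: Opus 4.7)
\smallskip

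\noindent\textbf{Proof plan.} The strategy is to reduce the immersed problem to an embedded one, where the orientation result of Fukaya et al.\ \cite[\S 44]{FOOO} applies, by performing an analytic desingularization at each self-intersection marked point. The factors $\Ker\bar\pd_{\la_{\al(i)}}$ built into $\tM^\ma_{k+1}(\al,\be,J)$ in \eq{il4eq36} are precisely what one needs for this desingularization to match dimensions: compare \eq{il4eq10} with \eq{il4eq37}, where the $-\sum_{i\in I}\eta_{\al(i)}$ correction disappears. This is exactly the index additivity one expects from gluing strips $Y$ onto $\Si$ at each $z_i$ for $i\in I$.

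Concretely, given a representative $\bigl([\Si,\vec z,u,l,\bar u],\xi_i:i\in I\bigr)$, I would glue a copy of $Y$ (see \eq{il4eq6}) to $\Si$ at each $z_i$ with $i\in I$, using a fixed gluing parameter; the new boundary condition on each glued strip is $\la_{\al(i)}$, which by construction interpolates between $\d\io(T_{p_\pm}L)$. The boundary of the glued surface $\Si^\sharp$ now carries a continuously varying Lagrangian subbundle $\Lambda^\sharp$ of $(u^\sharp)^*(TM)$ (where $u^\sharp$ extends $u$ by the constant map $p$ on each glued $Y$-strip), and this subbundle is \emph{oriented} by the conventions of Definition~\ref{il4dfn3} together with the orientation of $L$ coming from the relative spin structure. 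The relative spin structure on $\io:L\to M$ thus extends to a relative spin structure on the pair $(M,\Lambda^\sharp)$ in the sense required by \cite[Def.~44.2]{FOOO}: $\io^*(\mathrm{st})=w_2(L)$ together with $w_2(V)=\mathrm{st}$ pull back along each $Y$-strip to trivial classes since $Y$ is contractible, and the spin structure on $TL\op\io^*(V)\vert_{L_{[2]}}$ extends over each strip using the oriented family $\la_{\al(i)}$.

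Applying \cite[\S 44]{FOOO} to the desingularized problem then orients the determinant line bundle of the linearized $\bar\partial$-operator $\bar\pd^\sharp$ on $\Si^\sharp$ with boundary condition $\Lambda^\sharp$, and hence orients the corresponding moduli space. To transfer this back to $\tM^\ma_{k+1}(\al,\be,J)$, I use the standard linear gluing isomorphism of determinant lines
\[
\det\bar\pd^\sharp\;\cong\;\det\bar\pd_{\Si}\;\ot\;\bigot_{i\in I}\det\bar\pd_{\la_{\al(i)}},
\]
where the product is taken in the natural order $\le$ on $I\subseteq\{0,\ldots,k\}$, matching the ordering convention introduced in Definition~\ref{il4dfn8}. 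By the genericity assumptions of \S\ref{il46} we have $\Coker\bar\pd_{\la_{\al(i)}}=0$, so $\det\bar\pd_{\la_{\al(i)}}=\det\Ker\bar\pd_{\la_{\al(i)}}$, and the isomorphism above combined with the orientation of $\det\bar\pd^\sharp$ produces an orientation of the fibre product in \eq{il4eq36}, namely of $\tM^\ma_{k+1}(\al,\be,J)$.

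The main obstacle will be verifying independence of auxiliary choices: the gluing parameter (neck length) used to attach each $Y$-strip, the particular interpolating smoothing of the Lagrangian boundary condition near each $z_i$, and a choice of cutoff function used in the linear gluing. Independence from the gluing parameter follows from the classical continuity of determinant lines under gluing; for the interpolating smoothing, I would exhibit any two choices as lying in a one-parameter family and invoke continuity of orientations in families. Finally, compatibility with the Kuranishi structure of \S\ref{il41} reduces to the fact that the Kuranishi neighbourhoods near the boundary strata of $\oM^\ma_{k+1}(\al,\be,J)$ are themselves built from linear gluing of determinant lines, so the orientations constructed above propagate consistently from the global operator $\bar\pd^\sharp$ to each Kuranishi chart.
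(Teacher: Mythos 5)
Your proposal is correct and follows essentially the same route as the paper's proof: you glue strips $Y$ with boundary condition $\la_{\al(i)}$ at each $z_i$, $i\in I$, to obtain a desingularized operator whose index is oriented via the relative spin structure as in \cite[\S 44]{FOOO}, and your determinant-line gluing identity is exactly the paper's isomorphism \eq{il5eq1}, $\Ind D_u\bar{\pd}\op\bigop_{i\in I}\Ker\bar{\pd}_{\la_{\al(i)}}\cong\Ind D_{u,\la_{\al}}$, which (together with the standard orientation of the domain moduli space) orients the virtual tangent bundle of $\tM_{k+1}^\ma(\al,\be,J)$. The auxiliary-choice issues you flag are handled in the paper by working in the limit of long necks $r\ra\iy$, so no essential difference arises.
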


\begin{proof} Let $[\Si,\vec z,u,l,\bar{u}]\in\oM_{k+1}^\ma
(\al,\be,J)$, so that $\vec z=(z_0,\ldots,z_k)$ with
$z_0,\ldots,z_k$ distinct smooth points of $\pd\Si$. For each $i\in
I$ we choose a small open neighbourhood $U_i$ of $z_i$ in $\Si$,
such that $U_i\cap U_j=\emptyset$ if $i\neq j\in I$, and $z_j\notin
U_i$ if $i\ne j\notin I$, and $U_i\sm\{z_i\}$ is biholomorphic to
$(-\iy,0)\t[-1,1]$, where $z_i$ corresponds to $-\iy$. We identify
$U_i$ with $\{-\iy\}\cup(-\iy,0)\t[-1,1]$, and define
\begin{equation*}
U_i^r=\{-\iy\}\cup(-\iy,-r)\t[-1,1]\subset U_i,
\end{equation*}
for $r>0$. For $i\in I$ we also define
\begin{equation*}
Y_i^r=\bigl\{(x,y)\in\R^2:\text{either $x\le 0$, $x^2+y^2\le 1$ or
$0\le x\le r$, $\md{y}\le 1$}\bigl\}\subset Y,
\end{equation*}
where $Y$ is as in \eq{il4eq6}, and we set $y_i=(-1,0)\in Y_i^r$.
For $j\notin I$ we define $y_j=z_j\in\Si\sm\bigcup_{i\in I}U_i$.
Glue $\Si\sm\bigcup_{i\in I}U_i^r$ and $\bigcup_{i\in I}Y_i^r$ by
identifying $\{-r\}\t[-1,1]\subset U_i$ with $\{r\}\t[-1,1]\subset
Y_i^r$ to make $(\Si^r,y_0,\ldots,y_k)$, which is diffeomorphic to
$(\Si,z_0,\ldots,z_k)$. Consider the linearized Cauchy--Riemann
operator
\begin{equation*}
\begin{split}
D_u\bar{\pd}&:W^{1,q}\bigl(\Si\sm\{z_i:i\in I\},\pd\Si\sm\{z_i:i\in
I\};u^*(TM),u^*(\d\io(TL)\bigr) \\
&\longra L^q\bigl(\Si\sm\{z_i:i\in
I\};u^*(TM)\ot\La^{0,1}(\Si\sm\{z_i:i\in I\})\bigr),
\end{split}
\end{equation*}
for $q>2$, and define the virtual vector space
\begin{equation*}
\Ind D_u\bar{\pd}=\Ker D_u\bar{\pd}\ominus\Coker D_u\bar{\pd}.
\end{equation*}
Here for a Fredholm operator $P$, we will write $\ind P=\dim\Ker
P-\dim\Coker P$ in $\Z$, and $\Ind P=\Ker P\ominus\Coker P$ as a
virtual vector space.

By a suitable partition of unity, we define differential operators
\begin{equation*}
D_{u,\la_{\al}}:W^{1,q}(\Si^r,\pd\Si^r;E_u,F_{u,\la_{\al}})\longra
L^q(\Si^r;E_u\ot\La^{0,1}\Si^r),
\end{equation*}
for $q>2$ and large $r$, whose restrictions to $\Si\sm\bigcup_{i\in
I}U_i^{r-1}$ and $Y_i^{r-1}$ coincide with $D_u\bar{\pd}$ and
$\bar{\pd}_{\la_{\al(i)}}$, respectively, and we define the virtual
vector space
\begin{equation*}
\Ind D_{u,\la_{\al}}=\Ker D_{u,\la_{\al}}\ominus\Coker
D_{u,\la_{\al}}.
\end{equation*}

Here $E_u\ra\Si^r$ is a complex vector bundle agreeing with
$u^*(TM)$ on $\Si\sm\bigcup_{i\in I}U_i^r$, and is trivial with
fibre $T_{p_i}M$ on $Y_i^r$ for $i\in I$, where $\al(i)=(p_-,p_+)\in
R$ with $\io(p_-)=\io(p_+)=p_i$. Also $F_{u,\la_{\al}}$ is a real
vector subbundle of $E_u\vert_{\pd\Si^r}$ which agrees with
$\d\io(TL)$ on $\pd\Si\sm\bigcup_{i\in I}U_i^r$, and with
$\la_{\al(i)}$ on $\pd Y_i^r$ for $i\in I$, except near
$\{-r\}\t[-1,1]$ where we interpolate between these two values. The
notation $\la_\al$ in $D_{u,\la_{\al}}$ and $F_{u,\la_{\al}}$
denotes that these depend on the choice of $\la_{\al(i)}$ for all
$i\in I$, where $\al(i)=(p_-,p_+)\in R$ and $\la_{(p_-,p_+)}$ is as
in \S\ref{il43}. Then, by a gluing theorem for large $r$, we have an
isomorphism of virtual vector spaces
\begin{equation}
\Ind D_u\bar{\pd}\op\ts\bigop_{i\in I}\Ker\bar{\pd}_{\la_{\al(i)}}
\cong\Ind D_{u,\la_{\al}},
\label{il5eq1}
\end{equation}
since $\Coker\bar{\pd}_{\la_{\al(i)}}=0$ as in \S\ref{il46}. Really
this holds in the limit~$r\ra\iy$.

The {\it virtual tangent bundle\/} of $\oM_{k+1}^\ma(\al,\be,J)$ is
\begin{equation*}
\bigcup_{[\Si,\vec z,u,l,\bar{u}]\in\oM_{k+1}^\ma(\al,\be,J)} (\Ind
D_u\bar{\pd}\op T_{[\Si,\vec z]}\oM_{k+1}^\ma)
\longra\oM_{k+1}^\ma(\al,\be,J),
\end{equation*}
where $\oM_{k+1}^\ma$ is the moduli space of isomorphism classes of
genus 0 prestable bordered Riemann surfaces with $k+1$ distinct
smooth boundary marked points ordered counter-clockwise. Combining
this with \eq{il4eq36} and using \eq{il5eq1} shows that in the limit
$r\ra\iy$, the {\it virtual tangent bundle\/} of
$\tM_{k+1}^\ma(\al,\be,J)$ is
\begin{equation}
\bigcup_{(\xi_i:i\in I,[\Si,\vec z,u,l,\bar{u}])\in
\tM_{k+1}^\ma(\al,\be,J)}(\Ind D_{u,\la_{\al}}\op T_{[\Si,\vec
z]}\oM_{k+1}^\ma)\longra\tM_{k+1}^\ma(\al,\be,J).
\label{il5eq2}
\end{equation}

Since $\oM_{k+1}^\ma$ is oriented \cite[\S 2.2]{FOOO}, \cite[\S
4.5]{Liu}, the factor $T_{[\Si,\vec z]}\oM_{k+1}^\ma$ in \eq{il5eq2}
is oriented. As in the embedded case \cite[\S 44]{FOOO}, a relative
spin structure for $\io:L\ra M$ canonically determines a homotopy
type of trivializations of $F_{u,\la_{\al}}$, which gives an
orientation of $\Ind D_{u,\la_{\al}}$. This is obtained by gluing in
$\la_{\al(i)}$ at $z_i$ for $i\in I$, and so also depends on the
choice of $\la_{(p_-,p_+)}$ for $(p_-,p_+)$ in $R$. Combining these
two gives an orientation for the virtual tangent bundle \eq{il5eq2},
and hence an orientation on the Kuranishi
space~$\tM_{k+1}^\ma(\al,\be,J)$.
\end{proof}

With these orientations, we compute the signs in~\eq{il4eq40}.

\begin{thm} In the situation of\/ \S{\rm\ref{il46},} with the
orientations for\/ $\tM_{k+1}^\ma(\al,\be,J)$ in Theorem\/
{\rm\ref{il5thm1}} and using the conventions of\/ \S{\rm\ref{il24},}
the orientations of\/ $\pd\tM_{k+1}^\ma(\al,\ab\be,\ab J)$ and\/
$\tM_{k_1+1}^\ma (\al_1,\be_1,J)\t_{\btev_i,L\amalg\ti
R,\btev}\tM_{k_2+1}^\ma (\al_2,\ab\be_2,\ab J)$ in\/ \eq{il4eq40}
differ by a factor\/ $(-1)^{(k_1-i)(k_2-1)+(n+k_1)},$ so that in
oriented Kuranishi spaces we have
\begin{equation}
\begin{gathered}
\pd\tM_{k+1}^\ma(\al,\be,J)\cong
\coprod_{\!\!\!\!\!\!\!\!\begin{subarray}{l}
k_1+k_2=k+1,\;1\le i\le k_1, I_1\cup_iI_2=I,\\
\al_1\cup_i\al_2=\al,\;
\be_1+\be_2=\be\end{subarray}\!\!\!\!\!\!\!\!\!\!\!\!\!\!}
\begin{aligned}[t]
(-1)^{n+i+ik_2}\tM_{k_2+1}^\ma(\al_2,\be_2,J)&\\
\t_{\btev,L\amalg\ti R,\btev_i}\tM_{k_1+1}^\ma(\al_1,&\be_1,J).
\end{aligned}
\end{gathered}
\label{il5eq3}
\end{equation}
\label{il5thm2}
\end{thm}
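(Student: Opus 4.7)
The plan is to track the oriented virtual tangent bundle of $\tM_{k+1}^\ma(\al,\be,J)$ described in \eq{il5eq2} through the codimension one degeneration producing a generic boundary stratum, and then compare the resulting orientation against the fibre product orientation dictated by Convention \ref{il2conv}(c). The two sources of orientation established in the proof of Theorem \ref{il5thm1} --- the oriented moduli space of marked discs $\oM^\ma_{k+1}$, and the index bundle $\Ind D_{u,\la_\al}$ oriented by the relative spin structure --- decompose separately under the gluing, so the sign in \eq{il5eq3} arises as a sum of essentially independent contributions which can be computed one at a time. Because the $\Ker\bar\pd_{\la_{\al(j)}}$ factors in \eq{il4eq36} are absorbed into the clean object $\Ind D_{u,\la_\al}$ by \eq{il5eq1}, the whole computation reduces modulo universal sign bookkeeping to the embedded-Lagrangian case of Fukaya et al., plus one new piece of data at the node.

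First I would treat the contribution from $T\oM^\ma_{k+1}$. This is the embedded-Lagrangian calculation of \cite[\S 46]{FOOO}: reordering the marked points $\vec z$ into the concatenation $(\vec z_1,\vec z_2)$ dictated by \eq{il4eq5}, declaring the nodal smoothing parameter to be an outward normal (Convention \ref{il2conv}(a)), and applying \eq{il2eq4} to the fibre product $\oM^\ma_{k_2+1}\t_{\bev,L,\bev_i}\oM^\ma_{k_1+1}$ over the $n$-dimensional factor produces a sign of the form $(-1)^{A(k_1,k_2,i,n)}$ whose explicit form is standard. Next I would track the contribution from $\Ind D_{u,\la_\al}$: the same Fredholm-stretching argument as in Theorem \ref{il5thm1} yields a canonical gluing isomorphism
\begin{equation*}
\Ind D_{u,\la_\al}\op T_p(L\amalg\ti R)\cong \Ind D_{u_2,\la_{\al_2}}\op\Ind D_{u_1,\la_{\al_1}},
\end{equation*}
where $p=\btev_i=\btev$ denotes the value at the node, and a homotopy argument as in \cite[\S 44]{FOOO} shows that under the trivializations of $F_{u,\la_\al},F_{u_1,\la_{\al_1}},F_{u_2,\la_{\al_2}}$ determined by the relative spin structure, this isomorphism is orientation-preserving up to a universal sign depending only on $n$. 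That universal sign I would extract by testing against the simplest nontrivial configuration, for example a disc splitting off a constant disc at a smooth boundary point.

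Finally I would assemble both contributions together with the fibre-product sign correction $(-1)^{\rank E'_{p'}(\dim V_p-\rank E_p-\dim Y)}$ of Convention \ref{il2conv}(c), substitute $\vdim\tM^\ma_{k_j+1}=\mu_L(\be_j)+k_j-2+n$ and $\dim(L\amalg\ti R)=n$, and use that $\mu_L(\be_j)$ is even when $L$ is oriented; the total exponent should reduce modulo $2$ to $(k_1-i)(k_2-1)+(n+k_1)$. A short check using Proposition \ref{il2prop1}(a) then swaps the natural fibre product order of \eq{il4eq40} into the order appearing in \eq{il5eq3}, introducing an extra factor $(-1)^{k_1k_2}$ which combines with $(k_1-i)(k_2-1)+(n+k_1)$ to produce the displayed exponent $n+i+ik_2$.

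The main obstacle will be the case $i\in I_1$ and $0\in I_2$, where a new self-intersection marked point is created at the node. Here the pointwise fibre product in \eq{il4eq40} is over the $n$-dimensional subspace $\la_{(p_-,p_+)}(-1,0)\subset T_pM$, and one must carefully use the transverse decomposition \eq{il4eq35} to identify the fibre of $\Ker\bar\pd_{\la_{\al_1(i)}}\t_{\ev_{(-1,0)},\la_{(p_-,p_+)}(-1,0),\ev_{(-1,0)}}\Ker\bar\pd_{\la_{\al_2(0)}}$ as a single point carrying a definite orientation induced from the chosen orientations of the two $\Ker$ factors and the symplectic orientation of $T_pM$. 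It is precisely this clean behaviour --- absent for the unmodified spaces $\oM^\ma_{k+1}$ fibered over the singular $L\amalg R$ --- that forces the answer to be independent of $\al$, and is the reason we work with the $\tM^\ma_{k+1}$ rather than the $\oM^\ma_{k+1}$ when computing signs.
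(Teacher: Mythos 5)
Your proposal is correct and follows essentially the same route as the paper: a Fredholm-stretching/gluing argument identifying $\Ind D_{u,\la_\al}$ with the fibre product of $\Ind D_{u_2,\la_{\al_2}}$ and $\Ind D_{u_1,\la_{\al_1}}$ over $T(L\amalg\ti R)$ (the paper's Lemma \ref{il5lem1}, quoted from \cite[Lem.~46.4]{FOOO}), combined with the oriented boundary behaviour of $\oM^\ma_{k+1}$, the conventions of \S\ref{il24}, evenness of $\mu_L(\be_j)$, and the limit $r\ra\iy$, exactly as in Corollary \ref{il5cor1} and the end of the paper's proof. The only slips are cosmetic: \eq{il4eq40} and \eq{il5eq3} already use the same fibre product order, so the $(-1)^{k_1k_2}$ reversal via Proposition \ref{il2prop1} is needed only to pass to the $\tM^\ma_{k_1+1}\t_{\btev_i,L\amalg\ti R,\btev}\tM^\ma_{k_2+1}$ form in the theorem's first sentence (your mod-2 arithmetic there checks out), and the $i\in I_1$, $0\in I_2$ case needs no separate treatment in this proof since the $\Ker\bar\pd$ factors are already absorbed into the glued index and the transversality \eq{il4eq35} was used in \S\ref{il46} to establish \eq{il4eq40}.
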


\begin{proof} Suppose $[\Si,\vec z,u,l,\bar{u}]$ in
$\pd\oM_{k+1}^\ma(\al,\be,J)$ is identified in \eq{il4eq4} with a
point in $\oM_{k_2+1}^\ma(\al_2,\be_2,J)\t_{\bev,L\amalg R,\bev_i}
\oM_{k_1+1}^\ma(\al_1,\be_1,J)$ represented by $[\Si_1,\vec
z_1,u_1,l_1,\bar{u}_1]\in\ab\oM_{k_1+1}^\ma(\al_1,\be_1,J)$ and
$[\Si_2,\vec z_2,u_2,l_2,\bar{u}_2]\in
\oM_{k_2+1}^\ma\ab(\al_2,\ab\be_2,J)$. Then $\vec z_1\!=\!(z_0^1,
\ldots,z_{k_1}^1)$ and $\vec z_2=(z_0^2,\ldots,z_{k_2}^2)$, and as
the point lies in the fibre product we have $u_1(z_i^1)= u_2(z_0^2)$
in $\io(L)$, and either $i\notin I_1$ and $0\notin I_2$, or $i\in
I_1$ and $0\in I_2$ and $\al_1(i)=\si\ci\al_2(0)$ in $R$, noting the
differing definitions of $\ev_i$ and $\ev$ in \eq{il4eq1} and
\eq{il4eq2}. From these we construct $(\Si^r,y_0,\ldots,y_k),
(\Si_1^r,y_0^1, \ldots,y_{k_1}^1),(\Si^r_2,y_0^2,\ldots,y_{k_2}^2)$,
and smoothed operators $D_{u,\la_{\al}},D_{u_1,\la_{\al_1}},
D_{u_2,\la_{\al_2}}$ upon them, as in the proof of Theorem
\ref{il5thm1}. The following lemma is then proved as in Fukaya et
al.~\cite[Lem.~46.4]{FOOO}:

\begin{lem} We have an isomorphism of oriented virtual vector
spaces
\begin{equation}
\Ind D_{u,\la_{\al}}\cong\Ind D_{u_2,\la_{\al_2}}
\t_{\bar{\ev},T(L\amalg \ti R),\bar{\ev}_i}\Ind D_{u_1,\la_{\al_1}},
\label{il5eq4}
\end{equation}
where, for $\xi_1\in
W^{1,q}(\Si_1^r,\pd\Si_1^r;E_{u_1},F_{u_1,\la_{\al_1}})$,
\begin{equation}
\bar{\ev}_i(\xi_1)=\begin{cases} \xi_1(y_i^1)\in T_{u_1(z_i^1)}L, &
i\notin I_1,\\
\xi_1(y_i^1) \in \la_{\al_1(i)}(-1,0), & i\in I_1,
\end{cases}
\label{il5eq5}
\end{equation}
and, for $\xi_2\in W^{1,q}(\Si_2^r,\pd\Si_2^r;E_{u_2},
F_{u_2,\la_{\al_2}})$, we define
\begin{equation}
\bar{\ev}(\xi_2)=\begin{cases} \xi_2(y_0^2)\in T_{u_2(z_0^2)}L,
& 0\notin I_2,\\
\xi_2(y_0^2) \in \la_{\si\ci\al_2(0)}(-1,0), & 0\in I_2.
\end{cases}
\label{il5eq6}
\end{equation}
\label{il5lem1}
\end{lem}

Since $L$ is oriented and $\la_{(p_-,p_+)}$ is compatible with
orientations, $\mu_L(\be)$ is even. Thus we obtain the following
corollary, proved as in Fukaya et al.\ \cite[Prop.~46.2]{FOOO}. For
reasons to be explained in Remark \ref{il5rem2}(b), we have reversed
the order of their fibre product, as for \eq{il4eq4} in
\S\ref{il43}, so the sign in \eq{il5eq3} is not the same as that in
\cite[Prop.~46.2]{FOOO}; the difference can be computed using the
second line of~\eq{il2eq5}.

\begin{cor} We have isomorphisms of oriented virtual vector spaces
\begin{equation}
\begin{split}
&\Ind D_{u,\la_{\al}}\op T_{[\Si,z_0,\ldots,z_k]}\pd\oM_{k+1}^\ma
\cong \\
& (-1)^{n+i+ik_2}(\Ind D_{u_2,\la_{\al_2}}\op
T_{[\Si_2,z_0^2,\ldots,z_{k_2}^2]}\oM_{k_2+1}^\ma)\\
& \t_{\bar{\ev},T(L\amalg \ti R),\bar{\ev}_i}(\Ind D_{u_1,\la_{\al_1}}
\op T_{[\Si_1,z_0^1,\ldots,z_{k_1}^1]}\oM_{k_1+1}^\ma).
\end{split}
\label{il5eq7}
\end{equation}
\label{il5cor1}
\end{cor}

By \eq{il5eq2}, in the limit $r\ra\iy$ the three terms in
\eq{il5eq7} become the virtual tangent bundles of
$\pd\tM_{k+1}^\ma(\al,\be,J),\tM_{k_1+1}^\ma(\al_1,\be_1,J)$ and
$\tM_{k_2+1}^\ma(\al_2,\be_2,J)$. By comparing
\eq{il4eq38}--\eq{il4eq39} and \eq{il5eq5}--\eq{il5eq6} we see that
in the limit $r\ra\iy$, the fibre product
`$\cdots\t_{\bar{\ev},T(L\amalg \tilde R),\bar{\ev}_i}\cdots$' in
\eq{il5eq7} becomes that induced on virtual tangent bundles by the
fibre product `$\cdots\t_{\btev,L\amalg\ti R,\btev_i}\cdots$' in
\eq{il4eq40} and \eq{il5eq3}. Taking the limit $r\ra\iy$, equation
\eq{il5eq7} now implies the oriented virtual tangent bundle version
of \eq{il5eq3}, so Theorem \ref{il5thm2} follows from this
and~\eq{il4eq40}.
\end{proof}

\subsection{Orientations on $\tM_{k+1}^\ma(\al,\be,J,f_1,\ldots,f_k)$}
\label{il52}

Next we orient the spaces of~\S\ref{il47}.

\begin{dfn} In the situation of \S\ref{il43}, choose orientations
$o_{(p_-,p_+)}$ on the vector spaces $\Ker\bar{\pd}_{
\smash{\la_{(p_-,p_+)}}}$ for all $(p_-,p_+)$ in $R$. In equation
\eq{il4eq35}, $\la_{(p_-,p_+)}(-1,0)$ is an oriented Lagrangian
subspace of $T_pM$, and the maps $\ev_{(-1,0)}$ are injective, so
our orientations $o_{(p_\mp,p_\pm)}$ on
$\Ker\bar{\pd}_{\la_{(p_\mp,p_\pm)}}$ induce orientations on
$\ev_{(-1,0)}\bigl(\Ker\bar{\pd}_{\la_{(p_\mp, p_\pm)}}\bigr)$.
Thus, all three vector spaces in \eq{il4eq35} are oriented. Define
$\ep_{(p_-,p_+)}=1$ if \eq{il4eq35} is true in oriented vector
spaces, and $\ep_{(p_-,p_+)}=-1$ otherwise, for all~$(p_-,p_+)\in
R$.

The subspaces on the r.h.s.\ of \eq{il4eq35} have dimensions
$\eta_{(p_-,p_+)}$ and $n-\eta_{(p_-,p_+)}$, so exchanging them
changes orientations by a factor $(-1)^{\eta_{(p_-,p_+)}
(n-\eta_{(p_-,p_+)})}$. Thus
\begin{equation}
\ep_{(p_-,p_+)}\ep_{(p_+,p_-)}=(-1)^{\eta_{(p_-,p_+)}
(n-\eta_{(p_-,p_+)})}.
\label{il5eq8}
\end{equation}
If $n$ is {\it odd\/} then one of
$\eta_{(p_-,p_+)},n-\eta_{(p_-,p_+)}$ is even, so \eq{il5eq8} gives
$\ep_{(p_-,p_+)}\ep_{(p_+,p_-)}=1$. In this case, we can choose the
orientations on the $\Ker\bar{\pd}_{\la_{(p_-,p_+)}}$ so that
$\ep_{(p_-,p_+)}=1$ for all $(p_-,p_+)$ in $R$, which simplifies
some formulae below. But if $n$ is {\it even\/} and some
$\eta_{(p_-,p_+)}$ is odd then \eq{il5eq8} gives $\ep_{(p_-,p_+)}
\ep_{(p_+,p_-)}=-1$, so we cannot choose the orientations on the
$\Ker\bar{\pd}_{\la_{(p_-,p_+)}}$ to make all~$\ep_{(p_-,p_+)}=1$.

We work in the situation of Definition \ref{il4dfn9} with
orientations on $\tM_{k+1}^\ma(\al,\be,J)$ from Theorem
\ref{il5thm1}, and $o_{(p_-,p_+)}$ on $\Ker\bar{\pd}_{
\la_{(p_-,p_+)}}$. Define an orientation on $\tM_{k+1}^\ma
(\al,\ab\be,\ab J,\ab f_1,\ab\ldots,f_k)$ by the fibre product of
oriented Kuranishi spaces:
\begin{equation}
\begin{split}
&\tM_{k+1}^\ma(\al,\be,J,f_1,\ldots,f_k)=
(-1)^{(n+1)\sum_{l=1}^k(k-l)(\deg f_l+1)}\\
&\tM_{k+1}^\ma(\al,\be,J)
\t_{\prod_{i=1}^k\btev_i,(L\amalg\ti R)^k,
\prod_{i=1}^k\bigl\{\begin{smallmatrix}f_i, & i\notin I,\\
f_i\t\ev_{(-1,0)}, & i\in I\end{smallmatrix}\bigr\}}\\
&\left.\prod_{i=1}^k
\begin{cases}\De_{a_i}, & i\notin I,\\
\De_{a_i}\t\Ker\bar{\pd}_{\la_{\si\ci\al(i)}}, & i\in I\end{cases}
\right\},
\end{split}
\label{il5eq9}
\end{equation}
which is \eq{il4eq45} with a choice of sign taken from Fukaya et
al.\ \cite[Def.~47.1]{FOOO}. Roughly speaking, the sign
$(-1)^{(n+1)\sum_{l=1}^k(k-l)(\deg f_l+1)}$ is chosen so that in the
$A_\iy$ algebra we will construct later, $\m_k(f_1,\ldots,f_k)$ is a
virtual chain for the oriented Kuranishi space
$\tM_{k+1}^\ma(\al,\be,J,f_1,\ldots,f_k)$. But we will actually
define $\m_k(f_1,\ldots,f_k)$ using the $\oM_{k+1}^\ma(\al,\be,J,
f_1,\ldots,f_k)$, and the calculations in this section are just
motivation for the complicated choice of orientation on
$\oM_{k+1}^\ma(\al,\be,J, f_1,\ldots,f_k)$ in~\S\ref{il54}.

Similarly, define an orientation on $\tM_{k_1+1}^\ma(\al_1,\ab
\be_1,\ab J,\ab f_1,\ldots,f_{i-1}; f_{i+k_2},\ldots,f_k)$ by
\begin{gather}
\tM_{k_1+1}^\ma(\al_1,\ab \be_1,\ab J,\ab f_1,\ldots,f_{i-1};
f_{i+k_2},\ldots,f_k)=(-1)^{n\sum_{l=1}^{i-1}(\deg f_l+1)}
\label{il5eq10}\\
(-\!1)^{(n+1)\sum_{l=1}^{i-1}(k-k_2+1-l)(\deg f_l+1)}
(-\!1)^{(n+1)\sum_{l=i+k_2}^k(k-l)(\deg f_l+1)}
\tM_{k_1+1}^\ma(\al_1,\be_1,J)
\nonumber\\
\t_{\btev_1\t\cdots\t\btev_{i-1}\t
\btev_{i+1}\t\cdots\t\btev_{k_1},(L\amalg\ti R)^{k_1-1},
\prod_{\begin{subarray}{l} j=1,\ldots,k:\\
j<i\;\text{or}\; j\ge i+k_2\end{subarray}}
\bigl\{\begin{smallmatrix}f_j, & j\notin I\\
f_j\t\ev_{(-1,0)}, & j\in I\end{smallmatrix}\bigr\}}
\nonumber\\
\prod_{\begin{subarray}{l} j=1,\ldots,k:\\
j<i\;\text{or}\; j\ge i+k_2\end{subarray}}
\left.\begin{cases}\De_{a_j}, & \!j\notin I\\
\De_{a_j}\t\Ker\bar{\pd}_{\la_{\si\ci\al(j)}}, & \!j\in
I\end{cases}\right\},
\nonumber
\end{gather}
which is \eq{il4eq46} with a sign inserted, chosen to achieve a
simple form for the signs in \eq{il5eq11} and \eq{il5eq12} below.
\label{il5dfn2}
\end{dfn}

We can now add orientations to equation~\eq{il4eq47}.

\begin{thm} In the situation of Definition {\rm\ref{il4dfn9},} with
the orientations of Definition {\rm\ref{il5dfn2},} in oriented
Kuranishi spaces we have
\begin{equation}
\begin{aligned}
&\pd\tM_{k+1}^\ma(\al,\be,J,f_1,\ldots,f_k)\cong \\
&\coprod_{i=1}^k\coprod_{j=0}^{a_i}\,\,
\begin{aligned}[t]
&(-1)^{j+1+\sum_{l=1}^{i-1}\deg f_l}\\
&\quad\tM_{k+1}^\ma(\al,\be,J,f_1,\ldots,f_{i-1},f_i\ci F_j^{a_i},
f_{i+1},\ldots,f_k)
\end{aligned}
\\
&\amalg
\coprod_{\begin{subarray}{l}
k_1+k_2=k+1,\;1\le i\le k_1,\\
I_1\cup_iI_2=I,\;\al_1\cup_i\al_2=\al,\\
\be_1+\be_2=\be
\end{subarray}}
\begin{aligned}[t]
&(-1)^{n+\bigl(1+\sum_{l=1}^{i-1}\deg f_l\bigr)
\bigl(1+\sum_{l=i}^{i+k_2-1}\deg f_l\bigr)}\\
&\quad\tM_{k_2+1}^\ma (\al_2,\be_2,J,f_i,\ldots,f_{i+k_2-1})
\t_{\btev,L\amalg\ti R,\btev_i}\\
&\quad\tM_{k_1+1}^\ma(\al_1,\be_1,J,f_1,\ldots,f_{i-1};
f_{i+k_2},\ldots,f_k).
\end{aligned}
\end{aligned}
\label{il5eq11}
\end{equation}
Also, if\/ $f:\De_a\ra L\amalg R$ is smooth then in oriented
Kuranishi spaces we have
\begin{gather}
\left.\begin{cases}\De_a, & \!f(\De_a)\subset L\\
\De_a\t\Ker\bar{\pd}_{\la_{(p_+,p_-)}}, &
\!f(\De_a)=\{(p_-,p_+)\}\subset R
\end{cases}\right\}
\label{il5eq12}\\
\t_{\bigl\{\begin{smallmatrix}f, & f(\De_a)\subset L\\
f\t\ev_{(-1,0)}, & f(\De_a)\subset R\end{smallmatrix}\bigr\},
L\amalg\ti R,\btev_i} \tM_{k_1+1}^\ma
(\al_1,\be_1,J,f_1,\ldots,f_{i-1};f_{i+k_2},\ldots,f_k)
\nonumber\\
=(-1)^{(1+\deg f)\bigl(1+\sum_{j=1}^{i-1}\deg f_j\bigr)}\,
\tM_{k_1+1}^\ma(\al_1,\be_1,J,f_1,\ldots,f_{i-1},
f,f_{i+k_2},\ldots,f_k).
\nonumber
\end{gather}
\label{il5thm3}
\end{thm}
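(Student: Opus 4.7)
The plan is to derive both \eqref{il5eq11} and \eqref{il5eq12} directly from the oriented fibre product presentations \eqref{il5eq9} and \eqref{il5eq10}, combined with the already-established oriented boundary formula \eqref{il5eq3} for the purely geometric moduli spaces $\tM_{k+1}^\ma(\al,\be,J)$, by applying the associativity, commutativity, and boundary rules for fibre products of oriented Kuranishi spaces recorded in Proposition \ref{il2prop1} and Convention \ref{il2conv}. Both equations will be purely formal consequences of these ingredients, so the entire argument is sign bookkeeping; the normalisation factors in Definition \ref{il5dfn2} were chosen precisely so that this bookkeeping produces the clean expressions in \eqref{il5eq11} and \eqref{il5eq12}.

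First I would apply Proposition \ref{il2prop1}(a) to the fibre product on the right of \eqref{il5eq9}. Writing $W$ for the product $\prod_{i=1}^k\{\De_{a_i}\text{ or }\De_{a_i}\t\Ker\bar\pd_{\la_{\si\ci\al(i)}}\}$, this splits $\pd\tM_{k+1}^\ma(\al,\be,J,f_1,\ldots,f_k)$ into a piece coming from $\pd\tM_{k+1}^\ma(\al,\be,J)$ and a piece coming from $\pd W$. For the first piece I would substitute \eqref{il5eq3}, obtaining an iterated fibre product of $\tM_{k_2+1}^\ma(\al_2,\be_2,J)\t_{L\amalg\ti R}\tM_{k_1+1}^\ma(\al_1,\be_1,J)$ with $W$, and then apply Proposition \ref{il2prop1}(b),(c) to rearrange this as the fibre product over $L\amalg\ti R$ of an expression matching \eqref{il5eq9} for $(\al_2,\be_2,J,f_i,\ldots,f_{i+k_2-1})$ with one matching \eqref{il5eq10} for $(\al_1,\be_1,J,f_1,\ldots,f_{i-1};f_{i+k_2},\ldots,f_k)$. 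The residual sign is the sum of: the $(-1)^{n+i+ik_2}$ of \eqref{il5eq3}, the Koszul sign $(-1)^{\vdim\tM_{k+1}^\ma+\dim(L\amalg\ti R)^k}$ from \eqref{il2eq5}, the normalisation signs from \eqref{il5eq9} on all three sides, the prefactor from \eqref{il5eq10}, and the shuffle signs from Proposition \ref{il2prop1}(b),(c). Reducing this modulo $2$ and using \eqref{il4eq14} to eliminate $k_2-1$ in favour of $n+\sum_{l=i}^{i+k_2-1}\deg f_l$ should collapse the total to the symmetric bilinear expression $(-1)^{n+(1+\sum_{l=1}^{i-1}\deg f_l)(1+\sum_{l=i}^{i+k_2-1}\deg f_l)}$ of the second clause of \eqref{il5eq11}.

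For the $\pd W$ piece I would use the standard fact that $\pd \De_{a_i}=\sum_{j=0}^{a_i}(-1)^j F_j^{a_i}(\De_{a_i-1})$ in oriented manifolds, push $\pd$ through the product of simplex (and $\Ker\bar\pd$) factors picking up a further sign $(-1)^{\sum_{l<i}\dim\De_{a_l}}$, and combine with the boundary sign from \eqref{il2eq5}. Balancing this against the difference between the normalisation of \eqref{il5eq9} at $(f_1,\ldots,f_k)$ and at $(f_1,\ldots,f_i\ci F_j^{a_i},\ldots,f_k)$, where by \eqref{il4eq13} we have $\deg(f_i\ci F_j^{a_i})\equiv\deg f_i+1$ so several terms in the quadratic sum $\sum_l(k-l)(\deg f_l+1)$ shift, should collapse the total sign to $(-1)^{j+1+\sum_{l=1}^{i-1}\deg f_l}$, giving the first clause of \eqref{il5eq11}. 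For \eqref{il5eq12}, both sides unfold via \eqref{il5eq9} and \eqref{il5eq10} into iterated fibre products of $\tM_{k_1+1}^\ma(\al_1,\be_1,J)$ with products of simplex factors, and associativity (Proposition \ref{il2prop1}(b)) identifies them once the factor corresponding to $f$ is slotted into position $i$; the residual sign $(-1)^{(1+\deg f)(1+\sum_{j=1}^{i-1}\deg f_j)}$ is what remains after cancelling the normalisation prefactors of \eqref{il5eq9} on the right against \eqref{il5eq10} on the left.

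The main obstacle is the mod-$2$ bookkeeping itself: there are at least five independent sign sources (the two normalisations in \eqref{il5eq9} and \eqref{il5eq10}, the geometric sign $(-1)^{n+i+ik_2}$ from \eqref{il5eq3}, the Koszul sign in Proposition \ref{il2prop1}(a), and the reshuffling signs in Proposition \ref{il2prop1}(b),(c)), and nothing makes them match except explicit calculation. The one genuine geometric input beyond bookkeeping is \eqref{il4eq14}, which is what turns the linear-in-$k_2$ sign $(-1)^{n+i+ik_2}$ of \eqref{il5eq3} into the symmetric pairing of degree sums visible in \eqref{il5eq11}; once that substitution is made, the identity reduces to a pair of quadratic-form identities over $\Z/2$, which can be checked term by term.
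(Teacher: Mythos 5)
Your proposal follows essentially the same route as the paper: the paper proves \eq{il5eq11} by exactly this sign calculation, combining the unoriented boundary decomposition with the oriented definitions \eq{il5eq9}--\eq{il5eq10}, the oriented boundary formula \eq{il5eq3} of Theorem \ref{il5thm2}, Proposition \ref{il2prop1}, and $\pd\De_{a_i}=\sum_{j}(-1)^jF_j^{a_i}(\De_{a_i-1})$, with \eq{il5eq12} obtained in the same way from \eq{il5eq9}--\eq{il5eq10} and Proposition \ref{il2prop1}. Your bookkeeping plan, including the r\^ole of the dimension formulae in converting the $k_2$-dependent sign of \eq{il5eq3} into the degree-sum pairing, is the intended argument.
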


Here \eq{il5eq11} is proved by a sign calculation using equations
\eq{il4eq47} and \eq{il5eq9}--\eq{il5eq10}, Proposition
\ref{il2prop1}, Theorem \ref{il5thm2}, and the formula
$\pd\De_{a_i}=\sum_{j=0}^{a_i}(-1)^jF_j^{a_i}(\De_{a_i-1})$ in
oriented manifolds with corners, in the notation of \S\ref{il26},
and \eq{il5eq12} follows in a similar way from equations
\eq{il5eq9}--\eq{il5eq10} and Proposition~\ref{il2prop1}.

\subsection{Orientations on $\oM_{k+1}^\ma(\al,\be,J)$.}
\label{il53}

\begin{dfn} Choose a relative spin structure for $\io:L\ra M$, so
that Theorem \ref{il5thm1} gives orientations on the modified moduli
spaces $\tM_{k+1}^\ma(\al,\be,J)$. Inserting signs in \eq{il4eq36},
define the orientation on $\oM_{k+1}^\ma(\al,\be,J)$ to be that for
which
\begin{equation}
\begin{split}
\tM_{k+1}^\ma(\al,\be,J)=\,&\ts\prod_{0\ne j\in I}\ep_{\al(j)}\,
(-1)^{\sum_{0\ne j\in I}\eta_{\al(j)}\bigl[k-j+\sum_{l\in
I:l>j}\eta_{\al(l)}\bigr]}\\
&\oM_{k+1}^\ma(\al,\be,J)\t\ts \prod_{i\in
I}\Ker\bar{\pd}_{\la_{\al(i)}}
\end{split}
\label{il5eq13}
\end{equation}
holds as a product of oriented Kuranishi spaces. This orientation on
$\oM_{k+1}^\ma(\al,\be,J)$ depends on the choices of a relative spin
structure for $\io:L\ra M$, and the $\la_{(p_-,p_+)}$ in
\S\ref{il42}, and the orientations $o_{(p_-,p_+)}$ for the
$\Ker\bar{\pd}_{\la_{(p_-,p_+)}}$ in \S\ref{il52}. The complicated
choice of sign in \eq{il5eq13} will be explained in Remark
\ref{il5rem2}(c). One thing it does is achieve a fairly simple form
for the sign in \eq{il5eq14} below.
\label{il5dfn3}
\end{dfn}

We compute the orientations in Theorem \ref{il4thm1}. The theorem
will be important in \cite{AkJo}, where we do not use moduli
spaces~$\oM_{k+1}^\ma(\al,\be,J,f_1,\ldots,f_k)$.

\begin{thm} Using the orientations of Definition {\rm\ref{il5dfn3},}
the isomorphism\/ \eq{il4eq4} in oriented Kuranishi spaces becomes:
\begin{gather}
\pd\oM_{k+1}^\ma(\al,\be,J)\cong
\coprod_{\begin{subarray}{l}
k_1+k_2=k+1,\;1\le i\le k_1,\\
I_1\cup_iI_2=I,\;\al_1\cup_i\al_2=\al,\\
\be_1+\be_2=\be\end{subarray}}\!\!\!
\begin{aligned}[t]
\ze_1\,\oM_{k_2+1}^\ma(\al_2,\be_2,J)\t_{\bev,L\amalg
R,\bev_i}&\\
\oM_{k_1+1}^\ma(\al_1,\be_1,J),&
\end{aligned}
\label{il5eq14}\\
\text{where}\quad \ze_1=(-1)^{n+\bigl(i+\sum_{j\in
I:0<j<i}\eta_{\al(j)}\bigr) \bigl(1+k_2+\sum_{l\in I:i\le
l<i+k_2}\eta_{\al(l)}\bigr)}
\label{il5eq15}
\end{gather}
if\/ $i\notin I_1$\/ and $0\notin I_2,$ and
\begin{equation}
\ze_1=(-1)^{n+\bigl(i+\sum_{j\in I:0<j<i}\eta_{\al(j)}\bigr)
\bigl(\eta_{\al_1(i)}+1+k_2+\sum_{l\in I:i\le
l<i+k_2}\eta_{\al(l)}\bigr)}
\label{il5eq16}
\end{equation}
if\/ $i\in I_1,$ $0\in I_2,$ and\/ $\al_2(0)=\si\ci\al_1(i)$. Note
that in the cases not covered by \eq{il5eq15} and\/ \eq{il5eq16} we
have\/ $\oM_{k_1+1}^\ma(\al_1,\be_1,J)\ab \t_{\bev_i,L\amalg
R,\bev}\oM_{k_2+1}^\ma(\al_2,\be_2,J)=\emptyset,$ so we do not need
to define\/~$\ze_1$.
\label{il5thm4}
\end{thm}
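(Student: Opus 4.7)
The plan is to deduce \eqref{il5eq14} from the analogous oriented formula \eqref{il5eq3} for the modified moduli spaces by using \eqref{il5eq13} on both sides and tracking all sign contributions.

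First, I would apply Theorem \ref{il5thm2} to write $\pd\tM_{k+1}^\ma(\al,\be,J)\cong(-1)^{n+i+ik_2}\,\tM_{k_2+1}^\ma(\al_2,\be_2,J)\t_{\btev,L\amalg\ti R,\btev_i}\tM_{k_1+1}^\ma(\al_1,\be_1,J)$ in oriented Kuranishi spaces. Then I would substitute \eqref{il5eq13} for each of the three modified moduli spaces, introducing signs $\epsilon(\al),\epsilon(\al_1),\epsilon(\al_2)$, each a product of $\ep_{\al_*(j)}$'s together with a power of $-1$ depending on the $\eta_{\al_*(j)}$. On the left, $\pd$ commutes with the product $\oM_{k+1}^\ma(\al,\be,J)\t\prod_{j\in I}\Ker\bar\pd_{\la_{\al(j)}}$ with no extra sign, since the second factor has no boundary and lies to the right.

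Next, I would use Proposition \ref{il2prop1}(b),(c) to commute the $\Ker\bar\pd$ factors past the fibre product on the right, arriving at an isomorphism of the form $\pd\oM_{k+1}^\ma\t\prod_{j\in I}\Ker\bar\pd_{\la_{\al(j)}}\cong\xi\cdot\bigl(\oM_{k_2+1}^\ma\t_{L\amalg R}\oM_{k_1+1}^\ma\bigr)\t\prod_{j\in I_2}\Ker\bar\pd_{\la_{\al_2(j)}}\t\prod_{j\in I_1}\Ker\bar\pd_{\la_{\al_1(j)}}$ for some accumulated sign $\xi$. In case (a), with $i\notin I_1$ and $0\notin I_2$, the fibre product $\tM_2\t_{L\amalg\ti R}\tM_1$ restricts to a fibre product over $L$, and no $\Ker\bar\pd$ factors engage with it, so they are entirely passive multiplicative factors. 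In case (b), with $i\in I_1$ and $0\in I_2$, the two factors $\Ker\bar\pd_{\la_{\al_2(0)}}$ and $\Ker\bar\pd_{\la_{\al_1(i)}}$ are absorbed into the fibre product over $\la_{\al_1(i)}(-1,0)\subset\ti R$, which by the transverse splitting \eqref{il4eq35} and Definition \ref{il5dfn2} collapses to an oriented point carrying the sign $\ep_{\al_1(i)}$.

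Finally, I would solve for $\zeta_1$ by cancelling the $\Ker\bar\pd$ factors on both sides, after reordering the two products $\prod_{I_2}\Ker\bar\pd\t\prod_{I_1}\Ker\bar\pd$ on the right into the canonical ordering of $I=I_1\cup_iI_2$ appearing on the left; this reordering produces signs involving products of the $\eta_{\al(j)}$'s. The resulting $\zeta_1$ combines $(-1)^{n+i+ik_2}$ from Theorem \ref{il5thm2}, the three signs $\epsilon(\al),\epsilon(\al_1),\epsilon(\al_2)$ from \eqref{il5eq13}, the permutation signs from the reordering, and in case (b) the factor $\ep_{\al_1(i)}$. The $\ep_{\al_*(j)}$'s cancel pairwise except for the pair $\ep_{\al_1(i)}\ep_{\al_2(0)}$ in case (b), which \eqref{il5eq8} converts to $(-1)^{\eta_{\al_1(i)}(n-\eta_{\al_1(i)})}$. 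The main obstacle is precisely this sign bookkeeping: assembling all the contributions and simplifying down to the clean forms \eqref{il5eq15}--\eqref{il5eq16} is a tedious but routine calculation, which must be carried out carefully so that the $\ep$'s cancel as expected via \eqref{il5eq8} and the remaining powers of $-1$ reduce to the stated exponents.
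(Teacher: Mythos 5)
Your proposal is correct and is essentially the paper's own argument: the proof there substitutes \eq{il5eq13} into \eq{il5eq3} for all three moduli spaces, pulls the $\Ker\bar\pd_{\la}$ factors out past the fibre product and reorders them to match $\prod_{j\in I}\Ker\bar\pd_{\la_{\al(j)}}$, treating the cases $i\notin I_1,\,0\notin I_2$ and $i\in I_1,\,0\in I_2$ separately, with the leftover $\ep$'s handled exactly via \eq{il5eq8} as you describe. The remaining work is the same sign bookkeeping you flag as routine.
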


\begin{proof} Substitute \eq{il5eq13}, as an isomorphism of oriented
Kuranishi spaces, into \eq{il5eq3} three times for $k,\al,\be$ and
$k_1,\al_1,\be_1$ and $k_2,\al_2,\be_2$. This yields
\begin{gather}
\pd\oM_{k+1}^\ma(\al,\be,J)\t\ts\prod\limits_{j\in I}
\Ker\bar{\pd}_{\la_{\al(j)}}=\displaystyle\coprod
\nolimits_{\begin{subarray}{l}
k_1+k_2=k+1,\;1\le i\le k_1,\; I_1\cup_iI_2=I,\\
\al_1\cup_i\al_2=\al,\; \be_1+\be_2=\be\end{subarray}}
\,(-1)^{n+i+ik_2}
\nonumber\\
\ts\prod\limits_{0\ne j\in I}\ep_{\al(j)} \prod\limits_{0\ne j\in
I_1}\ep_{\al_1(j)}\prod\limits_{0\ne j\in I_2}\ep_{\al_2(j)}
(-1)^{\sum_{0\ne j\in I}\eta_{\al(j)}[k-j+\sum_{l\in
I:l>j}\eta_{\al(l)}]}
\nonumber\\
(-1)^{\sum_{0\ne j\in I_1}\eta_{\al_1(j)}[k_1\!-\!j\!+\!\sum_{l\in
I_1:l>j}\eta_{\al_1(l)}]} (-1)^{\sum_{0\!\ne\!j\!\in\!I_2}
\eta_{\al_2(j)}[k_2\!-\!j\!+\!\sum_{l\in
I_2:l>j}\eta_{\al_2(l)}]}
\label{il5eq17}\\
\bigl(\oM_{k_2+1}^\ma(\al_2,\be_2, J)\t\ts\prod_{j\in
I_2}\Ker\bar{\pd}_{\la_{\al_2(j)}}\bigr)\t_{\btev,L\amalg\ti
R,\btev_i}
\nonumber\\
{}\qquad \bigl(\oM_{k_1+1}^\ma(\al_1,\be_1,J) \!\t\!\ts\prod_{j\in
I_1}\Ker\bar{\pd}_{\la_{\al_1(j)}}\bigr). \nonumber
\end{gather}
The left hand side is $\pd\tM_{k+1}^\ma(\al,\be,J)$. Fix
$i,\ldots,\be_2$ in \eq{il5eq17}, and first consider the case
$i\notin I_1$ and $0\notin I_2$. Then we have
\begin{equation}
\begin{split}
& \bigl(\oM_{k_2+1}^\ma (\al_2,\be_2,J)\t\ts\prod_{j\in I_2}\Ker
\bar{\pd}_{\la_{\al_2(j)}}\bigr)\\
&\qquad\t_{\btev,L\amalg\ti R,\btev_i}
\bigl(\oM_{k_1+1}^\ma(\al_1,\be_1,J)\t\ts\prod_{j\in I_1}\Ker
\bar{\pd}_{\la_{\al_1(j)}}\bigr)\\
&=(-1)^{k_1\sum_{l\in I_2}\eta_{\al_2(l)}}
\oM_{k_2+1}^\ma(\al_2,\be_2,J)\t_{\bev,L\amalg R,\bev_i}\\
&\qquad \oM_{k_1+1}^\ma(\al_1,\be_1,J)\t\bigl(\ts\prod_{j\in
I_1}\Ker \bar{\pd}_{\la_{\al_1(j)}}\bigr)\t \bigl(\ts\prod_{j\in
I_2}\Ker \bar{\pd}_{\la_{\al_2(j)}}\bigr)\\
&=(-1)^{k_1\sum_{l\in I_2}\eta_{\al_2(l)}} (-1)^{(\sum_{j\in
I_1:j>i}\eta_{\al_1(j)})(\sum_{l\in I_2}\eta_{\al_2(l)})}\\
&\qquad  \oM_{k_2+1}^\ma(\al_2,\be_2,J)\t_{\bev,L\amalg
R,\bev_i}\oM_{k_1+1}^\ma(\al_1,\be_1,J) \t\bigl(\ts\prod_{j\in
I}\Ker\bar{\pd}_{\la_{\al(j)}}\bigr).
\end{split}
\label{il5eq18}
\end{equation}

Here in the first step we pull the factors $\prod_{j\in I_1}\Ker
\bar{\pd}_{\la_{\al_1(j)}}$ and $\prod_{j\in I_2}\Ker
\bar{\pd}_{\la_{\al_2(j)}}$, which are not involved in the fibre
product, out to the right. Since $\prod_{j\in I_1}\Ker
\bar{\pd}_{\la_{\al_1(j)}}$ is already on the right, it causes no
sign changes. Pulling $\prod_{j\in I_2}\Ker\bar{\pd}_{
\la_{\al_2(j)}}$ through the fibre product with $L\amalg\ti R$ and
then through $\tM_{k_2+1}^\ma(\al_2,\be_2,J)$ changes orientations
by a factor $(-1)^{\dim(\prod_{j\in I_2}\Ker
\bar{\pd}_{\la_{\al_2(j)}})(\dim L\amalg\ti R +
\dim\tM_{k_1+1}^\ma(\al_1,\be_1,J))}$. Using \eq{il4eq37} to compute
$\dim\tM_{k_1+1}^\ma(\al_1,\be_1,J)$ and omitting even terms $2$,
$2n$ and $\mu_L(\be_1)$ in $\dim L\amalg\ti R +
\dim\tM_{k_1+1}^\ma(\al_1,\be_1,J)$ gives the sign on the third line
of \eq{il5eq18}. In the fifth and sixth lines we reorder
$\bigl(\prod_{j\in I_1}\Ker\bar{\pd}_{\la_{\al_1(j)}}\bigr)\t
\bigl(\prod_{j\in I_2}\Ker \bar{\pd}_{\la_{\al_2(j)}}\bigr)$ to
obtain $\bigl(\prod_{j\in I}\Ker\bar{\pd}_{\la_{\al(j)}}\bigr)$. By
\eq{il4eq5}, this means swapping over factors $\prod_{j\in
I_1:j>i}\Ker\bar{\pd}_{\la_{\al_1(j)}}$ and $\prod_{l\in I_2}\Ker
\bar{\pd}_{\la_{\al_2(l)}}$, and so contributes the sign
$(-1)^{(\sum_{j\in I_1:j>i}\eta_{\al_1(j)})(\sum_{l\in
I_2}\eta_{\al_2(l)})}$ in the fifth line. Combining signs in
\eq{il5eq17} and \eq{il5eq18} we obtain \eq{il5eq15}, proving the
theorem in the case $i\notin I_1$ and $0\notin I_2$. The second case
is similar.
\end{proof}

\begin{rem} If we reverse the order of the fibre product in
\eq{il5eq14} using Proposition \ref{il2prop1}(a) and \eq{il4eq10},
noting that the fibre product is over $L$ with $\dim L=n$ in the
case $i\notin I_1$, $0\notin I_2$, and over $R$ with $\dim R=0$ in
the case $i\in I_1$, $0\in I_2$, we obtain
\begin{equation}
\begin{gathered}
\pd\oM_{k+1}^\ma(\al,\be,J)\cong\coprod_{\begin{subarray}{l}
k_1+k_2=k+1,\;1\le i\le k_1,\; I_1\cup_iI_2=I,\\
\al_1\cup_i\al_2=\al,\; \be_1+\be_2=\be\end{subarray}
\!\!\!\!\!\!\!\!\!\!\!\!\!\!\!\!\!\!\!\!\!\!\!\!}
\begin{aligned}[t]
\ze_2\,\oM_{k_1+1}^\ma(\al_1,\be_1,J)\t_{\bev_i,L\amalg R,\bev}&\\
\oM_{k_2+1}^\ma(\al_2,\be_2,J)&
\end{aligned}
\end{gathered}
\label{il5eq19}
\end{equation}
in oriented Kuranishi spaces, where
\begin{equation*}
\begin{split}
\ze_2=\,&(-1)^{n+i+\sum_{j\in
I:0<j<i}\eta_{\al(j)}}\\
&\cdot
\begin{cases}
(-1)^{\bigl(k_2+\sum_{j\in I:i\le j<i+k_2}\eta_{\al(j)}\bigr)
\bigl(k_1+i+\sum_{l\in I:i+k_2\le l\le k}\eta_{\al(l)}\bigr)},
& \mbox{if }0\notin I, \\
(-1)^{\bigl(k_2+\sum_{j\in I:i\le j<i+k_2}\eta_{\al(j)}\bigr)
\bigl(k_1+i+\eta_{\al(0)}+
\sum_{l\in I:i+k_2\le l\le k}\eta_{\al(l)}\bigr)},
& \mbox{if }0\in I,
\end{cases}
\end{split}
\end{equation*}
if $i\notin I_1$ and $0\notin I_2$, and
\begin{equation*}
\begin{split}
&\ze_2=(-1)^{n+i+\sum_{j\in I:0<j<i}\eta_{\al(j)}}\\
&\cdot
\begin{cases}
(-1)^{\bigl(\eta_{\al_1(i)}+k_2+\sum_{j\in I:i\le
j<i+k_2}\eta_{\al(j)}\bigr)\bigl(\eta_{\al_2(0)}+ k_1+i+\sum_{l\in
I:i+k_2\le l\le k}\eta_{\al(l)}\bigr)}, &\mbox{if }0\notin I, \\
(-1)^{\bigl(\eta_{\al_1(i)}+k_2+\sum_{j\in I:i\le
j<i+k_2}\eta_{\al(j)}\bigr)\bigl(\eta_{\al_2(0)}+ k_1+i+\sum_{l\in
I:i+k_2\le l\le k}\eta_{\al(l)}\bigr)}, &\mbox{if }0\in I,
\end{cases}
\end{split}
\end{equation*}
if $i\in I_1$, $0\in I_2$, and $\al_2(0)=\si\ci\al_1(i)$. In the
embedded case, when $I=\emptyset$, the sign $\ze_2$ reduces to
$(-1)^{n+i+k_2(k_1+i)}$, which agrees with that calculated by Fukaya
et al.\ in \cite[Prop.~46.2 \& Rem.~46.3]{FOOO} when~$i=1$.
\label{il5rem1}
\end{rem}

\subsection{Orientations on $\oM_{k+1}^\ma(\al,\be,J,f_1,\ldots,f_k)$}
\label{il54}

\begin{dfn} We work in the situation of Definitions \ref{il4dfn5}
and \ref{il4dfn9} with the orientations on the $\tM_{k+1}^\ma(\al,
\be,J,f_1,\ldots,f_k)$ from Definition \ref{il5dfn2}, and
$o_{(p_-,p_+)}$ on $\Ker\bar{\pd}_{\smash{\la_{(p_-,p_+)}}}$ from
\S\ref{il52}. Define $\oM_{k+1}^\ma(\al,\ab\be,\ab J,
f_1,\ldots,f_k)$ to have the unique orientation such that
\begin{equation}
\tM_{k+1}^\ma(\al,\be,J,f_1,\ldots,f_k)\!=\!\begin{cases}
\oM_{k+1}^\ma(\al,\be,J, f_1,\ldots,f_k), & 0\!\notin\!I,\\
\oM_{k+1}^\ma(\al,\be,J,f_1,\ldots,f_k)\!\t\!\Ker
\bar{\pd}_{\la_{\al(0)}},\!\! & 0\!\in\!I,\end{cases}
\label{il5eq20}
\end{equation}
holds, in oriented Kuranishi spaces. This is just \eq{il4eq42}, with
no extra sign added. Similarly, adding signs to \eq{il4eq43}, let
$\oM_{k_1+1}^\ma(\al_1,\ab \be_1,\ab J,\ab f_1,\ldots,f_{i-1};
f_{i+k_2},\ldots,f_k)$ have the unique orientation for which in
oriented Kuranishi spaces we have
\begin{equation}
\begin{split}
&\tM_{k_1+1}^\ma(\al_1,\ab \be_1,\ab J,\ab f_1,\ldots,f_{i-1};
f_{i+k_2},\ldots,f_k)= \\
&\text{\begin{footnotesize}$\displaystyle
\begin{cases}
\oM_{k_1+1}^\ma(\al_1,\be_1,J,f_1,\ldots,f_{i-1};
f_{i+k_2},\ldots,f_k), & \hbox to 0pt{\hss $0,i\notin I_1,$}\\
\oM_{k_1+1}^\ma(\al_1,\be_1,J,f_1,\ldots,f_{i-1};
f_{i+k_2},\ldots,f_k)\t \Ker\bar{\pd}_{\la_{\al_1(0)}},& \hbox to
0pt{\hss $0\in I_1$, $i\notin I_1$,}\\
\begin{aligned}
&(-1)^{\eta_{\al_1(i)}\bigl(1+\sum_{j=1}^{i-1}\deg
f_j+\sum_{j=i+k_2}^k\deg f_j\bigr)}\ep_{\al_1(i)}\\
&\oM_{k_1+1}^\ma(\al_1,\be_1,J,f_1,\ldots,f_{i-1};
f_{i+k_2},\ldots,f_k)\t\Ker\bar{\pd}_{\la_{\al_1(i)}},
\end{aligned}
& \hbox to 0pt{\hss $0\notin I_1$, $i\in I_1$,}\\
\begin{aligned}
&(-1)^{\eta_{\al_1(i)}\bigl(1+\sum_{j=1}^{i-1}\deg
f_j+\sum_{j=i+k_2}^k\deg f_j\bigr)}\ep_{\al_1(i)}\\
&\oM_{k_1+1}^\ma(\al_1,\be_1,J,f_1,\ldots,f_{i-1};
f_{i+k_2},\ldots,f_k)\!\t\!
\Ker\bar{\pd}_{\la_{\al_1(0)}}\!\t\!\Ker\bar{\pd}_{\la_{\al_1(i)}},
\,\,\,\,\,\,\,\,\,\,\,\,\,\,
\end{aligned}
& \hbox to 0pt{\hss $0,i\!\in\!I_1$.}
\end{cases}
$\end{footnotesize}}
\end{split}
\label{il5eq21}
\end{equation}
Reordering the factors using \eq{il2eq5}, \eq{il4eq17} and
\eq{il5eq8} gives
\begin{gather}
\tM_{k_1+1}^\ma(\al_1,\ab \be_1,\ab J,\ab f_1,\ldots,f_{i-1};
f_{i+k_2},\ldots,f_k)=
\label{il5eq22}\\
\text{\begin{footnotesize}$\displaystyle
\begin{cases} \oM_{k_1+1}^\ma(\al_1,\be_1,J,f_1,\ldots,f_{i-1};
f_{i+k_2},\ldots,f_k), & \hbox to 0pt{\hss $0,i\notin I_1,$}\\
\oM_{k_1+1}^\ma(\al_1,\be_1,J,f_1,\ldots,f_{i-1};
f_{i+k_2},\ldots,f_k)\t \Ker\bar{\pd}_{\la_{\al_1(0)}},& \hbox to
0pt{\hss $0\in I_1$, $i\notin I_1$,}\\
\ep_{\si\ci\al_1(i)}\Ker\bar{\pd}_{\la_{\al_1(i)}}\t
\oM_{k_1+1}^\ma(\al_1,\be_1,J,f_1,\ldots,f_{i-1};
f_{i+k_2},\ldots,f_k),& \hbox to
0pt{\hss $0\notin I_1$, $i\in I_1$,}\\
\ep_{\si\ci\al_1(i)}\Ker\bar{\pd}_{\la_{\al_1(i)}}\!\t\!
\oM_{k_1+1}^\ma(\al_1,\be_1,J,f_1,\ldots,f_{i-1};
f_{i+k_2},\ldots,f_k)\!\t\! \Ker\bar{\pd}_{\la_{\al_1(0)}},
\,\,\,\,\,\,\,\,\,\,\,\,\,\,\,\, & \hbox to 0pt{\hss
$0,i\!\in\!I_1$,}
\end{cases}
$\end{footnotesize}}
\nonumber
\end{gather}
\label{il5dfn4}
\end{dfn}

Combining equations \eq{il5eq9}, \eq{il5eq13} and \eq{il5eq20} and
calculating using Proposition \ref{il2prop1}, the definition of
$\ep_{(p_-,p_+)}$ and \eq{il5eq8} to determine the signs, we prove
that:

\begin{thm} An alternative way to define the orientations in
Definition {\rm\ref{il5dfn4},} in terms of the orientation on\/
$\oM_{k+1}^\ma(\al,\be,J)$ given in Definition {\rm\ref{il5dfn3},}
is that
\begin{equation}
\begin{split}
&\oM_{k+1}^\ma(\al,\be,J,f_1,\ldots,f_k)=\\
&\ze_3\,\oM_{k+1}^\ma(\al,\be,J)\t_{\bev_1\t\cdots\t\bev_k,(L\amalg
R)^k, f_1\t\cdots\t f_k}(\De_{a_1}\t\cdots\t\De_{a_k})
\end{split}
\label{il5eq23}
\end{equation}
in oriented Kuranishi spaces, which is\/ \eq{il4eq11} with signs
inserted, where
\begin{gather}
\ze_3\!=\!(-1)^{\sum\limits_{0\ne i\in
I}\!(n\!-\!\eta_{\al(i)})\bigl[\sum\limits_{j=1\!}^i\!(\deg
f_j+1)-\!\!\!\!\sum\limits_{j\in I:0<j\le
i}\!\!\!\!\eta_{\al(j)}\bigr]}(-1)^{(n\!+\!1)\bigl[\sum\limits_{i=1\!}^k
\!(k\!-\!i)(\deg f_i\!+\!1)-\!\!\sum\limits_{0\ne i\in
I}\!\!(k-i)\eta_{\al(i)}\bigr]}
\nonumber\\
\cdot\begin{cases} 1, & 0\notin I, \\
(-1)^{\eta_{\al(0)}\bigl[\sum_{i=1}^k(\deg f_i+1)-\sum_{0\ne i\in
I}\eta_{\al(i)}\bigr]}, & 0\in I.
\end{cases}
\label{il5eq24}
\end{gather}
\label{il5thm5}
\end{thm}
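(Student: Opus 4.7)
The plan is to prove Theorem \ref{il5thm5} as a bookkeeping calculation: the orientation of $\oM_{k+1}^\ma(\al,\be,J,f_1,\ldots,f_k)$ is defined by chaining together three pieces of data, namely Definition \ref{il5dfn2} (comparing $\tM_{k+1}^\ma(\al,\be,J,f_1,\ldots,f_k)$ with $\tM_{k+1}^\ma(\al,\be,J)$ and the $f_i$), Definition \ref{il5dfn3} (comparing $\tM_{k+1}^\ma(\al,\be,J)$ with $\oM_{k+1}^\ma(\al,\be,J)$ and the $\Ker\bar\pd_{\la_{\al(i)}}$), and Definition \ref{il5dfn4} (comparing $\tM_{k+1}^\ma(\al,\be,J,f_1,\ldots,f_k)$ with $\oM_{k+1}^\ma(\al,\be,J,f_1,\ldots,f_k)$ and $\Ker\bar\pd_{\la_{\al(0)}}$). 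Eliminating $\tM$'s yields a single identity between $\oM_{k+1}^\ma(\al,\be,J,f_1,\ldots,f_k)$ and the fibre product in \eq{il5eq23}, and the task is to compute the total sign and show it equals $\ze_3$.

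First I would substitute \eq{il5eq9} and \eq{il5eq13} into \eq{il5eq20}. On the right-hand side of the resulting identity there appears $\oM_{k+1}^\ma(\al,\be,J)$, the factors $\prod_{i\in I}\Ker\bar\pd_{\la_{\al(i)}}$ coming from \eq{il5eq13}, the fibre product with $\prod_{i=1}^k\De_{a_i}$ (where the factors with $i\in I$ are twisted by $\Ker\bar\pd_{\la_{\si\ci\al(i)}}$), the sign $(-1)^{(n+1)\sum_{l=1}^k(k-l)(\deg f_l+1)}$ from \eq{il5eq9}, and the sign and $\prod\ep_{\al(j)}$ from \eq{il5eq13}. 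Next, for each $0\ne i\in I$, I would use Proposition \ref{il2prop1} to pull the factor $\Ker\bar\pd_{\la_{\al(i)}}$ next to the corresponding $\Ker\bar\pd_{\la_{\si\ci\al(i)}}$ factor appearing in the fibre product, so that the pair sits as a fibre product over $\la_{(p_-,p_+)}(-1,0)$. By the direct sum decomposition \eq{il4eq35} and the definition of $\ep_{(p_-,p_+)}$, this pair contracts to an oriented point contributing a factor $\ep_{\si\ci\al(i)}\ep_{\al(i)}^{-1}$ or similar that, after using \eq{il5eq8}, can be re-expressed; the surviving fibre product is then over $(L\amalg R)^k$ as in \eq{il5eq23}. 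In the case $0\in I$ one must also compare the lone factor $\Ker\bar\pd_{\la_{\al(0)}}$ on the left of \eq{il5eq20} with the one in $\prod_{i\in I}\Ker\bar\pd_{\la_{\al(i)}}$ on the right.

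Each of these operations contributes a well-defined sign determined by the dimensions of the factors being moved past each other, which I would record using \eq{il2eq5}, \eq{il4eq10}, \eq{il4eq13}, and \eq{il5eq8}. Collecting the signs yields three groups: the $(n+1)\sum_{l=1}^k(k-l)(\deg f_l+1)$ factor from \eq{il5eq9}, modified by subtracting the $\eta_{\al(i)}$ contributions when $\Ker\bar\pd$ factors are extracted (producing the inner $(n+1)[\sum(k-i)(\deg f_i+1)-\sum_{0\ne i\in I}(k-i)\eta_{\al(i)}]$ exponent); a $(n-\eta_{\al(i)})$-weighted permutation sign coming from commuting $\Ker\bar\pd_{\la_{\al(i)}}$ across the partial product $\De_{a_1}\t\cdots\t\De_{a_{i-1}}\t\cdots$, which produces the first exponent in \eq{il5eq24}; and the separate correction in the $0\in I$ case, where moving $\Ker\bar\pd_{\la_{\al(0)}}$ past the entire product of chains and $\Ker$ factors produces $(-1)^{\eta_{\al(0)}[\sum(\deg f_i+1)-\sum_{0\ne i\in I}\eta_{\al(i)}]}$. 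The $\ep$ signs from the pair contractions together with the $\prod\ep_{\al(j)}$ from \eq{il5eq13} cancel, reproducing \eq{il5eq24}.

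The main obstacle will be purely combinatorial: keeping the sign bookkeeping consistent when $I$ includes the index $0$ and when pulling the $\Ker\bar\pd_{\la_{\al(i)}}$ factors across the fibre product (which involves not just the $\De_{a_j}$ but also the twisting factors $\Ker\bar\pd_{\la_{\si\ci\al(j)}}$ for $j\in I$, $j<i$). I expect that the cleanest approach is to first do the computation assuming $0\notin I$, then repeat it separately for the $0\in I$ case, treating the extra $\Ker\bar\pd_{\la_{\al(0)}}$ factor as the last operation; this isolates the second line of \eq{il5eq24} as the sole correction, and makes the $(n+1)[\cdots]$ and $(n-\eta_{\al(i)})[\cdots]$ contributions transparent as the price of the two independent reorderings.
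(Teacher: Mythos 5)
Your proposal is correct and follows essentially the same route as the paper, whose proof is exactly to combine \eq{il5eq9}, \eq{il5eq13} and \eq{il5eq20} and compute the resulting sign using Proposition \ref{il2prop1}, the definition of $\ep_{(p_-,p_+)}$ and \eq{il5eq8}. Your more detailed bookkeeping plan (contracting each pair $\Ker\bar\pd_{\la_{\al(i)}}\t_{\la_{\al(i)}(-1,0)}\Ker\bar\pd_{\la_{\si\ci\al(i)}}$ to an oriented point and treating the $0\in I$ case separately) is the intended calculation.
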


The sign $\ze_3$ in \eq{il5eq24} will be important in determining
the right definition for our $A_\iy$ algebras in \cite{AkJo}. We can
now prove an analogue of Theorem \ref{il5thm3}. Note that the signs
in equations \eq{il5eq25}--\eq{il5eq26} are exactly the same as
those in equations~\eq{il5eq11}--\eq{il5eq12}.

\begin{thm} In the situation of Definition {\rm\ref{il4dfn9},} with
the orientations of Definition {\rm\ref{il5dfn2},} in oriented
Kuranishi spaces we have
\begin{equation}
\begin{aligned}
&\pd\oM_{k+1}^\ma(\al,\be,J,f_1,\ldots,f_k)\cong \\
&\coprod_{i=1}^k\coprod_{j=0}^{a_i}\,\,
\begin{aligned}[t]
&(-1)^{j+1+\sum_{l=1}^{i-1}\deg f_l}\\
&\quad\oM_{k+1}^\ma(\al,\be,J,f_1,\ldots,f_{i-1},f_i\ci F_j^{a_i},
f_{i+1},\ldots,f_k)
\end{aligned}
\\
&\amalg
\coprod_{\begin{subarray}{l}
k_1+k_2=k+1,\;1\le i\le k_1,\\
I_1\cup_iI_2=I,\;\al_1\cup_i\al_2=\al,\\
\be_1+\be_2=\be\end{subarray}}
\begin{aligned}[t]
&(-1)^{n+\bigl(1+\sum_{l=1}^{i-1}\deg f_l\bigr)
\bigl(1+\sum_{l=i}^{i+k_2-1}\deg f_l\bigr)}\\
&\quad\oM_{k_2+1}^\ma (\al_2,\be_2,J,f_i,\ldots,f_{i+k_2-1})
\t_{\bev,L\amalg R,\bev_i}\\
&\quad\oM_{k_1+1}^\ma(\al_1,\be_1,J,f_1,\ldots,f_{i-1};
f_{i+k_2},\ldots,f_k).
\end{aligned}
\end{aligned}
\label{il5eq25}
\end{equation}
Also, if\/
$f:\De_a\ra L\amalg R$ is smooth then in oriented Kuranishi spaces
we have
\begin{gather}
\De_a\t_{f, L\amalg R,\bev_i}\oM_{k_1+1}^\ma
(\al_1,\be_1,J,f_1,\ldots,f_{i-1};f_{i+k_2},\ldots,f_k)
\label{il5eq26}\\
=(-1)^{(1+\deg f)\bigl(1+\sum_{j=1}^{i-1}\deg f_j\bigr)}\,
\oM_{k_1+1}^\ma(\al_1,\be_1,J,f_1,\ldots,f_{i-1},
f,f_{i+k_2},\ldots,f_k). \nonumber
\end{gather}
\label{il5thm6}
\end{thm}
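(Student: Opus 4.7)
The plan is to deduce Theorem \ref{il5thm6} from Theorem \ref{il5thm3}, exploiting the fact that the signs appearing in \eq{il5eq25}--\eq{il5eq26} are \emph{identical} to those in \eq{il5eq11}--\eq{il5eq12}. So the whole content of the theorem is that converting each $\tM$-space to the corresponding $\oM$-space via \eq{il5eq20} and \eq{il5eq21} produces a collection of auxiliary $\Ker\bar{\pd}_{\la}$ factors and sign corrections which cancel out exactly.

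First I would handle \eq{il5eq25}. Start with \eq{il5eq11} applied to our data. On the left, apply \eq{il5eq20} to replace $\pd\tM_{k+1}^\ma(\al,\be,J,f_1,\ldots,f_k)$ by $\pd\oM_{k+1}^\ma(\al,\be,J,f_1,\ldots,f_k)$ (times $\Ker\bar\pd_{\la_{\al(0)}}$ when $0\in I$). For each summand on the right, apply \eq{il5eq20} to the $\tM_{k_2+1}^\ma$-factor and \eq{il5eq21} (or equivalently the reordered form \eq{il5eq22}) to the $\tM_{k_1+1}^\ma$-factor. Since in the boundary decomposition \eq{il4eq4} we have $0\in I$ iff $0\in I_1$, and $i\in I_1$ iff $0\in I_2$, there are exactly four cases, distinguished by whether $0\in I$ and whether $i\in I_1$. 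In each case, after pulling the various $\Ker\bar\pd_{\la}$ factors outside the fibre products (using Proposition \ref{il2prop1} and the dimension formulas \eq{il4eq17} and \eq{il4eq37}), one sees that a single factor $\Ker\bar\pd_{\la_{\al(0)}}$ remains free on both sides when $0\in I$; any other factors land inside the fibre product, where they are consumed by \eq{il4eq35} (the transverse decomposition $\la_{(p_-,p_+)}(-1,0)=\ev_{(-1,0)}\bigl(\Ker\bar\pd_{\la_{(p_-,p_+)}}\bigr)\op\ev_{(-1,0)}\bigl(\Ker\bar\pd_{\la_{(p_+,p_-)}}\bigr)$) when passing from the fibre product over $L\amalg\ti R$ to one over $L\amalg R$.

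Next I would check the signs. The reordering of the $\Ker\bar\pd_{\la}$ factors past the fibre product produces signs depending only on $\eta_{\al_1(i)}$ and on the dimensions of $\tM_{k_2+1}^\ma$, $\oM_{k_1+1}^\ma$ computed from \eq{il4eq17} and \eq{il4eq44}. When $i\in I_1$, collapsing the fibre product over the transverse decomposition \eq{il4eq35} introduces a factor $\ep_{\al_2(0)}\ep_{\al_1(i)}=\ep_{(p_+,p_-)}\ep_{(p_-,p_+)}=(-1)^{\eta_{\al_1(i)}(n-\eta_{\al_1(i)})}$ by \eq{il5eq8}; this is precisely what the calibrated sign $(-1)^{\eta_{\al_1(i)}(1+\sum\deg f_j+\sum\deg f_l)}\ep_{\al_1(i)}$ in \eq{il5eq21} was designed to absorb, together with the analogous $\ep_{\al_2(0)}$ from converting the $\tM_{k_2+1}^\ma$ factor implicitly (via \eq{il4eq35}) rather than explicitly. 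After all these cancellations, the surviving signs on the right of the converted identity are precisely $(-1)^{j+1+\sum_{l=1}^{i-1}\deg f_l}$ and $(-1)^{n+(1+\sum\deg f_l)(1+\sum\deg f_l)}$, matching \eq{il5eq25}. The boundary term from the simplex factors $\pd\De_{a_i}$ is handled identically: replacing $f_i$ by $f_i\ci F_j^{a_i}$ leaves $I,\al,\be$ unchanged, so the conversion factors between $\tM$ and $\oM$ are literally the same on both sides and cancel trivially.

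For \eq{il5eq26}, the argument is the same but shorter. Starting from \eq{il5eq12}, apply \eq{il5eq20} to $\tM_{k_1+1}^\ma(\al_1,\be_1,J,\ldots,f,\ldots)$ on the right and \eq{il5eq21} to $\tM_{k_1+1}^\ma(\al_1,\be_1,J,\ldots;\ldots)$ on the left. When $f(\De_a)\subseteq R$, equal to some $\{(p_-,p_+)\}$, the extra factor $\De_a\t\Ker\bar\pd_{\la_{(p_+,p_-)}}$ and the map $\ev_{(-1,0)}$ on the left of \eq{il5eq12} combine with the $\Ker\bar\pd_{\la_{\al_1(i)}}$ factor from \eq{il5eq21} into a fibre product over $\la_{(p_-,p_+)}(-1,0)$ which collapses to a point by \eq{il4eq35}, giving the claimed fibre product over $L\amalg R$; the sign $\ep_{\al_1(i)}$ from \eq{il5eq21} is exactly absorbed in this collapse.

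The main obstacle will be the bookkeeping for the signs in the four-case analysis for \eq{il5eq25}, specifically verifying that the $\eta$-linear signs arising from reordering the $\Ker\bar\pd_{\la}$ factors across fibre products, plus the $\ep$ factors introduced by \eq{il5eq21}, plus the $(-1)^{\eta(n-\eta)}$ factor from \eq{il4eq35} all cancel modulo the relations \eq{il5eq8} and \eq{il4eq9}. This is precisely the calibration that the signs in Definition \ref{il5dfn4} were designed to achieve, and the cleanest check is to work through the product reorderings explicitly using Proposition \ref{il2prop1}(a) with dimensions given by \eq{il4eq17} and \eq{il4eq37}.
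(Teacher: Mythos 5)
Your proposal is correct and follows essentially the same route as the paper: substitute \eq{il5eq20} and \eq{il5eq21}--\eq{il5eq22} into \eq{il5eq11}--\eq{il5eq12}, split into the four cases determined by whether $0\in I$ and $i\in I_1$, and observe that the fibre product $\Ker\bar{\pd}_{\la_{\al_2(0)}}\t_{\la_{\al_2(0)}(-1,0)}\Ker\bar{\pd}_{\la_{\al_1(i)}}$ collapses via \eq{il4eq35} to a point whose sign cancels the $\ep_{\si\ci\al_1(i)}$ built into \eq{il5eq22}, so that only the stated signs survive. The one place to tighten is your sign accounting for the collapse: the point carries a single factor $\ep_{\al_2(0)}$ (not $\ep_{\al_2(0)}\ep_{\al_1(i)}$), and since $\al_2(0)=\si\ci\al_1(i)$ it cancels the $\ep_{\si\ci\al_1(i)}$ from \eq{il5eq22} directly, exactly as in the paper's treatment of the case $0\in I$, $0,i\in I_1$.
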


\begin{proof} To prove \eq{il5eq25}, we substitute \eq{il5eq20} and
\eq{il5eq22} into \eq{il5eq11}. We must consider separately the
cases $0\notin I$ and $0\in I$ in \eq{il5eq20}. As $0\in I$ if and
only if $0\in I_1$ by \eq{il4eq4} and \eq{il4eq5}, these determine
whether or not $0\in I_1$, but for each $i,\ldots,\be_2$ in
\eq{il5eq25} we must still consider separately the cases $i\notin
I_1$ and $i\in I_1$ in \eq{il5eq22}, so there are four cases to
consider. We explain the most complicated case $0\in I$ and $0,i\in
I_1$. Then substituting \eq{il5eq20} and \eq{il5eq22} into
\eq{il5eq11} yields in oriented Kuranishi spaces
\begin{align*}
\pd\bigl(\oM_{k+1}^\ma(\al,&\be,J,f_1,\ldots,f_k)\t\Ker
\bar{\pd}_{\la_{\al(0)}}\bigr)=\\
&\bigl(\pd(\oM_{k+1}^\ma(\al,\be,J,f_1,\ldots,f_k)\bigr)\t\Ker
\bar{\pd}_{\la_{\al(0)}}
\end{align*}
on the left hand side, using Proposition \ref{il2prop1}(a) and $\pd
\bigl(\Ker\bar{\pd}_{\la_{\al(0)}}\bigr)=\emptyset$, and
\begin{equation*}
(-1)^{j\!+\!1\!+\!\sum_{l=1}^{i-1}\deg f_l}
\oM_{k\!+\!1}^\ma(\al,\be,J,f_1,\ldots,f_{i-1},f_i\ci F_j^{a_i},
f_{i\!+\!1},\ldots,f_k)\!\t\!\Ker \bar{\pd}_{\la_{\al(0)}}
\end{equation*}
for the first term on the right hand side, for each $j$, and
\begin{equation*}
\begin{split}
&(-1)^{n+\bigl(1+\sum_{l=1}^{i-1}\deg f_l\bigr)
\bigl(1+\sum_{l=i}^{i+k_2-1}\deg f_l\bigr)} \\
&\quad
\bigl(\oM_{k_2+1}^\ma (\al_2,\be_2,J,f_i,\ldots,f_{i+k_2-1}) \t\Ker
\bar{\pd}_{\la_{\al_2(0)}}\bigr)
\t_{\btev,L\amalg\ti R,\btev_i}\\
&\quad
\bigl(\ep_{\si\ci\al_1(i)}\Ker\bar{\pd}_{\la_{\al_1(i)}}\!\t\!
\oM_{k_1+1}^\ma(\al_1,\be_1,J,f_1,\ldots,f_{i-1};
f_{i+k_2},\ldots,f_k)\!\t\!\Ker\bar{\pd}_{\la_{\al_1(0)}}\bigr)\\
&= (-1)^{n+\bigl(1+\sum_{l=1}^{i-1}\deg f_l\bigr)
\bigl(1+\sum_{l=i}^{i+k_2-1}\deg f_l\bigr)}
\oM_{k_2+1}^\ma(\al_2,\be_2,J,f_i,\ldots,f_{i+k_2-1})\t\\
&\quad
\bigl(\ep_{\si\ci\al_1(i)}\Ker\bar{\pd}_{\la_{\al_2(0)}}
\t_{\la_{\al_2(0)}(-1,0)}\Ker\bar{\pd}_{\la_{\al_1(i)}}\bigr)\t\\
&\quad
\oM_{k_1+1}^\ma(\al_1,\be_1,J,f_1,\ldots,f_{i-1};
f_{i+k_2},\ldots,f_k)\t\Ker\bar{\pd}_{\la_{\al_1(0)}}\\
&=(-1)^{n+\bigl(1+\sum_{l=1}^{i-1}\deg f_l\bigr)
\bigl(1+\sum_{l=i}^{i+k_2-1}\deg f_l\bigr)}\\
&\quad
\bigl(\oM_{k_2+1}^\ma (\al_2,\be_2,J,f_i,\ldots,f_{i+k_2-1})
\t_{\bev,L\amalg R,\bev_i}\\
&\quad\oM_{k_1+1}^\ma(\al_1,\be_1,J,f_1,\ldots,f_{i-1};
f_{i+k_2},\ldots,f_k)\bigr)\t\Ker\bar{\pd}_{\la_{\al_1(0)}}
\end{split}
\end{equation*}
for the final term on the right hand side, for fixed
$i,\ldots,\be_2$ with $i\in I_1$. Here we use the fact that
$\Ker\bar{\pd}_{\la_{\al_2(0)}}\t_{\la_{\al_2(0)}(-1,0)}
\Ker\bar{\pd}_{\la_{\al_1(i)}}$ is a point with sign
$\ep_{\al_2(0)}$, and as $\al_2(0)=\si\ci\al_1(i)$ this cancels with
$\ep_{\si\ci\al_1(i)}$, so that the fifth line is just a point with
sign 1. In the last two lines, the fibre product over $L\amalg R$ is
actually a fibre product over the point $\al_1(i)$ in $R$, so it is
a product, as in the fifth to seventh lines.

The last three equations are the oriented products of the
corresponding terms in \eq{il5eq25} with
$\Ker\bar{\pd}_{\la_{\al_1(0)}}$. This proves \eq{il5eq25} in the
case $0\in I$ and $0,i\in I_1$. The other cases follow by similar
but simpler arguments. To prove \eq{il5eq26} we substitute
\eq{il5eq20} and \eq{il5eq22} into \eq{il5eq12}, and use the same
method.
\end{proof}

\begin{rem}{\bf(a)} Theorem \ref{il5thm6} is the main result of this
section. It is important that the signs in \eq{il5eq25} and
\eq{il5eq26} depend only on $n,i,j,k_2$ and the {\it shifted
cohomological degrees\/} $\deg f_j,\deg f$. In particular, they do
not involve the $\ep_{\al(j)},\eta_{\al(j)}$ or $a_j$. Because of
this, in the rest of the paper we will be able to write all our
signs in terms of $\deg f_j,\deg f$, without any correction factors
involving $\ep_{\al(j)},\eta_{\al(j)},a_j$. This was one aim of the
careful definition of orientations above.

Theorem \ref{il5thm6} is an analogue in the immersed case of Fukaya
et al.\ \cite[Prop.~48.1]{FOOO}; roughly speaking, if we substitute
\eq{il5eq26} into \eq{il5eq25}, then we get \cite[Prop.~48.1]{FOOO},
with the same signs, noting that our definition of $\deg f_i$
differs by 1 from that of \cite{FOOO}. Since our signs are
compatible with those of Fukaya et al.\ \cite{FOOO}, we can follow
their proof to construct an $A_\iy$ algebra, and there will be no
new orientation issues, provided we grade our complexes using
shifted cohomological degrees in~\eq{il4eq13}.
\smallskip

\noindent{\bf(b)} In equations \eq{il4eq4}, \eq{il4eq18},
\eq{il4eq40}, \eq{il4eq47}, \eq{il5eq3}, \eq{il5eq11}, \eq{il5eq14},
\eq{il5eq17} and \eq{il5eq25} above, we chose to order the fibre
products as $\oM_{k_2+1}^\ma(\al_2,\ldots)\t_{\ldots}
\oM_{k_1+1}^\ma(\al_1,\ldots)$ rather than the other way round; this
order was reversed in \eq{il5eq19}. Fukaya et al.\ adopt the
opposite order to us, in \cite[Prop.~46.2]{FOOO} for instance. We
can now explain why we chose this order for our fibre products.
Using \eq{il2eq5}, \eq{il4eq14} and \eq{il4eq17} we may rewrite
\eq{il5eq25} and \eq{il5eq26} with the other fibre product order,
which yields:
\begin{gather}
\pd\oM_{k+1}^\ma(\al,\be,J,f_1,\ldots,f_k)=
\label{il5eq27}\\
\coprod_{i=1}^k\coprod_{j=0}^{a_i}\,\,
\begin{aligned}[t]
&(-1)^{j+1+\sum_{l=1}^{i-1}\deg f_l}\\
&\oM_{k+1}^\ma(\al,\be,J,f_1,\ldots,f_{i-1},f_i\ci F_j^{a_i},
f_{i+1},\ldots,f_k)
\end{aligned}
\nonumber\allowdisplaybreaks\\
\amalg \!\!\!\coprod_{\begin{subarray}{l}
k_1+k_2=k+1,\;1\le i\le k_1,\\
I_1\cup_iI_2=I,\;\al_1\cup_i\al_2=\al,\\
\be_1+\be_2=\be\end{subarray}}
\begin{aligned}[t]
&(-1)^{n+1+i+ik_2+\sum_{l=1}^{i-1}\deg f_l+
\sum_{l=i}^{i+k_2-1}\deg f_l\sum_{l=i+k_2}^k\deg f_l}\cdot\\
&\oM_{k_1+1}^\ma(\al_1,\be_1,J,f_1,\ldots,f_{i-1};
f_{i+k_2},\ldots,f_k)\t_{\bev_i,L\amalg R,\bev}\\
&\qquad \oM_{k_2+1}^\ma
(\al_2,\be_2,J,f_i,\ldots,f_{i+k_2-1})\,\cdot
\end{aligned}
\nonumber\allowdisplaybreaks
\\
\text{\begin{footnotesize}$\displaystyle\left.\begin{cases}
1, & \hbox to 0pt{\hss $0,i\!\notin\!I_1$,}\\
(-1)^{\eta_{\al_1(0)}\sum_{l=i}^{i+k_2-1}\deg f_l},
 & \hbox to 0pt{\hss $0\!\in\!I_1$,
$i\!\notin\!I_1$,}\\
(-1)^{\eta_{\al_1(i)}\bigl[1+\!\sum\limits_{l=1}^{i-1}\!\deg
f_l\!+\!\sum\limits_{l=i+k_2}^k\!\deg f_l\bigr]}
(-1)^{\eta_{\si\ci\al_1(i)}\bigl[\eta_{\al_1(i)}\!+\!
\sum\limits_{l=i}^{i+k_2-1}\!\deg f_l\bigr]},
& \hbox to 0pt{\hss $0\!\notin\!I_1$,
$i\!\in\!I_1$,}\\
(-1)^{\eta_{\al_1(i)}\bigl[1\!+\!\!\sum\limits_{l=1}^{i-1}\!\!\deg
f_l+\!\!\sum\limits_{l=i+k_2}^k\!\!\deg f_l\bigr]}
(-1)^{(\eta_{\al_1(0)}\!+\!\eta_{\si\ci\al_1(i)})
\bigl[\eta_{\al_1(i)}\!+\!\!
\sum\limits_{l=i}^{i+k_2-1}\!\!\deg f_l\bigr]},
\,\,\,\,\,\,\,\,\,\,\,\,\,\,\,\,\,\, & \hbox to 0pt{\hss
$0,i\!\in\!I_1$,}
\end{cases}\right\}\!,$\end{footnotesize}}
\nonumber
\allowdisplaybreaks
\\
\oM_{k_1+1}^\ma
(\al_1,\be_1,J,f_1,\ldots,f_{i-1};f_{i+k_2},\ldots,f_k)
\t_{\bev_i,L\amalg R,f}\De_a
\label{il5eq28}\\
=(-1)^{(\deg f+1)\sum_{l=i+k_2}^k\deg f_l}\,
\oM_{k_1+1}^\ma(\al_1,\be_1,J,f_1,\ldots,f_{i-1},
f,f_{i+k_2},\ldots,f_k)\,\cdot
\nonumber\\
\text{\begin{footnotesize}$\displaystyle \left.\begin{cases} 1, &
\hbox to 0pt{\hss $0,i\!\notin\!I_1$,}\\
(-1)^{\eta_{\al_1(0)}(\deg f+1)}, & \hbox to 0pt{\hss $0\!\in\!I_1$,
$i\!\notin\!I_1$,}\\
(-1)^{\eta_{\al_1(i)}\bigl[1+\sum_{l=1}^{i-1}\deg
f_l+\sum_{l=i+k_2}^k\deg f_l\bigr]} (-1)^{\eta_{\si\ci\al_1(i)}(\deg
f+1+\eta_{\al_1(i)})},
&\hbox to 0pt{\hss $0\!\notin\!I_1$, $i\!\in\!I_1$,}\\
(-1)^{\eta_{\al_1(i)}\bigl[1\!+\!\sum_{l=1}^{i-1}\!\deg
f_l+\!\sum_{l=i+k_2}^k\!\deg f_l\bigr]}
(-1)^{(\eta_{\al_1(0)}+\eta_{\si\ci\al_1(i)})(\deg
f+1+\eta_{\al_1(i)})}, \,\,\,\,\,\,\,\,\,\,\,\,\,\,\,\,\,\,\,\,\, &
\hbox to 0pt{\hss $0,i\!\in\!I_1$,}
\end{cases}\right\}\!.
$\end{footnotesize}} \nonumber
\end{gather}

Observe that equations \eq{il5eq27}--\eq{il5eq28} have complicated
extra sign terms involving
$\eta_{\al_1(0)},\eta_{\al_1(i)},\eta_{\si\ci\al_1(i)}$, so they are
not simply written in terms of $n,i,j,k_2$ and $\deg f_j,\deg f$, as
\eq{il5eq25}--\eq{il5eq26} were. Thus we prefer the fibre product
order in \eq{il5eq25}--\eq{il5eq26}. One might guess that by
changing the signs in \eq{il5eq20} and \eq{il5eq21}, altering the
orientations of $\oM_{k+1}^\ma(\al,\be,J,f_1,\ldots,f_k)$,
$\oM_{k_1+1}^\ma(\al_1,\be_1,J,f_1,\ldots, f_{i-1};\ab
f_{i+k_2},\ab\ldots,\ab f_k)$, one could eliminate the troublesome
terms in \eq{il5eq27} and \eq{il5eq28}, to get signs depending only
on $n,i,j,k_1,k_2,\deg f_j,\deg f$. However, calculations by the
authors indicate that this is impossible, at least with the
orientation conventions of~\S\ref{il24}.
\smallskip

\noindent{\bf(c)} We defined the orientation on
$\oM_{k+1}(\al,\be,J)$ in \S\ref{il53} by \eq{il5eq13}, which
includes a complicated choice of sign. We chose this particular sign
by requiring that if $a_i=n$ for $i\notin I$ and $a_i=0$ for $i\in
I$, so that $\deg f_i=-1$ for $i\notin I$ and $\deg
f_i=\eta_{\al(i)}-1$ for $i\in I$, then the sign $\ze_3$ in
\eq{il5eq23} and \eq{il5eq24} should be 1. The sign in \eq{il5eq13}
was then determined as in the proof of Theorem \ref{il5thm6}. The
motivation for this choice is that we have found natural
orientations for the moduli spaces
$\oM_{k+1}^\ma(\al,\be,J,f_1,\ldots,f_k)$, with good properties
under boundaries as in Theorem \ref{il5thm6}. Now we have
\begin{equation*}
\begin{split}
&\oM_{k+1}^\ma(\al,\be,J)=\\
&\oM_{k+1}^\ma(\al,\be,J)\t_{\prod_{i=1}^k\bev_i,(L\amalg
R)^k,\prod_{i=1}^k\bigl\{\begin{smallmatrix}\id_L, & i\notin I\\
\id_{\{\al(i)\}}, & i\in I\end{smallmatrix}\bigr\}}
\smash{\left.\prod_{i=1}^k
\begin{cases}L, & \!i\notin I\\
\{\al(i)\}, & \!i\in I\end{cases} \right\}.}
\end{split}
\end{equation*}
This is like the fibre product \eq{il4eq11} defining
$\oM_{k+1}^\ma(\al,\be,J,f_1,\ldots,f_k)$, but replacing
$f_i:\De_{a_i}\!\ra\!L\amalg R$ by $\id_L:L\!\ra\!L\amalg R$ for
$i\notin I$ and $\id_{\{\al(i)\}}:\{\al(i)\}\!\ra\!L\amalg R$ for
$i\in I$. Thus we can think of $\oM_{k+1}^\ma(\al,\be,J)$ as a
generalization of $\oM_{k+1}^\ma(\al,\be,J,f_1,\ab\ldots,\ab f_k)$
in which $a_i=n=\dim L$ for $i\notin I$ and $a_i=0=\dim\{\al(i)\}$
for $i\in I$, and so we should arrange to get $\ze_3=1$ in
\eq{il5eq24} in this case.
\label{il5rem2}
\end{rem}

The orientations on $\oM_{k+1}^\ma(\al,\be,J,f_1,\ldots,f_k)$ depend
on the choice of paths $\la_{(p_-,p_+)}$ in \S\ref{il43} and
orientations $o_{(p_-,p_+)}$ on $\Ker\bar\pd_{
\smash{\la_{(p_-,p_+)}}}$ in \S\ref{il52}, for $(p_-,p_+)\in R$.
Suppose we change to alternative choices $\ti\la_{(p_-,p_+)},\ti
o_{(p_-,p_+)}$. Note that changing $\la_{(p_-,p_+)}$ to
$\ti\la_{(p_-,p_+)}$ alters the index $\eta_{(p_-,p_+)}$ in
\eq{il4eq8} to $\ti\eta_{(p_-,p_+)}$, and this changes the shifted
cohomological degree $\deg f$ in~\eq{il4eq13}.

As $\la_{(p_-,p_+)},\ti\la_{(p_-,p_+)}$ are paths in {\it
oriented\/} Lagrangian spaces, $\eta_{(p_-,p_+)},
\ti\eta_{(p_-,p_+)}$ differ by an even number, so we may write
$\ti\eta_{(p_-,p_+)}=\eta_{(p_-,p_+)}+2d_{(p_-,p_+)}$ for
$d_{(p_-,p_+)}\in\Z$. So degrees in \eq{il4eq13} change by $\deg
f\mapsto\deg f+2d_{(p_-,p_+)}$ if $f:\De_a\ra\{(p_-,p_+)\}$. Since
the changes in $\eta_{(p_-,p_+)},\deg f$ are even, all the signs
above, such as those in \eq{il5eq25} and \eq{il5eq26}, are
unchanged. Here is how changing to alternative choices
$\ti\la_{(p_-,p_+)},\ti o_{(p_-,p_+)}$ affects the orientations
on~$\oM_{k+1}^\ma(\al,\be,J,f_1,\ldots,f_k)$.

\begin{prop} In the situation above, suppose that for all\/
$(p_-,p_+)\in R$ we replace the paths $\la_{(p_-,p_+)}$ in
{\rm\S\ref{il43}} and orientations $o_{(p_-,p_+)}$ on
$\Ker\bar\pd_{\smash{\la_{(p_-,p_+)}}}$ in {\rm\S\ref{il52}} by
alternative choices $\ti\la_{(p_-,p_+)},\ti o_{(p_-,p_+)},$ so
that\/ $\eta_{(p_-,p_+)}$ is replaced by $\ti\eta_{(p_-,p_+)},$ but
we make no other changes. Then for all\/ $(p_-,p_+)\in R$ there
exist\/ $\xi_{(p_-,p_+)}=\pm 1$ depending only on $\la_{(p_-,p_+)},
o_{(p_-,p_+)},\ti\la_{(p_-,p_+)},\ti o_{(p_-,p_+)},$ such that for
all\/ $k,\al,\be,f_1,\ldots,f_k$ the orientation on
$\oM_{k+1}^\ma(\al,\be,J,f_1,\ldots,f_k)$ changes by a factor
\begin{equation}
\prod_{0\ne i\in I}\xi_{\si\ci\al(i)}\cdot\begin{cases}
\xi_{\al(0)}, & 0\in I,\\ 1, & 0\notin I.\end{cases}
\label{il5eq29}
\end{equation}
\label{il5prop}
\end{prop}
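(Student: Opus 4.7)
The plan is to reduce the proposition to a local sign calculation at each self-intersection point, by exploiting the fact that the defining relations \eq{il5eq13}, \eq{il5eq23}--\eq{il5eq24}, \eq{il5eq25}--\eq{il5eq26} depend on the data $\la_{(p_-,p_+)}, o_{(p_-,p_+)}$ only through parities. First I would observe that $\ti\eta_{(p_-,p_+)} = \eta_{(p_-,p_+)} + 2d_{(p_-,p_+)}$ with $d_{(p_-,p_+)} \in \Z$, and correspondingly $\deg f_i \mapsto \deg f_i + 2d_{\al(i)}$ for any $f_i$ mapping to $\al(i) \in R$. Consequently every $(-1)^{(\cdots)}$ factor appearing in \eq{il5eq13} and in $\ze_3$ of \eq{il5eq24} is unchanged by the swap. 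In particular \eq{il5eq23} shows that the orientation change on $\oM_{k+1}^\ma(\al,\be,J,f_1,\ldots,f_k)$ equals the orientation change on $\oM_{k+1}^\ma(\al,\be,J)$, and \eq{il5eq13} reduces the problem further to combining three orientation changes: that of $\tM_{k+1}^\ma(\al,\be,J)$, that of $\prod_{i\in I}\Ker\bar\pd_{\la_{\al(i)}}$, and the change in the product $\prod_{0\ne j\in I}\ep_{\al(j)}$.

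Next I would analyse each of these three pieces locally at a single self-intersection $(p_-,p_+) \in R$. Using the gluing isomorphism \eq{il5eq1} together with the construction of the orientation on $\tM_{k+1}^\ma(\al,\be,J)$ in Theorem \ref{il5thm1} via a trivialization of the boundary bundle $F_{u,\la_\al}$, one checks that swapping $\la_{\al(i)}$ and $o_{\al(i)}$ at a single puncture $i$ changes the orientation of $\Ind D_{u,\la_\al}$ by a sign depending only on the local data at $\al(i)$; since different punctures contribute to disjoint pieces of the trivialization of $F_{u,\la_\al}$, the total sign change on $\tM_{k+1}^\ma$ factors as a product of independent local signs over $i \in I$. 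Similarly the change in each $\Ker\bar\pd_{\la_{\al(i)}}$, viewed via a generic 1-parameter family of Lagrangian paths from $\la_{\al(i)}$ to $\ti\la_{\al(i)}$, is a local sign depending only on $(\la,o,\ti\la,\ti o)$ at $\al(i)$, and the factor $\ep_{\al(j)}$ from \eq{il4eq35} similarly changes by a local sign at $\al(j)$.

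Packaging these three local contributions into a single local sign at each $(p_-,p_+)$, I would define $\xi_{(p_-,p_+)} \in \{\pm 1\}$ to be the composite sign by which the trio (orientation contribution to $\Ind D_{u,\la_\al}$, orientation of $\Ker\bar\pd_{\la_{(p_-,p_+)}}$, and factor $\ep_{(p_-,p_+)}$) is multiplied when $\la_{(p_-,p_+)}, o_{(p_-,p_+)}$ are replaced by $\ti\la_{(p_-,p_+)}, \ti o_{(p_-,p_+)}$. By construction $\xi_{(p_-,p_+)}$ depends only on the indicated local data. Substituting back into \eq{il5eq13}, the combined change in orientation of $\oM_{k+1}^\ma(\al,\be,J)$ is then a product of $\xi_{\al(i)}$ or $\xi_{\si\ci\al(i)}$ terms running over $i \in I$, the question being only which of the two appears at each index.

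The main obstacle is the bookkeeping that distinguishes the marked point $0$ from the remaining $i \in I$ in \eq{il5eq29}. This asymmetry arises because \eq{il4eq2} places $\si\ci\al(0)$ (not $\al(0)$) in the role of the distinguished $0$-th sheet labelling in the definition of $\bev$, while in \eq{il5eq13} the factor $\prod_{0\ne j \in I}\ep_{\al(j)}$ excludes $j=0$; likewise the extra factor $\Ker\bar\pd_{\la_{\al(0)}}$ in \eq{il5eq20} enters with $\al(0)$ rather than $\si\ci\al(0)$. Tracking through which side of \eq{il4eq35} is used at each marked point, one finds that for $i\ne 0$ the local sign $\xi$ must be evaluated at $\si\ci\al(i)$, while at $i=0$ it is evaluated at $\al(0)$; combining yields exactly \eq{il5eq29}, completing the proof.
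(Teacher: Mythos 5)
Your strategy is essentially the paper's: note that $\ti\eta_{(p_-,p_+)}=\eta_{(p_-,p_+)}+2d_{(p_-,p_+)}$ so every explicit sign in \S\ref{il5} is untouched, localize the remaining change at the marked points in $I$, and then do the $\si$-bookkeeping that produces the asymmetry between $i=0$ and $i\ne 0$ in \eq{il5eq29}. The organization differs in two respects. First, the paper splits the argument: it treats the case $\ti\la_{(p_-,p_+)}=\la_{(p_-,p_+)}$ (only the $o$'s change) directly from \eq{il5eq9} and \eq{il5eq20}, where the orientation of $\tM_{k+1}^\ma(\al,\be,J)$ is manifestly independent of the $o$'s and no $\ep$'s appear, rather than routing through \eq{il5eq13}/\eq{il5eq23} and cancelling $\xi_{\al(j)}\xi_{\si\ci\al(j)}$ against $\xi_{\al(j)}$ as your decomposition requires. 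Second, and more substantively, for the general path change the paper does not try to assign separate signs to ``the change of $\Ker\bar\pd_{\la_{\al(i)}}$'' and ``the contribution to $\Ind D_{u,\la_\al}$'' --- these are not individually signs, since the kernels can change dimension by $2d_{(p_-,p_+)}$; instead it introduces one local object, the index $V_{(p_-,p_+)}=\Ind\bar\pd_{(p_-,p_+)}$ of an operator on the disc with boundary conditions $\la_{(p_-,p_+)}$ on one semicircle and $\ti\la_{(p_-,p_+)}(-x,y)$ on the other, asserts that the virtual tangent bundle of $\tM_{k+1}^\ma(\al,\be,J)$ changes by direct sum with $\bigop_{i\in I}V_{\al(i)}$, and \emph{defines} $\xi_{(p_-,p_+)}$ by $\xi_{(p_-,p_+)}V_{(p_-,p_+)}\cong\Ker\bar\pd_{\ti\la_{(p_-,p_+)}}\ominus\Ker\bar\pd_{\la_{(p_-,p_+)}}$. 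This is what your ``generic 1-parameter family of paths'' and ``composite sign'' are gesturing at, but the paper's formulation buys a $\xi_{(p_-,p_+)}$ that is well defined (independent of $u,k,\al,\be,f_i$) from the start, with the gluing statement isolated as the single geometric input; in your version the well-definedness of the composite sign is exactly the point that still needs the excision/gluing argument, so you should make that step explicit rather than folding it into the definition of $\xi_{(p_-,p_+)}$.
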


\begin{proof} When we change only the orientations $o_{(p_-,p_+)}$,
so that $\ti\la_{(p_-,p_+)}=\la_{(p_-,p_+)}$, using \eq{il5eq9},
\eq{il5eq20} and the fact that the orientation of
$\tM_{k+1}^\ma(\al,\be,J)$ is independent of the $o_{(p_-,p_+)}$, we
see that changing from $o_{(p_-,p_+)}$ to $\ti o_{(p_-,p_+)}$ for
all $(p_-,p_+)\in R$ changes the orientation of
$\oM_{k+1}^\ma(\al,\be,J,\ab f_1,\ab\ldots,\ab f_k)$ by a factor
\eq{il5eq29}, with the $\xi_{(p_-,p_+)}$ determined by~$\ti
o_{(p_-,p_+)}=\xi_{(p_-,p_+)}o_{(p_-,p_+)}$. For the general case,
we must also consider how the {\it virtual tangent bundle} of
$\tM_{k+1}(\al,\be,J)$ in \S\ref{il51} changes when we replace
$\smash{\la_{(p_-,p_+)}}$ by~$\smash{\ti\la_{(p_-,p_+)}}$.

In fact the virtual tangent bundle changes by direct sum with
$\bigop_{i\in I}V_{\al(i)}$, where
$V_{(p_-,p_+)}=\Ind\bar\pd_{(p_-,p_+)}$ for $(p_-,p_+)\in R$ are
oriented virtual vector spaces, and $\bar\pd_{(p_-,p_+)}$ is an
elliptic operator on the disc $D=\{(x,y)\in\R^2:x^2+y^2\le 1\}$ with
boundary conditions $\la_{(p_-,p_+)}(x,y)$ on the semicircle $x\le
0$ and $\ti\la_{(p_-,p_+)}(-x,y)$ on the semicircle $x\ge 0$. There
is an isomorphism $\xi_{(p_-,p_+)}V_{(p_-,p_+)}\cong\Ker
\bar\pd_{\ti\la_{\smash{(p_-,p_+)}}}
\ominus\Ker\bar\pd_{\la_{\smash{(p_-,p_+)}}}$, where
$\Ker\bar\pd_{\la_{\smash{(p_-, p_+)}}},
\Ker\bar\pd_{\ti\la_{(p_-,p_+)}}$ have orientations
$o_{(p_-,p_+)},\ti o_{(p_-,p_+)}$ and $\xi_{(p_-,p_+)}=\pm 1$, and
the proposition holds with these~$\xi_{(p_-,p_+)}$.
\end{proof}

\subsection{Adding families of almost complex structures}
\label{il55}

We can generalize the material above to the moduli spaces with
smooth families of almost complex structures in \S\ref{il45}. First
we explain how to orient the moduli spaces $\oM_{k+1}^\ma
(\al,\be,J_t:t\in{\cal T})$ of Definition \ref{il4dfn6},
generalizing Definition~\ref{il5dfn3}.

\begin{dfn} We work in the situation of \S\ref{il45}, with $M,L$,
and $J_t:t\in{\cal T}$, with the additional assumptions of
\S\ref{il51}--\S\ref{il53}, that is, that we have chosen a relative
spin structure for $\io:L\ra M$, and orientations for
$\Ker\bar\pd_{\la_{(p_+,p_-)}}$ for all $(p_+,p_-)\in R$. We also
suppose that $\cal T$ is oriented. At a point $\bigl(t,[\Si,\vec
z,u,l,\bar{u}]\bigr)$ of $\oM_{k+1}^\ma (\al,\be,J_t:t\in{\cal T})$,
we have an isomorphism of virtual vector spaces
\begin{equation}
T_{(t,[\Si,\vec z,u,l,\bar{u}])}\oM_{k+1}^\ma(\al,\be,J_t:t\in{\cal
T})\cong T_t{\cal T}\op T_{[\Si,\vec z,u,l,\bar{u}]}
\oM_{k+1}^\ma(\al,\be,J_t).
\label{il5eq30}
\end{equation}
In Definition \ref{il5dfn3} we constructed an orientation on
$\oM_{k+1}^\ma(\al,\be,J_t)$, and hence on $T_{[\Si,\vec
z,u,l,\bar{u}]}\oM_{k+1}^\ma(\al,\be,J_t)$. As $\cal T$ is oriented,
$T_t{\cal T}$ is oriented. Define $\oM_{k+1}^\ma(\al,\be,J_t:
t\in{\cal T})$ to have the orientation such that \eq{il5eq30} holds
in oriented virtual vector spaces.

A special case of this which is useful for computing signs in
formulae is to take $J_t=J$ for some almost complex structure $J$
and all $t\in{\cal T}$. Then
\begin{equation}
\oM_{k+1}^\ma(\al,\be,J_t:t\in{\cal T})\cong {\cal
T}\t\oM_{k+1}^\ma(\al,\be,J)
\label{il5eq31}
\end{equation}
holds in oriented Kuranishi spaces.
\label{il5dfn5}
\end{dfn}

Here is the analogue of Theorem \ref{il5thm4}. We can prove it by
the same method; alternatively, we can take $J_t=J$ for $t\in{\cal
T}$, so that \eq{il5eq31} holds, and then deduce the signs in
\eq{il5eq32} from Proposition \ref{il2prop1} and~\eq{il5eq14}.

\begin{thm} Using the orientations of Definition \ref{il5dfn5},
the isomorphism\/ \eq{il4eq21} in oriented Kuranishi spaces becomes:
\begin{gather}
\begin{split}
&\pd\oM^\ma_{k+1}(\al,\be,J_t:t\in{\cal T})\cong
\oM^\ma_{k+1}(\al,\be,J_t:t\in\pd{\cal T})\\
&\coprod_{\begin{subarray}{l}
k_1+k_2=k+1,\; 1\le i\le k_1,\\
I_1\cup_iI_2=I,\;\al_1\cup_i\al_2=\al,\\
\be_1+\be_2=\be
\end{subarray}}\!\!\!\!\!
\begin{aligned}[t]
\ze_4\,\oM^\ma_{k_2+1}(\al_2,\be_2,J_t:t\in{\cal T})\t_{\bs\pi_{\cal
T}\t\bev,{\cal T}\t(L\amalg R),\bs\pi_{\cal T}\t\bev_i}&\\
\oM^\ma_{k_1+1}(\al_1,\be_1,J_t:t\in{\cal T}), &
\end{aligned}
\end{split}
\label{il5eq32}\\
\text{where}\quad \ze_4=(-1)^{\dim{\cal T}+n+\bigl(i+\sum_{j\in
I:0<j<i}\eta_{\al(j)}\bigr) \bigl(1+k_2+\sum_{l\in I:i\le
l<i+k_2}\eta_{\al(l)}\bigr)}
\nonumber
\end{gather}
if\/ $i\notin I_1$\/ and $0\notin I_2,$ and
\begin{equation*}
\ze_4=(-1)^{\dim{\cal T}+n+\bigl(i+\sum_{j\in
I:0<j<i}\eta_{\al(j)}\bigr) \bigl(\eta_{\al_1(i)}+1+k_2+\sum_{l\in
I:i\le l<i+k_2}\eta_{\al(l)}\bigr)}
\end{equation*}
if\/ $i\in I_1,$ $0\in I_2,$ and\/ $\al_2(0)=\si\ci\al_1(i)$.
\label{il5thm7}
\end{thm}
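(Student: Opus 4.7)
The plan is to follow the suggestion immediately preceding the theorem statement and reduce to the case of a constant family $J_t\equiv J,$ where \eq{il5eq31} gives $\oM^\ma_{k+1}(\al,\be,J_t:t\in{\cal T})={\cal T}\t\oM^\ma_{k+1}(\al,\be,J)$ in oriented Kuranishi spaces. Since Definition~\ref{il5dfn5} orients the family moduli space locally on ${\cal T}$ via the pointwise splitting \eq{il5eq30} as $T_t{\cal T}\op T_{[\Si,\vec z,u,l,\bar u]}\oM^\ma_{k+1}(\al,\be,J_t),$ and since the signs in \eq{il5eq32} depend only on the discrete data $(n,i,k_2,\dim{\cal T},I_j,\al_j)$ intrinsic to this orientation convention, the boundary analysis is local on ${\cal T}$ and the signs may be computed in the constant family case; the general case follows because the pointwise virtual-tangent-bundle decomposition is identical.

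In the constant family case, Convention~\ref{il2conv}(a) and the first formula of \eq{il2eq5} yield
\[
\pd\bigl({\cal T}\t\oM^\ma_{k+1}(\al,\be,J)\bigr)=\pd{\cal T}\t\oM^\ma_{k+1}(\al,\be,J)\amalg(-1)^{\dim{\cal T}}\,{\cal T}\t\pd\oM^\ma_{k+1}(\al,\be,J).
\]
The first summand is identified with the $\oM^\ma_{k+1}(\al,\be,J_t:t\in\pd{\cal T})$ term of \eq{il5eq32} by applying \eq{il5eq31} to $\pd{\cal T}$ in place of ${\cal T},$ with no extra sign, because the outward normal of ${\cal T}$ at $\pd{\cal T}$ is the outward normal of the product. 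Applying Theorem~\ref{il5thm4} to expand $\pd\oM^\ma_{k+1}(\al,\be,J)$ rewrites the second summand as a disjoint union over nodal configurations of pieces of the form
\[
(-1)^{\dim{\cal T}}\,\ze_1\,{\cal T}\t\bigl(\oM^\ma_{k_2+1}(\al_2,\be_2,J)\t_{\bev,L\amalg R,\bev_i}\oM^\ma_{k_1+1}(\al_1,\be_1,J)\bigr),
\]
with $\ze_1$ as in \eq{il5eq15}--\eq{il5eq16}.

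The final step is to rewrite ${\cal T}\t(A\t_{L\amalg R}B)$ as the fibre product $({\cal T}\t A)\t_{{\cal T}\t(L\amalg R)}({\cal T}\t B)$ over the maps $\bs\pi_{\cal T}\t\bev$ and $\bs\pi_{\cal T}\t\bev_i,$ which is precisely the shape of the fibre product appearing in \eq{il5eq32} under \eq{il5eq31}. A direct computation from Convention~\ref{il2conv}(b)-(c) shows this rearrangement is \emph{sign-free}: the orientation of $\Ker\d(\id_{\cal T}\t f_A)=0\op\Ker\d f_A$ inside $T({\cal T}\t A)$ differs from that of $\Ker\d f_A$ inside $TA$ by a sign $(-1)^{\dim{\cal T}\cdot\dim\Ker\d f_A},$ and this sign exactly cancels the one arising from moving $T{\cal T}$ past $\Ker\d f_A$ in the product decomposition $T{\cal T}\op TA=T{\cal T}\op\Ker\d f_A\op f_A^*(TY).$ Combined with the explicit factor $(-1)^{\dim{\cal T}}$ from the product-boundary formula, the total sign on each nodal term is $(-1)^{\dim{\cal T}}\ze_1=\ze_4,$ matching \eq{il5eq32}. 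The main obstacle is the careful verification of this sign-free rearrangement: the individual factors in Convention~\ref{il2conv}(b)-(c) each depend on $\dim{\cal T},$ $\vdim A$ and $\dim(L\amalg R),$ and it is only after the full cancellation is tracked that one sees the rearrangement contributes no additional sign and so the answer reduces cleanly to $(-1)^{\dim{\cal T}}\ze_1,$ as claimed.
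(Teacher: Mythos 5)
Your proposal is correct and follows essentially the route the paper itself indicates for Theorem \ref{il5thm7}: the paper's proof is the one-line remark that one may either repeat the method of Theorem \ref{il5thm4} or take $J_t\equiv J$ so that \eq{il5eq31} holds and then read off the signs from Proposition \ref{il2prop1} and \eq{il5eq14}, and you have carried out the second option, correctly obtaining $\ze_4=(-1)^{\dim{\cal T}}\ze_1$ from the product-boundary formula. Your verification that the rearrangement ${\cal T}\t(A\t_{L\amalg R}B)\cong({\cal T}\t A)\t_{{\cal T}\t(L\amalg R)}({\cal T}\t B)$ is sign-free, via the cancellation of the two factors $(-1)^{\dim{\cal T}\cdot\dim\Ker\d f_A}$, is a correct filling-in of a step the paper leaves implicit in its citation of Proposition \ref{il2prop1}.
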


Next we add simplicial chains, and orient the moduli spaces
$\oM_{k+1}^\ma(\al,\be,J_t:t\in{\cal T},f_1,\ldots,f_k)$ of
Definition~\ref{il4dfn7}.

\begin{dfn} In the situation of Definition \ref{il5dfn5}, for
$i=1,\ldots,k$, let $a_i\ge 0$ and $f_i:\De_{a_i}\ra {\cal
T}\t(L\amalg R)$ be a smooth map, as in Definition \ref{il4dfn7}.
Since we have not defined modified moduli spaces
$\tM_{k+1}^\ma(\al,\be,J_t:t\in{\cal T},f_1,\ldots,f_k)$, we cannot
define an orientation on $\tM_{k+1}^\ma(\al,\be,J_t:t\in{\cal
T},f_1,\ldots,f_k)$ following \eq{il5eq20}. Instead, we will take
the analogue of Theorem \ref{il5thm5} to be our definition.
Inserting signs in \eq{il4eq24} motivated by
\eq{il5eq23}--\eq{il5eq24}, define
$\tM_{k+1}^\ma(\al,\be,J_t:t\in{\cal T},f_1,\ldots,f_k)$ to have the
orientation given in oriented Kuranishi spaces by
\begin{align}
\oM_{k+1}^\ma(\al,&\be,J_t:t\!\in\!{\cal T},f_1,\ldots,f_k)\!=\!
\ze_5\bigl((\R^m,\ka^m_k)\t_{\bs\pi_0,\R^m,\bs\pi_{\cal T}}
\oM_{k+1}^\ma(\al,\be,J_t:t\!\in\!{\cal T})\bigr)
\nonumber\\
&\t_{(\bs\pi_1\t\bev_1)\t\cdots\t(\bs\pi_k\t\bev_k),({\cal
T}\t(L\amalg R))^k, f_1\t\cdots\t
f_k}(\De_{a_1}\t\cdots\t\De_{a_k}),
\label{il5eq33}
\end{align}
where $(\R^m,\ka^m_k)$ and $\R^m$ have their natural orientations,
the orientation of $\oM_{k+1}^\ma(\al,\ab\be,\ab J_t:t\in{\cal T})$
is as in Definition \ref{il5dfn5}, and
\begin{equation}
\begin{split}
\ze_5=&(-1)^{\sum\limits_{0\ne i\in
I}(n-\eta_{\al(i)})\bigl[\sum\limits_{j=1}^i(\deg
f_j+1)-\sum\limits_{j\in I:0<j\le
i}\eta_{\al(j)}\bigr]}\\
&(-1)^{(\dim{\cal T}+n+1)\bigl[\sum\limits_{i=1}^k (k-i)(\deg
f_i+1)-\sum\limits_{0\ne i\in I}(k-i)\eta_{\al(i)}\bigr]} \\
&\cdot\begin{cases} 1, & 0\notin I, \\
(-1)^{\eta_{\al(0)}\bigl[\sum_{i=1}^k(\deg f_i+1)-\sum_{0\ne i\in
I}\eta_{\al(i)}\bigr]}, & 0\in I,
\end{cases}
\end{split}
\label{il5eq34}
\end{equation}
where the degrees $\deg f_i$ are as in \eq{il4eq26}. Similarly, we
orient $\oM_{k_1+1}^\ma(\al_1,\be_1,J_t:t\in{\cal T},f_1,\ldots,\ab
f_{i-1};\ab f_{i+k_2},\ab\ldots,f_k)$ by inserting signs in
\eq{il4eq28}. We will not write this sign down explicitly, but we
choose it to satisfy \eq{il5eq36} below.
\label{il5dfn6}
\end{dfn}

Calculation using equations \eq{il5eq32}--\eq{il5eq34} and
Proposition \ref{il2prop1} then yields an analogue of
Theorem~\ref{il5thm6}:

\begin{thm} In the situation of Definition {\rm\ref{il4dfn7},} with the
orientations of Definition {\rm\ref{il5dfn6}} and degrees in
{\rm\eq{il4eq26},} for $k>0$ in oriented Kuranishi spaces we have
\begin{equation}
\begin{split}
&\pd\oM_{k+1}^\ma(\al,\be,J_t:t\in{\cal T},f_1,\ldots,f_k)\cong \\
&\coprod_{i=1}^k\coprod_{j=0}^{a_i}\begin{aligned}[t]
&(-1)^{j+1+\sum_{l=1}^{i-1}\deg f_l}\\
&\oM_{k+1}^\ma(\al,\be,J_t:t\in{\cal T},f_1,\ldots,f_{i-1},f_i\ci
F_j^{a_i}, f_{i+1},\ldots,f_k)
\end{aligned}\\
&\amalg \!\!\!\coprod_{\begin{subarray}{l}
k_1+k_2=k+1,\;1\le i\le k_1, \\
I_1\cup_iI_2=I,\;\al_1\cup_i\al_2=\al,\\
\be_1+\be_2=\be
\end{subarray}}
\begin{aligned}[t]
&(-1)^{\dim{\cal T}+n+\bigl(1+\sum_{l=1}^{i-1}\deg f_l\bigr)
\bigl(1+\sum_{l=i}^{i+k_2-1}\deg f_l\bigr)}\\
&\oM_{k_2+1}^\ma (\al_2,\be_2,J_t:t\!\in\!{\cal
T},f_i,\ldots,f_{i+k_2-1})\\
&\t_{\bs\pi_{\cal T}\t\bev,
{\cal T}\t(L\amalg R),\bs\pi_{\cal T}\t\bev_i}\\
&\oM_{k_1+1}^\ma (\al_1,\be_1,J_t:t\in{\cal T},f_1,\ldots,f_{i-1};
f_{i+k_2},\ldots,f_k).
\end{aligned}
\end{split}
\label{il5eq35}
\end{equation}
When $k=0$ this holds with an extra term
$\oM^\ma_1(\al,\be,J_t:t\in\pd{\cal T})$ on the right hand side, as
in~\eq{il5eq32}.

Also, if\/ $f:\De_a\ra{\cal T}\t(L\amalg R)$ is smooth, in oriented
Kuranishi spaces we have
\begin{gather}
\De_a\t_{f,{\cal T}\t(L\amalg R),\bs\pi_{\cal
T}\t\bev_i}\oM_{k_1+1}^\ma (\al_1,\be_1,J_t:t\!\in\!{\cal
T},f_1,\ldots,f_{i-1};f_{i+k_2},\ldots,f_k)
\label{il5eq36}\\
=\!(-\!1)^{(\deg f\!+\!1)\bigl(1\!+\!\sum_{j=1}^{i-1}\deg f_j\bigr)}
\oM_{k_1+1}^\ma(\al_1,\be_1,J_t:t\!\in\!{\cal T},
f_1,\ldots,f_{i-1},f,f_{i+k_2},\ldots,f_k). \nonumber
\end{gather}
\label{il5thm8}
\end{thm}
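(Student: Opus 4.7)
The plan is to follow closely the strategy used for Theorem \ref{il5thm6}, but adapted to account for the family parameter $t\in{\cal T}$ and the auxiliary factor $(\R^m,\ka_k^m)$ that appears in the definition \eq{il4eq24}. The starting point is the unoriented boundary decomposition \eq{il4eq29} for $k>0$, together with \eq{il4eq21} supplying the additional $\oM^\ma_1(\al,\be,J_t:t\in\pd{\cal T})$ term when $k=0$. We must then promote this to an isomorphism of \emph{oriented} Kuranishi spaces using the orientation conventions of \S\ref{il24}, Proposition \ref{il2prop1}, and the definitions in \S\ref{il55}.

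For \eq{il5eq35}, first I would substitute \eq{il5eq33} three times: once for $\oM^\ma_{k+1}(\al,\be,J_t:t\in{\cal T},f_1,\ldots,f_k)$ on the left hand side, and once each for the two factors $\oM^\ma_{k_1+1}(\al_1,\be_1,J_t:t\in{\cal T},\ldots)$ and $\oM^\ma_{k_2+1}(\al_2,\be_2,J_t:t\in{\cal T},\ldots)$ on the right. Then I would apply Theorem \ref{il5thm7} to replace $\pd\oM^\ma_{k+1}(\al,\be,J_t:t\in{\cal T})$ by the oriented fibre product over ${\cal T}\t(L\amalg R)$. After that, the calculation reduces to a bookkeeping exercise: using Proposition \ref{il2prop1}(b),(c) to reorder the factors $(\R^m,\ka_k^m)$, the $\De_{a_j}$, and the $\oM$-terms into the shape required by \eq{il5eq35}, then collecting the resulting sign contributions from \eq{il5eq34} (for each of the three substitutions) together with $\ze_4$ from Theorem \ref{il5thm7} and the swap signs produced by Proposition \ref{il2prop1}. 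The crucial observation that makes the final expression depend only on $n,\dim{\cal T},i,j,k_2$ and the shifted degrees $\deg f_l$ is that the $\eta$-dependent terms in $\ze_4$ and the three copies of \eq{il5eq34} cancel in exactly the same pattern as in Theorem \ref{il5thm6}; only $\dim{\cal T}$ survives as an additional constant shift in the second family of signs.

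For \eq{il5eq36}, the argument is analogous but shorter: I would substitute \eq{il5eq33} into both sides (the appropriate version for the semi-colon moduli on the right, chosen so as to force \eq{il5eq36}), and apply Proposition \ref{il2prop1}(b),(c) to identify the fibre product
\begin{equation*}
\De_a\t_{f,{\cal T}\t(L\amalg R),\bs\pi_i\t\bev_i}\bigl((\R^m,\ka^m_{k_1})\t_{\bs\pi_0,\R^m,\bs\pi_{\cal T}}\oM^\ma_{k_1+1}(\al_1,\be_1,J_t:t\in{\cal T})\bigr)
\end{equation*}
with the corresponding expression on the right hand side of \eq{il5eq36}. The sign $(-1)^{(\deg f+1)(1+\sum_{j=1}^{i-1}\deg f_j)}$ is then dictated by the need to commute $\De_a$ past the $\De_{a_1}\t\cdots\t\De_{a_{i-1}}$ factors via the second line of \eq{il2eq5}, and we choose the sign hidden in the definition of orientations on $\oM^\ma_{k_1+1}(\al_1,\be_1,J_t:t\in{\cal T},f_1,\ldots,f_{i-1};f_{i+k_2},\ldots,f_k)$ precisely so that this identity holds on the nose.

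The main obstacle will be the handling of the auxiliary Kuranishi spaces $(\R^m,\ka_k^m)$. Specifically, one needs the oriented isomorphism $(\R^m,\ka^m_k)\cong(\R^m,\ka^m_{k_2})\t_{\bs\pi_0,\R^m,\bs\pi_i}(\R^m,\ka^m_{k_1})$ mentioned after \eq{il4eq29}, together with its compatibility with the projections $\bs\pi_j$ and with the embedding ${\cal T}\hookra\R^m$. I would establish this by an explicit computation with the defining sections $s^m_k$ and a linear-algebra identification of the tangent spaces, checking that the orientation convention of \S\ref{il24} produces the identity sign when one decomposes $(\R^m)^{k+1}\cong(\R^m)^{k_2+1}\t_{\R^m}(\R^m)^{k_1+1}$ by grouping the first $i$ and last $k_1-i+1$ copies. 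Once this lemma is in place the rest of the proof is, as in Remark \ref{il5rem2}(a) and Theorem \ref{il5thm6}, a systematic but unremarkable sign chase in which $n$ is replaced throughout by $\dim{\cal T}+n$, as previewed after \eq{il4eq27}.
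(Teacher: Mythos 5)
Your proposal is correct and follows essentially the same route as the paper: the paper's own proof of \eq{il5eq35} is precisely the sign chase combining \eq{il5eq32}--\eq{il5eq34} with Proposition \ref{il2prop1} (with $n$ replaced by $\dim{\cal T}+n$ in the relevant signs), while \eq{il5eq36} holds because, exactly as you say, the sign in the orientation of $\oM_{k_1+1}^\ma(\al_1,\be_1,J_t:t\in{\cal T},f_1,\ldots,f_{i-1};f_{i+k_2},\ldots,f_k)$ is chosen in Definition \ref{il5dfn6} to make it hold. Your extra attention to the oriented identification $(\R^m,\ka^m_k)\cong(\R^m,\ka^m_{k_2})\t_{\bs\pi_0,\R^m,\bs\pi_i}(\R^m,\ka^m_{k_1})$ is a detail the paper leaves implicit, but it does not change the approach.
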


\section{Perturbation data and virtual chains}
\label{il6}

We shall now choose {\it perturbation data\/}
$\s_{\be,J,f_1,\ldots,f_k}$ for families of moduli spaces
$\oM_{k+1}^\ma(\be,J,f_1,\ldots,f_k)$ in \eq{il4eq19}, as in
\S\ref{il27}, which are compatible at the boundaries with choices
made for the boundary strata appearing in \eq{il4eq18}. Technically
$\oM_{k+1}^\ma(\be,J,f_1,\ldots,f_k)$ may not be a Kuranishi space,
as the components $\oM_{k+1}^\ma(\al,\ab\be,\ab J,f_1,\ldots,f_k)$
in \eq{il4eq19} may have different virtual dimensions. By
perturbation data for $\oM_{k+1}^\ma(\be,J,f_1,\ldots,f_k)$ we mean
perturbation data for $\oM_{k+1}^\ma(\al,\be,J,f_1,\ldots,f_k)$ for
all $I,\al$ in \eq{il4eq19}, in the obvious way.

Our goal is to define $A_{N,0}$ algebras and gapped filtered $A_\iy$
algebras, which are filtered using $\cal G\subset[0,\iy)\t\Z$. It is
convenient to introduce $\cal G$ at this point. Choose ${\cal
G}\subset[0,\iy)\t\Z$ to satisfy the conditions:
\begin{itemize}
\setlength{\itemsep}{0pt}
\setlength{\parsep}{0pt}
\item[(i)] $\cal G$ is closed under addition with ${\cal
G}\cap(\{0\}\t\Z)=\{(0,0)\}$, and ${\cal G}\cap([0,C]\t\Z)$ is
finite for any $C\ge 0$; and
\item[(ii)] If $\be\in H_2(M,\io(L);\Z)$, and
$\oM_1^\ma(\be,J)\ne\emptyset$ then $\bigl([\om]\cdot\be,
\ha\mu_L(\be)\bigr)\in{\cal G}$.
\end{itemize}
Here (i) is as in \S\ref{il35} and \S\ref{il37}. If we define ${\cal
G}_J$ to be the smallest subset of $[0,\iy)\t\Z$ containing
$\bigl([\om]\cdot\be,\ha\mu_L(\be)\bigr)$ for all $\be\in
H_2(M,\io(L);\Z)$ with $\oM_1^\ma(\be,J)\ne\emptyset$ and closed
under addition, then ${\cal G}_J\cap(\{0\}\t\Z)=\{(0,0)\}$ is
immediate as $[\om]\cdot\be=0$, $\oM_1^\ma(\be,J)\ne\emptyset$ imply
$\be=0$, and ${\cal G}_J\cap([0,C]\t\Z)$ finite for any $C\ge 0$
follows from compactness for the family of stable $J$-holomorphic
curves with area~$\le C$.

Thus there exists at least one subset $\cal G$ satisfying (i),(ii).
However, we do not want to fix ${\cal G}={\cal G}_J$, since in
\S\ref{il8}--\S\ref{il10} we will vary the complex structure $J$,
and we will want $\cal G$ to be independent of $J$. So for the
moment we take $\cal G$ satisfying (i),(ii) to be given. If $\be\in
H_2(M,\io(L);\Z)$ and $\oM_{k+1}^\ma(\be,J) \ne\emptyset$ for any
$k\ge 0$ then $\bigl([\om]\cdot\be, \ha\mu_L(\be)\bigr)\in{\cal G}$.
Write $\nm{\be}=\bnm{([\om]\cdot\be,\ha\mu_L(\be))}$, using the
notation of \eq{il3eq16}. Then $\nm{\be}\ge 0$, and if
$\be=\be_1+\be_2$ for $\be_1,\be_2\in H_2(M,\io(L);\Z)$ with
$\oM_{k_1+1}^\ma(\be_1,J),\ab\oM_{k_2+1}^\ma(\be_2,J)\ne\emptyset$
then $\nm{\be}\ge\nm{\be_1}+\nm{\be_2}$. With this notation we
prove:

\begin{thm} For a given $N\in\N,$ there are
$\X_0\subset\cdots\subset\X_N$ and\/ $\{\s_{\be,J,f_1,\ldots,f_k}\}$
which satisfy the following conditions:
\begin{itemize}
\item[$(N1)$] $\X_0,\ldots,\X_N$ are finite sets of smooth
simplicial chains\/ $f:\De_a\ra L\amalg R$ such that
\begin{itemize}
\item[{\rm(a)}] if\/ $f:\De_a\ra L\amalg R$ lies in\/ $\X_i$ and\/
$a>0$ then\/ $f\ci F_j^a:\De_{a-1}\ra L\amalg R$ lies in\/ $\X_i$
for all\/ $j=0,\ldots,a$, using the notation of\/ {\rm\S\ref{il26};}
and
\item[{\rm(b)}] part\/ {\rm(a)} implies that\/ $\Q\X_i$ is
closed under\/ $\pd,$ and a subcomplex of the singular chains\/
$C_*^\rsi(L\amalg R;\Q)$. We require that the natural projection\/
$H_*(\Q\X_i,\pd)\ra H_*^\rsi(L\amalg R;\Q)$ should be an
isomorphism.
\end{itemize}
\item[$(N2)$] For all\/ $k\ge 0$, $f_1\in\X_{i_1},\ldots,f_k\in
\X_{i_k}$ and $\be\in H_2(M,\io(L);\Z)$ with
$i_1+\cdots+i_k+\nm{\be}+k-1\le N$ and
$\oM_{k+1}^\ma(\be,J,f_1,\ldots,f_k)\ne\emptyset,$
$\s_{\be,J,f_1,\ldots,f_k}$ is perturbation data for
$\bigl(\oM_{k+1}^\ma(\be,J,f_1,\ldots,f_k),\bev\bigr)$ in the sense
of\/ {\rm\S\ref{il27},} and all the simplices of\/ $VC\bigl(
\oM_{k+1}^\ma(\be,J,f_1,\ldots,f_k),\bev,\s_{\be,J,f_1,\ldots,f_k}
\bigr)$ lie in\/ $\X_{i_1+\cdots+i_k+\nm{\be}+k-1}$. At the
boundary\/ $\pd\oM_{k+1}^\ma(\be,J,f_1,\ldots,f_k),$ given by the
union of\/ \eq{il4eq18} over all\/ $I,\al,$ this
$\s_{\be,J,f_1,\ldots,f_k}$ must be compatible with:
\begin{itemize}
\item[{\rm(i)}] the choices of\/ $\s_{\be,J,f_1,\ldots,f_{i-1},f_i\ci
F_j^{a_i},f_{i+1},\ldots,f_k}$ for the term\/
$\oM_{k+1}^\ma(\al,\ab\be,\ab J,\ab f_1,\ldots,f_{i-1},f_i\ci
F_j^{a_i}, f_{i+1},\ldots,f_k)$ in\/~{\rm\eq{il4eq18};}
\item[{\rm(ii)}] the choices of\/ $\s_{\be_2,J,f_i,\ldots,
f_{i+k_2-1}}$ for the term $\oM_{k_2+1}^\ma (\al_2,\be_2,J,f_i,\ab
\ldots,\ab f_{i+k_2-1})$ in {\rm\eq{il4eq18};} and
\item[{\rm(iii)}] for each\/ $g:\De_a\ra L\amalg R$ appearing in $VC
\bigl(\oM_{k_2+1}^\ma(\be_2,J,f_i,\ab\ldots,\ab
f_{i\!+\!k_2\!-\!1}),\ab\bev,\s_{\be_2,J,f_i,\ldots,
f_{i+k_2-1}}\bigr),$ the choices of\/
$\s_{\be_1,J,f_1,\ldots,f_{i-1},g,f_{i+k_2},\ldots,f_k}$ for the
term\/ $\oM_{k_1\!+\!1}^\ma(\al_1,\be_1,J,f_1,\ldots,f_{i\!-\!1};
f_{i\!+\!k_2},\ldots,f_k)$ in\/ \eq{il4eq18} combined with\/ $VC
\bigl(\oM_{k_2+1}^\ma(\be_2,J,f_i,\ldots,f_{i+k_2-1}),\bev,
\s_{\be_2,J,f_i,\ldots,f_{i+k_2-1}}\bigr)$.
\end{itemize}

This boundary compatibility implies that, for $f_1:\De_{a_1}\ra
L\amalg R$ in $\X_{i_1},\ldots,f_k:\De_{a_k}\ra L\amalg R$ in
$\X_{i_k}$ as above, we have
\begin{gather}
\pd VC\bigl(\oM_{k+1}^\ma(\be,J,f_1,\ldots,f_k),\bev,\s_{\be,J,f_1,
\ldots,f_k}\bigr)=
\label{il6eq1} \\
\sum_{i=1}^k\sum_{j=0}^{a_i}\,\,
\begin{aligned}[t]
(-1)^{j+1+\sum_{l=1}^{i-1}\deg f_l}
VC\bigl(&\oM_{k+1}^\ma(\be,J,f_1,\ldots,f_{i-1}, f_i\ci
F_j^{a_i},f_{i+1},\\
&\ldots,f_k),\bev,\s_{\be,J,f_1,\ldots, f_{i-1},f_i\ci
F_j^{a_i},f_{i+1},\ldots,f_k}\bigr)
\end{aligned}
\nonumber\\
+\!\!\!\!\sum_{\begin{subarray}{l}
k_1+k_2=k+1,\\
1\le i\le k_1,\\
\be_1+\be_2=\be\end{subarray}}
\begin{aligned}[t]
&(-1)^{n+1+\sum_{l=1}^{i-1}\deg f_l}
VC\bigl(\oM_{k_1+1}^\ma(\be_1,J,f_1,\ldots,f_{i-1},\\
&VC(\oM_{k_2+1}^\ma(\be_2,J,f_i,\ldots, f_{i+k_2-1}),\bev,
\s_{\be_2,J,f_i,\ldots,f_{i+k_2-1}}), f_{i+k_2},\\
&{}\qquad\ldots,f_k),\bev,\s_{\substack{
\be_1,J,f_1,\ldots,f_{i-1},VC(\oM_{k_2+1}^\ma(\be_2,J,f_i,\ldots,
f_{i+k_2-1}),\\
\bev,\s_{\be_2,J,f_i,\ldots,f_{i+k_2-1}}),
f_{i+k_2},\ldots,f_k}}\,\bigr).
\end{aligned}
\nonumber
\end{gather}

Here if we have\/ $VC(\oM_{k_2+1}^\ma(\be_2,J,f_i,\ldots,
f_{i+k_2-1}),\bev,
\s_{\be_2,J,f_i,\ldots,f_{i+k_2-1}})\ab=\ab\sum_{a\in A}\si_a\,g_a$
for $\si_a\in\Q$ and\/ $g_a$ in
$\X_{i_i+\cdots+i_{i+k_2-1}+\nm{\be_2}+k_2-1},$ the final term
$VC(\oM_{k_1+1}^\ma(\ldots,VC(\ldots),\ldots),
\bev,\s_{\ldots,VC(\ldots),\ldots})$ in \eq{il6eq1} is short for
\begin{equation}
\sum_{a\in A}\begin{aligned}[t]\si_a\,
VC\bigl(&\oM_{k_1+1}^\ma(\be_1,J,f_1,\ldots,f_{i-1},g_a,f_{i+k_2},
\ldots,f_k),\\
&\bev,\s_{\be_1,J,f_1,\ldots,f_{i-1},g_a,f_{i+k_2},\ldots,f_k}\bigr).
\end{aligned}
\label{il6eq2}
\end{equation}
\end{itemize}
\label{il6thm}
\end{thm}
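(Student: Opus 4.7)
The plan is to prove Theorem \ref{il6thm} by strong induction on $N$. At each stage I would construct both the finite face-closed chain complex $\X_N\supset\X_{N-1}$ and the perturbation data $\s_{\be,J,f_1,\ldots,f_k}$ for all tuples of complexity $c:=i_1+\cdots+i_k+\nm{\be}+k-1$ equal to $N$. Within a single stage, the perturbation data for the various complexity-$N$ tuples will be chosen via a two-tier inner induction: outer on the lexicographic order on $(\nm{\be},k)$, and inner on the total simplex dimension $a_1+\cdots+a_k$.

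For the base case $N=0$, I would start by applying Proposition \ref{il2prop3} to produce $\X_0$: a finite face-closed set of smooth simplices $f:\De_a\ra L\amalg R$ inducing an isomorphism $H_*(\Q\X_0,\pd)\cong H_*^\rsi(L\amalg R;\Q)$. The tuples of complexity $\le 0$ are very restricted, forcing $k=0$ with $\nm{\be}\le 1$ or $k=1$, $\be=0$, $i_1=0$; the corresponding moduli spaces have no genuine splittings in \eq{il4eq18} because any potential splitting would force one side to violate disc stability. Thus the relevant perturbation data can be chosen directly as in \S\ref{il27}, and $\X_0$ is then enlarged (again via Proposition \ref{il2prop3}) to contain the simplices of the resulting virtual chains.

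For the inductive step, assume the theorem through complexity $N-1$ and consider the complexity-$N$ tuples in the lex order on $(\nm{\be},k)$, with an innermost induction on total simplex dimension to handle face compatibility. The decisive claim is that every genuine boundary stratum in \eq{il4eq18} sits strictly earlier in one of these orderings: face strata $\oM^\ma_{k+1}(\al,\be,J,\ldots,f_i\ci F_j^{a_i},\ldots)$ share $(\nm{\be},k)$ but have total simplex dimension lower by one, and for a genuine splitting $\be_1+\be_2=\be,$ $k_1+k_2=k+1$ a short case analysis shows both $(\nm{\be_j},k_j)$ are strictly lex-smaller than $(\nm{\be},k)$. Indeed, if $\be_j=0$ then stability forces $k_j\ge 2$ (so $k_{3-j}<k$), while if $\be_j\ne 0$ then super-additivity $\nm{\be}\ge\nm{\be_1}+\nm{\be_2}$ forces $\nm{\be_{3-j}}<\nm{\be}$. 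Because the chain $g$ inserted into the top piece by compatibility (iii) lies in $\X_{c_2}$ with $c_2<c$, every boundary prescription is determined by already-chosen perturbation data; I would then extend these boundary data transversely to the interior via the standard multisection extension of Fukaya--Ono (\S\ref{il27}) and enlarge $\X_{N-1}$ to $\X_N$ by adjoining the new virtual-chain simplices (face-closed) and appealing once more to Proposition \ref{il2prop3} to preserve the homology isomorphism.

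The main obstacle will be verifying consistency of the prescribed boundary perturbation data at codimension-two corners of $\oM^\ma_{k+1}(\be,J,f_1,\ldots,f_k),$ where two boundary strata of \eq{il4eq18} meet. Such a corner corresponds either to a double splitting into three fibre-product pieces or to a splitting-plus-face combination, and one must check that the two iterated restrictions agree. This follows from the associativity and symmetry of the fibre-product description \eq{il4eq18} under iteration together with the associativity of the virtual chain construction, exactly as in the embedded case of Fukaya et al.\ \cite[\S 30]{FOOO}; the only new ingredient is the bookkeeping for the immersed data $I,\al,$ which is absorbed into the sign computations of \S\ref{il5}. Once this consistency is established, the boundary identity \eq{il6eq1} follows immediately from the general formula $\pd VC(X,\bs g,\s_X)=VC(\pd X,\bs g\vert_{\pd X},\s_X\vert_{\pd X})$ of Remark \ref{il2rem}(c), applied to $X=\oM^\ma_{k+1}(\be,J,f_1,\ldots,f_k)$ with $\bs g=\bev,$ and substituted into the oriented boundary decomposition \eq{il5eq25}.
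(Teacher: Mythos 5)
Your overall strategy coincides with the paper's: a nested induction in which the outer stage builds $\X_0\subset\cdots\subset\X_N$ and an inner induction orders the complexity-$N$ tuples so that every boundary prescription coming from (i)--(iii) refers only to choices already made. Your inner order (lexicographic on $(\nm{\be},k)$, then total simplex dimension) differs from the paper's (first $\nm{\be}+k$, then $\nm{\be}$, then $a_1+\cdots+a_k$), but your case analysis --- stability forcing $k_j\ge 2$ when $\be_j=0$, super-additivity of $\nm{\,\cdot\,}$ when $\be_j\ne 0$ --- does respect the same dependency structure, including the delicate case $k_2=0$, $\be_2\ne 0$ where the top stratum has $k_1=k+1>k$ marked points; and your treatment of codimension-two corners and of enlarging $\X$ via Proposition \ref{il2prop3} matches the paper.

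The genuine gap is that you never address the $C^0$-smallness constraint on the multisections, which the paper treats as the central difficulty of this theorem and the reason it can only be proved for finite $N$ (Remark \ref{il6rem}). Perturbation data produces a virtual chain only if each multisection $\s^i$ is ``sufficiently close'' to $s^i$ in $C^0$ (so that the perturbed zero sets remain compact), and conditions (i)--(iii) prescribe $\s_{\be,J,f_1,\ldots,f_k}$ along the boundary in terms of earlier choices. If those earlier perturbations were not chosen small enough, the prescribed boundary values need not be close enough to $s^i$ for your ``standard multisection extension'' step to go through, and the induction stalls. The fix must be built into the induction: each $\s_{\be',J,f'_1,\ldots,f'_{k'}}$ has to be chosen small enough not only for its own virtual chain but for all the finitely many later steps (finitely many precisely because $N$ is fixed) in which it appears as boundary data. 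A smaller point: the sign in \eq{il6eq1} does not follow ``immediately'' from $\pd VC(X,\bs g,\s_X)=VC(\pd X,\bs g\vert_{\pd X},\s_X\vert_{\pd X})$ and \eq{il5eq25} alone, because no perturbation data is chosen for $\oM_{k_1+1}^\ma(\al_1,\be_1,J,f_1,\ldots,f_{i-1};f_{i+k_2},\ldots,f_k)$; one must first take the virtual chain of the $k_2$-factor, insert its simplices $g_a$ using \eq{il5eq26}, and combine the two signs via $\deg g_a=1-\mu_L(\be_2)+\sum_{l=i}^{i+k_2-1}\deg f_l$, which is exactly how the factor $(-1)^{n+1+\sum_{l=1}^{i-1}\deg f_l}$ in \eq{il6eq1} arises.
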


\begin{proof} Our proof is based on Fukaya et al.\
\cite[\S 30.5]{FOOO}. It involves a quadruple induction, an outer
induction over $g=0,\ldots,N$ in which we choose $\X_0,\ldots,\X_N$,
and an inner triple induction over $(j,k,l)$ during the construction
of~$\X_{g+1}$.

For the first step $g=0$ of the outer induction, let
$(\nm{\be},k)=(1,0)$. Since $\oM_1^\ma(\be,J)$ has no boundary,
(i)--(iii) are trivial. Choose arbitrary (but `small', in a sense
discussed below) perturbation data $\s_{\be,J}$ for
$\bigl(\oM_1^\ma(\be,J),\bev\bigr)$ for all $\be\in
H_2(M,\io(L);\Z)$ with $\nm{\be}=1$ and
$\oM_1^\ma(\be,J)\ne\emptyset$. There are only finitely many such
$\be$, and we can choose such $\s_{\be,J}$ as in \S\ref{il27}. The
virtual cycles $VC\bigl(\oM_1^\ma(\be,\ab
J),\ab\bev,\s_{\be,J}\bigr)$ for all such $\be$ involve only
finitely many simplices $f:\De_a\ra L\amalg R$. We must choose
$\X_0$ to contain all these simplices, and to satisfy (a),(b) in
$(N1)$ above. This is possible by Proposition~\ref{il2prop3}.

For the inductive step, suppose that we have chosen
$\X_0\subset\cdots\subset\X_g$ and $\{\s_{\be,J,f_1,\ldots,f_k}\}$,
which satisfy $(N1)$ and $(N2)$ with $N=g$. We shall construct
$\X_{g+1}$ and further choices of $\s_{\be,J,f_1,\ldots,f_k}$
satisfying $(N1)$ and $(N2)$ with $N=g+1$. These choices are not
independent of each other, but have to be made in a certain order.
Consider triples of integers $(j,k,l)$ such that $j\ge 0$, $k\ge 1$,
$(j,k)\ab\ne\ab(0,1)$, $j+k\ab\le\ab g+2$ and $0\le l\le nk$, where
$n=\dim L$. There are only finitely many such triples.

Define a total order $\le$ on such triples $(j,k,l)$ by
$(j_1,k_1,l_1)\le (j_2,k_2,l_2)$ if either
\begin{itemize}
\setlength{\itemsep}{0pt}
\setlength{\parsep}{0pt}
\item[$(*_1)$] $j_1+k_1<j_2+k_2$; or
\item[$(*_2)$] $j_1+k_1=j_2+k_2$ and $j_1<j_2$; or
\item[$(*_3)$] $j_1+k_1=j_2+k_2$ and $j_1=j_2$ and $l_1\le l_2$.
\end{itemize}
In a triple induction on $(j,k,l)$, at step $(j,k,l)$ we consider
all possible choices of $\be\in H_2(M,\io(L);\Z)$ with $\nm{\be}=j$
and $i_1,\ldots,i_k\ge 0$ with $i_1+\cdots+i_k+j+k-1=g+1$ and
$f_1\in\X_{i_1},f_2\in\X_{i_2},\ldots,f_k\in\X_{i_k}$ with
$f_i:\De_{a_i}\ra L\amalg R$, where $a_1+\cdots+a_k=l$, and such
that $\M^\ma_{k+1}(\be,J,f_1,\ldots,f_k)\ne\emptyset$. There are
only finitely many possibilities for such
$\be,i_1,\ldots,i_k,f_1,\ldots,f_k$. We will choose perturbation
data $\s_{\be,J,f_1,\ldots,f_k}$ on such choices in the order $\le$
on triples~$(j,k,l)$.

The important thing about this way of organizing our choices is that
for given $\be,i_1,\ldots,i_k,f_1,\ldots,f_k$ at step $(j,k,l)$, the
compatibilities (i)--(iii) on $\s_{\be,J,f_1,\ldots,f_k}$ in $(N2)$
depend only on $\s_{\be',J,f'_1,\ldots,f'_{k'}}$ which were either
chosen with $\X_{g'}$ for $g'\le g$, or were chosen during this step
$g+1$, but for some $(j',k',l')$ with $(j',k',l')<(j,k,l)$. So the
boundary conditions on $\s_{\be,J,f_1,\ldots,f_k}$ always depend on
choices we have already made, not on choices we have yet to make.

To see this, note that at step $(g,j,k,l)$, part (i) involves
choices made at step $(g,j,k,l-1)$, part (ii) choices at step
$(g',j',k',l')$ for $g'\le g$, $j'\le j$, $k'\le k$ and $l'$
arbitrary, but with either $j'<j$ (if $\be_1\ne 0)$ or $k'<k$ (if
$\be_1=0$), and part (iii) choices at step $(g',j',k',l')$ for
$g'\le g$, $j'\le j$, $k'\le k+1$, and $l'$ arbitrary, but with
either $j'<j$ (if $\be_2\ne 0)$ or $k'<k$ (if $\be_2=0$); this
allows $(j',k')=(j-1,k+1)$. Here we use the fact that
$\M^\ma_{k+1}(0,J,f_1,\ldots,f_k)=\emptyset$ unless $k\ge 2$. In
each case $(j',k',l')<(j,k,l)$ by $(*_1)$--$(*_3)$ above.

So, at step $(j,k,l)$ we must choose perturbation data
$\s_{\be,J,f_1, \ldots,f_k}$ for $\bigl(\oM_{k+1}^\ma(\be,\ab J,\ab
f_1,\ab\ldots,f_k),\bev \bigr)$ for the finitely many possibilities
for $\be,i_1,\ldots,i_k,f_1,\ldots,f_k$ above, satisfying the
compatibilities (i)--(iii) above with previous choices, which should
be `small' in the sense discussed below. Essentially, (i)--(iii)
prescribe $\s_{\be,J,f_1,\ldots,f_k}$ over
$\pd\oM_{k+1}^\ma(\be,J,f_1,\ldots,f_k)$, and we have to extend
these prescribed values over the interior
of~$\oM_{k+1}^\ma(\be,J,f_1,\ldots,f_k)$.

Note that because of the definition of boundaries of Kuranishi
spaces in \S\ref{il22}, regarded as subspaces of
$\oM_{k+1}^\ma(\be,J,f_1,\ldots,f_k)$, the disjoint components of
\eq{il4eq16} do actually intersect in
$\oM_{k+1}^\ma(\be,J,f_1,\ldots,f_k)$, in the codimension 2 corners
of $\oM_{k+1}^\ma(\be,J,f_1,\ldots,f_k)$ which lift to
$\pd^2\oM_{k+1}^\ma(\be,J,f_1,\ldots,f_k)$. But by induction we find
that (i)--(iii) prescribe consistent values for
$\s_{\be,J,f_1,\ldots,f_k}$ on these codimension 2 corners, since
the boundary values $\s_{\be,J,f_1,\ldots,f_{i-1},f_i\ci F_j^{a_i},
f_{i+1},\ldots,f_k}$, $\s_{\be_2,J,f_i,\ldots, f_{i+k_2-1}}$,
$\s_{\be_1,J,f_1,\ldots,f_{i-1},f,f_{i+k_2},\ldots,f_k}$ appearing
in (i)--(iii) themselves satisfy (i)--(iii).

Therefore, the discussion of \S\ref{il27} shows that we can choose
perturbation data $\s_{\be,J,f_1,\ldots,f_k}$ satisfying boundary
compatibilities (i)--(iii), but with one caveat. In Definition
\ref{il2dfn11} and Remark \ref{il2rem}(a) we said that a set of
perturbation data $\s_X$ for a Kuranishi space involves a finite
cover of $X$ by Kuranishi neighbourhoods $(V^i,E^i,\Ga^i,s^i,
\psi^i)$ and smooth, transverse multisections $\s^i$ on
$(V^i,\ldots,\psi^i)$ such that {\it each\/ $\s^i$ is sufficiently
close to $s^i$ in\/} $C^0$. Here the definition of `sufficiently
close' is rather vague; it has to do with ensuring that the
perturbed Kuranishi spaces remain compact.

Now it is conceivable that conditions (i)--(iii) on
$\s_{\be,J,f_1,\ldots,f_k}$ might be incompatible with this
requirement that the multisections $\s^i$ be `sufficiently close' to
$s^i$ in $C^0$. That is, in effect (i)--(iii) prescribe $\s^i$ over
$\pd V^i$, and if these prescribed values are not `sufficiently
close' to $s^i\vert_{\pd V^i}$ in $C^0$, then we cannot choose
$\s^i$ on $V^i$ `sufficiently close' to $s^i$ in $C^0$ with these
values on $s^i$. In this case, we could not choose
$\s_{\be,J,f_1,\ldots,f_k}$ satisfying all the necessary conditions.

A version of this problem is described in \cite[\S 30.3]{FOOO}. The
solution adopted by Fukaya et al.\ \cite[\S 30.2--\S 30.3]{FOOO},
which we follow, is that at every step we should choose the
perturbations $\s_{\be,J,f_1,\ldots,f_k}$ to be `small', by which we
mean that the $\s^i$ should be sufficiently close to $s^i$ in $C^0$
that not only does the construction of $VC\bigl(
\oM_{k+1}^\ma(\be,J,f_1,\ldots,f_k),\bev,\s_{\be,J,f_1,\ldots,f_k}
\bigr)$ work, as in Definition \ref{il2dfn11}, but also, for all the
conditions (i)--(iii) involving $\s_{\be,J,f_1,\ldots,f_k}$ at later
inductive steps in the proof, the prescribed values for $\s^i$ on
$\pd V^i$ should be sufficiently close to $s^i\vert_{V^i}$ that the
later constructions of $VC(\cdots)$ also work. We will discuss this
in Remark~\ref{il6rem}.

Thus following this method, at each step $(j,k,l)$ in our triple
induction, we can choose perturbation data
$\s_{\be,J,f_1,\ldots,f_k}$ satisfying (i)--(iii) for all the
finitely many choices of $\be,f_1,\ldots,f_k$ required. This
completes the inner induction on $(j,k,l)$. To finish the outer
inductive step on $g$, it remains to choose $\X_{g+1}$. The
conditions on $\X_{g+1}$ are that it should contain $\X_g$, and that
it should contain the simplices of $VC\bigl(\oM_{k+1}^\ma
(\be,J,f_1,\ldots,f_k), \ab \bev,\s_{\be,J,f_1,\ldots,f_k}\bigr)$
for all the finitely many $\be,f_1,\ldots,f_k$ we have just
considered over all $(j,k,l)$, which is a finite set of simplices,
and that it should satisfy (a),(b). This is possible by Proposition
\ref{il2prop3}. So we can choose $\X_{g+1}$ satisfying all the
conditions. This completes the inductive step for~$g=0,\ldots,N$.

We have now constructed $\X_0\subset\cdots\subset\X_N$ and
$\{\s_{\be,J,f_1,\ldots,f_k}\}$ satisfying $(N1)$ and $(N2)$. It
remains only to prove \eq{il6eq1}. Essentially, this is equation
\eq{il5eq25}, summed over all $\al$, perturbed using the
$\s_{\be,J,f_1,\ldots,f_k}$, and regarded as an equation in virtual
cycles in $C_*^\rsi(L\amalg R;\Q)$ rather than in oriented Kuranishi
spaces. However, since we have not chosen perturbation data for
$\oM_{k_1+1}^\ma(\al_1,\be_1,J,f_1,\ldots,
f_{i-1};f_{i+k_2},\ldots,f_k)$, we have to treat the final term of
\eq{il5eq25} differently. We perturb
$\oM_{k_2+1}^\ma(\al_2,\ab\be_2,J,\ab f_i,\ab\ldots,\ab
f_{i+k_2-1})$ in the fifth line of \eq{il5eq25}, summed over all
$\al_2$, using $\s_{\be_2,J,f_i,\ldots,f_{i+k_2-1}}$, so that it
becomes a virtual cycle
\begin{equation}
VC(\oM_{k_2+1}^\ma(\be_2,J,f_i,\ldots,
f_{i+k_2-1}),\bev,\s_{\be_2,J,f_i,\ldots,f_{i+k_2-1}})=\ts\sum_{a\in
A}\si_a\,g_a.
\label{il6eq3}
\end{equation}

Then in the fibre product in the fifth and sixth lines of
\eq{il5eq25}, we substitute each $g_a$ which is part of the
perturbed $\oM_{k_2+1}^\ma(\al_2,\ab\be_2,J,\ab f_i,\ab\ldots,\ab
f_{i+k_2-1})$ into $\oM_{k_1+1}^\ma(\al_1,\ab\be_1,\ab J,\ab
f_1,\ldots, f_{i-1};f_{i+k_2},\ldots,f_k)$. This gives
$(-1)^{\cdots}\oM_{k_1+1}^\ma(\al_1,\be_1,J,f_1,\ldots, f_{i-1},\ab
g_a,\ab f_{i+k_2},\ldots,f_k)$ by \eq{il5eq26}, and we perturb this
using $\s_{\be_1,J,f_1,\ldots,f_{i-1},g_a,f_{i+k_2},\ldots,f_k}$,
and take its virtual cycle. Considering (i)--(iii) above, we see
that modifying \eq{il5eq25} in this way to give an equation in
virtual cycles is valid, because it corresponds exactly to the
conditions (i)--(iii) on $\s_{\be,J,f_1, \ldots,f_k}$, which equate
to boundary conditions on $VC\bigl(\oM_{k+1}^\ma(\be,J,f_1,\ldots,
f_k),\bev,\s_{\be,J,f_1,\ldots,f_k}\bigr)$.

Thus \eq{il6eq1} follows from \eq{il5eq25} summed over all $\al$ and
\eq{il5eq26}, except for the signs in \eq{il6eq1}, which we have not
yet computed. The sign on the second line of \eq{il6eq1} is the sign
on the second line of \eq{il5eq25}. The sign on the second line of
\eq{il6eq1} is the combination of the sign on the fourth line on
\eq{il5eq25}, and the sign in \eq{il5eq26} when we substitute $g_a$
into $\oM_{k_1+1}^\ma(\al_1,\be_1,J,f_1,\ldots,f_{i-1};f_{i+k_2},
\ldots,f_k)$. To calculate this, we need to know $\deg g_a$ for the
$g_a$ in \eq{il6eq3}. We have $g_a:\De_b\ra L\amalg R$, where
$b=\vdim\oM_{k_2+1}^\ma(\be_2,J,f_i,\ldots,f_{i+k_2-1})$, which is
given by \eq{il4eq14}. Then $\deg g_a$ is given in terms of $b$ by
\eq{il4eq13}. Both equations are divided into cases $0\notin I_2$
and $0\in I_2$, and involve $\eta_{\al_2(0)}$ if $0\in I_2$. But
combining them, these contributions cancel out, so in every case we
have
\begin{equation}
\deg g_a=1-\mu_L(\be_2)+\ts\sum_{l=i}^{i+k_2-1}\deg f_l.
\label{il6eq4}
\end{equation}

Therefore the overall sign in the fourth line of \eq{il6eq1} should
be
\begin{equation}
\begin{split}
&(-1)^{n+\bigl(1+\sum_{l=1}^{i-1}\deg f_l\bigr)
\bigl(1+\sum_{l=i}^{i+k_2-1}\deg f_l\bigr)}\cdot\\
&(-1)^{\bigl(2-\mu_L(\be_2)+\sum_{l=i}^{i+k_2-1}\deg f_l\bigr)
\bigl(1+\sum_{l=1}^{i-1}\deg f_l\bigr)},
\end{split}
\label{il6eq5}
\end{equation}
where the first line comes from the fourth line of \eq{il5eq25}, and
the second line from \eq{il5eq26}, with $g_a$ in place of $f$ and
\eq{il6eq4} in place of $\deg f$. Noting that $2-\mu_L(\be_2)$ is
even, \eq{il6eq5} simplifies to $(-1)^{n+1+\sum_{l=1}^{i-1}\deg
f_l}$. This completes the proof.
\end{proof}

\begin{rem} In Theorem \ref{il6thm}, we had to fix a {\it
finite\/} $N\ge 0$, and then choose $\X_0,\ldots,\X_N$ and
$\{\s_{\be,J,f_1,\ldots,f_k}\}$. The conditions on
$\s_{\be,J,f_1,\ldots,f_k}$ with $f_1\in\X_{i_1},\ldots,$\ab $f_k
\in\X_{i_k}$ and $i_1+\cdots+i_k+\nm{\be}+k-1\le g$ for $g\le N$
really do depend not just on the $\X_1,\ldots,\X_g$, {\it but also
on the choice of\/} $N$, because we had to choose
$\s_{\be,J,f_1,\ldots,f_k}$ to be `small', that is, the
multisections $\s^i$ must be sufficiently close to $s^i$ in $C^0$,
and this notion of `sufficiently close' depends not just on
$\oM_{k+1}^\ma(\be,J,f_1,\ldots,f_k)$, but on all the other fibre
products involving $\s_{\be,J,f_1,\ldots,f_k}$ in their boundary
conditions in the later inductive steps~$g+1,g+2,\ldots,N$.

Because of this, we cannot prove the theorem for $N=\iy$, that is,
we cannot get an infinite sequence $\X_0\subset\X_1\subset\cdots$
and an infinite set of choices of perturbation data
$\{\s_{\be,J,f_1,\ldots,f_k}\}$ satisfying $(N1)$, $(N2)$. Taking
the limit $N\ra\iy$ does not work, since the $\X_g$ for $g<N$ and
$\s_{\be,J,f_1,\ldots,f_k}$ in the theorem depend on $N$.

One way to explain this is to note that by imposing a fixed upper
limit $N$ for $i_1+\cdots+i_k+\nm{\be}+k-1$, each choice of
perturbation data $\s_{\be,J,f_1,\ldots,f_k}$ has to satisfy only
finitely many smallness conditions, of the form `$\s^i$ is
sufficiently close to $s^i$'. But if we allowed $N=\iy$ then
$\s_{\be,J,f_1,\ldots,f_k}$ would have to satisfy {\it infinitely
many smallness conditions}. While we can always satisfy finitely
many smallness conditions, infinitely many might force $\s^i=s^0$,
which would contradict $\s^i$ being transverse. As an analogy,
consider finding $\ep\in\R$ satisfying $\ep\ne 0$ (a transversality
condition) and $\md{\ep}\le 1/n$ for $n=1,2,\ldots$ (infinitely many
smallness conditions).

This point is important, because the necessity to restrict to finite
$N$ is responsible for a large part of the length and technical
complexity of Fukaya et al.\ \cite{FOOO}. If Theorem \ref{il6thm}
held with $N=\iy$ then one could immediately construct an $A_\iy$
algebra, just using geometry. But instead, we have to consider
$A_{N,0}$ algebras, and obtain the $A_\iy$ algebra from them as a
kind of limit, using algebraic techniques. In \cite{AkJo} we will
reformulate Lagrangian Floer cohomology using the theory of {\it
Kuranishi cohomology\/} given in \cite{Joyc2,Joyc3}. There the
problem above disappears because we do not perturb our moduli
spaces, so we construct $A_\iy$ algebras geometrically, without
passing through $A_{N,K}$ algebras.
\label{il6rem}
\end{rem}

\section{$A_{N,0}$ algebras from immersed Lagrangian submanifolds}
\label{il7}

\begin{dfn} Let $\cal G$ be as in \S\ref{il6}, and $\nm{\,.\,}:{\cal
G}\ra\N$ be as in \eq{il3eq16}. For a given $N\in\N$, let
$\X_0\subset\cdots\subset\X_N$ and $\{\s_{\be,J,f_1,\ldots,f_k}\}$
be as in Theorem \ref{il6thm}. Suppose $k\ge 0$, $(\la,\mu)\in{\cal
G}$, and $i_1,\ldots,i_k=0,\ldots,N$ with
$i_1+\cdots+i_k+\nm{(\la,\mu)}+k-1\le N$. Define a $\Q$-multilinear
map $\m_{k,\geo}^{\la,\mu}:\Q\X_{i_1}\t\cdots\t\Q\X_{i_k}
\ra\Q\X_{i_1+\cdots+i_k+\nm{(\la,\mu)}+k-1}$ by
\begin{equation}
\begin{split}
&\m_{1,\geo}^{\smash{0,0}}(f_1)=(-1)^n\pd f_1,\\
&\m_{k,\geo}^{\la,\mu}(f_1,\ldots,f_k)=\!\!\!\!
\sum_{\begin{subarray}{l}
\be\in H_2(M,\io(L);\Z):\\
[\om]\cdot\be=\la,\;\> \mu_L(\be)=2\mu,\\
\oM_{k+1}^\ma(\be,J,f_1,\ldots,f_k)\ne\emptyset\end{subarray}}
\!\!\!\!\!\!\!\!\!\!\!\!\!\!
\begin{aligned}[t]VC\bigl(\oM_{k+1}^\ma(\be,J,f_1,\ldots,f_k)&,\\
\bev,\s_{\be,J,f_1,\ldots,f_k}\bigr)&,\\
(k,\la,\mu)\ne&(1,0,0).
\end{aligned}
\end{split}
\label{il7eq1}
\end{equation}
Combining \eq{il4eq13}, \eq{il4eq14} and $\mu_L(\be)=2\mu$ shows
that the shifted cohomological degree in \eq{il7eq1} is
\begin{equation}
\deg VC\bigl(\oM_{k+1}^\ma(\be,J,f_1,\ldots,f_k),\bev,
\s_{\be,J,f_1,\ldots,f_k}\bigr)=1-2\mu+\ts\sum_{i=1}^k\deg f_i.
\label{il7eq2}
\end{equation}
Thus $\m_{k,\geo}^{\la,\mu}:\Q\X_{i_1}\t\cdots\t\Q\X_{i_k}
\ra\Q\X_{i_1+\cdots+i_k+\nm{(\la,\mu)}+k-1}$ has degree~$1-2\mu$.
\label{il7dfn1}
\end{dfn}

Then \eq{il6eq1} immediately implies the following:

\begin{prop} For $k\in\N,$ $(\la,\mu)\in{\cal G}$ and\/
$f_1\in\X_{i_1},\ldots,f_k\in\X_{i_k}$ with
$i_1+\cdots+i_k+\nm{(\la,\mu)}+k-1\le N,$ we have
\begin{equation}
\begin{gathered}
\sum_{\begin{subarray}{l} k_1+k_2=k+1,\; 1\le i\le k_1,\; k_2\ge 0,\\
(\la_1,\mu_1),(\la_2,\mu_2)\in{\cal G},\;
(\la_1,\mu_1)+(\la_2,\mu_2)=(\la,\mu)\end{subarray}
\!\!\!\!\!\!\!\!\!\!\!\!\!\!\!\!\!\!\!\!\!\!\!\!\!\!\!\!
\!\!\!\!\!\!\!\!\!\!\!\!\!\!\!\!\!\!\!\!\!\!\!\!\!\!\!\!}
\!\!\!\!\!\!\!\!\!\!\!
\begin{aligned}[t]
(-1)^{\sum_{l=1}^{i-1}\deg f_l}
\m_{k_1,\geo}^{\la_1,\mu_1}\bigl(f_1,\ldots,f_{i-1},
\m_{k_2,\geo}^{\la_2,\mu_2}(f_i,\ldots,f_{i+k_2-1}), \\
f_{i+k_2}\ldots,f_k\bigr)=0.
\end{aligned}
\end{gathered}
\label{il7eq3}
\end{equation}
\label{il7prop}
\end{prop}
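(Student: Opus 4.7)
The plan is to derive (7.3) directly from the boundary identity (6.1) for virtual chains, summed over all $\be \in H_2(M, \io(L); \Z)$ with $[\om] \cdot \be = \la$ and $\mu_L(\be) = 2\mu$ (finitely many, by the gapped assumption on $\cal G$), and then reinterpret each kind of term in the resulting identity as an instance of the operators $\m_{k',\geo}^{\la',\mu'}$ of Definition 7.1.

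I will first dispose of the case $(k, \la, \mu) = (1, 0, 0)$ separately: there (7.3) reads $\m_{1,\geo}^{0,0}(\m_{1,\geo}^{0,0}(f_1)) = 0$, i.e.\ $\pd\pd f_1 = 0$, which holds because $\Q\X_N$ is a subcomplex of $C_*^\rsi(L \amalg R; \Q)$ by $(N1)(b)$. For $(k, \la, \mu) \ne (1, 0, 0)$, I fix $f_1, \ldots, f_k$ as in the statement and sum (6.1) over the finite set of admissible $\be$. The left side equals $\pd\, \m_{k,\geo}^{\la,\mu}(f_1, \ldots, f_k) = (-1)^n \m_{1,\geo}^{0,0}\bigl(\m_{k,\geo}^{\la,\mu}(f_1, \ldots, f_k)\bigr)$. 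The first sum on the right of (6.1), using $\Q$-multilinearity of $\m_{k,\geo}^{\la,\mu}$ in each slot together with $\pd f_i = \sum_{j=0}^{a_i} (-1)^j f_i \ci F_j^{a_i}$ and $\pd f_i = (-1)^n \m_{1,\geo}^{0,0}(f_i)$, collapses to $\sum_{i=1}^k (-1)^{n+1+\sum_{l<i} \deg f_l} \m_{k,\geo}^{\la,\mu}\bigl(f_1, \ldots, \m_{1,\geo}^{0,0}(f_i), \ldots, f_k\bigr)$. The second sum on the right of (6.1), after partitioning $\be = \be_1 + \be_2$ into pairs $(\la_r, \mu_r)$ with $[\om]\cdot\be_r = \la_r$, $\mu_L(\be_r) = 2\mu_r$ and $(\la_1,\mu_1)+(\la_2,\mu_2)=(\la,\mu)$, and applying Definition 7.1 first to the inner factor $\m_{k_2,\geo}^{\la_2,\mu_2}(f_i, \ldots, f_{i+k_2-1})$ and then, by $\Q$-multilinearity, to the outer $\m_{k_1,\geo}^{\la_1,\mu_1}$, becomes $\sum (-1)^{n+1+\sum_{l<i} \deg f_l}\m_{k_1,\geo}^{\la_1,\mu_1}\bigl(f_1,\ldots,\m_{k_2,\geo}^{\la_2,\mu_2}(f_i,\ldots,f_{i+k_2-1}),\ldots,f_k\bigr)$.

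Transferring the left-hand contribution to the right and multiplying through by $(-1)^n$ identifies the three groups of terms with, respectively, the $(k_1,\la_1,\mu_1)=(1,0,0)$ summand (with $i=1$), the $(k_2,\la_2,\mu_2)=(1,0,0)$ summands (with $i$ varying), and the remaining general summands of (7.3). The main thing to check is sign bookkeeping, but this is not delicate: the signs on the right of (6.1) were engineered in \S5 precisely so that substituting $\pd = (-1)^n \m_1^{0,0}$ reproduces the standard $A_\iy$ sign conventions, as noted in the remarks of \S5 comparing our signs with those of Fukaya--Oh--Ohta--Ono. Finally, one should verify that every $\m_{k_r,\geo}^{\la_r,\mu_r}$ invoked on the right is defined under $(N2)$: this follows from $\nm{(\la_1,\mu_1)} + \nm{(\la_2,\mu_2)} \le \nm{(\la,\mu)}$ (inherent to the definition of $\nm{\,.\,}$), from the hypothesis $i_1 + \cdots + i_k + \nm{(\la,\mu)} + k - 1 \le N$, and from the assertion in $(N2)$ that the simplices of $VC\bigl(\oM_{k_2+1}^\ma(\be_2, J, f_i, \ldots, f_{i+k_2-1}), \bev, \s_{\be_2,J,f_i,\ldots,f_{i+k_2-1}}\bigr)$ lie in $\Q\X_{i_i+\cdots+i_{i+k_2-1}+\nm{(\la_2,\mu_2)}+k_2-1}$.
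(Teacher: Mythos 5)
Your proposal is correct and takes essentially the paper's own route: the paper's entire proof is the observation that \eq{il6eq1} immediately implies \eq{il7eq3}, and what you have written is exactly the expansion of that step --- sum \eq{il6eq1} over the finitely many $\be$ with $[\om]\cdot\be=\la$, $\mu_L(\be)=2\mu$, use $\Q$-multilinearity and $\pd f_i=\sum_j(-1)^jf_i\ci F_j^{a_i}$ to recognise the left-hand side and the first sum as the $(k_1,\la_1,\mu_1)=(0,0,1)$-type and $(k_2,\la_2,\mu_2)=(1,0,0)$ terms of \eq{il7eq3}, and the second sum (via the convention \eq{il6eq2}) as the remaining terms, with the index bound for the composed operators guaranteed by superadditivity of $\nm{\,.\,}$ and Theorem \ref{il6thm}$(N2)$. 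The only cosmetic remark is that multiplying through by $(-1)^n$ rather than $(-1)^{n+1}$ yields the negative of the left-hand side of \eq{il7eq3}, which is the same assertion since both expressions vanish.
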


Equation \eq{il7eq3} is just \eq{il3eq10} for the
$\m_{k,\geo}^{\la,\mu}$. Thus, the data
$\Q\X_0\subset\cdots\subset\Q\X_N$ and $\m_{k,\geo}^{\la,\mu}$ are a
finite approximation to a gapped filtered $A_\iy$ algebras, as for
$A_{N,K}$ algebras in \S\ref{il37}. But it is not an $A_{N,K}$
algebra, as the conditions for when $\m_{k,\geo}^{\la,\mu}(f_1,
\ldots,f_k)$ is defined are different. We can apply purely algebraic
methods from Fukaya et al.\ \cite[\S 23.4, \S 30.7]{FOOO} to define
$A_{N,0}$ algebras. We use the method of sums over planar trees from
\S\ref{il33}, based on the construction of $\n$ in
Definition~\ref{il3dfn7}.

\begin{dfn} For a given $N\in\N$, we take $N'=N(N+2)$. Let
$\X_0\subset\cdots\subset\X_{N'}$ and $\{s_{\be,J,f_1,\ldots,f_k}\}$
be as in Theorem \ref{il6thm} with $N'$ in place of $N$. Since the
homologies of $(\Q\X_N,\pd),(\Q\X_{N'},\pd)$ are isomorphic, we can
find some linear subspace $A\subset\Q\X_{N'}$ such that
$\Q\X_{N'}=\Q\X_N\op A\op\pd A$ and $\pd:A\ra\pd A$ is an
isomorphism. Let $\Pi:\Q\X_{N'}\ra\Q\X_N$ for the projection, and
define linear $H:\Q\X_{N'}\ra \Q\X_{N'}$ by
\begin{equation}
H(x)=\begin{cases} 0, & \text{for $x\in\Q\X_N\op A$,}\\
\pd^{-1}x, & \text{for $x\in\pd A$.}\end{cases}
\label{il7eq4}
\end{equation}
Then $\id-\Pi=\pd H+H\pd$, as in \S\ref{il34} with $\m_1=\pd$.

Suppose $k\ge 0$ and $(\la,\mu)\in{\cal G}$ with
$\nm{(\la,\mu)}+k-1\le N$. Let $T$ be a rooted planar tree with $k$
leaves, and $(\bs\la,\bs\mu)$ be a family of $(\la_v,\mu_v)\in{\cal
G}$ for each internal vertex $v$ of $T$, such that
$\sum_v(\la_v,\mu_v)=(\la,\mu)$, and $(\la_v,\mu_v)=(0,0)$ implies
that $v$ has at least 2 incoming edges. We shall define a graded
multilinear operator $\m_{k,T}^{(\bs\la,\bs\mu)}:{\buildrel
{\ulcorner\,\,\,\text{$k$ copies } \,\,\,\urcorner} \over
{\vphantom{m}\smash{\Q\X_N\t\cdots\t \Q\X_N}}}\ra \Q\X_N$ of degree
$-2\mu+1$. Let $f_1,\ldots,f_k\in\Q\X_N$. Assign objects and
operators to the vertices and edges of $T$:
\begin{itemize}
\setlength{\itemsep}{0pt}
\setlength{\parsep}{0pt}
\item assign $f_1,\ldots,f_k$ to the leaf vertices $1,\ldots,k$
respectively.
\item for each internal vertex $v$ with 1 outgoing edge and $n$ incoming
edges, assign $\m_{n,\geo}^{\la_v,\mu_v}$. (Here by assumption
$(\la_v,\nu_v)=(0,0)$ implies $n\ge 2$, so we never assign the
special case $\m_{1,\geo}^{\smash{0,0}}$ in \eq{il7eq1}.)
\item assign $\id$ to each leaf edge.
\item assign $\Pi$ to the root edge.
\item assign $(-1)^{n+1}H$ to each internal edge.
\end{itemize}
Then define $\m_{k,T}^{(\bs\la,\bs\mu)}(f_1,\ldots,f_k)$ to be the
composition of all these objects and morphisms, as in \S\ref{il33}.
Define a $\Q$-multilinear map $\m_k^{\la,\mu}: {\buildrel
{\ulcorner\,\,\,\text{$k$ copies } \,\,\,\urcorner} \over
{\vphantom{m}\smash{\Q\X_N\t\cdots\t \Q\X_N}}}\ra\Q\X_N$ graded of
degree $1-2\mu$ by
$\m_1^{\smash{0,0}}=\m_{1,\geo}^{\smash{0,0}}=(-1)^n\pd$ and
$\m_k^{\la,\mu}=\sum_{T,(\bs\la,\bs\mu)}\m_{k,T}^{(\bs\la,\bs\mu)}$
for $(k,\la,\mu)\ne(1,0,0)$, where the sum is over all
$T,(\bs\la,\bs\mu)$ as above.
\label{il7dfn2}
\end{dfn}

We can now associate an $A_{N,0}$ algebra to $L$. It depends on the
choices of almost complex structure $J$, perturbation data
$\s_{\be,J,f_1,\ldots,f_k}$, and $N,N',\X_N,\X_{N'},A$ above.

\begin{thm}{\bf(a)} In Definition {\rm\ref{il7dfn2},} the
$\m_k^{\la,\mu}$ satisfy equation \eq{il3eq10} for all\/ $k\ge 0$
and\/ $(\la,\mu)\in{\cal G}$ with\/ $\nm{(\la,\mu)}+k-1\le N$. Thus
$(\Q\X_N,{\cal G},\m)$ is an $A_{N,0}$ algebra in the sense of
Definition {\rm\ref{il3dfn14},} where $\m=\bigl(\m_k^{\la,\mu}:k\ge
0,$ $(\la,\mu)\in{\cal G},$
$\nm{(\la,\mu)}\!+\!k\!-\!1\!\le\!N\bigr),$ and\/ $\Q\X_N$ is graded
by shifted cohomological degree in~\eq{il4eq13}.
\smallskip

\noindent{\bf(b)} If\/ $f_1\in\X_{i_1},\ldots,f_k\in\X_{i_k}$ with\/
$i_1+\cdots+i_k+\nm{(\la,\mu)}+k-1\le N$ then
\begin{equation}
\m_k^{\la,\mu}(f_1,\ldots,f_k)=\m_{k,\geo}^{\la,\mu}(f_1,\ldots,f_k).
\label{il7eq5}
\end{equation}
\label{il7thm}
\end{thm}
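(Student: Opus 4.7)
The plan is to deduce both parts of Theorem \ref{il7thm} from the sum-over-planar-trees machinery of \S\ref{il33}, applied to the partial structure $\m_\geo$ on $\Q\X_0\subset\cdots\subset\Q\X_{N'}$ of Definition \ref{il7dfn1}, using the contracting data $(\Pi,H)$ of \eqref{il7eq4} which satisfies $\id-\Pi=\pd H+H\pd$ and, crucially, $H|_{\Q\X_N}=0$.

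For part (a) I would follow the proof of Theorem \ref{il3thm2} and its gapped filtered analogue Theorem \ref{il3thm4}, adapted to the partially defined $\m_\geo$. Expanding
\begin{equation*}
\ts\sum\pm\,\m_{k_1}^{\la_1,\mu_1}\bigl(f_1,\ldots,\m_{k_2}^{\la_2,\mu_2}(f_i,\ldots,f_{i+k_2-1}),\ldots,f_k\bigr)
\end{equation*}
in tree form produces a sum over pairs $(T,e)$, where $T$ is a planar tree with $k$ leaves and total charge $(\la,\mu)$ and $e$ is a distinguished internal edge; at $e$ the operator inserted is $\Pi$ (the identity-then-projection arising from gluing the root of $T_2$ onto a leaf of $T_1$) rather than the $(-1)^{n+1}H$ appearing on the other internal edges of the expansion of $\m_k^{\la,\mu}$. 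Substituting $\Pi=\id-\pd H-H\pd$ at $e$ splits the sum into three pieces: the $\id$-piece reconstructs the full $H$-decorated expansion of $\m_k^{\la,\mu}$ (with appropriate signs), while the $\pd H$ and $H\pd$ pieces combine with the $A_\iy$-relation \eqref{il7eq3} for $\m_\geo$ at the adjacent vertex to telescope to zero. This is the classical Markl/Kontsevich--Soibelman cancellation, and it gives \eqref{il3eq10} for all $(\la,\mu),k$ with $\nm{(\la,\mu)}+k-1\le N$.

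For part (b) the vanishing $H|_{\Q\X_N}=0$ does essentially all the work. Under $i_1+\cdots+i_k+\nm{(\la,\mu)}+k-1\le N$, I claim every tree $T$ with an internal edge contributes zero. At any internal vertex $v$ of $T$ whose $n_v$ inputs are all leaves $f_{l_1},\ldots,f_{l_{n_v}}$, one has
\begin{equation*}
\m_{n_v,\geo}^{\la_v,\mu_v}(f_{l_1},\ldots,f_{l_{n_v}})\in\Q\X_{J_v},\qquad J_v=\ts\sum_{j=1}^{n_v}i_{l_j}+\nm{(\la_v,\mu_v)}+n_v-1,
\end{equation*}
and $J_v\le N$ since $n_v\le k$, $\sum i_{l_j}\le\sum i_l$, and $\nm{(\la_v,\mu_v)}\le\nm{(\la,\mu)}$ by the superadditivity immediate from \eqref{il3eq16}. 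Thus $H$ annihilates $v$'s output, and by multilinearity this kills the parent vertex's contribution; inducting upward along internal edges, the whole tree contributes zero. The only surviving tree has a single internal vertex and yields $\Pi\ci\m_{k,\geo}^{\la,\mu}(f_1,\ldots,f_k)$; the same level bound places this output in $\Q\X_N$, where $\Pi$ restricts to the identity, giving~\eqref{il7eq5}.

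The main obstacle is the finite-level bookkeeping in part (a): each $\m_{n,\geo}^{\la_v,\mu_v}$ invoked inside a tree must lie within the domain furnished by Theorem \ref{il6thm}, i.e., the sum of its input levels plus $\nm{(\la_v,\mu_v)}+n-1$ must not exceed $N'$. A routine level bound shows that for inputs $f_1,\ldots,f_k\in\Q\X_N$ and a tree $T$ contributing to $\m_k^{\la,\mu}$ with $\nm{(\la,\mu)}+k-1\le N$, every intermediate output sits at level at most $(k+1)N\le(N+2)N=N'$ since $k\le N+1$; hence the generous choice $N'=N(N+2)$ legitimizes every step of the cancellation argument in part (a).
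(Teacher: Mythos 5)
Your proof is correct and follows essentially the same route as the paper: part (a) is the Markl/Kontsevich--Soibelman tree-cancellation argument that the paper simply delegates to Theorems \ref{il3thm2} and \ref{il3thm4} and to \cite[\S 30.7]{FOOO}, and part (b) is the paper's own argument that any tree with an internal edge vanishes because a vertex fed only by leaves has output in $\Q\X_N$, where $H=0$, so only the one-vertex tree survives, giving $\Pi\ci\m_{k,\geo}^{\la,\mu}(f_1,\ldots,f_k)$ with $\Pi$ acting as the identity. Your closing level estimate (at most $k\le N+1$ inputs of level $\le N$ plus a budget of $N$ for $\nm{(\la,\mu)}+k-1$, totalling at most $N(N+2)=N'$) is precisely the bookkeeping the paper leaves implicit in its choice of $N'$, granted the tacit convention, also implicit in the paper, that the homotopy $H$ is taken compatible with the filtration by the $\Q\X_i$.
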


\begin{proof} The proof of part (a) follows the first parts of those
of Theorems \ref{il3thm2} and \ref{il3thm4}, as in \cite[\S
30.7]{FOOO}. For (b), suppose $f_1\in\X_{i_1},\ldots,f_k\in\X_{i_k}$
with $i_1+\cdots+i_k+ \nm{(\la,\mu)}+k-1\le N$ and
$T,(\bs\la,\bs\mu)$ are as in Definition \ref{il7dfn2}, where $T$
has at least one internal edge. Then
$\m_{k,T}^{(\bs\la,\bs\mu)}(f_1,\ldots,f_k)$ includes an expression
$-H\ci\m_{n,\geo}^{\la_v,\mu_v}(f_{a+1},\ldots,f_{a+n})$, where $-H$
comes from an internal edge of $T$, and
$\m_{n,\geo}^{\la_v,\mu_v}(f_{a+1},\ldots,f_{a+n})$ lies in
$\X_{i_{a+1}+\cdots+i_{a+n}+\nm{(\la_v,\mu_v)}+n-1}$, and so in
$\X_N$, as $n\le k$ and $\nm{(\la_v,\mu_v)}\le\nm{(\la,\mu)}$. But
$H=0$ on $\X_N$, so $-H\ci\m_{n,\geo}^{\la_v,\mu_v}
(f_{a+1},\ldots,f_{a+n})=0$,
and~$\m_{k,T}^{(\bs\la,\bs\mu)}(f_1,\ldots,f_k)=0$.

Therefore $\m_{k,T}^{(\bs\la,\bs\mu)}(f_1,\ldots,f_k)=0$ if $T$ has
an internal edge, so for $(k,\la,\mu)\ne(1,0,0)$ the only nonzero
contribution to $\m_k^{\la,\mu}(f_1,\ldots,f_k)$ comes from the
unique planar tree $T$ with one internal vertex and $k$ leaves,
which gives $\Pi\ci\m_{k,\geo}^{\la,\mu}(f_1,\ldots,f_k)$. But
$\m_{k,\geo}^{\la,\mu}(f_1,\ldots,f_k)\in\X_N$, so $\Pi$ acts as the
identity on it, proving \eq{il7eq5}. When $(k,\la,\mu)=(1,0,0)$,
equation \eq{il7eq5} holds by definition.
\end{proof}

\section{Choosing perturbation data for
$\oM_{k+1}^\ma(\be,J_t:t\in[0,1],f_1,\ldots,f_k)$}
\label{il8}

The $A_{N,0}$ algebras of \S\ref{il7} depended on a choice of almost
complex structure $J$. In \S\ref{il9} we will show that for two
choices $J_0,J_1$ for $J$, the resulting $A_{N,0}$ algebras are
homotopy equivalent. We do this by choosing a smooth 1-parameter
family $J_t:t\in[0,1]$ of almost complex structures interpolating
between $J_0$ and $J_1$, and using the moduli spaces
$\oM_{k+1}^\ma(\be,J_t:t\in[0,1],f_1,\ldots,f_k)$.

In this section we generalize Theorem \ref{il6thm} to choose
perturbation data for the $\oM_{k+1}^\ma(\be,J_t:t\in[0,1],
f_1,\ldots,f_k)$. Choose ${\cal G}\subset[0,\iy)\t\Z$ to satisfy the
conditions:
\begin{itemize}
\setlength{\itemsep}{0pt}
\setlength{\parsep}{0pt}
\item[(i)] $\cal G$ is closed under addition with ${\cal
G}\cap(\{0\}\t\Z)=\{(0,0)\}$, and ${\cal G}\cap([0,C]\t\Z)$ is
finite for any $C\ge 0$; and
\item[(ii)] If $\be\in H_2(M,\io(L);\Z)$, and
$\oM_1^\ma(\be,J_t:t\in[0,1])\ne\emptyset$ then
$\bigl([\om]\cdot\be, \ha\mu_L(\be)\bigr)\in{\cal G}$.
\end{itemize}
As for ${\cal G}_J$ in \S\ref{il6}, there exists a unique smallest
subset ${\cal G}_{J_t:t\in[0,1]}$ satisfying (i),(ii), but we do not
necessarily take ${\cal G}={\cal G}_{J_t:t\in[0,1]}$. Write
$\nm{\be}=\bnm{([\om]\cdot\be,\ha\mu_L(\be))}$, using \eq{il3eq16}
for this $\cal G$. With this notation we prove:

\begin{thm} Let\/ $(M,\om)$ be a compact\/ $2n$-dimensional
symplectic manifold and $\io:L\ra M$ be a compact Lagrangian
immersion with only transverse double self-intersections. Suppose
$J_t$ for $t\in[0,1]$ is a smooth family of almost complex
structures on $M$ compatible with\/ $\om$. Define compact Kuranishi
spaces $\oM_{k+1}^\ma(\al,\be,J_t:t\in[0,1],f_1,\ldots,f_k)$ as in
{\rm\S\ref{il45}} with\/ ${\cal T}=[0,1],$ and orient them as
in~{\rm\S\ref{il55}}.

Then for a given $N\in\N,$ there are
$\X_0^{\sst[0,1]}\subset\cdots\subset\X_N^{\sst[0,1]}$ and\/
$\{\s_{\be,J_t:t\in[0,1],f_1,\ldots,f_k}\}$ which satisfy the
following conditions:
\begin{itemize}
\item[$(N1)$] $\X^{\sst[0,1]}_0,\ldots,\X^{\sst[0,1]}_N$ are finite
sets of smooth simplicial chains\/ $f:\De_a\ra [0,1]\t(L\amalg R)$
such that
\begin{itemize}
\item[{\rm(a)}] There is a decomposition $\X^{\sst[0,1]}_i=
\X^{\sst 0}_i\amalg\X^{\sst(0,1)}_i\amalg\X^{\sst 1}_i$ for
$i=0,\ldots,N,$ where $f\in\X^{\sst 0}_i$ implies $f(\De_a)\subseteq
\{0\}\t(L\amalg R),$ and\/ $f\in\X^{\sst 1}_i$ implies
$f(\De_a)\subseteq\{1\}\t(L\amalg R),$ and\/ $f\in\X^{\sst(0,1)}_i$
implies $f(\De_a^\ci)\subseteq (0,1)\t(L\amalg R),$ where
$\De^\ci_a$ is the interior of\/ $\De_a,$ and $\pi_{[0,1]}\ci
f:\De_a\ra[0,1]$ is a submersion near $(\pi_{[0,1]}\ci
f)^{-1}(\{0,1\})$. {\rm(}This is equivalent to Condition
{\rm\ref{il4cond}.)} We shall sometimes regard\/
$\X^{\sst(0,1)}_{\smash{i}}$ as singular chains on $[0,1]\t(L\amalg
R)$ \begin{bfseries}relative to\end{bfseries} $\{0,1\}\t(L\amalg
R),$ that is, we project to~$C^\rsi_*\bigl([0,1]\t(L\amalg
R),\{0,1\}\t(L\amalg R);\Q\bigr)$.
\item[{\rm(b)}] if\/ $f:\De_a\ra [0,1]\t(L\amalg R)$ lies in\/
$\X^{\sst[0,1]}_i$ and\/ $a>0$ then\/ $f\ci F_b^a:\De_{a-1}\ra
[0,1]\t(L\amalg R)$ lies in\/ $\X^{\sst[0,1]}_i$ for all\/
$b=0,\ldots,a,$ using the notation of\/ {\rm\S\ref{il26}}. If\/
$g:\De_{a-1}\ra [0,1]\t(L\amalg R)$ lies in $\X^{\sst 0}_i$ or
$\X^{\sst 1}_i$ then $g=f\ci F_b^a$ for some $f:\De_a\ra
[0,1]\t(L\amalg R)$ in $\X^{\sst(0,1)}_i$ and\/ $b=0,\ldots,a$. If\/
$f:\De_a\ra [0,1]\t(L\amalg R)$ in $\X^{\sst(0,1)}_i$ then $f\ci
F_b^a$ lies in $\X^{\sst 0}_i$ or $\X^{\sst 1}_i$ for at most one
$b=0,\ldots,a$.
\item[{\rm(c)}] {\rm(a),(b)} imply that\/ $\Q\X^{\sst 0}_i,
\Q\X^{\sst 1}_i$ and\/ $\Q\X^{\sst(0,1)}_i$ are subcomplexes of the
(relative) singular chains\/
$C_*^\rsi\bigl(\{0\}\!\t\!(L\!\amalg\!R);\Q\bigr),$
$C_*^\rsi\bigl(\{1\}\!\t\!(L\!\amalg\!R);\Q\bigr)$ and\/
$C^\rsi_*\bigl([0,1]\!\t\!(L\!\amalg\!R),\{0,1\}\!\t\!(L\amalg
R);\Q\bigr)$ respectively. We require that the corresponding three
natural projections should be isomorphisms:
\begin{equation}
\begin{gathered}
H_*(\Q\X^{\sst 0}_i,\pd^{\sst 0})\,{\buildrel\cong\over\longra}\,
H_*^\rsi(L\amalg R;\Q),\quad H_*(\Q\X^{\sst
1}_i,\pd^{\sst 1})\,{\buildrel\cong\over\longra}\,H_*^\rsi(L\amalg R;\Q),\\
H_*\bigl(\Q\X^{\sst(0,1)}_i,\pd^{\sst(0,1)}\bigr)\,{\buildrel\cong\over
\longra}\, H_*^\rsi\bigl([0,1]\t(L\amalg R),\{0,1\}\t(L\amalg
R);\Q\bigr),
\end{gathered}
\label{il8eq1}
\end{equation}
identifying $\{0\}\t(L\amalg R)$ and\/ $\{1\}\t(L\amalg R)$
with\/~$L\amalg R$.
\end{itemize}
\item[$(N2)$] For all\/ $k\ge 0,$ $f_1\in\X^{\sst[0,1]}_{i_1},\ldots,f_k\in
\X^{\sst[0,1]}_{i_k}$ and\/ $\be\in H_2(M,\io(L);\Z)$ with\/
$i_1+\cdots+i_k+\nm{\be}+k-1\le N$ and
$\oM_{k+1}^\ma(\be,J_t:t\in[0,1],f_1,\ldots,f_k)\ne\emptyset,$
$\s_{\be,J_t:t\in[0,1],f_1,\ldots,f_k}$ is perturbation data for
$\bigl(\oM_{k+1}^\ma(\be,J_t:t\in[0,1],f_1,\ab\ldots,\ab
f_k),\ab\bs\pi_{[0,1]}\t\bev\bigr),$ and all the simplices of\/
$VC\bigl(\oM_{k+1}^\ma(\be,J_t:t\in[0,1],f_1,\ab\ldots,\ab
f_k),\ab\bs\pi_{[0,1]} \t\bev, \s_{\be,J_t:t\in[0,1],f_1,\ldots,f_k}
\bigr)$ lie in\/ $\X^{\sst[0,1]}_{i_1+\cdots+i_k+\nm{\be}+k-1}$. At
the boundary\/ $\pd\oM_{k+1}^\ma(\be,J_t:t\in[0,1],f_1,\ldots,f_k),$
given by the union of\/ \eq{il4eq29} over all\/ $I,\al,$ this
$\s_{\be,J_t:t\in[0,1],f_1,\ldots,f_k}$ must be compatible with:
\begin{itemize}
\item[{\rm(i)}] the choices of\/ $\s_{\be,J_t:t\in[0,1],f_1,\ldots,
f_{i-1},f_i\ci F_j^{a_i},f_{i+1},\ldots,f_k}$ for the term\/
$\oM_{k+1}^\ma\ab(\al,\ab\be,\ab J_t:t\in[0,1],\ab
f_1,\ldots,f_{i-1},f_i\ci F_j^{a_i}, f_{i+1},\ldots,f_k)$
in\/~{\rm\eq{il4eq29};}
\item[{\rm(ii)}] the choices of\/ $\s_{\be_2,J_t:t\in[0,1],f_i,\ldots,
f_{i+k_2-1}}$ for the term $\oM_{k_2+1}^\ma
(\al_2,\be_2,J_t:t\in[0,1],f_i,\ab \ldots,\ab f_{i+k_2-1})$ in
{\rm\eq{il4eq29};} and
\item[{\rm(iii)}] for each\/ $g:\De_a\ra [0,1]\t(L\amalg R)$
appearing in $VC\bigl(\oM_{k_2+1}^\ma(\be_2,J_t:t\in[0,1],f_i,
\ab\ldots,\ab f_{i\!+\!k_2\!-\!1}),\ab\bs\pi_{[0,1]}\t\bev,
\s_{\be_2,J_t:t\in[0,1], f_i,\ldots, f_{i+k_2-1}}\bigr),$ the
choices of\/ $\s_{\be_1,J_t:t\in[0,1],f_1,\ldots,f_{i-1},g,
f_{i+k_2},\ldots,f_k}$ for the term\/
$\oM_{k_1\!+\!1}^\ma(\al_1,\be_1,J_t:t\in[0,1],f_1,\ldots,f_{i\!-\!1};
f_{i\!+\!k_2},\ldots,f_k)$ in\/ \eq{il4eq29} combined with\/ $VC
\bigl(\oM_{k_2+1}^\ma(\be_2,\ab
J_t:t\in[0,1],f_i,\ldots,f_{i+k_2-1}),\bs\pi_{[0,1]}\t\bev,
\s_{\be_2,J_t:t\in[0,1],f_i,\ldots,f_{i+k_2-1}}\bigr)$.
\end{itemize}

This boundary compatibility implies that, for $f_1:\De_{a_1}\ra
[0,1]\t(L\amalg R)$ in $\X^{\sst[0,1]}_{i_1},\ldots,f_k:\De_{a_k}\ra
L\amalg R$ in $\X^{\sst[0,1]}_{i_k}$ as above, when $k>0$ we have
\begin{gather}
\pd VC\bigl(\oM_{k+1}^\ma(\be,J_t:t\in[0,1],f_1,\ldots,f_k),
\bs\pi_{[0,1]} \t\bev,\s_{\be,J_t:t\in[0,1],f_1, \ldots,f_k}\bigr)=
\label{il8eq2} \\
\sum_{i=1}^k\sum_{j=0}^{a_i}\!
\begin{aligned}[t]
(-1)^{j+1+\sum_{l=1}^{i-1}\deg f_l}
VC\bigl(\oM_{k+1}^\ma(\be,J_t:t\!\in\![0,1],f_1,\ldots,f_{i-1},
f_i\!\ci\!F_j^{a_i},f_{i+1}&,\\
\ldots,f_k),\bs\pi_{[0,1]}\!\t\!\bev,\s_{\be,J_t:t\in[0,1],f_1,\ldots,
f_{i-1},f_i\ci F_j^{a_i},f_{i+1},\ldots,f_k}\bigr)&
\end{aligned}
\nonumber\\
+\!\!\!\!\sum_{\begin{subarray}{l}
k_1+k_2=k+1,\\
1\le i\le k_1,\\
\be_1+\be_2=\be\end{subarray}}
\begin{aligned}[t]
&(-1)^{n+\sum_{l=1}^{i-1}\deg f_l}
VC\bigl(\oM_{k_1+1}^\ma(\be_1,J_t:t\in[0,1],f_1,\ldots,f_{i-1},
\\
&VC(\oM_{k_2+1}^\ma(\be_2,J_t:t\in[0,1],f_i,\ldots,
f_{i+k_2-1}),\bs\pi_{[0,1]}\t\bev,\\
&\s_{\be_2,J_t:t\in[0,1],f_i,\ldots,f_{i+k_2-1}}), f_{i+k_2},
\ldots,f_k),\bs\pi_{[0,1]}\t\bev,\\
&\s_{\substack{
\be_1,J_t:t\in[0,1],f_1,\ldots,f_{i-1},VC(\oM_{k_2+1}^\ma
(\be_2,J_t:t\in[0,1],f_i,\ldots,f_{i+k_2-1}),\\
\bs\pi_{[0,1]}\t\bev,\s_{\be_2,J_t:t\in[0,1],f_i,\ldots,f_{i+k_2-1}}),
f_{i+k_2},\ldots,f_k}}\,\bigr),
\end{aligned}
\nonumber
\end{gather}
using notation $VC(\oM_{k_1+1}^\ma(\ldots,VC(\ldots),\ldots),
\bev,\s_{\ldots,VC(\ldots),\ldots})$ as in \eq{il6eq2}.

When $k=0$ equation \eq{il8eq2} holds with the addition of an extra
term supported on $\{0,1\}\t(L\amalg R)$ corresponding to
$\oM^\ma_1(\be,J_t:t\in\{0,1\}),$ as in Theorem {\rm\ref{il5thm8}}.
Thus, if we project to relative chains
$C^\rsi_*\bigl([0,1]\!\t\!(L\!\amalg\!R),\{0,1\}\!\t\!(L\amalg
R);\Q\bigr),$ then \eq{il8eq2} holds for all\/~$k\ge 0$.
\item[$(N3)$] As well as the boundary compatibilities
$(N2)${\rm(i)--(iii),} we can impose compatibilities at the boundary
$\{0,1\}\t(L\amalg R)$ of\/ $[0,1]\t(L\amalg R),$ as follows.
Suppose $g_1\in\X^{\sst 0}_{i_1},\ldots,g_k\in\X^{\sst 0}_{i_k},$
where $g_j:\De_{a_j-1}\ra \{0\}\t(L\amalg R)$. We shall also abuse
notation by regarding $g_j$ as mapping $\De_{a_j-1}\ra L\amalg R$.
Then $(N1)${\rm(b)} implies that there exist\/
$f_1\in\X^{\sst(0,1)}_{i_1},\ldots,f_k\in\X^{\sst(0,1)}_{i_k}$ and\/
$b_1,\ldots,b_k$ such that\/ $b_j=0,\ldots,a_j$ and\/ $g_j=f_j\ci
F_{b_j}^{a_j}$ for $j=1,\ldots,k$. Then using the notation of Remark
\ref{il4rem} and inserting signs in {\rm\eq{il4eq33}--\eq{il4eq34},}
there are natural isomorphisms of oriented Kuranishi spaces:
\begin{align}
\begin{split}
\{0\}&\t_{i,[0,1],\bs\pi_{[0,1]}}\oM_{k+1}^\ma(\be,J_t:t\in[0,1],
f_1,\ldots,f_k)\\
&\cong (-1)^{k+\sum_{j=1}^kb_j}\oM_{k+1}^\ma(\be,J_0,g_1,
\ldots,g_k)\\
&\quad\t\bigl[\{0\}\t_{i,\R,\bs\pi_0}\bigl((\R,\ka_k^1)\t_{\bs\pi_1\t\cdots\t
\bs\pi_k,\R^k,i}[0,\iy)^k\bigr)\bigr],
\end{split}
\label{il8eq3}\\
\begin{split}
&\oM_{k+1}^\ma(\be,J_t:t\in[0,1],
f_1,\ldots,f_{j-1},g_j,f_{j+1},\ldots,f_k)\\
&\cong \pm\oM_{k+1}^\ma(\be,J_0,g_1, \ldots,g_k)\\
&\qquad\t\bigl[(\R,\ka_k^1)\t_{\bs\pi_1\t\cdots\t
\bs\pi_k,\R^k,i}[0,\iy)^{j-1}\t\{0\}\t[0,\iy)^{k-j}\bigr],
\label{il8eq4}
\end{split}
\end{align}
for all\/ $j=1,\ldots,k,$ where $i:\{0\}\ra[0,1]$ is the inclusion.
The analogues for $g_1\in\X^{\sst 1}_{i_1},\ldots,g_k\in \X^{\sst
1}_{i_k}$ and\/ $i:\{1\}\ra[0,1]$ are\goodbreak
\begin{align}
\begin{split}
\{1\}&\t_{i,[0,1],\bs\pi_{[0,1]}}\oM_{k+1}^\ma(\be,J_t:t\in[0,1],
f_1,\ldots,f_k)\\
&\cong (-1)^{\sum_{j=1}^kb_j}\oM_{k+1}^\ma(\be,J_1,g_1,
\ldots,g_k)\\
&\quad\t\bigl[\{0\}\t_{i,\R,\bs\pi_0}\bigl((\R,\ka_k^1)\t_{\bs\pi_1\t\cdots\t
\bs\pi_k,\R^k,i}[0,\iy)^k\bigr)\bigr],
\end{split}
\label{il8eq5}\\
\begin{split}
&\oM_{k+1}^\ma(\be,J_t:t\in[0,1],
f_1,\ldots,f_{j-1},g_j,f_{j+1},\ldots,f_k)\\
&\cong \pm\oM_{k+1}^\ma(\be,J_1,g_1, \ldots,g_k)\\
&\qquad\t\bigl[(\R,\ka_k^1)\t_{\bs\pi_1\t\cdots\t
\bs\pi_k,\R^k,i}[0,\iy)^{j-1}\t\{0\}\t[0,\iy)^{k-j}\bigr].
\label{il8eq6}
\end{split}
\end{align}

Suppose $\ti\X{}^{\sst 0}_0\subset\cdots\subset\ti\X{}^{\sst 0}_N$
and\/ $\{\s^{\sst 0}_{\be,J_0,f_1,\ldots,f_k}\}$ are possible
choices in Theorem \ref{il6thm} with\/ $J_0$ in place of\/ $J,$ and
that\/ $\ti\X{}^{\sst 1}_0\subset\cdots\subset\ti\X{}^{\sst 1}_N$
and\/ $\{\s^{\sst 1}_{\be,J_1,f_1,\ldots,f_k}\}$ are possible
choices in Theorem \ref{il6thm} with\/ $J_1$ in place of\/ $J$. Then
we can choose $\X_0^{\sst[0,1]}\subset\cdots\subset\X_N^{\sst[0,1]}$
and\/ $\{\s_{\be,J_t:t\in[0,1],f_1,\ldots,f_k}\}$ above such that
\begin{itemize}
\item[{\rm(a)}] $\X{}^{\sst 0}_i=\ti\X{}^{\sst 0}_i$ and\/ $\X{}^{\sst
1}_i=\ti\X{}^{\sst 1}_i$ for $i=1,\ldots,N$.
\item[{\rm(b)}] For all\/ $g_1\in\ti\X{}^{\sst 0}_{i_1},\ldots,
g_k\in\ti\X{}^{\sst 0}_{i_k},$ and all choices of\/
$f_1,\ldots,f_k,b_1,\ldots,b_k$ and $j$ above, the perturbation data
$\s_{\be,J_t:t\in[0,1],f_1,\ldots,f_{j-1},g_j,f_{j+1},\ldots,f_k}$
for $\bigl(\oM_{k+1}^\ma(\be,J_t:t\in[0,1],f_1,\ab\ldots,f_{j-1},
g_j,f_{j+1},\ldots,\ab f_k),\ab\bs\pi_{[0,1]}\t\bev\bigr)$ over
$\{0\}\t(L\amalg R)$ is identified with the perturbation data
$\s^{\sst 0}_{\be,J_0,g_1,\ldots,g_k}$ for
$\bigl(\oM_{k+1}^\ma(\be,\ab J_0,\ab g_1,\ldots,g_k),\bev\bigr)$
over $L\amalg R$ under the isomorphism {\it\eq{il8eq4}} and the
identification $\{0\}\t(L\amalg R)\cong L\amalg R,$ noting that\/
$\bigl[\{0\}\t_{i,\R,\bs\pi_0}\bigl((\R,\ka_k^1)\t_{\bs\pi_1\t\cdots\t
\bs\pi_k,\R^k,i}[0,\iy)^k\bigr)\bigr]$ is a single point whose
Kuranishi structure has transverse Kuranishi map, so it needs no
perturbation, and perturbation data $\s^{\sst
0}_{\be,J_0,g_1,\ldots,g_k}$ induces perturbation data
$\s_{\be,J_t:t\in[0,1],f_1,\ldots,f_{j-1},g_j,f_{j+1},\ldots, f_k}$
with the same virtual chain, up to sign.
\item[{\rm(c)}] The analogue of\/ {\rm(b)} holds for
$g_1\in\ti\X{}^{\sst 1}_{i_1},\ldots,g_k\in\ti\X{}^{\sst 1}_{i_k}$
and\/~$J_1$.
\end{itemize}
\end{itemize}
\label{il8thm}
\end{thm}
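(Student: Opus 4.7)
The plan is to follow the quadruple induction scheme of Theorem \ref{il6thm}, with an outer induction on $g=0,\ldots,N$ that builds $\X^{\sst[0,1]}_g$, and an inner triple induction on $(j,k,l)=(\nm{\be},k,a_1+\cdots+a_k)$ that chooses each $\s_{\be,J_t:t\in[0,1],f_1,\ldots,f_k}$, ordered as in the proof of Theorem \ref{il6thm}. First I would set $\X^{\sst 0}_g=\ti\X{}^{\sst 0}_g$ and $\X^{\sst 1}_g=\ti\X{}^{\sst 1}_g$, so that only the relative piece $\X^{\sst(0,1)}_g$ needs to be constructed inductively, and the decomposition in $(N1)$(a) holds by definition. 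For the base case $g=0$ and $(\nm{\be},k)=(1,0)$, the boundary of $\oM_1^\ma(\be,J_t:t\in[0,1])$ consists by \eq{il4eq21} only of $\oM^\ma_1(\be,J_t:t\in\pd[0,1])$, already perturbed by $\s^{\sst 0}_{\be,J_0}$ and $\s^{\sst 1}_{\be,J_1}$; I extend these over the interior by~\S\ref{il27}.

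At the inductive step, the new feature beyond Theorem \ref{il6thm} is condition $(N3)$, which prescribes $\s_{\be,J_t:t\in[0,1],f_1,\ldots,f_k}$ on the codimension-one strata lying over $\{0,1\}\subset[0,1]$. The key tool is Remark \ref{il4rem}: whenever some $f_j$ equals a face $g_j$ with image in $\{0\}\t(L\amalg R)$ (or $\{1\}\t(L\amalg R)$), the isomorphism \eq{il8eq4} (respectively \eq{il8eq6}) identifies the relevant stratum with $\oM^\ma_{k+1}(\be,J_0,g_1,\ldots,g_k)$ (respectively $\oM^\ma_{k+1}(\be,J_1,g_1,\ldots,g_k)$) times a fixed Kuranishi space whose Kuranishi map is already transverse and so needs no perturbation. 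Thus the pre-chosen data $\s^{\sst 0}_{\be,J_0,g_1,\ldots,g_k}$ (or $\s^{\sst 1}_{\be,J_1,g_1,\ldots,g_k}$) from Theorem \ref{il6thm} canonically induces perturbation data on the boundary stratum, which is what $(N3)$ demands.

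The main obstacle is verifying consistency at codimension-two corners of $\oM^\ma_{k+1}(\be,J_t:t\in[0,1],f_1,\ldots,f_k)$, where a boundary stratum coming from $(N2)$(i)--(iii) meets one coming from $(N3)$ at $t\in\{0,1\}$, and where two strata at $t=0$ and $t=1$ both prescribe data. This is where most of the technical work will go. Consistency follows from two facts: first, the pre-existing $\s^{\sst 0}$ and $\s^{\sst 1}$ themselves satisfy $(N2)$(i)--(iii) of Theorem \ref{il6thm} for $J_0$ and $J_1$, so on the common overlap the two prescriptions for $\s_{\be,J_t:t\in[0,1],f_1,\ldots,f_k}$ agree; second, the isomorphisms \eq{il8eq3}--\eq{il8eq6} intertwine the boundary operator \eq{il4eq29} restricted to $t\in\{0,1\}$ with \eq{il4eq18} for $J_0, J_1$, up to the trivial auxiliary factor. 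With the smallness precaution of Remark \ref{il6rem}, this reduces the compatible-extension problem to the standard situation covered by~\S\ref{il27}.

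To close the step, after each inner choice I would enlarge $\X^{\sst(0,1)}_{g+1}$ to contain all simplices of the new virtual chains together with their face maps, using the relative version of Proposition \ref{il2prop3} applied to $\bigl([0,1]\t(L\amalg R),\{0,1\}\t(L\amalg R)\bigr)$; face maps landing in $\{0,1\}\t(L\amalg R)$ are automatically in $\X^{\sst 0}_{g+1}\amalg\X^{\sst 1}_{g+1}$ by $(N3)$(a) since $\ti\X{}^{\sst 0},\ti\X{}^{\sst 1}$ are closed under faces. This gives $(N1)$(b),(c) including the third isomorphism in \eq{il8eq1}. Formula \eq{il8eq2} for $k>0$ then follows exactly as \eq{il6eq1} does, by substituting the perturbation data into Theorem \ref{il5thm8} and re-running the sign calculation of \eq{il6eq4}--\eq{il6eq5}, noting that an extra factor $\dim{\cal T}=1$ in the sign on line four of \eq{il5eq35} cancels against a corresponding shift in \eq{il4eq27}; for $k=0$ the extra boundary contribution from $\pd[0,1]$ in \eq{il5eq35} is precisely what is killed by projection to the relative complex $C^\rsi_*\bigl([0,1]\t(L\amalg R),\{0,1\}\t(L\amalg R);\Q\bigr)$.
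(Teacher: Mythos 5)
Your proposal follows essentially the same route as the paper's proof: the same quadruple induction as in Theorem \ref{il6thm}, with $\X^{\sst 0}_i,\X^{\sst 1}_i$ fixed equal to $\ti\X{}^{\sst 0}_i,\ti\X{}^{\sst 1}_i$ so that only $\X^{\sst(0,1)}_i$ and the perturbation data with every $f_j\in\X^{\sst(0,1)}_{i_j}$ remain to be chosen, boundary data over $t\in\{0,1\}$ prescribed from $\s^{\sst 0},\s^{\sst 1}$ via Remark \ref{il4rem} and \eq{il8eq3}--\eq{il8eq6}, enlargement of $\X^{\sst(0,1)}_{g+1}$ by a relative version of Proposition \ref{il2prop3}, and \eq{il8eq2} deduced from Theorem \ref{il5thm8} just as \eq{il6eq1} was deduced from Theorem \ref{il5thm6}, with the $k=0$ boundary term killed by passing to relative chains. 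This is the paper's argument in outline, and it works.

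Two small corrections. First, your sign remark is misstated: the factor $(-1)^{\dim{\cal T}}$ in the fourth line of \eq{il5eq35} does \emph{not} cancel --- it is exactly what turns the sign $(-1)^{n+1+\sum_{l=1}^{i-1}\deg f_l}$ of \eq{il6eq1} into $(-1)^{n+\sum_{l=1}^{i-1}\deg f_l}$ in \eq{il8eq2}. What cancels is the extra $\dim{\cal T}$ in the dimension formula \eq{il4eq27} against the extra $\dim{\cal T}$ in the degree convention \eq{il4eq26}, so that $\deg g_a$ is still given by \eq{il6eq4}; if one took your parenthetical literally one would recover the sign of \eq{il6eq1}, contradicting \eq{il8eq2}. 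Second, the explicit signs $(-1)^{k+\sum_{j=1}^kb_j}$ in \eq{il8eq3} and $(-1)^{\sum_{j=1}^kb_j}$ in \eq{il8eq5} are part of the statement and need justification, which your sketch omits: they come from the oriented identifications $\{0\}^k\t_{[0,1]^k}(\De_{a_1}\t\cdots\t\De_{a_k})\cong(-1)^{k+\sum_jb_j}\De_{a_1-1}\t\cdots\t\De_{a_k-1}$ and $\{1\}^k\t_{[0,1]^k}(\De_{a_1}\t\cdots\t\De_{a_k})\cong(-1)^{\sum_jb_j}\De_{a_1-1}\t\cdots\t\De_{a_k-1}$, where each face map $F_{b_j}^{a_j}$ multiplies orientations by $(-1)^{b_j}$ and the coefficient $-1$ of $\{0\}$ in $\pd[0,1]=-\{0\}\amalg\{1\}$ contributes the extra $(-1)^k$ at $t=0$.
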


\begin{proof} Most of the proof is a straightforward generalization
of that of Theorem \ref{il6thm}, so we will just comment on the
differences. As in $(N3)$, we suppose some choices $\ti\X{}^{\sst
0}_0\subset\cdots\subset\ti\X{}^{\sst 0}_N,$ $\{\s^{\sst
0}_{\be,J_0,f_1,\ldots,f_k}\}$ and $\ti\X{}^{\sst
1}_0\subset\cdots\subset\ti\X{}^{\sst 1}_N,$ $\{\s^{\sst
1}_{\be,J_1,f_1,\ldots,f_k}\}$ are given for the outcomes of Theorem
\ref{il6thm} with $J_0,J_1$ in place of $J$. Then $(N3)$(a)
determines $\X^{\sst 0}_0,\ldots,\X^{\sst 0}_N$ and $\X^{\sst
1}_0,\ldots,\X^{\sst 1}_N$, and in the inductive proof we are only
free to choose $\X^{\sst(0,1)}_0,\ldots,\X^{\sst(0,1)}_N$. Also,
$(N3)$(b),(c) determine $\s_{\be,J_t:t\in[0,1],f_1,\ldots,f_k}$ if
any $f_j$ lies in $\X^{\sst 0}_{i_j}$ or $\X^{\sst 1}_{i_j}$, so in
the inductive proof we are only free to choose
$\s_{\be,J_t:t\in[0,1],f_1,\ldots,f_k}$ when $f_j\in\X^{\sst(0,1)
}_{i_j}$ for all~$j=1,\ldots,k$.

As in Theorem \ref{il6thm}, we perform a quadruple induction in
which we choose $\X^{\sst(0,1)}_0\subset\cdots\subset
\X^{\sst(0,1)}_N$ and $\s_{\be,J_t:t\in[0,1],f_1,\ldots,f_k}$ for
all $k\ge 0$, $f_1\in\X^{\sst(0,1)}_{i_1},\ldots,f_k\in
\X^{\sst(0,1)}_{i_k}$ and $\be\in H_2(M,\io(L);\Z)$ with
$i_1+\cdots+i_k+\nm{\be}+k-1\le N$ and
$\oM_{k+1}^\ma(\be,J_t:t\in[0,1],f_1,\ldots,f_k)\ne\emptyset$. At
the point when we choose $\s_{\be,J_t:t\in[0,1],f_1,\ldots,f_k}$, we
have already chosen perturbation data for every components in
$\pd\oM_{k+1}^\ma(\be,J_t:t\in[0,1],f_1,\ldots,f_k)$, which are
consistent on corners of codimension 2 and higher, and we must
extend these choices over the interior of $\oM_{k+1}^\ma(\be,
J_t:t\in[0,1],f_1,\ldots,f_k)$. In this proof, for the components of
$\pd\oM_{k+1}^\ma(\be, J_t:t\in[0,1],f_1,\ldots,f_k)$ lying over
$t=0$ or $t=1$ in $[0,1]$ the choice of perturbation data is given
by some $\{\s^{\sst 0}_{\be,J_0,g_1,\ldots,g_l}\}$ or $\{\s^{\sst
1}_{\be,J_1,g_1,\ldots,g_l}\}$, but in Theorem \ref{il6thm}, all the
boundary choices were made at previous steps in the quadruple
induction.

Since each $f_j$ maps $\De_{a_j}^\ci\ra (0,1)\t(L\amalg R)$, it is
immediate that the interior of each simplex in
$VC\bigl(\oM_{k+1}^\ma(\be,J_t:t\in[0,1],f_1,\ldots,f_k),\bs\pi_{[0,1]}
\t\bev,\s_{\be,J_t:t\in[0,1],f_1, \ldots,f_k}\bigr)$ maps to
$(0,1)\t(L\amalg R)$, and so satisfies the conditions in $(N1)$(a)
to lie in $\X^{\sst(0,1)}_i$. Thus, in the final step in the outer
induction when we have to choose $\X^{\sst(0,1)}_{g+1}$, there will
be a finite set $\cal W$ of smooth simplices $f:\De_a\ra
[0,1]\t(L\amalg R)$ with $f(\De_a^\ci)\subseteq (0,1)\t(L\amalg R)$
that are the new simplices introduced in virtual cycles for
$\oM_{k+1}^\ma(\be,J_t:t\in[0,1],f_1,\ldots,f_k)$ in this step, and
we must choose $\X^{\sst(0,1)}_{g+1}$ with ${\cal
W}\subseteq\X^{\sst(0,1)}_{g+1}$ and
$\X^{\sst(0,1)}_g\subseteq\X^{\sst(0,1)}_{g+1}$ to satisfy
$(N1)$(a)--(c). This is possible by a relative version of
Proposition \ref{il2prop3}, given the properties of $\X^{\sst
0}_0,\ldots,\X^{\sst 0}_N$ and $\X^{\sst 1}_0,\ldots,\X^{\sst 1}_N$
in Theorem \ref{il6thm}, and the fact that any face of $f:\De_a\ra
[0,1]\t(L\amalg R)$ either lies in $\X^{\sst 0}_{g+1}$ or $\X^{\sst
1}_{g+1}$, or its interior maps to~$(0,1)\t(L\amalg R)$.

In equation \eq{il8eq2}, the sign on the fourth line is
$(-1)^{n+\sum_{l=1}^{i-1}\deg f_l}$, rather than
$(-1)^{n+1+\sum_{l=1}^{i-1}\deg f_l}$ in the fourth line of
\eq{il6eq1}, because of the factor $(-1)^{\dim{\cal T}}=-1$ in the
fourth line of \eq{il5eq35}, which does not occur in the
corresponding equation \eq{il5eq25} used to deduce \eq{il6eq1}. The
extra term in \eq{il8eq2} when $k=0$ supported on $\{0,1\}\t(L\amalg
R)$ comes from the extra $\pd{\cal T}$ term in Theorem \ref{il5thm8}
when~$k=0$.

It remains only to justify the isomorphisms
\eq{il8eq3}--\eq{il8eq6}. These are given in unoriented Kuranishi
spaces in \eq{il4eq33}--\eq{il4eq34}, and we do not specify signs in
\eq{il8eq4} and \eq{il8eq6}, so we only have to compute the signs in
\eq{il8eq3} and \eq{il8eq5}. This is done by going through the proof
of \eq{il4eq33} inserting orientations. The signs
$(-1)^{k+\sum_{j=1}^kb_j}$ and $(-1)^{\sum_{j=1}^kb_j}$ come from
the isomorphisms of oriented manifolds
\begin{align}
\begin{split}
\{0\}^k&\t_{i,[0,1]^k,(\pi_{[0,1]}\ci f_1)\t\cdots\t(\pi_{[0,1]}\ci
f_k)}\bigl(\De_{a_1}\t\cdots\t\De_{a_k}\bigr)\\
&\cong (-1)^{k+\sum_{j=1}^kb_j} \De_{a_1-1}\t\cdots\t\De_{a_k-1},
\end{split}
\label{il8eq7}\\
\begin{split}
\{1\}^k&\t_{i,[0,1]^k,(\pi_{[0,1]}\ci f_1)\t\cdots\t(\pi_{[0,1]}\ci
f_k)}\bigl(\De_{a_1}\t\cdots\t\De_{a_k}\bigr)\\
&\cong(-1)^{\sum_{j=1}^kb_j}\De_{a_1-1}\t\cdots\t\De_{a_k-1},
\end{split}
\label{il8eq8}
\end{align}
for \eq{il8eq3} and \eq{il8eq5} respectively. Here the factors
$(-1)^{\sum_{j=1}^kb_j}$ arise since $F_{b_j}^{a_j}:\De_{a_j-1}\ra
\pd\De_{a_j}$ multiplies orientations by $(-1)^{b_j}$, and the extra
$(-1)^k$ in \eq{il8eq7} is the coefficient $-1$ of $\{0\}$ in
$\pd[0,1]=-\{0\}\amalg\{1\}$, raised to the power~$k$.
\end{proof}

In fact it is not difficult to extend Theorem \ref{il8thm} from a
family $J_t:t\in[0,1]$ to a general family $J_t:t\in{\cal T}$ for
${\cal T}$ a compact manifold with boundary and corners, and we will
use this extension in \S\ref{il10} when $\cal T$ is a closed
semicircle or triangle in $\R^2$. But the statement of this
generalization is even more complex, with special treatment for the
codimension $k$ corners of $\cal T$ for $k=0,1,\ldots,\dim{\cal T}$,
and the analogue of $(N3)$ referring recursively to the outcome of
Theorem \ref{il8thm} with $\pd{\cal T}$ in place of $\cal T$, rather
than just to the outcome of Theorem \ref{il6thm}. For simplicity, it
seemed better just to state the result for~${\cal T}=[0,1]$.

\section{$A_{N,0}$ morphisms from $J_0$ to $J_1$ $A_{N,0}$ algebras}
\label{il9}

We work in the situation of \S\ref{il8}, with $J_t$ for $t\in[0,1]$
a smooth family of almost complex structures on $M$ compatible with
$\om$. We begin by constructing an $A_{N,0}$ algebra of relative
chains $C^\rsi_*\bigl([0,1]\t(L\amalg R),\{0,1\}\t(L\amalg
R);\Q\bigr)$ depending on the whole family $J_t:t\in[0,1]$. Here are
the analogues of Definitions \ref{il7dfn1} and \ref{il7dfn2},
Proposition \ref{il7prop} and Theorem~\ref{il7thm}.

\begin{dfn} Let $\cal G$ be as in \S\ref{il8}, and $\nm{\,.\,}:{\cal
G}\ra\N$ be as in \eq{il3eq16}. For a given $N\in\N$, let
$\X^{\sst[0,1]}_i=\X^{\sst 0}_i\amalg\X^{\sst(0,1)}_i\amalg\X^{\sst
1}_i$ for $i=0,\ldots,N,$ and $\{\s_{\be,J_t:t\in[0,1],f_1,\ldots,
f_k}\}$ be as in Theorem \ref{il8thm}. Write $\Pi^{\sst(0,1)}:
\Q\X^{\sst[0,1]}_i\ra\Q\X^{\sst(0,1)}_i$ for the projection, with
kernel $\Q\X^{\sst 0}_i\op\Q\X^{\sst 1}_i$. Suppose $k\ge 0$,
$(\la,\mu)\in{\cal G}$, and $i_1,\ldots,i_k=0,\ldots,N$ with
$i_1+\cdots+i_k+\nm{(\la,\mu)}+k-1\le N$. Generalizing \eq{il7eq1},
define a $\Q$-multilinear map $\m_{k,\geo}^{{\sst(0,1)}\la,\mu}:
\Q\X_{i_1}^{\sst(0,1)}\t
\cdots\t\Q\X_{i_k}^{\sst(0,1)}\ra\Q\X_{i_1+\cdots+i_k+\nm{(\la,
\mu)}+k-1}^{\sst(0,1)}$ of degree $1-2\mu$ by
\begin{equation}
\begin{split}
&\m_{1,\geo}^{{\sst(0,1)}0,0}(f_1)=\Pi^{\sst(0,1)}\bigl[(-1)^{n+1}\pd
f_1\bigr]=(-1)^{n+1}\pd^{\sst(0,1)}f_1,\\
&\m_{k,\geo}^{{\sst(0,1)}\la,\mu}(f_1,\ldots,f_k)=
\!\!\!\!\!\!\!\!\! \sum_{\begin{subarray}{l}
\be\in H_2(M,\io(L);\Z):\\
[\om]\cdot\be=\la,\;\> \mu_L(\be)=2\mu,\\
\oM_{k+1}^\ma(\be,J_t:t\in[0,1],f_1,\ldots,f_k)\ne\emptyset\end{subarray}
\!\!\!\!\!\!\!\!\!\!\!\!\!\!\!\!\!\!\!\!\!\!\!\!\!\!\!\!\!\!\!\!\!\!\!\!\!}
\!\!\!\!\!\!\!\!\!\!\!\!
\begin{aligned}[t]\Pi^{\sst(0,1)}\bigl[VC\bigl(\oM_{k+1}^\ma
(\be,J_t:t\!\in\![0,1],f_1,\ldots,f_k)&,\\
\bev,\s_{\be,J_t:t\in[0,1],f_1,\ldots,f_k}\bigr)\bigr]&,\\
(k,\la,\mu)\ne(1,0,0)&.
\end{aligned}
\end{split}
\label{il9eq1}
\end{equation}
\label{il9dfn1}
\end{dfn}

Now applying $\Pi^{\sst(0,1)}:\Q\X^{\sst[0,1]}_i
\ra\Q\X^{\sst(0,1)}_i$ is equivalent to projecting to {\it relative}
singular chains to $C^\rsi_*\bigl([0,1]\t(L\amalg R),\{0,1\}\t
(L\amalg R);\Q\bigr)$, so we can regard $\Q\X^{\sst(0,1)}_i$ as a
space of relative chains. As in Theorem \ref{il8thm}$(N2)$, equation
\eq{il8eq2} holds in relative chains for all $k\ge 0$. Note the two
sign differences compared to \S\ref{il6}--\S\ref{il7}: the signs in
the fourth lines of \eq{il6eq1} and \eq{il8eq2} differ by $-1$, and
the signs on the first lines of \eq{il7eq1} and \eq{il9eq1} differ
by $-1$. Both of these are really $(-1)^{\dim{\cal T}}$, where
${\cal T}=[0,1]$. In proving \eq{il9eq3} below, these two sign
differences cancel out, so that the signs in \eq{il7eq3} and
\eq{il9eq3} are the same. Thus as for Proposition \ref{il7prop} we
deduce:

\begin{prop} For $k\in\N,$ $(\la,\mu)\in{\cal G}$ and\/
$f_1\in\X^{\sst(0,1)}_{i_1},\ldots,f_k\in\X^{\sst(0,1)}_{i_k}$ with
$i_1+\cdots+i_k+\nm{(\la,\mu)}+k-1\le N,$ we have
\begin{equation}
\begin{gathered}
\sum_{\begin{subarray}{l} k_1+k_2=k+1,\; 1\le i\le k_1,\; k_2\ge 0,\\
(\la_1,\mu_1),(\la_2,\mu_2)\in{\cal G},\\
(\la_1,\mu_1)+(\la_2,\mu_2)=(\la,\mu)\end{subarray}
\!\!\!\!\!\!\!\!\!\!\!\!\!\!\!\!\!\!\!\!\!\!\!\!\!\!\!\!\!\!\!\!\!\!\!}
\!\!\!\!\!\!\!\!\!\!\!
\begin{aligned}[t]
(-1)^{\sum_{l=1}^{i-1}\deg f_l}
\m_{k_1,\geo}^{{\sst(0,1)}\la_1,\mu_1}\bigl(f_1,\ldots,f_{i-1},
\m_{k_2,\geo}^{{\sst(0,1)}\la_2,\mu_2}(f_i,\ldots,f_{i+k_2-1}), \\
f_{i+k_2}\ldots,f_k\bigr)=0.
\end{aligned}
\end{gathered}
\label{il9eq3}
\end{equation}
\label{il9prop}
\end{prop}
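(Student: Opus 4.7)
The plan is to derive \eqref{il9eq3} from the boundary formula \eqref{il8eq2} in exactly the same way that Proposition \ref{il7prop} was derived from \eqref{il6eq1}, but now projecting to relative chains by $\Pi^{\sst(0,1)}$. The key elementary fact is that $\Pi^{\sst(0,1)}\pd=\pd^{\sst(0,1)}\Pi^{\sst(0,1)}$ on singular chains of $[0,1]\t(L\amalg R)$, so applying $(-1)^{n+1}\Pi^{\sst(0,1)}$ to both sides of \eqref{il8eq2} yields an identity among relative chains of precisely the form \eqref{il9eq3}.

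The proof will identify three classes of terms. First, the left-hand side of \eqref{il8eq2}, multiplied by $(-1)^{n+1}$ and projected, becomes $(-1)^{n+1}\pd^{\sst(0,1)}\m_{k,\geo}^{\sst(0,1)\la,\mu}(f_1,\ldots,f_k) =\m_{1,\geo}^{\sst(0,1)0,0}(\m_{k,\geo}^{\sst(0,1)\la,\mu}(f_1,\ldots,f_k))$, which is the $(k_1,\la_1,\mu_1,i)=(1,0,0,1)$ term of \eqref{il9eq3}. Second, the first sum in \eqref{il8eq2} telescopes via $\sum_j(-1)^j f_i\ci F^{a_i}_j=\pd f_i$; after projection and use of $\pd^{\sst(0,1)}f_i=(-1)^{n+1}\m_{1,\geo}^{\sst(0,1)0,0}(f_i)$, the signs combine to produce exactly the $(k_2,\la_2,\mu_2)=(1,0,0)$ terms of \eqref{il9eq3}. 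Third, the second sum in \eqref{il8eq2} maps to the general composition terms once one recognizes $\Pi^{\sst(0,1)}VC(\oM^\ma_{k_1+1}(\be_1,J_t:t\in[0,1],\ldots,g,\ldots))=\m_{k_1,\geo}^{\sst(0,1)\la_1,\mu_1}(\ldots,g,\ldots)$ for $g\in\X^{\sst(0,1)}_{\smash j}$, and then collecting the inner $VC$ produces $\m_{k_2,\geo}^{\sst(0,1)\la_2,\mu_2}(f_i,\ldots,f_{i+k_2-1})$ in the appropriate slot.

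The crucial checking step is that contributions from simplices $g_a\in\X^{\sst 0}_j\cup\X^{\sst 1}_j$ in the inner virtual chain vanish after applying $\Pi^{\sst(0,1)}$. This uses Theorem \ref{il8thm}$(N3)$: when $g_a\in\X^{\sst 0}_j$, the isomorphism \eqref{il8eq4} identifies $\oM^\ma_{k_1+1}(\be_1,J_t:t\in[0,1],\ldots,g_a,\ldots)$ with a signed product of $\oM^\ma_{k_1+1}(\be_1,J_0,\ldots,g_a,\ldots)$ and a trivial factor, and the compatibility $(N3)(b)$ forces the perturbation data and hence the virtual chain to be supported in $\{0\}\t(L\amalg R)$, which $\Pi^{\sst(0,1)}$ annihilates; symmetrically for $\X^{\sst 1}_j$ via \eqref{il8eq6} and $(N3)(c)$. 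The same argument kills the extra $\{0,1\}\t(L\amalg R)$ boundary term that appears in \eqref{il8eq2} only when $k=0$, so the relation holds uniformly for all $k\ge 0$.

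The main obstacle will be sign bookkeeping: there are two distinct sign shifts between the $J$-setting of \S\ref{il6}--\S\ref{il7} and the $J_t$-setting here, namely the factor $(-1)^{\dim\mathcal{T}}=-1$ appearing in the fourth line of \eqref{il8eq2} compared with \eqref{il6eq1} and coming from Theorem \ref{il5thm8}, and the flip from $(-1)^n$ to $(-1)^{n+1}$ in the definitions of $\m_{1,\geo}^{\smash{0,0}}$ versus $\m_{1,\geo}^{\sst(0,1)0,0}$. One verifies that these two changes cancel, so that the overall sign $(-1)^{\sum_{l=1}^{i-1}\deg f_l}$ in \eqref{il9eq3} agrees with that of \eqref{il7eq3} after the $(-1)^{n+1}\Pi^{\sst(0,1)}$ rescaling. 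Once this cancellation is confirmed and the $\X^{\sst 0},\X^{\sst 1}$-vanishing established, the rest is a direct transcription of the argument for Proposition \ref{il7prop}.
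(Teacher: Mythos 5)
Your proposal is correct and takes essentially the same route as the paper: one applies $\Pi^{\sst(0,1)}$ to \eq{il8eq2} (so the extra $k=0$ term supported on $\{0,1\}\t(L\amalg R)$ dies and the formula holds in relative chains for all $k\ge 0$), notes that the $(-1)^{\dim{\cal T}}$ shift in the fourth line of \eq{il8eq2} relative to \eq{il6eq1} cancels against the change from $(-1)^n$ to $(-1)^{n+1}$ in $\m_{1,\geo}^{{\sst(0,1)}0,0}$, and then transcribes the deduction of Proposition \ref{il7prop}. One small relocation: the $(N3)$(b),(c) vanishing argument you spell out is needed chiefly for the $j=b_i$ face term $f_i\ci F_{b_i}^{a_i}\in\X^{\sst 0}_{i}\cup\X^{\sst 1}_{i}$ in the first sum of \eq{il8eq2}, which is exactly what justifies replacing $\pd f_i$ by $\pd^{\sst(0,1)}f_i$, whereas for the composition terms the construction in the proof of Theorem \ref{il8thm} already places every simplex of the inner virtual chain in $\X^{\sst(0,1)}_{i}$ when all inputs lie there — but since the mechanism you invoke is identical, the argument goes through.
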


\begin{dfn} For a given $N\in\N$, we take $N'=N(N+2)$. Let
$\X^{\sst[0,1]}_i=\X^{\sst 0}_i\amalg\X^{\sst(0,1)}_i\amalg\X^{\sst
1}_i$ for $i=0,\ldots,N'$ and $\{s_{\be,J_t:t\in[0,1],f_1,
\ldots,f_k}\}$ be as in Theorem \ref{il8thm} with $N'$ in place of
$N$. Since by \eq{il8eq1} the homologies of $(\Q\X^{\sst(0,1)}_N,
\pd^{\sst(0,1)}),(\Q\X^{\sst(0,1)}_{N'},\pd^{\sst(0,1)})$ are
isomorphic, we can find some linear subspace
$A^{\sst(0,1)}\subset\Q\X^{\sst(0,1)}_{N'}$ such that
\begin{itemize}
\item $\Q\X^{\sst(0,1)}_{N'}=\Q\X^{\sst(0,1)}_N\op
A^{\sst(0,1)}\op\pd^{\sst(0,1)} A^{\sst(0,1)}$; and
\item $\pd^{\sst(0,1)}:A^{\sst(0,1)}\ra\pd^{\sst(0,1)} A^{\sst(0,1)}$ is an isomorphism.
\end{itemize}
Later we will take $A^{\sst(0,1)}$ compatible with choices of $A$ in
Definition \ref{il7dfn2} for $J_0,J_1$. Define a linear map
$H^{\sst(0,1)}:\Q\X^{\sst(0,1)}_{N'}\ra\Q\X^{\sst(0,1)}_{N'}$ by
\begin{equation}
H^{\sst(0,1)}(x)=\begin{cases}
0, & \text{for $x\in \Q\X_N\op A^{\sst(0,1)}$,}\\
(\pd^{\sst(0,1)})^{-1}x, & \text{for $x\in\pd^{\sst(0,1)}
A^{\sst(0,1)}$.}\end{cases}
\label{il9eq4}
\end{equation}
Write $\Pi:\Q\X^{\sst(0,1)}_{N'}\ra\Q\X^{\sst(0,1)}_N$ for the
projection. Then $\id-\Pi=\pd^{\sst(0,1)}
H^{\sst(0,1)}+H^{\sst(0,1)}\pd^{\sst(0,1)}$.

Suppose $k\ge 0$ and $(\la,\mu)\in{\cal G}$ with
$\nm{(\la,\mu)}+k-1\le N$. Let $T$ be a rooted planar tree with $k$
leaves, and $(\bs\la,\bs\mu)$ be a family of $(\la_v,\mu_v)\in{\cal
G}$ for each internal vertex $v$ of $T$, such that
$\sum_v(\la_v,\mu_v)=(\la,\mu)$, and $(\la_v,\mu_v)=(0,0)$ implies
that $v$ has at least 2 incoming edges. We shall define a graded
multilinear operator $\m_{k,T}^{{\sst(0,1)}(\bs\la,\bs\mu)}:
{\buildrel {\ulcorner\,\,\,\text{$k$ copies } \,\,\,\urcorner} \over
{\vphantom{m}\smash{\Q\X^{\sst(0,1)}_N\t\cdots\t
\Q\X^{\sst(0,1)}_N}}}\ra\Q\X^{\sst(0,1)}_N$ of degree $1-2\mu$. Let
$f_1,\ldots,f_k\in\Q\X_N^{\sst(0,1)}$. Assign objects and operators
to the vertices and edges of $T$:
\begin{itemize}
\setlength{\itemsep}{0pt}
\setlength{\parsep}{0pt}
\item assign $f_1,\ldots,f_k$ to the leaf vertices $1,\ldots,k$
respectively.
\item for each internal vertex $v$ with 1 outgoing edge and $n$ incoming
edges, assign $\m_{n,\geo}^{{\sst(0,1)}\la_v,\mu_v}$.
\item assign $\id$ to each leaf edge.
\item assign $\Pi$ to the root edge.
\item assign $(-1)^nH^{\sst(0,1)}$ to each internal edge.
\end{itemize}
Then define $\m_{\smash{k,T}}^{{\sst(0,1)}(\bs\la,\bs\mu)}(f_1,
\ldots,f_k)$ to be the composition of all these. Define a
$\Q$-multilinear map $\m_k^{{\sst(0,1)}\la,\mu}:{\buildrel
{\ulcorner\,\,\,\text{$k$ copies } \,\,\,\urcorner} \over
{\vphantom{m}\smash{\Q\X^{\sst
(0,1)}_N\t\cdots\t\Q\X^{\sst(0,1)}_N}}}\ra\Q\X^{\sst(0,1)}_N$ graded
of degree $1-2\mu$ by $\m_1^{{\sst(0,1)}0,0}=
\m_{1,\geo}^{{\sst(0,1)}0,0}=(-1)^{n+1}\pd^{\sst(0,1)}$ and
$\m_k^{{\sst(0,1)}\la,\mu}=\sum_{T,(\bs\la,\bs\mu)}\m_{k,T}^{{\sst(0,1)}
(\bs\la,\bs\mu)}$ for~$(k,\la,\mu)\ne(1,0,0)$.
\label{il9dfn2}
\end{dfn}

\begin{thm}{\bf(a)} In Definition {\rm\ref{il9dfn2},} the
$\m_k^{{\sst(0,1)}\la,\mu}$ satisfy equation \eq{il3eq10} for all\/
$k\ge 0$ and\/ $(\la,\mu)\in{\cal G}$ with\/ $\nm{(\la,\mu)}+k-1\le
N$. Thus $(\Q\X^{\sst(0,1)}_N,{\cal G},\m^{\sst(0,1)})$ is an
$A_{N,0}$ algebra in the sense of Definition {\rm\ref{il3dfn14},}
where $\m^{\sst(0,1)}=\bigl(\m_k^{{\sst(0,1)}\la,\mu}:k\ge 0,$
$(\la,\mu)\in{\cal G},$ $\nm{(\la,\mu)}\!+\!k\!-\!1\!\le\!N\bigr),$
and\/ $\Q\X_N^{\sst(0,1)}$ is graded by shifted cohomological degree
in~\eq{il4eq26}.\!\!\!
\smallskip

\noindent{\bf(b)} If\/
$f_1\in\X_{i_1}^{\sst(0,1)},\ldots,f_k\in\X_{i_k}^{\sst(0,1)}$
with\/ $i_1+\cdots+i_k+\nm{(\la,\mu)}+k-1\le N$ then
\begin{equation}
\m_k^{{\sst(0,1)}\la,\mu}(f_1,\ldots,f_k)=
\m_{k,\geo}^{{\sst(0,1)}\la,\mu}(f_1,\ldots,f_k).
\label{il9eq5}
\end{equation}
\label{il9thm1}
\end{thm}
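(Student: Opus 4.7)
The plan is to mirror the proof of Theorem \ref{il7thm} almost verbatim, transplanting the planar tree machinery of Definition \ref{il3dfn7} and Theorem \ref{il3thm2} (or rather its gapped analogue in Definition \ref{il3dfn12} and Theorem \ref{il3thm4}) to the relative-chain setting of $\Q\X^{\sst(0,1)}_{N'}$, and using Proposition \ref{il9prop} in place of Proposition \ref{il7prop} as the key input. The data $\Pi,H^{\sst(0,1)}$ play exactly the roles of $\Pi_B,H$ in \S\ref{il33}, the identity $\id-\Pi=\pd^{\sst(0,1)}H^{\sst(0,1)}+H^{\sst(0,1)}\pd^{\sst(0,1)}$ replacing $\id-\Pi_B=\m_1^{\smash{0,0}}\ci H+H\ci\m_1^{\smash{0,0}}$, and the geometric operators $\m_{k,\geo}^{{\sst(0,1)}\la,\mu}$ playing the role of the $\m_k$ assigned to internal vertices of a planar tree.

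For part (a), I would argue that the verification of the $A_{N,0}$ relation \eq{il3eq10} for $\m^{\sst(0,1)}$ proceeds by unwinding the planar tree sum $\sum_{T,(\bs\la,\bs\mu)}\m_{k,T}^{{\sst(0,1)}(\bs\la,\bs\mu)}$, using at each internal vertex the quadratic relation \eq{il9eq3} for $\m_{k,\geo}^{{\sst(0,1)}\la,\mu}$. As in the proof of Theorem \ref{il3thm2}, cancellations come in pairs: a tree $T$ with two adjacent internal vertices $v,w$ joined by an internal edge $e$ contributes one term where the two $\m_{\geo}^{{\sst(0,1)}}$'s compose across the edge $e$ (weighted by $(-1)^nH^{\sst(0,1)}$), and the two possible ways of contracting this pair differ by exactly the sign needed so that, combined with \eq{il9eq3} and with $\id-\Pi=\pd^{\sst(0,1)}H^{\sst(0,1)}+H^{\sst(0,1)}\pd^{\sst(0,1)}$, everything telescopes to zero. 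The single special operator $\m_1^{{\sst(0,1)}0,0}=(-1)^{n+1}\pd^{\sst(0,1)}$ combines with the $\Pi$ on the root edge to handle the residual $\pd^{\sst(0,1)}$ terms.

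The main subtlety, and the one place the argument is not just formal copying, is sign bookkeeping. Note two sign flips compared to \S\ref{il7}: the formula \eq{il9eq1} for $\m_{1,\geo}^{{\sst(0,1)}0,0}$ carries $(-1)^{n+1}$ rather than the $(-1)^n$ of \eq{il7eq1}, and the internal edges of trees in Definition \ref{il9dfn2} are weighted $(-1)^nH^{\sst(0,1)}$ rather than $(-1)^{n+1}H$. Both of these are really factors of $(-1)^{\dim{\cal T}}=-1$ for ${\cal T}=[0,1]$, and I expect that one must check that in every pair-cancellation in the previous paragraph these two $(-1)^{\dim{\cal T}}$ conspire to reproduce the same net sign as in the proof of Theorem~\ref{il7thm}(a); precisely the same cancellation underlies the fact that \eq{il9eq3} and \eq{il7eq3} carry identical signs despite \eq{il6eq1} and \eq{il8eq2} differing by one. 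This sign check is the main, though essentially bookkeeping, obstacle.

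For part (b), the argument is immediate: if every $f_j\in\Q\X^{\sst(0,1)}_{i_j}$ with $i_1+\cdots+i_k+\nm{(\la,\mu)}+k-1\le N$, then for any planar tree $T$ with an internal edge, the partial composition feeding into that edge is an output of some $\m_{n,\geo}^{{\sst(0,1)}\la_v,\mu_v}$ applied to chains whose total degree index sums to at most $N$, so by the output condition of Theorem~\ref{il8thm}$(N2)$ it already lies in $\Q\X^{\sst(0,1)}_N$; but $H^{\sst(0,1)}$ vanishes on $\Q\X^{\sst(0,1)}_N$ by \eq{il9eq4}, killing the contribution of $T$. Hence the only surviving tree is the one with a single internal vertex of valence $k+1$, contributing $\Pi\ci\m_{k,\geo}^{{\sst(0,1)}\la,\mu}(f_1,\ldots,f_k)$; and since this value also lies in $\Q\X^{\sst(0,1)}_N$, $\Pi$ acts as the identity, giving \eq{il9eq5}. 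The case $(k,\la,\mu)=(1,0,0)$ is true by definition.
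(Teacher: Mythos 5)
Your proposal is correct and follows essentially the same route as the paper: the paper proves Theorem \ref{il9thm1} exactly as Theorem \ref{il7thm}, invoking the planar-tree transfer argument of Theorems \ref{il3thm2} and \ref{il3thm4} for (a) with Proposition \ref{il9prop} as the geometric input, and for (b) the observation that any tree with an internal edge contributes zero because the innermost $\m_{n,\geo}^{{\sst(0,1)}\la_v,\mu_v}$ output already lies in $\Q\X^{\sst(0,1)}_N$, on which $H^{\sst(0,1)}$ vanishes, leaving only the one-vertex tree on which $\Pi$ acts as the identity. Your sign remark is also consistent with the paper's conventions, since the two $(-1)^{\dim{\cal T}}$ flips in \eq{il9eq1} and in the internal-edge weight $(-1)^nH^{\sst(0,1)}$ combine to give the same homotopy identity $-(\id-\Pi)$ as in \S\ref{il7}.
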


Now let $(\Q\X^{\sst 0}_N,{\cal G},\m^{\sst 0})$ and $(\Q\X^{\sst
1}_N,{\cal G},\m^{\sst 1})$ be $A_{N,0}$ algebras constructed in
Theorem \ref{il7thm} with $J=J_0$ and $J=J_1$. We shall construct
strict, surjective $A_{N,0}$ morphisms $\p^{\sst 0}:
(\Q\X^{\sst(0,1)}_N,{\cal G},\m^{\sst(0,1)})\ra (\Q\X^{\sst
0}_N,{\cal G},\m^{\sst 0})$ and $\p^{\sst 1}:
(\Q\X^{\sst(0,1)}_N,{\cal G},\m^{\sst(0,1)})\ra (\Q\X^{\sst
1}_N,{\cal G},\m^{\sst 1})$, and show they are homotopy
equivalences.

\begin{dfn} Let $J_0,J_1$ be complex structures on $M$ compatible
with $\om$, and $J_t:t\in[0,1]$ a smooth family of complex
structures on $M$ compatible with $\om$ interpolating between them.
Fix once and for all $N\in\N$, $N'=N(N+2)$ and ${\cal
G}\subset[0,\iy)\t\Z$ satisfying conditions (i),(ii) of \S\ref{il8}.
This implies that $\cal G$ satisfies conditions (i),(ii) of
\S\ref{il6} for $J=J_0$ and~$J=J_1$.

With these $N,N',{\cal G}$, suppose $\X{}^{\sst
0}_0\subset\cdots\subset\X{}^{\sst 0}_{N'}$, $\{\s^{\sst
0}_{\be,J_0,f_1,\ldots,f_k}\}$ are possible choices in Theorem
\ref{il6thm} with $J_0,N'$ in place of $J,N$, and $\X{}^{\sst
1}_0\subset\cdots\subset\X{}^{\sst 1}_{N'}$, $\{\s^{\sst
1}_{\be,J_1,f_1,\ldots,f_k}\}$ possible choices in Theorem
\ref{il6thm} with $J_1,N'$. Let $(\Q\X^{\sst 0}_N,{\cal G},\m^{\sst
0})$ and $(\Q\X^{\sst 1}_N,{\cal G},\m^{\sst 1})$ be possible
$A_{N,0}$ algebras constructed in Theorem \ref{il7thm} from this
data for each of $J_0,J_1$. As in Definition \ref{il7dfn2}, this
involves additional choices of subspace $A$ and corresponding
operator $H$, which we write as $A^{\sst 0},H^{\sst 0}$ and $A^{\sst
1},H^{\sst 1}$ respectively.

Suppose $\X^{\sst[0,1]}_i=\X^{\sst 0}_i\amalg\X^{\sst(0,1)}_i\amalg
\X^{\sst 1}_i$ for $i=0,\ldots,N'$ and $\{s_{\be,J_t:t\in[0,1],f_1,
\ldots,f_k}\}$ are possible choices in Theorem \ref{il8thm} with
$N'$ in place of $N$, and compatible in $(N3)$ with the above
choices of $\X{}^{\sst 0}_0\subset\cdots\subset\X{}^{\sst 0}_{N'}$,
$\{\s^{\sst 0}_{\be,J_0,f_1,\ldots,f_k}\}$ and $\X{}^{\sst
1}_0\subset\cdots\subset\X{}^{\sst 1}_{N'}$, $\{\s^{\sst
1}_{\be,J_1,f_1,\ldots,f_k}\}$, dropping the distinction between
$\ti\X{}^{\sst 0}_i,\ti\X{}^{\sst 1}_i$ and $\X{}^{\sst
0}_i,\X{}^{\sst 1}_i$. Let $(\Q\X^{\sst(0,1)}_N,{\cal
G},\m^{\sst(0,1)})$ be a possible $A_{N,0}$ algebra constructed in
Theorem \ref{il9thm1} from this data. This involves an additional
choice of $A^{\sst(0,1)}$, yielding $H^{\sst(0,1)}$. We will shortly
require $A^{\sst(0,1)},H^{\sst(0,1)}$ to be compatible with $A^{\sst
0},H^{\sst 0}$ and $A^{\sst 1},H^{\sst 1}$.

Write $\pd^{\sst 0},\pd^{\sst 1},\pd^{\sst[0,1]},\pd^{\sst(0,1)}$
for the boundary operators on $\Q\X^{\sst 0}_i,\Q\X^{\sst
1}_i,\Q\X^{\sst[0,1]}_i,\Q\X^{\sst(0,1)}_i$ respectively, where we
regard $\pd^{\sst(0,1)}:\Q\X^{\sst(0,1)}_i\ra\Q\X^{\sst(0,1)}_i$ as
acting on $\Q\X^{\sst(0,1)}_i$ as a subspace of the relative chains
$C^\rsi_*\bigl([0,1]\t(L\amalg R),\{0,1\}\t(L\amalg R);\Q\bigr)$.
But we will also regard $\Q\X^{\sst(0,1)}_i$ as a subspace of
$\Q\X^{\sst[0,1]}_i$, so that $\pd^{\sst[0,1]}$
maps~$\Q\X^{\sst(0,1)}_i\ra\Q\X^{\sst[0,1]}_i$.

Define linear maps $P^{\sst 0}:\Q\X^{\sst(0,1)}_i\ra\Q\X^{\sst 0}_i$
and $P^{\sst 1}:\Q\X^{\sst(0,1)}_i\ra\Q\X^{\sst 1}_i$ for
$i=0,\ldots,N'$ by $P^{\sst 0}=-\Pi^{\sst 0}\ci\pd^{\sst[0,1]}$ and
$P^{\sst 1}=\Pi^{\sst 1}\ci\pd^{\sst[0,1]}$, where $\Pi^{\sst
0},\Pi^{\sst 1}:\Q\X^{\sst[0,1]}_i\ra\Q\X^{\sst 0}_i,\Q\X^{\sst
1}_i$ are the projections coming from the decomposition
$\X^{\sst[0,1]}_i=\X^{\sst 0}_i\amalg\X^{\sst(0,1)}_i\amalg\X^{\sst
1}_i$. Observe that although $\pd^{\sst[0,1]}$ reduces dimension of
singular chains by one, $\Q\X^{\sst 0}_i,\Q\X^{\sst 1}_i$ are graded
by $\deg f$ in \eq{il4eq13}, but $\Q\X^{\sst(0,1)}_i$ is graded by
$\deg f$ in \eq{il4eq26} with $\dim{\cal T}=1$. Therefore $P^{\sst
0},P^{\sst 1}$ are actually {\it graded of degree zero}.

Considering the components of $\pd^{\sst[0,1]}:\Q\X^{\sst[0,1]}_i
\ra\Q\X^{\sst[0,1]}_i$ in the splitting
$\Q\X^{\sst[0,1]}_i=\Q\X^{\sst 0}_i\op\Q\X^{\sst(0,1)}_i
\op\Q\X^{\sst 1}_i$, we see that $\pd^{\sst[0,1]}=\pd^{\sst
0}+\pd^{\sst(0,1)}+\pd^{\sst 1}-P^{\sst 0}+P^{\sst 1}$. Since
$(\pd^{\sst[0,1]})^2=0$, taking components of $(\pd^{\sst[0,1]})^2$
mapping from $\Q\X^{\sst(0,1)}_i$ to $\Q\X^{\sst 0}_i,\Q\X^{\sst
1}_i$ shows that
\begin{equation}
P^{\sst 0}\ci\pd^{\sst(0,1)}+\pd^{\sst 0}\ci P^{\sst 0}=0
\quad\text{and}\quad P^{\sst 1}\ci\pd^{\sst(0,1)}+\pd^{\sst 1}\ci
P^{\sst 1}=0.
\label{il9eq6}
\end{equation}
Thus $P^{\sst 0},P^{\sst 1}$ are morphisms of complexes
$(\Q\X^{\sst(0,1)}_i,\pd^{\sst(0,1)}),(\Q\X^{\sst 0}_i,\pd^{\sst
0}),(\Q\X^{\sst 1}_i,\pd^{\sst 1})$, and induce maps $P^{\sst
0}_*,P^{\sst 1}_*$ on cohomology. But by assumption \eq{il8eq1} are
isomorphisms. Under these, $P^{\sst 0}_*$ corresponds (though not
with gradings) to the natural map
\begin{equation*}
H_*^\rsi\bigl([0,1]\t(L\amalg R),\{0,1\}\t(L\amalg
R);\Q\bigr)\longra H_{*-1}^\rsi\bigl(\{0\}\t(L\amalg R);\Q\bigr).
\end{equation*}
Since this is an isomorphism, $P^{\sst 0}_*$ and similarly $P^{\sst
1}_*$ are isomorphisms.

Theorem \ref{il8thm}$(N1)$(b) implies that if $g\in\X_i^{\sst 0}$
then there exists $f\in\X_i^{\sst(0,1)}$ with $\Pi^{\sst 0}(f)=\pm
g$, and also $\Pi^{\sst 1}(f)=0$. Similarly, if $g\in\X_i^{\sst 1}$
there exists $f\in\X_i^{\sst(0,1)}$ with $\Pi^{\sst 1}(f)=\pm g$,
and also $\Pi^{\sst 0}(f)=0$. Therefore $\Pi^{\sst 0}\op\Pi^{\sst
1}:\Q\X_i^{\sst(0,1)}\ra\Q\X_i^{\sst 0}\op\Q\X_i^{\sst 1}$ is
surjective. Combining this with \eq{il9eq6}, one can show that in
Definition \ref{il9dfn2}, one can choose $A^{\sst(0,1)}$ so that
$\Pi^{\sst 0}(A^{\sst(0,1)})=A^{\sst 0}$ and $\Pi^{\sst 1}(A^{\sst
(0,1)})=A^{\sst 1}$. Combining this with \eq{il7eq4}, \eq{il9eq4}
and \eq{il9eq6}, we see that
\begin{equation}
P^{\sst 0}\ci H^{\sst(0,1)}+H^{\sst 0}\ci P^{\sst 0}=0
\quad\text{and}\quad P^{\sst 1}\ci H^{\sst(0,1)}+H^{\sst 1}\ci
P^{\sst 1}=0.
\label{il9eq7}
\end{equation}

Now define $\p^{{\sst 0}0,0}_1:\Q\X^{\sst(0,1)}_N\ra\Q\X^{\sst 0}_N$
by $\p^{{\sst 0}0,0}_1=P^{\sst 0}$, and for all $k\ge 0$ and
$(\la,\mu)\in{\cal G}$ with $\nm{(\la,\mu)}+k-1\le N$ and
$(k,\la,\mu)\ne(1,0,0)$, define $\p^{\smash{{\sst
0}\la,\mu}}_k:{\buildrel{ \ulcorner\,\,\,\text{$k$ copies }
\,\,\,\urcorner}\over{\vphantom{m}\smash{\Q\X^{\sst(0,1)}_N
\t\cdots\t\Q\X^{\sst(0,1)}_N}}}\ra\Q\X^{\sst 0}_N$ by
$\p^{\smash{{\sst 0}\la,\mu}}_k=0$. Write $\p^{\sst 0}=\bigl(
\p^{\smash{{\sst 0}\la,\mu}}_k:k\ge 0$, $(\la,\mu)\in{\cal G}$,
$\nm{(\la,\mu)}+k-1\le N\bigr)$. Similarly, define $\p^{\sst
1}=\bigl(\p^{\smash{{\sst 1}\la,\mu}}_k:k\ge 0$, $(\la,\mu)\in{\cal
G}$, $\nm{(\la,\mu)}+k-1\le N\bigr)$ by $\p^{{\sst 1}0,0}_1=P^{\sst
1}$ and $\p^{\smash{{\sst 1}\la,\mu}}_k=0$
for~$(k,\la,\mu)\ne(1,0,0)$.
\label{il9dfn3}
\end{dfn}

\begin{thm} In Definition {\rm\ref{il9dfn3},} $\p^{\sst 0}:
(\Q\X^{\sst(0,1)}_N,{\cal G},\m^{\sst(0,1)})\ra(\Q\X^{\sst
0}_N,{\cal G},\m^{\sst 0})$ and\/ $\p^{\sst 1}:
(\Q\X^{\sst(0,1)}_N,{\cal G},\m^{\sst(0,1)})\ra (\Q\X^{\sst
1}_N,{\cal G},\m^{\sst 1})$ are strict, surjective $A_{N,0}$
morphisms, and weak homotopy equivalences.
\label{il9thm2}
\end{thm}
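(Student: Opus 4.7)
The plan is as follows. Strictness of $\p^{\sst 0},\p^{\sst 1}$ is immediate from Definition~\ref{il9dfn3}. Surjectivity of $P^{\sst 0}$ follows directly from Theorem~\ref{il8thm}$(N1)$(b): every $g\in\X^{\sst 0}_i$ equals $f\ci F_b^a$ for some $f\in\X^{\sst(0,1)}_i$ and some $b$, and this is the unique face of $f$ lying in $\X^{\sst 0}_i\cup\X^{\sst 1}_i$; hence $\Pi^{\sst 0}(\pd^{\sst[0,1]}f)=(-1)^bg$ and $P^{\sst 0}(f)=(-1)^{b+1}g$, and similarly for $P^{\sst 1}$. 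Strictness then makes the $A_{N,0}$ morphism equation \eq{il3eq12} collapse to a single identity: the only nonzero summand on the left forces $(i,j,\la_1,\mu_1)=(1,k+1,0,0)$, and the only nonzero summand on the right forces $l=k$, $k_r=r$ and $(\la_0,\mu_0)=(\la,\mu)$, so the morphism equation reduces to
\begin{equation*}
P^{\sst 0}\ci\m_k^{{\sst(0,1)}\la,\mu}=\m_k^{{\sst 0}\la,\mu}\ci\bigl(P^{\sst 0}\bigr)^{\ot k}\quad\text{for all admissible }(k,\la,\mu),
\end{equation*}
whose base case $(k,\la,\mu)=(1,0,0)$ is exactly \eq{il9eq6}.

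For the inductive step I would expand both sides over planar trees using Definitions~\ref{il7dfn2} and~\ref{il9dfn2} and propagate $P^{\sst 0}$ from the root to the leaves. On internal edges this is routine: \eq{il9eq7} converts $(-1)^nH^{\sst(0,1)}$ into $(-1)^{n+1}H^{\sst 0}$, the extra sign being absorbed by the differing $(-1)^n$ versus $(-1)^{n+1}$ conventions in the two tree-sum formulas. \textbf{The main obstacle will be the vertex identity}: one must show that $P^{\sst 0}\ci\m_{n,\geo}^{{\sst(0,1)}\la,\mu}(f_1,\dots,f_n)$ equals $\m_{n,\geo}^{{\sst 0}\la,\mu}\bigl(P^{\sst 0}(f_1),\dots,P^{\sst 0}(f_n)\bigr)$ up to terms that are absorbed by the inductive hypothesis on $\nm{(\la,\mu)}+n$. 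The plan for the vertex identity is to write $\m_{n,\geo}^{{\sst(0,1)}\la,\mu}(f_1,\dots,f_n)$ as $\Pi^{\sst(0,1)}$ of the sum over $\be$ of $VC(\oM_{n+1}^\ma(\be,J_t:t\in[0,1],f_1,\dots,f_n),\ldots)$, apply $P^{\sst 0}=-\Pi^{\sst 0}\ci\pd^{\sst[0,1]}$, and use the obvious identities $\Pi^{\sst 0}\pd^{\sst[0,1]}\Pi^{\sst 1}=0$ and $\Pi^{\sst 0}\pd^{\sst[0,1]}\Pi^{\sst 0}=\pd^{\sst 0}\Pi^{\sst 0}$ to reduce to $\Pi^{\sst 0}\pd^{\sst[0,1]}VC$. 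By \eq{il8eq2} this boundary expands (with no extra $\{0,1\}$-term since $n\ge 1$) as a sum of simplicial face terms plus cut terms. The $\Pi^{\sst 0}$-projection selects precisely those face terms in which some $f_i\ci F_j^{a_i}$ lies in $\X^{\sst 0}_*$; by Theorem~\ref{il8thm}$(N3)$, the isomorphism \eq{il8eq4}, and the matching of perturbation data in $(N3)$(b), each such surviving moduli space is canonically identified with $\oM_{n+1}^\ma(\be,J_0,g_1,\dots,g_n)$ with $g_l=P^{\sst 0}(f_l)$, and contributes exactly the expected summand of $\m_{n,\geo}^{{\sst 0}\la,\mu}(P^{\sst 0}(f_1),\dots,P^{\sst 0}(f_n))$. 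The cut terms, after applying $\Pi^{\sst 0}$ and invoking \eq{il8eq4} on the outer moduli space, yield nested virtual chains of strictly smaller $\nm{(\la,\mu)}+n$ that feed into the inductive hypothesis. Sign matching is routine via Theorem~\ref{il5thm8} and the orientation signs in~\eq{il8eq4}.

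Finally, weak homotopy equivalence of $\p^{\sst 0}$ reduces to showing that $(P^{\sst 0})_*\colon H^*(\Q\X^{\sst(0,1)}_N,\pd^{\sst(0,1)})\ra H^*(\Q\X^{\sst 0}_N,\pd^{\sst 0})$ is an isomorphism. By \eq{il8eq1} these cohomologies identify with $H^*\bigl([0,1]\t(L\amalg R),\{0,1\}\t(L\amalg R);\Q\bigr)$ and $H^*(L\amalg R;\Q)$ respectively, and $(P^{\sst 0})_*$ becomes the component at $\{0\}$ of the connecting homomorphism in the long exact sequence of the pair. Since $[0,1]$ deformation retracts to $\{0\}$, the pair $\bigl([0,1]\t(L\amalg R),\{0\}\t(L\amalg R)\bigr)$ is acyclic, and the long exact sequence of the triple $\{0\}\subset\{0,1\}\subset[0,1]$, crossed with $L\amalg R$, forces $(P^{\sst 0})_*$ to be an isomorphism. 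The argument for $\p^{\sst 1}$ is identical with $0$ and $1$ swapped.
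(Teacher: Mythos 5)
Your overall architecture matches the paper's: strictness is immediate, surjectivity of $P^{\sst 0},P^{\sst 1}$ follows from Theorem \ref{il8thm}$(N1)$(b) exactly as in Definition \ref{il9dfn3}, the strictness of $\p^{\sst 0}$ collapses \eq{il3eq12} to the commutation identity $P^{\sst 0}\ci\m_k^{{\sst(0,1)}\la,\mu}=\m_k^{{\sst 0}\la,\mu}\ci(P^{\sst 0}\t\cdots\t P^{\sst 0})$, the internal edges of the planar trees are handled by \eq{il9eq7} just as in the paper, and your identification of $(P^{\sst 0})_*$ with the connecting map for the pair $\bigl([0,1]\t(L\amalg R),\{0,1\}\t(L\amalg R)\bigr)$ via \eq{il8eq1} is the paper's weak homotopy equivalence argument. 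The divergence, and the problem, is in your proof of the vertex identity $P^{\sst 0}\ci\m_{k,\geo}^{{\sst(0,1)}\la,\mu}=\m_{k,\geo}^{{\sst 0}\la,\mu}\ci(P^{\sst 0})^{\ot k}$.

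Your route applies $P^{\sst 0}=-\Pi^{\sst 0}\ci\pd^{\sst[0,1]}$ to the virtual chain and expands via \eq{il8eq2}. First, a minor point: the cut terms do not feed any induction; with all inputs in $\X^{\sst(0,1)}_*$ their virtual chains consist of simplices lying in $\X^{\sst(0,1)}_*$, so $\Pi^{\sst 0}$ kills them outright. The genuine gap is in the surviving face terms. By $(N1)$(b) each $f_i$ has at most one face $g_i=f_i\ci F_{b_i}^{a_i}$ in $\X^{\sst 0}_*$, and when every $P^{\sst 0}(f_i)\ne 0$ you get one surviving face term for \emph{each} $i=1,\ldots,k$. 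But by \eq{il8eq4} together with $(N3)$(b), inserting the $t=0$ face of a \emph{single} $f_i$ already forces the whole configuration to $t=0$ and restricts every other input $f_l$ to its face $g_l$, so each of these $k$ face terms has virtual chain equal, up to sign, to the full $VC\bigl(\oM_{k+1}^\ma(\be,J_0,g_1,\ldots,g_k),\bev,\s^{\sst 0}_{\be,J_0,g_1,\ldots,g_k}\bigr)$. Your computation therefore yields $k$ signed copies of the one summand you need; no choice of $\pm1$ signs reconciles that when $k$ is even, and the paper deliberately leaves the signs in \eq{il8eq4}, \eq{il8eq6} unspecified, so the "routine sign matching" you invoke is not available. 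The paper's proof of \eq{il9eq9} sidesteps this entirely: it identifies $P^{\sst 0}$ on chains with the geometric slice $\{0\}\t_{i,[0,1],\cdot}(\cdot)$, and uses \eq{il8eq3} plus the perturbation-data compatibility of Theorem \ref{il8thm}$(N3)$(b) to argue that slicing at $t=0$ commutes with taking virtual chains, producing a single copy with the sign $(-1)^{k+\sum_jb_j}$ computed once in \eq{il8eq3}/\eq{il8eq7}. If you insist on arguing through $\pd VC$, you must analyse how the $t=0$ locus, which before perturbation lies in corners of the Kuranishi neighbourhoods where the $k$ listed boundary faces coincide, is distributed among those faces after perturbation; your sketch does not supply this, and without it the step fails.
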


\begin{proof} Combining \eq{il7eq1}, \eq{il9eq1} and \eq{il9eq6},
and noting the difference in signs $(-1)^n$, $(-1)^{n+1}$ in the
first lines of \eq{il7eq1} and \eq{il9eq1} gives
\begin{equation}
\m_{1,\geo}^{{\sst 0}\,0,0}\ci P^{\sst 0}=P^{\sst
0}\ci\m_{1,\geo}^{{\sst(0,1)}\,0,0}\quad\text{and}\quad
\m_{1,\geo}^{{\sst 1}\,0,0}\ci P^{\sst 1}=P^{\sst
1}\ci\m_{1,\geo}^{{\sst(0,1)}\,0,0}.
\label{il9eq8}
\end{equation}
We shall prove the analogue of \eq{il9eq8} for $\m_{k,\geo}^{{\sst
0}\,\la,\mu},\m_{k,\geo}^{{\sst 1}\,\la,\mu}$,
$(k,\la,\mu)\ne(1,0,0)$, using equations \eq{il8eq3} and
\eq{il8eq5}. To do this we relate $P^{\sst 0},P^{\sst 1}$ to the
fibre products $\{0\}\t_{i,[0,1],\ldots}\cdots$,
$\{1\}\t_{i,[0,1],\ldots}\cdots$ used in \eq{il8eq3}
and~\eq{il8eq5}.

Suppose $f:\De_a\ra [0,1]\t(L\amalg R)$ lies in $\X^{\sst(0,1)}_N$
for $a>0$. Then $\pd^{\sst[0,1]}f=\sum_{b=0}^a(-1)^bf\ci F_b^a$. By
Theorem \ref{il8thm}$(N1)$(b), $f\ci F_b^a\in\X^{\sst 0}_N$ for at
most one $b=0,\ldots,a$. Suppose $f\ci F_b^a\in\X^{\sst 0}_N$. Then
$P^{\sst 0}(f)=-\Pi^{\sst 0}(\pd^{\sst[0,1]}f)=(-1)^{1+b}f\ci
F_b^a$. But as in the proof of Theorem \ref{il8thm}, we have $\{0\}
\t_{i,[0,1],\pi_1\ci f}\De_a\cong (-1)^{1+b}\De_{a-1}$, and the
restriction of $f$ to this $\De_{a-1}$ is $f\ci F_b^a$. Thus it is
natural to identify $P^{\sst 0}(f)$ with $(\{0\}\t_{i,[0,1],\pi_1\ci
f}\De_a,f\ci\pi_{\De_a})$, as signed singular simplices. This is
also valid if $f\ci F_b^a\notin\X^{\sst 0}_N$ for any
$b=0,\ldots,a$, since then $P^{\sst 0}(f)=0$ and
$\{0\}\t_{i,[0,1],\pi_1\ci f}\De_a=\emptyset$.

Therefore $P^{\sst 0}:\Q\X^{\sst(0,1)}_N\ra\Q\X^{\sst 0}_N$ is
essentially equivalent, with signs, to the fibre product
$\{0\}\t_{i,[0,1],\ldots}\cdots$, that is, $P^{\sst 0}$ takes
$f:\De_a\ra[0,1]\t(L\amalg R)$ to $f\ci\pi_{\De_a}:\{0\}
\t_{i,[0,1],\pi_1\ci f}\De_a\ra[0,1]\t(L\amalg R)$. In the same way,
$P^{\sst 1}:\Q\X^{\sst(0,1)}_N\ra\Q\X^{\sst 1}_N$ is essentially
equivalent, with signs, to the fibre product
$\{1\}\t_{i,[0,1],\ldots}\cdots$.

Now suppose as in $(N3)$ that $g_1\in\X^{\sst 0}_N,\ldots,
g_k\in\X^{\sst 0}_N$ and $f_1\in\X^{\sst(0,1)}_N,\ldots,
f_k\in\X^{\sst (0,1)}_N$ with $f_j:\De_{a_j}\ra[0,1]\t(L\amalg R)$,
$g_j:\De_{a_j-1}\ra\{0\}\t(L\amalg R)$ and $g_j=f_j\ci
F_{b_j}^{a_j}$ for $b_j=0,\ldots,a_j$ and $j=1,\ldots,k$. Then
$P^{\sst 0}(f_j)=(-1)^{1+b_j}g_j$, as above. Let $k\ge 0$ and
$(\la,\mu)\in{\cal G}$ with $\nm{(\la,\mu)}+k-1\le N$ and
$(k,\la,\mu)\ne(1,0,0)$. Then
\begin{align}
P^{\sst 0}\!\ci\!\m_{k,\geo}^{{\sst(0,1)}\la,\mu}&(f_1,\ldots,f_k)
\!= \!\!\!\!\!\!\!\!\!\!\!
\sum_{\begin{subarray}{l}
\be\in H_2(M,\io(L);\Z):\; [\om]\cdot\be=\la,\;\> \mu_L(\be)=2\mu,\\
\oM_{k+1}^\ma(\be,J_t:t\in[0,1],f_1,\ldots,f_k)\ne\emptyset\end{subarray}
\!\!\!\!\!\!\!\!\!\!\!\!\!\!\!\!\!\!\!\!\!\!\!\!\!\!\!\!\!\!\!\!\!\!\!\!\!
\!\!\!\!\!\!\!\!\!\!\!\!\!\!\!\!\!\!\!\!\!\!}
\!\!\!\!\!\!\!\!\!\!\!\!\!
\begin{aligned}[t]P^{\sst 0}\ci\Pi^{\sst(0,1)}\bigl[VC\bigl(\oM_{k+1}^\ma
(\be,J_t:t\!\in\![0,1],f_1,\ldots,f_k)&,\\
\bev,\s_{\be,J_t:t\in[0,1],f_1,\ldots,f_k}\bigr)\bigr]&\\
\end{aligned}
\nonumber\\
&\!\!\!\!\!\!\!\!\!\!\!\!\!\!\!\!=(-1)^{k+\sum_{j=1}^kb_j}
\!\!\!\!\!\!\!\!\!
\sum_{\begin{subarray}{l}
\be\in H_2(M,\io(L);\Z):[\om]\cdot\be=\la,\;\> \mu_L(\be)=2\mu,\\
\oM_{k+1}^\ma(\be,J_0,g_1,\ldots,g_k)\ne\emptyset\end{subarray}
\!\!\!\!\!\!\!\!\!\!\!\!\!\!\!\!\!\!\!\!\!\!\!\!\!\!\!\!\!
\!\!\!\!\!\!\!\!\!\!\!\!\!\!\!\!} \!\!\!\!\!\!\!\!
VC\bigl(\oM_{k+1}^\ma(\be,J_0,g_1,\ldots,g_k), \bev,\s^{\sst
0}_{\be,J_0,g_1,\ldots,g_k}\bigr)
\label{il9eq9}\\
&\!\!\!\!\!\!\!\!\!\!\!\!\!\!\!\!=(-1)^{k+\sum_{j=1}^kb_j}
\m_{k,\geo}^{{\sst 0}\la,\mu}(g_1,\ldots,g_k)=\m_{k,\geo}^{{\sst
0}\la,\mu}\bigl(P^{\sst 0}(f_1),\ldots,P^{\sst 0}(f_k)\bigr),
\nonumber
\end{align}
using \eq{il9eq1}, \eq{il7eq1} in the first and third steps, and
$P^{\sst 0}(f_j)\!=\!(-1)^{1+b_j}g_j$ in the fourth.

In the second, most difficult step of \eq{il9eq9} we use the
essential equivalence of $P^{\sst 0}$ with the fibre product
$\{0\}\t_{i,[0,1],\ldots}\cdots$, equations \eq{il8eq3} and
\eq{il8eq4}, and the identification of perturbation data
$\s_{\be,J_t:t\in[0,1],f_1, \ldots,f_{j-1},g_j,f_{j+1},\ldots,f_k}$,
$\s^{\sst 0}_{\be,J_0,g_1,\ldots,g_k}$ under \eq{il8eq4} in Theorem
\ref{il8thm}$(N3)$. The idea here is that because of this
compatibility of perturbation data, the two operations of taking
fibre product $\{0\}\t_{i,[0,1],\ldots}\cdots$ (basically $P^{\sst
0}$), and taking virtual chains using perturbation data, commute
when applied to $\oM_{k+1}^\ma(\be,J_t:t\in[0,1],f_1,\ldots,f_k)$.
That is, we can take virtual chains first and then apply $P^{\sst
0}$, giving the r.h.s.\ of the first line of \eq{il9eq9}. Or we can
apply $\{0\}\t_{i,[0,1],\ldots}\cdots$ first, giving
$(-1)^{k+\sum_{j=1}^kb_j}\oM_{k+1}^\ma(\be,J_0,g_1,\ldots,g_k)$ by
\eq{il8eq3}, and then take virtual chains, giving the second line of
\eq{il9eq9}. Since the two operations commute, the two expressions
are equal.

Now let $T$ be a rooted planar tree with $k$ leaves, and
$(\bs\la,\bs\mu)$ be as in Definition \ref{il9dfn2}. Then
Definitions \ref{il7dfn2} and \ref{il9dfn2} define $\m_{k,T}^{{\sst
0}(\bs\la,\bs\mu)}: {\buildrel {\ulcorner\,\,\,\text{$k$ copies }
\,\,\,\urcorner} \over {\vphantom{m}\smash{\Q\X^{\sst 0}_N\t\cdots\t
\Q\X^{\sst 0}_N}}}\ra\Q\X^{\sst(0,1)}_N$ and
$\m_{k,T}^{{\sst(0,1)}(\bs\la,\bs\mu)}: {\buildrel
{\ulcorner\,\,\,\text{$k$ copies } \,\,\,\urcorner} \over
{\vphantom{m}\smash{\Q\X^{\sst(0,1)}_N\t\cdots\t
\Q\X^{\sst(0,1)}_N}}}\ra\Q\X^{\sst(0,1)}_N$, where $\m_{k,T}^{{\sst
0}(\bs\la,\bs\mu)}$ assigns $\m_{n,\geo}^{{\sst 0}\la_v,\mu_v}$ to
an internal vertex and $(-1)^{n+1}H^{\sst 0}$ to an internal edge,
and $\m_{k,T}^{{\sst(0,1)}(\bs\la,\bs\mu)}$ assigns
$\m_{n,\geo}^{{\sst (0,1)}\la_v,\mu_v}$ to an internal vertex and
$(-1)^nH^{\sst(0,1)}$ to an internal edge.

Equation \eq{il9eq9} gives $P^{\sst 0}\ci\m_{n,\geo}^{{\sst(0,1)}
\la_v,\mu_v}=\m_{n,\geo}^{{\sst 0}\la_v,\mu_v}\ci(P^{\sst 0}\t
\cdots\t P^{\sst 0})$, and \eq{il9eq7} implies that $P^{\sst 0}\ci
(-1)^nH^{\sst(0,1)}=(-1)^{n+1}H^{\sst 0}\ci P^{\sst 0}$. Combining
these we see that $P^{\sst 0}\ci\m_{k,T}^{{\sst
(0,1)}(\bs\la,\bs\mu)}=\m_{k,T}^{{\sst 0}(\bs\la,\bs\mu)}\ci(P^{\sst
0}\t \cdots\t P^{\sst 0})$. Summing this over $T,(\bs\la,\bs\mu)$
and using \eq{il9eq8} now shows that $P^{\sst
0}\ci\m_k^{\smash{{\sst(0,1)}\la,\mu}}=\m_k^{\smash{{\sst
0}\la,\mu}}\ci(P^{\sst 0}\t \cdots\t P^{\sst 0})$ for all $k\ge 0$
and $(\la,\mu)\in{\cal G}$ with $\nm{(\la,\mu)}+k-1\le N$. This and
the definition of $\p^{\sst 0}$ imply that $\p^{\sst 0}$ is a {\it
strict\/ $A_{N,0}$ morphism}, as we have to prove.

From Definition \ref{il9dfn2}, $P^{\sst 0}:\Q\X^{\sst(0,1)}_N\ra
\Q\X^{\sst 0}_N$ is surjective, and $P^{\sst
0}_*:H^*\bigl(\Q\X^{\sst(0,1)}_N,\ab\pd^{\sst(0,1)}\bigr)\ab\ra
H^*\bigl(\Q\X^{\sst 0}_N,\pd^{\sst 0}\bigr)$ is an isomorphism. As
$\p_1^{{\sst 0}0,0}=P^{\sst 0}$, $\m^{{\sst 0}0,0}_1=(-1)^n\pd^{\sst
0}$ and $\m^{{\sst (0,1)}0,0}_1=(-1)^{n+1}\pd^{\sst(0,1)}$, it
follows that $\p^{\sst 0}$ is {\it surjective}, and a {\it weak
homotopy equivalence}, as we have to prove. The proof for $\p^{\sst
1}$ is the same, apart from sign differences $P^{\sst
1}(f_j)=(-1)^{b_j}g_j$ and between \eq{il8eq3} and~\eq{il8eq5}.
\end{proof}

By the $A_{N,0}$ version of Corollary \ref{il3cor2}, we deduce:

\begin{cor} We can construct explicit $A_{N,0}$ morphisms
${\mathfrak i}^{\sst 0}:(\Q\X^{\sst 0}_N,{\cal G},\m^{\sst
0})\ra(\Q\X^{\sst(0,1)}_N,{\cal G},\m^{\sst(0,1)})$ and\/
${\mathfrak i}^{\sst 1}:(\Q\X^{\sst 1}_N,{\cal G},\m^{\sst 1})\ra
(\Q\X^{\sst(0,1)}_N,{\cal G},\m^{\sst(0,1)})$ which are homotopy
inverses for $\p^{\sst 0},\p^{\sst 1}$ respectively, using sums over
planar trees. Hence $\f^{\sst 01}=\p^{\sst 1}\ci{\mathfrak i}^{\sst
0}:(\Q\X^{\sst 0}_N,{\cal G},\m^{\sst 0})\ra (\Q\X^{\sst 1}_N,{\cal
G},\m^{\sst 1})$ is an $A_{N,0}$ morphism and a homotopy
equivalence, with homotopy inverse $\f^{\sst 10}=\p^{\sst
0}\ci{\mathfrak i}^{\sst 1}$.
\label{il9cor}
\end{cor}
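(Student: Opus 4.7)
The plan is to apply the $A_{N,0}$ analogue of Corollary \ref{il3cor2} (whose validity over $A_{N,0}$ algebras is asserted at the end of \S\ref{il37}) to the strict, surjective weak homotopy equivalences $\p^{\sst 0}$ and $\p^{\sst 1}$ produced by Theorem \ref{il9thm2}. Concretely, I would first imitate Definition \ref{il3dfn8} in the $A_{N,0}$ setting: choose a graded subspace $B^{\sst 0}\subset\Q\X^{\sst(0,1)}_N$ such that $\p^{\sst 0}_1=P^{\sst 0}$ restricts to an isomorphism $B^{\sst 0}\!\to\!\Q\X^{\sst 0}_N$ and such that the complementary subspace $C^{\sst 0}$ with $C^{\sst 0}\op\m_1^{\smash{{\sst(0,1)}0,0}}(C^{\sst 0})=\Ker P^{\sst 0}$ satisfies the hypotheses of Definition \ref{il3dfn7}. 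The existence of such a splitting is guaranteed because $P^{\sst 0}$ is surjective with $P_*^{\sst 0}$ an isomorphism, by the same linear algebra used to choose $A^{\sst(0,1)}$ in Definition \ref{il9dfn3}. The corresponding operator $H^{\sst(0,1)}$ plays the role of $H$ in the minimal-model construction.

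Next, I would build ${\mathfrak i}^{\sst 0}$ by the planar-tree formula: set $\tilde{\mathfrak i}^{\sst 0}_k=\sum_{T,(\bs\la,\bs\mu)}\tilde{\mathfrak i}^{\sst 0}_{k,T,(\bs\la,\bs\mu)}$, where the sum ranges over rooted planar trees $T$ with $k$ leaves and decorations $(\bs\la,\bs\mu)$ as in Definition \ref{il7dfn2}, and the operator assigned to $T$ takes values in $B^{\sst 0}\subset\Q\X^{\sst(0,1)}_N$, with $\m_n^{\smash{{\sst(0,1)}\la_v,\mu_v}}$ at internal vertices, the inclusion $B^{\sst 0}\hookra\Q\X^{\sst(0,1)}_N$ at leaf edges, and $-H^{\sst(0,1)}$ at both the root edge and internal edges. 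Define ${\mathfrak i}^{\sst 0}_k=\tilde{\mathfrak i}^{\sst 0}_k\ci\bigl((P^{\sst 0}|_{B^{\sst 0}})^{-1}\!\t\!\cdots\!\t\!(P^{\sst 0}|_{B^{\sst 0}})^{-1}\bigr)$. The $A_{N,0}$ version of Theorem \ref{il3thm2} then gives that ${\mathfrak i}^{\sst 0}$ is an $A_{N,0}$ morphism and a homotopy equivalence, and the computation in Definition \ref{il3dfn8} (which uses only that $\p^{\sst 0}$ is strict, making $\Pi_{B^{\sst 0}}\ci\m^{\smash{{\sst(0,1)}\la,\mu}}_k(\cdots,-H^{\sst(0,1)}(-),\cdots)=0$) shows $\p^{\sst 0}\ci{\mathfrak i}^{\sst 0}=\id_{(\Q\X^{\sst 0}_N,{\cal G},\m^{\sst 0})}$, so ${\mathfrak i}^{\sst 0}$ is a homotopy inverse for $\p^{\sst 0}$. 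The construction of ${\mathfrak i}^{\sst 1}$ is identical, using a splitting adapted to $P^{\sst 1}$.

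For the second assertion, compose: $\f^{\sst 01}=\p^{\sst 1}\ci{\mathfrak i}^{\sst 0}$ and $\f^{\sst 10}=\p^{\sst 0}\ci{\mathfrak i}^{\sst 1}$ are $A_{N,0}$ morphisms by the composition operation of Definition \ref{il3dfn14}. Using $\p^{\sst 0}\ci{\mathfrak i}^{\sst 0}=\id$ and $\p^{\sst 1}\ci{\mathfrak i}^{\sst 1}=\id$ together with the 2-categorical composition of homotopies sketched after Definition \ref{il3dfn4} (which transfers verbatim to the $A_{N,0}$ level), one computes
\begin{equation*}
\f^{\sst 10}\ci\f^{\sst 01}=\p^{\sst 0}\ci{\mathfrak i}^{\sst 1}\ci\p^{\sst 1}\ci{\mathfrak i}^{\sst 0}\simeq \p^{\sst 0}\ci\id\ci{\mathfrak i}^{\sst 0}=\p^{\sst 0}\ci{\mathfrak i}^{\sst 0}=\id,
\end{equation*}
where the homotopy ${\mathfrak i}^{\sst 1}\ci\p^{\sst 1}\simeq\id_{\Q\X^{\sst(0,1)}_N}$ comes from the fact that ${\mathfrak i}^{\sst 1}$ is a homotopy inverse for $\p^{\sst 1}$; similarly $\f^{\sst 01}\ci\f^{\sst 10}\simeq\id$. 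Hence $\f^{\sst 01}$ is an $A_{N,0}$ homotopy equivalence with homotopy inverse $\f^{\sst 10}$.

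The main obstacle is not logical but bookkeeping: verifying that the planar-tree sums defining $\tilde{\mathfrak i}^{\sst 0}_k$ and the associated homotopy $\H^{\sst 0}:{\mathfrak i}^{\sst 0}\ci\p^{\sst 0}\Rightarrow\id$ only involve operations $\m^{\smash{{\sst(0,1)}\la_v,\mu_v}}_n$ with $\nm{(\la_v,\mu_v)}+n-1\le N$, so that the formulas make sense at the $A_{N,0}$ level, and that the degree/sign conventions $(-1)^{n+1}$ for $H$ in Definition \ref{il7dfn2} versus $(-1)^n$ for $H^{\sst(0,1)}$ in Definition \ref{il9dfn2} propagate correctly through the tree compositions. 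Both follow from the inductive bound $\nm{(\la_v,\mu_v)}+n-1\le\nm{(\la,\mu)}+k-1$ used throughout \S\ref{il7}--\S\ref{il9}, and the sign differences cancel exactly as they did in the passage from \eq{il7eq1} to \eq{il9eq8}.
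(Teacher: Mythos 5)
Your proposal is correct and follows essentially the same route as the paper, which simply invokes the $A_{N,0}$ version of Corollary \ref{il3cor2} applied to the strict, surjective weak homotopy equivalences $\p^{\sst 0},\p^{\sst 1}$ from Theorem \ref{il9thm2}; your write-up just unpacks the construction of Definition \ref{il3dfn8} (choice of splitting adapted to $P^{\sst 0}$, planar-tree sums, $\p^{\sst 0}\ci{\mathfrak i}^{\sst 0}=\id$) and the standard 2-categorical composition argument, exactly as intended. The only cosmetic point is that your operator playing the role of $H$ is determined by the new splitting adapted to $\Ker P^{\sst 0}$ and should not be confused with the $H^{\sst(0,1)}$ of Definition \ref{il9dfn2} used to build $\m^{\sst(0,1)}$.
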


This is important, as it shows that the $A_{N,0}$ algebras we
associated to $L$ in \S\ref{il7} are independent of the almost
complex structure $J$ and other choices, up to homotopy equivalence.
We can now compare our proof of this with analogous results in
Fukaya et al.\ \cite[\S 19.1 \& \S 30.9]{FOOO}. In effect, in
\cite[Th.~19.1]{FOOO} Fukaya et al.\ construct a version $\f$ of our
homotopy equivalence $\f^{\sst 01}$ directly, without introducing an
intermediate $A_{N,0}$ algebra $(\Q\X^{\sst(0,1)}_N,{\cal
G},\m^{\sst(0,1)})$ as we do.

Since our ${\mathfrak i}^{\sst 0}$ involves a sum over planar trees,
one would expect their $\f$ also to involve sums over planar trees,
and it does, though this is not made very explicit. In
\cite[Def.~19.8]{FOOO}, Fukaya et al.\ define complicated moduli
spaces $\M_{k+1}^\ma(M',L',\{J_\rho\}_\rho:\be;{\rm top}(\rho))$
which are in effect disjoint unions over planar trees $T$ with $k$
leaves of multiple fibre products over $T$ of Kuranishi spaces,
where to each internal vertex of $T$ we associate
$\oM_{n+1}^\ma(\be_v,J_t:t\in[0,1])$ in our notation, and to each
internal edge of $T$ we associate $\{(s,t)\in[0,1]^2:s\le t\}$. Here
the fibre product `$\{(s,t)\in[0,1]^2:s\le
t\}\t_{\pi_1,[0,1],\ldots}\cdots$' is an analogue of our $H$, an
explicit partial inverse for~$\pd$.

All these sums and products over trees happen at the level of
Kuranishi spaces, not complexes $\Q\X_i$. To extend them to
complexes, Fukaya et al.\ \cite[Prop.~19.14, \S 30.9]{FOOO} choose
perturbation data $\s^{{\rm top}(\rho)}$ for the moduli spaces
$\M_{k+1}^\ma(M',L',\{J_\rho\}_\rho:\be;{\rm top}(\rho))$, and
further chain complexes $\Q\X_i'$, satisfying many compatibility
conditions. This adds an extra layer of complexity to the proof. We
believe our method in \S\ref{il8}--\S\ref{il9} is preferable to that
of \cite{FOOO}, as it is shorter and more transparent.

\section{Homotopies between $A_{N,0}$ morphisms}
\label{il10}

In \S\ref{il7} we constructed $A_{N,0}$ algebras $(\Q\X_N,{\cal
G},\m)$ from $L$ using a choice of almost complex structure $J$, and
in \S\ref{il9}, given two such $A_{N,0}$ algebras $(\Q\X^{\sst
0}_N,{\cal G},\m^{\sst 0}),\ab(\Q\X^{\sst 1}_N,\ab{\cal
G},\ab\m^{\sst 1})$ from $J_0,J_1$, we constructed a homotopy
equivalence $\f^{\sst 01}:(\Q\X^{\sst 0}_N,{\cal G},\m^{\sst 0})\ra
(\Q\X^{\sst 1}_N,{\cal G},\m^{\sst 1})$. We will now show that such
$\f^{\sst 01}$ are {\it unique up to homotopy}, and also that they
{\it form commutative triangles up to homotopy}.

\subsection{Uniqueness of $\f^{\sst 01}$ in Corollary \ref{il9cor}
up to homotopy}
\label{il101}

Let $J_0,J_1$ be complex structures on $M$ compatible with $\om$.
Fix $N\ge 0$, $N'=N(N+2)$ and $\cal G$, which must satisfy some
conditions below, once and for all. Suppose $(\Q\X^{\sst 0}_N,{\cal
G},\m^{\sst 0})$, $(\Q\X^{\sst 1}_N,{\cal G},\m^{\sst 1})$ are
possible outcomes for the $A_{N,0}$ algebra of Theorem \ref{il7thm}
with $J=J_0$ and $J=J_1$ respectively, and $N,N',{\cal G}$ as above.

Suppose $J_t:t\in[0,1]$ and $\hat J_t:t\in[0,1]$ are smooth
1-parameter families of almost complex structures on $M$ compatible
with $\om$ interpolating between $J_0$ and $J_1$, so that $\hat
J_0=J_0$ and $\hat J_1=J_1$. Let $(\Q\X^{\sst(0,1)}_N,{\cal
G},\m^{\sst(0,1)})$, $(\Q\hat\X{}^{\sst(0,1)}_N,{\cal
G},\hat\m^{\sst(0,1)})$ be possible outcomes for the $A_{N,0}$
algebra of Theorem \ref{il9thm1} using $J_t:t\in[0,1]$ and $\hat
J_t:t\in[0,1]$, and $\p^{\sst 0},\p^{\sst 1},{\mathfrak i}^{\sst
0},\f^{\sst 01},\hat\p^{\sst 0},\hat\p^{\sst 1},\hat{\mathfrak
i}^{\sst 0},\hat\f^{\sst 01}$ corresponding outcomes for the
$A_{N,0}$ morphisms $\p^{\sst 0},\p^{\sst 1},{\mathfrak i}^{\sst
0},\f^{\sst 01}$ of Theorem \ref{il9thm2} and
Corollary~\ref{il9cor}.

Then $\f^{\sst 01}=\p^{\sst 1}\ci{\mathfrak i}^{\sst 0}$ and
$\hat\f^{\sst 01}=\hat\p^{\sst 1}\ci\hat{\mathfrak i}^{\sst 0}$ are
both $A_{N,0}$ morphisms $(\Q\X^{\sst 0}_N,{\cal G},\m^{\sst 0})\ra
(\Q\X^{\sst 1}_N,{\cal G},\m^{\sst 1})$. We shall construct a {\it
homotopy} $\H:\f^{\sst 01}\Ra\hat\f^{\sst 01}$. This implies that
the $A_{N,0}$ morphism $\f^{\sst 01}:(\Q\X^{\sst 0}_N,{\cal
G},\m^{\sst 0})\ra(\Q\X^{\sst 1}_N,{\cal G},\m^{\sst 1})$ in
Corollary \ref{il9cor} is {\it independent of choices up to
homotopy}, and thus that the $A_{N,0}$ algebra $(\Q\X_N,{\cal
G},\m)$ in Theorem \ref{il7thm} is independent of $J$ and other
choices {\it up to canonical homotopy equivalence}, rather than just
up to homotopy equivalence.

To construct $\H$ we need to choose a 2-parameter family of almost
complex structures $J_s:s\in S$ interpolating between
$J_t:t\in[0,1]$ and $\hat J_t:t\in[0,1]$. The most obvious way to do
this is, as in Fukaya et al.\ \cite[\S 19.2]{FOOO}, is to take
$S=[0,1]^2$, with boundary conditions $J_{(0,t)}=J_t$,
$J_{(1,t)}=\hat J_t$, $J_{(s,0)}=J_0$ and $J_{(s,1)}=J_1$ for
$s,t\in[0,1]$. But for us there is a better choice: we take $S$ to
be the {\it semicircle}
\begin{equation*}
S=\bigl\{(x,y)\in\R^2:0\le x^2+y^2\le 1,\;\> y\ge 0\bigr\},
\end{equation*}
and $J_{(x,y)}:(x,y)\in S$ a smooth family of almost complex
structures on $M$ compatible with $\om$, with the boundary
conditions
\begin{equation*}
J_{(-1,0)}=J_0,\;\> J_{(1,0)}=J_1,\;\> J_{(2t-1,0)}=J_t,\;\>
J_{(-\cos\pi t,\sin\pi t)}=\hat J_t,\; t\in[0,1].
\end{equation*}

Here we regard $S$ as a 2-manifold with boundary and corners. It has
two corners $(\mp 1,0)$ to which we assign $J_0,J_1$, and two edges,
a straight edge $E$ to which we assign $J_t:t\in[0,1]$, and a
semicircle $\hat E$ to which we assign $\hat J_t:t\in[0,1]$. This is
illustrated in Figure \ref{il10fig1}(a). The semicircle is
preferable because our method will associate an $A_{N,0}$ algebra to
each face, edge and vertex of $S$. Using the square $[0,1]^2$ we
would have to deal with $1+4+4=9$ $A_{N,0}$ algebras, but the
semicircle gives only $1+2+2=5$ $A_{N,0}$ algebras, leading to a
simpler proof.

\begin{figure}[htb]
\centerline{
\begin{footnotesize}
\raisebox{-55pt}{$\begin{xy}
0;<1.7mm,0mm>:
,(0,-1.5)*{J_0}
,(20,-1.5)*{J_1}
,(10,-1.5)*{J_t:t\in[0,1]}
,(10,11.5)*{\hat J_t:t\in[0,1]}
,(10,3.5)*{J_{(x,y)}:(x,y)\in S}
,(0,0)*{\bullet}
,(20,0)*{\bullet}
,{\ellipse_{}}
,\ar@{-}(0,0);(20,0)
\end{xy}$}\qquad
$\displaystyle \xymatrix@!0@R=30pt@C=80pt{
&(\Q\hat\X{}^{\sst(0,1)}_N,{\cal G},\hat\m^{\sst(0,1)})
\ar@/_2pc/[ddl]_{\hat\p^{\sst 1}} \ar@/_-2pc/[ddr]^{\hat\p^{\sst 1}}
\\
& (\Q\X^{\sst S}_N,{\cal G},\m^{\sst S}) \ar[u]^{\hat\p^{\sst(0,1)}}
\ar[d]_{\p^{\sst(0,1)}}
\\
(\Q\X^{\sst 0}_N,{\cal G},\m^{\sst 1})
&
(\Q\X^{\sst(0,1)}_N,{\cal G},\m^{\sst(0,1)})
\ar[l]_{\p^{\sst 0}} \ar[r]^{\p^{\sst 1}}
&
(\Q\X^{\sst 1}_N,{\cal G},\m^{\sst 1})
}$
\end{footnotesize}}
\caption{\!\!\!(a) $J_s:s\in S$\qquad (b) $A_{N,0}$ algebras and
morphisms}
\label{il10fig1}
\end{figure}
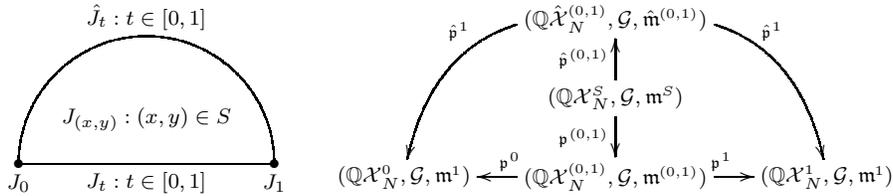

We need the family $J_s:s\in S$ to be compatible with $\cal G$ in
the sense that if $\be\in H_2(M,\io(L);\Z)$ and
$\oM_1^\ma(\be,J_s:s\in S)\ne\emptyset$ then $\bigl([\om]\cdot\be,
\ha\mu_L(\be)\bigr)\in{\cal G}$, generalizing condition (ii) of
\S\ref{il6} and \S\ref{il8}. One way to ensure this is always
possible is to choose $\cal G$ as follows: let $J_t:t\in\cal T$ be a
smooth family of complex structures on $M$ compatible with $\om$,
where $\cal T$ is a compact, connected, simply-connected manifold
with boundary. We think of $J_t:t\in\cal T$ as a {\it large} family,
with $\dim{\cal T}\gg 0$, the set of all almost complex structures
we are interested in. Define ${\cal G}\subset[0,\iy)\t\Z$ to be the
unique smallest subset satisfying the conditions:
\begin{itemize}
\setlength{\itemsep}{0pt}
\setlength{\parsep}{0pt}
\item[(i)] $\cal G$ is closed under addition with ${\cal
G}\cap(\{0\}\t\Z)=\{(0,0)\}$, and ${\cal G}\cap([0,C]\t\Z)$ is
finite for any $C\ge 0$; and
\item[(ii)] If $\be\in H_2(M,\io(L);\Z)$,
$\oM_1^\ma(\be,J_t:t\in{\cal T})\ne\emptyset$ then
$\bigl([\om]\cdot\be, \ha\mu_L(\be)\bigr)\in{\cal G}$.
\end{itemize}
Then $\cal G$ satisfies conditions (i),(ii) in \S\ref{il6} and
\S\ref{il8} and upon $J_s:s\in S$ above provided all the (families
of) complex structures $J$, $J_t:t\in[0,1]$, $J_s:s\in S$ that we
choose lie in $\cal T$. This problem of dependence of $\cal G$ on
$J$ will disappear in \S\ref{il11}, since although we need to
specify a particular $\cal G$ to define an $A_{N,K}$ algebra, we do
not need to specify $\cal G$ to define a gapped filtered $A_\iy$
algebra, there just has to exist some suitable~$\cal G$.

Our next result generalizes the material of \S\ref{il8}--\S\ref{il9}
to our 2-parameter family $J_s:s\in S$. To write the details out in
full would take pages, but the proof involves few new ideas, so we
will just briefly indicate how to modify sections \ref{il8}
and~\ref{il9}.

\begin{thm} In the situation above, generalizing Theorem
{\rm\ref{il9thm1}} we can define an $A_{N,0}$ algebra $(\Q\X^{\sst
S}_N,{\cal G},\m^{\sst S}),$ and generalizing Theorem
{\rm\ref{il9thm2}} we can define strict, surjective $A_{N,0}$
morphisms $\p^{\sst(0,1)}:(\Q\X^{\sst S}_N,{\cal G},\m^{\sst S})\ra
(\Q\X^{\sst(0,1)}_N,{\cal G},\m^{\sst(0,1)})$ and\/
$\hat\p^{\sst(0,1)}:(\Q\X^{\sst S}_N,{\cal G},\m^{\sst S})\ra
(\Q\hat\X{}^{\sst(0,1)}_N,{\cal G},\hat\m^{\sst(0,1)})$ which are
weak homotopy equivalences, such that Figure {\rm\ref{il10fig1}(b)}
is a commutative diagram of\/ $A_{N,0}$ morphisms, that is,
\begin{equation}
\p^{\sst 0}\ci\p^{\sst(0,1)}=\hat\p^{\sst
0}\ci\hat\p^{\sst(0,1)}\quad\text{and}\/\quad \p^{\sst
1}\ci\p^{\sst(0,1)}=\hat\p^{\sst 1}\ci\hat\p^{\sst(0,1)}.
\label{il10eq1}
\end{equation}
\label{il10thm1}
\end{thm}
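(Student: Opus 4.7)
The plan is to mimic the strategy of Sections \ref{il8}--\ref{il9}, but with the one-dimensional parameter space $[0,1]$ replaced by the semicircle $S$, which has two boundary edges $E,\hat E$ and two corner points $(\mp 1,0)$ carrying $J_0,J_1$. Concretely, I would first generalize Theorem \ref{il8thm} to a 2-dimensional parameter manifold with corners. For each $k,\be,f_1,\ldots,f_k$ with $f_i:\De_{a_i}\ra S\t(L\amalg R)$ I form $\oM_{k+1}^\ma(\be,J_s:s\in S,f_1,\ldots,f_k)$ as in Definition \ref{il4dfn7} (with ${\cal T}=S$) and orient it as in \S\ref{il55}. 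Using a four-fold induction on $(g,j,k,l)$ as in the proof of Theorem \ref{il6thm}, I choose finite sets $\X^{\sst S}_0\subset\cdots\subset\X^{\sst S}_{N'}$ of simplices in $S\t(L\amalg R)$ together with perturbation data $\{\s_{\be,J_s:s\in S,f_1,\ldots,f_k}\}$ compatible under the boundary formulas \eq{il4eq29}. The $\X^{\sst S}_i$ split as
\[
\X^{\sst S}_i=\X^{\sst(-1,0)}_i\amalg\X^{\sst(1,0)}_i\amalg\X^{\sst E^\circ}_i\amalg\X^{\sst\hat E^\circ}_i\amalg\X^{\sst S^\circ}_i,
\]
by which stratum of $S$ the interior of the simplex maps into, subject to the analogue of Condition \ref{il4cond} on submersivity near each lower-dimensional stratum. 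Crucially, I impose $(N3)$-type compatibility: along $E$ the data agree with the choices used to build $(\Q\X^{\sst(0,1)}_N,{\cal G},\m^{\sst(0,1)})$, and along $\hat E$ with those used for $(\Q\hat\X{}^{\sst(0,1)}_N,{\cal G},\hat\m^{\sst(0,1)})$; this is achieved by running the induction starting from the two edge choices as in Theorem \ref{il8thm}$(N3)$.

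Next, working with the relative complex $\Q\X^{\sst S^\circ}_N\subseteq\Q\X^{\sst S}_N$ (singular chains on $S\t(L\amalg R)$ modulo $\pd S\t(L\amalg R)$), I define geometric operators $\m^{{\sst S^\circ}\la,\mu}_{k,\geo}$ by the analogue of \eq{il9eq1}, projecting the virtual chain via $\Pi^{\sst S^\circ}$. As in Proposition \ref{il9prop}, equation \eq{il8eq2} applied on the relative complex yields the $A_{N,0}$ identity \eq{il3eq10}. Then the tree construction of Definition \ref{il9dfn2} (with a choice of splitting $A^{\sst S^\circ}$ and contracting homotopy $H^{\sst S^\circ}$, chosen compatibly with $H^{\sst(0,1)}$ and $\hat H^{\sst(0,1)}$ so that $\Pi^{\sst E^\circ}A^{\sst S^\circ}\subseteq A^{\sst(0,1)}$ and likewise for $\hat E$) produces the $A_{N,0}$ algebra $(\Q\X^{\sst S}_N,{\cal G},\m^{\sst S})$. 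Define $\p^{\sst(0,1)}=-\Pi^{\sst(0,1)}\ci\pd^{\sst S}$ and $\hat\p^{\sst(0,1)}=\Pi^{\sst\hat(0,1)}\ci\pd^{\sst S}$ on degree-one components, zero elsewhere. Surjectivity on chains and the fact that $\p^{{\sst(0,1)}}_{1*}$ is the connecting map in the long exact sequence for the pair $(S,E\cup\hat E)$ (which is an isomorphism because $\hat E/\pd E\simeq[0,1]/\{0,1\}$ deformation-retracts to a point in $(S,E)$), together with the corresponding statement for $\hat E$, give that both are weak homotopy equivalences. Strictness and the $A_{N,0}$ morphism property follow by the same trees-plus-fibre-product argument as in Theorem \ref{il9thm2}, using the corner analogues of \eq{il8eq3}--\eq{il8eq6} that express $\{0\}\t_{i,[0,1],\bs\pi_E}\oM_{k+1}^\ma(\be,J_s:s\in S,f_1,\ldots,f_k)$ as $\oM_{k+1}^\ma(\be,J_s:s\in E,g_1,\ldots,g_k)$ times a contractible Kuranishi point.

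The commutativity \eq{il10eq1} is the essential new content, and is where I expect the main (if modest) difficulty. Both sides of the first equation in \eq{il10eq1} are strict $A_{N,0}$ morphisms whose only nonzero component is in degree $(k,\la,\mu)=(1,0,0)$, so it suffices to check the equality on one-variable degree-zero operations. For a simplex $f\in\X^{\sst S^\circ}_N$, the composition $\p^{\sst 0}\ci\p^{\sst(0,1)}$ picks up the iterated boundary components of $f$ that lie in $E$ and then in the corner $(-1,0)$, while $\hat\p^{\sst 0}\ci\hat\p^{\sst(0,1)}$ picks up those going through $\hat E$ and then into $(-1,0)$. Both equal the coefficient of $\pd^{\sst S}\ci\pd^{\sst S}$ restricted to $\{(-1,0)\}\t(L\amalg R)$ up to sign, and because $(\pd^{\sst S})^2=0$ the only nonzero contributions to the corner come from faces passing through exactly one of the two edges; tracking the signs in \eq{il8eq3} versus \eq{il8eq5} (differing by $(-1)^k$) together with the opposite signs in the definitions of $\p^{\sst 0}$ versus $\hat\p^{\sst 0}$ shows the two contributions coincide. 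The main obstacle is bookkeeping: one has to verify carefully that, at each codimension-$2$ corner stratum of the moduli spaces over $S$, the perturbation data prescribed by $(N3)$-compatibility along $E$ and along $\hat E$ can be consistently extended, which requires the inductive choices to agree on simplices in $\X^{\sst(-1,0)}_i$ and $\X^{\sst(1,0)}_i$. This is analogous to, but one dimension higher than, the codimension-$2$ corner consistency already handled in Theorems \ref{il6thm} and \ref{il8thm}, and goes through by the same inductive `smallness' argument of Remark \ref{il6rem}.
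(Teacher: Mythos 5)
Your proposal is correct and follows the same skeleton as the paper's proof: extend Theorem \ref{il8thm} to the $2$-parameter family $J_s:s\in S$ with chain sets stratified over the corners, the two edges and the interior of $S$, impose $(N3)$-type compatibility of perturbation data along $E$ and $\hat E$, build $(\Q\X^{\sst S}_N,{\cal G},\m^{\sst S})$ on relative chains by the analogues of Definitions \ref{il9dfn1}--\ref{il9dfn2}, define $\p^{\sst(0,1)},\hat\p^{\sst(0,1)}$ as strict morphisms given by boundary-then-project with opposite signs on the two edges, and check \eq{il10eq1} at the level of the single chain maps. The one place you genuinely diverge is the verification of the chain identity (the paper's \eq{il10eq3}): you deduce it from $(\pd^{\sst S})^2=0$ together with the stratification, whereas the paper argues it directly via the simplicial face identity $F_b^a\ci F_c^{a-1}=F_{b'}^a\ci F_{c'}^{a-1}$ and the uniqueness conditions it builds into $\X^{\sst S}_i$ (each interior simplex has at most one face over $E$ and at most one over $\hat E$). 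The two arguments are equivalent, but your route needs one hypothesis made explicit which the paper writes into the definition of $\X^{\sst S}_i$ and which your transversality condition only partially captures: faces of interior-stratum simplices must never land directly over the corner strata, for otherwise the corner component of $\pd^2$ acquires a third term and the cancellation between the $E$- and $\hat E$-contributions no longer forces the two compositions to agree. Also, the sign cancellation comes from the opposite signs in $\p^{\sst(0,1)}$ versus $\hat\p^{\sst(0,1)}$ (reflecting $\pd S=E\amalg-\hat E$ in oriented manifolds), not from any sign difference between $\p^{\sst 0}$ and $\hat\p^{\sst 0}$, which are defined identically for their respective edge algebras; your final sign remark should be adjusted accordingly (and your choice of which edge projection carries the minus sign is the reverse of the paper's, which is harmless provided the orientation bookkeeping in the analogue of \eq{il9eq9} is done consistently).
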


\begin{proof} Here is how to modify Theorem \ref{il8thm} to the new
$J_s:s\in S$. The conclusion is that for a given $N\in\N,$ there are
$\bar\X{}_0^{\sst S}\subset\cdots \subset\bar\X{}_N^{\sst S}$ and\/
$\{\s_{\be,J_s:s\in S,f_1,\ldots,f_k}\}$ satisfying analogues of
$(N1)$--$(N3)$. In $(N1)$, $\bar\X{}_0^{\sst
S},\ldots,\bar\X{}_N^{\sst S}$ are finite sets of smooth simplicial
chains $f:\De_a\ra S\t(L\amalg R)$ with decompositions
\begin{equation*}
\bar\X{}^{\sst S}_i= \X^{\sst 0}_i \amalg\X^{\sst 1}_i
\amalg\X^{\sst(0,1)}_i \amalg\hat\X{}^{\sst(0,1)}_i \amalg\X^{\sst
S}_i\quad\text{for $i=0,\ldots,N$,}
\end{equation*}
where if $f\in\bar\X_i^{\sst S}$ and $a>0$ then $f\ci
F_b^a\in\bar\X_i^{\sst S}$ for $b=0,\ldots,a$, and
\begin{itemize}
\setlength{\itemsep}{0pt}
\setlength{\parsep}{0pt}
\item $\X^{\sst 0}_i$ consists of $f:\De_a\ra\{(-1,0)\}\t(L\amalg
R)$, and are identified with choices of $\X_i$ in Theorem
\ref{il6thm} with $J=J_1$ under~$L\amalg R\cong\{(-1,0)\}\t(L\amalg
R)$.
\item $\X^{\sst 1}_i$ consists of $f:\De_a\ra\{(1,0)\}\t(L\amalg
R)$, and are identified with choices of $\X_i$ in Theorem
\ref{il6thm} with $J=J_1$ under~$L\amalg R\cong\{(1,0)\}\t(L\amalg
R)$.
\item $\X^{\sst(0,1)}_i$ consists of $f:\De_a\ra E\t(L\amalg
R)$, and are identified with choices of $\X_i^{\sst(0,1)}$ in
Theorem \ref{il8thm} with for $J_t:t\in[0,1]$ under $[0,1]\t(L\amalg
R)\cong E\t(L\amalg R)$ given by $t\mapsto (2t-1,0)$. Also $f$ maps
$\De_a^\ci\ra E^\ci\t(L\amalg R)$ and $\pi_E\ci f$ is a submersion
near $(\pi_E\ci f)^{-1}\bigl(\{(\pm 1,0)\}\bigr)$, as in~$(N1)$(a).
\item $\hat\X{}^{\sst(0,1)}_i$ consists of $f:\De_a\ra\hat E\t(L\amalg
R)$, and are identified with choices of $\X_i^{\sst(0,1)}$ in
Theorem \ref{il8thm} with for $\hat J_t:t\in[0,1]$ under
$[0,1]\t(L\amalg R)\cong\hat E\t(L\amalg R)$ given by $t\mapsto
(-\cos\pi t,\sin\pi t)$. Also $f$ maps $\De_a^\ci\ra\hat
E^\ci\t(L\amalg R)$ and $\pi_{\hat E}\ci f$ is a submersion near
$(\pi_{\hat E}\ci f)^{-1}\bigl(\{(\pm 1,0)\}\bigr)$.
\item $\X^{\sst S}_i$ consists of $f:\De_a\ra S\t(L\amalg
R)$ such that $f$ maps $\De_a^\ci\ra S^\ci\t(L\amalg R)$ and
$\pi_S\ci f$ is transverse to $\pd S$. That is, for each $p\in
\pd\De_a$ with $\pi_S\ci f(p)\in\pd S$, we require that $\d(\pi_S\ci
f)(T_p\De_a)+T_{\pi_S\ci f(p)}(\pd S)=T_{\pi_S\ci f(p)}S$.
Furthermore, if $a>0$ then for each $b=0,\ldots,a$ we have $f\ci
F_b^a\in\X^{\sst(0,1)}_i,\hat\X{}^{\sst(0,1)}_i$ or $\X^{\sst S}_i$,
that is, we do not allow $f\ci F_b^a\in\X^{\sst 0}_i$ or $\X^{\sst
1}_i$. Also, $f\ci F_b^a\in\X^{\sst(0,1)}_i$ for at most one
$b=0,\ldots,a$, and $f\ci F_b^a\in\hat\X{}^{\sst(0,1)}_i$ for at
most one~$b=0,\ldots,a$.
\end{itemize}
Here the submersion and transversality conditions are equivalent to
Condition \ref{il4cond}, so that we can apply Remark~\ref{il4rem}.

We regard $\Q\X^{\sst S}_i$ as a space of relative chains in
$C^\rsi_*\bigl(S\!\t\!(L\!\amalg\!R),\pd S\!\t\!(L\amalg
R);\Q\bigr)$. As for \eq{il8eq1} we require the following maps to be
isomorphisms:
\begin{equation}
\begin{split}
&H_*(\Q\X^{\sst 0}_i,\pd^{\sst 0})\,{\buildrel\cong\over\longra}\,
H_*^\rsi(L\amalg R;\Q),\quad H_*(\Q\X^{\sst
1}_i,\pd^{\sst 1})\,{\buildrel\cong\over\longra}\,H_*^\rsi(L\amalg R;\Q),\\
&H_*\bigl(\Q\X^{\sst(0,1)}_i,\pd^{\sst(0,1)}\bigr)\,{\buildrel\cong\over
\longra}\, H_*^\rsi\bigl(E\t(L\amalg R),\{(\pm 1,0)\}\t(L\amalg
R);\Q\bigr),\\
&H_*\bigl(\Q\hat\X{}^{\sst(0,1)}_i,\hat\pd^{\sst(0,1)}\bigr)\,
{\buildrel\cong\over \longra}\, H_*^\rsi\bigl(\hat E\t(L\amalg
R),\{(\pm 1,0)\}\t(L\amalg
R);\Q\bigr),\\
&H_*\bigl(\Q\X^{\sst S}_i,\pd^{\sst S}\bigr)\,{\buildrel \cong
\over\longra}\, H_*^\rsi\bigl(S\t(L\amalg R),\pd S\t(L\amalg
R);\Q\bigr).
\end{split}
\label{il10eq2}
\end{equation}
In $(N2),(N3)$, for all $k\ge 0$, $f_1\in\bar\X{}^{\sst
S}_{i_1},\ldots, f_k\in\bar\X{}^{\sst S}_{i_k}$ and $\be\in
H_2(M,\io(L);\Z)$ with $i_1+\cdots+i_k+\nm{\be}+k-1\le N$ and
$\oM_{k+1}^\ma(\be,J_s:s\in S,f_1,\ldots,f_k)\ne\emptyset$,
$\s_{\be,J_s:s\in S,f_1,\ldots,f_k}$ is perturbation data for
$\oM_{k+1}^\ma(\be,J_s:s\in S,f_1,\ldots,f_k)$, which should satisfy
compatibilities both over the boundary of
$\oM_{k+1}^\ma(\be,J_s:s\in S,\ab f_1,\ab\ldots,\ab f_k)$, and with
previous choices made in Theorem \ref{il6thm} for $J_0,J_1$ and
Theorem \ref{il8thm} for $J_t:t\in[0,1]$ and~$\hat J_t:t\in[0,1]$.

We modify Definition \ref{il9dfn1} to define $\Q$-multilinear maps
$\m_{k,\geo}^{{\sst S}\la,\mu}: \Q\X_{i_1}^{\sst S}\t
\cdots\t\Q\X_{i_k}^{\sst S}\ra\Q\X_{i_1+\cdots+i_k+\nm{(\la,
\mu)}+k-1}^{\sst S}$ of degree $1-2\mu$ by
\begin{align*}
&\m_{1,\geo}^{{\sst S}0,0}(f_1)=\Pi^{\sst S}\bigl[(-1)^{n+2}\pd
f_1\bigr]=(-1)^{n+2}\pd^{\sst S}f_1,\\
&\m_{k,\geo}^{{\sst S}\la,\mu}(f_1,\ldots,f_k)= \!\!\!\!\!\!\!\!\!
\sum_{\begin{subarray}{l}
\be\in H_2(M,\io(L);\Z):\\
[\om]\cdot\be=\la,\;\> \mu_L(\be)=2\mu,\\
\oM_{k+1}^\ma(\be,J_s:s\in
S,f_1,\ldots,f_k)\ne\emptyset\end{subarray}
\!\!\!\!\!\!\!\!\!\!\!\!\!\!\!\!\!\!\!\!\!\!\!\!\!\!\!\!\!\!\!\!\!\!\!\!\!}
\!\!\!\!\!\!\!\!\!\!\!\!
\begin{aligned}[t]\Pi^{\sst S}\bigl[VC\bigl(\oM_{k+1}^\ma
(\be,J_s:s\!\in\!S,f_1,\ldots,f_k)&,\\
\bev,\s_{\be,J_s:s\in S,f_1,\ldots,f_k}\bigr)\bigr]&,\\
(k,\la,\mu)\ne(1,0,0)&.
\end{aligned}
\end{align*}
The analogue of Proposition \ref{il9prop} holds. In our modification
of Definition \ref{il9dfn2} we assign $(-1)^{n+1}H^{\sst S}$ to each
internal edge, and then the analogue of Theorem \ref{il9thm1} holds,
giving the $A_{N,0}$ algebra $(\Q\X^{\sst S}_N,{\cal G},\m^{\sst
S})$.

The strict $A_{N,0}$-morphisms $\p^{\sst(0,1)}:(\Q\X^{\sst
S}_N,{\cal G},\m^{\sst S})\ra (\Q\X^{\sst(0,1)}_N,{\cal
G},\m^{\sst(0,1)})$ and $\hat\p^{\sst(0,1)}:(\Q\X^{\sst S}_N,{\cal
G},\m^{\sst S})\ra (\Q\hat\X{}^{\sst(0,1)}_N,{\cal
G},\hat\m^{\sst(0,1)})$ are defined as in Definition \ref{il9dfn3},
but using the projections $P^{\sst(0,1)}:\Q\X^{\sst S}_i\ra
\Q\X^{\sst(0,1)}_i$ and $\hat P^{\sst(0,1)}:\Q\X^{\sst S}_i\ra
\Q\hat\X{}^{\sst(0,1)}_i$ defined by $P^{\sst(0,1)}=\Pi^{\sst(0,1)}
\ci\bar\pd^{\sst S}$ and $\hat P^{\sst(0,1)}=-\hat\Pi^{\sst(0,1)}
\ci\bar\pd^{\sst S}$, where $\bar\pd^{\sst S}$ is the boundary
operator on $\Q\bar\X{}_i^{\sst S}$ and
$\Pi^{\sst(0,1)},\hat\Pi^{\sst(0,1)}$ are the projections to
$\Q\X^{\sst(0,1)}_i,\Q\hat\X{}^{\sst(0,1)}_i$. The difference in
signs here is because in oriented manifolds we have $\pd S=E\amalg
-\hat E$, where the orientations on $E,\hat E$ are determined by
their identifications with~$[0,1]$.

Then the analogue of Theorem \ref{il9thm2} holds, so that
$\p^{\sst(0,1)},\hat\p^{\sst(0,1)}$ are strict, surjective $A_{N,0}$
morphisms. Using \eq{il10eq2} and the natural isomorphism
\begin{equation*}
H_*^\rsi\bigl(S\!\t\!(L\!\amalg\!R),\pd
S\!\t\!(L\!\amalg\!R);\Q\bigr) \,{\buildrel\cong\over\longra}\,
H_{*-1}^\rsi\bigl(E\!\t\!(L\!\amalg\!R),\{(\pm 1,0)\}\!\t\!(L\amalg
R);\Q\bigr),
\end{equation*}
we find that $\p^{\sst(0,1)}$ is a weak homotopy equivalence, and
similarly so is~$\hat\p^{\sst(0,1)}$.

Equation \eq{il10eq1} now follows immediately from the identities
\begin{equation}
P^{\sst 0}\ci P^{\sst(0,1)}=\hat P^{\sst 0}\ci\hat P^{\sst(0,1)}
\qquad\text{and}\qquad P^{\sst 1}\ci P^{\sst(0,1)}=\hat P^{\sst
1}\ci\hat P^{\sst(0,1)}.
\label{il10eq3}
\end{equation}
To prove these, suppose that $f:\De_a\ra S\t(L\amalg R)$ lies in
$\X{}_N^{\sst S}$ with $P^{\sst 0}\ci P^{\sst(0,1)}(f)\ne 0$. Then
there exist $b=0,\ldots,a$ with $f\ci F_b^a\in\X^{\sst(0,1)}_N$ and
$c=0,\ldots,a-1$ with $f\ci F_b^a\ci F_c^{a-1}\in\X^{\sst 0}_N$,
where $b,c$ are unique by the conditions on $\X{}_N^{\sst S}$ above
and the conditions on $\X^{\sst(0,1)}_N$ in Theorem
\ref{il8thm}$(N1)$(b). Therefore $P^{\sst(0,1)}(f)=(-1)^{b}f\ci
F_b^a$, and $P^{\sst 0}\ci P^{\sst(0,1)}(f)=(-1)^{1+b+c} f\ci
F_b^a\ci F_c^{a-1}$, as $P^{\sst(0,1)}=\Pi^{\sst(0,1)}
\ci\bar\pd^{\sst S}$ and~$P^{\sst 0}=-\Pi^{\sst
0}\ci\pd^{\sst[0,1]}$.

If $c<b$ define $b'=c$ and $c'=b-1$, and if $c\ge b$ define $b'=c+1$
and $c'=b$. Then $f\ci F_b^a\ci F_c^{a-1}=f\ci F_{b'}^a\ci
F_{c'}^{a-1}$, so $(f\ci F_{b'}^a)\ci F_{c'}^{a-1}\in\X^{\sst 0}_N$.
The conditions on $\X{}_N^{\sst S}$ above give $f\ci
F_{b'}^a\notin\X{}_N^{\sst S}$, and also $f\ci
F_{b'}^a\notin\X{}_N^{\sst 0},\X{}_N^{\sst 1}$. Thus $f\ci F_{b'}^a$
lies in $\X^{\sst(0,1)}_N$ or $\hat\X{}^{\sst(0,1)}_N$. But $f\ci
F_b^a\in\X^{\sst(0,1)}_N$, $b\ne b'$ and uniqueness of $b$ in the
conditions on $\X{}_N^{\sst S}$ above imply that $f\ci
F_{b'}^a\notin \X^{\sst(0,1)}_N$. Hence $f\ci
F_{b'}^a\in\hat\X{}^{\sst(0,1)}_N$. The argument above now gives
$\hat P{}^{\sst(0,1)}(f)=(-1)^{1+b'} f\ci F_{b'}^a$, as $\hat
P{}^{\sst(0,1)}=-\hat\Pi{}^{\sst(0,1)} \ci\bar\pd^{\sst S}$, and
$\hat P{}^{\sst 0}\ci\hat P{}^{\sst(0,1)}(f) =(-1)^{b'+c'}f\ci
F_{b'}^a\ci F_{c'}^{a-1}=(-1)^{1+b+c} f\ci F_b^a\ci F_c^{a-1}=
P^{\sst 0}\ci P^{\sst(0,1)}(f)$, as $\hat P{}^{\sst
0}=-\hat\Pi{}^{\sst 0}\ci\hat\pd{}^{\sst[0,1]}$. Therefore if
$P^{\sst 0}\ci P^{\sst(0,1)}(f)\ne 0$ then $P^{\sst 0}\ci
P^{\sst(0,1)}(f)=\hat P{}^{\sst 0}\ci\hat P{}^{\sst(0,1)}(f)$. By
the same reasoning, if $\hat P{}^{\sst 0}\ci\hat P{}^{\sst(0,1)}
(f)\ne 0$ then $P^{\sst 0}\ci P^{\sst(0,1)}(f)=\hat P{}^{\sst
0}\ci\hat P{}^{\sst(0,1)}(f)$. This proves the first equation of
\eq{il10eq3}. The second is similar.
\end{proof}

Here is the main result of this section.

\begin{thm} In the situation above, there exists a homotopy
$\H:\f^{\sst 01}\Ra\hat\f^{\sst 01}$ between the $A_{N,0}$ morphisms
$\f^{\sst 01},\hat\f^{\sst 01}:(\Q\X^{\sst 0}_N,{\cal G},\m^{\sst
0})\ra (\Q\X^{\sst 1}_N,{\cal G},\m^{\sst 1})$.
\label{il10thm2}
\end{thm}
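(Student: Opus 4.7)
The plan is to use the $A_{N,0}$ algebra $(\Q\X^{\sst S}_N,{\cal G},\m^{\sst S})$ from Theorem \ref{il10thm1} as a ``homotopy pullback'' of the two intermediate algebras $(\Q\X^{\sst(0,1)}_N,{\cal G},\m^{\sst(0,1)})$ and $(\Q\hat\X{}^{\sst(0,1)}_N,{\cal G},\hat\m^{\sst(0,1)})$. By Theorem \ref{il10thm1}, the composition
\begin{equation*}
\chi = \p^{\sst 0}\ci\p^{\sst(0,1)} = \hat\p^{\sst 0}\ci\hat\p^{\sst(0,1)} : (\Q\X^{\sst S}_N,{\cal G},\m^{\sst S}) \longra (\Q\X^{\sst 0}_N,{\cal G},\m^{\sst 0})
\end{equation*}
is a composition of strict surjective $A_{N,0}$ morphisms which are weak homotopy equivalences, hence itself a weak homotopy equivalence. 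By the $A_{N,0}$ analogue of Theorem \ref{il3thm5}(c), $\chi$ is a homotopy equivalence, so it admits a homotopy inverse $\mathfrak k:(\Q\X^{\sst 0}_N,{\cal G},\m^{\sst 0})\ra(\Q\X^{\sst S}_N,{\cal G},\m^{\sst S})$.

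The key step is to show that $\p^{\sst(0,1)}\ci\mathfrak k$ is a two-sided homotopy inverse of $\p^{\sst 0}$, and similarly for $\hat\p^{\sst(0,1)}\ci\mathfrak k$. One direction is immediate: $\p^{\sst 0}\ci(\p^{\sst(0,1)}\ci\mathfrak k)=\chi\ci\mathfrak k\sim\id$. For the other direction, let $\mathfrak j$ be a homotopy inverse of $\p^{\sst(0,1)}$ (existing by Corollary \ref{il9cor}), and compute
\begin{equation*}
(\p^{\sst(0,1)}\ci\mathfrak k)\ci\p^{\sst 0}
\sim (\p^{\sst(0,1)}\ci\mathfrak k)\ci\p^{\sst 0}\ci(\p^{\sst(0,1)}\ci\mathfrak j)
=\p^{\sst(0,1)}\ci(\mathfrak k\ci\chi)\ci\mathfrak j
\sim \p^{\sst(0,1)}\ci\mathfrak j\sim\id,
\end{equation*}
using that $\mathfrak k\ci\chi\sim\id$ and that homotopies compose with $A_{N,0}$ morphisms on either side (as in Definition \ref{il3dfn4}, which transfers to the $A_{N,0}$ setting).

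Next, I invoke the standard uniqueness of homotopy inverses: if $\mathfrak g_1,\mathfrak g_2$ are both homotopy inverses of $\p^{\sst 0}$, then $\mathfrak g_1\sim\mathfrak g_1\ci\p^{\sst 0}\ci\mathfrak g_2\sim\mathfrak g_2$. Applied to ${\mathfrak i}^{\sst 0}$ and $\p^{\sst(0,1)}\ci\mathfrak k$, this gives ${\mathfrak i}^{\sst 0}\sim\p^{\sst(0,1)}\ci\mathfrak k$, and the same argument gives $\hat{\mathfrak i}^{\sst 0}\sim\hat\p^{\sst(0,1)}\ci\mathfrak k$. Composing on the left with $\p^{\sst 1}$ and $\hat\p^{\sst 1}$ respectively preserves homotopy, so
\begin{equation*}
\f^{\sst 01}=\p^{\sst 1}\ci{\mathfrak i}^{\sst 0}\sim\p^{\sst 1}\ci\p^{\sst(0,1)}\ci\mathfrak k
\quad\text{and}\quad
\hat\f^{\sst 01}=\hat\p^{\sst 1}\ci\hat{\mathfrak i}^{\sst 0}\sim\hat\p^{\sst 1}\ci\hat\p^{\sst(0,1)}\ci\mathfrak k.
\end{equation*}
By the second equation of \eqref{il10eq1}, the two right-hand sides are strictly equal, so by transitivity of homotopy we obtain $\f^{\sst 01}\sim\hat\f^{\sst 01}$, producing the required homotopy $\H$.

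The main obstacle is not in this algebraic manipulation, which is rather formal once the semicircle $A_{N,0}$ algebra is in place, but in the underlying geometric construction of $(\Q\X^{\sst S}_N,{\cal G},\m^{\sst S})$ and the strict commutativity \eqref{il10eq1}, already handled in Theorem \ref{il10thm1}. The one remaining subtlety is verifying that the standard 2-category arguments (uniqueness of homotopy inverses, compatibility of composition with homotopy) transfer cleanly from Theorem \ref{il3thm1} to the $A_{N,0}$ setting of Theorem \ref{il3thm5}; but as Fukaya et al.\ \cite[\S 30.7]{FOOO} essentially observe, the proofs in the $A_{N,0}$ case are identical modulo the bookkeeping of the filtration $\nm{(\la,\mu)}+k-1\le N$, which is preserved throughout all the compositions above.
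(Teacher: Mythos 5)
Your proof is correct and takes essentially the same route as the paper: a purely formal homotopy-algebra argument from Theorem \ref{il10thm1} and \eq{il10eq1}, using homotopy inverses supplied by Theorem \ref{il3thm5}(c) together with uniqueness of homotopy inverses up to homotopy, the only cosmetic difference being that you invert the composite $\p^{\sst 0}\ci\p^{\sst(0,1)}=\hat\p^{\sst 0}\ci\hat\p^{\sst(0,1)}$ once and whisker with $\p^{\sst(0,1)},\hat\p^{\sst(0,1)}$, whereas the paper inverts $\p^{\sst(0,1)},\hat\p^{\sst(0,1)}$ separately and notes that ${\mathfrak i}^{\sst(0,1)}\ci{\mathfrak i}^{\sst 0}$ and $\hat{\mathfrak i}^{\sst(0,1)}\ci\hat{\mathfrak i}^{\sst 0}$ are both homotopy inverses of that same composite. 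One small correction: the homotopy inverse of $\p^{\sst(0,1)}$ comes from Theorem \ref{il10thm1} plus Theorem \ref{il3thm5}(c), not Corollary \ref{il9cor}, which concerns $\p^{\sst 0},\p^{\sst 1}$ in the $[0,1]$-family setting.
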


\begin{proof} As $\p^{\sst(0,1)},\hat\p^{\sst(0,1)}$ are weak
homotopy equivalences by Theorem \ref{il10thm2}, they are homotopy
equivalences by Theorem \ref{il3thm5}(c), so they have homotopy
inverses ${\mathfrak i}^{\sst(0,1)},\ab\hat{\mathfrak
i}^{\sst(0,1)}$. Write $\f\sim\g$ when two $A_{N,0}$ morphisms are
homotopic. Then we have
\begin{gather*}
\f^{\sst 01}=\p^{\sst 1}\ci{\mathfrak i}^{\sst 0}=\p^{\sst
1}\ci\id_{\Q\X_N^{\sst(0,1)}}\ci{\mathfrak i}^{\sst 0}\sim \p^{\sst
1}\ci\p^{\sst(0,1)}\ci{\mathfrak i}^{\sst(0,1)}\ci
{\mathfrak i}^{\sst 0}=\\
\hat\p^{\sst 1}\ci\hat\p^{\sst(0,1)}\ci{\mathfrak i}^{\sst(0,1)}\ci
{\mathfrak i}^{\sst 0}\sim \hat\p^{\sst
1}\ci\hat\p^{\sst(0,1)}\ci\hat{\mathfrak i}^{\sst(0,1)}\ci
\hat{\mathfrak i}^{\sst 0}\sim \hat\p^{\sst
1}\ci\id_{\Q\hat\X_N^{\sst(0,1)}}\ci\hat{\mathfrak i}^{\sst 0}=
\hat\p^{\sst 1}\ci\hat{\mathfrak i}^{\sst 0}=\hat\f^{\sst 01}.
\end{gather*}
Here in the third step $\p^{\sst(0,1)}\ci{\mathfrak
i}^{\sst(0,1)}\sim\id_{\Q\X_N^{\sst(0,1)}}$ as
$\p^{\sst(0,1)},{\mathfrak i}^{\sst(0,1)}$ are homotopy inverses,
the sixth step is similar, and in the fourth step we use
\eq{il10eq1}. For the fifth step, ${\mathfrak
i}^{\sst(0,1)}\ci{\mathfrak i}^{\sst 0}\sim\hat{\mathfrak
i}^{\sst(0,1)}\ci \hat{\mathfrak i}^{\sst 0}$ since these are
homotopy inverses for $\p^{\sst 0}\ci\p^{\sst(0,1)},\hat\p^{\sst
0}\ci\hat\p^{\sst(0,1)}$, which are equal by \eq{il10eq1}. Thus $\H$
exists, as homotopy is an equivalence relation.
\end{proof}

If ${\mathfrak i}^{\sst 0},\hat{\mathfrak i}^{\sst 0}$ are
constructed by sums over planar trees as in the $A_{N,0}$ version of
Corollary \ref{il3cor2}, then we can construct $\H$ explicitly as a
(complicated) sum over trees using the techniques of Markl
\cite{Mark}. Fukaya et al.\ \cite[\S 19.2 \& \S 30.10]{FOOO} prove
results analogous to Theorem \ref{il10thm2} by a rather more
elaborate method. Their proof involves a family of almost complex
structures $J_{\rho,s}$ for $(\rho,s)\in[0,1]^2$, four $A_{N,K}$
algebras of chains on $L$, and one $A_{N,K}$ algebra of chains
on~$(-\ep,1+\ep)\t L$.

To construct one of the $A_{N,0}$ morphisms between these, they
define \cite[eq.~(19.27)]{FOOO} complicated moduli spaces
$\M_{k+1}^\ma(M',L',\{J_{\rho,s}\}_{\rho,s}:\be;{\rm top}(\rho),{\rm
twp}(s))$, which are in effect disjoint unions over planar trees $T$
with $k$ leaves of multiple fibre products over $T$ of Kuranishi
spaces, with $\oM_{n+1}^\ma(\be_v,J_{\rho,s}:\rho,s\in[0,1])$ at
each internal vertex, and $\{(\rho_1,\rho_2)
\in[0,1]^2:\rho_1\le\rho_2\}$ at each internal edge. This sum over
trees roughly speaking constructs an explicit homotopy inverse for
the strict surjective $A_{N,K}$ morphism $\p^{\sst
0}\ci\p^{\sst(0,1)}=\hat\p^{\sst 0}\ci\hat\p^{\sst(0,1)}$ in our
notation, using the method of~\S\ref{il33}.

\subsection{Compositions of $\f^{\sst 01}$ in Corollary
\ref{il9cor} up to homotopy}
\label{il102}

Let $J^a,J^b,J^c$ be complex structures on $M$ compatible with
$\om$. Fix $N\ge 0$, $N'=N(N+2)$ and $\cal G$, which must satisfy
some conditions below, once and for all. Suppose $(\Q\X^{\sst
a}_N,{\cal G},\m^{\sst a})$, $(\Q\X^{\sst b}_N,{\cal G},\m^{\sst
b})$, $(\Q\X^{\sst c}_N,{\cal G},\m^{\sst c})$ are possible outcomes
for the $A_{N,0}$ algebra of Theorem \ref{il7thm} with
$J=J^a,J^b,J^c$ respectively, and $N,N',{\cal G}$ as above.

Suppose $J_t^{ab},J_t^{bc},J_t^{ac}$ for $t\in[0,1]$ are smooth
1-parameter families of almost complex structures on $M$ compatible
with $\om$ with $J_0^{ab}=J_0^{ac}=J^a$, $J_1^{ab}=J_0^{bc}=J^b$,
$J_1^{bc}=J_1^{ac}=J^c$. Let $(\Q\X^{\sst ab}_N,{\cal G},\m^{\sst
ab})$ be the $A_{N,0}$ algebra of Theorem \ref{il9thm1} using
$J_t^{ab}:t\in[0,1]$. Write $\p^{\sst ab,a},\p^{\sst
ab,b},{\mathfrak i}^{\sst a,ab},\f^{\sst ab}$ respectively for the
$A_{N,0}$ morphisms $\p^{\sst 0},\p^{\sst 1},{\mathfrak i}^{\sst
0},\f^{\sst 01}$ of Theorem \ref{il9thm2} and Corollary \ref{il9cor}
for $J_t^{ab}:t\in[0,1]$, so that $\p^{\sst ab,a}:(\Q\X^{\sst
ab}_N,{\cal G},\m^{\sst ab})\ra(\Q\X^{\sst a}_N,{\cal G},\m^{\sst
a})$, and so on. Use the analogous notation for
$J_t^{bc},J_t^{ac}:t\in[0,1]$. Then $\f^{\sst ac}$ and $\f^{\sst
bc}\ci\f^{\sst ab}$ are both $A_{N,0}$ morphisms $(\Q\X^{\sst
a}_N,{\cal G},\m^{\sst a})\ra (\Q\X^{\sst c}_N,{\cal G},\m^{\sst
c})$. We shall construct a {\it homotopy} $\H:\f^{\sst
ac}\Ra\f^{\sst bc}\ci\f^{\sst ab}$, using a very similar method
to~\S\ref{il101}.

To construct $\H$ we choose a 2-parameter family of almost complex
structures $J_t:t\in T$ interpolating between $J_t^{ab},
J_t^{bc},J_t^{ac}$ for $t\in[0,1]$. Let $T$ be the triangle
\begin{equation*}
T=\bigl\{(x,y)\in\R^2:x\le 1,\;\> y\ge 0,\;\> x\ge y\bigr\},
\end{equation*}
and $J_{(x,y)}:(x,y)\in T$ a smooth family of almost complex
structures on $M$ compatible with $\om$, with the boundary
conditions
\begin{equation*}
J_{(0,0)}\!=\!J^a,\; J_{(1,0)}\!=\!J^b,\; J_{(1,1)}\!=\!J^c,\;
J_{(t,0)}\!=\!J_t^{ab},\; J_{(1,t)}\!=\!J_t^{bc},\;
J_{(t,t)}\!=\!J_t^{ac},\; t\in[0,1].
\end{equation*}
This is illustrated in Figure \ref{il10fig2}(a). We need the family
$J_t:t\in T$ to be compatible with $\cal G$ in the sense that if
$\be\in H_2(M,\io(L);\Z)$ and $\oM_1^\ma(\be,J_t:t\in
T)\ne\emptyset$ then $\bigl([\om]\cdot\be,
\ha\mu_L(\be)\bigr)\in{\cal G}$. We can ensure this as
in~\S\ref{il101}.

\begin{figure}[htb]
\centerline{
\begin{footnotesize}
\raisebox{-75pt}{$\begin{xy}
0;<1.1mm,0mm>: ,(-2,-1)*{J^a} ,(22,-1)*{J^b} ,(22,21)*{J^c}
,(10,-2)*{J_t^{ab}:t\in[0,1]} ,(20.5,12)*!L{J_t^{bc}:}
,(20.8,8)*!L{t\in[0,1]} ,(1,16)*!L{J_t^{ac}:} ,(1,12)*!L{t\in[0,1]}
,(10,7)*!L{J_t:} ,(10,3)*!L{t\in T} ,(0,0)*{\bullet}
,(20,0)*{\bullet} ,(20,20)*{\bullet} ,\ar@{-}(0,0);(20,0)
,\ar@{-}(20,0);(20,20) ,\ar@{-}(20,20);(0,0)
\end{xy}$}\!\!\!\!\!\!\!\!\!\!\!\!\!\!
$\displaystyle \xymatrix@!0@R=20pt@C=50pt{ &&&&(\Q\hat\X{}^{\sst
c}_N,{\cal G},\hat\m^{\sst c})\\
\\
&& (\Q\hat\X{}^{\sst ac}_N,{\cal G},\hat\m^{\sst ac})
\ar[uurr]^{\p^{\sst ac,c}} \ar[ddll]_{\p^{\sst ac,a}} &&
(\Q\hat\X{}^{\sst bc}_N,{\cal G},\hat\m^{\sst bc}) \ar[uu]_{\p^{\sst
bc,c}} \ar[dd]^{\p^{\sst bc,b}}
\\
&&& (\Q\hat\X{}^{\sst T}_N,{\cal G},\hat\m^{\sst T})
\ar[ul]^{\p^{\sst ac}} \ar[ur]^(0.4){\p^{\sst bc}}
\ar[dl]^(0.4){\p^{\sst ab}}
\\
(\Q\hat\X{}^{\sst a}_N,{\cal G},\hat\m^{\sst a}) &&
(\Q\hat\X{}^{\sst ab}_N,{\cal G},\hat\m^{\sst ab})
\ar[rr]^(0.6){\p^{\sst ab,b}} \ar[ll]_{\p^{\sst ab,a}} &&
(\Q\hat\X{}^{\sst b}_N,{\cal G},\hat\m^{\sst b}). }$
\end{footnotesize}}
\caption{\!\!\!(a) $J_t:t\in T$\qquad (b) $A_{N,0}$ algebras and
morphisms}
\label{il10fig2}
\end{figure}
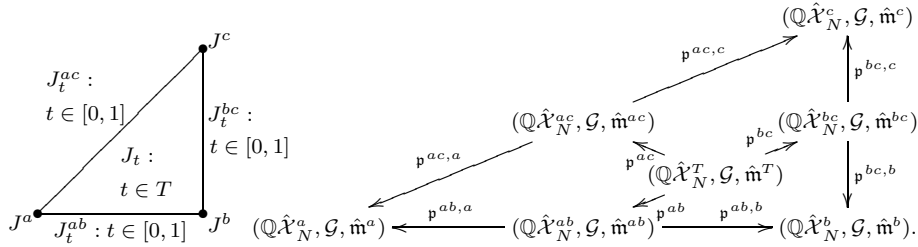

Then we prove analogues of Theorems \ref{il10thm1} and
\ref{il10thm2} by the same methods:

\begin{thm} In the situation above, we can define an $A_{N,0}$
algebra $(\Q\X^{\sst T}_N,{\cal G},\ab\m^{\sst T})$ and strict,
surjective $A_{N,0}$ morphisms $\p^{\sst ab}:(\Q\X^{\sst T}_N,{\cal
G},\m^{\sst T})\ra(\Q\X^{\sst ab}_N,{\cal G},\m^{\sst ab}),$
$\p^{\sst bc}:(\Q\X^{\sst T}_N,{\cal G},\m^{\sst T})\ra(\Q\X^{\sst
bc}_N,{\cal G},\m^{\sst bc}),$ $\p^{\sst ac}:(\Q\X^{\sst T}_N,{\cal
G},\m^{\sst T})\ra(\Q\X^{\sst ac}_N,{\cal G},\m^{\sst ac})$ which
are weak homotopy equivalences, such that Figure
{\rm\ref{il10fig2}(b)} is a commutative diagram.
\label{il10thm3}
\end{thm}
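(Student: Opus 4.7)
The plan is to prove Theorem \ref{il10thm3} by running the same scheme as in the proof of Theorem \ref{il10thm1}, with the semicircle $S$ replaced by the triangle $T$, and its two edges $E,\hat E$ replaced by the three edges $E^{ab},E^{bc},E^{ac}$ of $T$ meeting at the vertices $(0,0),(1,0),(1,1)$. First I would prove a triangle analogue of Theorem \ref{il8thm}: for a given $N\in\N$, construct finite sets $\bar\X{}^{\sst T}_0\subseteq\cdots\subseteq \bar\X{}^{\sst T}_N$ of smooth chains $f:\De_a\ra T\t(L\amalg R)$ and perturbation data $\{\s_{\be,J_t:t\in T,f_1,\ldots,f_k}\}$ for the moduli spaces $\oM^\ma_{k+1}(\be,J_t:t\in T,f_1,\ldots,f_k)$ of \S\ref{il45}, via a quadruple induction as in Theorems \ref{il6thm} and \ref{il8thm}. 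The subtlety here is that each $\bar\X{}^{\sst T}_i$ must decompose as
\begin{equation*}
\bar\X{}^{\sst T}_i=\X^{\sst a}_i\amalg\X^{\sst b}_i\amalg\X^{\sst c}_i \amalg\X^{\sst ab}_i\amalg\X^{\sst bc}_i\amalg\X^{\sst ac}_i\amalg\X^{\sst T}_i,
\end{equation*}
corresponding to simplices supported at a vertex, in the interior of one edge, or in the interior $T^\ci$, and in the last case $\pi_T\ci f$ must be transverse to $\pd T$ in the sense of Condition \ref{il4cond}. The inductive step will use a relative version of Proposition \ref{il2prop3} together with compatibility conditions forcing the restrictions to the vertex, edge, and face strata to be identified with the previously chosen data from Theorem \ref{il6thm} applied to $J^a,J^b,J^c$ and from Theorem \ref{il8thm} applied to $J^{ab}_t,J^{bc}_t,J^{ac}_t$. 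As in \S\ref{il8}, one asks that the relative complexes $\bigl(\Q\X^{\sst T}_i,\pd^{\sst T}\bigr)$ compute $H^\rsi_*\bigl(T\t(L\amalg R),\pd T\t(L\amalg R);\Q\bigr)$, and similarly for each edge relative to its vertices, and each vertex.

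Next, I would copy the construction of Definitions \ref{il9dfn1}--\ref{il9dfn2} and Theorem \ref{il9thm1} verbatim, with the projection $\Pi^{\sst(0,1)}$ replaced by the projection $\Pi^{\sst T}:\Q\bar\X{}^{\sst T}_i\ra\Q\X^{\sst T}_i$ onto interior chains, with virtual chains taken relative to $\pd T\t(L\amalg R)$, and with the sign $(-1)^{n+1}$ in $\m^{{\sst(0,1)}0,0}_1$ replaced by $(-1)^{n+2}=(-1)^n$ to account for $\dim T=2$ (as was done in the proof of Theorem \ref{il10thm1}). Proposition \ref{il9prop} then yields relations \eq{il3eq10}, so that $(\Q\X^{\sst T}_N,{\cal G},\m^{\sst T})$ is a bona fide $A_{N,0}$ algebra. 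The strict, surjective $A_{N,0}$ morphisms $\p^{\sst ab},\p^{\sst bc},\p^{\sst ac}$ are then built as in Definition \ref{il9dfn3}: set
\begin{equation*}
P^{\sst ab}=\ep^{\sst ab}\,\Pi^{\sst ab}\ci\bar\pd^{\sst T},\quad P^{\sst bc}=\ep^{\sst bc}\,\Pi^{\sst bc}\ci\bar\pd^{\sst T},\quad P^{\sst ac}=\ep^{\sst ac}\,\Pi^{\sst ac}\ci\bar\pd^{\sst T},
\end{equation*}
with signs $\ep^{\sst ab},\ep^{\sst bc},\ep^{\sst ac}=\pm1$ dictated by the orientation of $\pd T$ as $E^{ab}\amalg E^{bc}\amalg(-E^{ac})$ (under the natural parametrizations $t\mapsto(t,0),(1,t),(t,t)$), and declare $\p^{\sst ab}_k,\p^{\sst bc}_k,\p^{\sst ac}_k$ to vanish for $(k,\la,\mu)\ne(1,0,0)$ with $\p^{{\sst ab}0,0}_1=P^{\sst ab}$, etc. The argument of Theorem \ref{il9thm2} then shows each is a strict, surjective $A_{N,0}$ morphism: the key geometric input is the triangle analogues of \eq{il8eq3}--\eq{il8eq6}, obtained from \eq{il4eq33}--\eq{il4eq34} by fibering the moduli space with $J_s:s\in T$ over a point on an edge, which transforms it to the 1-parameter moduli space associated to that edge. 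That $P^{\sst ab},P^{\sst bc},P^{\sst ac}$ are weak homotopy equivalences follows from \eq{il10eq2} for the triangle via the usual boundary connecting isomorphism $H^\rsi_*\bigl(T\t(L\amalg R),\pd T\t(L\amalg R)\bigr)\ra H^\rsi_{*-1}\bigl(E^{*}\t(L\amalg R),\pd E^{*}\t(L\amalg R)\bigr)$ for $*=ab,bc,ac$.

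Finally, commutativity of Figure \ref{il10fig2}(b) reduces to the six identities
\begin{equation*}
P^{\sst ab,a}\!\ci\!P^{\sst ab}\!=\!P^{\sst ac,a}\!\ci\!P^{\sst ac},\;\; P^{\sst ab,b}\!\ci\!P^{\sst ab}\!=\!P^{\sst bc,b}\!\ci\!P^{\sst bc},\;\; P^{\sst bc,c}\!\ci\!P^{\sst bc}\!=\!P^{\sst ac,c}\!\ci\!P^{\sst ac},
\end{equation*}
at the level of the projections, which in turn unwind to the elementary combinatorial fact, proved as for \eq{il10eq3}: for $f:\De_a\ra T\t(L\amalg R)$ in $\X^{\sst T}_N$, if there exist $0\le c<b\le a$ (or $0\le b\le c<a$) with $f\ci F_b^a\in\X^{\sst ab}_N$ and $f\ci F_b^a\ci F_c^{a-1}\in\X^{\sst a}_N$, then interchanging the two face maps produces the unique $b',c'$ with $f\ci F_{b'}^a\in\X^{\sst ac}_N$ and $f\ci F_{b'}^a\ci F_{c'}^{a-1}$ equal to the same chain in $\X^{\sst a}_N$, and the resulting signs cancel because the two edges $E^{ab},E^{ac}$ meet at the vertex $a=(0,0)$ with opposite orientations in $\pd T$ (so the signs $\ep^{\sst ab}\ep^{\sst ab,a}$ and $\ep^{\sst ac}\ep^{\sst ac,a}$ agree). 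Once these six identities are verified, Theorem \ref{il10thm3} is established. The main obstacle I anticipate is bookkeeping the orientation signs at the three vertices of $T$ so that the analogues of \eq{il10eq3} hold on the nose, rather than up to a global sign: this requires choosing the signs $\ep^{\sst ab},\ep^{\sst bc},\ep^{\sst ac}$ compatibly with the orientation conventions of \S\ref{il24} applied to $T$, exactly as in the derivation of the semicircle signs in the proof of Theorem \ref{il10thm1}.
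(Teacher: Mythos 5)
Your proposal follows exactly the route the paper intends: the paper gives no separate proof of Theorem \ref{il10thm3}, stating only that it is proved "by the same methods" as Theorems \ref{il10thm1} and \ref{il10thm2}, i.e.\ a triangle analogue of Theorem \ref{il8thm} with chains stratified by the vertices, edges and interior of $T$, the relative-chain $A_{N,0}$ algebra with the $(-1)^{n+\dim{\cal T}}$ sign, projections $\Pi\ci\bar\pd^{\sst T}$ with signs from $\pd T=E^{ab}\amalg E^{bc}\amalg(-E^{ac})$, and commutativity via the face-map interchange argument of \eq{il10eq3} — all of which your sketch reproduces correctly (your "six identities" are of course the three you display, one per vertex). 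So your proof is correct and essentially identical in approach to the paper's.
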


\begin{thm} In the situation above, there exists a homotopy
$\H:\f^{\sst ac}\Ra\f^{\sst bc}\ci\f^{\sst ab}$ between the
$A_{N,0}$ morphisms $\f^{\sst ac},\f^{\sst bc}\ci\f^{\sst
ab}:(\Q\X^{\sst a}_N,{\cal G},\m^{\sst a})\ra (\Q\X^{\sst c}_N,{\cal
G},\m^{\sst c})$.
\label{il10thm4}
\end{thm}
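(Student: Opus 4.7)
The plan is to mimic the proof of Theorem \ref{il10thm2}, replacing the semicircle $S$ and its two edges by the triangle $T$ and its three edges, and using Theorem \ref{il10thm3} as the geometric input. First I would invoke Theorem \ref{il10thm3} to obtain the intermediate $A_{N,0}$ algebra $(\Q\X^{\sst T}_N,{\cal G},\m^{\sst T})$ together with the strict surjective $A_{N,0}$ morphisms $\p^{\sst ab},\p^{\sst bc},\p^{\sst ac}$, each a weak homotopy equivalence, making the diagram in Figure \ref{il10fig2}(b) commute. In particular this yields the identities
\begin{equation*}
\p^{\sst ab,a}\ci\p^{\sst ab}=\p^{\sst ac,a}\ci\p^{\sst ac}, \quad
\p^{\sst ab,b}\ci\p^{\sst ab}=\p^{\sst bc,b}\ci\p^{\sst bc}, \quad
\p^{\sst ac,c}\ci\p^{\sst ac}=\p^{\sst bc,c}\ci\p^{\sst bc},
\end{equation*}
which will be the key algebraic ingredients.

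Next, by the $A_{N,0}$ version of Theorem \ref{il3thm5}(c), each of $\p^{\sst ab},\p^{\sst bc},\p^{\sst ac}$ has a homotopy inverse, which I denote $\mathfrak{i}^{\sst ab},\mathfrak{i}^{\sst bc},\mathfrak{i}^{\sst ac}$. Writing $\f\sim\g$ for homotopic $A_{N,0}$ morphisms, I would then chain together homotopies exactly as in the proof of Theorem \ref{il10thm2}:
\begin{align*}
\f^{\sst bc}\!\ci\f^{\sst ab}
&=\p^{\sst bc,c}\!\ci{\mathfrak i}^{\sst b,bc}\!\ci\p^{\sst ab,b}\!\ci\id\!\ci{\mathfrak i}^{\sst a,ab}
\sim\p^{\sst bc,c}\!\ci{\mathfrak i}^{\sst b,bc}\!\ci\p^{\sst ab,b}\!\ci\p^{\sst ab}\!\ci{\mathfrak i}^{\sst ab}\!\ci{\mathfrak i}^{\sst a,ab}\\
&=\p^{\sst bc,c}\!\ci{\mathfrak i}^{\sst b,bc}\!\ci\p^{\sst bc,b}\!\ci\p^{\sst bc}\!\ci{\mathfrak i}^{\sst ab}\!\ci{\mathfrak i}^{\sst a,ab}
\sim\p^{\sst bc,c}\!\ci\p^{\sst bc}\!\ci{\mathfrak i}^{\sst ab}\!\ci{\mathfrak i}^{\sst a,ab}\\
&=\p^{\sst ac,c}\!\ci\p^{\sst ac}\!\ci{\mathfrak i}^{\sst ab}\!\ci{\mathfrak i}^{\sst a,ab}.
\end{align*}
To close the chain I need $\p^{\sst ac}\!\ci{\mathfrak i}^{\sst ab}\!\ci{\mathfrak i}^{\sst a,ab}\sim{\mathfrak i}^{\sst a,ac}$. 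Both morphisms are right homotopy inverses of $\p^{\sst ac,a}$: indeed
\begin{equation*}
\p^{\sst ac,a}\ci\p^{\sst ac}\ci{\mathfrak i}^{\sst ab}\ci{\mathfrak i}^{\sst a,ab}
=\p^{\sst ab,a}\ci\p^{\sst ab}\ci{\mathfrak i}^{\sst ab}\ci{\mathfrak i}^{\sst a,ab}
\sim\p^{\sst ab,a}\ci{\mathfrak i}^{\sst a,ab}
\sim\id_{\Q\X^{\sst a}_N},
\end{equation*}
and right homotopy inverses of a homotopy equivalence are unique up to homotopy (the standard argument $\g_1\sim{\mathfrak i}^{\sst a,ac}\ci\p^{\sst ac,a}\ci\g_1\sim{\mathfrak i}^{\sst a,ac}$, using the $A_{N,0}$ analogue of Theorem \ref{il3thm1}(a) that homotopy is an equivalence relation). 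Thus $\p^{\sst ac,c}\ci\p^{\sst ac}\ci{\mathfrak i}^{\sst ab}\ci{\mathfrak i}^{\sst a,ab}\sim\p^{\sst ac,c}\ci{\mathfrak i}^{\sst a,ac}=\f^{\sst ac}$, giving the required homotopy $\H:\f^{\sst ac}\Rightarrow\f^{\sst bc}\ci\f^{\sst ab}$.

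The genuinely hard part of this argument is not visible here: it lies entirely in Theorem \ref{il10thm3}, which demands the simultaneous construction of chain subcomplexes $\bar\X{}^{\sst T}_0\subset\cdots\subset\bar\X{}^{\sst T}_N$ decomposing according to the five face-strata of $T$ (vertices $\{J^a\},\{J^b\},\{J^c\}$, edges for $J^{ab}_t,J^{bc}_t,J^{ac}_t$, and the interior), together with perturbation data $\{\s_{\be,J_t:t\in T,f_1,\ldots,f_k}\}$ for the Kuranishi spaces $\oM_{k+1}^\ma(\be,J_t:t\in T,f_1,\ldots,f_k)$ that restrict to the previously chosen data on every lower-stratum face. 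This is the triangle analogue of Theorem \ref{il8thm}$(N3)$ with three boundary strata instead of two, and yields commutativity of Figure \ref{il10fig2}(b) via an elaboration of the identity \eq{il10eq3} from the semicircle case; once that technical input is taken as given, the proof of Theorem \ref{il10thm4} itself reduces to the short $2$-categorical calculation displayed above.
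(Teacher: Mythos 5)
Your argument is correct and is essentially the paper's own proof: the paper proves Theorem \ref{il10thm4} "by the same methods" as Theorem \ref{il10thm2}, i.e.\ by taking Theorem \ref{il10thm3} (the triangle algebra $(\Q\X^{\sst T}_N,{\cal G},\m^{\sst T})$, the strict surjective weak homotopy equivalences $\p^{\sst ab},\p^{\sst bc},\p^{\sst ac}$, and the commutativity of Figure \ref{il10fig2}(b)) as the geometric input, passing to homotopy inverses via Theorem \ref{il3thm5}(c), and chaining homotopies using uniqueness of homotopy inverses — exactly your displayed calculation, with your closing step playing the role of the step "${\mathfrak i}^{\sst(0,1)}\ci{\mathfrak i}^{\sst 0}\sim\hat{\mathfrak i}^{\sst(0,1)}\ci\hat{\mathfrak i}^{\sst 0}$" in the proof of Theorem \ref{il10thm2}. (The only slip is citing Theorem \ref{il3thm1}(a) rather than its $A_{N,K}$ analogue Theorem \ref{il3thm5}(a), which is immaterial.)
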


Fukaya et al.\ \cite[\S 19.3]{FOOO} prove related results by a
different method. In our notation, they suppose that the families
$J_t^{ab},J_t^{bc},J_t^{ac}$ satisfy $J_t^{ac}=J_{2t}^{ab}$ for
$t\le\ha$ and $J_t^{ac}=J_{2t-1}^{bc}$ for $t\ge\ha$, and show that
one can make choices in the constructions of $\f^{\sst ab},\f^{\sst
bc},\f^{\sst ac}$ so that $\f^{\sst ac}=\f^{\sst bc}\ci\f^{\sst
ab}$. Then for more general choices of $J_t^{ab},J_t^{bc},J_t^{ac}$
and $\f^{\sst ab},\f^{\sst bc},\f^{\sst ac}$, Theorem \ref{il10thm4}
follows from Theorem~\ref{il10thm2}.

\section{Gapped filtered $A_\iy$ algebras from immersed Lagrangians}
\label{il11}

We can now, at last, associate a gapped filtered $A_\iy$ algebra
to~$L$.

\begin{dfn} Suppose $(M,\om)$ is a compact symplectic manifold, and
$\io:L\ra M$ a compact immersed Lagrangian in $M$ with only
transverse double self-intersections. Let $J$ be an almost complex
structure on $M$ compatible with $\om$. Choose a relative spin
structure for $\io:L\ra M$ and orientations $o_{(p_-,p_+)}$ of the
$\Ker\bar\pd_{\la_{(p_-,p_+)}}$ as in \S\ref{il5}. Let ${\cal
G}\subset[0,\iy)\t\Z$ satisfy conditions (i),(ii) of~\S\ref{il6}.

For each $N=0,1,2,\ldots$, let $(\Q\X_N,{\cal G},\m_N)$ be an
$A_{N,0}$ algebra constructed in Theorem \ref{il7thm} for these
$J,\cal G$; we write $\m_N$ rather than $\m$ to make clear the
dependence on $N$. We assume no relation between the choices made in
constructing $(\Q\X_N,{\cal G},\m_N)$ and $(\Q\X_{N'},{\cal
G},\m_{N'})$ for $N\ne N'$, so the sets of simplices, perturbation
data, and so on, can all be different.

As in \S\ref{il7}, any $A_{N+1,0}$ algebra $(A,{\cal G},\bar\m)$ can
be truncated to an $A_{N,0}$ algebra $(A,{\cal G},\m)$ by taking
$\m$ to be the subset of $\bar\m_k^{\smash{\la,\mu}}$ with
$\nm{(\la,\mu)}+k-1\le N$. Write $(\Q\X_{N+1},{\cal G},\m_{N+1})_N$
for the truncation of $(\Q\X_{N+1},{\cal G},\m_{N+1})$ to an
$A_{N,0}$ algebra. Then $(\Q\X_N,{\cal G},\m_N)$ and
$(\Q\X_{N+1},{\cal G},\m_{N+1})_N$ are both possible outcomes for
$A_{N,0}$ algebras constructed in Theorem \ref{il7thm} using $J,\cal
G$. Applying the results of \S\ref{il8}--\S\ref{il9} with $J_t=J$
for $t\in[0,1]$, Corollary \ref{il9cor} constructs an $A_{N,0}$
morphism $\f^{\sst 01}$ that we will write as $\f^N:(\Q\X_N,{\cal
G},\m_N)\ra(\Q\X_{N+1},{\cal G},\m_{N+1})_N$, which is a homotopy
equivalence. Putting $J_s\equiv J$ for $s\in S$ in \S\ref{il10},
Theorem \ref{il10thm2} implies that $\f^N$ is independent of choices
up to homotopy.

Set $\X=\X_0$. By induction on $N=0,1,2,\ldots$ we shall construct
$\m^N$ such that $(\Q\X,{\cal G},\m^N)$ is an $A_{N,0}$ algebra, and
an $A_{N,0}$ morphism $\g^N:(\Q\X,{\cal G},\m^N)\ra(\Q\X_N,{\cal
G},\m_N)$ which is a weak homotopy equivalence, satisfying the
conditions:
\begin{itemize}
\setlength{\itemsep}{0pt}
\setlength{\parsep}{0pt}
\item[(i)] $\m^0=\m_0$ and $\g^0=\id_{\Q\X}$;
\item[(ii)] $\m^{N+1}$ extends $\m^N$ for all $N\ge 0$, that
is, the truncation $(\Q\X,{\cal G},\m^{N+1})_N$ of $(\Q\X,{\cal
G},\m^{N+1})$ to an $A_{N,0}$ algebra is $(\Q\X,{\cal G},\m^N)$;
\item[(iii)] The truncation $(\g^{N+1})_N:(\Q\X,{\cal
G},\m^{N+1})_N\ra(\Q\X_{N+1},{\cal G},\m_{N+1})_N$ of $g^{N+1}$ to
an $A_{N,0}$ morphism satisfies $(\g^{N+1})_N=\f^N\ci\g^N$ for all
$N\ge 0$, using $(\Q\X,{\cal G},\m^{N+1})_N=(\Q\X,{\cal G},\m^N)$
from~(ii).
\end{itemize}
For the first step, $\m^0,\g^0$ are given in (i). For the inductive
step, suppose we have constructed $\m^N,\g^N$ satisfying (i)--(iii)
for $N=0,1,\ldots,P$. Then $\f^P\ci\g^P:(\Q\X,{\cal
G},\m^P)\ra(\Q\X_{P+1},{\cal G},\m_{P+1})_P$ is an $A_{P,0}$
morphism which is a weak homotopy equivalence, since $\f^P,\g^P$
are. Theorem \ref{il3thm6}(a) with $N=P$, $\bar N=P+1$ now shows
that there exists an $A_{P+1,0}$ algebra $(\Q\X,{\cal G},\m^{P+1})$
extending $(\Q\X,{\cal G},\m^P)$ and an $A_{P+1,0}$ morphism
$\g^{P+1}:(\Q\X,{\cal G},\m^{P+1})\ra(\Q\X_{P+1},{\cal G},\m_{P+1})$
extending $\f^P\ci\g^P$ which is a weak homotopy equivalence. This
proves the inductive step.

For all $k\ge 0$ and $(\la,\mu)\in{\cal G}$, define
$\m_k^{\smash{\la,\mu}}:{\buildrel {\ulcorner\,\,\,\text{$k$ copies
} \,\,\,\urcorner} \over {\vphantom{m}\smash{\Q\X\t\cdots\t
\Q\X}}}\ra\Q\X$ by $\m_k^{\smash{\la,\mu}}=
\m_k^{\smash{N,\la,\mu}}$, where $N=\max(\nm{(\la,\mu)}+k-1,0)$ and
$\m_k^{\smash{N,\la,\mu}}$ is the $(k,\la,\mu)$ term in $\m^N$. Then
(ii) implies that $\m_k^{\smash{\la,\mu}}=
\m_k^{\smash{N',\la,\mu}}$ for any $N'\ge N$. Since $(\Q\X,{\cal
G},\m^N)$ is an $A_{N,0}$ algebra for all $N\ge 0$, equation
\eq{il3eq10} holds for the $\m_k^{\smash{N,\la,\mu}}$, so by
independence of $N$, the $\m_k^{\smash{\la,\mu}}$ satisfy
\eq{il3eq10} for all $k\ge 0$, $(\la,\mu)\in{\cal G}$ and pure
$a_1,\ldots,a_k\in\Q\X$.

Define $\Q$-multilinear maps $\m_k:{\buildrel
{\ulcorner\,\,\,\text{$k$ copies } \,\,\,\urcorner} \over
{\vphantom{m}\smash{(\Q\X\ot\La^0_\nov)\t\cdots\t(\Q\X\ot
\La^0_\nov)}}}\ra\Q\X\ot\La^0_\nov$ for $k=0,1,\ldots$ by
$\smash{\m_k=\sum_{ (\la,\mu)\in{\cal G}}T^\la
e^\mu\m_k^{\smash{\la,\mu}}}$. Write $\m=(\m_k)_{k\ge 0}$. Then
Definition \ref{il3dfn10} implies that $(\Q\X\ot\La^0_\nov,\m)$ is a
{\it gapped filtered\/ $A_\iy$ algebra}.
\label{il11dfn}
\end{dfn}

Definition \ref{il11dfn} is similar to Fukaya et al.\ \cite[\S
30.8]{FOOO}. Here is one of our main results, analogous
to~\cite[Th.s 10.11, 14.1 \& 14.2]{FOOO}.

\begin{thm}{\bf(a)} In Definition {\rm\ref{il11dfn},}
$(\Q\X\ot\La^0_\nov,\ab\m)$ depends up to canonical homotopy
equivalence only on $(M,\om),$ $\io:L\ra M$ and its relative spin
structure, and the indices $\eta_{(p_-,p_+)}$ in {\rm\S\ref{il43},}
and is independent of\/ $J,{\cal G},$ changes of paths
$\la_{(p_-,p_+)}$ in {\rm\S\ref{il43}} which fix $\eta_{(p_-,p_+)},$
the orientations $o_{(p_-,p_+)}$ on $\Ker\bar\pd_{\la_{(p_-,p_+)}}$
in {\rm\S\ref{il52},} and all other choices.

That is, if\/ $(\Q\X\ot\La^0_\nov,\m),(\Q\ti\X\ot\La^0_\nov,\ti\m)$
are outcomes in Definition {\rm\ref{il11dfn}} depending on $J,{\cal
G},\la_{(p_-,p_+)}, o_{(p_-,p_+)},\ldots$ and\/ $\ti J,\ti{\cal
G},\ab\ti \la_{(p_-,p_+)},\ab\ti o_{(p_-,p_+)},\ldots,$ we can
construct a gapped filtered\/ $A_\iy$ morphism ${\mathfrak
j}:(\Q\X\ot\La^0_\nov,\m)\!\ra\! (\Q\ti\X\ot\La^0_\nov,\ti\m)$ which
is a homotopy equivalence. If\/ ${\mathfrak j},{\mathfrak j}'$ are
possibilities for ${\mathfrak j}$ there is a homotopy~$\H:{\mathfrak
j}\!\Ra\!{\mathfrak j}'$.
\smallskip

\noindent{\bf(b)} If\/ $(\Q\X\ot\La^0_\nov,\m),(\Q\ti\X\ot
\La^0_\nov,\ti\m),(\Q\check\X\ot \La^0_\nov,\check\m)$ are possible
outcomes in Definition {\rm\ref{il11dfn}} and\/ ${\mathfrak
j}:(\Q\X\ot\La^0_\nov,\m)\ra (\Q\ti\X\ot\La^0_\nov,\ti\m),$
${\mathfrak j}':(\Q\ti\X \ot\La^0_\nov,\ti\m)\ra(\Q\check\X
\ot\ab\La^0_\nov,\ab\check\m),$ ${\mathfrak j}'':(\Q\X\ot
\La^0_\nov,\m)\ra(\Q\check\X\ab\ot\La^0_\nov,\ab\check\m)$ are
corresponding gapped filtered\/ $A_\iy$ morphisms in part\/
{\rm(a),} then there is a homotopy~$\H:{\mathfrak j}''\Ra{\mathfrak
j}'\ci{\mathfrak j}$.
\label{il11thm}
\end{thm}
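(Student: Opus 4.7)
The plan is to mirror the inductive construction of Definition \ref{il11dfn} at the level of morphisms and homotopies, building the pieces of ${\mathfrak j}$ and the homotopies between them by induction on $N,$ using the algebraic extension Theorem \ref{il3thm6} together with the $A_{N,0}$ comparison results of \S\ref{il9} and the $A_{N,0}$ uniqueness/composition results of \S\ref{il10}. Let $(\Q\X\ot\La^0_\nov,\m)$ and $(\Q\ti\X\ot\La^0_\nov,\ti\m)$ denote two outcomes of Definition \ref{il11dfn} with inductive data $\{(\Q\X_N,{\cal G},\m_N),\g^N\}$ and $\{(\Q\ti\X_N,\ti{\cal G},\ti\m_N),\ti\g^N\}.$ Since enlarging $\cal G$ does not change the output of Definition \ref{il11dfn}, I may assume $\cal G=\ti{\cal G}$ is large enough to satisfy (i),(ii) of \S\ref{il6} and \S\ref{il8} for $J,\ti J$ and a chosen smooth path $J_t:t\in[0,1]$ from $J$ to $\ti J.$ By Proposition \ref{il5prop}, changing the orientations $o_{(p_-,p_+)}$ and replacing $\la_{(p_-,p_+)}$ by isotopic paths (so $\eta_{(p_-,p_+)}$ is unchanged) multiplies each moduli-space orientation by a uniform product of signs as in \eq{il5eq29}; these assemble into a strict invertible gapped filtered $A_\iy$ isomorphism, so it suffices to compare outcomes differing only in $J$ and the auxiliary choices of \S\ref{il6}--\S\ref{il7}.

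\textbf{Existence in (a).} Corollary \ref{il9cor} applied to $J_t:t\in[0,1]$ produces for each $N\ge 0$ an $A_{N,0}$ weak homotopy equivalence $\f^N:(\Q\X_N,{\cal G},\m_N)\ra(\Q\ti\X_N,{\cal G},\ti\m_N).$ I will construct inductively $A_{N,0}$ weak homotopy equivalences ${\mathfrak j}^N:(\Q\X,{\cal G},\m^N)\ra(\Q\ti\X,{\cal G},\ti\m^N)$ satisfying $({\mathfrak j}^{N+1})_N={\mathfrak j}^N.$ For $N=0$ choose any chain map ${\mathfrak j}^0$ lifting the cohomology isomorphism $(\f^0)_*\ci(\g^0)_*\ci(\ti\g^0)_*^{-1},$ which exists since $\ti\g^0$ is a surjective quasi-isomorphism. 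For the inductive step $P\ra P+1,$ Theorem \ref{il3thm6}(a) extends ${\mathfrak j}^P$ to an $A_{P+1,0}$ morphism landing in some $A_{P+1,0}$ extension of $(\Q\ti\X,{\cal G},\ti\m^P);$ using that any two such extensions of a fixed $A_{P,0}$ algebra are $A_{P+1,0}$-homotopy equivalent via an extension of $\id,$ one transports the morphism to land in the prescribed $\ti\m^{P+1},$ and Theorem \ref{il3thm6}(b) then adjusts it within its $A_{P,0}$-homotopy class so that the resulting ${\mathfrak j}^{P+1}$ extends ${\mathfrak j}^P$ exactly. Assembling the $\{{\mathfrak j}^N\}$ yields a gapped filtered $A_\iy$ morphism ${\mathfrak j},$ which is a homotopy equivalence by Theorem \ref{il3thm3}(c).

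\textbf{Uniqueness and part (b).} For uniqueness in (a), given a second ${\mathfrak j}'$ built from a different path $\hat J_t,$ Theorem \ref{il10thm2} applied to a semicircular family $J_s:s\in S$ interpolating $J_t$ and $\hat J_t$ produces at each level $N$ an $A_{N,0}$ homotopy $\H^N:({\mathfrak j})_N\Ra({\mathfrak j}')_N,$ and these assemble into a gapped filtered homotopy $\H:{\mathfrak j}\Ra{\mathfrak j}'$ by the same inductive extension mechanism used for existence, now applied to homotopies via Theorem \ref{il3thm6}(b). Part (b) is proved analogously: Theorem \ref{il10thm4} applied to a triangular family $J_t:t\in T$ joining $J,\ti J,\check J$ supplies for each $N$ an $A_{N,0}$ homotopy between $({\mathfrak j}'\ci{\mathfrak j})_N$ and $({\mathfrak j}'')_N,$ which glue inductively to the required gapped filtered homotopy via Theorem \ref{il3thm6}(b).

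\textbf{Main obstacle.} The principal technical difficulty is the bookkeeping in each inductive step: one must extend the morphism (or homotopy) while matching the fixed $A_{P+1,0}$ algebra extensions on both source and target and the fixed $A_{P,0}$ datum from the previous level. Theorem \ref{il3thm6} is tailored for this, but its part (a) as stated constructs rather than pre-specifies the source extension; the bridge is the standard observation that all $A_{P+1,0}$ extensions of a given $A_{P,0}$ algebra are $A_{P+1,0}$-homotopy equivalent via identity-extending equivalences, letting one transport the constructed extension to the prescribed one. The remaining work is a careful verification that the $A_{N,0}$ homotopies supplied by Theorems \ref{il10thm2} and \ref{il10thm4} respect truncation, which is what makes the induction close.
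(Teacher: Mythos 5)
Your reductions at the start (enlarging $\cal G$ via truncation, and absorbing changes of $\la_{(p_-,p_+)},o_{(p_-,p_+)}$ with fixed $\eta_{(p_-,p_+)}$ into a strict isomorphism built from the signs of Proposition \ref{il5prop}) are exactly the paper's, but the inductive step of your existence argument has a genuine gap. First, you misread Theorem \ref{il3thm6}(a): there the \emph{target} extension is prescribed and the \emph{source} extension is constructed, so applying it with target $(\Q\ti\X,{\cal G},\ti\m^{P+1})$ gives you an extension of ${\mathfrak j}^P$ out of \emph{some} $A_{P+1,0}$ extension of $(\Q\X,{\cal G},\m^P)$, which need not be the prescribed $\m^{P+1}$. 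Your proposed bridge -- that any two $A_{P+1,0}$ extensions of a fixed $A_{P,0}$ algebra are homotopy equivalent via identity-extending equivalences -- is not a standard fact and is false in general: the top-degree operations of two extensions differ by a cocycle for a Hochschild-type differential, and equivalences extending the given $A_{P,0}$ data change it only by coboundaries, so inequivalent extensions exist whenever the relevant cohomology group is nonzero. In the situation at hand, the compatibility of the two prescribed extensions $\m^{P+1}$ and $\ti\m^{P+1}$ is precisely what requires geometric input. The paper supplies it by using the family $J_t$ not only to produce the level-$N$ comparison ${\mathfrak h}^N:(\Q\X_N,{\cal G},\m_N)\ra(\Q\ti\X_N,{\cal G},\ti\m_N)$ but also ${\mathfrak h}^{N+1}$ and the mixed morphism ${\mathfrak i}^N:(\Q\X_N,{\cal G},\m_N)\ra(\Q\ti\X_{N+1},{\cal G},\ti\m_{N+1})_N$, and proving via Theorem \ref{il10thm4} the key relation $\ti{\mathfrak e}^N\ci{\mathfrak h}^N\ci\g^N\sim(\ti{\mathfrak e}^{N+1}\ci{\mathfrak h}^{N+1}\ci\g^{N+1})_N$, equation \eq{il11eq2}. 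With that in hand, the inductive step is a single application of Theorem \ref{il3thm6}(b) with $\f={\mathfrak j}^P$ and $\bar\g=\ti{\mathfrak e}^{P+1}\ci{\mathfrak h}^{P+1}\ci\g^{P+1}$, a morphism that already runs between the two prescribed $A_{P+1,0}$ algebras; part (a) of Theorem \ref{il3thm6} is never needed, and no general claim about extensions is required.

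A second, related problem is that your induction does not maintain the invariant that ${\mathfrak j}^N$ is homotopic to the geometric comparison $\ti{\mathfrak e}^N\ci{\mathfrak h}^N\ci\g^N$ (condition (i) in the paper's proof); your base case even takes ${\mathfrak j}^0$ to be an arbitrary lift of a cohomology isomorphism. Without this invariant, the uniqueness argument and part (b) do not close: Theorems \ref{il10thm2} and \ref{il10thm4} produce homotopies between morphisms of the geometric algebras $(\Q\X_N,{\cal G},\m_N)$, i.e.\ between the ${\mathfrak h}^N$'s, not between the truncations $({\mathfrak j})_N$ and $({\mathfrak j}')_N$ of your assembled morphisms, and transferring them requires exactly the homotopy ${\mathfrak j}^N\sim\ti{\mathfrak e}^N\ci{\mathfrak h}^N\ci\g^N$ that the paper's induction carries along. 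Once that invariant is in place, level-by-level uniqueness and the relation ${\mathfrak j}^{\prime\prime N}\sim{\mathfrak j}^{\prime N}\ci{\mathfrak j}^N$ follow as in the paper, and the assembly of the homotopies $\H^N$ into a gapped filtered homotopy by an analogue of Theorem \ref{il3thm6}(b) for homotopies is the same step you describe.
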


\begin{proof} First we explain how to construct $\mathfrak
j$ in (a) when $\cal G,\la_{(p_-,p_+)},o_{(p_-,p_+)}$ are fixed, but
other choices $J,\ldots$ vary. Suppose $(\Q\X\ot\La^0_\nov, \m)$,
$(\Q\ti\X\ot\La^0_\nov,\ti\m)$ are constructed using ${\cal
G},\la_{(p_-,p_+)},o_{(p_-,p_+)}$ and other choices $J,\ldots$ and
$\ti J,\ldots$. Let $(\Q\X_N,\ab{\cal
G},\ab\m_N),\ab\f^N,\ab(\Q\X,{\cal G},\m^N),\g^N$ and
$(\Q\ti\X_N,{\cal G},\ti\m_N),\ti\f^N, (\Q\ti\X,\ab{\cal
G},\ab\ti\m^N),\ab \ti\g^N$, be the corresponding choices in
Definition \ref{il11dfn}. Let $\ti{\mathfrak d}^N,\ti{\mathfrak
e}^N$ be homotopy inverses for~$\ti\f^N,\ti\g^N$.

Let $J_t:t\in[0,1]$ be a smooth family of almost complex structures
on $M$ compatible with $\om$, with $J_0=J$ and $J_1=\ti J$. Suppose
that $\cal G$ satisfies conditions (i),(ii) of \S\ref{il8} for
$J_t:t\in[0,1]$; this implies that $\cal G$ also satisfies
conditions (i),(ii) of \S\ref{il6} for $J,\ti J$. If $\cal G$ does
not satisfy (i),(ii), we can use the third part of the proof to
change to a new $\cal G$ which does. Then Corollary \ref{il9cor}
constructs an $A_{N,0}$ morphism $\f^{\sst 01}$ that we will write
as $\h^N:(\Q\X_N,{\cal G},\m_N)\ra(\Q\ti\X_N,{\cal G},\ti\m_N)$,
which is a homotopy equivalence. Also, as $(\Q\ti\X_{N+1},{\cal
G},\ti\m_{N+1})_N$ is also a possible $A_{N,0}$ algebra from Theorem
\ref{il7thm} with $\ti J$, Corollary \ref{il9cor} constructs an
$A_{N,0}$ morphism ${\mathfrak i}^N:(\Q\X_N,{\cal
G},\m_N)\ra(\Q\ti\X_{N+1}, {\cal G},\ti\m_{N+1})_N$, which is a
homotopy equivalence. Thus we obtain the diagram of $A_{N,0}$
morphism homotopy equivalences:
\begin{equation}
\begin{gathered}
\xymatrix@R=20pt{ (\Q\X,{\cal G},\m^N) \ar[r]_{\g^N}
\ar@/^1.5pc/[rr]_{(\g^{N+1})_N} \ar@{{}-->}[d]^(0.4){{\mathfrak
j}^N} & (\Q\X_N,{\cal G},\m_N) \ar[r]_{\f^N} \ar[d]_(0.4){{\mathfrak
h}^N} \ar[dr]^(0.6){{\mathfrak i}^N} &
(\Q\X_{N+1},{\cal G},\m_{N+1})_N \ar[d]^(0.4){({\mathfrak h}^{N+1})_N} \\
(\Q\ti\X,{\cal G},\ti\m^N) \ar@<.5pc>[r]^{\ti\g^N}
\ar@<-1ex>@/_1.2pc/[rr]^{(\ti\g^{N+1})_N} & (\Q\ti\X_N,{\cal
G},\ti\m_N) \ar@<.5pc>[r]^(0.4){\ti\f^N} \ar[l]^{\ti{\mathfrak e}^N}
& (\Q\ti\X_{N+1},{\cal G},\ti\m_{N+1})_N \ar[l]^{\ti{\mathfrak d}^N}
\ar@<2ex>@/^1.2pc/[ll]^{(\ti{\mathfrak e}^{N+1})_N}}
\end{gathered}
\label{il11eq1}
\end{equation}

Write $\f\sim\g$ when two $A_{N,0}$ morphisms are homotopic. Then we
have
\begin{equation}
\begin{gathered}
\ti{\mathfrak e}^N\ci{\mathfrak h}^N\ci \g^N\sim \ti{\mathfrak
e}^N\ci\ti{\mathfrak d}^N\ci\ti\f^N \ci{\mathfrak h}^N\ci \g^N\sim
\ti{\mathfrak e}^N\ci\ti{\mathfrak d}^N\ci{\mathfrak i}^N\ci\g^N\sim\\
\ti{\mathfrak e}^N\ci\ti{\mathfrak d}^N\ci({\mathfrak h}^{N+1})_N\ci
\f^N\ci\g^N\sim(\ti{\mathfrak e}^{N+1})_N\ci({\mathfrak
h}^{N+1})_N\ci(\g^{N+1})_N\\
=(\ti{\mathfrak e}^{N+1}\ci{\mathfrak h}^{N+1}\ci\g^{N+1})_N,
\end{gathered}
\label{il11eq2}
\end{equation}
using $\ti{\mathfrak d}^N,\ti\f^N$ homotopy inverses in the first
step, $\ti\f^N\ci{\mathfrak h}^N\sim{\mathfrak i}^N$ by Theorem
\ref{il10thm4} in the second, $({\mathfrak h}^{N+1})_N\ci
\f^N\sim{\mathfrak i}^N$ by Theorem \ref{il10thm4} in the third, and
$(\g^{N+1})_N=\f^N\ci\g^N$ and $\ti{\mathfrak e}^N\ci\ti{\mathfrak
d}^N\sim(\ti{\mathfrak e}^{N+1})_N$ which follows from
$(\ti\g^{N+1})_N=\ti\f^N\ci\ti\g^N$ and $\ti{\mathfrak
d}^N,\ti{\mathfrak e}^N,\ti{\mathfrak e}^{N+1}$ homotopy inverses
for $\f^N,\g^N,\g^{N+1}$ in the fourth.

By induction on $N=0,1,2,\ldots$ we now choose $A_{N,0}$ morphisms
${\mathfrak j}^N:(\Q\X,{\cal G},\m^N)\ab\ra (\Q\ti\X,{\cal
G},\ti\m^N)$ which are homotopy equivalences, satisfying the
conditions:
\begin{itemize}
\setlength{\itemsep}{0pt}
\setlength{\parsep}{0pt}
\item[(i)] ${\mathfrak j}^N$ is homotopic to
$\ti{\mathfrak e}^N\ci{\mathfrak h}^N\ci\g^N$; and
\item[(ii)] The truncation $({\mathfrak j}^{N+1})_N:(\Q\X,{\cal
G},\m^{N+1})_N\ra(\Q\ti\X,{\cal G},\ti\m^{N+1})_N$ of ${\mathfrak
j}^{N+1}$ to an $A_{N,0}$ morphism satisfies $({\mathfrak
j}^{N+1})_N={\mathfrak j}^N$ for all $N\ge 0$, using $(\Q\X,{\cal
G},\ab\m^{N+1})_N=(\Q\X,{\cal G},\m^N)$ and $(\Q\ti\X,{\cal
G},\ti\m^{N+1})_N=(\Q\ti\X,{\cal G},\ti\m^N)$.
\end{itemize}
For the first step, we take ${\mathfrak j}^0=\ti{\mathfrak
e}^N\ci{\mathfrak h}^N\ci\g^N$, so that (i) for $N=0$ is trivial.
For the inductive step, suppose we have chosen ${\mathfrak j}^N$
satisfying (i),(ii) for $N=0,1,\ldots,P$. We shall construct
${\mathfrak j}^{P+1}$. Since ${\mathfrak j}^P$ is homotopic to
$\ti{\mathfrak e}^P\ci{\mathfrak h}^P\ci\g^P$ by (i), and
$\ti{\mathfrak e}^P\ci{\mathfrak h}^P\ci\g^P$ is homotopic to
$(\ti{\mathfrak e}^{P+1}\ci{\mathfrak h}^{P+1}\ci\g^{P+1})_P$ by
\eq{il11eq2}, ${\mathfrak j}^P$ is homotopic to $(\ti{\mathfrak
e}^{P+1}\ci{\mathfrak h}^{P+1}\ci\g^{P+1})_P$. So Theorem
\ref{il3thm6}(b) with $N=P$, $\bar N=P+1$, $\f={\mathfrak j}^P$ and
$\bar\g=\ti{\mathfrak e}^{P+1}\ci{\mathfrak h}^{P+1}\ci\g^{P+1}$
gives ${\mathfrak j}^{P+1}$ satisfying (i),(ii). Therefore by
induction ${\mathfrak j}^N$ exists for all~$N$.

There is now a unique gapped filtered $A_\iy$ morphism ${\mathfrak
j}:(\Q\X\ot\La^0_\nov,\m)\ra(\Q\ti\X\ot\La^0_\nov,\ti\m)$ whose
truncation to $A_{N,0}$ algebras is ${\mathfrak j}^N$ for
$N=0,1,2,\ldots$. It is a weak homotopy equivalence as the
${\mathfrak j}^N$ are, and so is a homotopy equivalence by Theorem
\ref{il3thm3}(c). Regarding $\g^N,\ti\g^N$ as fixed, $\ti{\mathfrak
e}^N$ above is independent of choices up to homotopy, and by Theorem
\ref{il10thm2}, so is ${\mathfrak h}^N$. Thus, ${\mathfrak j}^N$ is
independent of choices up to $A_{N,0}$ homotopy. As this holds for
all $N$, ${\mathfrak j}$ is independent of choices up to homotopy.
That is, if ${\mathfrak j},{\mathfrak j}'$ are possible choices for
${\mathfrak j}$ then there is a homotopy $\H:{\mathfrak
j}\ra{\mathfrak j}'$. We construct $\H$ as the union of a family of
$A_{N,0}$ homotopies $\H^N:{\mathfrak j}^N\Ra{\mathfrak j}^{\prime
N}$ with $(\H^{N+1})_N=\H_N$, chosen using an analogue of Theorem
\ref{il3thm6}(b) for homotopies. This proves (a) with $\cal G$ and
$\la_{(p_-,p_+)},o_{(p_-,p_+)}$ for $(p_-,p_+)\in R$ fixed.

Secondly, we prove (b) with $\cal G,\la_{(p_-,p_+)},o_{(p_-,p_+)}$
fixed. Suppose $(\Q\X\ot\ab\La^0_\nov,\ab\m)$,
$(\Q\ti\X\ot\La^0_\nov,\ab\ti\m),\ab(\Q\check\X\ot
\La^0_\nov,\check\m)$ and ${\mathfrak j},{\mathfrak j}',{\mathfrak
j}''$ are as in (b), all constructed using the same $\cal
G,\la_{(p_-,p_+)},o_{(p_-,p_+)}$. Then with the obvious notation we
have a diagram of $A_{N,0}$ morphism homotopy equivalences:
\begin{equation*}
\xymatrix@R=10pt{ (\Q\X,{\cal G},\m^N) \ar@<.2pc>[r]_{\g^N}
\ar[d]^(0.4){{}\,{\mathfrak j}^N} \ar@<-6ex>@/^-1pc/[dd]_{{\mathfrak
j}^{\prime\prime N}} & (\Q\X_N,{\cal G},\m_N)
\ar[d]_(0.4){{\mathfrak h}^N} \ar@<6ex>@/^1pc/[dd]^{{\mathfrak
h}^{\prime\prime N}} \\
(\Q\ti\X,{\cal G},\ti\m^N) \ar@<.3pc>[r]^{\ti\g^N}
\ar[d]^(0.4){{}\,{\mathfrak j}^{\prime N}} & (\Q\ti\X_N,{\cal
G},\ti\m_N) \ar[l]^{\ti{\mathfrak e}^N}
\ar[d]_(0.4){{\mathfrak h}^{\prime N}}\\
(\Q\check\X,{\cal G},\check\m^N) \ar@<.3pc>[r]^{\check\g^N} &
(\Q\check\X_N,{\cal G},\check\m_N) \ar[l]^{\check{\mathfrak e}^N} }
\end{equation*}
Theorem \ref{il10thm4} shows that ${\mathfrak h}^{\prime\prime
N}\sim{\mathfrak h}^{\prime N}\ci{\mathfrak h}^N$. Since ${\mathfrak
j}^N\sim\ti{\mathfrak e}^N\ci{\mathfrak h}^N\ci\g^N$, ${\mathfrak
j}^{\prime N}\sim\check{\mathfrak e}^N\ci{\mathfrak h}^{\prime
N}\ci\ti\g^N$ and $\ti\g^N,\ti{\mathfrak e}^N$ are homotopy
inverses, this implies that ${\mathfrak j}^{\prime\prime
N}\sim{\mathfrak j}^{\prime N}\ci{\mathfrak j}^N$. That is, the
$A_{N,0}$ truncations of ${\mathfrak j}''$ and ${\mathfrak
j}'\ci{\mathfrak j}$ are $A_{N,0}$ homotopic for all $N=0,1,\ldots$.
We can now construct $\H:{\mathfrak j}''\Ra{\mathfrak
j}'\ci{\mathfrak j}$ as in the end of the first part of the proof.

Thirdly, we explain how to change $\cal G$ in (a) and (b). Suppose
that ${\cal G}\subseteq\ti{\cal G}\subset[0,\iy)\t\Z$, and ${\cal
G},\ti{\cal G}$ are closed under addition, such that ${\cal
G}\cap(\{0\}\t\Z)=\ti{\cal G}\cap(\{0\}\t\Z)=\{(0,0)\}$ and ${\cal
G}\cap([0,C]\t\Z),\ti{\cal G}\cap([0,C]\t\Z)$ are finite for any
$C\ge 0$. We shall define a functor from the 2-category of $A_{N,0}$
algebras with fixed $\cal G$ to the 2-category of $A_{N,0}$ algebras
with fixed $\ti{\cal G}$, which we call $\ti{\cal G}$-{\it
truncation}.

If $(\la,\mu)\in{\cal G}$ then as ${\cal G}\subseteq\ti{\cal G}$, in
\eq{il3eq16} we can define $\nm{(\la,\mu)}$ using either $\cal G$ or
$\ti{\cal G}$. Write these as $\nm{(\la,\mu)}_{\cal
G},\nm{(\la,\mu)}_{\ti {\cal G}}$ to distinguish them. Then ${\cal
G}\subseteq\ti{\cal G}$ implies that $\nm{(\la,\mu)}_{\cal
G}\le\nm{(\la,\mu)}_{\ti {\cal G}}$, as $(\la,\mu)$ can be split
into more pieces in $\ti{\cal G}$ than in $\cal G$. Thus for $k,N$
given, $\nm{(\la,\mu)}_{\ti{\cal G}}+k-1\le N$ implies
that~$\nm{(\la,\mu)}_{\cal G}+k-1\le N$.

Suppose $(A,{\cal G},\m)$ is an $A_{N,0}$ algebra, so that
$\m=\bigl(\m^{\smash{\la,\mu}}_k:k\ge 0$, $(\la,\mu)\in{\cal G}$,
$\nm{(\la,\mu)}_{\cal G}+k-1\le N\bigr)$. Define an $A_{N,0}$
algebra $(A,\ti{\cal G},\ti\m)$, where
$\ti\m=\bigl(\ti\m^{\smash{\la,\mu}}_k:k\ge 0$,
$(\la,\mu)\in{\ti{\cal G}}$, $\nm{(\la,\mu)}_{\ti{\cal G}}+k-1\le
N\bigr)$ by $\ti\m^{\smash{\la,\mu}}_k=0$ if $(\la,\mu)\in\ti{\cal
G}\sm{\cal G}$, and $\ti\m^{\smash{\la,\mu}}_k=
\m^{\smash{\la,\mu}}_k$ if $(\la,\mu)\in{\cal G}$. Since
$(\la,\mu)\in{\cal G}$ and $\nm{(\la,\mu)}_{\ti{\cal G}}+k-1\le N$
implies that $\nm{(\la,\mu)}_{\cal G}+k-1\le N$, this is
well-defined, and \eq{il3eq10} holds for the
$\ti\m^{\smash{\la,\mu}}_k$ as it does for the
$\m^{\smash{\la,\mu}}_k$. So $(A,\ti{\cal G},\ti\m)$ is an $A_{N,0}$
algebra.

Write $(A,{\cal G},\m)_{\ti{\cal G}}=(A,\ti{\cal G},\ti\m)$, that
is, $(A,{\cal G},\m)_{\ti{\cal G}}$ is the $\ti{\cal G}$-{\it
truncation} of $(A,{\cal G},\m)$. In a similar way, if $\f:(A,{\cal
G},\m)\ra(B,{\cal G},\n)$ is an $A_{N,0}$ morphism of $A_{N,0}$
algebras with $\cal G$, then the $\ti{\cal G}$-{\it truncation}
$\f_{\ti{\cal G}}=\ti\f:(A,{\cal G},\m)_{\ti{\cal G}}\ra(B,{\cal
G},\n)_{\ti{\cal G}}$ is an $A_{N,0}$ morphism of $A_{N,0}$ algebras
with $\ti{\cal G}$, where $\ti\f^{\smash{\la,\mu}}_k=0$ if
$(\la,\mu)\in\ti{\cal G}\sm{\cal G}$, and
$\ti\f^{\smash{\la,\mu}}_k=\f^{\smash{\la,\mu}}_k$ if
$(\la,\mu)\in{\cal G}$. If $\H:\f\ra\g$ is a homotopy of $A_{N,0}$
morphisms $\f,\g:(A,{\cal G},\m)\ra(B,{\cal G},\n)$, then the
$\ti{\cal G}$-truncation $\H_{\ti{\cal G}}=\ti\H:\f_{\ti{\cal
G}}\Ra\g_{\ti{\cal G}}$ is a homotopy, where $\ti\H^{\smash{\la,
\mu}}_k=0$ if $(\la,\mu)\in\ti{\cal G}\sm{\cal G}$, and
$\ti\H^{\smash{\la,\mu}}_k=\H^{\smash{\la,\mu}}_k$
if~$(\la,\mu)\in{\cal G}$.

Now suppose that $(\Q\X\ot\La^0_\nov,\m)$ is a gapped filtered
$A_\iy$ algebra constructed in Definition \ref{il11dfn} using data
$J,{\cal G},\ldots$. We shall show how to construct {\it exactly the
same} gapped filtered $A_\iy$ algebra using $\ti{\cal G}$ instead of
$\cal G$. Use all the notation $(\Q\X_N,{\cal G},\m_N),\f^N,\g^N,
\m^N,\ldots$ of Definition \ref{il11dfn}. Then it is easy to see
that we may go through Definition \ref{il11dfn} replacing $\cal G$
by $\ti{\cal G}$, and all the $A_{N,0}$ algebras, morphisms and
homotopies by their $\ti{\cal G}$-truncations, and get a valid set
of choices. That is, we replace $(\Q\X_N,{\cal G},\m_N)$ by
$(\Q\X_N,\ti{\cal G},\ti\m_N)=(\Q\X_N,{\cal G},\m_N)_{\ti{\cal G}}$,
$\f^N,\g^N$ by $\ti\f^N=(\f^N)_{\ti{\cal G}},\ti\g^N=
(\g^N)_{\ti{\cal G}}$, and $(\Q\X,{\cal G},\m^N)$ by $(\Q\X,{\cal
G},\ti\m^N)=(\Q\X,{\cal G},\m^N)_{\ti{\cal G}}$.

Since $\ti{\cal G}$-truncation commutes with truncation of
$A_{N+1,0}$ algebras to $A_{N,0}$ algebras, these satisfy
$(\ti\g^{N+1})_N=\ti\f^N\ci\ti\g^N$, and so on. Thus, we obtain a
gapped filtered $A_\iy$ algebra $(\Q\X\ot\La^0_\nov,\ti\m)$ using
$\ti{\cal G}$, whose truncation to an $A_{N,0}$ algebra with
$\ti{\cal G}$ is $(\Q\X,{\cal G},\ti\m^N)=(\Q\X,{\cal
G},\m^N)_{\ti{\cal G}}$ for all $N=0,1,\ldots$. Clearly this implies
that $\ti\m=\m$, and $(\Q\X\ot\La^0_\nov,\ti\m)=(\Q\X
\ot\La^0_\nov,\m)$. So we are always free to enlarge $\cal G$ to
$\ti{\cal G}$, and obtain not just two homotopic, but the same,
gapped filtered $A_\iy$ algebras.

To extend the proofs of the first two parts to allow $\cal G$ to
vary, suppose in (a) that $(\Q\X\ot\La^0_\nov,\m)$,
$(\Q\ti\X\ot\La^0_\nov,\ti\m)$ are constructed using $J,{\cal G},\ab
\la_{(p_-,p_+)},\ab o_{(p_-,p_+)},\ab\ldots$ and $\ti J,\ti{\cal
G},\ab\la_{(p_-,p_+)},\ab o_{(p_-,p_+)},\ab\ldots$, with possibly
different ${\cal G},\ti{\cal G}$. Choose a smooth 1-parameter family
of almost complex structures $J_t:t\in[0,1]$ on $M$ compatible with
$\om$, with $J_0=J$ and $J_1=\ti J$. Choose some $\check{\cal
G}\subset[0,\iy)\t\Z$ such that ${\cal G}\subseteq\check{\cal G}$,
and $\ti{\cal G}\subseteq\check{\cal G}$, and conditions (i),(ii) of
\S\ref{il8} hold for $\check{\cal G}$ and $J_t:t\in[0,1]$. This is
possible, and there is a unique smallest such~$\check{\cal G}$.

Now regard $(\Q\X\ot\La^0_\nov,\m)$, $(\Q\ti\X\ot\La^0_\nov, \ti\m)$
as having been constructed using $\check{\cal G}$ rather than ${\cal
G},\ti{\cal G}$, as above. Then we can use the first part of the
proof with $\check{\cal G}$ in place of $\cal G$ to construct
${\mathfrak j}:(\Q\X\ot\La^0_\nov,\m)\ra(\Q\ti\X
\ot\La^0_\nov,\ti\m)$ and prove (a). The extension of (b) to varying
$\cal G$ is similar; we must choose $\check{\cal G}$ to contain
${\cal G},{\cal G}',{\cal G}''$, and the choices of $\cal G$ used to
define ${\mathfrak j},{\mathfrak j}',{\mathfrak j}''$, and to be
compatible with the family of almost complex structures $J_t:t\in T$
used in \S\ref{il102} to construct homotopies.

Finally we explain how to change the paths $\la_{(p_-,p_+)}$ and
orientations $o_{(p_-,p_+)}$ on $\Ker\bar\pd_{\smash{
\la_{(p_-,p_+)}}}$ for $(p_-,p_+)\in R$. Let $\ti\la_{(p_-,p_+)},\ti
o_{(p_-,p_+)}$ be an alternative set of choices, which yield the
same indices $\eta_{(p_-,p_+)}$. Then Proposition \ref{il5prop}
shows how the orientation of $\oM_{k+1}^\ma(\al,\be,J,f_1,
\ldots,f_k)$ changes for these new choices, in terms of
$\xi_{(p_-,p_+)}=\pm 1$ for $(p_-,p_+)\in R$. Let
$(\Q\X\ot\La^0_\nov,\m)$ be constructed in Definition \ref{il11dfn}
using the $\la_{(p_-,p_+)}, o_{(p_-,p_+)}$, and
$(\Q\X\ot\La^0_\nov,\ti\m)$ be constructed using
$\ti\la_{(p_-,p_+)},\ti o_{(p_-,p_+)}$, but otherwise using {\it
exactly the same choices}. That is, the chain complexes
$\Q\X_N,\Q\X$ and choices of perturbation data are unchanged, but
the other data of virtual chains, $\m_N,\f^N,\g^N,\m^N,\m,\ldots$
change to $\ti\m_N,\ti\f^N,\ti\g^N,\ti\m^N,\ti\m,\ldots$ with
various sign changes depending on the $\xi_{(p_-,p_+)}$.

But $\oM_{k+1}^\ma(\al,\be,J,f_1,\ldots,f_k)=\emptyset$ unless
$f_i:\De_{a_i}\ra L\amalg R$ maps to $\al(i)\in R$ if $i\in I$ and
to $L$ if $i\notin I$, and $\bev:\oM_{k+1}^\ma(\al,\be,J,f_1,\ldots,
f_k)\ra L\amalg R$ maps to $\si\ci\al(0)$ if $0\in I$ and to $L$ if
$0\notin I$. Because of this, if we define linear
$\Xi:\Q\X_i\ra\Q\X_i$ by
\begin{equation*}
\Xi(f)=\begin{cases} \xi_{\si(p_-,p_+)}f, &
f:\De_a\ra\{(p_-,p_+)\}\subset R, \\
f, & f:\De_a\ra L, \end{cases}
\end{equation*}
then in Definition \ref{il7dfn1} we have
$\ti\m_{k,\geo}^{\la,\mu}\bigl(\Xi(f_1),\ldots,\Xi(f_k)\bigr)=
\Xi\ci\m_{k,\geo}^{\la,\mu}(f_1,\ldots,f_k)$, as
$\m_{k,\geo}^{\la,\mu},\ti\m_{k,\geo}^{\la,\mu}$ are constructed
from virtual chains for $\oM_{k+1}^\ma(\al,\be,J,f_1,\ldots,f_k)$,
which change signs as in Proposition~\ref{il5prop}.

Going through the constructions of \S\ref{il7}--\S\ref{il10} and
Definition \ref{il11dfn}, we find that everything commutes with
$\Xi$ in this way, so that eventually $(\Q\X\ot\La^0_\nov,\m)$ and
$(\Q\X\ot\La^0_\nov,\ti\m)$ satisfy
$\ti\m_k\bigl(\hat\Xi(f_1),\ldots,\hat\Xi(f_k)\bigr)=
\hat\Xi\ci\m_k(f_1,\ldots,f_k)$, where $\hat\Xi:\Q\X\ot\ab
\La^0_\nov\ab\ra\Q\X\ot\La^0_\nov$ is the $\La_\nov^0$-linear map
induced by $\Xi:\Q\X\ra\Q\X$. Thus $\hat\Xi$ induces a strict
$A_\iy$ isomorphism $\bs\Xi:(\Q\X\ot\La^0_\nov,\m)\ra
(\Q\X\ot\La^0_\nov,\ti\m)$. To include change of
$\la_{(p_-,p_+)},o_{(p_-,p_+)}$ in (a), we compose $\mathfrak j$
constructed above for fixed $o_{(p_-,p_+)}$ with this $\bs\Xi$ to
get the new $\mathfrak j$. The same idea works for~(b).
\end{proof}

\begin{rem} In Theorem \ref{il11thm}(a), it is {\it nearly} true
that $(\Q\X\ot\La^0_\nov,\ab\m)$ is also independent of the indices
$\eta_{(p_-,p_+)}$ in \S\ref{il43} up to canonical homotopy
equivalence. This would be true if we relaxed the definition of
gapped filtered $A_\iy$ morphism in Definition \ref{il3dfn11}
slightly. For $(p_-,p_+)\in R$, let $\la_{(p_-,p_+)},
\ti\la_{(p_-,p_+)}$ be possible choices in \S\ref{il43}, let
$\eta_{(p_-,p_+)},\ab \ti\eta_{(p_-,p_+)}$ be the corresponding
indices \eq{il4eq8}, and let $o_{(p_-,p_+)},\ti o_{(p_-,p_+)}$ be
orientations on $\Ker\bar\pd_{\smash{\la_{(p_-,p_+)}}},\Ker
\bar\pd_{\smash{\ti\la_{(p_-,p_+)}}}$. As at the end of
\S\ref{il54}, we have $\ti\eta_{(p_-,p_+)}=\eta_{(p_-,p_+)}+
2d_{(p_-,p_+)}$ for $d_{(p_-,p_+)}\in\Z$.

We can now try to adapt the final part of the proof of Theorem
\ref{il11thm}, as follows. Suppose $(\Q\X\ot\La^0_\nov,\m)$ is
constructed in Definition \ref{il11dfn} using
$\la_{(p_-,p_+)},\eta_{(p_-, p_+)},\ab o_{(p_-,p_+)}$, and
$(\Q\X\ot\La^0_\nov,\ti\m)$ is constructed using
$\ti\la_{(p_-,p_+)},\ti\eta_{(p_-, p_+)},\ti o_{(p_-,p_+)}$, but
otherwise using {\it exactly the same choices}. When we change from
$\la_{(p_-,p_+)},\eta_{(p_-, p_+)},\ab o_{(p_-,p_+)}$ to
$\ti\la_{(p_-,p_+)},\ti\eta_{(p_-, p_+)},\ti o_{(p_-,p_+)}$, the
orientations of $\oM_{k+1}^\ma(\al,\be,J,f_1,\ldots,f_k)$ change as
in Proposition \ref{il5prop}, in terms of $\xi_{(p_-,p_+)}=\pm 1$
for $(p_-,p_+)\in R$, and $\deg f$ in \eq{il4eq13} changes by $\deg
f\mapsto\deg f+2d_{(p_-,p_+)}$ if $f:\De_a\ra\{(p_-,p_+)\}$. Define
a $\La^0_\nov$-linear map $\hat\Xi:\Q\X\ot\La^0_\nov\ra\Q\X
\ot\La^0_\nov$ by
\begin{equation*}
\hat\Xi(f)=\begin{cases} e^{-d_{(p_-,p_+)}}\xi_{\si(p_-,p_+)}f, &
f:\De_a\ra\{(p_-,p_+)\}\subset R, \\
f, & f:\De_a\ra L, \end{cases}
\end{equation*}
where $e$ is the formal variable in $\La^0_\nov$ from \S\ref{il34}.

Note that $\Q\X\ot\La^0_\nov$ is graded differently in
$(\Q\X\ot\La^0_\nov,\m)$ and $(\Q\X\ot\La^0_\nov,\ti\m)$, because of
the change in $\deg f$. Since $e$ has degree 2, the correction
$e^{-d_{(p_-,p_+)}}$ ensures that $\hat\Xi$ is graded of degree 0 as
a map $(\Q\X\ot\La^0_\nov,\m)\ra (\Q\X\ot\La^0_\nov,\ti\m)$. As in
the final part of the proof of Theorem \ref{il11thm}, we find that
$\ti\m_k\bigl(\hat\Xi(f_1),
\ldots,\hat\Xi(f_k)\bigr)=\hat\Xi\ci\m_k(f_1,\ldots,f_k)$ for all
$f_1,\ldots,f_k\in\Q\X\ot\La^0_\nov$.

We would like to define a strict gapped filtered $A_\iy$ isomorphism
$\bs\Xi:(\Q\X\ot\La^0_\nov,\ab\m) \ab\ra(\Q\X\ot\La^0_\nov,\ti\m)$
by $\bs\Xi_1= \hat\Xi$ and $\bs\Xi_k=0$ for $k\ne 1$, which would
prove that $(\Q\X\ot\La^0_\nov,\ab\m)$ is independent of
$\la_{(p_-,p_+)},\eta_{(p_-,p_+)}$ up to canonical homotopy
equivalence. However, this $\bs\Xi$ has
$\bs\Xi^{0,-d_{\smash{(p_-,p_+)}}}_{\smash{1}}\ne 0$ for all
$(p_-,p_+)\in R$, which contradicts the conditions on ${\cal G}'$ in
Definition \ref{il3dfn11}(i) if $d_{(p_-,p_+)}\ne 0$. We could
weaken Definition \ref{il3dfn11}(i) to make $\bs\Xi$ a gapped
filtered $A_\iy$ morphism, but this would cause problems elsewhere,
in particular, the definition of weak homotopy equivalence would no
longer make sense.
\label{il11rem}
\end{rem}

By Theorem \ref{il3thm4}, the gapped filtered $A_\iy$ algebra
$(\Q\X\ot\La^0_\nov,\m)$ of Definition \ref{il11dfn} admits a {\it
minimal model\/} $({\cal H}\ot\La^0_\nov,\n)$ with ${\cal H}\cong
H^*(\Q\X,\m_1^{\smash{0,0}})$. Here Theorem \ref{il6thm}$(N1)$(b)
implies that $H^*(\Q\X,\m_1^{\smash{0,0}})\cong H^\rsi_*(L\amalg
R;\Q)$ as an ungraded vector space, and the grading is given by
shifted cohomological degree in \eq{il4eq13}. As
$(\Q\X\ot\La^0_\nov, \m)$ is unique up to canonical homotopy
equivalence by Theorem \ref{il11thm}, $({\cal H}\ot\La^0_\nov,\n)$
is unique up to canonical gapped filtered $A_\iy$ isomorphism. Thus
we deduce:

\begin{cor} The gapped filtered\/ $A_\iy$ algebra
$(\Q\X\ot\La^0_\nov,\m)$ of Definition {\rm\ref{il11dfn}} has a
minimal model\/ $({\cal H}\ot\La^0_\nov,\n),$ with graded\/
$\Q$-vector space ${\cal H}\!=\!\smash{\bigop\limits_{d\in\Z}\!{\cal
H}^d}$ given by
\begin{equation}
{\cal H}^d=H_{n-d-1}(L;\Q)\op\bigop\nolimits_{\begin{subarray}{l}
(p_-,p_+)\in R:\\ d=\eta_{(p_-,p_+)}-1\end{subarray}}\Q(p_-,p_+),
\label{il11eq4}
\end{equation}
where $\Q(p_-,p_+)\cong H_0\bigl(\{(p_-,p_+)\};\Q\bigr)$ is the
$\Q$-vector space with basis~$\{(p_-,p_+)\}$.

This $({\cal H}\ot\La^0_\nov,\n)$ depends up to canonical gapped
filtered\/ $A_\iy$ isomorphism only on $(M,\om),$ $\io:L\ra M$ and
its relative spin structure, and the indices $\eta_{(p_-,p_+)},$ and
is otherwise independent of\/ $J,{\cal G},\la_{(p_-,p_+)},o_{(p_-,
p_+)}$ and other choices. That is, if\/ $({\cal
H}\ot\La^0_\nov,\n),$ $({\cal H}\ot\La^0_\nov,\ti\n)$ are two
possible outcomes, we can construct a gapped filtered\/ $A_\iy$
isomorphism ${\mathfrak j}:({\cal H}\ot\La^0_\nov,\n)\ra({\cal
H}\ot\La^0_\nov,\ti\n),$ and\/ $\mathfrak j$ is unique up to
homotopy. Furthermore, ${\mathfrak j}_1^{\smash{0,0}}:{\cal
H}\ra{\cal H}$ is the identity on $H_*(L;\Q),$ and $\pm 1$ on each\/
$(p_-,p_+)$ in~$R$.
\label{il11cor}
\end{cor}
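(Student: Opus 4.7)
The plan is to combine Theorem \ref{il3thm4} on minimal models with the uniqueness result Theorem \ref{il11thm} for the underlying gapped filtered $A_\iy$ algebra. Applying Theorem \ref{il3thm4} to $(\Q\X\ot\La_\nov^0,\m)$ of Definition \ref{il11dfn} produces a minimal model $(\mathcal{H}\ot\La_\nov^0,\n)$ together with a homotopy equivalence ${\mathfrak i}:(\mathcal{H}\ot\La_\nov^0,\n)\ra(\Q\X\ot\La_\nov^0,\m)$ inducing the identity in cohomology, with $\mathcal{H}\cong H^*(\Q\X,\m_1^{0,0})$ as graded $\Q$-vector spaces. To identify this with \eqref{il11eq4}, recall from Definition \ref{il7dfn1} that $\m_1^{0,0}=(-1)^n\pd$, and from Theorem \ref{il6thm}$(N1)$(b) that the inclusion $\Q\X\hookra C^\rsi_*(L\amalg R;\Q)$ induces an isomorphism $H_*(\Q\X,\pd)\cong H^\rsi_*(L\amalg R;\Q)$. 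Since $R$ is discrete, $H^\rsi_*(L\amalg R;\Q)\cong H_*(L;\Q)\op\bigop_{(p_-,p_+)\in R}\Q(p_-,p_+)$, and the shifted cohomological degree \eqref{il4eq13} is $\deg=n-a-1$ on $H_a(L;\Q)$ and $\deg=\eta_{(p_-,p_+)}-1$ on each $(p_-,p_+)\in R$, which reproduces \eqref{il11eq4} degreewise.

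For the canonicity statement, suppose $(\mathcal{H}\ot\La_\nov^0,\n)$ and $(\mathcal{H}\ot\La_\nov^0,\ti\n)$ are two outcomes, arising as minimal models of $(\Q\X\ot\La_\nov^0,\m)$ and $(\Q\ti\X\ot\La_\nov^0,\ti\m)$ respectively. By Theorem \ref{il11thm}(a) there is a homotopy equivalence $\mathfrak{k}:(\Q\X\ot\La_\nov^0,\m)\ra(\Q\ti\X\ot\La_\nov^0,\ti\m)$, unique up to homotopy. Composing $\mathfrak{k}$ with ${\mathfrak i}$ on the right and a homotopy inverse $\ti{\mathfrak q}$ of $\ti{\mathfrak i}$ on the left (both constructed via sums over planar trees as in Corollary \ref{il3cor2} applied to the projection onto the minimal model) yields a gapped filtered $A_\iy$ morphism $\mathfrak{j}:=\ti{\mathfrak q}\ci\mathfrak{k}\ci\mathfrak{i}:(\mathcal{H}\ot\La_\nov^0,\n)\ra(\mathcal{H}\ot\La_\nov^0,\ti\n)$, which is a homotopy equivalence. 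Since both sides are minimal ($\n_1^{0,0}=\ti\n_1^{0,0}=0$), being a weak homotopy equivalence forces $\mathfrak{j}_1^{0,0}$ to be a linear isomorphism, so $\mathfrak{j}$ is a gapped filtered $A_\iy$ isomorphism by Theorem \ref{il3thm3}(c). Uniqueness of $\mathfrak{j}$ up to homotopy follows by combining the uniqueness in Theorem \ref{il11thm}(a) with the fact that both $\mathfrak{i}$ and $\ti{\mathfrak q}$ are unique up to homotopy (the former by Theorem \ref{il3thm4}, the latter as a homotopy inverse), and the standard fact that composition of $A_\iy$ morphisms descends to the homotopy category.

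The main obstacle is the final claim about $\mathfrak{j}_1^{0,0}$. The strategy is to decompose along the three types of data changes handled separately in the proof of Theorem \ref{il11thm}(a): (i) change of auxiliary data $J,\X_N,$ perturbation data, etc., with $\cal G,\la_{(p_-,p_+)},o_{(p_-,p_+)}$ fixed; (ii) enlargement of $\cal G$; and (iii) change of $\la_{(p_-,p_+)},o_{(p_-,p_+)}$ preserving $\eta_{(p_-,p_+)}$. For (ii), the proof of Theorem \ref{il11thm} shows that $\ti{\cal G}$-truncation produces the same gapped filtered $A_\iy$ algebra on the nose, so $\mathfrak{j}_1^{0,0}$ can be taken to be the identity. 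For (iii), the same proof exhibits an explicit strict $A_\iy$ isomorphism $\bs\Xi$ whose only nontrivial component $\bs\Xi_1=\hat\Xi$ acts by $\xi_{\si(p_-,p_+)}=\pm 1$ on each simplex mapping to $\{(p_-,p_+)\}\subset R$ and by the identity on $L$-simplices; passing to cohomology, $\bs\Xi_1$ descends to the identity on $H_*(L;\Q)$ and to $\pm 1$ on each $R$-generator. For (i), which is the subtlest, $\mathfrak{j}_1^{0,0}$ factors through the $(0,0)$-components $P^{\sst 0},P^{\sst 1}$ of the $A_{N,0}$ morphisms $\p^{\sst 0},\p^{\sst 1}$ of Theorem \ref{il9thm2} and through the $(0,0)$-part of the planar-tree formula for a homotopy inverse in Corollary \ref{il3cor2}; since only constant discs (those with $\be=0$) contribute at $(\la,\mu)=(0,0)$, the induced map on $H^*_\rsi(L\amalg R;\Q)$ is the map determined by pure topology of the slicing $\{0\}\t(L\amalg R)\hookra[0,1]\t(L\amalg R)\hookla\{1\}\t(L\amalg R)$ via the isomorphisms in \eqref{il8eq1}, which is the identity after the canonical identifications $\{0\}\t(L\amalg R)\cong L\amalg R\cong\{1\}\t(L\amalg R)$. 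Composing the three contributions from (i)--(iii) delivers the claimed description of $\mathfrak{j}_1^{0,0}$.
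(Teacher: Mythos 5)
Your proposal is correct and follows essentially the same route as the paper: the minimal model comes from Theorem \ref{il3thm4}, the identification of ${\cal H}$ with \eq{il11eq4} from Theorem \ref{il6thm}$(N1)$(b) together with the shifted degree \eq{il4eq13}, and canonicity from Theorem \ref{il11thm}, with your tracing of ${\mathfrak j}_1^{\smash{0,0}}$ through the three stages of that theorem's proof (including the sign map $\bs\Xi$ for changes of $\la_{(p_-,p_+)},o_{(p_-,p_+)}$) being exactly how the paper's construction yields the final claim, which it leaves implicit. The only (harmless) imprecisions are citing Corollary \ref{il3cor2} for a homotopy inverse of the non-strict morphism $\mathfrak i$ (existence follows instead from Theorem \ref{il3thm3}(c) or Markl's formulae) and citing Theorem \ref{il3thm3}(c) for $\mathfrak j$ being an isomorphism, where the correct point is that invertibility of ${\mathfrak j}_1^{\smash{0,0}}$ plus the gapped filtration forces ${\mathfrak j}_1$ to be invertible.
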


This is similar to Fukaya et al.~\cite[Th.~A, \S 1.2]{FOOO}.

\section{Calabi--Yau manifolds and graded Lagrangian submanifolds}
\label{il12}

We now explain how the material of \S\ref{il4}--\S\ref{il11}
simplifies when $(M,\om)$ is {\it Calabi--Yau}, and the Lagrangian
$L$ is {\it graded}. Floer cohomology of graded Lagrangian
submanifolds in Calabi--Yau manifolds is important because of its
r\^ole in the Homological Mirror Symmetry Conjecture of Kontsevich
\cite{Kont}. For background on Calabi--Yau manifolds, special
Lagrangian submanifolds, and Mirror Symmetry see Joyce \cite{Joyc1},
and for graded Lagrangian submanifolds and Floer cohomology see
Seidel \cite{Seid} and Fukaya \cite[Def.~2.9]{Fuka}. The framework
we give can be generalized in various ways; see Joyce \cite[\S
8.4]{Joyc1} on {\it almost Calabi--Yau manifolds}, and Seidel
\cite{Seid} for a more general notion of grading, expressed in terms
of covering spaces of bundles of Lagrangian Grassmannians.

\begin{dfn} A {\it Calabi--Yau $n$-fold\/} is a quadruple
$(M,J,\om,\Om)$ where $(M,J)$ is a compact $n$-dimensional complex
manifold, $\om$ is the K\"ahler form of a K\"ahler metric $g$ on
$M$, and $\Om$ is a non-vanishing holomorphic $(n,0)$-form on $M$
satisfying
\begin{equation}
\om^n/n!=(-1)^{n(n-1)/2}(i/2)^n\Om\w\bar\Om.
\label{il12eq1}
\end{equation}
This implies that $g$ is Ricci-flat with holonomy group contained in
SU$(n)$. Note that $(M,\om)$ is a compact symplectic manifold, and
$J$ is an (almost) complex structure on $M$ compatible with~$\om$.

If $(M,J)$ is a compact complex manifold with trivial canonical
bundle $K_M$, then by Yau's proof of the Calabi Conjecture, every
K\"ahler class on $M$ contains a unique Ricci-flat K\"ahler metric
$g$, with K\"ahler form $\om$. There exists $\Om$, unique up to
phase change $\Om\mapsto{\rm e}^{i\th}\Om$, such that
$(M,J,\om,\Om)$ is Calabi--Yau. One can construct many examples of
such $(M,J)$ using complex algebraic geometry.

Now let $\io:L\ra M$ be an oriented, immersed Lagrangian. Then
$\io^*(\Om)$ is a complex $n$-form on $L$, and the normalization
\eq{il12eq1} implies that $\md{\io^*(\Om)}\equiv 1$, where
$\md{\,.\,}$ is computed using $\io^*(g)$. Hence $\io^*(\Om)=u
\vol_L$ for some smooth $u:L\ra\U(1)$, where $\vol_L$ is the volume
form on $L$ defined using $\io^*(g)$ and the orientation. We call
$L$ {\it special Lagrangian with phase\/} ${\rm e}^{i\th}$ for
$\th\in[0,2\pi)$ if $u\equiv{\rm e}^{i\th}$.

A {\it grading} on $L$ is a choice of smooth function $\phi:L\ra\R$
such that $u\equiv{\rm e}^{i\phi}$. We call $(L,\phi)$ a {\it graded
Lagrangian submanifold}. If a grading exists it is unique up to
$\phi\mapsto\phi+2\pi k$ for $k\in\Z$, provided $L$ is connected.
Special Lagrangian submanifolds with phase ${\rm e}^{i\th}$ are
automatically graded, with $\phi\equiv\th$ constant. Let $\al\in
H^1(\U(1);\Z)$ be the generator with $\int_{\U(1)}\al=1$. Then
$u^*(\al)\in H^1(L;\Z)$ is called the {\it Maslov class}, and $L$
admits a grading if and only if $u^*(L)=0$ in $H^1(L;\Z)$, that is,
if and only if $L$ is {\it Maslov zero}.
\label{il12dfn}
\end{dfn}

Suppose that $(M,J,\om,\Om)$ is Calabi--Yau and $(L,\phi)$ is an
embedded graded Lagrangian in $M$. Then the {\it Maslov index}
$\mu_L(\be)$ of Definition \ref{il4dfn4} is zero for all $\be\in
H_2(M,L;\Z)$. This is because $\mu_L(\be)=\be\cdot
c_1\bigl(M,\io(L)\bigr)$, where $c_1\bigl(M,\io(L)\bigr)\in
H^2\bigl(M,\io(L);\Z\bigr)$ is the {\it relative first Chern
class\/} for $\om$ on $(M,L)$, and the Calabi--Yau and graded
conditions imply that~$c_1\bigl(M,\io(L)\bigr)=0$.

To extend this to immersed graded Lagrangians, we must require the
paths $\la_{(p_-,p_+)}$ in Definition \ref{il4dfn3} to lift to paths
$(\la_{(p_-,p_+)},\psi_{(p_-,p_+)})$ in {\it graded\/} Lagrangian
subspaces of $T_pM$. That is, $\la_{(p_-,p_+)}=\{\la_{(p_-,p_+)}
(x,y)\}_{(x,y)\in\pd Y}$ is a smooth family of oriented Lagrangian
subspaces of $T_pM$, where $p=\io(p_-)=\io(p_+)$, and
$\psi_{(p_-,p_+)}:\pd Y\ra\R$ is a smooth map, such that
$\Om_p\vert_{\la_{(p_-,p_+)}(x,y)}= {\rm
e}^{i\psi_{(p_-,p_+)}(x,y)}\ab\vol_{\la_{(p_-,p_+)}(x,y)}$ for all
$(x,y)\in\pd Y$, and
\begin{equation*}
\la_{(p_-,p_+)}(x,y)=\begin{cases}
\d\io(T_{p_-}L), & \mbox{if }y=1,\\
\d\io(T_{p_+}L), & \mbox{if }y=-1,
\end{cases}
\quad \psi_{(p_-,p_+)}(x,y)=\begin{cases}
\phi(p_-), & \mbox{if }y=1,\\
\phi(p_+), & \mbox{if }y=-1.
\end{cases}
\end{equation*}
Then the same argument ensures that $\mu_L(\be)=0$ for all~$\be\in
H_2\bigl(M,\io(L);\Z\bigr)$.

Requiring the $\la_{(p_-,p_+)}$ to lift to paths
$(\la_{(p_-,p_+)},\psi_{(p_-,p_+)})$ in graded Lagrangians
determines the index $\eta_{(p_-,p_+)}$ in \eq{il4eq8} uniquely,
independently of choices in $(\la_{(p_-,p_+)},\ab\psi_{(p_-,p_+)})$.
Calculation shows that we can give a simple local formula
for~$\eta_{(p_-,p_+)}$.

\begin{prop} Let\/ $(M,J,\om,\Om)$ be a Calabi--Yau $n$-fold, and\/
$(\io:L\ra M,\phi)$ be an immersed graded Lagrangian submanifold
with only transverse double self-intersections. Suppose $p_-,p_+\in
L$ with\/ $p_-\ne p_+$ and\/ $\io(p_-)=\io(p_+)=p$. Then for any
choice of path\/ $(\la_{(p_-,p_+)},\psi_{(p_-,p_+)})$ in graded
Lagrangian subspaces of\/ $T_pM$ as above, the index
$\eta_{(p_-,p_+)}$ in Definition \ref{il4dfn3} may be computed as
follows.

One can choose holomorphic coordinates $(z^1,\ldots,z^n)$ near $p$
in $M$ in which
\begin{equation}
\begin{split}
&\om\vert_p=\ts\frac{i}{2}(\d z^1\w\d\overline z^1+\cdots+\d
z^n\w\d\overline z^n),\qquad \Om\vert_p=\d
z^1\w\cdots\w\d z^n,\\
&\d\io(T_{p_-}L)=\bigl\{({\rm e}^{i\phi^1_-}x^1,\ldots,{\rm
e}^{i\phi^n_-}x^n):x^1,\ldots,x^n\in\R\bigr\},\quad\text{and\/}\\
&\d\io(T_{p_+}L)=\bigl\{({\rm e}^{i\phi^1_+}x^1,\ldots,{\rm
e}^{i\phi^n_+}x^n):x^1,\ldots,x^n\in\R\bigr\},
\end{split}
\label{il12eq2}
\end{equation}
where $\phi^1_\pm,\ldots,\phi^n_\pm\in\R$ satisfy
$\phi^1_\pm+\cdots+\phi^n_\pm=\phi(p_\pm)$ and\/
$\phi^j_+-\phi^j_-\notin\pi\Z$ for $j=1,\ldots,n$. For $x\in\R,$
write $[x]$ for the greatest integer $m$ with\/ $m\le x$. Then
\begin{equation}
\eta_{(p_-,p_+)}=n+\sum\nolimits_{j=1}^n\Bigl[\frac{\phi_+^{\smash{j}}
-\phi_-^{\smash{j}}}{\pi}\Bigr].
\label{il12eq3}
\end{equation}
\label{il12prop}
\end{prop}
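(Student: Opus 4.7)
The plan is to exploit the diagonal form of the tangent spaces $\d\io(T_{p_\pm}L)$ given in \eq{il12eq2} to reduce the computation of $\eta_{(p_-,p_+)}$ to $n$ decoupled one-variable problems, each computable by a direct index calculation. Since the graded hypothesis (as discussed just before the proposition) pins down $\eta_{(p_-,p_+)}$ independently of the choice of graded path $(\la_{(p_-,p_+)},\psi_{(p_-,p_+)})$, I may choose this path to respect the splitting $T_pM=\bigoplus_{j=1}^n\C_{z^j}$: take $\la_{(p_-,p_+)}(x,y)=\bigoplus_j\la^j(x,y)$ and $\psi_{(p_-,p_+)}(x,y)=\sum_j\psi^j(x,y)$, where each $(\la^j,\psi^j)$ is a smooth path of graded Lagrangian lines in $\C$ from $(e^{i\phi^j_-}\R,\phi^j_-)$ at $y=1$ through the semicircular cap to $(e^{i\phi^j_+}\R,\phi^j_+)$ at $y=-1$. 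The operator $\bar\pd_{\la_{(p_-,p_+)}}$ of \eq{il4eq7} then decomposes as a direct sum under this splitting, so both kernel and cokernel split and the Fredholm index adds:
\[
\eta_{(p_-,p_+)}=\ts\sum_{j=1}^n\eta^j,\qquad \eta^j:=\ind\bar\pd_{\la^j}.
\]
Formula \eq{il12eq3} thus reduces to the one-variable assertion $\eta^j=1+[(\phi^j_+-\phi^j_-)/\pi]$ for each $j$.

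For each $j$, the domain $Y$ is topologically a closed disc with a single strip-like asymptotic end at $x=+\iy$; because the transversality hypothesis $\phi^j_+-\phi^j_-\notin\pi\Z$ forces exponential decay at that end, $\bar\pd_{\la^j}$ is Fredholm, and its index coincides with the Maslov--Viterbo index of the graded path $\la^j$ with transverse endpoint Lagrangians $e^{i\phi^j_\pm}\R$. Since graded Lagrangian lines form the universal cover $\R$ of $\La(\C)\cong S^1$, the homotopy class of $\la^j$ rel endpoints is determined by the real endpoints $\phi^j_\pm$, and $\eta^j$ depends only on $\phi^j_+-\phi^j_-$. I would pin it down in two steps: (i) handle the base case $0<\phi^j_+-\phi^j_-<\pi$ by rotating coordinates to normalize $\phi^j_-=-\th/2$, $\phi^j_+=\th/2$ with $\th\in(0,\pi)$, then explicitly solving the resulting Riemann--Hilbert problem on $Y$ to obtain $\dim\Ker\bar\pd_{\la^j}=1$ and $\Coker\bar\pd_{\la^j}=0$, yielding $\eta^j=1$; (ii) show that shifting $\phi^j_+$ by $+\pi$ (the positive generator of $\pi_1(\La(\C))$) increases $\eta^j$ by exactly one, via a one-parameter spectral-flow/homotopy-of-Fredholm-operators argument in which a single new zero-mode appears transversally as the grading sweeps through a critical value. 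Iterating (ii) and combining with (i) gives $\eta^j=1+[(\phi^j_+-\phi^j_-)/\pi]$.

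Summing over $j$ yields \eq{il12eq3}, and consistency with \eq{il4eq9} is automatic since $[x]+[-x]=-1$ for $x\notin\Z$, so $\eta_{(p_-,p_+)}+\eta_{(p_+,p_-)}=2n-n=n$. The main obstacle is the base-case Fredholm computation: $Y$ is not a standard strip or a half-disc but a hybrid of both, and the index formula must be verified in this mixed geometry. My preferred route is a conformal change of variables realizing $Y$ as an upper half-disc with the strip end contracted to a boundary point, combined with the standard solution of the resulting Riemann--Hilbert problem; alternatively one may double $Y$ across the strip end to obtain a closed disc equipped with a loop of Lagrangian subspaces whose Maslov index is $2\eta^j-1$ (the $-1$ absorbing the asymptotic operator at the transverse intersection), thereby reducing step (i) to the classical formula for the Cauchy--Riemann index on a disc. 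Either reduction follows standard patterns in Floer-theoretic index theory (cf.\ Seidel, Robbin--Salamon), so the shape of the argument is clear even if the sign and normalization bookkeeping requires care.
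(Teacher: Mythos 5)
Your proposal is sound, and in fact the paper offers no written argument for this proposition at all (it is introduced only by ``Calculation shows that we can give a simple local formula for $\eta_{(p_-,p_+)}$''), so your reduction is exactly the kind of calculation being alluded to. The two key moves are both legitimate: the independence of $\eta_{(p_-,p_+)}$ from the choice of graded path, asserted in the paper just before the proposition, lets you take $\la_{(p_-,p_+)}=\bigoplus_j\la^j$ with $\psi_{(p_-,p_+)}=\sum_j\psi^j$ (the endpoint conditions $\sum_j\phi^j_\pm=\phi(p_\pm)$ are exactly what is needed, and the orientation compatibility of \S\ref{il43} can always be arranged on a product of lines without affecting the index), and then the operator \eq{il4eq7} splits so the index is additive. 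Your one-variable claim $\eta^j=1+[(\phi^j_+-\phi^j_-)/\pi]$ is correct and can be nailed down without spectral flow: conformally identify $Y$ with the upper half-plane $\{\zeta\in\C:\mathrm{Im}\,\zeta>0\}$ with the strip end at $\iy$, realize the graded boundary path by the line $e^{i\phi^j_+}(\zeta+i)^{-\be}\R$ with $\be=(\phi^j_+-\phi^j_-)/\pi$, and observe that writing $\xi=e^{i\phi^j_+}(\zeta+i)^{-\be}u$ turns the problem into Schwarz reflection: $u$ is entire, real on $\R$, of growth $o(|\zeta|^{\be})$, hence a real polynomial of degree $<\be$, giving kernel of dimension $[\be]+1$ for $\be>0$ (and $0$ for $\be<0$) with vanishing cokernel; this covers your base case (i) and step (ii) simultaneously and is insensitive to the transversality-only ambiguity since $\phi^j_+-\phi^j_-\notin\pi\Z$. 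One caution on your alternative route: doubling $Y$ across the strip end only computes the sum $\eta^j_{(p_-,p_+)}+\eta^j_{(p_+,p_-)}=1$ via the closed-disc Maslov formula, i.e.\ it reproduces \eq{il4eq9} but does not by itself separate the two indices, so it should be regarded as a consistency check rather than a substitute for the Riemann--Hilbert computation. Finally, your remark that $[x]+[-x]=-1$ for $x\notin\Z$ recovers \eq{il4eq9}, and the formula agrees with the paper's own use in Example \ref{il13ex1}, which is a good external sanity check.
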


Since $\phi^j_+-\phi^j_-\notin\pi\Z$, we have
$\bigl[\frac{\phi_+^{\smash{j}}
-\phi_-^{\smash{j}}}{\pi}\bigr]+\bigl[\frac{\phi_-^{\smash{j}}
-\phi_+^{\smash{j}}}{\pi}\bigr]=-1$ for $j=1,\ldots,n$. Thus
exchanging $p_-,p_+$ and $\phi^{\smash{j}}_-,\phi^{\smash{j}}_+$ we
see from \eq{il12eq3} that $\eta_{(p_-,p_+)}+\eta_{(p_+,p_-)}=n$, as
in \eq{il4eq9}. Recall that in \S\ref{il46} we assumed that
$\eta_{(p_-,p_+)}\ge 0$ for all $(p_-,p_+)\in R$. This is {\it not
compatible} with requiring $\la_{(p_-,p_+)}$ to lift to graded
Lagrangians, since then $\eta_{(p_-,p_+)}$ is determined by
\eq{il12eq3}, and need not satisfy~$\eta_{(p_-,p_+)}\ge 0$.

In fact we only used $\eta_{(p_-,p_+)}\ge 0$ to define the modified
moduli spaces $\tM_{k+1}^\ma(\al,\ab\be,\ab J),\ab
\tM_{k+1}^\ma(\al,\be,J,f_1,\ldots,f_k)$, which were only for
motivation, and in the orientation calculations of \S\ref{il5}. But
as we explained in \S\ref{il54}, changing the $\eta_{(p_-,p_+)}$
does not affect any of the signs in \S\ref{il5}, as the
$\eta_{(p_-,p_+)}$ change by even numbers, and Proposition
\ref{il5prop} explains how changing
$\la_{(p_-,p_+)},\eta_{(p_-,p_+)}, o_{(p_-,p_+)}$ affects the
orientations on $\oM_{k+1}^\ma(\al,\be,J,f_1,\ldots,f_k)$. Using
this, we can define the orientations on $\oM_{k+1}^\ma(\al,\be,J,
f_1,\ldots,f_k)$ using choices $\ti\la_{(p_-,p_+)}$ inducing indices
$\ti\eta_{(p_-,p_+)}\ge 0$, and then replace $\ti\eta_{(p_-,p_+)}$
by $\eta_{(p_-,p_+)}$ in \eq{il12eq3} defined using graded paths
$(\la_{(p_-,p_+)},\ab\psi_{(p_-,p_+)})$, and the results of
\S\ref{il5} such as Theorem \ref{il5thm6} will still be valid.

To summarize our discussion so far: when $(M,J,\om,\Om)$ is
Calabi--Yau and $(\io:L\ra M,\phi)$ is an immersed graded Lagrangian
with only transverse double self-intersections, by using graded
paths $(\la_{(p_-,p_+)},\ab\psi_{(p_-,p_+)})$ in \S\ref{il43} the
indices $\eta_{(p_-,p_+)}$ are uniquely determined by \eq{il12eq3},
for all $\be\in H_2\bigl(M,\io(L);\Z\bigr)$ the Maslov index
$\mu_L(\be)$ is zero, and the orientation results of \S\ref{il5}
still hold.

We can now go through the whole of \S\ref{il6}--\S\ref{il11} working
over the Calabi--Yau Novikov ring $\La_\CY^0$ of \S\ref{il34},
rather than over $\La_\nov^0$. The point is that terms $T^\la e^\mu$
in $\La_\nov^0$ are to keep track of holomorphic discs with area
$\la$ and Maslov index $2\mu$. But for graded Lagrangians all Maslov
indices are zero, so we can work just with terms $T^\la$ in
$\La_\CY^0$. Thus we prove analogues of Theorem \ref{il11thm} and
Corollary~\ref{il11cor}:

\begin{thm} Let\/ $(M,J,\om,\Om)$ be a Calabi--Yau $n$-fold and\/
$(\io:L\ra M,\phi)$ a compact, immersed, graded Lagrangian with only
transverse double self-intersections. Choose a relative spin
structure for $\io:L\ra M$. Then
\begin{itemize}
\setlength{\itemsep}{0pt}
\setlength{\parsep}{0pt}
\item[{\bf(a)}] By an analogue of Definition \ref{il11dfn}, we can
construct a gapped filtered\/ $A_\iy$ algebra
$(\Q\X\ot\La^0_\CY,\ab\m),$ which depends up to canonical homotopy
equivalence only on $(M,\om),$ $\io:L\ra M,$ and its relative spin
structure.

That is, if\/ $(\Q\X\ot\La^0_\CY,\m)$ and\/ $(\Q\ti\X\ot
\La^0_\CY,\ti\m)$ are outcomes depending on $J,\ab{\cal
G},\ab\la_{(p_-,p_+)},\ldots$ and\/ $\ti J,\ti{\cal
G},\ab\ti\la_{(p_-,p_+)},\ldots,$ we can construct a gapped
filtered\/ $A_\iy$ morphism ${\mathfrak
j}:(\Q\X\ot\La^0_\CY,\m)\!\ra\! (\Q\ti\X\ot\La^0_\CY,\ti\m)$ which
is a homotopy equivalence. If\/ ${\mathfrak j},{\mathfrak j}'$ are
possibilities for $\mathfrak j$ there is a homotopy~$\H:{\mathfrak
j}\!\Ra\!{\mathfrak j}'$.
\item[{\bf(b)}] If\/ $(\Q\X\ot\La^0_\CY,\m), (\Q\ti\X\ot
\La^0_\CY,\ti\m),(\Q\check\X\ot \La^0_\CY,\check\m)$ and\/
${\mathfrak j}:(\Q\X\ot\La^0_\CY,\m)\ra
(\Q\ti\X\ot\La^0_\CY,\ti\m),$ ${\mathfrak j}':(\Q\ti\X
\ot\La^0_\CY,\ti\m)\ra(\Q\check\X \ot\ab\La^0_\CY,\ab\check\m),$
${\mathfrak j}'':(\Q\X\ot
\La^0_\CY,\m)\ra(\Q\check\X\ab\ot\La^0_\CY,\ab\check\m)$ are as in
{\rm(a),} there is a homotopy~$\H:{\mathfrak j}''\Ra{\mathfrak
j}'\ci{\mathfrak j}$.
\item[{\bf(c)}] The gapped filtered\/ $A_\iy$ algebra
$(\Q\X\ot\La^0_\CY,\m)$ in {\rm(a)} has a minimal model\/ $({\cal
H}\ot\La^0_\CY,\n),$ with\/ ${\cal H}=\bigop_{d\in\Z}{\cal H}^d$
given by~\eq{il11eq4}.
\end{itemize}
\label{il12thm}
\end{thm}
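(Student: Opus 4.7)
The plan is to repeat the constructions of Sections \ref{il6}--\ref{il11} verbatim, but organized so that the smaller Novikov ring $\La_\CY^0$ (with no $e^\mu$ factor) is enough. The essential input is that under our hypotheses the Maslov index vanishes: if $\be\in H_2(M,\io(L);\Z)$ satisfies $\oM_1^\ma(\be,J)\ne\emptyset$ then $\mu_L(\be)=0$, because the graded paths $(\la_{(p_-,p_+)},\psi_{(p_-,p_+)})$ realize $c_1(M,\io(L))=0$ relatively. Hence we may replace $\cal G\subseteq[0,\iy)\t\Z$ throughout by a subset $\cal G\subseteq[0,\iy)$ closed under addition with ${\cal G}\cap\{0\}=\{0\}$ and ${\cal G}\cap[0,C]$ finite, and every term $T^\la e^\mu$ arising from a moduli space $\oM_{k+1}^\ma(\be,J,\ldots)$ automatically has $\mu=0$, so that the multilinear maps $\m_{k,\geo}^{\la,\mu}$ of Definition \ref{il7dfn1} land in $\Q\X\ot\La_\CY^0$.

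First I would fix a relative spin structure on $\io:L\ra M$, choose graded paths $(\la_{(p_-,p_+)},\psi_{(p_-,p_+)})$, and use these to define the indices $\eta_{(p_-,p_+)}$ by Proposition \ref{il12prop} and \eq{il12eq3}. Since \eq{il12eq3} can give negative $\eta_{(p_-,p_+)}$, I would first construct auxiliary paths $\ti\la_{(p_-,p_+)}$ with $\ti\eta_{(p_-,p_+)}\ge 0$ and orientations $\ti o_{(p_-,p_+)}$, run the orientation machinery of \S\ref{il5} with these, and then invoke Proposition \ref{il5prop} to transfer the orientations of $\oM_{k+1}^\ma(\al,\be,J,f_1,\ldots,f_k)$ to those defined by $(\la_{(p_-,p_+)},o_{(p_-,p_+)})$; the signs in Theorem \ref{il5thm6} depend only on $n,i,j,k_2$ and the shifted cohomological degrees $\deg f_i$, which shift by even numbers $2d_{(p_-,p_+)}$ under the change, so all sign formulae in \S\ref{il6}--\S\ref{il7} are unaffected.

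Next I would carry over Theorems \ref{il6thm}--\ref{il7thm} unchanged to produce an $A_{N,0}$ algebra $(\Q\X_N,{\cal G},\m_N)$ with $\cal G\subseteq[0,\iy)$, equivalently over $\La_\CY^0$, using the $A_{N,0}$ formalism of \S\ref{il37} as explained in the paragraph before \S\ref{il35} and the remark at the end of \S\ref{il37} that everything works over $\La_\CY^0$. The $J$-independence machinery of \S\ref{il8}--\S\ref{il10} transports verbatim: the moduli spaces $\oM_{k+1}^\ma(\be,J_t:t\in[0,1],\ldots)$, $\oM_{k+1}^\ma(\be,J_s:s\in S,\ldots)$ and $\oM_{k+1}^\ma(\be,J_t:t\in T,\ldots)$ are all built from $J$-holomorphic discs with boundary on the fixed graded Lagrangian $L$, so Maslov indices remain zero and $\cal G$ can be kept in $[0,\iy)$. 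Thus we obtain $A_{N,0}$ morphisms $\p^{\sst 0},\p^{\sst 1},\h^N,\f^{\sst 01}$ as homotopy equivalences over $\La_\CY^0$, together with the homotopies of Theorems \ref{il10thm2} and \ref{il10thm4}.

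Finally, the inductive limit procedure of Definition \ref{il11dfn} yields a gapped filtered $A_\iy$ algebra $(\Q\X\ot\La_\CY^0,\m)$, and then the proof of Theorem \ref{il11thm} goes through word for word (using in particular the $\ti{\cal G}$-truncation argument and the commuting diagram \eq{il11eq2}) to give (a) and (b); part (c) follows by the $\La_\CY^0$ analogue of Theorem \ref{il3thm4} applied to $(\Q\X\ot\La_\CY^0,\m)$, with ${\cal H}\cong H^*(\Q\X,\m_1^{\smash{0,0}})\cong H^\rsi_*(L\amalg R;\Q)$ graded by shifted cohomological degree as in \eq{il4eq13} and assembled into \eq{il11eq4}. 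I do not see any essential obstacle: the genuinely new content is the observation that $\mu_L\equiv 0$ in the graded Calabi--Yau setting permits the reduction from $\La_\nov^0$ to $\La_\CY^0$; the main bookkeeping point to watch is that the possibly negative indices $\eta_{(p_-,p_+)}$ of \eq{il12eq3} are handled cleanly by the auxiliary-choice-plus-Proposition-\ref{il5prop} trick, so that the entire orientation apparatus of \S\ref{il5} remains valid without modification.
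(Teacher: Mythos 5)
Your proposal is correct and follows essentially the same route as the paper: \S\ref{il12} proves Theorem \ref{il12thm} precisely by observing that graded paths $(\la_{(p_-,p_+)},\psi_{(p_-,p_+)})$ force $\mu_L(\be)=0$, handling the possibly negative indices $\eta_{(p_-,p_+)}$ of \eq{il12eq3} via auxiliary choices $\ti\la_{(p_-,p_+)}$ with $\ti\eta_{(p_-,p_+)}\ge 0$ together with Proposition \ref{il5prop} and the evenness of the shifts, and then rerunning \S\ref{il6}--\S\ref{il11} over $\La_\CY^0$ with ${\cal G}\subset[0,\iy)$. No gaps; your bookkeeping of the orientation transfer and of the family versions in \S\ref{il8}--\S\ref{il10} matches the paper's argument.
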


\section{Bounding cochains and Lagrangian Floer cohomology}
\label{il13}

Finally we apply our results to define bounding cochains and
Lagrangian Floer cohomology for immersed Lagrangians. We do this for
one and two Lagrangians over $\La_\nov^0,\La_\nov$ in
\S\ref{il131}--\S\ref{il132}, and for graded Lagrangians in
Calabi--Yau manifolds over $\La_\CY^0,\La_\CY$ in \S\ref{il133}.
Sections \ref{il134}--\ref{il135} suggest some questions and
conjectures for future research, concerning the invariance of Floer
cohomology under {\it local Hamiltonian equivalence} of immersed
Lagrangians, and on whether there exists a theory of {\it Legendrian
Floer cohomology} for embedded Legendrians in contact manifolds
which are $\U(1)$-bundles over symplectic manifolds, that is
invariant under embedded Legendrian isotopy.

\subsection{Bounding cochains, and the Floer cohomology of one Lagrangian}
\label{il131}

As in \S\ref{il36}, given a gapped filtered $A_\iy$ algebra
$(A\ot\La_\nov^0,\m)$, we can define {\it bounding cochains} $b$ for
$(A\ot\La_\nov^0,\m)$, and form {\it cohomology groups}
$H^*(A\ot\La_\nov^0,\m_1^b)$ and $H^*(A\ot\La_\nov,\m_1^b)$ over
$\La_\nov^0,\La_\nov$. We can apply these ideas either to
$(\Q\X\ot\La_\nov^0,\m)$ in Definition \ref{il11dfn}, or to its
canonical model $({\cal H}\ot\La_\nov^0,\n)$ in Corollary
\ref{il11cor}. The results will be the same in both cases, since up
to equivalence, bounding cochains and cohomology depend only on the
homotopy type of the gapped filtered $A_\iy$ algebra. We choose to
work with $({\cal H}\ot\La_\nov^0,\n)$, as the geometric
interpretation is clearer, and the notion of equivalence of bounding
cochains is better behaved.

\begin{dfn} Let $({\cal H}\ot\La_\nov^0,\n)$ be a gapped
filtered $A_\iy$ algebra in Corollary \ref{il11cor}, constructed
from $(M,\om)$ and $\io:L\ra M$. As in Definition \ref{il3dfn13}, a
{\it bounding cochain} $b$ for $({\cal H}\ot\La_\nov^0,\n)$ is $b\in
F^\la({\cal H}\ot\La_\nov^0)^{(0)}$ for some $\la>0$, satisfying
$\sum_{k\ge 0}\n_k(b,\ldots,b)=0$. Fix some bounding cochain $b$
for~$({\cal H}\ot\La_\nov^0,\n)$.

We shall define Lagrangian Floer cohomology over both Novikov rings
$\La_\nov^0$ and $\La_\nov$. For brevity we will use $\La_\nov^*$ to
mean either $\La_\nov^0$ or $\La_\nov$, the same for each
occurrence. Define graded $\La_\nov^*$-multilinear maps
$\n_k^b:{\buildrel {\ulcorner\,\,\,\text{$k$ copies }
\,\,\,\urcorner} \over {\vphantom{m}\smash{({\cal
H}\ot\La_\nov^*)\t\cdots\t({\cal H}\ot\La_\nov^*)}}}\ab\ra {\cal
H}\ot\La_\nov^*$ for $k=0,1,2,\ldots$, of degree $+1$, by
\begin{equation}
\begin{gathered}
\n_k^b(a_1,\ldots,a_k)=\!\!\sum_{n_0,\ldots,n_k\ge
0}\!\!\!\n_{k+n_0+\cdots+n_k}\begin{aligned}[t]
\bigl({\buildrel{\ulcorner\,\,\text{$n_0$} \,\,\urcorner}
\over{\vphantom{m}\smash{b,\ldots,b}}},a_1, {\buildrel
{\ulcorner\,\,\text{$n_1$}\,\,\urcorner}
\over{\vphantom{m}\smash{b,\ldots,b}}},a_2,{\buildrel
{\ulcorner\,\,\text{$n_2$}\,\,\urcorner}
\over{\vphantom{m}\smash{b,\ldots,b}}}&,\\[-6pt]
\ldots, {\buildrel {\ulcorner\,\,\text{$n_{k-1}$}\,\,\urcorner}
\over{\vphantom{m}\smash{b,\ldots,b}}},a_k,{\buildrel
{\ulcorner\,\,\text{$n_k$} \,\,\urcorner}
\over{\vphantom{m}\smash{b,\ldots,b}}}&\bigr).
\end{aligned}
\end{gathered}
\label{il13eq1}
\end{equation}
Then the $\n_k^b$ satisfy the $A_\iy$ relations \eq{il3eq1}, and
$\n_0^b=0$ as $b$ is a bounding cochain, so for pure
$a_1,a_2,a_3\in{\cal H}\ot\La_\nov^*$ we have
\begin{equation}
\begin{gathered}
(\n_1^b)^2=0,\quad \n_2^b\bigl(\n_1^b(a_1),a_2\bigr)+ (-1)^{\deg
a_1}\n_2^b\bigl(a_1,\n_1^b(a_2)\bigr)+\n_1^b\ci\n_2^b(a_1,a_2)=0,\\
\n_3^b\bigl(\n_1^b(a_1),a_2,a_3\bigr)+ (-1)^{\deg a_1}
\n_3^b\bigl(a_1,\n_1^b(a_2),a_3\bigr)+\\
(-1)^{\deg a_1+\deg a_2}\n_3^b\bigl(a_1,a_2,\n_1^b(a_3)\bigr)
+\n_2^b\bigl(\n_2^b(a_1,a_2),a_3\bigr)+\\
(-1)^{\deg a_1}\n_2^b\bigl(a_1,\n_2^b(a_2,a_3)\bigr)+
\n_1^b\ci\n_3^b(a_1,a_2,a_3)=0.
\end{gathered}
\label{il13eq2}
\end{equation}

The first equation of \eq{il13eq2} implies that $({\cal
H}\ot\La_\nov^*,\n_1^b)$ is a complex. Define the {\it Lagrangian
Floer cohomology groups} $HF^*\bigl((L,b);\La_\nov^0\bigr)$ and
$HF^*\bigl((L,b);\La_\nov\bigr)$ by
\begin{equation}
HF^k\bigl((L,b);\La_\nov^*\bigr)=H^{k-1}\bigl({\cal
H}\ot\La_\nov^*,\n_1^b\bigr).
\label{il13eq3}
\end{equation}
The grading is motivated by \eq{il11eq4} and the isomorphism
$H_k(L;\Q)\cong H^{n-k}(L;\Q)$ as $L$ is oriented of dimension $n$,
and implies that $HF^k\bigl((L,b);\La_\nov^*\bigr)$ is a modified
version of ordinary cohomology $H^k(L;\La_\nov^*)$. Define a
$\La_\nov^*$-bilinear product
$\bu:HF^k\bigl((L,b);\La_\nov^*\bigr)\t
HF^l\bigl((L,b);\La_\nov^*\bigr)\ra
HF^{k+l}\bigl((L,b);\La_\nov^*\bigr)$ by
\begin{equation}
\bigl(a_1+\Im\n_1^b\bigr)\bu\bigl(a_2+\Im\n_1^b\bigr)=
(-1)^{k(l+1)}\n_2^b(a_1,a_2)+\Im\n_1^b.
\label{il13eq4}
\end{equation}

Here since $\n_1^b(a_1)=\n_1^b(a_2)=0$, the second equation of
\eq{il13eq2} implies that $\n_1^b\bigl(\n_2^b(a_1,a_2)\bigr)=0$, so
the right hand side of \eq{il13eq4} does lie in
$HF^{k+l}\bigl((L,b);\La_\nov^*\bigr)$. Using the second equation of
\eq{il13eq2} we see that replacing $a_1\mapsto a_1+\n_1^b(c_1)$
changes $\n_2^b(a_1,a_2)\mapsto\n_2^b(a_1,a_2)-\n_1^b\bigl(
\n_2^b(c_1,a_2)\bigr)$. So the right hand side of \eq{il13eq4} is
independent of the choice of representative $a_1$ for
$a_1+\Im\n_1^b$, and similarly for $a_2$. Thus $\bu$ is
well-defined. Using the third equation of \eq{il13eq2} we can show
that $\bu$ is {\it associative}. It is a modified version of the cup
product on~$H^*(L;\La_\nov^*)$.

One can also construct a {\it unit\/} for
$\bigl(HF^*((L,b);\La_\nov^*),\bu\bigr)$, making it into a
$\La_\nov^*$-algebra. There is a complicated procedure for doing
this in Fukaya et al.\ \cite[\S 8]{FOOO}, involving first finding a
homotopy unit for $(\Q\X\ot\La_\nov^0,\m)$ in Definition
\ref{il11dfn}. We will not explain it, as the immersed case
introduces no new issues.
\label{il13dfn1}
\end{dfn}

\begin{rem} Although $HF^*\bigl((L,b);\La_\nov^*\bigr)$ is graded by
$k\in\Z$, multiplication by $e^d\in\La_\nov^*$ induces an
isomorphism $HF^k\bigl((L,b);\La_\nov^*\bigr)\ra
HF^{k+2d}\bigl((L,b);\La_\nov^*\bigr)$. So there are really only two
groups $HF^0\bigl((L,b);\La_\nov^*\bigr),
HF^1\bigl((L,b);\La_\nov^*\bigr)$, and it would be better to regard
$HF^*\bigl((L,b);\La_\nov^*\bigr)$ as {\it graded over} $\Z_2$,
rather than over~$\Z$.

We could rewrite most of the paper using $\Z_2$-graded spaces rather
than $\Z$-graded spaces, and this would achieve some
simplifications. In \S\ref{il3} we would work with $\Z_2$-graded
vector spaces $A=A^0\op A^1$ rather than $A=\bigop_{d\in\Z}A^d$, and
we would replace $\La_\nov,\La_\nov^0$ by $\La_\CY,\La_\CY^0$
throughout. For computing orientations and degrees, we would regard
$\eta_{(p_-,p_+)},\deg f$ as lying in $\Z_2$ rather than $\Z$. Then
$\eta_{(p_-,p_+)}\in\Z_2$ becomes independent of choice of
$\la_{(p_-,p_+)}$, and the problem in Remark \ref{il11rem}
disappears. We have not done this to keep our paper consistent with
Fukaya et al.~\cite{FOOO}.

In \S\ref{il133} we will see that for graded Lagrangians $(L,\phi)$
in Calabi--Yau manifolds, Floer cohomology
$HF^*\bigl((L,\phi,b);\La_\CY^*\bigr)$ is truly $\Z$-graded rather
than $\Z_2$-graded.
\label{il13rem1}
\end{rem}

Next we explain in which sense Floer cohomology is independent of
choices.

\begin{dfn} Let $({\cal H}\ot\La_\nov^0,\n)$ be as in Corollary
\ref{il11cor}. Write $\hM_{{\cal H},\n}$ for the set of bounding
cochains $b$ for $({\cal H}\ot\La_\nov^0,\n)$. Define $G_{{\cal
H},\n}$ to be the group of gapped filtered $A_\iy$ isomorphisms
${\mathfrak j}:({\cal H}\ot\La_\nov^0,\n)\ra({\cal
H}\ot\La_\nov^0,\n)$ which are homotopic to the identity. We call
$G_{{\cal H},\n}$ the {\it gauge group}. For ${\mathfrak j}\in
G_{{\cal H},\n}$ and $b\in\hM_{{\cal H},\n}$, define ${\mathfrak
j}\cdot b\in ({\cal H}\ot\La_\nov^0)^{(0)}$ by ${\mathfrak j}\cdot
b=\sum_{k\ge 0}{\mathfrak j}_k(b,\ldots,b)$. By summing \eq{il3eq11}
with ${\mathfrak j},\n$ in place of $\f,\m$ and $a_1=\cdots=a_k=b$
over all $k=0,1,\ldots$, we find that
\begin{equation*}
\ts\sum_{k=0}^\iy\n_k({\mathfrak j}\cdot b,\ldots,{\mathfrak j}\cdot
b)=\sum_{l,m=0}^\iy{\mathfrak j}_{l+m+1}\bigl(
{\buildrel{\ulcorner\,\,\text{$l$} \,\,\urcorner}
\over{\vphantom{m}\smash{b,\ldots,b}}},\ts\sum_{k=0}^\iy
\n_k(b,\ldots,b),{\buildrel{\ulcorner\,\,\text{$m$} \,\,\urcorner}
\over{\vphantom{m}\smash{b,\ldots,b}}}\bigr)=0,
\end{equation*}
as $b$ is a bounding cochain. Thus ${\mathfrak j}\cdot b$ is a
bounding cochain, so ${\mathfrak j}\cdot b\in\hM_{{\cal H},\n}$, and
this defines an action of $G_{{\cal H},\n}$ on $\hM_{{\cal H},\n}$.
Define the {\it moduli space of bounding cochains} to be~$\M_{{\cal
H},\n}=\hM_{{\cal H},\n}/G_{{\cal H},\n}$.

For ${\mathfrak j},b$ as above, define linear ${\mathfrak
j}_1^b:{\cal H}\ot\La_\nov^*\ra{\cal H}\ot\La_\nov^*$ by
\begin{equation}
{\mathfrak j}_1^b(a)=\ts\sum_{l,m=0}^\iy{\mathfrak j}_{l+m+1}\bigl(
{\buildrel{\ulcorner\,\,\text{$l$} \,\,\urcorner}
\over{\vphantom{m}\smash{b,\ldots,b}}},a,{\buildrel{\ulcorner\,\,\text{$m$}
\,\,\urcorner} \over{\vphantom{m}\smash{b,\ldots,b}}}\bigr).
\label{il13eq5}
\end{equation}
Now $\mathfrak j$ has an inverse ${\mathfrak j}^{-1}$ in $G_{{\cal
H},\n}$, and calculation shows that $({\mathfrak j}^{-1})_1^b\ci
{\mathfrak j}_1^b=\id$, so ${\mathfrak j}_1^b$ is an isomorphism. By
summing \eq{il3eq11} with ${\mathfrak j},\n$ in place of $\f,\m$,
$k=l+m+1$ and $a_j=a$ for $j=l+1$ and $a_j=b$ otherwise over all
$l,m\ge 0$ we find that ${\mathfrak j}_1^b\ci\n_1^b=\n_1^{{\mathfrak
j}\cdot b}\ci{\mathfrak j}_1^b$. Thus ${\mathfrak j}_1^b:\bigl({\cal
H}\ot\La_\nov^*,\n_1^b\bigr)\ra\bigl({\cal
H}\ot\La_\nov^*,\n_1^{{\mathfrak j}\cdot b}\bigr)$ is an isomorphism
of complexes, and induces an isomorphism $({\mathfrak
j}_1^b)_*:HF^*\bigl((L,b);\La_\nov^*\bigr)\ra
HF^*\bigl((L,{\mathfrak j}\cdot b);\La_\nov^*\bigr)$. As $\mathfrak
j$ is homotopic to the identity, this $({\mathfrak j}_1^b)_*$ is
independent of the choice of $\mathfrak j$ for fixed $b,{\mathfrak
j}\cdot b$. Thus, Floer cohomology
$HF^*\bigl((L,b);\La_\nov^*\bigr)$ depends up to canonical
isomorphism only on $G_{{\cal H},\n}\cdot b \in\M_{{\cal H},\n}$,
rather than on~$b\in\hM_{{\cal H},\n}$.

Now let $({\cal H}\ot\La^0_\nov,\n)$ and $({\cal
H}\ot\La^0_\nov,\ti\n)$ be two possible outcomes in Corollary
\ref{il11cor}. Then the corollary gives a gapped filtered $A_\iy$
isomorphism ${\mathfrak j}:({\cal H}\ot\La^0_\nov,\n)\ra({\cal
H}\ot\La^0_\nov,\ti\n)$, unique up to homotopy. For $b\in\hM_{{\cal
H},\n}$, define ${\mathfrak j}\cdot b$ as above. Then the same proof
shows that ${\mathfrak j}\cdot b$ is a bounding cochain for $({\cal
H}\ot\La^0_\nov,\ti\n)$. This defines a map ${\mathfrak
j}\,\cdot:\hM_{{\cal H},\n}\ra\hM_{{\cal H},\ti\n}$. It is a 1-1
correspondence, with inverse $({\mathfrak j}^{-1})\cdot$, and it
intertwines the actions of $G_{{\cal H},\n},G_{{\cal H},\ti\n}$ on
$\hM_{{\cal H},\n},\hM_{{\cal H},\ti\n}$, and thus induces a 1-1
correspondence~${\mathfrak j}_*:\M_{{\cal H},\n}\ra\M_{{\cal
H},\ti\n}$.

As $\mathfrak j$ is unique up to homotopy, this ${\mathfrak j}_*$ is
independent of the choice of $\mathfrak j$, for fixed $\n,\ti\n$.
Defining ${\mathfrak j}_1^b$ as in \eq{il13eq5}, the same proofs
show ${\mathfrak j}_1^b:\bigl({\cal H}\ot\La_\nov^*,
\n_1^b\bigr)\ra\bigl({\cal H}\ot\La_\nov^*,\ti\n_1^{{\mathfrak
j}\cdot b}\bigr)$ is an isomorphism of complexes, and induces an
isomorphism $({\mathfrak j}_1^b)_*:HF^*\bigl((L,b);\ab
\La_\nov^*\bigr)\ra HF^*\bigl((L,{\mathfrak j}\cdot
b);\La_\nov^*\bigr)$, which is independent of the choice of
$\mathfrak j$ for fixed $\n,\ti\n,b,{\mathfrak j}\cdot b$. We can
also use Theorem \ref{il11thm}(b) to check that, given three choices
$\n,\ti\n,\ti{\ti\n}$, the corresponding isomorphisms $({\mathfrak
j}_1^b)_*$ form commutative triangles.

This implies that the moduli space of bounding cochains $\M_{{\cal
H},\n}$ is {\it independent of choice of\/ $\n$ up to canonical
bijection}, and that under these bijections, Lagrangian Floer
cohomology $HF^*\bigl((L,b);\La_\nov^*\bigr)$, regarded as depending
on $G_{{\cal H},\n}\cdot b\in\M_{{\cal H},\n}$, is also {\it
independent of the choice of\/ $\n$ up to canonical isomorphism}. So
by Corollary \ref{il11cor}, in this sense, the moduli space
$\M_{{\cal H},\n}$ and associated Floer cohomology groups
$HF^*\bigl((L,b);\La_\nov^*\bigr)$ {\it depend only on $(M,\om),$
$\io:L\ra M$ and its relative spin structure, and the indices
$\eta_{(p_-,p_+)},$ and are independent of all other choices}.

In Remark \ref{il11rem} we showed that if
$(\Q\X\ot\La^0_\nov,\m),(\Q\X\ot\La^0_\nov,\ti\m)$ are constructed
in Definition \ref{il11dfn} using different indices
$\eta_{(p_-,p_+)},\ti\eta_{(p_-,p_+)}$, but otherwise exactly the
same choices, then we can construct
$\bs\Xi:(\Q\X\ot\La^0_\nov,\ab\m) \ab\ra(\Q\X\ot\La^0_\nov,\ti\m)$
which is {\it almost\/} a strict gapped filtered $A_\iy$
isomorphism. In the same way, if $({\cal
H}\ot\La_\nov^0,\n),(\ti{\cal H}\ot\La_\nov^0,\ti\n)$ are
constructed in Corollary \ref{il11cor} using different choices of
indices $\eta_{(p_-,p_+)},\ti\eta_{(p_-,p_+)}$, but otherwise
exactly the same choices, then we can construct $\bs\Xi:({\cal
H}\ot\La_\nov^0,\n)\ab\ra(\ti{\cal H}\ot\La_\nov^0,\ti\n)$, which is
almost a strict gapped filtered $A_\iy$ isomorphism, but does not
satisfy all of Definition~\ref{il3dfn11}(i).

Then $\bs\Xi_1:{\cal H}\ot\La_\nov^0\ra\ti{\cal H}\ot\La_\nov^0$
takes bounding cochains to bounding cochains, so $\bs\Xi_1:
\hM_{{\cal H},\n}\ra\hM_{\ti{\cal H},\ti\n}$ is a bijection which
induces a bijection $(\bs\Xi_1)_*:\M_{{\cal H},\n}\ra\M_{\ti{\cal
H},\ti\n}$. If $b\in\hM_{{\cal H},\n}$, so that $\bs\Xi_1(b)\in
\M_{\ti{\cal H},\ti\n}$, then $\bs\Xi_1:\bigl({\cal
H}\ot\La_\nov^*,\n_1^b\bigr)\ra \bigl(\ti{\cal
H}\ot\La_\nov^*,\ti\n_1^{\bs\Xi_1(b)}\bigr)$ is an isomorphism of
complexes, and induces an isomorphism
$(\bs\Xi_1)_*:HF^*\bigl((L,b);\ab \La_\nov^*\bigr)\ra
HF^*\bigl((L,\bs\Xi_1(b));\La_\nov^*\bigr)$. Thus, in the same sense
as above, $\M_{{\cal H},\n}$ and $HF^*\bigl((L,b);\La_\nov^*\bigr)$
are also {\it independent of the choice of
indices}~$\eta_{(p_-,p_+)}$.
\label{il13dfn2}
\end{dfn}

We state our conclusions as:

\begin{thm} In Definitions {\rm\ref{il13dfn1}} and\/
{\rm\ref{il13dfn2},} the moduli space of bounding cochains
$\M_{{\cal H},\n}$ depends up to canonical bijection only on
$(M,\om),$ $\io:L\ra M,$ and its relative spin structure, and the
Floer cohomology groups $HF^*\bigl((L,b);\La_\nov^*\bigr)$ also
depend as a $\La_\nov^*$-algebra up to canonical isomorphism only on
$(M,\om),$ $\io:L\ra M$ and its relative spin structure, and the
canonical bijection equivalence class of the point\/ $G_{{\cal
H},\n}\cdot b\in\M_{{\cal H},\n}$. They are independent in this
sense of all other choices, including the almost complex structure
$J,$ $\cal G,\X,\m,{\cal H},\n,$ and\/
$\la_{(p_-,p_+)},\eta_{(p_-,p_+)},o_{(p_-,p_+)}$ for~$(p_-,p_+)\in
R$.
\label{il13thm1}
\end{thm}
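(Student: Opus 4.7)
The statement is essentially a bookkeeping corollary of Corollary \ref{il11cor}, Theorem \ref{il11thm}, and the work already done in Definition \ref{il13dfn2}. My plan is to organise these inputs and then fill in the remaining details about canonicality, compatibility with the product $\bu$, and independence of the indices $\eta_{(p_-,p_+)}$.

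First I would record the two-outcome case. Fix outputs $({\cal H}\ot\La_\nov^0,\n)$ and $(\ti{\cal H}\ot\La_\nov^0,\ti\n)$ of Corollary \ref{il11cor} coming from data with the \emph{same} indices $\eta_{(p_-,p_+)}$. Corollary \ref{il11cor} supplies a gapped filtered $A_\iy$ isomorphism ${\mathfrak j}:({\cal H}\ot\La_\nov^0,\n)\ra(\ti{\cal H}\ot\La_\nov^0,\ti\n)$, unique up to homotopy. As verified in Definition \ref{il13dfn2}, the assignment $b\mapsto{\mathfrak j}\cdot b=\sum_{k\ge 0}{\mathfrak j}_k(b,\ldots,b)$ is a bijection $\hM_{{\cal H},\n}\ra\hM_{\ti{\cal H},\ti\n}$ intertwining the gauge actions, hence descends to a bijection ${\mathfrak j}_*:\M_{{\cal H},\n}\ra\M_{\ti{\cal H},\ti\n}$; and the operator ${\mathfrak j}_1^b$ of \eq{il13eq5} is an isomorphism of complexes $({\cal H}\ot\La_\nov^*,\n_1^b)\ra(\ti{\cal H}\ot\La_\nov^*,\ti\n_1^{{\mathfrak j}\cdot b})$, yielding the desired isomorphism $({\mathfrak j}_1^b)_*$ on Floer cohomology.

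Second I would establish canonicality. Suppose ${\mathfrak j},{\mathfrak j}'$ are homotopic choices and $\H:{\mathfrak j}\Ra{\mathfrak j}'$ is a homotopy from Theorem \ref{il11thm}(a). I would construct an explicit element ${\mathfrak k}\in G_{\ti{\cal H},\ti\n}$ carrying ${\mathfrak j}\cdot b$ to ${\mathfrak j}'\cdot b$, by twisting $\H$ with $b$: set ${\mathfrak k}_k(a_1,\ldots,a_k)$ equal to a signed sum of terms $\H_{k+\sum n_i}(b,\ldots,b,a_1,b,\ldots,b,a_k,b,\ldots,b)$, arranged so that ${\mathfrak k}_1$ is $\id$ plus something in positive filtration, hence invertible. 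Verification that ${\mathfrak k}\in G_{\ti{\cal H},\ti\n}$ and that ${\mathfrak k}\cdot({\mathfrak j}\cdot b)={\mathfrak j}'\cdot b$ reduces, after summing the homotopy identity \eq{il3eq14} over all positions of $b$-insertions, to the defining relation for $\H$. The same twisting shows ${\mathfrak k}_1^{{\mathfrak j}\cdot b}$ acts as the identity on $HF^*$, giving $({\mathfrak j}_1^b)_*=(({\mathfrak j}')_1^b)_*$. Functoriality for three outcomes — so that the induced bijections on $\M$ and isomorphisms on $HF^*$ form commutative triangles — follows by applying Theorem \ref{il11thm}(b) with the same twisting trick at $b$.

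Third I would handle the $\La_\nov^*$-algebra structure. Expanding the morphism relation \eq{il3eq11} for ${\mathfrak j}$ with inputs one $a_1$, one $a_2$, and arbitrarily many $b$'s, and summing over all interleavings, gives an identity
\begin{equation*}
{\mathfrak j}_1^b\bigl(\n_2^b(a_1,a_2)\bigr)=\ti\n_2^{{\mathfrak j}\cdot b}\bigl({\mathfrak j}_1^b(a_1),{\mathfrak j}_1^b(a_2)\bigr)+\ti\n_1^{{\mathfrak j}\cdot b}\bigl({\mathfrak j}_2^b(a_1,a_2)\bigr)\pm{\mathfrak j}_2^b\bigl(\n_1^b(a_1),a_2\bigr)\pm{\mathfrak j}_2^b\bigl(a_1,\n_1^b(a_2)\bigr),
\end{equation*}
so when $\n_1^b(a_1)=\n_1^b(a_2)=0$ the difference ${\mathfrak j}_1^b(\n_2^b(a_1,a_2))-\ti\n_2^{{\mathfrak j}\cdot b}({\mathfrak j}_1^b(a_1),{\mathfrak j}_1^b(a_2))$ lies in $\Im\ti\n_1^{{\mathfrak j}\cdot b}$. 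Combined with \eq{il13eq4} this says $({\mathfrak j}_1^b)_*$ is a $\La_\nov^*$-algebra homomorphism, and being bijective it is an isomorphism; compatibility with a homotopy unit constructed as in \cite[\S 8]{FOOO} is parallel. Finally, independence of the indices $\eta_{(p_-,p_+)}$ is obtained from the substitute map $\bs\Xi$ of Remark \ref{il11rem}: although $\bs\Xi$ fails Definition \ref{il3dfn11}(i), its first component $\bs\Xi_1$ is a genuine $\La_\nov^0$-linear degree-zero map satisfying $\ti\n_k(\bs\Xi_1(a_1),\ldots,\bs\Xi_1(a_k))=\bs\Xi_1\ci\n_k(a_1,\ldots,a_k)$, which is all that the previous three steps actually use to produce the bijection on $\M$ and the algebra isomorphism on $HF^*$. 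The main obstacle is the second step: producing the explicit gauge transformation ${\mathfrak k}$ from $\H$ and verifying the gauge and conjugation identities; the combinatorics resemble the sums over planar trees in \S\ref{il33}, and convergence uses $b\in F^\la({\cal H}\ot\La_\nov^0)$ for some $\la>0$ in the style of Lemma \ref{il3lem}, but keeping track of signs is the technical heart of the argument.
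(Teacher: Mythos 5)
Your overall route coincides with the paper's: Theorem \ref{il13thm1} is stated there as a summary of the checks carried out in Definition \ref{il13dfn2}, using the isomorphism ${\mathfrak j}$ of Corollary \ref{il11cor} (unique up to homotopy), the commutative triangles from Theorem \ref{il11thm}(b), and the map $\bs\Xi$ of Remark \ref{il11rem} for independence of the $\eta_{(p_-,p_+)}$. Your steps 1, 3 and 4 fill in exactly what the paper leaves as ``calculation shows'': in particular your expansion of \eq{il3eq11} with two inputs $a_1,a_2$ and arbitrary $b$-insertions, showing $({\mathfrak j}_1^b)_*$ respects the product $\bu$ of \eq{il13eq4}, and your observation that only the intertwining property $\ti\n_k(\bs\Xi_1(a_1),\ldots,\bs\Xi_1(a_k))=\bs\Xi_1\ci\n_k(a_1,\ldots,a_k)$ is needed from Remark \ref{il11rem}, are both consistent with the paper.

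However, your second step --- which you identify as the technical heart --- would fail as written. A homotopy $\H$ has components $\H_k$ of degree $-1$, while $b$ has degree $0$, so a family ${\mathfrak k}_k$ defined as signed sums of $\H_{k+\sum n_i}(b,\ldots,b,a_1,b,\ldots,b,a_k,b,\ldots,b)$ consists of degree $-1$ maps: it cannot be a gapped filtered $A_\iy$ morphism, whose components must have degree $0$, and its ${\mathfrak k}_1$ is not ``$\id$ plus positive filtration'', since no identity term appears. The $b$-twist of $\H$ is the right tool only for the other assertion, that homotopic ${\mathfrak j},{\mathfrak j}'$ with ${\mathfrak j}\cdot b={\mathfrak j}'\cdot b$ induce the same map on $HF^*$: there $\H_1^b(a)=\sum_{l,m\ge 0}\H_{l+m+1}(b,\ldots,b,a,b,\ldots,b)$ is a chain homotopy between ${\mathfrak j}_1^b$ and $({\mathfrak j}')_1^b$, obtained by summing \eq{il3eq14}, and degree $-1$ is then correct. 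For the gauge equivalence of ${\mathfrak j}\cdot b$ and ${\mathfrak j}'\cdot b$ the paper's definitions give a much shorter argument: since ${\mathfrak j}'$ is homotopic to ${\mathfrak j}$, the composition ${\mathfrak k}={\mathfrak j}'\ci{\mathfrak j}^{-1}$ is a gapped filtered $A_\iy$ isomorphism homotopic to ${\mathfrak j}\ci{\mathfrak j}^{-1}=\id$ (using the compatibility of composition with homotopy from \S\ref{il32} and Theorem \ref{il3thm3}(a)), hence lies in $G_{{\cal H},\ti\n}$, and the composition formula \eq{il3eq13} gives ${\mathfrak k}\cdot({\mathfrak j}\cdot b)=({\mathfrak k}\ci{\mathfrak j})\cdot b={\mathfrak j}'\cdot b$, so the two image cochains lie in one $G_{{\cal H},\ti\n}$-orbit and ${\mathfrak j}_*$ on $\M_{{\cal H},\n}$ is independent of the choice of ${\mathfrak j}$. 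With that substitution your argument goes through and agrees with the paper's.
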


\subsection{The Floer cohomology of two Lagrangians}
\label{il132}

Now let $(M,\om)$ be a compact symplectic manifold and $\io_0:L_0\ra
M$, $\io_1:L_1\ra M$ be compact immersed Lagrangians in $(M,\om)$
with only transverse double self-intersections, which intersect
transversely in finitely many points $\io_0(L_0)\cap\io_1(L_1)$ in
$M$, that are not self-intersection points of $L_0$ or $L_1$. Let
$(\Q\X_0\ot \La_\nov^0,\m^0),(\Q\X_1\ot\La_\nov^0,\m^1)$ be gapped
filtered $A_\iy$ algebras in Definition \ref{il11dfn} for
$\io_0:L_0\ra M$, $\io_1:L_1\ra M$, constructed using almost complex
structures $J_0,J_1$, and let $({\cal H}_0\ot\La_\nov^0,\n^0),
({\cal H}_1\ot\La_\nov^0,\n^1)$ be the corresponding gapped filtered
$A_\iy$ algebras in Corollary \ref{il11cor}. Let $b_0,b_1$ be
bounding cochains for $({\cal H}_0\ot\La_\nov^0,\n^0),({\cal
H}_1\ot\La_\nov^0,\n^1)$ respectively.

Then following Fukaya et al.\ \cite[\S 12]{FOOO}, one can define
Lagrangian Floer cohomology $HF^*\bigl((L_0,b_0),(L_1,b_1);
\La_\nov^*\bigr)$ for the pair of immersed Lagrangians $L_0,L_1$.
Doing this in the immersed rather than the embedded case raises no
new issues that we have not already dealt with above. In fact, as we
explain below, for immersed Lagrangians one can easily recover Floer
cohomology for two Lagrangians $L_0,L_1$ from the Floer cohomology
for one Lagrangian $L_0\amalg L_1$ in \S\ref{il131}. Therefore on
this issue we will simply quote the conclusions of \cite{FOOO} with
brief explanations.

Write $CF(L_0,L_1;\La_\nov^*)$ for the free $\La_\nov^*$-module with
basis $\io_0(L_0)\cap\io_1(L_1)$, where each
$p\in\io_0(L_0)\cap\io_1(L_1)$ is graded in a similar way to the
$\eta_{(p_-,p_+)}$ in \S\ref{il43}. Then by choosing a smooth family
$J_t:t\in[0,1]$ of almost complex structures on $M$ compatible with
$\om$ interpolating between $J_0$ and $J_1$, and considering
\cite[\S 12.4]{FOOO} moduli spaces $\oM_{k_1,k_0}\bigl(
L^1,L^0;[\ell_p,w_1],[\ell_p,w_2]\bigr)$ of stable maps of
holomorphic discs into $M$ with boundary in
$\io_0(L_0)\cup\io_1(L_1)$, which are holomorphic w.r.t.\ the family
$J_t:t\in[0,1]$ in a certain sense, one can give
$CF(L_0,L_1;\La_\nov^*)$ the structure of a {\it gapped filtered\/
$A_\iy$ bimodule} over~$(\Q\X_0\ot\La_\nov^0,\m^0),(\Q\X_1
\ot\La_\nov^0,\m^1)$.

Passing to canonical models, one can also give
$CF(L_0,L_1;\La_\nov^*)$ the structure of a gapped filtered $A_\iy$
bimodule over $({\cal H}_0\ot\La_\nov^0,\n^0),({\cal
H}_1\ot\La_\nov^0,\n^1)$, \cite[Th.~F, \S 1.2]{FOOO}. This bimodule
structure is independent of the choice of bounding cochains. But
once we choose bounding cochains $b_0,b_1$ for $({\cal
H}_0\ot\La_\nov^0,\n^0),({\cal H}_1\ot\La_\nov^0,\n^1)$, we can
define a differential $\de^{b_0,b_1}$ on $CF(L_0,L_1;\La_\nov^*)$,
so that $\bigl(CF(L_0,L_1;\La_\nov^*),\de^{b_0,b_1}\bigr)$ is a
complex. We then define
$HF^*\bigl((L_0,b_0),(L_1,b_1);\La_\nov^*\bigr)$ to be the
cohomology of $\bigl(CF(L_0,L_1;\La_\nov^*), \de^{b_0,b_1}\bigr)$,
graded in the same way as~\eq{il13eq3}.

In this way we obtain an analogue of Theorem~\ref{il13thm1}:

\begin{thm} In the situation above, $HF^*\bigl((L_0,b_0),
(L_1,b_1);\La_\nov^*\bigr)$ depends as a $\La_\nov^*$-module up to
canonical isomorphism only on $(M,\om),$ $\io_0:L_0\ra M,$
$\io_1:L_1\ra M$ and their relative spin structures, and the
canonical bijection equivalence classes of the points $G_{{\cal
H}_0,\n^0}\cdot b_0\in\M_{{\cal H}_0,\n^0}$ and\/~$G_{{\cal
H}_1,\n^1}\cdot b_1\in\M_{{\cal H}_1,\n^1}$.
\label{il13thm2}
\end{thm}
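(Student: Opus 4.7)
\smallskip

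\noindent\emph{Proof proposal.} The plan is to deduce Theorem \ref{il13thm2} from Theorem \ref{il13thm1} by realising the pair $(L_0,L_1)$ as the single immersed Lagrangian $\io=\io_0\amalg\io_1:L_0\amalg L_1\ra M$, and exhibiting the two‑Lagrangian Floer theory as a direct summand of the one‑Lagrangian theory for $L_0\amalg L_1$. Since $\io_0,\io_1$ have only transverse double self‑intersections and intersect each other transversely in finitely many points away from the self‑intersections of either component, $\io$ is again a compact immersed Lagrangian with only transverse double self‑intersections, and its self‑intersection set is $R=R_0\amalg R_1\amalg R_{01}\amalg R_{10}$, where $R_i$ is the self‑intersection set of $\io_i$ and $R_{ij}$ consists of the pairs $(p_-,p_+)\in L_i\t L_j$ with $\io_i(p_-)=\io_j(p_+)$ for $\{i,j\}=\{0,1\}$. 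A relative spin structure for $\io$ is canonically equivalent to a pair of relative spin structures for $\io_0,\io_1$, and the indices $\eta_{(p_-,p_+)}$ from \S\ref{il43} for pairs in $R_{01}\amalg R_{10}$ recover the grading on $\io_0(L_0)\cap\io_1(L_1)$ used to define~$CF(L_0,L_1;\La_\nov^*)$.

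First I would apply Theorem \ref{il13thm1} to $\io:L_0\amalg L_1\ra M$ to obtain the canonical model $({\cal H}\ot\La_\nov^0,\n)$, unique up to canonical homotopy equivalence. By \eq{il11eq4}, ${\cal H}$ splits canonically as
\begin{equation*}
{\cal H}={\cal H}_0\op{\cal H}_1\op{\cal K}_{01}\op{\cal K}_{10},
\end{equation*}
where ${\cal H}_i$ is the canonical model space for $L_i$ and ${\cal K}_{ij}=\bigop_{(p_-,p_+)\in R_{ij}}\Q\cdot(p_-,p_+)$, and this splitting is preserved by the canonical isomorphisms of Corollary \ref{il11cor}. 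Next I would verify that the $A_\iy$ operations $\n_k$ respect this decomposition in the bi‑coloured sense: if the inputs $a_1,\ldots,a_k$ are labelled by a path in the two‑vertex category (i.e.\ there exist $i_0,\ldots,i_k\in\{0,1\}$ with $a_j\in{\cal H}_{i_{j-1}}$ or $a_j\in{\cal K}_{i_{j-1}i_j}$), then $\n_k(a_1,\ldots,a_k)$ lies in ${\cal H}_{i_0}$ when $i_0=i_k$ and all inputs are diagonal, or in ${\cal K}_{i_0i_k}$ otherwise, and vanishes on all other input patterns. This follows from the identification of ${\cal K}_{ij}$‑entries with intersection points, and from the moduli‑space description of the structure maps: a holomorphic disc contributing to $\n_k$ must read off a cyclic sequence of boundary sheets, and only the bi‑coloured patterns arise.

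With this in hand, bounding cochains $b\in\hM_{{\cal H},\n}$ which lie in ${\cal H}_0\op{\cal H}_1$ correspond bijectively to pairs $(b_0,b_1)\in\hM_{{\cal H}_0,\n^0}\t\hM_{{\cal H}_1,\n^1}$, and the action of the subgroup $G_{{\cal H}_0,\n^0}\t G_{{\cal H}_1,\n^1}$ of $G_{{\cal H},\n}$ preserving the decomposition yields a well‑defined injection
\begin{equation*}
\M_{{\cal H}_0,\n^0}\t\M_{{\cal H}_1,\n^1}\hookra\M_{{\cal H},\n}.
\end{equation*}
For such $b=b_0+b_1$, the twisted operator $\n_1^b$ preserves the four‑fold decomposition: its restriction to ${\cal H}_i$ is $\n_1^{b_i}$, and its restriction to ${\cal K}_{01}$ (resp.\ ${\cal K}_{10}$) is precisely the twisted bimodule differential $\de^{b_0,b_1}$ (resp.\ $\de^{b_1,b_0}$) from Fukaya et al.\ \cite[\S 12]{FOOO}; this identification is the content of comparing the moduli spaces $\oM_{k+1}^\ma(\al,\be,J)$ for $\io$ restricted to discs with one boundary marked point in $R_{01}\amalg R_{10}$ with the moduli spaces of strips used in \cite[\S 12]{FOOO}, using a family $J_t:t\in[0,1]$ from $J_0$ to $J_1$ and the invariance results of \S\ref{il8}--\S\ref{il10}. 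Consequently
\begin{equation*}
HF^*\bigl((L_0,b_0),(L_1,b_1);\La_\nov^*\bigr)\cong H^{*-1}\bigl({\cal K}_{01}\ot\La_\nov^*,\n_1^b\vert_{{\cal K}_{01}\ot\La_\nov^*}\bigr),
\end{equation*}
as a direct summand of $H^{*-1}({\cal H}\ot\La_\nov^*,\n_1^b)$.

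Applying Theorem \ref{il13thm1} to $L_0\amalg L_1$ now gives the canonical‑isomorphism invariance of the right hand side under all the auxiliary choices, and of its dependence on $b$ only through $G_{{\cal H},\n}\cdot b\in\M_{{\cal H},\n}$. Restricting to bounding cochains of the form $b_0+b_1$, the equivalence class in $\M_{{\cal H},\n}$ depends only on the pair of classes $(G_{{\cal H}_0,\n^0}\cdot b_0,G_{{\cal H}_1,\n^1}\cdot b_1)$, since diagonal gauge transformations are generated by $G_{{\cal H}_0,\n^0}\t G_{{\cal H}_1,\n^1}$ together with elements supported on ${\cal K}_{01}\op{\cal K}_{10}$ which do not move a decomposed bounding cochain off the diagonal up to diagonal equivalence. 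This yields the claimed canonical $\La_\nov^*$‑module isomorphism and completes the deduction. The main obstacle, and the only point needing careful verification, is the block‑decomposition claim for the $\n_k$ and the identification of the off‑diagonal twisted differential with $\de^{b_0,b_1}$; once that moduli‑theoretic dictionary is in place, the invariance is a formal consequence of Theorem~\ref{il13thm1}.
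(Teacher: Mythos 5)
Your overall route is the one the paper itself endorses: \S\ref{il132} quotes the bimodule machinery of Fukaya et al.\ \cite[\S 12]{FOOO} for the official definition, and then remarks via \eq{il13eq6} that the two--Lagrangian theory is a sector of the one--Lagrangian theory for $L_0\amalg L_1$, so that Theorem \ref{il13thm2} "can be deduced from Theorem \ref{il13thm1} with little effort". Your colouring/block argument is the right dictionary for that remark: since $\bar u$ is continuous on ${\cal S}^1\sm\{\ze_i:i\in I\}$, the boundary of a disc can only switch between the components $L_0,L_1$ at marked points labelled by $R_{01}\amalg R_{10}$, and these switches come in even numbers, which gives exactly the bi-coloured constraint on the $\m_{k,\geo}$ and (after choosing $B,C,H$ compatibly with the splitting) on the $\n_k$; the identification of $\n_1^{b_0+b_1}$ on ${\cal K}_{01}$ with $\de^{b_0,b_1}$ then follows, provided one either takes $J_0=J_1=J_t$ constant, as the paper does in stating \eq{il13eq6}, or invokes the invariance results of \S\ref{il8}--\S\ref{il10}. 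One small inaccuracy: a pair of relative spin structures for $\io_0,\io_1$ is \emph{not} canonically a relative spin structure for $\io_0\amalg\io_1$ unless the background data $({\rm st},V)$ of Definition \ref{il5def1} is taken common to both; this common choice is implicit in orienting the strip moduli spaces and should be stated.

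The genuine gap is in your last paragraph, which is where most of the content of the theorem sits: the claim that the class of $b_0+b_1$ in $\M_{{\cal H},\n}$, together with the sector decomposition of the induced isomorphisms, depends only on the pair $(G_{{\cal H}_0,\n^0}\cdot b_0,\,G_{{\cal H}_1,\n^1}\cdot b_1)$. Your justification -- that "diagonal gauge transformations are generated by $G_{{\cal H}_0,\n^0}\t G_{{\cal H}_1,\n^1}$ together with elements supported on ${\cal K}_{01}\oplus{\cal K}_{10}$" -- is unproven and is also aimed in the wrong direction. What is actually needed is: given ${\mathfrak j}_i\in G_{{\cal H}_i,\n^i}$ with ${\mathfrak j}_i\cdot b_i=b_i'$, produce a gauge transformation of $({\cal H}\ot\La_\nov^0,\n)$ (or at least a quasi-isomorphism of twisted complexes) taking $b_0+b_1$ to $b_0'+b_1'$ and compatible with the four-block decomposition, so that the canonical isomorphism of Theorem \ref{il13thm1} restricts to the ${\cal K}_{01}$ sector. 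Extending ${\mathfrak j}_0\oplus{\mathfrak j}_1$ by the identity on ${\cal K}_{01}\oplus{\cal K}_{10}$ does not satisfy the $A_\iy$ morphism equations in general, because the operations of the union couple the diagonal and off-diagonal blocks; so this extension (equivalently, the action of separate gauge equivalences on the bimodule) requires a construction -- either the bimodule homotopy theory of \cite[\S 12]{FOOO}, which is precisely what the paper quotes for this point, or a geometric realization of the gauge equivalences by choices respecting the two-colouring. Without that step your argument establishes independence of the auxiliary choices but not the stated dependence only on the gauge orbits of $b_0$ and $b_1$.
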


Actually, if we take $J^0,J^1$ and $J^t$ for $t\in[0,1]$ to be some
fixed almost complex structure $J$, the definition of Floer
cohomology $HF^*\bigl((L_0,b_0),(L_1,b_1);\La_\nov^*\bigr)$ for two
Lagrangians is implicit in our definition of Floer cohomology
$HF^*\bigl((L,b);\La_\nov^*\bigr)$ for one immersed Lagrangian in
\S\ref{il131}. Take $L=L_0\amalg L_1$ with immersion
$\io=\io_0\amalg\io_1:L\ra M$. Then bounding cochains $b_0,b_1$ for
$L_0,L_1$ give a bounding cochain $b$ for $L$, and there is a
canonical isomorphism
\begin{equation}
\begin{gathered}
HF^*\bigl((L,b);\La_\nov^*\bigr)\cong
HF^*\bigl((L_0,b_0);\La_\nov^*\bigr)\op
HF^*\bigl((L_1,b_1);\La_\nov^*\bigr) \op\\
HF^*\bigl((L_0,b_0),(L_1,b_1);\La_\nov^*\bigr)\op
HF^*\bigl((L_1,b_1),(L_0,b_0);\La_\nov^*\bigr).
\end{gathered}
\label{il13eq6}
\end{equation}

Thus, Floer cohomology for two Lagrangians $L_0,L_1$ is just a
sector of Floer cohomology for one Lagrangian $L_0\amalg L_1$, and
one can deduce Theorem \ref{il13thm2} from Theorem \ref{il13thm1}
with little effort. This works only for immersed Lagrangians, since
even if $L_0,L_1$ are embedded, $L_0\amalg L_1$ is immersed unless
$\io_0(L_0)\cap\io_1(L_1)=\emptyset$.

Although it is not covered in \cite{FOOO}, it follows from the
framework of Fukaya \cite{Fuka} that if $L_0,L_1,L_2$ are immersed
Lagrangians in $(M,\om)$ with only transverse double
self-intersections, which intersect pairwise transversely as above,
with no triple self-intersections, and $b_0,b_1,b_2$ are bounding
cochains for $L_0,L_1,L_2$, then we can define a
$\La_\nov^*$-bilinear product
\begin{equation}
\begin{split}
\bu_{012}:HF^*\bigl((L_0,b_0),(L_1,b_1);\La_\nov^*\bigr)\t
HF^*\bigl((L_1,b_1),(L_2,b_2);\La_\nov^*\bigr)&\\
\longra HF^*\bigl((L_0,b_0),(L_2,b_2);&\La_\nov^*\bigr).
\end{split}
\label{il13eq7}
\end{equation}
This is basically composition of morphisms between objects
$(L_0,b_0),(L_1,b_1)$ and $(L_2,b_2)$ of the derived Fukaya category
of~$(M,\om)$.

As in \eq{il13eq6}, $HF^*\bigl((L_i,b_i),(L_j,b_j);\La_\nov^*\bigr)$
for $i,j=0,1,2$ are all sectors of the one-Lagrangian Floer
cohomology $HF^*\bigl((L,b);\La_\nov^*\bigr)$ for $L=L_0\amalg
L_1\amalg L_2$, and then $\bu_{012}$ in \eq{il13eq7} is just the
product $\bu$ on $HF^*\bigl((L,b); \La_\nov^*\bigr)$ in Definition
\ref{il13dfn1} restricted to these sectors. For four such
Lagrangians $L_0,\ldots,L_3$, associativity of $\bu$ for
$L=L_0\amalg\cdots\amalg L_3$ gives the associativity property
\begin{equation*}
\bu_{023}\ci\bigl(\bu_{012}\t\id_{HF^*(L_2,L_3)}\bigr)=
\bu_{013}\ci\bigl(\id_{HF^*(L_0,L_1)}\t\bu_{123}\bigr).
\end{equation*}

When we work over $\La_\nov$ rather than $\La_\nov^0$, Lagrangian
Floer cohomology has very important invariance properties under
Hamiltonian isotopy, most of which is proved by Fukaya et al.\
\cite[Th.~G, \S 1.2]{FOOO} in the embedded case:

\begin{thm} Let\/ $(M,\om)$ be a compact symplectic manifold, and\/
$\psi_t:t\in[0,1]$ be a smooth\/ $1$-parameter family of Hamiltonian
equivalent symplectomorphisms of\/ $(M,\om),$ with\/ $\psi_0=\id_M$.
Then:
\begin{itemize}
\setlength{\itemsep}{0pt}
\setlength{\parsep}{0pt}
\item[{\rm(a)}] Let\/ $\io_0:L_0\ra M$ be a compact immersed
Lagrangian in $(M,\om)$ and\/ $\io_1:L_1\ra M$ be the image of\/
$\io_0:L_0\ra M$ under $\psi_1,$ that is, $L_1=L_0$ and\/
$\io_1=\psi_1\ci\io_0$. Let\/ $({\cal H}_0\ot\La_\nov^0,\n^0),
({\cal H}_1\ot\La_\nov^0,\n^1)$ be gapped filtered\/ $A_\iy$
algebras in Corollary {\rm\ref{il11cor}} for $L_0,L_1$. Then using
$\psi_t:t\in[0,1]$ we can define a gapped filtered\/ $A_\iy$
isomorphism $\bs\Psi:({\cal H}_0\ot\La_\nov^0,\n^0)\ra({\cal
H}_1\ot\La_\nov^0,\n^1),$ unique up to homotopy. This induces a
unique bijection $\bs\Psi_*:\M_{{\cal H}_0,\n^0}\ra\M_{{\cal
H}_1,\n^1}$.
\item[{\rm(b)}] In {\rm(a),} if\/ $L_0,L_1$ intersect transversely in
$M,$ then whenever $b_0\in\hM_{{\cal H}_0,\n^0}$ and\/
$b_1\in\hM_{{\cal H}_1,\n^1}$ with\/ $\bs\Psi_*(G_{{\cal
H}_0,\n^0}\cdot b_0)=G_{{\cal H}_1,\n^1}\cdot b_1,$ there is a
canonical isomorphism
\begin{equation}
HF^*\bigl((L_0,b_0);\La_\nov\bigr)\cong
HF^*\bigl((L_0,b_0),(L_1,b_1);\La_\nov\bigr).
\label{il13eq8}
\end{equation}
\item[{\rm(c)}] In {\rm(a),} if\/ $\io_2:L_2\ra M$ is another compact
immersed Lagrangian in $(M,\om)$ which intersects $L_0,L_1$
transversely, with\/ $({\cal H}_2\ot\La_\nov^0,\n^2)$ in Corollary
{\rm\ref{il11cor},} and\/ $b_0\in\hM_{{\cal H}_0,\n^0},
b_1\in\hM_{{\cal H}_1,\n^1}$ and\/ $b_2\in\hM_{{\cal H}_2,\n^2}$
with\/ $\bs\Psi_*(G_{{\cal H}_0,\n^0}\cdot b_0)=G_{{\cal
H}_1,\n^1}\cdot b_1,$ there is a canonical isomorphism
\begin{equation}
HF^*\bigl((L_0,b_0),(L_2,b_2);\La_\nov\bigr)\cong
HF^*\bigl((L_1,b_1),(L_2,b_2);\La_\nov\bigr).
\label{il13eq9}
\end{equation}
\end{itemize}
\label{il13thm3}
\end{thm}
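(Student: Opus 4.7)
The plan is to adapt the proof of Fukaya--Oh--Ohta--Ono \cite[Th.~G, \S 5, \S 15--17]{FOOO} for the embedded case, using the immersed-case modifications developed throughout \S\ref{il4}--\S\ref{il12} and the algebraic machinery of \S\ref{il3}. I would prove (a) first by a symplectomorphism-pullback argument, then (b) by a Piunikhin--Salamon--Schwarz (PSS) type continuation map, from which (c) will follow by combining (a) for the immersion $L_0\amalg L_2$ with a continuation map analogous to that of (b).

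For (a), exploit the fact that symplectomorphisms preserve all the geometric data entering the construction. Let $\bigl({\cal H}_0\ot\La_\nov^0,\n^0\bigr)$ be built from an almost complex structure $J,$ relative spin structure, paths $\la_{(p_-,p_+)},$ orientations $o_{(p_-,p_+)},$ and the Kuranishi perturbation and chain choices of Definition \ref{il11dfn}. Set $\tilde J:=(\psi_1)_*J,$ compatible with $\om,$ and push forward all remaining choices along $\psi_1$ to build an auxiliary algebra $\bigl({\cal H}_1'\ot\La_\nov^0,\n^{\prime 1}\bigr)$ for $L_1$. Then $\psi_1$ induces a tautological orientation- and Kuranishi-preserving diffeomorphism between $\oM_{k+1}^\ma(\al,\be,J,f_1,\ldots,f_k)$ for $L_0$ and $\oM_{k+1}^\ma(\al,\be,\tilde J,\psi_1\!\circ\! f_1,\ldots,\psi_1\!\circ\! f_k)$ for $L_1,$ commuting with evaluation maps. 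Definition \ref{il7dfn1} and the $A_{N,0}$ constructions of \S\ref{il7}--\S\ref{il11} then yield a strict gapped filtered $A_\iy$ isomorphism $({\cal H}_0\ot\La_\nov^0,\n^0)\ra ({\cal H}_1'\ot\La_\nov^0,\n^{\prime 1});$ composing with the canonical homotopy equivalence $({\cal H}_1'\ot\La_\nov^0,\n^{\prime 1})\ra({\cal H}_1\ot\La_\nov^0,\n^1)$ of Corollary \ref{il11cor} produces $\bs\Psi,$ and homotopy uniqueness follows from Theorem \ref{il11thm}(a). The induced bijection $\bs\Psi_*$ is then that of Definition \ref{il13dfn2}.

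For (b), construct a PSS-style continuation map
\begin{equation*}
\Phi:\bigl({\cal H}_0\ot\La_\nov,\n_1^{b_0}\bigr)\longra
\bigl(CF(L_0,L_1;\La_\nov),\de^{b_0,b_1}\bigr)
\end{equation*}
from moduli spaces of $J$-holomorphic discs with one outgoing boundary puncture, whose boundary lies in the family $\psi_t(L_0),\;t\in[0,1],$ jumping from $L_0$ to $L_1=\psi_1(L_0)$ at the puncture, twisted by the bounding cochains $b_0,b_1$. When $\psi_1$ is $C^2$-small, a Weinstein-neighbourhood analysis identifies the transverse intersections of $L_0\cap L_1$ with critical points of a generating function $F:L_0\ra\R,$ and the standard Morse-versus-Floer trajectory comparison shows $\Phi$ is a quasi-isomorphism; this is the step where working over $\La_\nov$ rather than $\La_\nov^0$ is essential, since one must rescale $T$-weights to match infinitesimal and finite data. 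The general case reduces to the $C^2$-small one by invariance in the isotopy parameter, using (a) applied to the sub-isotopies $\psi_{st}$. Part (c) then follows by applying (a) to the immersion $L_0\amalg L_2$ under $\psi_t$ (giving an isomorphism of the $(L_0,L_2)$ sector with the $(L_1,\psi_1(L_2))$ sector via \eqref{il13eq6}) and then applying the one-sided continuation map analogue of (b), constructed for the family $\psi_{(1-s)}(\psi_1(L_2)),\;s\in[0,1],$ with $L_1$ held fixed, to transport $\psi_1(L_2)$ back to $L_2$.

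The main obstacle is the construction and quasi-isomorphism property of $\Phi$ in the immersed setting: the extra generators $(p_-,p_+)\in R$ in ${\cal H}_0$ from \eqref{il11eq4} contribute to the left side of \eqref{il13eq8}, and one must produce matching classes on the right coming from $J$-holomorphic strips passing through (or near) the self-intersection points of $L_0$ for generic small $\psi_1$. Controlling such strips and showing that $\Phi$ pairs them correctly with the $R$-generators requires a Fredholm index analysis analogous to Proposition \ref{il4prop1} but for strips with a jumping Lagrangian boundary condition, extending the framework of \S\ref{il4}--\S\ref{il5}. The orientation, sign, and bounding-cochain book-keeping then proceeds along the template of \S\ref{il6}--\S\ref{il11}, and the resulting isomorphisms \eqref{il13eq8} and \eqref{il13eq9} are canonical in the sense of Definition \ref{il13dfn2} and Theorem \ref{il13thm1}.
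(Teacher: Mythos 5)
Your part (a) coincides with the paper's argument: the paper treats (a) as immediate from \S\ref{il131}, because $\psi_1$ is an isomorphism of all the input data from $(M,\om,\io_0:L_0\ra M)$ to $(M,\om,\io_1:L_1\ra M)$, with homotopy uniqueness supplied by Theorem \ref{il11thm}. Be aware, though, that the paper does not reprove (b) and (c) at all: it quotes them from Fukaya--Oh--Ohta--Ono's Theorem G (proved there via moving-boundary $A_\iy$-bimodule quasi-isomorphisms), adding only the remark that the immersed case raises no new issues because the two-Lagrangian theory is a sector of the one-Lagrangian theory of $L_0\amalg L_1$ via \eq{il13eq6}. So your PSS-style route for (b) is a genuinely different strategy from anything in the paper, and its analytic core is not something you can lean on the paper for.

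The genuine gap is in your reduction step for (b). You pass from general $\psi_1$ to the $C^2$-small case ``by invariance in the isotopy parameter, using (a) applied to the sub-isotopies $\psi_{st}$''. But (a) only provides global pullback isomorphisms, which move \emph{both} Lagrangians at once: $\psi_s$ carries the pair $(L_0,\psi_t(L_0))$ to $(\psi_s(L_0),\psi_s\ci\psi_t(L_0))$, never to a pair of the form $(L_0,\psi_{t'}(L_0))$ with the first factor fixed. The invariance you actually need at that point --- that $HF^*\bigl((L_0,b_0),(\psi_t(L_0),b_t);\La_\nov\bigr)$ is independent of $t$ with $L_0$ held fixed --- is exactly the one-sided continuation statement of (c), which you only construct afterwards as an ``analogue of (b)''. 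As written the argument is circular; the workable order (and FOOO's) is the reverse: first build the one-sided continuation/bimodule quasi-isomorphisms for a Hamiltonian motion of a single Lagrangian, deduce (c), and only then prove (b) by deforming to the $C^2$-small case and comparing with the Bott--Morse/one-Lagrangian model. Moreover that comparison is precisely where the difficulty sits in the filtered, obstructed, immersed setting --- matching the $R$-generators of ${\cal H}_0$ with the pairs of intersection points of $L_0\cap\psi_1(L_0)$ created near each self-intersection, and matching bounding cochains under $\bs\Psi_*$ --- and your proposal acknowledges this as an open ``obstacle'' rather than carrying it out, so (b) is not established. (A small further slip in (c): the family $\psi_{(1-s)}(\psi_1(L_2))$ runs from $\psi_1(\psi_1(L_2))$ to $\psi_1(L_2)$; the family you want is simply $\psi_{1-s}(L_2)$, from $\psi_1(L_2)$ at $s=0$ to $L_2$ at $s=1$.)
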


Here part (a) is immediate from \S\ref{il131}, since $\psi_1$ is an
isomorphism from $M,\om,\io_0:L_0\ra M$ to $M,\om,\io_1:L_1\ra M$.
The nontrivial statements are (b),(c).

\begin{rem}{\bf(i)} Equations \eq{il13eq8} and \eq{il13eq9} do {\it
not\/} hold in general for Floer cohomology over $\La_\nov^0$. In
particular, from $HF^*\bigl((L_0,b_0),(L_1,b_1); \La_\nov^0\bigr)$
we can recover the $\Q$-vector space with basis
$\io_0(L_0)\cap\io_1(L_1)$. Thus, if \eq{il13eq9} held over
$\La_\nov^0$ it would force $\bmd{\io_0(L_0)\cap\io_2(L_2)}=
\bmd{\io_1(L_1)\cap\io_2(L_2)}$, which is false in general.
\smallskip

\noindent{\bf(ii)} In the embedded case, it is well known that
Theorem \ref{il13thm3} has important consequences in symplectic
geometry. Using (b) one can deduce the {\it Arnold Conjecture} for
compact monotone symplectic manifolds.
\smallskip

\noindent{\bf(iii)} The only place where we use compactness of $M$
is to ensure that moduli spaces of $J$-holomorphic curves
$\M_{k+1}(\al,\be,J)$ are compact. If $M$ is {\it noncompact\/} but
$J$ has suitable convexity properties at infinity which ensure
compactness of $\M_{k+1}(\al,\be,J)$, then Lagrangian Floer
cohomology is well-defined and Theorem \ref{il13thm3} holds. This
can be done for cotangent bundles $T^*L$ and $\C^n$, for instance.

By taking $M=T^*L$ for $L$ a compact $n$-manifold, and $L_0$ to be
the zero section of $T^*L$, part (b) implies another conjecture of
Arnold on cotangent bundles.

Taking $M=\C^n$, if $\io_0:L_0\ra\C^n$ is a compact immersed
Lagrangian, then by choosing $\psi_1$ to be a large translation in
$\C^n$ we can arrange that $\io_0(L_0)\cap\io_1(L_1)=\emptyset$.
Thus $CF(L_0,L_1;\La_\nov)=\{0\}$, so $HF^*\bigl((L_0,b_0),
(L_1,b_1);\La_\nov\bigr)=\{0\}$, and (b) implies that
$HF^*\bigl((L_0,b_0);\La_\nov\bigr)=\{0\}$ for any bounding cochain
$b_0$ for $L_0$.
\label{il13rem2}
\end{rem}

\subsection{Floer cohomology for graded Lagrangians in Calabi--Yau
$n$-folds}
\label{il133}

As in \S\ref{il12}, suppose $(M,J,\om,\Om)$ is a Calabi--Yau
$n$-fold and $(\io:L\ra M,\phi)$ an immersed graded Lagrangian with
only transverse double self-intersections. Choose a relative spin
structure for $\io:L\ra M$. Theorem \ref{il12thm} constructs gapped
filtered $A_\iy$ algebras $(\Q\X\ot\La^0_\CY,\m)$ and $({\cal
H}\ot\La^0_\CY,\n)$. We can then go through the whole of
\S\ref{il131} and \S\ref{il132} using graded Lagrangians, and
working over the Calabi--Yau Novikov rings $\La_\CY^0,\La_\CY$
rather than $\La_\nov^0,\La_\nov$.

Use $\La_\CY^*$ to mean $\La_\CY^0$ or $\La_\CY$. Write triples
$(L,\phi,b)$ as a shorthand for an immersed graded Lagrangian
$(\io:L\ra M,\phi)$ together with a bounding cochain $b$ for $({\cal
H}\ot\La^0_\CY,\n)$ in Theorem \ref{il12thm}(c). Then we may define
{\it Lagrangian Floer cohomology groups} $HF^*\bigl((L,\phi,b);
\La_\CY^*\bigr)$ for one graded Lagrangian as in \S\ref{il131}, and
$HF^*\bigl((L_0,\phi_0,b_0),(L_1,\phi_1,b_1);\La_\CY^*\bigr)$ for
two graded Lagrangians as in~\S\ref{il132}.

\begin{thm} The analogues of Theorems {\rm\ref{il13thm1},
\ref{il13thm2}} and\/ {\rm\ref{il13thm3}} hold for Lagrangian Floer
cohomology of immersed graded Lagrangians in Calabi--Yau $n$-folds,
over the Novikov rings~$\La_\CY^0,\La_\CY$.
\label{il13thm4}
\end{thm}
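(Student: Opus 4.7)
The plan is to follow the proofs of Theorems \ref{il13thm1}, \ref{il13thm2}, and \ref{il13thm3} line by line, with Theorem \ref{il11thm} and Corollary \ref{il11cor} replaced by their Calabi--Yau analogue Theorem \ref{il12thm}, and with all occurrences of the Novikov ring $\La_\nov^*$ replaced by $\La_\CY^*$. Since bounding cochains, the gauge group action, and the cohomology constructions in \S\ref{il13} are purely algebraic consequences of having a gapped filtered $A_\iy$ algebra together with the canonical homotopy-equivalence statements of Theorem \ref{il11thm}, and since Theorem \ref{il12thm} provides exactly the same package over $\La_\CY^0$, the arguments of \S\ref{il131}--\S\ref{il132} transport verbatim once we have checked compatibility with the grading and with the smaller Novikov ring.

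For the analogue of Theorem \ref{il13thm1}, I would first define, for the gapped filtered $A_\iy$ algebra $({\cal H}\ot\La^0_\CY,\n)$ from Theorem \ref{il12thm}(c), the set of bounding cochains $\hM_{{\cal H},\n}$, the gauge group $G_{{\cal H},\n}$, the quotient $\M_{{\cal H},\phi,\n}$, and the twisted differentials $\n_1^b$ exactly as in Definitions \ref{il13dfn1}--\ref{il13dfn2}. The formulas \eq{il13eq1}--\eq{il13eq5} make sense verbatim over $\La_\CY^*$; only the formal variable $e$ is absent, which simplifies but does not alter the argument. Theorem \ref{il12thm}(a),(b) supplies the canonical gapped filtered $A_\iy$ isomorphism ${\mathfrak j}:({\cal H}\ot\La^0_\CY,\n)\to({\cal H}\ot\La^0_\CY,\ti\n)$ between two outcomes, unique up to homotopy and satisfying the 2-cocycle property, so the same diagram chase as in Definition \ref{il13dfn2} shows that $\M_{{\cal H},\phi,\n}$ is independent up to canonical bijection of $J,{\cal G},\la_{(p_-,p_+)},o_{(p_-,p_+)}$, and that $HF^*\bigl((L,\phi,b);\La_\CY^*\bigr)$ is independent as a $\La_\CY^*$-algebra up to canonical isomorphism. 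Note also that, because $\mu_L(\be)=0$ for all $\be$ by \S\ref{il12}, the grading on $HF^*$ is no longer $\Z_2$-collapsed as in Remark \ref{il13rem1}: all $\n_k^{\la,\mu}$ have $\mu=0$, so the construction takes place entirely in the degree-$0$ piece of $\La_\nov^0$, which is $\La_\CY^0$, and the resulting cohomology is genuinely $\Z$-graded.

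For the analogue of Theorem \ref{il13thm2}, given two transversely intersecting immersed graded Lagrangians $(L_0,\phi_0)$ and $(L_1,\phi_1)$ with only transverse self-intersections, I would use the shortcut indicated at the end of \S\ref{il132}: set $L=L_0\amalg L_1$ with $\io=\io_0\amalg\io_1$ and $\phi=\phi_0\amalg\phi_1$; this is itself a compact immersed graded Lagrangian with only transverse double self-intersections, and a pair of bounding cochains $(b_0,b_1)$ gives a bounding cochain $b$ for $L$. The splitting \eq{il13eq6} holds over $\La_\CY^*$ by exactly the same reasoning, since the moduli spaces separate according to which components of $L$ the boundary arcs map to. Hence the Calabi--Yau analogue of Theorem \ref{il13thm2} follows immediately from the analogue of Theorem \ref{il13thm1} just established, together with the associative product $\bu_{012}$ on morphisms between triples $(L_i,\phi_i,b_i)$ obtained in the same way from the one-Lagrangian product $\bu$ on~$L_0\amalg L_1\amalg L_2$.

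For the analogue of Theorem \ref{il13thm3}, let $\psi_t$ be a Hamiltonian isotopy with $\psi_0=\id_M$. The essential point, and what I expect to be the only nonformal step, is to show that the graded structure is preserved by Hamiltonian isotopy in a canonical way. Given the graded Lagrangian $(L_0,\phi_0)$, the pushforward $\io_1=\psi_1\ci\io_0$ is Lagrangian, and since $\psi_t^*\Om$ differs from $\Om$ by a nowhere-vanishing function that is homotopic to $1$ through nowhere-vanishing functions along the isotopy (because the relative first Chern class vanishes on $H_2(M,\io_0(L_0);\Z)$ and is preserved under Hamiltonian isotopy), the phase function $\phi_0$ transports uniquely along $\psi_t$ to a grading $\phi_1$ on $L_1$. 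I would then apply the construction of \S\ref{il8}--\S\ref{il9} to the family $J_t=(\psi_t)_*J_0$, observing that every moduli space used is a moduli space of $J_t$-holomorphic discs with boundary on immersed graded Lagrangians whose Maslov indices are all zero, so that the construction lives over $\La_\CY^0$ throughout and produces the desired gapped filtered $A_\iy$ isomorphism $\bs\Psi:({\cal H}_0\ot\La^0_\CY,\n^0)\to({\cal H}_1\ot\La^0_\CY,\n^1)$, unique up to homotopy. Parts (b) and (c) then reduce, as in \cite{FOOO}, to computing Floer cohomology groups of the pair $(L_0,L_1)$ when the two Lagrangians coincide up to Hamiltonian isotopy; the bimodule argument sketched in \S\ref{il132} adapts directly, and over $\La_\CY$ the usual invariance isomorphisms \eq{il13eq8}--\eq{il13eq9} hold. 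The main obstacle is thus the bookkeeping needed to check that every moduli space and every perturbation datum introduced in the proofs of Theorems \ref{il13thm1}--\ref{il13thm3} remains compatible with the graded structure so that no $e^\mu$ terms with $\mu\ne 0$ ever appear; once this is verified, the proofs descend without change from $\La_\nov^*$ to~$\La_\CY^*$.
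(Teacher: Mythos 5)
Your proposal is correct and takes essentially the same approach as the paper, which likewise proves Theorem~\ref{il13thm4} simply by re-running \S\ref{il131}--\S\ref{il132} (and the Hamiltonian-invariance arguments) over $\La_\CY^0,\La_\CY$ with Theorem~\ref{il12thm} in place of Theorem~\ref{il11thm} and Corollary~\ref{il11cor}, the vanishing of all Maslov indices $\mu_L(\be)$ for graded Lagrangians being exactly what keeps the construction inside $\La_\CY^*$ and makes the grading genuinely $\Z$-valued. One minor quibble: your justification of the grading transport along the Hamiltonian isotopy via ``$\psi_t^*\Om$ differs from $\Om$ by a nowhere-vanishing function'' is imprecise, since $\psi_t^*\Om$ need not be a multiple of $\Om$; the correct (and standard) argument is that the phase of $\io_t^*(\Om)$, with $\io_t=\psi_t\ci\io_0$, varies continuously in $t$, so the lift $\phi_0$ deforms uniquely to a grading $\phi_1$ of $\io_1$ -- the conclusion you need is unaffected.
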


We explained in Remark \ref{il13rem1} that $HF^k\bigl((L,b);
\La_\nov^*\bigr)\cong HF^{k+2d}\bigl((L,b);\La_\nov^*\bigr)$ for
$d\in\Z$, so one should regard $HF^*\bigl((L,b);\La_\nov^*\bigr)$ as
graded over $\Z_2$ rather than $\Z$. In contrast, $HF^*\bigl(
(L,\phi,b);\La_\CY^*\bigr)$ really is graded over $\Z$, and this
makes Floer cohomology for graded Lagrangians a more powerful tool,
as Seidel \cite{Seid} points out.

In particular, we can give useful criteria for existence and
uniqueness of bounding cochains. Since $\La_\CY^0$ is graded of
degree 0, a bounding cochain $b$ for $({\cal H}\ot\La_\CY^0,\n)$
lies in $b\in F^\la({\cal H}^0\ot\La_\CY^0)$ for some $\la>0$ and
must satisfy $\sum_{k\ge 0}\n_k(b,\ldots,b)=0$ in ${\cal
H}^1\ot\La_\CY^0$. But \eq{il11eq4} gives
\begin{equation}
\begin{split}
{\cal H}^0&=H_{n-1}(L;\Q)\op\ts\bigop_{(p_-,p_+)\in R:\;
\eta_{(p_-,p_+)}=1}\Q(p_-,p_+), \\
{\cal H}^1&=H_{n-2}(L;\Q)\op\ts\bigop_{(p_-,p_+)\in R:\;
\eta_{(p_-,p_+)}=2}\Q(p_-,p_+).
\end{split}
\label{il13eq10}
\end{equation}
Thus we deduce:

\begin{prop} Suppose $(M,J,\om,\Om)$ is a Calabi--Yau $n$-fold,
$(\io:L\ra M,\phi)$ is an immersed graded Lagrangian with only
transverse double self-intersections, and\/ $({\cal
H}\ot\La^0_\CY,\n)$ is as in Theorem {\rm\ref{il12thm}(c)}. Then
\begin{itemize}
\setlength{\itemsep}{0pt}
\setlength{\parsep}{0pt}
\item[{\bf(a)}] If\/ $b_{n-2}(L)=0$ and\/ $\eta_{(p_-,p_+)}\ne 2$ for
all\/ $(p_-,p_+)\in R,$ then every $b\in F^\la({\cal
H}^0\ot\La_\CY^0)$ for $\la>0$ is a bounding cochain; and
\item[{\bf(b)}] If\/ $b_{n-1}(L)=0$ and\/ $\eta_{(p_-,p_+)}\ne 1$ for
all\/ $(p_-,p_+)\in R,$ then $0$ is the only possible bounding
cochain.
\end{itemize}
\label{il13prop1}
\end{prop}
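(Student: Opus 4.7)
The plan is to read off both parts of the proposition directly from the degree decomposition \eqref{il13eq10} of ${\cal H}^0$ and ${\cal H}^1$, combined with the definition of bounding cochain in the $\La_\CY^0$ setting. Since $\La_\CY^0$ is concentrated in degree $0$ (there is no formal variable $e$), for the tensor product grading we have $({\cal H}\otimes\La_\CY^0)^{(d)}={\cal H}^d\otimes\La_\CY^0$ for every $d\in\Z$. By the $\La_\CY$-version of Definition \ref{il3dfn13}, a bounding cochain must lie in $F^\la({\cal H}\otimes\La_\CY^0)^{(0)}=F^\la({\cal H}^0\otimes\La_\CY^0)$ for some $\la>0$, and must satisfy the Maurer--Cartan equation $\sum_{k\ge 0}\n_k(b,\ldots,b)=0$, whose left-hand side lies in $({\cal H}\otimes\La_\CY^0)^{(1)}={\cal H}^1\otimes\La_\CY^0$.

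For part (a), I would observe that the two hypotheses $b_{n-2}(L)=0$ and $\eta_{(p_-,p_+)}\ne 2$ for all $(p_-,p_+)\in R$ are precisely the conditions that kill the two summands of ${\cal H}^1$ in \eqref{il13eq10}. Hence ${\cal H}^1=0$, so ${\cal H}^1\otimes\La_\CY^0=0$, and the Maurer--Cartan equation is satisfied vacuously for every element $b\in F^\la({\cal H}^0\otimes\La_\CY^0)$. This yields the claim.

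For part (b), the hypotheses $b_{n-1}(L)=0$ and $\eta_{(p_-,p_+)}\ne 1$ for all $(p_-,p_+)\in R$ similarly kill the two summands of ${\cal H}^0$ in \eqref{il13eq10}, so ${\cal H}^0=0$ and therefore $F^\la({\cal H}^0\otimes\La_\CY^0)=0$ for every $\la>0$. Consequently the zero element is the only candidate for a bounding cochain, as claimed. (Whether $b=0$ actually solves the Maurer--Cartan equation depends on whether $\n_0=0$, but the statement only asserts uniqueness of the possible bounding cochain, not its existence.)

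There is no serious obstacle in this proof: the content is entirely encoded in \eqref{il13eq10}, which was itself obtained as part of Corollary \ref{il11cor} and its Calabi--Yau analogue in Theorem \ref{il12thm}(c). The only delicate point worth spelling out is the identification $({\cal H}\otimes\La_\CY^0)^{(d)}={\cal H}^d\otimes\La_\CY^0$, which uses the absence of the degree-$2$ generator $e$ in the Calabi--Yau Novikov ring $\La_\CY^0$; this is precisely the simplification that distinguishes the graded/Calabi--Yau setting of \S\ref{il12}--\S\ref{il133} from the general case of \S\ref{il131}, and it is what makes such clean existence/uniqueness criteria available here.
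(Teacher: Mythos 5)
Your proof is correct and is essentially the paper's own argument: the paper deduces the proposition immediately from the decomposition \eqref{il13eq10}, noting that since $\La_\CY^0$ sits in degree $0$, a bounding cochain lies in $F^\la({\cal H}^0\ot\La_\CY^0)$ and the Maurer--Cartan equation takes values in ${\cal H}^1\ot\La_\CY^0$, so the hypotheses of (a) make the equation vacuous and those of (b) leave $0$ as the only candidate. Your parenthetical remark that (b) asserts only uniqueness, not that $0$ actually satisfies $\n_0=0$, is also the correct reading of the statement.
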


Since $\io:L\ra M$ has a relative spin structure, $L$ is oriented,
so $b_{n-1}(L)=0$ in (b) is equivalent to $b^1(L)=0$, which is a
sufficient condition for an immersed Lagrangian $\io:L\ra M$ to
admit a grading $\phi$. As in Remark \ref{il13rem2}(iii), we can
also apply the theory to {\it noncompact\/} Calabi--Yau manifolds
$(M,J,\om,\Om)$, provided $J$ is convex at infinity. For example,
$M=\C^n$ with the Euclidean $J,\om,\Om$ will do.

In the noncompact case we may suppose $(M,\om)$ is an exact
symplectic manifold, that is, $\om=\d\xi$ for some 1-form $\xi$ on
$M$. If $\io:L\ra M$ is an immersed Lagrangian then $\io^*(\xi)$ is
a closed 1-form on $L$, and we call $L$ {\it exact\/} if
$\io^*(\xi)$ is exact. If $L$ is exact, then there can be no
nonconstant holomorphic discs in $M$ whose boundaries lie in
$\io(L)$ and lift continuously to $L$, as Stokes' Theorem shows that
their area would be zero. This implies that the component of $\n_0$
in $H_{n-2}(L;\Q)\ot\La_\CY^0$ is zero. If also $\eta_{(p_-,p_+)}\ne
2$ for all $(p_-,p_+)\in R$ then $\n_0=0$, so 0 is a bounding
cochain, giving:

\begin{prop} Suppose $(M,J,\om,\Om)$ is a noncompact, exact
Calabi--Yau $n$-fold, with\/ $J$ convex at infinity, $(\io:L\ra
M,\phi)$ is an exact immersed graded Lagrangian with only transverse
double self-intersections, and\/ $\eta_{(p_-,p_+)}\ne 2$ for all\/
$(p_-,p_+)\in R$. Then $0$ is a bounding cochain for $({\cal
H}\ot\La^0_\CY,\n)$ in Theorem~{\rm\ref{il12thm}(c)}.
\label{il13prop2}
\end{prop}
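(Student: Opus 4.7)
The plan is to reduce ``$b=0$ is a bounding cochain'' to the single identity $\n_0=0$ in ${\cal H}^1\ot\La^0_\CY$: since $({\cal H}\ot\La^0_\CY,\n)$ is minimal we have $\n_1^{0,0}=0$, and $\n_0^{0,0}=0$ by Definition~\ref{il3dfn10}(ii), so the Maurer--Cartan equation $\sum_{k\ge 0}\n_k(0,\ldots,0)=0$ collapses to $\n_0=0$. Under the hypothesis $\eta_{(p_-,p_+)}\ne 2$ for all $(p_-,p_+)\in R$, the description \eq{il11eq4} of ${\cal H}^1$ contains no $R$-summand, so ${\cal H}^1=H_{n-2}(L;\Q)$; hence the $R$-component of $\n_0$ is automatically zero, and one need only kill its $H_{n-2}(L;\Q)\ot\La^0_\CY$-component.

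For this I would trace $\n_0$ back, through the sum-over-planar-trees minimal-model construction of Definition~\ref{il3dfn12}, to the geometric source-operation $\m_0$ on the large algebra $(\Q\X\ot\La^0_\CY,\m)$ of Definition~\ref{il11dfn}. By \eq{il7eq1} the $L$-valued part of $\m_0$ is assembled from virtual chains of the moduli spaces $\oM_1^\ma(\emptyset,\be,J)$, i.e.\ of $J$-holomorphic stable discs with a single marked boundary point whose boundary lifts continuously to $L$ with no jumps at self-intersections. Writing $\io^*\xi=\d f$ for a primitive $f:L\ra\R$, Stokes' theorem gives
\begin{equation*}
[\om]\cdot\be \;=\; \int_\Si u^*\om \;=\; \int_{\pd\Si} u^*\xi \;=\; \int_{\mathcal{S}^1}\bar u^*\d f \;=\; 0
\end{equation*}
for every such disc, forcing $u$ constant; but a constant stable map with only one marked boundary point and no nodes is excluded by the stability count of Definition~\ref{il4dfn1}. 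Thus $\oM_1^\ma(\emptyset,\be,J)=\emptyset$ for all $\be$, and $\m_0$ is supported entirely in the $R$-subcomplex $\Q\X_R:=\bigop_{(p_-,p_+)\in R}C_*^\rsi(\{(p_+,p_-)\};\Q)\ot\La^0_\CY$ of $\Q\X\ot\La^0_\CY$.

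The hard part will be promoting this chain-level vanishing to $\n_0|_L=0$ in the minimal model, because a multi-vertex tree in the tree sum for $\n_0$ admits a non-source vertex whose operation $\m_k$ can \emph{a priori} carry $R$-valued inputs into an $L$-valued output via a moduli space $\oM^\ma_{k+1}(\al,\be,J,f_1,\ldots,f_k)$ with $1,\ldots,k\in I$ but $0\notin I$. To close this I would choose the decomposition $\Q\X=B\op C\op\m_1^{0,0}(C)$ in Definition~\ref{il3dfn12} compatibly with the $L/R$-splitting of $\Q\X$ (permissible since $\m_1^{0,0}=(-1)^n\pd$ respects the splitting), with $B=B_L\op B_R$, $B_L\cong H_{n-*}(L;\Q)$, and $B_R$ concentrated in shifted degrees $\eta_{(p_-,p_+)}-1\ne 1$. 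Then $\hat\Pi_B$ and $-\hat H$ both respect $\Q\X_L,\Q\X_R$; the one-vertex contribution to $\n_0$ is $\hat\Pi_B(\m_0)=0$ by support, and the multi-vertex $L$-mixing terms are eliminated by a filtered induction on total $T^\la$-energy, using the $A_\iy$ relations \eq{il3eq10} to re-express each such term as $\m_1^{0,0}$-exact in $\Q\X_L$ modulo strictly higher energy.

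The slickest route, however, is to invoke the Kuranishi-cohomology reformulation of \cite{AkJo} announced in Remark~\ref{il2rem}(d), where $\n_0$ is literally the virtual cochain built from $\oM_1^\ma(\be,J)$ with no minimal-model intermediary; then $\n_0|_L=0$ follows directly from the empty-moduli argument above, and $\n_0|_R=0$ from the degree/$\eta$ count, yielding $\n_0=0$ at once.
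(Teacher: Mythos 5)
Your reduction and the two vanishing mechanisms are exactly the paper's: the paper proves this proposition by observing that $b=0$ satisfies the Maurer--Cartan equation if and only if $\n_0=0$, that exactness plus Stokes' Theorem forbids nonconstant holomorphic discs whose boundary lifts continuously to $L$, so that the component of $\n_0$ in $H_{n-2}(L;\Q)\ot\La^0_\CY$ vanishes, and that $\eta_{(p_-,p_+)}\ne 2$ removes the $R$-summand of ${\cal H}^1$ in \eq{il13eq10}, whence $\n_0=0$. Up to that point your proposal and the paper coincide; the paper does not separately discuss the minimal-model tree sum, and treats the vanishing of the $H_{n-2}(L;\Q)$-component as following directly from the absence of such discs.

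The genuine gap in your write-up is the step you yourself call the hard part. You are right to flag that, a priori, a multi-vertex tree in Definition \ref{il3dfn12} could feed $R$-valued corner chains (e.g.\ teardrop outputs of $\m_0^{\la,\mu}$) into a vertex $\m_k^{\la,\mu}$ with $0\notin I$ and so produce an $L$-valued contribution to $\n_0$; but neither of your proposed resolutions closes this. The ``filtered induction \ldots\ $\m_1^{0,0}$-exact modulo strictly higher energy'' is not actually set up: the relations \eq{il3eq10} relate sums of tree terms to one another, and there is no reason an individual composite $\hat\Pi_B\ci\m_k(\ldots)$ should be $\pd$-exact; and the appeal to \cite{AkJo} is to a reformulation that is only announced, not available. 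The clean way to kill these terms, consistent with the paper's one-line assertion, is an action computation you already have the ingredients for: writing $\io^*\xi=\d f$, Stokes' Theorem applied to any disc contributing to $\m_k^{\la,\mu}$, corners included, forces $\la=[\om]\cdot\be=\sum_{i\in I}\bigl(f(p^i_-)-f(p^i_+)\bigr)$ where $\al(i)=(p^i_-,p^i_+)$, since otherwise $\oM_{k+1}^\ma(\al,\be,J,f_1,\ldots,f_k)=\emptyset$ and its virtual chain is zero. Now take a tree with no leaves whose root output is $L$-valued, with $B,C,H$ chosen compatibly with the splitting of $\Q\X$ by the components of $L\amalg R$ (as you propose, which is permissible since $\pd$ preserves that splitting). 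Then every corner label is matched across an internal edge with its image under $\si$, because the fibre products identify $\ev=\si\ci\al_2(0)$ of the child with $\ev_i=\al_1(i)$ of the parent; so the total energy $\sum_v\la_v$ telescopes to $0$, while every source vertex carries $\la_v>0$. Hence every such tree term vanishes, which is precisely the statement that the $H_{n-2}(L;\Q)\ot\La^0_\CY$-component of $\n_0$ is zero; combined with your degree count for the $R$-part, this completes the proof.
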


Now let $(\io:L\ra M,\phi)$ be a compact immersed graded Lagrangian
in $\C^n$. Propositions \ref{il13prop1}(a) and \ref{il13prop2} give
two sufficient conditions for 0 to be a bounding cochain for $L$.
Then $HF^*\bigl((L,\phi,0);\La_\CY\bigr)$ is well-defined, and
Remark \ref{il13rem2}(iii) shows that $HF^*\bigl((L,\phi,0);\La_\CY
\bigr)=\{0\}$. But $HF^*\bigl((L,\phi,0);\La_\CY \bigr)$ is the
cohomology of the complex $({\cal H}\ot\La_\CY,\n_1)$. To have zero
cohomology imposes constraints upon the ranks over $\La_\CY$ of the
graded pieces of a free $\La_\CY$-complex. For instance, we have:

\begin{cor} Let\/ $(\io:L\ra M,\phi)$ be a compact, immersed, graded
Lagrangian in $\C^n,$ with transverse double self-intersections.
Suppose that $\eta_{(p_-,p_+)}\ne 2$ for all\/ $(p_-,p_+)\in R,$ and
either $b_{n-2}(L)=0$ or $L$ is exact. Then $\dim{\cal H}^d\le
\dim{\cal H}^{d-1}+\dim{\cal H}^{d+1}$ for all\/ $d\in\Z,$ with\/
${\cal H}^d$ given in~\eq{il11eq4}.
\label{il13cor1}
\end{cor}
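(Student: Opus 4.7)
The plan is to reduce the inequality to a purely algebraic fact about acyclic complexes of finite-dimensional vector spaces; the required acyclicity will come from the vanishing of Floer cohomology of any compact Lagrangian in $\C^n$, obtained by displacing $L$ to infinity by a Hamiltonian translation.

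First I would verify that $0$ is a bounding cochain for $({\cal H}\ot\La_\CY^0,\n)$. This is immediate from the hypotheses: if $b_{n-2}(L)=0$ then Proposition \ref{il13prop1}(a) applies, and if $L$ is exact then Proposition \ref{il13prop2} applies. In either case $\n_0=0$, so $({\cal H}\ot\La_\CY,\n_1)$ is a well-defined complex of $\La_\CY$-modules, and $HF^*((L,\phi,0);\La_\CY)$ is defined. Next I would show this complex is acyclic. By Theorem \ref{il13thm4} (the graded analogue of Theorem \ref{il13thm3}) together with Remark \ref{il13rem2}(iii), the theory extends to the noncompact setting of $\C^n$ with its standard $J$. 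Pick $\psi_t$ to be translation by $tv$ for generic $v\in\C^n$ of sufficiently large norm, and put $L_1=\psi_1(L)$ with the graded structure induced by $\psi_1$. Then $\io_0(L)\cap\io_1(L_1)=\emptyset$, so $CF(L,L_1;\La_\CY)=0$ and hence $HF^*((L,\phi,0),(L_1,\phi_1,b_1);\La_\CY)=0$ for any $b_1$ corresponding to $0$ under $\bs\Psi_*$. The graded analogue of Theorem \ref{il13thm3}(b) then gives the canonical isomorphism
\begin{equation*}
HF^*((L,\phi,0);\La_\CY)\cong HF^*((L,\phi,0),(L_1,\phi_1,b_1);\La_\CY)=0,
\end{equation*}
so $({\cal H}\ot\La_\CY,\n_1)$ is acyclic.

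For the algebraic conclusion I would first observe that $\La_\CY$ is a field: any nonzero $a=\sum_{i\ge 0}a_iT^{\la_i}$ with $\la_0<\la_1<\cdots$ and $a_0\ne 0$ factors as $a_0T^{\la_0}(1+x)$ with $x\in F^{>0}\La_\CY^0$, and $(1+x)^{-1}=\sum_{k\ge 0}(-x)^k$ converges in $\La_\CY^0$. By \eq{il11eq4}, compactness of $L$, and finiteness of $R$, each ${\cal H}^d$ is a finite-dimensional $\Q$-vector space, so $({\cal H}\ot\La_\CY)^d$ is a finite-dimensional $\La_\CY$-vector space of dimension $h^d:=\dim_\Q{\cal H}^d$. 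Writing $r^d$ for the $\La_\CY$-rank of $\n_1|_{({\cal H}\ot\La_\CY)^d}$, acyclicity at degree $d$ gives $h^d-r^d=\dim\ker(\n_1|_d)=\dim\Im(\n_1|_{d-1})=r^{d-1}$. Combining with the trivial bounds $r^d\le h^{d+1}$ (image lies in the $(d{+}1)$-piece) and $r^{d-1}\le h^{d-1}$ (rank bounded by domain dimension), we conclude
\begin{equation*}
h^d=r^d+r^{d-1}\le h^{d+1}+h^{d-1},
\end{equation*}
which is the stated inequality.

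The one potentially delicate point is to confirm that the Hamiltonian-invariance statement of Theorem \ref{il13thm3}(b), formulated for compact $(M,\om)$, still applies in the noncompact setting of $\C^n$ with $\La_\CY$ coefficients; but this is precisely what Remark \ref{il13rem2}(iii) and Theorem \ref{il13thm4} assert, since $\C^n$ with its standard $J$ is convex at infinity and Gromov compactness for discs with Lagrangian boundary in $L\cup L_1$ holds as usual. Everything else is bookkeeping.
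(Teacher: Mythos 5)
Your proposal is correct and follows essentially the same route as the paper: under the hypotheses, $0$ is a bounding cochain by Propositions \ref{il13prop1}(a)/\ref{il13prop2}, displacement by a large translation (Remark \ref{il13rem2}(iii) together with the graded analogue of Theorem \ref{il13thm3}(b)) gives $HF^*\bigl((L,\phi,0);\La_\CY\bigr)=\{0\}$, and acyclicity of the finite-rank complex $({\cal H}\ot\La_\CY,\n_1)$ over the field $\La_\CY$ yields the inequality by rank--nullity. You merely make explicit the algebraic bookkeeping (invertibility in $\La_\CY$ and the rank estimates) that the paper leaves implicit.
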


Here is an example.

\begin{ex} Define a curve in $\C$ by
$C=\{s+it:s,t\in\R$, $t^2=s^2-s^4\}$. This is sketched in Figure
\ref{il13fig}. It is an immersed circle in $\R^2$, the shape of an
$\iy$ sign, with one self-intersection point at $0$. For $n\ge 1$,
define
\begin{equation*}
L_n=\bigl\{(\la x_1,\ldots,\la x_n\bigr):\la\in C,\;
x_1,\ldots,x_n\in\R,\; x_1^2\!+\!\cdots\!+\!x_n^2\!=\!1\bigr\}.
\end{equation*}
It is easy to see that $L_n$ is the image of an immersed Lagrangian
sphere $\io:{\cal S^n}\ra\C^n$, which has one transverse
self-intersection point at $0\in\C^n$ with $\io(p_-)=\io(p_+)=0$,
where $p_\pm=(\pm 1,0,\ldots,0)\in{\cal S}^n$. Note that $L_n$ is
SO$(n)$-invariant, and we can choose $\io$ to be equivariant with
respect to the actions of SO$(n)$ on ${\cal S}^n$ fixing $p_\pm$,
and on $\C^n$. The tangent spaces to $\io({\cal S}^n)$ at the
self-intersection point are
\begin{equation}
\begin{split}
\d\io(T_{p_-}{\cal S}^n)&=\bigl\{({\rm e}^{-i\pi/4}x_1,\ldots,{\rm
e}^{-i\pi/4}x_n):x_1,\ldots,x_n\in\R\bigr\},\\
\d\io(T_{p_+}{\cal S}^n)&=\bigl\{({\rm e}^{i\pi/4}x_1,\ldots,{\rm
e}^{i\pi/4}x_n):x_1,\ldots,x_n\in\R\bigr\}.
\end{split}
\label{il13eq11}
\end{equation}

\begin{figure}[htb]
\centerline{\begin{xy}
0;<1.7mm,0mm>: ,(0,0)*+{} ,(20,-1)*+!U{} **\crv{(10,10)&(20,10)}
,(0,0)*+{} ,(20,0)*+{} **\crv{(10,-10)&(20,-10)} ,(0,0)*+{}
,(-20,-1)*+!U{} **\crv{(-10,10)&(-20,10)} ,(0,0)*+{} ,(-20,0)*+{}
**\crv{(-10,-10)&(-20,-10)} ,(0,0)*+{} ,(-25,0)*+{} ,(-25,0)*+{ };
(25,0)*+{} **@{-} ?>* \dir{>} ,(0,-10)*+{ }; (0,10)*+{} **@{-} ?>*
\dir{>} ,(24,0.2)*+!D{s} ,(0.2,9)*+!L{t}
\end{xy}}
\caption{The curve $C$ in $\C$}
\label{il13fig}
\end{figure}

We shall calculate the index $\eta_{(p_-,p_+)}$ using Proposition
\ref{il12prop}. Despite the comparison between \eq{il12eq2} and
\eq{il13eq11}, we are not free to put $\phi^j_-=-\frac{\pi}{4}$ and
$\phi^j_+=\frac{\pi}{4}$, since \eq{il13eq11} only determines the
$\phi^j_\pm$ up to addition of $\pi\Z$. We have to choose a framing
$\phi:{\cal S}^n\ra\R$ for $\io:{\cal S}^n\ra\C^n$, and choose the
$\phi^j_\pm$ to satisfy $\phi^1_\pm+\cdots+\phi^n_\pm=\phi(p_\pm)$.

Consider the path $p:[-\frac{\pi}{2},\frac{\pi}{2}]\ra{\cal S}^n$
defined by $p(u)=(\sin u,\cos u,0,\ldots,0)$. Then
$p(\pm\frac{\pi}{2})=p_\pm$, and $\io\ci p(u)=(\la(u),0,\ldots,0)$,
where $\la:[-\frac{\pi}{2},\frac{\pi}{2}]\ra C$ sweeps out the right
hand lobe $s\ge 0$ of $C$ in the anticlockwise direction.
Calculation shows that for $u\in(-\frac{\pi}{2},\frac{\pi}{2})$ we
have
\begin{equation*}
\d\io(T_{p(u)}{\cal S}^n)=\bigl\{(\ts\frac{\d\la}{\d u}(u)
x_1,\la(u)x_2,\la(u)x_n):x_1,\ldots,x_n\in\R\bigr\}.
\end{equation*}
From Figure \ref{il13fig} we see that $\arg\frac{\d\la}{\d u}(u)$
increases continuously from $-\frac{\pi}{4}$ to $\frac{5\pi}{4}$ and
$\arg\la(u)$ increases continuously from $-\frac{\pi}{4}$ to
$\frac{\pi}{4}$ over~$(-\frac{\pi}{2},\frac{\pi}{2})$.

Therefore $\io:{\cal S}^n\ra\C^n$ has a framing $\phi:{\cal
S}^n\ra\R$ with $\phi(p_-)=-\frac{n\pi}{4}$ and
$\phi(p_-)=\frac{n\pi}{4}+\pi$, and in Proposition \ref{il12prop} we
may take $\phi^j_-=-\frac{\pi}{4}$ for $j=1,\ldots,n$,
$\phi^1_+=\frac{5\pi}{4}$, and $\phi^j_+=\frac{\pi}{4}$ for
$j=2,\ldots,n$. Hence $\bigl[\frac{\phi_+^j-\phi_-^j}{\pi}\bigr]$ is
1 for $j=1$ and 0 for $j=2,\ldots,n$, and \eq{il12eq3} gives
$\eta_{(p_-,p_+)}=n+1$, and similarly $\eta_{(p_+,p_-)}=-1$. Thus
\eq{il11eq4} gives ${\cal H}^d=\Q$ if $d=-2,-1,n-1,n$, and ${\cal
H}^d=0$ otherwise.

When $n>2$, Proposition \ref{il13prop1} implies that 0 is the unique
bounding cochain for $\io:{\cal S}^n\ra\C^n$. When $n=2$ Proposition
\ref{il13prop1}(a) does not apply, but this is an exact Lagrangian,
so Propositions \ref{il13prop1}(b) and \ref{il13prop2} show that 0
is the unique bounding cochain for $\io:{\cal S}^2\ra\C^2$. Thus as
above $HF^*\bigl(({\cal S}^n,\phi,0);\La_\CY\bigr)$ is well-defined,
and zero. Corollary \ref{il13cor1} holds.
\label{il13ex1}
\end{ex}

If $(M,J,\om,\Om)$ is a compact Calabi--Yau $n$-fold and $p\in M$,
then by shrinking the example above by a homothety and locally
identifying $\C^n$ near 0 with $M$ near $p$ using Darboux' Theorem,
we can construct Lagrangian immersions $\io:{\cal S}^n\ra M$. The
same arguments then prove:

\begin{prop} Let\/ $(M,J,\om,\Om)$ be a compact Calabi--Yau
$n$-fold for $n>1,$ and\/ $p\in M$. Then there exists an immersed,
graded Lagrangian $(\io:{\cal S}^n\ra M,\phi)$ with exactly one
transverse double self-intersection point at\/
$p\!=\!\io(p_-)\!=\!\io(p_+),$ with\/
$\eta_{(p_-,p_+)}\!=\!n\!+\!1$. It has unique bounding cochain $0,$
and\/ $HF^*\bigl(({\cal
S}^n,\phi,0);\La_\CY\bigr)\!=\!\{0\}$.\!\!\!\!
\label{il13prop3}
\end{prop}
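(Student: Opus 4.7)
My approach is to transport the immersed sphere of Example \ref{il13ex1} into $M$ via a Darboux chart, use Proposition \ref{il12prop} to carry the index $\eta_{(p_-,p_+)}=n+1$ across, apply a dimension count based on \eqref{il11eq4} to handle the bounding cochain, and use Hamiltonian displaceability of small Lagrangians together with the Calabi--Yau analogue of Theorem \ref{il13thm3}(b) in Theorem \ref{il13thm4} to conclude~$HF^*=0$.

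First I would apply Darboux's Theorem to find a symplectomorphism $\Psi:(B(0,r),\om_0)\ra(U,\om\vert_U)$ with $\Psi(0)=p$, where $U$ is an open neighbourhood of $p$ in $M$. Let $\io_0:{\cal S}^n\ra\C^n$ be the immersed Lagrangian of Example \ref{il13ex1}; for sufficiently small $\ep>0$ the rescaling $\io_\ep=\ep\cdot\io_0$ takes values in $B(0,r)$, and I set $\io=\Psi\ci\io_\ep:{\cal S}^n\ra M$, a compact immersed Lagrangian with exactly one transverse double self-intersection at $p=\io(p_-)=\io(p_+)$. Since ${\cal S}^n$ is simply-connected for $n>1$, the Maslov class $u^*[\al]\in H^1({\cal S}^n;\Z)$ vanishes, so a grading $\phi:{\cal S}^n\ra\R$ with $\io^*\Om={\rm e}^{i\phi}\vol_{{\cal S}^n}$ exists. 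The index $\eta_{(p_-,p_+)}$ depends only on local data at $p$ by Proposition \ref{il12prop}, and after identifying $T_pM\cong\C^n$ via $\d\Psi_0$ this data matches that of Example \ref{il13ex1} modulo shifts of the $\phi^j_\pm$ by $\pi\Z$ (to which the floor function in \eqref{il12eq3} is insensitive), giving $\eta_{(p_-,p_+)}=n+1$ and $\eta_{(p_+,p_-)}=-1$.

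The bounding cochain analysis follows from \eqref{il11eq4}: for $n>1$ we have $H_{n-1}({\cal S}^n;\Q)=0$, and $\eta_{(p_\pm,p_\mp)}\in\{n+1,-1\}$ excludes the value $1$, so ${\cal H}^0=0$ and no nonzero element of $F^\la({\cal H}^0\ot\La_\CY^0)$ can be a bounding cochain---equivalently, apply Proposition \ref{il13prop1}(b). For $0$ itself to be a bounding cochain one needs $\n_0=0$, and the same formula gives ${\cal H}^1=H_{n-2}({\cal S}^n;\Q)\op\bigop_{\eta=2}\Q=0$ as soon as $n\ge 3$, forcing $\n_0\in{\cal H}^1\ot\La_\CY^0=0$ automatically. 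The case $n=2$ is the main obstacle, since then ${\cal H}^1\cong H_0({\cal S}^2;\Q)\cong\Q$ and one must actually check the count of $J$-holomorphic discs bounded by $\io({\cal S}^2)$ vanishes; I would handle this by choosing $J$ to agree with $\Psi_*J_0$ on $U$, so that any $J$-holomorphic disc with boundary on $\io({\cal S}^2)$ which stays inside $U$ has zero area by exactness of $\io_\ep({\cal S}^2)$ in $(B(0,r),\om_0)$ and Stokes' theorem, and combining this with a monotonicity estimate to bound the areas of discs escaping $U$, shrinking $\ep$ and using the Novikov filtration to kill contributions from high-area classes order by order.

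For the vanishing of Floer cohomology, since $\io({\cal S}^n)\subset U$ lies in an arbitrarily small Darboux ball, there is a Hamiltonian isotopy $\psi_t:t\in[0,1]$ on $M$ with $\psi_0=\id_M$ and $\psi_1\bigl(\io({\cal S}^n)\bigr)\cap\io({\cal S}^n)=\emptyset$; such $\psi_t$ exists by standard displacement arguments for Lagrangians of small displacement energy. Write $L_1=\psi_1\ci\io:{\cal S}^n\ra M$ with induced grading $\phi_1$. The same arguments give $L_1$ unique bounding cochain $0$, and under the canonical bijection of moduli spaces of bounding cochains from the Calabi--Yau analogue of Theorem \ref{il13thm3}(a) in Theorem \ref{il13thm4}, this matches the bounding cochain $0$ on $\io$. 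The Calabi--Yau analogue of Theorem \ref{il13thm3}(b) then gives
\begin{equation*}
HF^*\bigl(({\cal S}^n,\phi,0);\La_\CY\bigr)\cong
HF^*\bigl(({\cal S}^n,\phi,0),(L_1,\phi_1,0);\La_\CY\bigr),
\end{equation*}
and the right hand side vanishes because $\io({\cal S}^n)\cap L_1({\cal S}^n)=\emptyset$ forces~$CF(L,L_1;\La_\CY)=\{0\}$.
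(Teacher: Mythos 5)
Your proposal follows essentially the same route as the paper, whose entire proof is the sentence preceding the proposition: shrink the immersed sphere of Example \ref{il13ex1} by a homothety, transplant it into $M$ near $p$ by Darboux's Theorem, and then repeat the index computation of Proposition \ref{il12prop}, the bounding-cochain analysis via \eq{il11eq4} and Propositions \ref{il13prop1}--\ref{il13prop2}, and the displacement argument of Remark \ref{il13rem2}(iii) and Theorem \ref{il13thm3}(b) (here realised, as you do, by a Hamiltonian isotopy displacing the small sphere inside the Darboux ball). The only point where you go beyond the paper is the $n=2$ discussion, where your phrase about killing high-area contributions ``order by order'' is not quite the right mechanism (since ${\cal H}^0=0$ there is nothing available to correct $\n_0$ with, so one needs the ${\cal H}^1$-component of $\n_0$ to vanish exactly, via local exactness in the ball together with the area gap for discs leaving it); the paper itself is even terser here, saying only that ``the same arguments'' apply.
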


Thus there exist many immersed Lagrangians which have unobstructed
Floer cohomology, but which are zero objects in the derived immersed
Fukaya category.

\subsection{Local Hamiltonian equivalence of immersed Lagrangians}
\label{il134}

For immersed Lagrangians, there are two different notions of
Hamiltonian equivalence.

\begin{dfn} Let $(M,\om)$ be a symplectic manifold, and
$\io:L\ra M$, $\io':L'\ra M$ be compact, immersed Lagrangians in
$M$. Then
\begin{itemize}
\setlength{\itemsep}{0pt}
\setlength{\parsep}{0pt}
\item[(i)] We say that $\io:L\ra M$, $\io':L'\ra M$ are {\it globally
Hamiltonian equivalent\/} if there exists a diffeomorphism $h:L\ra
L'$ and a smooth $1$-parameter family $\psi_t:t\in[0,1]$ of
Hamiltonian equivalent symplectomorphisms of $(M,\om)$ with
$\psi_0=\id_M$, such that $\psi_1\ci\io\equiv \io'\ci h$.
\item[(ii)] We say that $\io:L\ra M$, $\io':L'\ra M$ are {\it locally
Hamiltonian equivalent\/} if there exists a diffeomorphism $h:L\ra
L'$ and a smooth $1$-parameter family $\io_t:t\in[0,1]$ of
Lagrangian immersions $\io_t:L\ra M$, such that $\io_0=\io$ and
$\io_1=\io'\ci h$, and for each $t\in[0,1]$ the 1-form
$\d\io_t^*\bigl(\frac{\d\io_t}{\d t}\cdot \io_t^*(\om)\bigr)$ on $L$
is {\it exact}.

Here $\frac{\d\io_t}{\d t},\io_t^*(\om)$ and $\frac{\d\io_t}{\d
t}\cdot\io_t^*(\om)$ are sections of the vector bundles
$\io_t^*(TM),\ab\io_t^*(\La^2T^*M),\ab\io_t^*(T^*M)$ over $L$,
respectively, $\d\io_t:TL\ra\io_t^*(TM)$ is the derivative of
$\io_t$, and $\d\io_t^*:\io_t^*(T^*M)\ra T^*L$ the dual map. It
follows from $\io_t$ a Lagrangian immersion for $t\in[0,1]$ that
$\d\io_t^*\bigl(\frac{\d\io_t}{\d t}\cdot \io_t^*(\om)\bigr)$ is a
closed 1-form.
\end{itemize}
\label{il13dfn3}
\end{dfn}

By setting $\io_t=\psi_t\ci\io$, we see that global implies local
Hamiltonian equivalence. For embedded Lagrangians, if the
$\io_t:L\ra M$ are embeddings for all $t\in[0,1]$ then we can find a
family $\psi_t:t\in[0,1]$ as in (i) such that $\io_t=\psi_t\ci\io$,
so that local implies global Hamiltonian equivalence. Thus, {\it for
embedded Lagrangians, global and local Hamiltonian equivalence is
the same}. But for immersed Lagrangians, local Hamiltonian
equivalence can slide sheets of $L$ over each other, change the
number of self-intersection points, and so on, but global
Hamiltonian equivalence cannot. Hence, {\it for immersed
Lagrangians, local Hamiltonian equivalence is weaker than global
Hamiltonian equivalence}.

Theorem \ref{il13thm3} shows that Floer cohomology over $\La_\nov$
has strong invariance properties under global Hamiltonian
equivalence. So it makes sense to ask:

\begin{quest} Does Floer cohomology $HF^*\bigl((L_0,b_0);
\La_\nov\bigr),HF^*\bigl((L_0,b_0),\ab(L_1,\ab b_1);\ab
\La_\nov\bigr)$ have any useful invariance properties under
(possibly restricted classes of) {\it local\/} Hamiltonian
equivalences of $\io_0:L_0\ra M$ and~$\io_1:L_1\ra M$?
\label{il13qn1}
\end{quest}

For arbitrary local Hamiltonian equivalences, the answer to this
must be no. The {\it Lagrangian $h$-principle}, due to Gromov
\cite[p.~60-61]{Grom} and Lees \cite{Lees}, states that two
Lagrangian immersions $\io_0:L\ra M$, $\io_1:L\ra M$ are homotopic
through (possibly {\it exact\/}) Lagrangian immersions $\io_t:L\ra
M$ for $t\in[0,1]$ if and only if $\io_0,\io_1$ are homotopic in a
weaker sense, that is, $(\io_0,\d\io_0),(\io_1,\d\io_1)$ should be
homotopic through pairs $(\io,\ti\io)$, where $\io:L\ra M$ is smooth
and $\ti\io:TL\ra TM$ is a bundle map covering $\io$ which embeds
$TL$ as a bundle of Lagrangian subspaces in~$TM$.

Thus, the Lagrangian $h$-principle implies that two immersed
Lagrangians are locally Hamiltonian equivalent (at least when either
$b^1(L)=0$, so that \cite[Th.~1]{Lees} applies, or $M=\C^n$, so that
\cite[p.~60-61]{Grom} applies, and probably more generally) if and
only if they are homotopic in a weak sense which can be well
understood using homotopy theory. But Floer cohomology detects
`quantum' information not visible to classical algebraic topology
--- this is its whole point. So arbitrary local Hamiltonian
equivalence is too coarse an equivalence relation to preserve Floer
cohomology.

However, it could still be true that Floer cohomology over
$\La_\nov$ is in some sense invariant under some special class of
local Hamiltonian equivalences more general than global Hamiltonian
equivalences. For example, in Theorem \ref{il13thm3}(c),
$\io_0\amalg\io_2:L_0\amalg L_2\ra M$ and
$\io_1\amalg\io_2:L_1\amalg L_2\ra M$ are immersed Lagrangians which
are locally Hamiltonian equivalent but generally not globally so ---
for instance, if $\bmd{\io_0(L_0)\cap\io_2(L_2)}\ne
\bmd{\io_1(L_1)\cap\io_2(L_2)}$ then $L_0\amalg L_2$ and $L_1\amalg
L_2$ have different numbers of self-intersection points, and cannot
be globally Hamiltonian equivalent. But \eq{il13eq6} and Theorem
\ref{il13thm3}(c) imply that there is a canonical isomorphism
\begin{equation*}
HF^*\bigl((L_0\amalg L_2,b_0\amalg b_2);\La_\nov\bigr)\cong
HF^*\bigl((L_1\amalg L_2,b_1\amalg b_2);\La_\nov\bigr).
\end{equation*}

Another possibility: in the Calabi--Yau, graded Lagrangian case,
Proposition \ref{il13prop1} suggests that only self-intersections
with $\eta_{(p_-,p_+)}=1$ or 2 are relevant to existence of bounding
cochains. So we could consider only local Hamiltonian equivalences
through immersions $\io_t:L\ra M$ which have no self-intersections
with $\eta_{(p_-,p_+)}=1$ or 2, and perhaps these will preserve
Floer cohomology over~$\La_\CY$.

We shall now describe a mechanism for how the moduli spaces of
bounding cochains $\M_{{\cal H},\n}$ can change under local
Hamiltonian equivalence.

\begin{ex} Suppose $(M,\om)$ is a compact symplectic $2n$-manifold,
$L$ is a compact $n$-manifold, and $\io_t:L\ra M$ for $t\in[0,1]$ is
a smooth family of Lagrangian immersions, which have only transverse
double self-intersections for all $t\in[0,1]$. This implies that the
number of self-intersections of $\io_t:L\ra M$ is independent of
$t$. Therefore we can choose a smooth family of diffeomorphisms
$\de_t:M\ra M$ with $\de_0=\id_M$, such that $\io_t=\de_t\ci\io_0$.
So $\de_t^{-1}$ identifies $(M,\om),\io_t:L\ra M$ with
$(M,\de_t^*(\om)),\io_0:L\ra M$. That is, we can work with a fixed
immersion $\io_0:L\ra M$, but a 1-parameter family of symplectic
forms $\de_t^*(\om)$ on $M$ for~$t\in[0,1]$.

Let $t>0$ be small. Then $\om$ and $\de_t^*(\om)$ are $C^0$ close as
2-forms on $M$. Dimension calculations show that we can choose an
almost complex structure $J_0$ on $M$ compatible with both $\om$ and
$\de_t^*(\om)$. Write $J_t=(\de_t)_*(J_0)$, so that $J_t$ is
compatible with $\om$ as $J_0$ is compatible with $\de_t^*(\om)$.
Then $\de_t$ identifies $M,\io_0:L\ra M,J_0$ with $M,\io_t:L\ra
M,J_t$. Thus, $\de_t$ takes $J_0$-holomorphic curves in $M$ with
boundary in $\io_0(L)$ to $J_t$-holomorphic curves in $M$ with
boundary in $\io_t(L)$. However, $\de_t$ need not preserve the areas
of the curves computed using~$\om$.

Let $(\Q\X_0\ot\La^0_\nov,\m^0)$, $({\cal H}_0\ot\La^0_\nov, \n^0)$
be the gapped filtered $A_\iy$ algebras in Theorem \ref{il11thm} and
Corollary \ref{il11cor}, associated to $(M,\om)$ and $\io_0:L\ra M$
with almost complex structure $J_0$. Let
$(\Q\X_t\ot\La^0_\nov,\m^t),({\cal H}_t\ot\La^0_\nov,\n^t)$ be the
corresponding gapped filtered $A_\iy$ algebras associated to
$(M,\om)$ and $\io_t:L\ra M$ with almost complex structure $J_t$,
where {\it the choices made to construct\/ $\X^t,\m^t,{\cal
H}_t,\n^t$ are the images under $\de_t$ of the choices made to
construct\/} $\X^0,\m^0,{\cal H}_0,\n^0$. That is, we have
$\X_t=\{\de_t\ci f:f\in\X_0\}$, and then $\de_t$ induces
isomorphisms of Kuranishi spaces
\begin{equation}
\oM_{k+1}(\al,\be,J_0,f_1,\ldots,f_k)\cong
\oM_{k+1}(\al,(\de_t)_*(\be),J_t,\de_t\ci f_1,\ldots,\de_t\ci f_k),
\label{il13eq12}
\end{equation}
and we choose all orientations and perturbation data compatible with
these.

The difference between $(\Q\X_0\ot\La^0_\nov,\m^0),({\cal
H}_0\ot\La^0_\nov,\n^0)$ and $(\Q\X_t\ot\La^0_\nov, \m^t)$, $({\cal
H}_t\ot\La^0_\nov,\n^t)$ is that $\de_t$ changes the areas of $J_0$-
and $J_t$-holomorphic curves, and this changes the coefficients
$\la$ in the multilinear maps $\m_k^{\smash{\la,\mu}},
\n_k^{\smash{\la,\mu}}$ which make up $\m^0,\ab\n^0,\ab\m^t,\ab
\n^t$. The changes in areas of curves can be expressed like this:
there exist constants $c_{(p_-,p_+)}\in\R$ for all $(p_-,p_+)\in R$,
with $c_{(p_-,p_+)}+c_{(p_+,p_-)}=0$, such that if
$\oM_{k+1}(\al,\be,J_0,f_1,\ldots,f_k)\ne\emptyset$ then
\begin{equation}
(\de_t)_*(\be)\cdot[\om]_{M,\io_t(L)}=\be\cdot[\om]_{M,\io_0(L)}+
\ts\sum_{i\in I}c_{\al(i)},
\label{il13eq13}
\end{equation}
where $[\om]_{M,\io_0(L)},[\om]_{M,\io_t(L)}$ are the classes of
$\om$ in~$H^2(M,\io_0(L);\R),H^2(M,\io_t(L);\R)$.

By \eq{il11eq4} we have ${\cal H}_0={\cal H}_t=H_*(L;\Q)\op
\bigop_{(p_-,p_+)\in R}\Q(p_-,p_+)$. Using similar ideas to Remark
\ref{il11rem}, define a $\La_\nov$-linear map $\hat\Xi_t:{\cal
H}_0\ot\La_\nov\ra{\cal H}_t\ot\La_\nov$ to be the identity on
$H_*(L;\Q)$ and to satisfy $\hat\Xi_t(p_-,p_+)=
T^{-c_{(p_-,p_+)}}(p_-,p_+)$, where $T$ is the formal variable in
$\La_\nov$ from \S\ref{il34}. Then using \eq{il13eq12} and
\eq{il13eq13} we see that $\m_k^t\bigl(\hat\Xi_t(h_1),\ldots,
\hat\Xi_t(h_k)\bigr)=\hat\Xi_t\ci\m_k^0(h_1,\ldots,h_k)$ for
all~$h_1,\ldots,h_k\in{\cal H}_0\ot\La_\nov$.

Thus, as in Remark \ref{il11rem}, it is {\it nearly} true that
setting $\bs\Xi_1=\hat\Xi_t$ and $\bs\Xi_k=0$ for $k\ne 1$ defines a
strict gapped filtered $A_\iy$ isomorphism $\bs\Xi:({\cal
H}_0\ot\La^0_\nov,\n^0)\ra({\cal H}_t\ot\La^0_\nov,\n^t)$. The
problem is that if $c_{(p_-,p_+)}>0$ for some $(p_-,p_+)\in R$ then
$(p_-,p_+)\in{\cal H}_0\ot\La^0_\nov$ but
$\hat\Xi_t(p_-,p_+)=T^{-c_{(p_-,p_+)}}(p_-,p_+)\notin{\cal
H}_t\ot\La^0_\nov$, so $\hat\Xi_t$ does not map ${\cal
H}_0\ot\La_\nov^0\ra{\cal H}_t\ot\La_\nov^0\subset{\cal
H}_t\ot\La_\nov$.

However, if $b\in{\cal H}_0\ot\La_\nov^0$ is a bounding cochain for
$({\cal H}_0\ot\La^0_\nov,\n^0)$, and $\hat\Xi_t(b)$ lies in
$F^\la({\cal H}_t\ot\La_\nov)$ for some $\la>0$, then $\hat\Xi_t(b)$
is a bounding cochain for $({\cal H}_t\ot\La^0_\nov,\n^t)$. Also
$\hat\Xi_t$ is an isomorphism of complexes $({\cal
H}_0\ot\La_\nov,\n_1^{0,b})\ra({\cal H}_t\ot\La_\nov,\n_1^{t,b})$,
and so induces an isomorphism of Floer cohomology over $\La_\nov$
(though not over $\La_\nov^0$):
\begin{equation*}
(\hat\Xi_t)_*:HF^*(\io_0:L\ra M,b;\La_\nov)\longra HF^*(\io_t:L\ra
M,\hat\Xi_t(b);\La_\nov).
\end{equation*}

We have discovered a kind of {\it wall-crossing phenomenon}. When
$t\in[0,\ep)$ for some $\ep>0$ we can map bounding cochains $b$ for
$\io_0:L\ra M$ to bounding cochains $\hat\Xi_t(b)$ for $\io_t:L\ra
M$, and this map induces canonical isomorphisms on Lagrangian Floer
cohomology. We have $\hat\Xi_t(b)\in F^{\la(t)}({\cal
H}_t\ot\La_\nov)$, where we take $\la(t)$ as large as possible. For
$\hat\Xi_t(b)$ to be a bounding cochain we need $\la(t)>0$. However,
it may happen that at $t=\ep$ we have $\la(\ep)=0$, and for $t>\ep$
we have $\la(t)>0$. Then at $t=\ep$ we cross a `wall' where the
bounding cochain for $\io_0:L\ra M$ no longer corresponds to any
bounding cochain for $\io_t:L\ra M$ when~$t\ge\ep$.
\label{il13ex2}
\end{ex}

This example suggests the following conjectural picture:

\begin{conj} Suppose that\/ $(M,\om)$ is a compact symplectic
manifold, and that\/ $\io_t:L\ra M$ for $t\in[0,1]$ is a smooth\/
$1$-parameter family of compact Lagrangian immersions satisfying the
exactness condition of Definition {\rm\ref{il13dfn3}(ii)}. Let\/
$S\subseteq[0,1]$ be the open subset of\/ $t\in[0,1]$ for which\/
$\io_t:L\ra M$ has only transverse double self-intersections.
Suppose for simplicity that\/ $L$ is oriented and spin; this induces
relative spin structures for $\io_t:L\ra M$ for all\/ $t\in[0,1],$
as in {\rm\S\ref{il51}}. Then for all\/ $t\in S,$ we have the moduli
space of bounding cochains $\M_{{\cal H}_t,\n^t}$ for $\io_t:L\ra
M,$ which is independent of choices up to canonical bijection by
Theorem~{\rm\ref{il13thm1}}.

We conjecture that for all $s,t\in S$ there should exist open
subsets $O_{s,t}\subseteq\M_{{\cal H}_s,\n^s}$ and homeomorphisms
$\Phi_{s,t}:O_{s,t}\ra O_{t,s}$ with\/ $\Phi_{t,s}=\Phi_{s,t}^{-1},$
and whenever $G_{{\cal H}_s,\n^s}\cdot b_s\in O_{s,t},$ $G_{{\cal
H}_t,\n^t}\cdot b_t\in O_{t,s}$ with\/ $\Phi_{s,t}(G_{{\cal
H}_s,\n^s}\cdot b_s)=G_{{\cal H}_t,\n^t}\cdot b_t,$ there should
exist canonical isomorphisms
\begin{gather*}
HF^*(\io_s:L\ra M,b_s;\La_\nov)\cong HF^*(\io_t:L\ra
M,b_t;\La_\nov),\\
HF^*\bigl((\io_s:L\ra M,b_s),(L',b');\La_\nov\bigr)\cong
HF^*\bigl((\io_t:L\ra M,b_t),(L',b');\La_\nov\bigr),
\end{gather*}
for any compact immersed Lagrangian $\io':L'\ra M$ with transverse
double self-intersections intersecting $\io_s(L),\io_t(L)$
transversely, and bounding cochain~$b'$.

Furthermore, for any $G_{{\cal H}_s,\n^s}\cdot b_s\in\M_{{\cal
H}_s,\n^s}$ the set\/ $T_s=\{t\in S:G_{{\cal H}_s,\n^s}\cdot b_s\in
O_{s,t}\}$ is an open subset of\/ $S$ containing $s,$ and at the
boundary of\/ $T_s$ in $S,$ a wall-crossing phenomenon like that in
Example {\rm\ref{il13ex2}} occurs.
\label{il13conj1}
\end{conj}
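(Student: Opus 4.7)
The plan is to bootstrap the construction sketched in Example \ref{il13ex2} from infinitesimal local deformations to a global system of partial maps on moduli spaces of bounding cochains, using Theorem \ref{il11thm} to get canonical objects and Theorem \ref{il13thm3} as the template for the Floer-cohomology consequence. Fix a connected component $S^0 \subseteq S$ and parameters $s,t \in S^0$. Since along $S^0$ the number of double self-intersections is locally constant and they depend smoothly on the parameter, we can find a smooth family of diffeomorphisms $\delta_\tau : M \to M$ for $\tau$ in a neighbourhood of $[s,t]$ in $S^0$, with $\delta_s = \mathrm{id}_M$ and $\iota_\tau = \delta_\tau \circ \iota_s$, together with a matching smooth family $J_\tau = (\delta_\tau)_* J_s$ of almost complex structures. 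Using these as the choices in the construction of \S\ref{il11}, the isomorphisms of Kuranishi spaces \eqref{il13eq12} let us identify $\Q\X_s$ with $\Q\X_t$, and hence $\mathcal{H}_s$ with $\mathcal{H}_t$, as ungraded vector spaces.

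Next I would compute how areas shift. The exactness hypothesis on $d\iota_\tau^*\bigl(\tfrac{d\iota_\tau}{d\tau}\cdot \iota_\tau^*(\omega)\bigr)$ means that for each pair $(p_-,p_+)$ in the self-intersection set there is a well-defined real number
\[
c^{s,t}_{(p_-,p_+)} = \int_s^t \Bigl[\psi_\tau(p_-^\tau)-\psi_\tau(p_+^\tau)\Bigr]\,d\tau,
\]
where $\psi_\tau$ is a primitive of the closed $1$-form above and $p_\pm^\tau$ are the smoothly-varying preimages of the moving self-intersection. Exactness guarantees that $\psi_\tau$ depends only on the endpoints, so $c^{s,t}$ is path-independent and satisfies $c^{s,t}_{(p_-,p_+)}+c^{s,t}_{(p_+,p_-)}=0$ and the cocycle condition $c^{s,t}+c^{t,u}=c^{s,u}$. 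A Stokes-type argument using \eqref{il13eq13} then shows that for every $J_s$-holomorphic disc class $\beta$ with corners at self-intersections labelled by $\alpha : I \to R$, the area shift under $\delta_t$ is exactly $\sum_{i \in I} c^{s,t}_{\alpha(i)}$. Define a $\Lambda_{\mathrm{nov}}$-linear map $\hat\Xi_{s,t}:\mathcal{H}_s\otimes\Lambda_{\mathrm{nov}}\to\mathcal{H}_t\otimes\Lambda_{\mathrm{nov}}$ by the identity on $H_*(L;\Q)$ and $\hat\Xi_{s,t}(p_-,p_+) = T^{-c^{s,t}_{(p_-,p_+)}}(p_-,p_+)$. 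The area-shift formula combined with the commutativity argument of Example \ref{il13ex2} then gives
\[
\mathfrak{n}^t_k\bigl(\hat\Xi_{s,t}(h_1),\ldots,\hat\Xi_{s,t}(h_k)\bigr) = \hat\Xi_{s,t}\circ\mathfrak{n}^s_k(h_1,\ldots,h_k),
\]
for all $k$ and all $h_i \in \mathcal{H}_s\otimes\Lambda_{\mathrm{nov}}$, so $\hat\Xi_{s,t}$ is a strict $\Lambda_{\mathrm{nov}}$-linear intertwiner between the two filtered $A_\infty$ structures, albeit possibly shifting filtration degrees down by some $c^{s,t}_{(p_-,p_+)}>0$.

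Now define $\widehat O_{s,t}\subseteq\hat{\mathcal{M}}_{\mathcal{H}_s,\mathfrak{n}^s}$ to consist of those bounding cochains $b$ with $\hat\Xi_{s,t}(b) \in F^\lambda(\mathcal{H}_t\otimes\Lambda_{\mathrm{nov}})$ for some $\lambda>0$, which is manifestly open in the filtration topology, and let $O_{s,t}$ be its image in $\mathcal{M}_{\mathcal{H}_s,\mathfrak{n}^s}$. On $\widehat O_{s,t}$, the intertwining identity shows $\hat\Xi_{s,t}$ carries bounding cochains to bounding cochains. Using Theorem \ref{il11thm}(a) to pass from the particular synchronized construction to the canonical-up-to-homotopy models, $\hat\Xi_{s,t}$ descends to a well-defined $\Phi_{s,t}: O_{s,t} \to O_{t,s}$; cocycle compatibility $c^{s,t}+c^{t,u}=c^{s,u}$ plus part (b) of Theorem \ref{il11thm} give $\Phi_{t,u}\circ\Phi_{s,t}=\Phi_{s,u}$ on the common domain, and in particular $\Phi_{t,s}=\Phi_{s,t}^{-1}$. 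For pairs $s,t \in S$ not both in one component, I would connect them through a chain of overlapping local pieces in $S$ and define $\Phi_{s,t}$ as the (path-dependent-looking but actually path-independent, by the cocycle condition) composition. The Floer cohomology isomorphisms follow from $\hat\Xi_{s,t}$ being a chain isomorphism $(\mathcal{H}_s\otimes\Lambda_{\mathrm{nov}},\mathfrak{n}^{s,b_s}_1)\to(\mathcal{H}_t\otimes\Lambda_{\mathrm{nov}},\mathfrak{n}^{t,\hat\Xi_{s,t}(b_s)}_1)$, while the two-Lagrangian version is obtained by applying the single-Lagrangian statement to the disjoint union $L \amalg L'$ just as \eqref{il13eq6} was used to recover Theorem \ref{il13thm3}(c) from (b). The set $T_s = \{t \in S : [b_s]\in O_{s,t}\}$ is then open by continuity of $t\mapsto c^{s,t}_{(p_-,p_+)}$, contains $s$ because $c^{s,s}=0$, and at its boundary one of the exponents $-c^{s,t}_{(p_-,p_+)}$ passes through zero, which is exactly the wall-crossing of Example \ref{il13ex2}.

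The main obstacle, in my view, is not any single step above but proving that $\Phi_{s,t}$ is truly independent of all the intermediate choices (family $\delta_\tau$, almost complex structures $J_\tau$, chain models, perturbation data) in a way compatible with the canonical bijections of Theorem \ref{il13thm1}. Concretely, if two different smoothly-varying choices produce a priori different strict $A_\infty$ intertwiners $\hat\Xi_{s,t}, \hat\Xi'_{s,t}$, one must exhibit a filtered $A_\infty$ homotopy between them compatible with the wall-crossing filtration shifts, analogous to (but more delicate than) the uniqueness-up-to-homotopy arguments of \S\ref{il10}; the delicate point is that $\hat\Xi_{s,t}$ is not honestly a gapped filtered $A_\infty$ isomorphism in the sense of Definition \ref{il3dfn11} when some $c^{s,t}_{(p_-,p_+)}>0$, so the homotopy-theoretic machinery of \S\ref{il3} does not apply verbatim and will have to be adapted to allow strict morphisms with controlled negative filtration shifts, paralleling the difficulty already flagged in Remark \ref{il11rem}.
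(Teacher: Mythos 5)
This statement is labelled a \emph{conjecture} in the paper: \S\ref{il134} offers only the motivating mechanism of Example \ref{il13ex2}, not a proof, so there is no proof in the paper for your proposal to be measured against. Judged on its own terms, your proposal is essentially a globalisation of Example \ref{il13ex2}, and it does not close the gaps that make the statement conjectural — indeed you name the decisive one yourself in your last paragraph, which means what you have is a programme, not a proof.

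Concretely, two steps would fail as written. First, the identification in Example \ref{il13ex2} of the $A_\iy$ structures for $\io_s$ and $\io_t$ requires a single almost complex structure compatible with both $\om$ and $\de_t^*(\om)$, which is only available when the two forms are $C^0$ close, i.e.\ for $t$ near $s$; for general $s,t$ in a component of $S$ you must chain many local intertwiners, and controlling their compositions is exactly where the argument needs homotopy-theoretic input that is not available. Second, and more fundamentally, the maps $\hat\Xi_{s,t}$ are not gapped filtered $A_\iy$ morphisms in the sense of Definition \ref{il3dfn11} whenever some $c^{s,t}_{(p_-,p_+)}>0$ (this is precisely the obstruction flagged in Remark \ref{il11rem}), so Theorem \ref{il11thm}(a),(b), Theorem \ref{il3thm3} and the uniqueness-up-to-homotopy arguments of \S\ref{il8}--\S\ref{il10} cannot be invoked, as your proposal does, to show that $\Phi_{s,t}$ descends to the canonical moduli spaces $\M_{{\cal H}_s,\n^s}$, is independent of the choices of $\de_\tau$, $J_\tau$, chain models and perturbation data, and satisfies the cocycle identity giving $\Phi_{t,s}=\Phi_{s,t}^{-1}$. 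That independence-of-choices statement is the real content of the conjecture; without an extension of the filtered $A_\iy$ homotopy theory that accommodates strict morphisms with negative filtration shifts, the ``canonical'' isomorphisms on $HF^*(\cdot;\La_\nov)$ are not established. (Smaller issues: the constants $c^{s,t}_{(p_-,p_+)}$ defined via a primitive are only well defined per connected component of $L$, and the exact area-shift formula for all disc classes, your analogue of \eq{il13eq13}, is asserted by a ``Stokes-type argument'' but \eq{il13eq13} is itself only asserted, for small $t$, in Example \ref{il13ex2}.)
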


\subsection{Immersed Lagrangians and embedded Legendrians}
\label{il135}

We now develop the ideas of \S\ref{il134} further in the context of
{\it contact geometry} and {\it Legendrian submanifolds}. Let
$(M,\om)$ be a compact symplectic $2n$-manifold, and suppose
$[\om]\in H^2(M;\R)$ lies in the image of $H^2(M;\Z)\ra H^2(M;\R)$.
Then there exists a principal $\U(1)$-bundle $P\ra M$ with first
Chern class $c_1(P)=2\pi[\om]$, and a connection $A$ on $P$ with
curvature $2\pi\om$. Write the $\U(1)$ action on $P$ as $({\rm
e}^{\sqrt{-1}\,\th},p) \mapsto {\rm e}^{\sqrt{-1}\,\th}\cdot p$, and
let $v\in C^\iy(TP)$ be the vector field of the $\U(1)$-action, so
that ${\rm e}^{\sqrt{-1}\,\th}$ acts as $\exp(\th v):P\ra P$. Write
$\pi:P\ra M$ for the natural projection whose fibres are
$\U(1)$-orbits $\U(1)\cdot p$ for $p\in P$. Let $\ga$ be the 1-form
of the connection on $P$, so that $\ga\in C^\iy(T^*P)$ is
$\U(1)$-invariant with $v\cdot\ga\equiv 1$
and~$\d\ga\equiv\pi^*(2\pi\om)$.

Then $P$ has the structure of a {\it contact\/ $(2n+1)$-manifold},
with contact 1-form $\ga$ and Reeb vector field $v$. An immersed
$n$-manifold $\ti\io:L\ra P$ is called a {\it Legendrian
submanifold\/} if $\ti\io^*(\ga)\equiv 0$. If $\ti\io:L\ra P$ is
Legendrian then $\pi\ci\ti\io:L\ra P$ is a Lagrangian immersion.
Conversely, if $\io:L\ra M$ is a Lagrangian immersion, then
$\io^*(P)\ra L$ is a $\U(1)$-bundle with a flat $\U(1)$-connection,
and there exists a Legendrian immersion $\ti\io:L\ra P$ with
$\io=\pi\ci\ti\io$ if and only if this flat $\U(1)$-connection has a
constant section, that is, if it is trivial. Since flat
$\U(1)$-connections are classified by morphisms $H^1(L;\Z)\ra\U(1)$,
a sufficient condition for an immersed Lagrangian $\io:L\ra M$ to
lift to an immersed Legendrian $\ti\io:L\ra P$ is
that~$H^1(L;\Z)=\{0\}$.

If $\ti\io:L\ra P$ is an embedding we identify $L$ with
$\ti\io(L)\subset P$ and regard $L$ as a subset of $P$, with
$\ga\vert_L\equiv 0$. Generic Legendrians in $P$ are embedded. If
$L\subset P$ is an embedded Legendrian then $\pi=\pi\vert_L:L\ra M$
is an immersed Lagrangian, which in general is {\it not\/} embedded.

We call two Legendrian immersions $\ti\io:L\ra P$, $\ti\io':L'\ra P$
{\it immersed Legendrian isotopic\/} if there exists a
diffeomorphism $h:L\ra L'$ and a smooth 1-parameter family
$\ti\io_t:t\in[0,1]$ of Legendrian immersions $\ti\io_t:L\ra P$,
such that $\ti\io_0=\ti\io$ and $\ti\io_1=\ti\io'\ci h$. If
$\ti\io,\ti\io'$ are embeddings, we call $\ti\io:L\ra P$,
$\ti\io':L'\ra P$ {\it embedded Legendrian isotopic\/} if there
exist $\ti\io_t:t\in[0,1]$ as above with each $\ti\io_t:L\ra P$ an
embedding. Clearly, embedded Legendrian isotopic implies immersed
Legendrian isotopic.

If $\ti\io:L\ra P$, $\ti\io':L'\ra P$ are Legendrian immersions and
$h:L\ra L'$, $\ti\io_t:t\in[0,1]$ is an immersed Legendrian isotopy
between them, then $\pi\ci\ti\io:L\ra M$, $\pi\ci\ti\io':L'\ra M$
are Lagrangian immersions, and $h:L\ra L'$, $\pi\ci\ti\io_t:
t\in[0,1]$ is a local Hamiltonian equivalence between them, in the
sense of Definition \ref{il13dfn3}(ii). Conversely, if $\io:L\ra M$,
$\io':L'\ra M$ are locally Hamiltonian equivalent Lagrangian
immersions, then there exists a Legendrian lift $\ti\io:L\ra P$ with
$\io\equiv\pi\ci\ti\io$ if and only if there exists a Legendrian
lift $\ti\io':L'\ra P$ with $\io'\equiv\pi\ci\ti\io'$, and then
$h,\io_t:t\in[0,1]$ in Definition \ref{il13dfn3}(ii) lift to an
immersed Legendrian isotopy $h,\ti\io_t:t\in[0,1]$ between
$\ti\io:L\ra P$ and $\ti\io':L'\ra P$. So {\it local Hamiltonian
equivalence in $M$ corresponds exactly to immersed Legendrian
isotopy in}~$P$.

Now embedded Legendrian isotopies are a special class of immersed
Legendrian isotopies, and so project to a special class of local
Hamiltonian equivalences. Question \ref{il13qn1} asked whether Floer
cohomology is invariant under any special classes of local
Hamiltonian equivalences. So it makes sense to ask:

\begin{quest} In the situation above, let $L_0,L_1\subset P$ be
compact embedded Legendrians. Suppose that the Lagrangian immersions
$\pi:L_0\ra M$, $\pi:L_1\ra M$ have only transverse double
self-intersections. Is Floer cohomology $HF^*\bigl((\pi:L_0\ra
M,b_0);\La_\nov\bigr),HF^*\bigl((\pi:L_0\ra M,b_0),\ab(\pi:L_1\ra
M,b_1);\ab \La_\nov\bigr)$ preserved under embedded Legendrian
isotopies of~$L_0,L_1$?
\label{il13qn2}
\end{quest}

The authors expect the problem to be better behaved if we work over
a smaller Novikov ring $\La_\nov^{\Z}$. Suppose $L\subset P$ is a
compact embedded Legendrian, and $\pi:L\ra M$ has only transverse
double points. Define $R$ as in \S\ref{il41}. If $(p_-,p_+)\in R$
then $p_-,p_+\in L$ with $p_-\ne p_+$ and $\pi(p_-)=\pi(p_+)$ in
$M$. Thus $p_-,p_+$ are distinct points in the same $\U(1)$-orbit,
and $p_+={\rm e}^{\sqrt{-1}\,\th}\cdot p_-$ for some unique
$\th\in(0,2\pi)$. Define $a_{(p_-,p_+)}=\frac{\th}{2\pi}$. Then
$a_{(p_-,p_+)}\in(0,1)$, and~$a_{(p_-,p_+)}+a_{(p_+,p_-)}=1$.

The areas of $J$-holomorphic curves in $M$ with boundaries in
$\pi(L)$ have an integrality property involving the $a_{(p_-,p_+)}$
for $(p_-,p_+)\in R$. We can express it like this: if
$\oM_{k+1}(\al,\be,J)\ne\emptyset$ and $[\om]_{M,\pi(L)}$ is the
class of $\om$ in $H^2(M,\pi(L);\R)$ then
\begin{equation}
\be\cdot[\om]_{M,\pi(L)}-\ts\sum_{i\in I}a_{\al(i)}\in\Z.
\label{il13eq14}
\end{equation}
To prove \eq{il13eq14}, suppose $[\Si,\vec
z,u,l,\bar{u}]\in\oM_{k+1}(\al,\be,J)$, and for simplicity take
$\Si\cong D^2$ nonsingular. Then $\bar u:{\cal S}^1\sm\{\ze_i:i\in
I\}\ra L$ is smooth, with $(\lim_{\th\uparrow
0}\bar{u}(e^{\sqrt{-1}\,\th}\ze_i), \ab\lim_{\th\downarrow
0}\bar{u}(e^{\sqrt{-1}\,\th}\ze_i))=\al(i)$ in $R$, for all~$i\in
I$.

Modify this $\bar u$ to a piecewise smooth map $\ti u:{\cal S}^1\ra
P$ by inserting at each $\ze_i$ for $i\in I$, the line segment
$[0,2\pi a_{(p_-,p_+)}]\ra P$ mapping $\th\mapsto{\rm
e}^{\sqrt{-1}\,\th} \cdot p_-$, where $\al(i)=(p_-,p_+)$. Then
$\int_{{\cal S}^1}\ti u^*(\ga)=2\pi\sum_{i\in I}a_{\al(i)}$, since
$\ga\vert_L\equiv 0$ and $v\cdot\ga\equiv 1$. Now consider the
$\U(1)$-bundle $u^*(P)\ra\Si$. It has a connection $u^*(\ga)$ with
curvature $2\pi u^*(\om)$, and we have in effect constructed a
section $\ti u$ of $u^*(P)\vert_{\pd\Si}$ with $\int_{\pd\Si}\ti
u^*(\ga)=2\pi\sum_{i\in I}a_{\al(i)}$. But $\int_\Si 2\pi\,
u^*(\om)=\int_{\pd\Si}\ti u^*(\ga)+2\pi c_1\bigl(u^*(P);\ti
u\bigr)$, where $c_1\bigl(u^*(P);\ti u\bigr)\in\Z\cong
H^2(\Si,\pd\Si;\Z)$ is the first Chern class of the $\U(1)$-bundle
$u^*(P)\ra\Si$ relative to the trivialization of
$u^*(P)\vert_{\pd\Si}$ induced by $\ti u$. Putting all this together
gives~\eq{il13eq14}.

By analogy with \eq{il3eq6}--\eq{il3eq9}, define {\it Novikov rings}
\begin{align*}
\La_\nov^\Z&=\bigl\{\ts\sum_{i=0}^\iy a_iT^{\la_i}e^{\mu_i}:
\text{$a_i\in\Q$, $\la_i\in\Z$, $\mu_i\in\Z$, $\lim_{i\ra\iy}\la_i
=\iy$}\bigr\}, \\
\La^\N_\nov&=\bigl\{\ts\sum_{i=0}^\iy
a_iT^{\la_i}e^{\mu_i}: \text{$a_i\in\Q$, $\la_i\in\N$, $\mu_i\in\Z$,
$\lim_{i\ra\iy}\la_i =\iy$}\bigr\},\\
\La^\Z_\CY&=\bigl\{\ts\sum_{i=0}^\iy a_iT^{\la_i}: \text{$a_i\in\Q$,
$\la_i\in\Z$, $\lim_{i\ra\iy}\la_i =\iy$}\bigr\},\\
\La^\N_\CY&=\bigl\{\ts\sum_{i=0}^\iy a_iT^{\la_i}: \text{$a_i\in\Q$,
$\la_i\in\N$, $\lim_{i\ra\iy}\la_i =\iy$}\bigr\},
\end{align*}
where $\N=\{0,1,2,\ldots\}\subset\Z$. Then in the situation of
\S\ref{il11}, having constructed $\X$ define $\widetilde{\Q\X}$ to
be the $\Q$-vector space with basis $f$ for $f\in\X$ with
$f:\De_a\ra L$, and $T^{a_{(p_-,p_+)}}f$ for $f\in\X$ with
$f:\De_a\ra\{(p_-,p_+)\}\subset R$. Similarly, modifying
\eq{il11eq4}, define a $\Q$-vector space $\ti{\cal
H}=\bigop_{d\in\Z}\ti{\cal H}^d$ by
\begin{equation*}
\ti{\cal H}^d=H_{n-d-1}(L;\Q)\op\bigop\nolimits_{\begin{subarray}{l}
(p_-,p_+)\in R:\\ d=\eta_{(p_-,p_+)}-1\end{subarray}}\Q\cdot
T^{a_{(p_-,p_+)}}(p_-,p_+).
\end{equation*}

We can then go through \S\ref{il7}--\S\ref{il13} using
$\La_\nov^\Z,\La_\nov^\N,\La_\CY^\Z,\La_\CY^\N$ in place of
$\La_\nov,\ab\La_\nov^0,\ab\La_\CY,\ab\La_\CY^0$, and
$\widetilde{\Q\X}\ot\La_\nov^\N,\ti{\cal H}\ot\La_\nov^\N, \ti{\cal
H}\ot\La_\nov^\Z,\widetilde{\Q\X}\ot\La_\CY^\N, \ti{\cal
H}\ot\La_\CY^\N,\ti{\cal H}\ot\La_\CY^\Z$ in place of
$\Q\X\ot\La_\nov^0,{\cal H}\ot\La_\nov^0,{\cal
H}\ot\La_\nov,\Q\X\ot\La_\CY^0,{\cal H}\ot\La_\CY^0, {\cal
H}\ot\La_\CY$, respectively. The integrality condition \eq{il13eq14}
and the definitions of $\widetilde{\Q\X},\ti{\cal H}$ ensure we can
choose $\m_k$ to map $\bigl(\widetilde{\Q\X}\ot
\La_\nov^\N\bigr){}^k\ra\widetilde{\Q\X}\ot\La_\nov^\N$, and
similarly for $\n_k$. That is, only powers $T^l$ or
$T^{l+a_{(p_-,p_+)}}$ for $l\in\N$ and $(p_-,p_+)\in R$ occur in
$\widetilde{\Q\X}\ot\La_\nov^\N$, and in the terms $T^\la
e^\mu\m_k^{\smash{\la,\mu}}$ in $\m_k$, the only allowed values for
$\la\in\R$ are those which take possible total powers of $T$ in
$\bigl(\widetilde{\Q\X}\ot\La_\nov^\N\bigr){}^k$ to possible powers
of $T$ in~$\widetilde{\Q\X}\ot\La_\nov^\N$.

Thus, in \S\ref{il11} we construct gapped filtered $A_\iy$ algebras
$(\widetilde{\Q\X}\ot\La_\nov^\N,\m)$ and $(\ti{\cal H}\ot
\La_\nov^\N,\n)$ over $\La_\nov^\N$, and in the graded case of
\S\ref{il12} we construct $(\widetilde{\Q\X}\ot\La_\CY^\N,\m)$ and
$(\ti{\cal H}\ot\La_\CY^\N,\n)$ over $\La_\CY^\N$. Then as in
\S\ref{il131}--\S\ref{il133}, we define Lagrangian Floer cohomology
$HF^*\bigl((L,b);\La_\nov^\N\bigr),HF^*\bigl((L,b);\La_\nov^\Z\bigr)$,
over $\La_\nov^\N$ or $\La_\nov^\Z$, and similarly for two
Lagrangians, and for graded Lagrangians over
$\La_\CY^\N,\La_\CY^\Z$. Several of the definitions of gapped
filtered $A_\iy$ algebras, morphisms, etc.\ require minor
modification to allow for inclusion of factors $T^{a_{(p_-,p_+)}}$
in~$\widetilde{\Q\X},\ti{\cal H}$.

We can now make our most important point. Consider the wall-crossing
phenomenon described in Example \ref{il13ex2}. This occurs when, for
a family of immersed Lagrangians $\io_t:L\ra M$ for $t\in[0,1]$, we
have a family of bounding cochains $b_t\in F^{\la(t)}({\cal
H}_t\ot\La_\nov)$, where $\la(t)>0$ is necessary for $b_t$ to be a
bounding cochain. If $\la(\ep)=0$ then at $t=\ep$ we cross a `wall'
where $b_t$ ceases to be a bounding cochain.

Now if $\io_t=\pi\ci\ti\io_t$ for a smooth family of Legendrian
embeddings $\ti\io_t:L\ra M$, then the only allowed powers of $T$ in
bounding cochains $b(t)$ are $T^l$ for $l=1,2,\ldots$ and
$T^{l+a_{(p_-,p_+)}(t)}$ for $l=0,1,\ldots$, where
$a_{(p_-,p_+)}(t)\in(0,1)$. Thus, the leading power of $T$ in $b_t$
could only deform continuously to zero at $t=\ep$ if
$a_{(p_-,p_+)}(t)\ra 0$ as $t\ra\ep$. But $a_{(p_-,p_+)}(\ep)=0$
implies that $\ti\io_\ep(p_-)=\ti\io_\ep(p_+)$, that is,
$\ti\io_\ep:L\ra P$ is an immersion, but not an embedding.

This shows that {\it the wall-crossing phenomenon in Example
\ref{il13ex2} cannot happen for bounding cochains for $(\ti{\cal
H}\ot \La_\nov^\N,\n)$ under embedded Legendrian isotopy}. If, as
Conjecture \ref{il13conj1} claims, this is the only mechanism by
which Floer cohomology changes under local Hamiltonian equivalence,
then Floer cohomology over $\La_\nov^\Z$ should be unchanged under
embedded Lagrangian isotopy. So we conjecture:

\begin{conj} In the situation above, suppose that\/ $\ti\io_t:L\ra P$
for $t\in[0,1]$ is a smooth\/ $1$-parameter family of Legendrian
embeddings with\/ $L$ compact, oriented, and spin, and that\/
$\pi\ci\ti\io_0:L\ra M$ and\/ $\pi\ci\ti\io_1:L\ra M$ have only
transverse double self-intersections. Then there should exist a
canonical bijection $\Psi:\M_{{\cal H}_0,\n^0}\ra\M_{{\cal
H}_1,\n^1}$ between the moduli spaces of bounding cochains for
$\pi\ci\ti\io_0:L\ra M$ and\/ $\pi\ci\ti\io_1:L\ra M$. Let\/
$b_0\in\hM_{{\cal H}_0,\n^0}$ and\/ $b_1\in\hM_{{\cal H}_1,\n^1}$
with\/ $\Psi(G_{{\cal H}_0,\n^0}\cdot b_0)=G_{{\cal H}_1,\n^1}\cdot
b_1,$ and suppose $L_2$ is a compact embedded Legendrian in $P,$
such that\/ $\pi:L_2\ra M$ has only transverse double
self-intersections, and\/ $b_2$ is a bounding cochain for
$\pi:L_2\ra M$. Then there are canonical isomorphisms
\begin{gather*}
HF^*\bigl((\pi\ci\ti\io_0:L\ra M,b_0);\La_\nov^\Z\bigr)\cong
HF^*\bigl((\pi\ci\ti\io_1:L\ra M,b_1);\La_\nov^\Z\bigr),\\
HF^*\bigl((\pi\ci\ti\io_0:L\ra M,b_0),(\pi:L_2\ra
M,b_2);\La_\nov^\Z\bigr)\cong \\
HF^*\bigl((\pi\ci\ti\io_1:L\ra M,b_1),(\pi:L_2\ra
M,b_2);\La_\nov^\Z\bigr).
\end{gather*}
\label{il13conj2}
\end{conj}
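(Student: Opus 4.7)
The plan is to run the construction of \S\ref{il8}--\S\ref{il11} along the Legendrian family $\ti\io_t$, using the integrality relation \eq{il13eq14} to stay within $\La_\nov^\Z$ rather than $\La_\nov$. The key principle is that for $\io_t=\pi\ci\ti\io_t$ with self-intersection pairs varying smoothly, the quantity $\be_t\cdot[\om]_{M,\io_t(L)}-\sum_{i\in I}a_{\al(i)}(t)$ takes integer values by \eq{il13eq14}, continuously in $t$, hence is locally constant; so the symplectic area of a disc class varies linearly with $t$, exactly compensating the variation of the $a_{\al(i)}(t)$. Once these are absorbed into the basis $T^{a_{(p_-,p_+)}(t)}(p_-,p_+)$ of $\widetilde{\Q\X}_t$ and $\ti{\cal H}_t$, all $A_\iy$ structure maps lie in $\La_\nov^\N$ and parallel-transport isomorphisms between neighbouring times lie in $\La_\nov^\Z$. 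First I would perturb $\ti\io_t$ within its embedded Legendrian isotopy class so that, outside a finite set of times $0<t_1<\cdots<t_m<1$, the projection $\io_t$ has only transverse double self-intersections, and at each $t_j$ exactly one generic codimension-one degeneration occurs --- either a tangential self-intersection (birth or death of a pair of transverse double points) or a triple point; further transversality eliminates triple points, leaving tangential birth/death as the essential event.

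On each closed interval $I\subseteq[0,1]$ disjoint from $\{t_1,\ldots,t_m\}$, I would construct parametrized moduli spaces $\oM_{k+1}^\ma(\al,\be,J_t:t\in I,f_1,\ldots,f_k)$ as in \S\ref{il45} for a chosen family $J_t$ adapted to $\io_t$, and replicate the machinery of \S\ref{il8}--\S\ref{il9} with $I$ in place of $[0,1]$. The integrality observation forces every $T$-exponent appearing to lie in $\N$ after absorption into the shifted basis, so the resulting strict surjective $A_{N,0}$ morphisms from the $I$-family algebra to the endpoint algebras $(\ti{\cal H}_s\ot\La_\nov^\N,\n^s)$ and $(\ti{\cal H}_t\ot\La_\nov^\N,\n^t)$ are both weak homotopy equivalences over $\La_\nov^\N$. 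Corollary \ref{il9cor} then yields an $A_{N,0}$ homotopy equivalence between the endpoints, and Theorems \ref{il10thm2} and \ref{il11thm} extend verbatim to give a gapped filtered $A_\iy$ equivalence canonical up to homotopy; localizing to $\La_\nov^\Z$ preserves this.

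Across a tangential birth at $t_j$, two new transverse pairs $(p_-^1(t),p_+^1(t))$ and $(p_-^2(t),p_+^2(t))$ appear (or disappear), both converging as $t\to t_j$ to a single tangential pair $(p_-^0,p_+^0)\in L\t L$ with coincident tangent Lagrangians in $M$ but distinct Legendrian lifts in $P$; correspondingly four new basis elements appear in $\ti{\cal H}_{t_j+\ep}$, each with $T$-weight near $a_{(p_-^0,p_+^0)}(t_j)\in(0,1)$, bounded \emph{away} from zero. Thus no Novikov pole develops and the complex genuinely lives in $\La_\nov^\Z$. The statement to prove is that the four new generators split into two acyclic Koszul pairs in $(\ti{\cal H}_{t_j+\ep}\ot\La_\nov^\Z,\n^{t_j+\ep}_1)$: the local Floer differential near $t_j$ counts small discs pairing $(p_-^1,p_+^1)$ with $(p_-^2,p_+^2)$ (and likewise the reversed pair) with leading coefficient $\pm 1$ and $T$-exponent $|a_{(p_-^1,p_+^1)}-a_{(p_-^2,p_+^2)}|(t_j+\ep)$, modelled on a standard tangential Lagrangian birth in a Darboux neighbourhood with a fixed Legendrian lift to $P$. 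Contracting the two acyclic pairs produces the quasi-isomorphism across $t_j$, which the $A_\iy$ homotopy transfer of \S\ref{il33} promotes to a gapped filtered $A_\iy$ equivalence. Composing the equivalences across the open intervals and the singular times gives $\f^{\sst 01}:(\ti{\cal H}_0\ot\La_\nov^\Z,\n^0)\to(\ti{\cal H}_1\ot\La_\nov^\Z,\n^1)$, from which the bijection $\Psi$ and the $HF^*$-isomorphisms follow by the $\La_\nov^\Z$-analogue of Theorem \ref{il13thm1}; the two-Lagrangian case is obtained from the one-Lagrangian case applied to $\ti\io_t\amalg L_2:L\amalg L_2\hookrightarrow P$ via the sector decomposition \eq{il13eq6}.

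The hard part will be the birth/death analysis in the third paragraph: identifying the precise local Floer-theoretic model for a standard tangential birth of two transverse Lagrangian double points lifted to a Legendrian embedding in $P$, computing the leading $T$-exponent and sign of the differential pairing the new generators, and promoting the chain-level cancellation to an $A_\iy$-level homotopy equivalence through the Kuranishi virtual-chain framework of \S\ref{il6}--\S\ref{il7}. This is the Lagrangian Floer analogue of handle-slide invariance in Legendrian contact homology (Chekanov, Ekholm--Etnyre--Sullivan), transferred from the cylindrical-end to the filled moduli-space setting; carrying it through rigorously with Kuranishi gluing, consistently with the orientation conventions of \S\ref{il5} and the perturbation scheme of \S\ref{il6}, is the substantive work beyond what the rest of the argument demands.
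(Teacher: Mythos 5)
You should first be aware that this statement is stated in the paper as Conjecture \ref{il13conj2}: the paper gives no proof of it at all, only the heuristic that the wall-crossing mechanism of Example \ref{il13ex2} is blocked for embedded Legendrians because the integrality relation \eq{il13eq14} forces all $T$-exponents to lie in $\N+\{0\}\cup\{a_{(p_-,p_+)}(t)\}$ with $a_{(p_-,p_+)}(t)\in(0,1)$ bounded away from the integers as long as $\ti\io_t$ stays embedded; and even that heuristic is explicitly made conditional on Conjecture \ref{il13conj1}. So your proposal is an attempt at an open problem, and the question is whether it closes the gap the paper leaves open. It does not. The decisive step --- what happens at the finitely many times $t_j$ where $\pi\ci\ti\io_{t_j}$ fails to have transverse double self-intersections (birth/death of a cancelling pair of double points) --- is precisely where the entire machinery of \S\ref{il4}--\S\ref{il11} is unavailable, since every construction in the paper (moduli spaces, orientations, perturbation data, the $A_{N,0}$ algebras) assumes transverse double self-intersections. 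Your third paragraph asserts that the four new generators form two acyclic Koszul pairs killed by a local disc of small area with coefficient $\pm1$, and that this cancellation can be promoted to a gapped filtered $A_\iy$ homotopy equivalence over $\La_\nov^\N$; but you then concede that the local model, the leading exponent and sign, the Kuranishi gluing, and the $A_\iy$-level upgrade are all left to be done. That is not a peripheral technicality --- it is the substantive content of the conjecture (the Floer-theoretic analogue of birth/death invariance in Legendrian contact homology), and without it your argument establishes nothing beyond the paper's own motivating discussion.

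Two further points would need repair even if the degeneration analysis were supplied. First, on the intervals between the $t_j$ you propose to ``replicate \S\ref{il8}--\S\ref{il9} with $I$ in place of $[0,1]$'', but those sections vary only the almost complex structure $J_t$ while keeping the immersion $\io:L\ra M$ fixed; with $\io_t$ moving you must either extend the parametrized moduli theory to moving Lagrangian boundary conditions (not done in the paper) or use the device of Example \ref{il13ex2} --- conjugate by diffeomorphisms $\de_t$ so the immersion is fixed and the areas change by constants $c_{(p_-,p_+)}(t)$, which \eq{il13eq14} shows are exactly compensated by the drift of $a_{(p_-,p_+)}(t)$ in the shifted basis $T^{a_{(p_-,p_+)}}(p_-,p_+)$ --- and then check that the resulting rescaling maps preserve $F^\la$ with $\la>0$, which is where embeddedness of the Legendrian actually enters. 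Second, your reduction of the two-Lagrangian statement to the one-Lagrangian statement for $L\amalg L_2$ requires $\ti\io_t(L)$ to remain disjoint from $L_2$ in $P$ throughout the isotopy; in a generic $1$-parameter family two disjoint Legendrians in a $(2n+1)$-manifold do cross at isolated times, at which the union is no longer an embedded Legendrian and the mixed weights $a_{(p,q)}(t)$ pass through $0$ or $1$, so the blocking argument fails in the mixed sector and \eq{il13eq6} alone does not give the claimed isomorphism of $HF^*\bigl((L,b_0),(L_2,b_2);\La_\nov^\Z\bigr)$; a separate argument (or an added disjointness/genericity hypothesis) is needed there. (Also, your elimination of triple points by transversality is only valid for $n\ge 2$.)
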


This conjecture suggests there should exist a theory of {\it
Legendrian Floer cohomology} for embedded Legendrians in contact
manifolds $P$ which are $\U(1)$-bundles over symplectic manifolds
$(M,\om)$. This should clearly be related to the theory of {\it
Legendrian contact homology}, which was described informally by
Eliashberg, Givental and Hofer \cite[\S 2.8]{EGH}, and by Chekanov
\cite{Chek} for Legendrian knots in $\R^3$, and has been developed
rigorously by Ekholm, Etnyre and Sullivan \cite{EES1,EES2}, for
embedded Legendrians $L$ in $\R^{2n+1}$ and in $M\t\R$ for $(M,\om)$
an exact symplectic manifold.

In particular, for $(M,\om)$ exact one can compare our
$HF_*(L,b;\La_\nov^\Z)$ for embedded Legendrians in $M\t{\cal S}^1$,
and Ekholm et al.'s $HC_*(L,J)$ for embedded Legendrians $L$ in
$M\t\R$, \cite{EES2}. It seems that $HC_*(L,J)$ should be a sector
of $HF_*(L,b;\La_\nov^\Z)$, but not the whole thing, since
$HC_*(L;J)$ is the homology of a complex involving $H_1(L;\Z)$ and
the set of double points of $\pi(L)$ in $M$, but
$HF_*(L,b;\La_\nov^\Z)$ is the cohomology of a complex involving all
of $H_*(L;\Q)$ and $R$, which has two points $(p_-,p_+),(p_+,p_-)$
for each double point $p$ of $\pi(L)$ in $M$. We hope our conjecture
will lead to progress in Legendrian contact homology.

\medskip

\noindent{\small\sc The Mathematical Institute, 24-29 St. Giles,
Oxford, OX1 3LB, U.K.}

\noindent{\small\sc E-mail: \tt joyce@maths.ox.ac.uk}
\medskip

\noindent{\small\sc Department of Mathematics, Tokyo Metropolitan
University, Tokyo, Japan.}

\noindent{\small\sc E-mail: \tt akaho@comp.metro-u.ac.jp }

\end{document}